\definecolor{myred}{rgb}{0.75,0,0}
\definecolor{mygreen}{rgb}{0,0.5,0}
\definecolor{myblue}{rgb}{0,0,0.65}
\theoremstyle{plain}
\newtheorem{theorem}[subsubsection]{Theorem}
\newtheorem{proposition}[theorem]{Proposition}
\newtheorem{lemma}[theorem]{Lemma}
\newtheorem{corollary}[theorem]{Corollary}
\theoremstyle{definition}
\newtheorem{definition}[theorem]{Definition}
\newtheorem{remark}[theorem]{Remark}
\newtheorem{example}[theorem]{Example}
\newtheorem{question}[theorem]{Question}
\newtheorem{conjecture}[theorem]{Conjecture} 
\newtheorem{warn}[theorem]{Warning}
\newtheorem{notation}[theorem]{Notation}
\newtheorem{hypotheses}[theorem]{Hypotheses}
\theoremstyle{remark}
\numberwithin{equation}{section}
\numberwithin{theorem}{subsection}
\newcommand\nc{\newcommand}
\nc\on{\operatorname}
\nc\renc{\renewcommand}
\newcommand*{\shom}{\mathscr{H}\kern -.5pt om}
\newcommand*{\stor}{\mathscr{T}\kern -.5pt or}
\newcommand*{\sext}{\mathscr{E}\kern -.5pt xt}
\providecommand\@dotsep{5}
\renewcommand{\listoftodos}[1][\@todonotes@todolistname]{%
\@starttoc{tdo}{#1}}
\newcommand{\customlabel}[2]{\protected@write \@auxout {}{\string \newlabel {#1}{{#2}{\thepage}{#2}{#1}{}} }\hypertarget{#1}{#2}}
\DeclareMathOperator\tor{Tor}
\renewcommand\hom{\mathrm{Hom}}
\DeclareMathOperator\coker{coker}
\DeclareMathOperator\rk{rk}
\DeclareMathOperator\spec{Spec}
\DeclareMathOperator\im{im}
\DeclareMathOperator\sym{Sym}
\renewcommand\drop{\mathrm{Drop}}
\renewcommand\sp{\mathrm{Sp}}
\DeclareMathOperator\gal{Gal}
\DeclareMathOperator\tr{tr}
\DeclareMathOperator\frob{Frob}
\DeclareMathOperator\chr{\operatorname{char}}
\newcommand\ol{\overline}
\DeclareMathOperator\ev{ev}
\DeclareMathOperator\prob{Prob}
\DeclareMathOperator\aut{Aut}
\DeclareMathOperator\gl{GL}
\DeclareMathOperator\asp{\mathrm{ASp}}
\DeclareMathOperator\ahsp{\mathrm{A}^{\operatorname{H}}\mathrm{Sp}}
\DeclareMathOperator\so{SO}
\renewcommand\o{{\rm{O}}}
\DeclareMathOperator\sel{Sel}
\DeclareMathOperator\Sel{Sel}
\DeclareMathOperator\tors{tors}
\DeclareMathOperator\quo{Quo}
\DeclareMathOperator\grp{grp}
\DeclareMathOperator\ogr{OGr}
\DeclareMathOperator\et{\acute et}
\DeclareMathOperator\colim{colim}
\renewcommand\top{\mathrm{top}}
\DeclareFontFamily{U}{wncy}{}
\DeclareFontShape{U}{wncy}{m}{n}{<->wncyr10}{}
\DeclareSymbolFont{mcy}{U}{wncy}{m}{n}
\DeclareMathSymbol{\Sha}{\mathord}{mcy}{"58}
\DeclareMathOperator{\Sym}{Sym}
\DeclareMathOperator{\Conf}{Conf}
\newcommand{\field}[1]{\mathbb{#1}}
\newcommand{\Z}{\field{Z}}
\newcommand{\F}{\field{F}}
\newcommand{\R}{\field{R}}
\newcommand{\ord}{\mbox{ord}}
\newcommand{\EE} {\mathbb{E}}
\newcommand{\ra}{\rightarrow}
\DeclareMathOperator{\id}{\mathrm{id}}
\newcommand{\tensor} {\otimes}
\newcommand{\beq}{\begin{displaymath}}
\newcommand{\eeq}{\end{displaymath}}
\newcommand{\beqn}{\begin{equation}}
\newcommand{\eeqn}{\end{equation}}
\newcommand{\conf}[3]{\operatorname{Conf}^{#1}_{#2/#3}} 
\newcommand{\st}[4]{\operatorname{St}^{#1, #2}_{#3/#4}} 
\newcommand{\stqtwist}[4]{\operatorname{QTwist}^{#1,
#2}_{\operatorname{St},#3/#4}} 
\newcommand{\openstqtwist}[4]{\operatorname{QTwist}^{#1,
#2}_{#3/#4}} 
\newcommand{\qtwist}[3]{\operatorname{QTwist}^{#1}_{#2/#3}} 
\newcommand{\rankcover}[2]{\operatorname{QTwist}^{\rk,
#1}_{#2}} 
\newcommand{\hur}[6]{\operatorname{Hur}^{#1, #2, #3, #4}_{#5/#6}}
\newcommand\inertiaindex[5]{T^{#1}_{#2, #3, #4, #5}}
\newcommand\allinertia[4]{T_{#1, #2, #3, #4}}
\newcommand\sinertiaindex[5]{S^{#1}_{#2,#3,#4,#5}}
\newcommand\sallinertia[4]{S_{#1, #2,#3,#4}}
\newcommand{\selhur}[2]{\operatorname{Hur}^{#2}_{#1}}
\newcommand{\rankselhur}[2]{\operatorname{Hur}^{#2,\rk}_{#1}}
\newcommand{\selsheaf}[1]{{\mathcal{S}e\ell}_{#1}} 
\newcommand{\selspace}[1]{\operatorname{Sel}_{#1}} 
\newcommand{\selspacemoments}[2]{\operatorname{Sel}_{#1}^{#2}} 
\newcommand{\rankselspacemoments}[2]{\operatorname{Sel}^{#2,\rk}_{#1}}
\newcommand{\logselspace}[1]{\operatorname{Sel}^{\operatorname{log}}_{#1}}
\DeclareMathOperator\surj{Surj}
\newcommand{\bklpr}[1]{\operatorname{Sel}^{\operatorname{BKLPR}}_{#1}} 
\newcommand{\paritybklpr}[2]{\operatorname{Sel}^{\operatorname{BKLPR},#2}_{#1}} 
\def\listtodoname{List of Todos}
\def\listoftodos{\@starttoc{tdo}\listtodoname}
\title[The BKLPR heuristics over function fields]{Homological stability for generalized Hurwitz spaces and Selmer groups in quadratic twist families over function fields}
\subjclass[2020]{Primary 11G05; Secondary 11G10, 14G15, 55N99}
\keywords{Bhargava-Kane-Lenstra-Poonen-Rains heuristics, the minimalist
conjecture, quadratic twists, homological stability, big monodromy}
\author{Jordan S. Ellenberg}
\author{Aaron Landesman}
\begin{document}

\date{\today}

\begin{abstract}
	We prove a version of the Bhargava-Kane-Lenstra-Poonen-Rains heuristics for Selmer groups of quadratic twist families of abelian varieties over global function	fields. As a consequence, we derive a result towards the  ``minimalist conjecture" on Selmer ranks of abelian varieties in such families.  More precisely, we show that the probabilities predicted in these two conjectures are correct to within an error term in the size of the constant field, $q$, which goes to $0$ as $q$ grows. Two key inputs are a new homological stability theorem for a generalized version of Hurwitz spaces parameterizing covers of punctured Riemann surfaces of arbitrary genus, and an expression of average sizes of Selmer groups in terms of the number of rational points on these Hurwitz spaces over finite fields.
\end{abstract}

\maketitle
\tableofcontents

\section{Introduction}
\label{section:introduction}

For $\nu$ a positive integer and $A$ an abelian variety over a global field $K$, the $\nu$-Selmer group
of $A$, denoted $\sel_\nu(A)$, is a group which sits in an exact sequence between the mod $\nu$ Mordell-Weil group 
$A(K)/\nu A(K)$ and the $\nu$ torsion in the Tate-Shafarevich group $\Sha(A)[\nu]$. These Selmer groups, unlike the other two terms in the exact sequence, are computationally
approachable, and provide the most tractable means of obtaining information about 
the rank of $A$ and of $\Sha(A)$.

The Selmer group of an abelian variety can be thought of as a higher analogue of
the class group of a number field.  The behavior of the class group of a number
field chosen at random from a specified family is the subject of the
Cohen-Lenstra conjecture and its many subsequent generalizations. In the same way, the question ``what does the $\nu$-Selmer group of a random abelian variety look like?" is the subject of a suite of more recent conjectures.  Conjectures predicting the distribution of Selmer groups were formulated in
\cite{poonenR:random-maximal-isotropic-subspaces-and-selmer-groups}, when $\nu$
is prime, and generalized to the case of composite $\nu$ in
\cite[\S5.7]{bhargavaKLPR:modeling-the-distribution-of-ranks-selmer-groups},
see also
\cite[\S5.3.3]{fengLR:geometric-distribution-of-selmer-groups}.
We call these conjectures 
the ``BKLPR heuristics.''  
Although the above papers state their conjectures in the context of the universal family parameterizing all elliptic
curves, it is also natural to ask under what circumstances they apply to quadratic twist families (
\cite[Remark 1.9]{poonenR:random-maximal-isotropic-subspaces-and-selmer-groups}.)
Our main result is a proof of these conjectures
over function fields of arbitrary genus, up to an error term in $q$ that
approaches $0$ as $q$ grows, in the case where the family of abelian varieties is the family of quadratic twists of a fixed abelian variety.

For $\ell$ a suitably large prime,
as an immediate consequence of our main result, we obtain a version of the {\em minimalist 
conjecture} for $\ell^\infty$-Selmer ranks, which predicts that quadratic twists of a fixed elliptic curve have $\ell^\infty$-Selmer rank $0$ half the
time, $\ell^\infty$-Selmer rank $1$ half the time, and $\ell^\infty$-Selmer rank at least $2$ zero percent of the time.

The approach of this paper is similar to that of 
\cite{EllenbergVW:cohenLenstra},
which verifies
a version of the Cohen-Lenstra heuristics over genus $0$ function fields.
As in \cite{EllenbergVW:cohenLenstra},
one key input is a new homological stability theorem.  This theorem, which is purely topological in nature, is used to bound the \'{e}tale cohomology
of relevant moduli spaces, whose $\mathbb F_q$-points count elements of Selmer
groups of quadratic twists of an abelian variety.

\subsection{Main Results}
\label{subsection:main-results}

To give an indication of the nature of the results we prove in this paper, we
start with a very special case of \autoref{theorem:main-moments} below, see
\autoref{remark:beginning-explanation}. We now describe this special case informally. 
Let $\mathbb F_q$ be a finite field of odd characteristic,
$A$ be an abelian variety over the field $\F_q(t)$ with good reduction over
$\infty$, and $\ell$ an odd prime not
dividing $q$.  For any squarefree polynomial $f \in \F_q[t]$ of even degree $n$,\footnote{See \autoref{remark:ramified-quadratic-twists} for a discussion on how to
generalize this to the case that the degree, $n$, is odd.}
we denote by $A_f$ the quadratic twist of $A$ by the quadratic character of
$\F_q(t)$ associated to $f$.  
Write $\EE_n \Sel_\ell A_f$ for the average size
of the $\ell$-Selmer group of $A_f$ as $f$ ranges over squarefree 
polynomials of degree $n$ which are coprime to the bad reduction locus of $A$.
Similarly, write $\EE_{n,j} \Sel_\ell A_f$ for the same average obtained from
the base change $A / \F_{q^j}(t)$, so that the average is now over the
squarefree polynomials in $\F_{q^j}(t)$ coprime to bad reduction.  Then, the Poonen-Rains heuristics
assert that $\lim_n \EE_{n,j} \Sel_\ell A_f = \ell+1$ for all $j$.  What we
prove, subject to some modest conditions on $A$ and $\ell$, which will be
specified in \autoref{theorem:main-moments}, is that 
\beq
\lim_j \lim_n \EE_{n,j} \Sel_\ell A_f 
= \ell+1.
\eeq

We emphasize that the computation that $\lim_j \EE_{n,j} \Sel_\ell A_f =
\ell+1$, without first taking a limit in $n$,
is substantially easier, see \autoref{subsection:past-work} for more on this
issue.
The contribution of the present
paper is to understand, as in the BKLPR heuristics, what happens when $n$ goes
to infinity {\em with $j$ fixed}, or, in other words, $A$ is defined over a specific global field $\F_{q^j}(t)$.

Before getting to our most general results,
we present a second special
case of 
\autoref{theorem:main-moments} 
which is again simpler to state than \autoref{theorem:main-moments}, but still of significant interest.
Let $C$ be a smooth proper geometrically connected curve
over a finite field $\mathbb F_q$ of odd characteristic and let $U \subset C$ be a nonempty open subscheme with
nonempty complement. 
Let $\nu$ be an odd integer and $A \to U$ be a polarized abelian scheme 
with polarization of degree prime to $\nu$.
Let $\qtwist n U {\mathbb F_q}(\mathbb F_{q^j})$ denote the groupoid of
quadratic twists of $A\times_{\spec \mathbb F_q} \spec \mathbb F_{q^j}$, ramified over a
degree $n$ divisor contained in $U$ with $n$ even. (See
\autoref{notation:quadratic-twist-notation} for a precise definition.)
For $x \in \qtwist n U {\mathbb F_q}(\mathbb F_{q^j})$, we use the notation
$A_x$ to indicate the quadratic twist of $A$ corresponding to the point $x$.
We use $\bklpr \nu$ for the predicted distribution of the $\nu$-Selmer group, as
given in
\cite{bhargavaKLPR:modeling-the-distribution-of-ranks-selmer-groups}; see
\autoref{definition:bklpr-rank-selmer} for a brief definition.
The following consequence of our main result says the BKLPR heuristics hold for quadratic
twists of an elliptic curve with squarefree discriminant, up to an error that
goes to $0$ as $q$ grows.

\begin{theorem}
	\label{theorem:squarefree-discriminant-case}
	With notation as above, suppose $A$ is a nonconstant elliptic curve with
	squarefree discriminant.
	Choose $\nu$ and $q$ so that $\chr \mathbb F_q > 3$ and $\nu$ is prime to
	$6q$.  Let $H$ be a finitely generated $\Z/\nu \Z$-module.
	Then
	\begin{align*}
\prob(\bklpr \nu \simeq H) &=
		\lim_{j \to \infty} \limsup_{\substack{n \to \infty \\ n
		\hspace{.1cm} \mathrm{even}}} \prob( \sel_\nu(A_x)
	\simeq H :x \in \qtwist n U {\mathbb F_q}(\mathbb F_{q^j}) )  \\
		&=\lim_{j \to \infty} \liminf_{\substack{n \to \infty \\ n
		\hspace{.1cm} \mathrm{even}}} \prob( \sel_\nu(A_x)
	\simeq H :x \in \qtwist n U {\mathbb F_q}(\mathbb F_{q^j}) ).
	 	\end{align*}
\end{theorem}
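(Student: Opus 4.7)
The plan is to deduce \autoref{theorem:squarefree-discriminant-case} from \autoref{theorem:main-moments}, the main moments result, by a standard method-of-moments argument.

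First, I would verify the hypotheses of \autoref{theorem:main-moments} are met in this special case. The elliptic curve $A\to U$ is principally polarized (so the polarization degree $1$ is trivially prime to $\nu$), and squarefreeness of the discriminant forces multiplicative, hence tame, reduction at all bad fibers. Combined with $\gcd(\nu,6q)=1$ and $\chr \F_q > 3$, this gives the big monodromy conditions on $A[\ell]$ for primes $\ell \mid \nu$ that one expects the main moments theorem to demand. The hypotheses on $6$ and on $\chr \F_q$ are chosen precisely to rule out pathological behavior at $2$- and $3$-torsion.

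Second, granting \autoref{theorem:main-moments}, one obtains, for each finite abelian $\nu$-group $G$, the three-way equality
\[
\lim_{j}\limsup_{\substack{n\to\infty\\n\text{ even}}} \EE\,|\surj(\sel_\nu(A_x),G)| \;=\; \lim_{j}\liminf_{\substack{n\to\infty\\n\text{ even}}} \EE\,|\surj(\sel_\nu(A_x),G)| \;=\; \EE\,|\surj(\bklpr \nu,G)|.
\]
Since the BKLPR distribution is known to be determined by its collection of surjection moments (as shown in \cite{bhargavaKLPR:modeling-the-distribution-of-ranks-selmer-groups}, and with the requisite moment-determinacy input), these equalities propagate to equalities of probabilities $\prob(\sel_\nu(A_x)\simeq H)$ by a Möbius-style inclusion-exclusion expressing an isomorphism probability as a (conditionally convergent) signed combination of surjection moments.

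The main technical subtlety is the interaction of the inclusion-exclusion with $\limsup$ and $\liminf$, since these do not commute with signed sums. I would handle this by tightness: the distributions of $\sel_\nu(A_x)$, as $n$ and $j$ vary, are supported on isomorphism classes of finite $\Z/\nu\Z$-modules of bounded $\F_\ell$-rank (controlled by $\ell$-adic cohomology of the relevant Hurwitz spaces), so one can extract subsequential limit distributions achieving the $\limsup$ and $\liminf$ in $n$, match their moments using \autoref{theorem:main-moments}, and then invoke moment-determinacy to identify each such limit with $\bklpr \nu$. This is where the matching lim sup/lim inf in \autoref{theorem:main-moments} is essential; without it, one would only obtain that every subsequential limit equals $\bklpr \nu$ up to the error from varying $j$. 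I expect this tightness/subsequence argument to be the main obstacle, with the verification of the hypotheses of \autoref{theorem:main-moments} in the squarefree-discriminant case being routine by comparison.
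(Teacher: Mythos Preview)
Your proposed route through \autoref{theorem:main-moments} has a genuine gap: the BKLPR distribution is \emph{not} determined by its surjection moments. Indeed, \autoref{proposition:bklpr-moments} shows that $\bklpr \nu$, $\paritybklpr \nu 0$, and $\paritybklpr \nu 1$ all share the same $H$-surjection moment $\#\sym^2 H$, and the paper explicitly flags this non-determinacy in \autoref{subsection:difficulties-of-the-proof} (citing \cite[Example 1.12]{fengLR:geometric-distribution-of-selmer-groups}). So even after your tightness/subsequence extraction, matching the moments of a subsequential limit to those of $\bklpr \nu$ does not force that limit to equal $\bklpr \nu$; it could equally well be $\paritybklpr \nu 0$, $\paritybklpr \nu 1$, or any convex combination thereof. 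The example in \autoref{remark:katz-legendre-so} shows this is not a phantom concern: there one genuinely obtains a limiting distribution supported entirely in even parity. Your claim that moment-determinacy ``is shown in \cite{bhargavaKLPR:modeling-the-distribution-of-ranks-selmer-groups}'' is simply false.

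The paper instead deduces \autoref{theorem:squarefree-discriminant-case} directly from \autoref{theorem:main-finite-field}, a statement already at the level of distributions rather than moments. The proof of \autoref{theorem:main-finite-field} (via \autoref{theorem:main-dvr}) resolves the non-determinacy by introducing the rank double cover $\rankcover n {\mathscr F}$, which separates the twists by parity of $\dim\sel_\ell$; one then applies moment-determinacy (via \cite{nguyenW:local-and-global-universality} and \autoref{proposition:general-moments-to-distribution}) \emph{within} each parity class $\mathcal N^i$, where the restricted BKLPR distributions $\paritybklpr \nu i$ \emph{are} determined by their moments among $\mathcal N^i$-valued random variables, and finally shows the two parities occur with equal limiting frequency. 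Your hypothesis-checking paragraph is essentially correct (squarefree discriminant gives multiplicative reduction everywhere, hence toric part of dimension $1$, trivial component groups, and tameness since $\gcd(q,6)=1$; irreducibility of $A[\ell]$ and $\ell>3$ follow from nonconstancy and $\gcd(\nu,6)=1$), but that verification feeds into \autoref{theorem:main-finite-field}, not \autoref{theorem:main-moments}.
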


We next state a more general theorem of which
\autoref{theorem:squarefree-discriminant-case} is a consequence, as we next
explain:
The irreducibility assumption in 
\autoref{theorem:main-finite-field}
holds in the setting of
\autoref{theorem:squarefree-discriminant-case}
by
\cite[Proposition 2.7]{zywina:inverse-orthogonal}. 
The reason we assume $\nu$ is prime to $6q$ is to ensure $A[\nu] \to U$ is tame,
which is one of the assumptions of
\autoref{theorem:main-finite-field}.
In particular, we will not need to assume $\nu$ is prime to $6q$ in future results
in this introduction.
The remaining assumptions
in \autoref{theorem:main-finite-field} also
automatically hold for any nonconstant elliptic
curve of squarefree discriminant.
For the next theorem, we use notation as defined prior to \autoref{theorem:squarefree-discriminant-case}.
\begin{theorem}
	\label{theorem:main-finite-field}
	With notation as above,
	choose an abelian scheme $A$ so that
\begin{equation}
\label{equation:multiplicative-hypothesis}
	\text{$A$ has multiplicative
		reduction with toric part of dimension $1$ over some point of
		$C$.}
\end{equation}
Choose $\nu$ so that every prime $\ell \mid \nu$ satisfies $\ell > 2 \dim A +
1$ and $A[\ell] \times_{\mathbb F_q} \overline{\mathbb F}_q$ corresponds to a irreducible sheaf of $\mathbb Z/\ell \mathbb Z$-modules on $U\times_{\mathbb F_q} \overline{\mathbb F}_q$,
$\nu$ is prime to $q$,
and $A[\nu]$ is a tame finite \'etale
cover of $U$.
Further assume that $\nu$ is relatively prime to the order of the geometric component group of the N\'eron model of
$A$ over $C$, as defined in \autoref{notation:component-group}.
	We have
	\begin{align*}
		\lim_{j \to \infty} \limsup_{\substack{n \to \infty \\ n
		\hspace{.1cm} \mathrm{even}}} \prob( \sel_\nu(A_x)
	\simeq H :x \in \qtwist n U {\mathbb F_q}(\mathbb F_{q^j}) ) &= \prob(\bklpr \nu \simeq
H),	\end{align*}
	as well as the analogous statement with $\limsup$ replaced with
	$\liminf$.
%
\end{theorem}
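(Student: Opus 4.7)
The plan is to pass from the distributional statement to counts of rational points on an auxiliary tower of Hurwitz spaces whose cohomology is controlled by the homological stability theorem promised earlier in the paper, and then to match the resulting stable cohomology with the BKLPR prediction through Frobenius weights.

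First, I would reduce the claim about distributions to one about moments. By M\"obius inversion on the poset of finite $\Z/\nu\Z$-modules, it suffices to prove that for every such module $H'$,
\[
\lim_{j\to\infty}\limsup_{\substack{n\to\infty\\ n\text{ even}}} \EE\bigl(\#\Hom(\sel_\nu(A_x),H')\bigr)=\lim_{j\to\infty}\liminf_{\substack{n\to\infty\\ n\text{ even}}} \EE\bigl(\#\Hom(\sel_\nu(A_x),H')\bigr)=\EE\bigl(\#\Hom(\bklpr{\nu},H')\bigr),
\]
with the expectation over $x\in\twoconf{n}{U}{\F_q}(\F_{q^j})$. The numerator of each expectation is then the number of $\F_{q^j}$-points of a Hurwitz-type moduli space $\selhur{n}{H'}$ that augments the twist datum $x$ by an $H'$-valued Selmer class; this is a variant of the very Hurwitz space for which the paper proves homological stability.

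Second, I would combine Grothendieck--Lefschetz with Deligne's purity. Writing $d_n=\dim\selhur{n}{H'}$,
\[
\#\selhur{n}{H'}(\F_{q^j})=\sum_{i\geq 0}(-1)^i\tr\bigl(\frob_{q^j}\mid H^i_c(\selhur{n}{H'}_{\ol{\F}_q},\Q_\ell)\bigr),
\]
and the eigenvalues on $H^i_c$ have absolute value at most $q^{ij/2}$. After dividing by $\#\twoconf{n}{U}{\F_q}(\F_{q^j})$, the top-degree term $H^{2d_n}_c$ supplies the main contribution while every other degree contributes at most $q^{-j/2}$ per unit of Betti number. By Poincar\'e duality, $H^{2d_n-k}_c$ is controlled by $H^k$ in the stable range, so the homological stability theorem supplies a uniform bound $B_k$ on these Betti numbers, independent of $n$. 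Summing the resulting geometric series in $j$ yields, for $n$ sufficiently large,
\[
\EE\bigl(\#\Hom(\sel_\nu(A_x),H')\bigr)=M_\infty(H')+O_{H'}(q^{-j/2}),
\]
where $M_\infty(H')$ is the stable contribution of the top-degree cohomology. Taking $\limsup$ or $\liminf$ in $n$ and then $j\to\infty$ produces $M_\infty(H')$ in both cases.

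Third, I would identify $M_\infty(H')$ with $\EE\#\Hom(\bklpr{\nu},H')$. The paper remarks that the diagonal limit $\lim_j \EE_{n,j}$ (with $n$ fixed) matches the BKLPR moment by a simpler argument: the same weight bounds show that this double average detects only the geometrically connected components of the relevant Hurwitz cover. Under the hypotheses of the theorem -- $\ell>2\dim A+1$, irreducibility of $A[\ell]$ as a sheaf on $U_{\ol{\F}_q}$, the multiplicative reduction condition supplying a local transvection at a bad point, and coprimality of $\nu$ to the component group of the N\'eron model -- the global monodromy on $A[\nu]$ is ``big enough'' that these components are enumerated precisely by the combinatorics used in the BKLPR construction. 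Matching the two expressions for the top-degree quantity then identifies $M_\infty(H')$ with the BKLPR moment. The main obstacle is exactly this last identification: homological stability alone controls Betti numbers, whereas what is really required is control of Frobenius on the stable top cohomology, and this is precisely where the big-monodromy hypotheses, the multiplicative reduction assumption (for the transvection), and the bound $\ell>2\dim A+1$ (to ensure the transvections generate a large symplectic group) all enter in an essential way.
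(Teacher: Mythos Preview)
Your first reduction is where the argument breaks. The BKLPR distribution is \emph{not} determined by its $H$-moments: the paper explicitly flags (and this is the central subtlety of the whole project) that if one conditions $\bklpr{\nu}$ on the parity of $\dim_{\mathbb F_\ell}\sel_\ell$, the resulting distribution $\paritybklpr{\nu}{i}$ has the \emph{same} $H$-moments $\#\Sym^2 H$ as the unconditioned one (\autoref{proposition:bklpr-moments}). So the sentence ``By M\"obius inversion \ldots\ it suffices to prove [the moments match]'' is not a valid reduction; even if the rest of your argument correctly establishes the moment equality (which is essentially \autoref{theorem:main-moments}), it cannot by itself pin down the distribution.

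The paper's route around this is geometric, not analytic. It constructs a \emph{rank double cover} $\rankcover{n}{\mathscr F}\to\twoconf{n}{U}{\mathbb F_q}$ whose $\mathbb F_{q^j}$-points detect exactly those twists $x$ with $\dim\sel_\ell(A_x)$ of specified parity, and proves a parallel homological stability and component count for the fiber products $\rankselspacemoments{\mathscr F^n}{H}$. Two further ingredients are needed: the arithmetic constraint that $\sel_\nu(A_x)$ always lies in $\mathcal N^0\cup\mathcal N^1$ (coming from the structure of $\Sha(A_x)[\nu]$, \autoref{lemma:sha-almost-square}), and the theorem of Nguyen--Wood that moments \emph{do} determine the distribution once restricted to each parity class. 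One then checks parity is equidistributed (the rank double cover is geometrically irreducible under the Dickson invariant, which is where the multiplicative-reduction hypothesis enters via \autoref{lemma:determinant-monodromy}) and reassembles the full distribution via \autoref{proposition:general-moments-to-distribution}. A secondary omission in your sketch: the homological stability input is proved topologically over $\mathbb C$, so one must first lift the symplectically self-dual sheaf $A[\nu]$ from $\mathbb F_q$ to a mixed-characteristic DVR (\autoref{lemma:lifting-to-char-0}) before any comparison of Betti numbers is available.
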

\autoref{theorem:main-finite-field}
is proven in
\autoref{subsubsection:proof-main-finite-field}.
We also explain in
\autoref{subsubsection:proof-special-case-prime}
how the proof of 
\autoref{theorem:main-finite-field} can be somewhat shortened in
the case that $\nu$ is prime.
\begin{remark}
	\label{remark:number-field-intro}
If we start with an abelian scheme over an affine curve over a number field $K$, one can spread it
out to an abelian scheme over an affine curve over a sufficiently small nonempty open $\spec \mathscr O
\subset \spec \mathscr
O_K$. One can then deduce a version of \autoref{theorem:main-finite-field} where one
takes a limit over prime powers with characteristic avoiding finitely many
primes, instead of restricting the characteristic to take a single fixed value,
as in \autoref{theorem:main-finite-field}. See \autoref{remark:number-field} for
more on this.
The key point is that the cohomology groups of the relevant moduli space will be
independent of the geometric point of $\spec \mathscr O$ we choose.
\end{remark}

We next include some remarks on the relation between our results, the BKLPR
heuristics, and the results of
\cite{EllenbergVW:cohenLenstra}.
\begin{remark}
	\label{remark:}
	\autoref{theorem:main-finite-field} can be thought of as a version of the conjectures of
\cite{bhargavaKLPR:modeling-the-distribution-of-ranks-selmer-groups} over global
function fields for quadratic twist families of abelian varieties.  There are two respects in which our result does not precisely say that the BKLPR conjecture holds for such families.  The first difference, and the more substantial one, is that we can't show the probabilities we analyze agree with the BKLPR heuristics exactly, but only up to an error term that shrinks as the finite field gets larger and larger.  The second difference is that BKLPR makes conjectures for $\ell^\infty$-Selmer groups,
while our results apply only to finite order Selmer groups.
It seems likely the ideas in this paper could be extended to the case of
$\ell^\infty$-Selmer groups, and we think it would be quite interesting to do so.

The relationship between the theorems of the present paper and the BKLPR
heuristics is analogous to the relationship between the results of
\cite{EllenbergVW:cohenLenstra} and the Cohen-Lenstra heuristics.
The connection between the two papers is discussed further in the next remark.
\end{remark}
\begin{remark}
\label{remark:}
We believe the version of the Cohen-Lenstra heuristics proven in
\cite{EllenbergVW:cohenLenstra} should be viewable as a degenerate case of
\autoref{theorem:main-finite-field}, where one takes $A$ to be a $1$-dimensional
torus, instead of an abelian scheme.
The torus may be viewed as a degeneration of an elliptic curve.
We note that \cite{EllenbergVW:cohenLenstra} does not directly follow from the
results presented here, but we are hopeful that a modest generalization of the work in this paper could imply both those results and ours.

\end{remark}

The next result computes the moments of Selmer groups.
To introduce some further notation, if $X$ and $Y$ are two finite abelian groups, we use $\#\surj(X, Y)$ for the number of surjections from $X$ to $Y$.
We also define $Z := C- U$.

\begin{theorem}
\label{theorem:main-moments}
With the same hypotheses on $A$ and $\nu$ as in
\autoref{theorem:main-finite-field},
\begin{align}
	\label{equation:moment-limsup}
		\lim_{j \to \infty} \limsup_{\substack{n \to \infty \\ n
		\hspace{.1cm} \mathrm{even}}}\frac{\sum_{x \in \qtwist n U
			{\mathbb F_q}(\mathbb F_{q^j})}  \# \surj
		(\sel_\nu(A_x), H)}{\sum_{x \in \qtwist n U {\mathbb F_q}(\mathbb F_{q^j})}
		1}
		&= \#\sym^2 H,
		\end{align}
	as well as the analogous statement with $\limsup$ replaced with
	$\liminf$.

If, moreover, there is some $\sigma \in Z(\mathbb F_q)$ over which $A$ has good
reduction, 
\begin{align}
	\label{equation:moment-limit-residue}
		\lim_{j \to \infty} \lim_{\substack{n \to \infty \\ n
		\hspace{.1cm} \mathrm{even}}}
				\frac{\sum_{x \in \qtwist n U
			{\mathbb F_q}(\mathbb F_{q^j})}  \# \surj
		(\sel_\nu(A_x), H)}{\sum_{x \in \qtwist n U {\mathbb F_q}(\mathbb F_{q^j})}
		1}
		&= \#\sym^2 H.
		\end{align}
\end{theorem}

\autoref{theorem:main-moments} 
is proven in \autoref{subsubsection:proof-main-moments}.

\begin{remark}
	\label{remark:error-term-moments}
	An upgraded version of \eqref{equation:moment-limsup}, bounding 
the error term as $j \to \infty$ by a constant (depending on $A$ and $H$) divided by $\sqrt{q}$ can be deduced from
the analogous error term provided in
\autoref{theorem:point-counting-computation},
following the same proof in \autoref{subsubsection:proof-main-moments}.
\end{remark}

\begin{remark}
	\label{remark:removing-sigma-hypothesis}
	The condition that there is some $\sigma \in
	Z(\mathbb F_q)$ over which $A$ has good reduction is fairly easy to
	arrange, by first passing to an extension where $C$ has a $\mathbb F_q$-point of good reduction, and then augmenting $Z$ to include that point.
	Note that this requires us to restrict the class of quadratic twists we consider to those which are unramified at the point we added to Z.

	Moreover, it seems likely that the hypothesis that there is $\sigma \in
	Z(\mathbb F_q)$ over which $A$ has good reduction can be removed.
	A viable path to doing so would involve two generalizations.
	First, we would need to carry out the whole paper in a setting where we
	require our quadratic twists be ramified at specified points in $Z$, as described
	further in
	\autoref{remark:ramified-quadratic-twists}.
	Second, 
	we would need to carry out \autoref{section:frobenius-equivariance} in the
	setting where $A[\nu]$ has inertia type $-\id$ over $\sigma$.
	If one were able to verify both these generalizations, one could then
	show the limit in $n$ exists over $\mathbb F_{q^j}$ for sufficiently
	large $j$ where there is a point
	$\sigma\in C(\mathbb F_{q^j})$ by verifying the limit exists both
	in the case of quadratic twists ramified at $\sigma$ and unramified at
	$\sigma$, and then adding the two
	resulting limits.
	These generalizations both seem quite approachable, and we believe it would be interesting to work
	this out.
\end{remark}

\begin{remark}
	\label{remark:beginning-explanation}
	As we now explain, the informal example given in the first paragraph of
\autoref{subsection:main-results} is the special case of 
of \autoref{theorem:main-moments} where
$\nu =
\ell, H = \Z / \ell \Z,$ and $C = \mathbb P^1_{\mathbb F_q}$, and $Z := C-U$ is the union of the
places of bad reduction of the abelian scheme, together with $\infty$.
We will assume $A$ has good reduction over $\infty$ so that the hypothesis
preceding \eqref{equation:moment-limit-residue} is satisfied, although, as
mentioned in \autoref{remark:removing-sigma-hypothesis}, this is likely
unnecessary.
In this case, $\#\Sym^2 H = \ell$, so the average number of surjections from the
$\ell$-Selmer group to $\Z / \ell \Z$ is $\ell$. Since the $\ell$-Selmer group
is a finite dimensional vector space $V$ over $\Z/ \ell \Z$, 
\begin{align*}
	\# V=
\#\hom(\mathbb Z/\ell \mathbb Z, V) = \#\hom(V, \mathbb Z/\ell \mathbb Z) =
\#\surj(V,\Z/\ell\Z) + 1.
\end{align*}
Thus, the average size of the $\ell$-Selmer group is $\ell+1$ as claimed. 
\end{remark}

It is well-known that bounds for average sizes (or more generally moments) of
$\nu$-Selmer groups yield interesting bounds on algebraic ranks (also known as
Mordell-Weil ranks). Moreover, 
control of algebraic ranks gets better as $\nu$ gets larger. See 
\cite[Proposition 5]{bhargavaS:average-4-selmer} 
and
\cite[p.246-247]{poonenR:random-maximal-isotropic-subspaces-and-selmer-groups}.
Since the results of the present paper allow $\nu$ to be arbitrarily large, they
are well-suited for results on algebraic ranks.  
For $A$ an abelian variety over a global field, we use $\rk_{\ell^\infty} A$ to denote
the $\ell^\infty$-Selmer rank of $A$, which means that we can write
$\sel_{\ell^\infty}(A) \simeq (\mathbb Q_\ell/\mathbb Z_\ell)^{\rk_{\ell^\infty} A}
\oplus G$, for $G$ a finite group.
The minimalist conjecture, a version of which was originally posed by Goldfeld in 1979
\cite[Conjecture B]{goldfeld:conjectures-on-elliptic-curves}, states that for
suitable families of elliptic curves, the rank takes the value $0$ half the
time and $1$ half the time.
In this direction, we will prove the following version of the minimalist
conjecture:

\begin{theorem}
\label{theorem:main-minimalist}
Suppose $A$ is an abelian scheme over $U$ satisfying
\eqref{equation:multiplicative-hypothesis}, and $\nu = \ell$ is a prime
satisfying the hypotheses of \autoref{theorem:main-finite-field}.
Then,
\begin{align*}
\lim_{j \to \infty} \limsup_{\substack{n \to \infty \\ n
		\hspace{.1cm} \mathrm{even}}} \prob(\rk_{\ell^\infty} A_x =0
		: x \in \qtwist n U {\mathbb F_q}(\mathbb F_{q^j})) &= \frac{1}{2},\\
\lim_{j \to
		\infty} \limsup_{\substack{n \to \infty \\ n
		\hspace{.1cm} \mathrm{even}}} \prob(\rk_{\ell^\infty} A_x =1: x \in \qtwist n U
		{\mathbb F_q}(\mathbb F_{q^j})) &= 	\frac{1}{2}, \\
\lim_{j \to
		\infty} \limsup_{\substack{n \to \infty \\ n
		\hspace{.1cm} \mathrm{even}}} \prob(\rk_{\ell^\infty} A_x \geq 2: x \in \qtwist n U
		{\mathbb F_q}(\mathbb F_{q^j})) &= 	0,
		\end{align*}
	as well as the analogous statements with $\limsup$ replaced with
	$\liminf$.
\end{theorem}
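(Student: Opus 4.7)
The plan is to prove this in two parts. First, show $\prob(\rk_{\ell^\infty} A_x \geq 2) = 0$ by a Markov argument applied to higher-level Selmer moments from \autoref{theorem:main-moments}. Second, show $\prob(\rk_{\ell^\infty} A_x \text{ is even}) = 1/2$ via the $\ell$-parity theorem combined with \autoref{theorem:main-finite-field} at $\nu = \ell$. The two pieces together pin down the stated probabilities.

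For the first part, I would apply \autoref{theorem:main-moments} with $\nu = \ell^k$ for arbitrary $k \geq 1$. The hypotheses of that theorem depend on $\nu$ only through its prime divisors (unchanged as $k$ grows) and through tameness of $A[\nu] \to U$, which is automatic for $\ell \neq \chr \F_q$ since wild inertia is pro-$\chr\F_q$ while $A[\ell^k]$ is pro-$\ell$. For any finite $\Z/\ell^k$-module $G$, the identity $|G| = |\Hom(G, \Z/\ell^k)| = \sum_{i=0}^{k} \#\surj(G, \Z/\ell^i)$ combined with $\#\Sym^2 \Z/\ell^i = \ell^i$ yields the moment bound
\[
\lim_{j \to \infty} \limsup_{n \to \infty} \mathbb E\bigl[|\sel_{\ell^k}(A_x)|\bigr] \;\leq\; \frac{\ell^{k+1}-1}{\ell-1}.
\]
When $\rk_{\ell^\infty} A_x \geq 2$, the group $\sel_{\ell^\infty}(A_x)[\ell^k]$ contains $(\Z/\ell^k)^2$, and the canonical surjection $\sel_{\ell^k}(A_x) \twoheadrightarrow \sel_{\ell^\infty}(A_x)[\ell^k]$ arising from the Kummer sequence forces $|\sel_{\ell^k}(A_x)| \geq \ell^{2k}$. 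Markov's inequality then gives
\[
\lim_{j \to \infty} \limsup_{n \to \infty} \prob\bigl(\rk_{\ell^\infty} A_x \geq 2\bigr) \;\leq\; \frac{\ell^{k+1}-1}{(\ell-1)\ell^{2k}},
\]
and sending $k \to \infty$ drives the right side to zero.

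For the second part, I would invoke the $\ell$-parity theorem—known for elliptic curves over global function fields and available in substantial generality for abelian varieties—asserting $\rk_{\ell^\infty} A_x \equiv \dim_{\F_\ell} \sel_\ell(A_x) \pmod 2$. By \autoref{theorem:main-finite-field} at $\nu = \ell$, the distribution of $\sel_\ell(A_x)$ in the stated limit matches $\bklpr_\ell$, which by construction has $\prob(\dim_{\F_\ell} \bklpr_\ell \text{ even}) = 1/2$, encoding the root-number equidistribution built into the BKLPR heuristics for quadratic twist families. Hence $\lim_j \limsup_n \prob(\rk_{\ell^\infty} A_x \text{ even}) = 1/2$, and combining with the first part yields $\prob(\rk = 0) = \prob(\rk = 1) = 1/2$. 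The same argument works verbatim with $\liminf$ in place of $\limsup$.

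The substantive observation in the first part is that the $\ell$-Selmer moment alone would only yield $\prob(\dim \sel_\ell \geq 2) \leq (\ell+1)/\ell^2$, which does not vanish, so the crucial leverage comes from accessing higher-level Selmer moments uniformly in $k$ and taking $k \to \infty$ after the $\lim_j \limsup_n$. The main obstacle is the second part: ensuring the $\ell$-parity statement applies in the full generality of the hypotheses of \autoref{theorem:main-finite-field}. For elliptic curves this is standard, but in the higher-dimensional abelian variety setting under the stated reduction hypotheses, one may need additional input, potentially via direct root-number computations for quadratic twists in the function field setting.
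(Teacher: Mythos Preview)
Your proposal is correct and follows essentially the same two-step strategy as the paper: bound the probability of $\ell^\infty$ Selmer rank $\geq 2$ via Markov applied to the $\ell^k$-Selmer moments for growing $k$ (this is exactly the Bhargava--Shankar/Poonen--Rains argument the paper cites), and deduce parity equidistribution from \autoref{theorem:main-finite-field} at $\nu=\ell$ together with the congruence $\rk_{\ell^\infty} A_x \equiv \dim_{\F_\ell}\sel_\ell(A_x) \pmod 2$.

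Two clarifications. First, your tameness justification for $A[\ell^k]$ is not quite right as stated: the automorphism group $\GL_{2r}(\Z/\ell^k\Z)$ is not an $\ell$-group, so $\ell\neq p$ alone does not force wild inertia to act trivially. The correct argument is that tameness of $A[\ell]$ is already among the hypotheses, and then the image of wild inertia in $\GL_{2r}(\Z/\ell^k\Z)$ lies in the kernel of reduction mod $\ell$, which \emph{is} an $\ell$-group, forcing it to be trivial. Second, your concern about the parity congruence in the general abelian-variety case is resolved in the paper by \autoref{lemma:sha-almost-square}: the Cassels--Tate pairing makes the non-divisible part of $\Sha[\ell^\infty]$ a square, and the Trihan--Yasuda $\ell$-parity result over function fields handles the divisible part, yielding $\sel_\nu(A_x)\in\mathcal N^0\coprod\mathcal N^1$ and hence the needed congruence without further input.
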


\autoref{theorem:main-minimalist} is proven in
\autoref{subsubsection:proof-main-minimalist}.

\begin{remark}[Versions of \autoref{theorem:main-minimalist} for algebraic and
		analytic rank]
	\label{remark:rank-versions}
	The $\ell^\infty$-Selmer rank is conjecturally independent of $\ell$ and equal to the
analytic rank and algebraic rank.
Since the Selmer rank is an upper bound for the algebraic rank, we can
immediately deduce from 
\autoref{theorem:main-minimalist} that the algebraic rank is at most $1$ with
probability $1$, as $j \to \infty$.
We can also deduce from the parity conjecture 
\cite{trihanY:the-ell-parity-conjecture}
that the parity of the analytic rank approaches equidistribution as $j \to \infty$.
If we knew that the parity of the algebraic rank approached equidistribution as $j \to
\infty$, we could prove a version of the minimalist conjecture above for
algebraic rank.
Similarly, if we knew
the analytic rank is at most $1$ with probability
$1$ as $j \to \infty$, we could deduce a version of the minimalist conjecture
for analytic rank, and also use this and known relations between analytic and
algebraic rank to deduce a version of the minimalist conjecture for algebraic
rank.
\end{remark}
%

\subsection{Overview of the proof}
\label{subsection:proof-outline}

The method of the proof has similar broad strokes to that of
\cite{EllenbergVW:cohenLenstra}. See also
\cite{randal-williams:homology-of-hurwitz-spaces} for a summary of this method.
The loose idea is to construct moduli spaces parameterizing objects associated
to the Selmer groups we want
to count.
We then count $\mathbb F_q$-points on these moduli spaces using the
Grothendieck-Lefschetz trace formula and Deligne's bounds, which relates these point counts to the
cohomology of these moduli spaces.
We bound the higher homology groups using a homological stability theorem, and
control the $0$th homology group via a big monodromy result.
Altogether, this gives us enough control on the point counts to estimate the
moments. Finally, we show that these moments determine the distribution of
Selmer groups, and that the resulting distribution agrees with the predicted one.

Nearly every aspect of this strategy turns out to be trickier in the context of
the BKLPR heuristics than it was in the context of the Cohen-Lenstra heuristics.
We next outline the additional difficulties.

\subsection{Summary of the main innovations}
\label{subsection:difficulties-of-the-proof}

\subsubsection{The connection between Selmer groups and Hurwitz stacks}
One of the main insights in this paper is 
that there is a close relation between Selmer groups and
Hurwitz stacks.
It has been well known for many years that the moduli spaces parameterizing
objects in the Cohen-Lenstra heuristics were Hurwitz stacks related to dihedral
group covers. 
However, it seems not to have been previously noticed that the moduli spaces appearing in the BKLPR heuristics are also 
closely related to Hurwitz stacks.
Indeed, in
\autoref{proposition:selmer-to-hurwitz}, we relate stacks parameterizing
$\nu$-Selmer group elements to Hurwitz stacks for the group $\on{ASp}_{2r}(\mathbb
Z/\nu \mathbb Z)$, where $\on{ASp}$ denotes the affine symplectic group, see
\autoref{definition:asp}.

\subsubsection{Homological stability over higher genus punctured curves}

A second difficulty is that
the above Hurwitz stacks do not occur over compact topological surfaces, but instead occur over
punctured surfaces, where the punctures occur at the places of bad reduction of
the abelian scheme. This necessitates that we prove a generalization
of the topological results of \cite{EllenbergVW:cohenLenstra} (which only apply to
Hurwitz stacks over the disc) to Hurwitz stacks over more general Riemann
surfaces which may be punctured and may have positive genus.

The reader familiar with \cite{EllenbergVW:cohenLenstra} may note the absence of something that plays a crucial role in that paper: a conjugacy class $c$ in $G = \on{ASp}_{2r}(\mathbb
Z/\nu \mathbb Z)$ which generates the whole group and which satisfies the
``non-splitting" condition necessary for that paper.  In fact, that role is
played in the present work by the conjugacy class in $G$ consisting of elements
whose image in the symplectic group is $-\id.$  This conjugacy class does not,
of course, generate the whole of $G$, which places us outside the context in
which the methods of \cite{EllenbergVW:cohenLenstra} directly apply.  More
precisely, a branched $G$-cover of the disc, all of whose monodromy lies in
$c$, is automatically disconnected, consisting of components whose monodromy
group is actually the smaller group generated by $c$.  But, in the generality of
the present paper, our Hurwitz spaces will be covers of a Riemann surface with
$(f+1)+n$ punctures, where the monodromies of the relevant $\sp_{2g}(\mathbb
Z/\nu \mathbb Z)$ representation around the first $f+1$ punctures and
around loops forming a basis for the homology of the surface are specified in
advance, while only the monodromies around the last $n$ punctures are required
to lie in the conjugacy class $c$.  Such a cover of a Riemann surface can
certainly be connected, i.e., have full monodromy group $G$. As we will see, it is examples precisely of this kind that will arise when we analyze the moduli stacks attached to variation of Selmer groups in quadratic twist families.

\subsubsection{Homological stability for spaces more exotic than Hurwitz stacks}
Once one deals with the above issues, one might then expect it to be possible to follow the strategy of 
\cite{EllenbergVW:cohenLenstra} to control the cohomology of these spaces, use
this to control the finite field point counts via the Grothendieck-Lefschetz
trace formula and Deligne's bounds, and finally deduce the relevant BKLPR
conjectures. However, this approach would, at best, only compute the moments of
the BKLPR distribution. It turns out that this distribution is not completely determined by its
moments, see \cite[Example 1.12]{fengLR:geometric-distribution-of-selmer-groups}.
In particular, if one restricts to elliptic curves whose Selmer rank is even, the
resulting distribution has the same moments as the full BKLPR distribution.
Therefore, at the very least, in order to show these heuristics hold, we need a
way of separating out abelian varieties of even and odd Selmer rank.
Fortunately, it turns out that there is a certain double cover of the stack of
quadratic twists which governs whether the corresponding abelian variety has
even or odd Selmer rank.  This double cover is not a Hurwitz stack; nonetheless, the new homological stability results proved in this paper are general enough to apply to such covers. In this way, we prove homological stability
results not just for Hurwitz stacks over punctured Riemann surfaces, but a more
general class of covers of configuration space on these Riemann surfaces.
We note that the degrees of our covers of configuration space grow exponentially
in the parameter $n$. A similar framework which inspired ours, 
applying to a very different different class of covers whose degrees grow
polynomially in the parameter, was developed in
\cite{randal-williamsW:homological-stability-for-automorphism-groups}.

\subsubsection{Proving the stabilization maps respect the Frobenius action}

One step of this paper whose analog does not appear in
\cite{EllenbergVW:cohenLenstra} is that we prove that the limit in $n$ exists in 
\eqref{equation:moment-limit-residue}.
To show this limit exists, the key point is to show that the homological
stabilization maps appearing in our main results respect the action of
Frobenius, and hence the traces of Frobenius on these cohomology groups are
compatible.  This is carried out in \autoref{section:frobenius-equivariance}.

A natural explanation for the equivariance would be that the stabilization map
we exhibit topologically is a lift to characteristic $0$ of a
map of schemes over $\mathbb F_q$; this appears to be too much to hope for.
Instead, we show the map is induced by a map of log schemes over $\mathbb F_q$,
which is enough to obtain Frobenius equivariance of the stabilization map.
This idea was inspired by a similar use of log schemes in
\cite[\S8]{bergstromDPW:hyperelliptic-curves-the-scanning-map}.
In that paper, log structures were used not for the purpose of showing
stabilization maps are equivariant, but instead for the purpose of showing that
the cohomology of the relevant spaces are of Tate type. 

In our setting,
significant technical care and new ideas are needed to properly construct the
stabilization maps and show they are equivariant. First, we need to carefully
construct partial compactifications of Selmer spaces. Second, we must endow 
these spaces with the correct
additional data and log structure so that the resulting map of log stacks
matches the topological stabilization map over $\mathbb C$. 

\subsubsection{Proving the stabilization maps have degree $2$}

Even once the Frobenius equivariance described above was in place, in order to
show the limit in
\eqref{equation:moment-limit-residue}
exists over all even
$n$, we needed to construct a stabilization map of degree $2$. 
If we only had a degree $d$ stabilization map,
we would only be able to show the
limit exists along $n$ lying a given residue class modulo $d$.
Previously, as far as we are aware, the general belief of the community seems to
have been that the degree of the
stabilization map was rather large.
However, by using recent work of Wood, we are able to show in \autoref{subsection:u-degree} that there is a stabilization map of
degree $2$, and so the limit over all even $n$ exists on the nose.

\subsubsection{Working with symplectically self-dual sheaves}
Another crucial point is that throughout we work not with $\nu$-torsion in an
abelian scheme, but in the more general setting of symplectically self-dual
sheaves. This idea is also prominent in many works of Katz, such as
\cite{katz:twisted-l-functions-and-monodromy}. Working in this level of
generality is crucial for us, as our topological results only apply in
characteristic $0$, so if we start with an abelian scheme in positive
characteristic, we need some way of lifting it to characteristic $0$ in a way
compatible with our hypotheses. While we are quite unsure whether this is
possible for
abelian schemes, it is not too difficult for symplectically self-dual sheaves.

We now explain why we are able to get away with working with symplectically
self-dual sheaves, in place of abelian schemes.
Under the assumptions of 
\autoref{theorem:main-finite-field}, the $\sel_\nu(A)$ only depends on $A[\nu]$.
Namely, if $C, A,\nu,$ and $q$ are as in
\autoref{theorem:main-finite-field},
$\sel_\nu(A) \simeq H^1(C, \mathscr A[\nu])$, for $\mathscr A$ the N\'eron model
of $A$ over $C$. 
(A similar isomorphism holds in the number field case, see
\cite[Proposition
5.4(c)]{cesnavicius:selmer-groups-as-flat-cohomology-groups}.)
Hence, $\sel_\nu(A)$ is determined just from the group scheme
$A[\nu]$ because $\mathscr A[\nu] = j_* A[\nu]$ for $j: U \to C$ the open
inclusion. Therefore, we are free to forget that we started with an abelian scheme, so
long as we remember this symplectically self-dual \'etale sheaf $A[\nu]$.

\subsubsection{Difficulties related to $g > 0$, BKLPR moments, and monodromy}
There are several further subtleties, and we now briefly summarize a couple of
them.
First, unlike the case of genus $0$, in higher genus, there may be many
quadratic twists with the same ramification divisor.
Second, 
for $\nu$ a general composite integer,
the moments of the BKLPR
distribution do not seem to be computed in the existing literature.
We note that when $\nu$ is prime, and more generally
when $H$ is a free $\mathbb Z/\nu \mathbb Z$-module, these moments were computed
in 
\cite[Theorem
5.10]{bhargavaKLPR:modeling-the-distribution-of-ranks-selmer-groups}.
We compute the moments of the BKLPR distribution for general composite $\nu$ in
\autoref{proposition:bklpr-moments}.

Third, we need to compute the relevant
monodromy groups. This too requires additional technical work, where we draw
great inspiration from works of Katz
\cite{katz:twisted-l-functions-and-monodromy} and Hall
\cite{hall:bigMonodromySympletic}, relying on the theory of middle
convolution.

\subsection{Discussion of equidistribution of parity of rank}

We next include a number of remarks relating to our main results and
equidistribution of the parity of rank.
The following example gives a case where the parity of rank is not
equidistributed, and shows that some version of our assumption
\eqref{equation:multiplicative-hypothesis}
is necessary.

\begin{remark}
\label{remark:katz-legendre-so}
Some version of the assumption \eqref{equation:multiplicative-hypothesis} in \autoref{theorem:main-finite-field}
is necessary.
Indeed, without \eqref{equation:multiplicative-hypothesis}, it is possible that every quadratic twist
corresponding to a point of
$\qtwist n U {\mathbb F_q}(\mathbb F_{q^j})$ has Selmer rank of a fixed parity. Hence, quadratic twists of such a
curve do not satisfy the minimalist conjecture.
A specific example is given by the elliptic curve $y^2 =
\lambda(\lambda-1)x(x-1)(x-\lambda)$,  over $\mathbb F_q(\lambda)$, where $q$ is
a prime which is $1 \bmod 4$.
This is a variant of the Legendre family.
Indeed, in \cite[8.6.7]{katz:twisted-l-functions-and-monodromy},
it is shown the relevant arithmetic monodromy group we define in
\autoref{definition:monodromy} is contained in the special orthogonal group.
(We can also see the geometric monodromy is contained in the special orthogonal group using
the methods of this paper, since one can use
\autoref{lemma:action-on-rank-cover} to show all generators of the
fundamental group of configuration space map to the special orthogonal group.)
In this case, the proof of \autoref{lemma:image-of-rank-cover-has-selmer-parity}
shows that for all but finitely many primes $\ell$, the $\ell$-Selmer group of
every quadratic twist unramified over the places of bad reduction has
even Selmer rank.
Note here that assumption \eqref{equation:multiplicative-hypothesis} of \autoref{theorem:main-finite-field} is not
satisfied as each of the three places of bad reduction of the elliptic curve
$y^2 = \lambda(\lambda-1)x(x-1)(x-\lambda)$,
given by $\lambda = 0,\lambda=1,$ and $\lambda=\infty$, has 
additive reduction.
For some further related examples, also see
\cite{rizzo:on-the-variation,
	rizzo:average-root-numbers,
	rizzo:average-root-numbers-nonconstant}.
\end{remark}


\begin{remark}
	\label{remark:}
	Under the assumptions of \autoref{theorem:main-finite-field}, the parity of
	the rank of Selmer groups in the quadratic twist families we consider
	is equidistributed.
	The proportion of the time the rank takes a given parity in the number field setting has been the object
	of much study, see for example
	\cite[Conjecture 7.12]{klagsburnMR:disparity-in-selmer-ranks}.
	We believe it would be quite interesting to understand better understand the
	relation between the number field and function field perspectives on
	this question.
\end{remark}

In the example considered in \autoref{remark:katz-legendre-so}, for sufficiently large $q$, the
proportion of quadratic twists with Selmer rank $\geq 2$ becomes arbitrarily close to
$0$.
We wonder whether this continues to hold even in the absence of
\eqref{equation:multiplicative-hypothesis}:
\begin{question}
\label{question:}
Suppose $A$ is any abelian scheme over $U$, for $U$ an affine curve over
$\mathbb F_{q}$. What conditions do we need on $A$ so that the proportion of quadratic twists of $A
\times_{\spec \mathbb F_q} \mathbb F_{q^j}$
with (Selmer) rank $\geq 2$ tend to $0$ as $j$ grows, even in the absence of
\eqref{equation:multiplicative-hypothesis}?
\end{question}
We conjecture that an irreducibility
condition on the Galois representation associated to $A$ will suffice.
More specifically make the following
conjecture, many cases of which are suggested by \autoref{theorem:main-minimalist}.
We say a quadratic twist is unramified at a real place if the corresponding
double cover has two real places over that real place, and is 
ramified at a real place if the double has a complex
place over that real place.
\begin{conjecture}
	\label{conjecture:twist-proportion}
Let $K$ be any global field of characteristic not $2$ and $A$ any abelian
variety of dimension $r$ over $K$.
\begin{equation}
	\label{equation:big-monodromy-assumption}
\begin{aligned}
	\text{
		Suppose that for some prime $\ell$, $\ell \neq \chr(K)$,
	the
	identity component of the Zariski}
	\\
	\text{closure of
		$\on{im}(\on{Gal}(\overline K/K) \to \on{GL}(H^1(A_{\overline K},
		\overline{\mathbb Q}_\ell(1))))$ acts
	irreducibly on $H^1(A_{\overline K}, \overline{\mathbb Q}_\ell(1))$.}
\end{aligned}
\end{equation}
Specify divisors $D_{\on{unram}}, D_{\on{ram}}$ whose union contains all places of bad
reduction of $A$ and all real places.
The set of quadratic twists of $A$ unramified over $D_{\on{unram}}$ and ramified over
$D_{\on{ram}}$
have ranks distributed according to one of the following three possibilities:
\begin{enumerate}
	\item $0\%$ rank $>1$, $50\%$ rank $0$, $50\%$ rank $1$,
	\item $0\%$ rank $>1$, $100\%$ rank $0$, $0\%$ rank $1$,
	\item $0\%$ rank $>1$, $0\%$ rank $0$, $100\%$ rank $1$.
\end{enumerate}
\end{conjecture}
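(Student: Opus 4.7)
The plan is to leverage the Hurwitz-space methods developed in the body of this paper, replacing the multiplicative reduction hypothesis \eqref{equation:multiplicative-hypothesis} by the Galois-theoretic irreducibility hypothesis \eqref{equation:big-monodromy-assumption}. In the paper, \eqref{equation:multiplicative-hypothesis} is used essentially to produce a transvection in the monodromy of the relevant cover of $\twoconf n U {\mathbb F_q}$, thereby forcing that monodromy to be the full symplectic group and landing us in case (1). Without \eqref{equation:multiplicative-hypothesis}, the geometric monodromy on $A[\ell]$ still preserves the Weil pairing and therefore lies in either the symplectic or the orthogonal group on $A[\ell]$; the irreducibility hypothesis \eqref{equation:big-monodromy-assumption} is meant to exclude smaller reducible or imprimitive images.

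First, I would enlarge the set-up of $\twoconf n U {\mathbb F_q}$ to allow twists with prescribed ramification pattern along $D_{\on{unram}} \cup D_{\on{ram}}$, including the case where $D_{\on{ram}}$ contains infinite (in the number-field case, real) places. The homological stability input of the paper should continue to apply to these enriched Hurwitz spaces, since its proof is local at the prescribed inertia types and insensitive to which specific finite places appear in the ramification divisor.

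Next, I would analyze the geometric monodromy $\rho_\ell \colon \pi_1^{\et}(U_{\overline{\mathbb F_q}}) \to \GL(A[\ell])$. Using \eqref{equation:big-monodromy-assumption} together with classification results for irreducible subgroups of classical groups (in the style of Larsen, and of Guralnick--Hall--Magaard), the identity component of the Zariski closure of the $\ell$-adic image should be forced to be one of exactly three connected classical groups: $\Sp$, split $\so$, or non-split $\so$. These three options correspond directly to the three conclusions of the conjecture. In the symplectic case, the existing moment calculation of \autoref{theorem:main-moments} adapts and yields the BKLPR predictions, so the parity of the Selmer rank is equidistributed and we land in case (1). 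In the split orthogonal case, every quadratic twist has even Selmer rank, and the orthogonal analog of BKLPR (maximal isotropic subspaces in an even orthogonal space) forces rank $0$ with probability $1$, giving case (2). In the non-split orthogonal case, every twist has odd Selmer rank, and the corresponding orthogonal distribution forces rank $1$ with probability $1$, giving case (3). In each orthogonal case, the fact that rank $\geq 2$ occurs with probability $0$ follows from the large-$\nu$ moment estimate combined with the parity constraint.

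The main obstacle will be the group-theoretic step: deducing from the $\ell$-adic irreducibility in \eqref{equation:big-monodromy-assumption} that the mod-$\ell$ monodromy image is large enough to exclude tensor-product and imprimitive subgroup containments that are a priori compatible with irreducibility of the $\ell$-adic representation. One will likely need to supplement \eqref{equation:big-monodromy-assumption} with a residual irreducibility hypothesis at $\ell$, or to upgrade the Hurwitz-space moment calculation to an $\ell^\infty$ statement so that one may work directly with the $\ell$-adic image. The number-field analog of the conjecture appears essentially out of reach by these techniques, since the function-field Hurwitz-space input has no direct substitute over $\Spec \Z$.
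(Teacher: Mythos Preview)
The statement you are addressing is a \emph{conjecture} in the paper, not a theorem; the paper offers no proof, only motivation in the remarks that follow it. So there is no ``paper's own proof'' to compare against, and your proposal is better read as a research plan than a proof. You yourself flag this at the end: the group-theoretic step is an obstacle, and the number-field case is out of reach. That is an honest assessment, but it means what you have written is not a proof.

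More substantively, the proposal conflates two different monodromy representations, and this confusion undermines the trichotomy you set up. The representation $\rho_\ell \colon \pi_1^{\et}(U_{\overline{\mathbb F_q}}) \to \GL(A[\ell])$ on the curve always lands in the \emph{symplectic} group $\Sp_{2r}(\mathbb Z/\ell\mathbb Z)$ via the Weil pairing; there is no ``split $\so$ versus non-split $\so$'' dichotomy at this level. The representation that governs the rank distribution is the one denoted $\rho_{\mathscr F^n_b}$ in \autoref{definition:monodromy}: it is the monodromy of the Selmer sheaf over the space of quadratic twists, and by \autoref{remark:orthogonal-containment} it lands in an \emph{orthogonal} group $\o(Q_{\mathscr F^n_b})$. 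The hypothesis \eqref{equation:multiplicative-hypothesis} is used in \autoref{theorem:big-monodromy-mod-ell} to produce a \emph{reflection} (not a transvection) in this orthogonal monodromy, which forces the image to be large and in particular not contained in $\so$. The three cases in the conjecture correspond, as the remark after the conjecture explains, to whether this orthogonal monodromy surjects onto the Dickson invariant (case (1)) or is trapped inside $\so$ (cases (2) and (3), distinguished by the parity of $\rk V_{\mathscr F^n_b}$; cf.\ \autoref{lemma:1-eigenvalue-parity} and \autoref{remark:katz-legendre-so}). Your ``$\Sp$ / split $\so$ / non-split $\so$'' trichotomy for the curve monodromy is not the right object, and does not line up with cases (1)--(3) in the way you claim.

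The actual content that would be needed, and that the paper does not supply, is a replacement for \autoref{theorem:big-monodromy-mod-ell} showing that \eqref{equation:big-monodromy-assumption} alone forces $\im \rho_{\mathscr F^n_b} \supset \Omega(Q_{\mathscr F^n_b})$ for large $n$. The paper's proof goes through Hall's criterion and middle convolution, which genuinely uses a point of drop $1$; the remark after the conjecture points instead to Katz's \cite[Proposition 5.4.3]{katz:twisted-l-functions-and-monodromy} as the likely route, but this is for $\overline{\mathbb Q}_\ell$-sheaves and would need to be transported to the mod-$\ell$ setting. That passage is exactly the obstacle you identify, and it is a real one.
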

We next explain some of our motivation for the above conjecture, especially the
hypothesis
\eqref{equation:big-monodromy-assumption}.

\begin{remark}
	\label{remark:}
	Note that some sort of assumption of the flavor 
	of \eqref{equation:big-monodromy-assumption}
	is necessary in
	\autoref{conjecture:twist-proportion}, since if $A = E^r$, for $r > 1$ and
$E$ a generic elliptic curve, we would expect the rank to be $0$ half the
time and $r$ half the time.

The reason that we believe \eqref{equation:big-monodromy-assumption}
should be sufficient comes from the
big monodromy result of Katz,
\cite[Proposition 5.4.3]{katz:twisted-l-functions-and-monodromy}.
This essentially says that if, in the function field setting,
for $A$ an abelian scheme over $U$ and geometric point $\spec \overline{\mathbb
F}_q \simeq \overline{x} \to U_{\overline {\mathbb F}_q}$,
$H^1(A_{\overline{\mathbb
F}_q} \times_{U_{\overline{\mathbb
F}_q}}  {\overline x}, \overline{\mathbb Q}_\ell(1))$
corresponds to an irreducible representation 
of $\pi_1(U_{\overline{\mathbb F}_q}, \overline x)$
for some $\ell \neq \chr(K)$,
a certain related monodromy group should be big, i.e., contain the special
orthogonal group. 
It seems to us this should imply that the 
geometric
monodromy representation considered in \autoref{definition:monodromy} for $\nu =
\ell$ has index
at most $4$ in the orthogonal group $\bmod \ell$. 
We conjecture that in this case the BKLPR
conjectures hold, with the possible caveat that the rank may have a fixed parity
if the monodromy group is contained in the special orthogonal group.
It is not immediately clear how to best generalize the condition 
that 
$H^1(A_{\overline{\mathbb
F}_q} \times_{U_{\overline{\mathbb
F}_q}}  {\overline x}, \overline{\mathbb Q}_\ell(1))$
is irreducible
to the number field setting, but it seems that 
\eqref{equation:big-monodromy-assumption}
should imply it, and so
\eqref{equation:big-monodromy-assumption}
seems a reasonable sufficient criterion.
\end{remark}

\begin{remark}
	\label{remark:ramified-quadratic-twists}
	Throughout this paper, we work with the space of quadratic twists
	parameterizing double covers whose ramification locus does not
	intersect
	the discriminant locus. 
	As a variant, we could work
	with the space of finite double covers whose ramification locus contains a specified divisor $R$
	(where $R$ may intersect the discriminant locus) but the ramification locus of
	the cover does not meet the discriminant locus outside of $R$.

	Assuming there is a place of multiplicative reduction with toric part of
	codimension $1$ outside of $R$,
	and replacing the space of quadratic twists in our main theorems with
	the above 
	variant, we believe the conclusions of
	\autoref{theorem:main-finite-field},
	\autoref{theorem:main-moments},
	and
	\autoref{theorem:main-minimalist}
	should still hold.

	In fact, we believe one can make a more precise version of \autoref{conjecture:twist-proportion}
	that predicts which of the three cases we are in based on local data
	associated to the abelian variety, similarly to the case of elliptic
	curves which is closely related to \cite[Proposition
	7.9]{klagsburnMR:disparity-in-selmer-ranks}. 
	We believe this generalization
	would lead to a version of
	\cite[Conjecture 7.12]{klagsburnMR:disparity-in-selmer-ranks} for global
	arbitrary fields.

	It would be quite interesting to work the above claims out precisely.
\end{remark}

\subsection{Discussion on the presence of limsup and liminf}
We conclude our remarks with comments pertaining to the presence of the $\limsup$ and
$\liminf$.

\begin{remark}
\label{remark:}
Previously, it was not even known that the $\limsup$ and $\liminf$ appearing in
\eqref{equation:moment-limsup} of \autoref{theorem:main-moments} even existed,
nor that the limit in $n$ appearing in \eqref{equation:moment-limit-residue}
existed, let alone what their limiting
value as $j \to \infty$ was. The fact that these limits exist is an important part of these
theorems.
We also note that if one only cares about verifying the existence of
the $\limsup$ and $\liminf$,
without computing the value after taking a further
limit in $j$, 
one does not need the full force of
our big monodromy results culminating in
\autoref{proposition:geometric-moments}, which are what enables us to compute these values precisely.
Instead, one may use
\autoref{theorem:evw-stability} and
	\autoref{lemma:finite-generation-mp} to obtain an ineffective bound on
	the relevant number of irreducible components.
\end{remark}


\begin{remark}
The reason we cannot propagate this existence of the limit in $n$ of
\eqref{equation:moment-limit-residue} to our other main results such
as \autoref{theorem:main-finite-field} (which only has a $\limsup$ and a
$\liminf$) is that we do not know how to rule out
the possibility that the moments grow too quickly to determine a distribution
for any fixed value of $q$.
%

Even more ambitiously, one might want to know what these limits in $n$ actually are, and in
particular whether they agree with the BKLPR heuristics.  For this, one would
likely want to know not only that the \'etale cohomology groups stabilize as
Frobenius modules up to Tate twist, but what Frobenius module they stabilize to. 
For the moment, this appears to be a substantially harder problem.
See also 
\autoref{remark:mapping-class-group-action-on-cohomology} and
\autoref{remark:0-cohomology}.
\end{remark}

\subsection{Past work}
\label{subsection:past-work}

As mentioned above, two guiding sets of conjectures in number theory are the
Cohen-Lenstra heuristics and the BKLPR heuristics.
Focusing on the latter over number fields, very little is known.
Over $\mathbb Q$, 
work by \cite{Heath:theSizeOf-i,Heath:theSizeOf-ii,swinnerton-dyer:the-effect-of-twisting,kane:on-the-ranks-of-the-2-selmer-groups-of-twists}
led to a determination of the distribution of $2$-Selmer groups in quadratic
twist families of elliptic curves. Building on this, Smith described the
distribution of
$2^\infty$-Selmer groups of elliptic curves over $\mathbb Q$ in
\cite[Theorem 1.5]{smith:the-distribution-of-selmer-groups-1}.  
Smith is able to use this to deduce the minimalist conjecture in many
quadratic twist families over
$\mathbb Q$ \cite[Theorem
1.2]{smith:the-distribution-of-selmer-groups-1}. The reason for this
deduction is that Smith's work, like ours, but unlike the previous papers cited in this paragraph, provides distributional information about $\nu$-Selmer groups with $\nu$ arbitrarily large. 
These results for quadratic twist families over number fields nearly exclusively deal with
$2$-power Selmer groups.  Our results are in some sense disjoint, applying only to 
$\nu$-Selmer groups for $\nu$ odd.

There is also some work toward understanding $3$-isogeny Selmer groups in
quadratic twist families (\cite{bhargavaKLS:3-isogeny}.)
However, the above results are only for $3$-Selmer groups, and only when the
pertinent curves possess unexpected isogenies.
As far as we are aware, our work provides the first results toward describing the distribution of odd
order Selmer groups in quadratic twist families when there are no unexpected
isogenies.

There is also a growing literature about variation of Selmer groups in the universal family parameterizing all elliptic curves.  For this family, Bhargava and Shankar
computed the average size of the $\nu$-Selmer group for $\nu \leq 5$
\cite{bhargava-shankar:binary-quartic-forms-having-bounded-invariants, bhargavaS:ternary, bhargavaS:average-4-selmer, bhargavaS:average-5-selmer},
and Bhargava-Shankar-Swaminathan computed the second moment of $2$-Selmer
groups \cite{bhargavaSS:the-second-moment}.

Over function fields, much more is known if one permits taking a limit in
the finite field order $q$ {\em before} any limit in log-height is taken. (Here, the log-height of a quadratic twist refers to the degree of its ramification locus.)
In the context of the Cohen-Lenstra heuristics, \cite{Achter:cohenQuadratic}
established a large $q$ limit version of the Cohen-Lenstra heuristics, where he
took a $q$ limit before letting the log-height grow.

In the context of the BKLPR heuristics, some results were also known when one
takes a large $q$ limit prior to large log-height limit:
The average size of certain Selmer
groups in quadratic twist families were computed in
\cite{parkW:average-selmer-rank-in-quadratic-twist-families}.
In the context of the universal family,
\cite{landesman:geometric-average-selmer} computed the average size of Selmer
groups, and the full BKLPR distribution was computed in
\cite{fengLR:geometric-distribution-of-selmer-groups}.

Closer to the present work are results in which one takes a limit in log-height first, with $q$ fixed, and only then lets $q$ increase.
De Jong \cite{de-jong:counting-elliptic-surfaces-over-finite-fields}
computed the average size of $3$-Selmer groups over $\mathbb F_q(t)$ in the
universal family of elliptic curves.
H{\`{\^o}}, L\^e H\`ung, and Ng{\^o}
\cite{ho-lehung-ngo:average-size-of-2-selmber-groups}
compute the average size of $2$-Selmer groups over function fields for the
universal family, while
\cite{achenjang:the-average-size-of-2-selmer}
carries out a similar program in all characteristics, including characteristic
$2$.
We note that these results both have the same flavor as our main results, in that they only arrive at 
the predicted value after first taking a large log-height limit, and then taking a
large $q$ limit.
Another more recent result of Thorne
\cite{thorne:on-the-average-number-of-2-selmer}
calculates the average size of $2$-Selmer groups in a family of elliptic curves
with $2$ marked points over genus $0$ function fields, and, interestingly, this result does not require taking
a large $q$ limit at the end.
We also note that 
\cite[Theorem 2.2.5]{ho-lehung-ngo:average-size-of-2-selmber-groups}
does not require taking a large $q$ limit if one restricts to elliptic curves
with squarefree discriminant.

Since the work \cite{EllenbergVW:cohenLenstra} proved a homological
stability result for Hurwitz stacks, there has also been further activity in
this topological direction.
The homological stability results of 
\cite{EllenbergVW:cohenLenstra} have been employed in a number of arithmetic papers, such
as in
\cite{lipnowskiST:cohen-lenstra-roots-of-unity},
\cite{lipnowskyT:cohen-lenstra-for-etale-group-schemes}, and
\cite{ellenbergLS:nonvanishing-of-hyperelliptic-zeta-functions}.
However, few papers have further developed the
homological stability techniques.
Some notable examples where these techniques were developed further include
\cite{ellenbergTW:fox-neuwirth-fuks-cells},
proving a version of Malle's conjecture,
a polynomial version of homological stability in
\cite{bianchiM:polynomial-stability},
a verification that stability in \cite{EllenbergVW:cohenLenstra}
holds with period $1$ instead of with period $\deg U$ in 
\cite{davisS:the-hilbert-polynomial-of-quandles},
and a bound on the ranks of homology groups for Hurwitz spaces
associated to punctured genus $0$ surfaces in
\cite{hoang:fox-neuwirth-cells-resultant}.
Finally, \cite{bergstromDPW:hyperelliptic-curves-the-scanning-map} and
\cite{millerPPR:uniform-twisted-homological-stability}
used
homological stability techniques to approach a 
conjecture on moments of quadratic L-functions, and were able to not only show
the relevant cohomology groups stabilize, but even compute their limiting values.

\subsection{Outline}

The structure of the paper is as follows.
We suggest the reader consult \autoref{figure:proof-schematic} for a schematic
depiction of the main ingredients in the proof.
In \autoref{section:background}
we review background on orthogonal groups, the BKLPR heuristics, and Hurwitz stacks.
Next, we continue to the topological part of our paper.
In \autoref{section:arc-complex}, we set up a general notion of coefficient systems (which include 
Hurwitz stacks over the complex numbers as a special case)
to which the arc complex spectral sequence applies.
This is the context in which we  prove our main homological stability results in
\autoref{section:homological-stability}.
We next continue to the more algebraic part of the paper, beginning with
\autoref{section:selmer-space},
where we construct Selmer stacks which parameterize Selmer elements on quadratic
twists of our abelian scheme.
In \autoref{section:hurwitz-and-selmer}, we show that the above constructed
Selmer stacks can be identified with Hurwitz stacks over the complex numbers.
In order to compute the $0$th homology of these spaces, we prove a big monodromy
result in 
\autoref{section:big-monodromy}.
We verify our homological stability results apply to these Selmer stacks, as
well as to certain double covers, which control the parity of the $\ell^\infty$-Selmer rank of the
quadratic twists of our abelian scheme, in 
\autoref{section:rank-cover}.
Having controlled the cohomologies of the spaces we care about, we conclude our
main results by combining the above with some slightly more analytic
computations.
In \autoref{section:moments}, we compute the moments related to Selmer stacks,
as well as fiber products of these with the above mentioned double cover.
In \autoref{section:determining-distribution}, we show these moments determine the
distribution, obtaining our main result, \autoref{theorem:main-finite-field}.
In 
\autoref{section:frobenius-equivariance}, we use logarithmic
geometry to prove that the
stabilization maps on cohomology are equivariant for the action of Frobenius, up
to twist, which allows us to show that a limit as $n \to \infty$ exists in
\eqref{equation:moment-limit-residue}, instead of only knowing that the $\liminf$ and
$\limsup$ exist as in \eqref{equation:moment-limsup}.
Finally, in \autoref{section:nc}, we use logarithmic geometry to prove that configuration spaces and Hurwitz spaces
have normal crossings compactifications. This is a crucial ingredient for us to
be able to transfer cohomology between the complex numbers and finite fields.

\begin{figure}
\large
	\centering
\adjustbox{scale=.75,center}{
\tikz[overlay]{
	\filldraw[fill=yellow!50,draw=red!50!yellow] (0,-4.5) rectangle
	(12.2,-.5);
\node[draw] at (2,-1) {Topology};
	\filldraw[fill=green!50,draw=red!50!yellow] (0,1.5) rectangle (16,4.5);
	\node[draw] at (2,3.8) {Big monodromy};
	\filldraw[fill=blue!50,draw=red!50!yellow] (16.5,1.5) rectangle (23,3.5);
	\node[draw] at (21,2.5) {Probability};
	\filldraw[fill=red!50,draw=red!50!yellow] (12.6,-.9) rectangle
	(19.2,-3);
	\node[draw] at (15,-1.5) {Logarithmic Geometry};
	\filldraw[fill=orange!50,draw=red!50!yellow] (13.1,-3.2) rectangle
	(22,-4.5);
	\node[draw] at (19,-3.9) {Combinatorial Group Theory};
}

   \begin{tikzcd}[column sep = 2.3em]
	& \text{Lem.}~\ref{lemma:selmer-identification} \ar{r} & 
  \text{Lem.}~\ref{lemma:determinant-monodromy} \ar{rd} &
  \text{Lem.}~\ref{lemma:kernel-and-bklpr-distributions} \ar{d} &&& \\
  \text{Prop.}~\ref{proposition:rank-description}
  \ar{r} &
  \underbrace{\text{Thm.}~\ref{theorem:big-monodromy-mod-ell}}_{\text{\cite[Middle
  convolution]{hall:bigMonodromySympletic,katz:twisted-l-functions-and-monodromy}}}
  \ar{ur} \ar{r} &
  \text{Prop.}~\ref{proposition:big-monodromy-composite} \ar{r} &
  \text{Prop.}~\ref{proposition:geometric-moments} \ar{rd} &
  \text{Prop.}~\ref{proposition:general-moments-to-distribution}\ar{rd}&&
 \text{Lem.}~\ref{lemma:lifting-to-char-0} \ar{d}
  \\
	\qquad & \text{Lem.}~\ref{lemma:torsor-description} \ar{r} &
	\text{Cor.}~\ref{corollary:selmer-to-hurwitz-moments} \ar{r} &
	\text{Lem.}~\ref{lemma:moments-cohomology} \ar{r}
	&
	\text{Thm.}~\ref{theorem:point-counting-computation} \ar{r}\ar{rrd} &
	\text{Thm.}~\ref{theorem:main-dvr} \ar{r} &
	\text{Thm.}~\ref{theorem:main-finite-field} \\
\qquad & \text{Thm.}~\ref{theorem:evw-stability} \ar{r}\ar{rdd} &
\text{Cor.}~\ref{corollary:cohomology-bound}\ar[swap]{ur}{\text{\cite{EllenbergVW:cohenLenstra}}}
	&
	&
	&&
	\text{Thm.}~\ref{theorem:main-moments}
  \\
	\text{Prop.}~\ref{proposition:arc-complex-spectral-sequence}\ar{r} &
	\text{Thm.}~\ref{theorem:1-controlled-bound} \ar{ur}\ar{dr} & 
	&
	\text{Lem.}~\ref{lemma:factor-through-trivial-log} \ar{r}
	&
	\text{Thm.}~\ref{theorem:frobenius-equivariance} \ar{uu}
	& \\
	\qquad & \text{Lem.}~\ref{lemma:degree-bound-for-km0} \ar{uur} \ar{r} &
	\text{Thm.}~\ref{theorem:central-u-implies-cohomology-bound} \ar{urr}
	&
	\text{Prop.}~\ref{proposition:degree-order} \ar{ur}
	&
	&&
 	  \end{tikzcd}
}
\caption{
A diagram depicting the structure of the proof of
the main result,
\autoref{theorem:main-finite-field}.}
\label{figure:proof-schematic}
\end{figure}
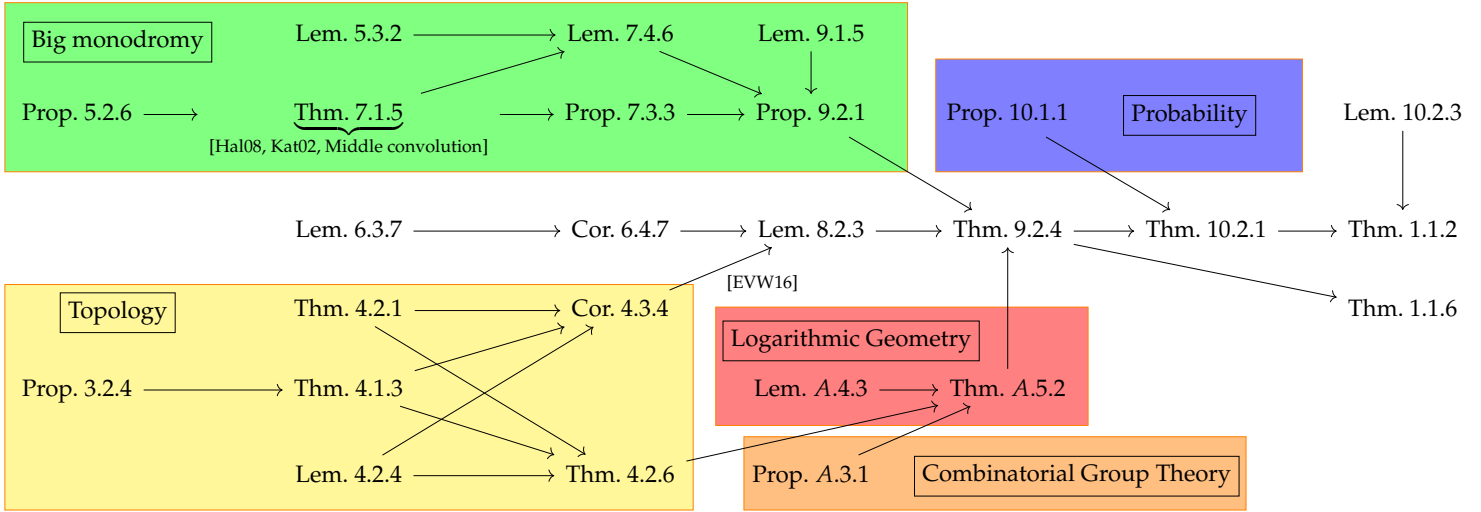

\subsection{Notation}
For the reader's convenience, in \autoref{table:notation}
we collect some notation introduced throughout the paper.
\begin{figure}
	\centering
\scalebox{0.7}{
	\begin{tabular}{|l|l|l|}
		\hline
		Notation & Description & Location defined \\\hline
		$\nu$ & Odd integer indexing the Selmer group $\sel_\nu(A)$ &
		\autoref{notation:omega}  \\ \hline
		$D_Q$ & The Dickson invariant map associated to a quadratic form
		$Q$ & \autoref{notation:omega}  \\ \hline
		$\Omega(Q)$ & The subgroup of the special orthogonal group which is the kernel of
		the spinor norm & \autoref{notation:omega} \\ \hline
		$\bklpr \nu$ & The BKLPR predicted distribution of $\nu$-Selmer
		groups & \autoref{definition:bklpr-rank-selmer}  \\ \hline
		$B$ & Base scheme & \autoref{notation:curve-notation}  \\ \hline
		$C$ & Smooth proper curve over $B$ & \autoref{notation:curve-notation}  \\ \hline
		$Z$ & Divisor in $C$ of degree $f+1$ which twists are unramified
		along & \autoref{notation:curve-notation}  \\ \hline
		$\conf n U B$ & Configuration space of degree $n$ divisors in
		$U$  & \autoref{notation:curve-notation}  \\ \hline
		$\hur G n Z S C B$ & Hurwitz space parameterizing $G$-covers
	with monodromy in $S$ & \autoref{definition:fixed-hur} \\ \hline
			$\Sigma^b_{g,f}$ & Topological surface of genus $g$ with
			$b$ boundary components and $f$ punctures &
			\autoref{notation:surface-braid-groups}  \\ \hline
			$X^{\oplus n} \oplus A_{g,f}$ & $n$ copies of a marked
			cylinder glued onto $\Sigma^1_{g,f}$ &
			\autoref{notation:surface-braid-groups}  \\ \hline
			$B^n_{g,f}$ & The surface braid group $\pi_1(\Conf^n_{X^{\oplus n} \oplus
			A_{g,f}})$ &
			\autoref{notation:surface-braid-groups}  \\ \hline
		$H_{\inertiaindex n G c g f}$.
 & The $n$th vector space from a coefficient system corresponding to a Hurwitz space &  
 \autoref{example:hurwitz-coefficient-system}
 \\ \hline
		$R^V$ & Ring of connected components associated to a coefficient
		system over $\Sigma^1_{0,0}$ & \autoref{definition:k-complex} \\ \hline
		$\mathcal K(M)$ & $K$-complex associated a graded $R^V$-module
		& \autoref{definition:k-complex}  \\ \hline
		$M^{V,F}_p$ & $\oplus_{n \geq 0} H_p(B^n_{g,f}, F_n)$ &
		\autoref{notation:homology-coefficient-system} \\ \hline
		$\mathscr F$ & A tame, symplectically self-dual lcc sheaf of free
		$\mathbb Z/\nu \mathbb Z$-modules on $U$&
		\autoref{notation:quadratic-twist-notation}  \\ \hline
		$\qtwist n U B$ & A Hurwitz space parameterizing quadratic
		twists & \autoref{notation:quadratic-twist-notation}  \\ \hline
		$\mathscr F^n_B$ & The universal degree $n$ quadratic twist of $\mathscr F$ & \autoref{notation:quadratic-twist-notation} \\ \hline
		$\selsheaf{\mathscr F^n_B}$ & The Selmer sheaf, which
		parameterizes torsors for quadratic twists of $\mathscr F$ of log-height
		$n$ & \autoref{definition:selmer-sheaf}  \\ \hline
		$\selspace {\mathscr F^n_B}$ & The Selmer stack, which is the
		finite \'etale cover of $\qtwist n U B$ corresponding to
		the Selmer sheaf & \autoref{definition:selmer-sheaf}  \\ \hline
		$C_x, U_x, \mathscr F_x, A_x$ & The fiber of the relevant object over $x
		\in \qtwist n U B$ & \autoref{notation:x-points} \\ \hline
		$\Phi_{A'}$ & Component group of the abelian scheme $A'$ over
		$U'$ & \autoref{notation:component-group} \\ \hline
		$\drop_x(\mathscr F)$ & Corank of the invariants of the inertia
		of $\mathscr F$ at $x$ & \autoref{definition:drop} \\ \hline
		$\asp_{2r}$ & The affine symplectic group &
		\autoref{definition:asp} \\ \hline
		$\ahsp_{2r}$ & The $H$ moment of the affine symplectic group &
	\autoref{notation:asp-generalization} \\ \hline
	$\selhur H {\mathscr F^n_B}$ & A certain Hurwitz space which is geometrically
	isomorphic to the Selmer
	sheaf & \autoref{notation:sel-hur} \\ \hline
	$V_{\mathscr F^n_B}$ & The vector space corresponding to a geometric
	fiber of the Selmer sheaf & \autoref{definition:monodromy} \\ \hline
	$\rho_{\mathscr F^n_B}$ & The monodromy representation associated to the
	Selmer sheaf & \autoref{definition:monodromy}  \\ \hline
	$X_{A[\nu]^n_{\mathbb F_q}}$ & Probability distribution on $\nu$-Selmer groups
	of quadratic twists of $A$ over $\mathbb F_q$ &
	\autoref{definition:actual-selmer-distribution} \\ \hline
	$X^i_{A[\nu]^n_{\mathbb F_q}}$ & Probability distribution on $\nu$-Selmer groups
	of quadratic twists of $A$ over $\mathbb F_q$ with fixed parity of rank &
	\autoref{definition:actual-selmer-distribution} \\ \hline
	$\sinertiaindex n {\mathscr F}{H} g f$ & Coefficient system associated to
	$H$-moment of the $\nu$-Selmer sheaf &
	\autoref{example:specified-hurwitz-coefficient-system} \\ \hline
	$\sinertiaindex n {\underline{\mathscr F}}{H} 0 0$ & The coefficient
	system for $\Sigma^1_{0,0}$ which $\sinertiaindex n {\mathscr F}{H} g f$
	lies over &
	\autoref{example:specified-hurwitz-coefficient-system} \\ \hline
	$S^{n,\on{rk}}_{\mathscr F, g, f}$ & The coefficient
	system associated to the rank double cover
	&
	\autoref{example:rank-coefficient-system} \\ \hline
		$\mathcal N$ & Finite abelian $\mathbb Z/\nu \mathbb Z$-modules & \autoref{definition:actual-selmer-distribution} \\ \hline
	$\mathcal N^i$ & The subset of objects of $N$ of the form $(\mathbb Z/\nu \mathbb Z)^i \times G^2$ & \autoref{definition:actual-selmer-distribution} \\ \hline
\end{tabular}
}
	\vspace{.5cm}
	\caption{Some notation introduced in the paper.}
	\label{table:notation}
\end{figure}

\subsection{Acknowledgements}
We thank Craig Westerland for numerous helpful and detailed discussions which
were invaluable in pinning down some of the trickiest
topological inputs to this paper.
We also thank Dori Bejleri for suggesting the idea to prove the stabilization
maps respect the Frobenius action, for extensive help with technical aspects of log
geometry.
We thank several anonymous referees for numerous detailed and helpful comments.
We thank Chris Hall for a meticulously close reading and numerous
helpful discussions.
Thanks to Eric Rains for many helpful exchanges, especially relating to the
BKLPR heuristics and Vasiu's lifting results.
We thank Melanie Wood for a number of useful conversations relating to
determining the distribution from the moments.
Thanks additionally to Levent Alp\"{o}ge and Bjorn Poonen for help understanding the
possible structures of the Tate-Shafarevich group.
We also thank Sun Woo Park for a close reading and for numerous
detailed and helpful comments.
We further thank
Dan Abramovich,
Niven Achenjang,
Andrea Bianchi,
Kevin Chang,
Qile Chen,
Chantal David,
Tony Feng,
Jeremy Hahn,
David Harbater,
Anh Trong Nam Hoang,
Hyun Jong Kim,
Ben Knudsen,
Michael Kural,
Jef Laga,
Peter Landesman,
Eric Larson,
Robert Lemke Oliver,
Ishan Levy,
Siyan Daniel Li-Huerta,
Daniel Litt,
Zhao Yu Ma,
Davesh Maulik,
Barry Mazur,
Jeremy Miller,
Samouil Molcho,
Martin Olsson,
Dan Petersen,
Andy Putman,
Oscar Randal-Williams,
Zev Rosengarten,
Will Sawin,
Mark Shusterman,
Alex Smith,
Salim Tayou,
Ravi Vakil,
and
David Yang.
This work also owes a large intellectual debt to a number of others including
work of Chris Hall, work of Nick Katz,
and work of Oscar Randal-Williams and Nathalie Wahl.  
Open AI GPT-5.1 was used to help find typos in the manuscript.
JE was supported by the National Science
Foundation 
under Award No.
DMS 2301386, and AL was supported by the National Science
Foundation 
under Award No.
DMS 2102955
and award No. 2449797.

\section{Background}
\label{section:background}

We now review some background on orthogonal groups in
\autoref{subsection:orthogonal-groups}, background on the BKLPR heuristics in
\autoref{subsection:bklpr-review}, and background on Hurwitz stacks in \autoref{subsection:hurwitz-stacks-background}.
The one new part of this section is \autoref{subsection:computing-moments},
where we compute the moments of the BKLPR distribution.

\subsection{Orthogonal groups}
\label{subsection:orthogonal-groups}

We now define some notation we will use relating to orthogonal groups.
Throughout, we will be working over base rings $R$ with $2$ invertible on $R$,
and so we will freely pass between quadratic spaces and spaces with a
bilinear pairing. For some additional detail and further references, we refer
the reader to \cite[\S3.2]{fengLR:geometric-distribution-of-selmer-groups}
whose material in turn was largely drawn from \cite[Appendix
C]{conrad:reductive-group-schemes}.
\begin{notation}
	\label{notation:omega}
	Let $R= \mathbb Z/\nu \mathbb Z$, for some $\nu$ with $\gcd(\nu,2) = 1$.
	Let $V$ be a free
	$R$-module of rank at least $3$ with a bilinear pairing $B: V \times V \to R$.
	Let $Q: V \to R,$ defined by $Q(v) := B(v,v)$ denote the associated quadratic form.
	We assume throughout that $Q$ is {\em nondegenerate}, meaning that the
	quadric associated to $Q$ is smooth, or equivalently $Q$ is nondegenerate
	modulo every prime $\ell \mid \nu$.
	We let $\o(Q)$ denote the associated orthogonal group preserving $Q$.
	There is a Dickson invariant map $D_Q: \o(Q) \to \prod_{\ell \mid \nu \text{
	prime}} \mathbb Z/2\mathbb Z$ by sending an element to $0$ in coordinate $\ell$ if its
	determinant $\bmod \ell$ is $1$ and sending it to $1$ if its determinant
	$\bmod \ell$ is $-1$. 
	There is also a {\em $+1$-spinor norm map} $\on{sp}_Q^+: \o(Q) \to
	H^1(\on{Spec} R, \mu_2) \simeq R^\times/(R^\times)^2 \simeq \prod_{\ell
		\mid \nu \text{
	prime}} \mathbb Z/2\mathbb Z$,
	where the map in cohomology is induced by the boundary map associated to
	the exact sequence of algebraic groups $\mu_2 \to \on{Pin}(Q) \to
	\o(Q)$.
	The $-1$-spinor norm, $\on{sp}_Q^-: \o(Q) \to
	\prod_{\ell \mid \nu \text{
	prime}} \mathbb Z/2\mathbb Z$,
	is the composition of $\on{sp}_Q^+$ with the identification $\o(Q)
	\simeq \o(-Q)$, see 
	\cite[Remark C.4.9, Remark C.5.4, and
	p.348]{conrad:reductive-group-schemes}.
	In particular, if $r_v$ is the reflection about the vector $v$,
	$\on{sp}_Q^-(r_v) = [-Q(v)]$, where $[x]$ denotes the square class of
	$x$, viewed as an element of $\prod_{\ell \mid \nu \text{
	prime}} \mathbb Z/2\mathbb Z$.

	We define $\Omega(Q) := \ker D_Q \cap \ker \on{sp}_Q^- \subset \o(Q)$.
	In particular, since $\nu$ is odd, $\Omega(Q) \subset \o(Q)$ has index $4^{\omega(\nu)}$,
	where $\omega(\nu)$ denotes
	the number of primes dividing $\nu$.
\end{notation}
\begin{remark}
	\label{remark:}
	It turns out that the map $D_Q \times \on{sp}_Q^-: \o(Q) \to \prod_{\ell
		\mid \nu \text{
	prime}} (\mathbb Z/2 \mathbb Z \times \mathbb Z/2\mathbb Z)$ can be identified with the
	abelianization of $\o(Q)$, assuming $Q$ is nondegenerate and has rank more than $2$.
\end{remark}

The following lemma will be useful throughout the paper, and connects the Dickson invariant to the dimension of the
$1$-eigenspace of an element of the orthogonal group. 
We will see that the latter is related
to Selmer groups via \autoref{lemma:selmer-identification}.

\begin{lemma}
	\label{lemma:1-eigenvalue-parity}
	Let $(V, Q)$ be a quadratic space over a field and $g \in \o(Q)$.
	We have
	\begin{align*}
	\dim \ker(g -\id) \bmod 2 \equiv \rk V - D_Q(g).
	\end{align*}
\end{lemma}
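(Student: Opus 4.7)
The plan is to reduce the claim to the Cartan--Dieudonn\'e theorem on decomposition of orthogonal transformations into reflections. Concretely, I will show that $\rk V - \dim \ker(g-\id)$ equals, modulo $2$, the number of reflections needed to write $g$, which in turn is $D_Q(g)$ mod $2$ via the determinant.

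First I would observe that the claim reduces to the identity $\det(g) = (-1)^{\rk V - \dim \ker(g-\id)}$, because $D_Q$ is by construction the mod-$2$ reduction of the determinant: $(-1)^{D_Q(g)} = \det(g) \in \{\pm 1\}$. (This uses that we are in characteristic $\neq 2$, which is implicit in the setup of \autoref{notation:omega}.)

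Next I would invoke the Cartan--Dieudonn\'e theorem: every $g \in \o(Q)$ on a nondegenerate quadratic space of rank $d$ can be written as a product of at most $d$ reflections, and in fact the minimal number of reflections needed is exactly $k := d - \dim \ker(g-\id)$. The lower bound $k \geq d - \dim \ker(g-\id)$ is elementary: if $g = r_{v_1} \cdots r_{v_m}$, then $\bigcap_{i} v_i^{\perp} \subseteq \ker(g-\id)$, forcing $m \geq d - \dim \ker(g-\id)$; the matching upper bound is the content of Cartan--Dieudonn\'e. Given such a decomposition into $k$ reflections, one has $\det(g) = \prod_i \det(r_{v_i}) = (-1)^k$, which yields exactly $\det(g) = (-1)^{\rk V - \dim \ker(g-\id)}$, and the lemma follows.

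The main obstacle, if any, is really just citing the right form of Cartan--Dieudonn\'e; there is one mildly delicate case in characteristic $2$ or on isotropic spaces where the theorem can fail or require more reflections than $d$, but neither case arises here since the ambient field has odd characteristic (and $Q$ is nondegenerate). One could also bypass Cartan--Dieudonn\'e by arguing directly: the hypothesis that $Q$ is nondegenerate forces the restriction of $Q$ to the $g$-invariant complement of $\ker(g-\id)$ to be nondegenerate, and on that complement $g-\id$ is invertible, so one can proceed by induction on $\dim V - \dim \ker(g-\id)$ after peeling off one reflection $r_v$ with $v \in \mathrm{im}(g-\id)$ chosen so that $Q(v) \neq 0$; but going via Cartan--Dieudonn\'e is cleaner.
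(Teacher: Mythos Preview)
Your proof is correct and is essentially the same argument as the paper's, just made explicit where the paper cites a reference. The paper invokes \cite[p.~160]{taylor:the-geometry-of-the-classical-groups} for the identity $\dim \im(g-\id) \equiv D_Q(g) \bmod 2$ and then applies rank--nullity; the content of that citation is exactly the Scherk/Cartan--Dieudonn\'e fact you spell out, namely that $g$ is a product of $\dim \im(g-\id)$ reflections, whence the determinant (and hence $D_Q$) is $(-1)^{\dim \im(g-\id)}$.
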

\begin{proof}
	Note that since we are working over a field, the Dickson invariant
	$D_Q(g)$ is valued in $\mathbb Z/2 \mathbb Z$, and not a product of
	copies of $\mathbb Z/2 \mathbb Z$.
	It follows from 
	\cite[p. 160]{taylor:the-geometry-of-the-classical-groups}, 
	that $\dim \im (g - \id) = D_Q(g) \bmod 2$.
	We find
		\begin{equation}
	\begin{aligned}
		\label{equation:kernel-to-determinant}
	\dim \ker(g -\id) \bmod 2 &\equiv \rk V -
	\dim \im (g - \id) \bmod 2 \\
	&\equiv\rk V- D_Q(g) \bmod 2,
	\end{aligned}
\end{equation}
using the exact sequence relating the kernel and image of
	$g - \id : V \to V$.
\end{proof}

\subsection{Review of the BKLPR distribution}
\label{subsection:bklpr-review}

We now give a quick review of the predicted distribution for $\nu$-Selmer groups
given in \cite{bhargavaKLPR:modeling-the-distribution-of-ranks-selmer-groups}.
We also suggest the reader consult
\cite[\S5.3]{fengLR:geometric-distribution-of-selmer-groups} for a slightly more
detailed description of this distribution, geared to the context in which we will use it
in this paper.

\subsubsection{The $\ell^\infty$-Selmer distribution from BKLPR conditioned on rank}
\label{subsubsection:conditioned-bklpr}

Let $\ell$ be a prime. For non-negative integers $m,r$ with $m - r \in 2 \Z_{\geq 0}$, let $A$ be drawn
randomly from the Haar probability measure on the set of \emph{alternating} $m
\times m$-matrices over $\Z_{\ell}$ having rank $m-r$. 
Let
$\mathscr{T}_{m,r,\ell}$ be the distribution of $(\coker A)_{\tors}$, the
torsion in $\coker A$. According
to \cite[Theorem
1.10]{bhargavaKLPR:modeling-the-distribution-of-ranks-selmer-groups}, as $m
\rightarrow \infty$ through integers with $m-r \in 2 \Z_{\geq 0}$, the
distributions $\mathscr{T}_{m,r, \ell}$ converge to a limit $\mathscr{T}_{r,\ell}$. 

\subsubsection{The BKLPR $\nu$-Selmer distribution}
\label{subsubsection:bklpr-n-selmer}
We next review the model for $\nu$-Selmer elements described at the beginning of \cite[\S 5.7]{bhargavaKLPR:modeling-the-distribution-of-ranks-selmer-groups}.
Let $\mathscr T_{r,\ell}$ denote the random variable defined on isomorphism classes of finite abelian $\ell$ groups (notated $\mathscr T_r$ in \cite{bhargavaKLPR:modeling-the-distribution-of-ranks-selmer-groups})
defined in \cite[Theorem 1.6]{bhargavaKLPR:modeling-the-distribution-of-ranks-selmer-groups} and reviewed in \autoref{subsubsection:conditioned-bklpr}.
For $G$ an abelian group, we let $G[\nu]$ denote the $\nu$ torsion of $G$.
For $\nu \in \mathbb Z_{\geq 1}$ with prime factorization $\nu = \prod_{\ell
\mid \nu} \ell^{a_\ell}$, 
define a distribution $\mathscr T_{r,\mathbb Z/\nu\mathbb Z}$ on finitely
generated $\mathbb Z/\nu\mathbb Z$-modules
by choosing a collection of abelian groups $\{ T_\ell\}_{\ell \mid \nu}$, 
with $T_\ell$ drawn from $\mathscr T_{r,\ell}$, and defining the probability
$\mathscr T_{r,\mathbb Z/\nu\mathbb Z} = G$ to be the probability that
$\oplus_{\ell \mid \nu} T_\ell[\nu] \simeq G$.

Given the above predicted distribution for the $\nu$-Selmer group of abelian
varieties of rank $r$,
the heuristic that $50\%$ of abelian varieties have rank $0$ and $50\%$ have rank $1$ leads to
the predicted joint distribution of the $\nu$-Selmer group and rank given in
\autoref{definition:bklpr-rank-selmer}.
We use $\mathscr T_{1, \mathbb Z/\nu\mathbb Z} \oplus \mathbb Z/\nu\mathbb Z$ as notation for
the random variable so that the probability $\mathscr T_{1, \mathbb Z/\nu\mathbb Z} \oplus
\mathbb Z/\nu\mathbb Z \simeq G \oplus \mathbb Z/\nu\mathbb Z$ is equal to the
probability that $\mathscr T_{1, \mathbb Z/\nu\mathbb Z} \simeq G$.

\begin{definition}
	\label{definition:bklpr-rank-selmer}
	Let $\mathcal N$ denote the set of finite $\mathbb Z/\nu \mathbb Z$-modules.
 	Let $\bklpr \nu : \mathcal N \to \mathbb R_{\geq 0}$ denote the probability
distribution defined by
	\begin{align*}
		\bklpr \nu :=	\frac{1}{2} \mathscr T_{0, \mathbb Z/\nu\mathbb Z} +
\frac{1}{2} \left( \mathscr T_{1, \mathbb Z/\nu\mathbb Z} \oplus \mathbb
Z/\nu\mathbb Z \right).
\end{align*}
For $i \in \{0,1\}$ let 
$\paritybklpr \nu i: \mathcal N \to \mathbb R_{\geq 0}$
denote the distribution $\bklpr \nu$ conditioning on $\rk \bklpr \nu \bmod \ell
\equiv i \bmod 2,$
for any $\ell \mid \nu$.
In particular, 
$\paritybklpr \nu 0$ is the distribution 
$\mathscr T_{0, \mathbb Z/\nu\mathbb Z}$
while
$\paritybklpr \nu 1$ is the distribution 
$\mathscr T_{1, \mathbb Z/\nu\mathbb Z} \oplus \mathbb Z/\nu \mathbb Z$.
\end{definition}
\begin{remark}
	\label{remark:}
	Note that $\paritybklpr \nu i$ is independent of $\ell \mid \nu$ 
	as follows from the definition of $\paritybklpr \nu i$,
	\autoref{definition:bklpr-rank-selmer},
	so the definition of $\paritybklpr \nu i$ is independent of the choice
	of $\ell \mid
	\nu$.
\end{remark}
\begin{remark}
	\label{remark:}
	We note that there was a slight error in \cite[Definition
	5.12]{fengLR:geometric-distribution-of-selmer-groups}.
	There, when $r = 1$, the distribution should have been given by
	$\frac{1}{2} \left( \mathscr T_{1, \mathbb Z/\nu\mathbb Z} \oplus \mathbb
Z/\nu\mathbb Z \right)$ and not 
$\frac{1}{2} \left( \mathscr T_{1, \mathbb Z/\nu\mathbb Z} \right)$ as written there. The
latter models $\Sha[\nu]$ as opposed to $\sel_{\nu}$.
\end{remark}

\subsection{Computing the moments of $\nu$-Selmer groups}
\label{subsection:computing-moments}

We next compute moments of the BKLPR distribution.
For a distribution $X$ valued in finite abelian groups, we use the $H$-moment of
$X$ as terminology for the expected number of surjections or homomorphisms 
$X \to H$. Knowing the expected number of homomorphisms for all $H$ is
equivalent to knowing the expected number of surjections for all $H$ by an
inclusion exclusion argument.

The computation of the moments below in the case that $H \simeq (\mathbb Z/\ell^j \mathbb Z)^m$ was explained in
	\cite[Theorem 5.10 and Remark
	5.11]{bhargavaKLPR:modeling-the-distribution-of-ranks-selmer-groups}.
	Surprisingly, the general case appears to be missing from the literature.
	We follow a similar method of proof to
	\cite[Theorem
	5.10]{bhargavaKLPR:modeling-the-distribution-of-ranks-selmer-groups},
	though it is somewhat more involved.
\begin{proposition}
	\label{proposition:bklpr-moments}
	We have
	\begin{align*}
\# \sym^2 H &= \mathbb E(\# \surj(\bklpr \nu, H)) \\
&= \mathbb E(\# \surj(\paritybklpr \nu 0,
H)) \\
&=\mathbb E(\# \surj(\paritybklpr \nu 1, H)).
	\end{align*}
\end{proposition}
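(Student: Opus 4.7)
The plan is to follow the method of \cite[Theorem 5.10]{bhargavaKLPR:modeling-the-distribution-of-ranks-selmer-groups}, which establishes this identity in the special case $H = (\mathbb{Z}/\ell^j\mathbb{Z})^m$, and to extend it to a general finite abelian $H$. First, reduce to the case where $\nu = \ell^a$ is a prime power by the Chinese Remainder Theorem: decomposing $H = \bigoplus_{\ell \mid \nu} H_\ell$ into $\ell$-primary parts, the definition $\mathscr{T}_{r,\mathbb{Z}/\nu\mathbb{Z}} = \bigoplus_\ell \mathscr{T}_{r,\ell}[\nu]$ with independent $\ell$-primary components, together with the vanishing of homomorphisms between groups of coprime orders, implies that both sides of each identity in the proposition factor as products over primes $\ell \mid \nu$, since $\#\sym^2 H = \prod_\ell \#\sym^2 H_\ell$ as well.

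By \autoref{definition:bklpr-rank-selmer} we have $\bklpr{\nu} = \tfrac{1}{2}\paritybklpr{\nu}{0} + \tfrac{1}{2}\paritybklpr{\nu}{1}$ with $\paritybklpr{\nu}{0} = \mathscr{T}_{0,\mathbb{Z}/\nu\mathbb{Z}}$ and $\paritybklpr{\nu}{1} = \mathscr{T}_{1,\mathbb{Z}/\nu\mathbb{Z}} \oplus \mathbb{Z}/\nu\mathbb{Z}$, so the identity for $\bklpr{\nu}$ is the average of the two parity-conditioned identities. It therefore suffices to prove, for $H$ a finite abelian $\ell$-group of exponent dividing $\ell^a$, the two identities
\begin{align*}
\mathbb{E}(\#\surj(\mathscr{T}_{0,\ell}[\ell^a], H)) &= \#\sym^2 H, \\
\mathbb{E}(\#\surj(\mathscr{T}_{1,\ell}[\ell^a] \oplus \mathbb{Z}/\ell^a\mathbb{Z}, H)) &= \#\sym^2 H.
\end{align*}

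To compute each moment, I would unfold the definition of $\mathscr{T}_{r,\ell}$ as the $m \to \infty$ limit of the distribution of torsion cokernels of Haar-random alternating $m \times m$ matrices over $\mathbb{Z}_\ell$ of corank $r$, and express the moment as a limit of counts of pairs $(\pi, A)$ where $\pi \colon (\mathbb{Z}/\ell^a\mathbb{Z})^m \twoheadrightarrow H$ is a surjection and $A$ is an alternating matrix with $\pi \circ A = 0$. For fixed $\pi$, the set of such $A$ is precisely the set of alternating forms on the quotient $(\mathbb{Z}/\ell^a\mathbb{Z})^m / \im(\pi^\vee)$, which can be counted explicitly; stratifying by the Smith normal form type of the embedding $\im(\pi^\vee) \hookrightarrow (\mathbb{Z}/\ell^a\mathbb{Z})^m$, normalizing by the total number of alternating matrices of given corank, and taking $m \to \infty$ should yield the desired value $\#\sym^2 H$ in both parity cases, with the extra $\mathbb{Z}/\ell^a\mathbb{Z}$ summand in the odd-corank case compensating precisely for the difference between the even and odd corank normalizations.

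The main obstacle is carrying out this counting cleanly for arbitrary $H$. In the BKLPR case $H = (\mathbb{Z}/\ell^j\mathbb{Z})^m$ the embedding $\im(\pi^\vee) \hookrightarrow (\mathbb{Z}/\ell^a\mathbb{Z})^m$ has a single Smith type and the count is uniform, but for a general finite abelian $\ell$-group one must sum over several Smith types and verify that the resulting expression collapses to $\#\sym^2 H$. This simplification can be obtained by identifying $\#\sym^2 H$ with the cardinality of the set of symmetric bilinear forms on $H$ and invoking the duality between symmetric and alternating forms on finite abelian $\ell$-groups which underlies \cite[Theorem 1.10]{bhargavaKLPR:modeling-the-distribution-of-ranks-selmer-groups}; this is the only step that requires genuinely new combinatorial input beyond the methods of that theorem.
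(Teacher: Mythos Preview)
Your reduction to prime powers via the Chinese remainder theorem and the observation that the full moment is the average of the two parity-conditioned moments are exactly what the paper does. From that point on, however, the paper takes a different and cleaner route.

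Rather than working with the alternating-matrix model $\mathscr{T}_{r,\ell}$ directly, the paper passes to the equivalent description of $\bklpr{\ell^j}$ as the distribution of $Z\cap W$ for $Z,W$ independent uniform maximal isotropic subspaces in a split quadratic space over $\mathbb{Z}/\ell^j\mathbb{Z}$ of rank $2n$, letting $n\to\infty$ (this is the model from \cite[\S1.2--1.3]{bhargavaKLPR:modeling-the-distribution-of-ranks-selmer-groups}). It then counts \emph{injections} $H\hookrightarrow Z\cap W$ rather than surjections. Writing $H$ via its conjugate partition $\lambda'$, the paper computes separately the number of injections $h\colon H\hookrightarrow W$ and the probability that a random $Z$ contains $\im(h)$, each by an induction on the $\ell^i$-torsion layer. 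The product of these, after taking $n\to\infty$, collapses immediately to $\ell^{\sum_i \binom{\lambda'_i+1}{2}}=\#\sym^2 H$, with no further simplification needed. The parity-conditioned cases follow by restricting $Z$ to one component of the orthogonal Grassmannian.

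Your alternating-matrix route is in principle viable, but two of your steps are not right as stated. First, the identification ``the set of such $A$ is precisely the set of alternating forms on the quotient $(\mathbb{Z}/\ell^a\mathbb{Z})^m/\im(\pi^\vee)$'' is not correct without an additional dualization: the condition $\pi\circ A=0$ says that $\im A\subset\ker\pi$, which is a condition on the \emph{range} of the associated map, not that $A$ descends to the quotient. Second, the Haar-measure conditioning on fixed corank $r$ is a genuine complication for $r=1$ (a measure-zero locus) that you do not address. The ``duality between symmetric and alternating forms'' you invoke in the last paragraph is too vague to constitute a proof. The Grassmannian model avoids all of this: both the corank conditioning and the $\ell^a$-torsion truncation are built into the model, and the layer-by-layer count in $\ell^i$-torsion handles general $H$ without any separate Smith-type stratification.
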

\begin{proof}
	We first reduce to the case that $\nu = \ell^j$, for $\ell$ prime and $j
	\geq 1$.
	First, if $H_\ell$ is the Sylow $\ell$ subgroup of $H$, we have
	$\sym^2 H = \prod_{\ell \mid \nu}\sym^2 H_\ell$.
	Using the universal property of products, we also have that for any
	abelian group $A$, $\hom(A, H) = \prod_{\ell \mid \nu} \hom(A, H_\ell)$.
	Hence, we may
	assume that $\nu = \ell^j$.
	Instead of counting surjections, we can dually count injections from $H$
	to any of the above three distributions.
	
	Now, write $H \simeq \oplus_{i=1}^m \mathbb Z/\ell^{\lambda_i} \mathbb Z$, so
	that $H$ is determined by a partition $\lambda = (\lambda_1, \ldots,
	\lambda_m)$. Let $\lambda'$ denote the partition conjugate to $\lambda$
	so that $\lambda'_i$ is the number of copies of $\mathbb Z/\ell^i
	\mathbb Z$ appearing in $H$.
	We first consider the case of computing injections $H \to \bklpr
	{\ell^j}$.
	The number of injective homomorphisms $H \to \bklpr {\ell^j}$
	can be expressed as the limit as $n \to \infty$ of the number of injections $H \to Z \cap W$ where $Z,
	W \in \ogr_n(\mathbb Z/\ell^j \mathbb Z)$, for $\ogr_n$ the orthogonal
	Grassmannian parameterizing $n$-dimensional maximal isotropic subspaces
	in the rank $2n$ quadratic space with the split quadratic form
	$\sum_{i=1}^n x_i x_{i+n}$.
	(This uses an alternate description of the BKLPR distribution from the
	one we gave in \autoref{subsubsection:bklpr-n-selmer}, given in
	\cite[\S1.2 and 1.3]{bhargavaKLPR:modeling-the-distribution-of-ranks-selmer-groups};
	see also \cite[\S5.3.1]{fengLR:geometric-distribution-of-selmer-groups} for a
	summary.)
	For fixed $n$, we can express this as the number of injective
	homomorphisms $h: H \to W$ times the probability that a uniformly random $Z$ contains
	$\im (h)$.
	We can compute both of these numbers by inductively computing the answer
	on $\ell^t$ torsion for each $t \leq j$. 

	First, we compute the number of injective homomorphisms $h: H \to W$.
	In the case $t = 1$, so $\ell^t = \ell$, this was shown in the proof of \cite[Theorem
	5.10]{bhargavaKLPR:modeling-the-distribution-of-ranks-selmer-groups}
	to be $(\ell^n)^{\lambda'_1} \prod_{i=0}^{\lambda'_1-1} (1- \ell^{i-n})$.
	In general, a map $H \to W$ is injective if and only if $H[\ell] \to
	W$ is injective, so the number of injective maps $H[\ell^t] \to
	W$ lifting a given map $H[\ell^{t-1}] \to W$ for
	$t \geq 2$ is $(\ell^n)^{\lambda'_t}$. 
	Recall we defined $m$ by $H \simeq \oplus_{i=1}^m \mathbb
	Z/\ell^{\lambda_i} \mathbb Z$.
	Then, the total number of
	injective maps $H \to W$ is
	\begin{align}
		\label{equation:injective-maps}
		\ell^{n \cdot \sum_t \lambda'_t} \cdot \prod_{i=0}^{m-1}
		(1-\ell^{i-n}).
	\end{align}
	
	Next, we compute the probability that $Z$ contains $\im(h),$ for $h: H
	\to W$ an injective homomorphism.
	First, the chance that $Z$ contains $\im (H[\ell])$ was computed in 
\cite[Theorem 5.10]{bhargavaKLPR:modeling-the-distribution-of-ranks-selmer-groups}
and it is 
\begin{align*}
	\frac{\# \ogr_{n-m}(\mathbb Z/\ell \mathbb Z)}{\# \ogr_n(\mathbb
Z/\ell \mathbb Z)} = \ell^{\frac{n(n-1)}{2} - \frac{(n-m)(n-m-1)}{2}}
\prod_{i=n-m}^{n-1} (1+\ell^{-i}).
\end{align*}
Let $V$ denote the quadratic space we are working in.
Suppose we have fixed the image $Z/\ell^{t-1} Z \subset V/\ell^{t-1} V$ containing $h(H
[\ell^{t-1}])$. We next compute the chance that $Z/\ell^{t} Z$ contains the
image of $h(H [\ell^t])$ in $V/\ell^{t}V$.
Since $\ogr$ is smooth of dimension $\frac{n(n-1)}{2}$, there are
$\ell^{\frac{n(n-1)}{2}}$ lifts of $\ell^{t-1}Z$ to $\ell^{t} Z$.
The number of these containing $\im h(H [\ell^t])$ can be identified with lifts
of a maximal isotropic subspace of dimension $n - \lambda'_t$, since an
isotropic subspace of $W$ containing a rank $m$ isotropic space $T$ can be
identified with an isotropic subspace of the rank $m - n$ space $T^\perp/T$.
There are $\ell^{\frac{(n-\lambda'_t)(n-\lambda'_t-1)}{2}}$ such subspaces.
Hence, the chance $Z/\ell^{t} Z$ contains the image of $h(H [\ell^t])$ is
$\ell^{\frac{(n-\lambda'_t)(n-\lambda'_t-1)}{2} - \frac{n(n-1)}{2}} =
\ell^{\frac{(\lambda'_t)^2 - 2n\lambda'_t + \lambda'_t}{2}}$.
Multiplying these probabilities over all values of $t$ up to $j$, the chance $Z$
contains $h(H)$ is
\begin{align}
	\label{equation:chance-of-containment}
	\ell^{\sum_{t=1}^j {\frac{(\lambda'_t)^2 - 2n\lambda'_t +
	\lambda'_t}{2}}} \prod_{i=n-m}^{n-1} (1+ \ell^{-i}).
\end{align}

Therefore, the moment we are seeking is the product of
\eqref{equation:injective-maps} with \eqref{equation:chance-of-containment},
which gives
\begin{align*}
			&\ell^{n \cdot \sum_t \lambda'_t} \cdot \prod_{i=0}^{m-1}
			(1-\ell^{i-n}) \cdot \ell^{\sum_{t=1}^j {\frac{(\lambda'_t)^2 - 2n\lambda'_t +
	\lambda'_t}{2}}} \prod_{i=n-m}^{n-1} (1+ \ell^{-i})
	\\
	&= \ell^{\sum_{t=1}^j {\frac{(\lambda'_t)^2 + \lambda'_t}{2}}}
	\prod_{i=0}^{m-1}(1-\ell^{i-n})\prod_{i=n-m}^{n-1} (1+ \ell^{-i}).
\end{align*}
As $n \to \infty$, this approaches 
$\ell^{\sum_{t=1}^j {\frac{(\lambda'_t)^2 + \lambda'_t}{2}}}$.
A standard argument shows this agrees with $\# \sym^2 H$.
For example, the analogous computation of the size of $\wedge^2 H$ in place of
$\sym^2 H$ was carried
out in \cite[\S2.4]{wood:the-distribution-of-sandpile-groups}.
	
	The cases of $\paritybklpr \nu 1$ and $\paritybklpr \nu 0$ follow similarly by
	only taking one of the components of the orthogonal Grassmannian, as
	also explained in \cite[Remark
	5.11]{bhargavaKLPR:modeling-the-distribution-of-ranks-selmer-groups}.
\end{proof}

\subsection{Background on Hurwitz stacks}
\label{subsection:hurwitz-stacks-background}

In this subsection, we give a precise definition of the Hurwitz stacks we will
be working with.
Throughout the paper, we will employ the following notation.

\begin{notation}
	\label{notation:curve-notation}
Let $B$ be a base scheme.
Let $C \to B$ be a relative curve, which is smooth and proper of genus $g$ with
geometrically connected fibers.
Let $Z \subset C$ be a divisor, with $Z$
finite \'etale over $B$ of degree $f+1$, for $f \geq 0$.
Let $U := C - Z$. The situation is summarized in the following diagram:
\begin{equation}
	\label{equation:}
	\begin{tikzcd} 
		Z \ar {r}  & C \ar {d} & \ar{l} U := C - Z \\
		& B.
\end{tikzcd}\end{equation}
Let $n \geq 0$ be an integer.
Let $\sym^n_{C/B}$ denote the relative $n$th symmetric power of the curve $C$
over $B$.
Define $\conf n U B \subset \sym^n_{C/B}$ to be the open subscheme parameterizing
effective
divisors on $C$ which are finite \'etale of degree $n$ over $B$ and disjoint from $Z$.
Let $\mathscr C^n_B := C \times_B \conf n U B \to \conf n U B$ denote the universal curve, which has a
universal degree $n$ divisor $\mathscr D^n_B \subset \mathscr C^n_B$ whose fiber over a
point $[D] \in \conf n U B$ is $D \subset U$.
Let $\mathscr U^n_B := \mathscr C^n_B - \mathscr D^n_B - (\mathscr C^n_B \times_C
Z)$ and let $j: \mathscr
U^n_B \subset \mathscr
C^n_B$ denote the open inclusion.
This setup is pictured in the next diagram:
\begin{equation}
	\label{equation:}
	\begin{tikzcd} 
		\mathscr D^n_B \ar{r} \ar{rd} & \mathscr C^n_B \ar{d} &
		\ar{l}{j}\ar{ld} \mathscr
		U^n_B := \mathscr C^n_B - \mathscr D^n_B - (\mathscr C^n_B \times_C
Z)\\
\sym^n_{C/B}  & \conf n U B \ar{l} \ar {d}  \\
		& B.
\end{tikzcd}\end{equation}
\end{notation}

\begin{definition}
	\label{definition:fixed-hur}
	Keeping notation from
\autoref{notation:curve-notation},
	suppose $B$ is a scheme and $G$ is a finite group with $\# G$ invertible on $B$
	with chosen geometric point ${\overline{b}} \in B$.
	Suppose $\mathcal S \subset \hom(\pi_1(\Sigma_{g,n+f+1}), G)$ is a $G$ conjugation
	invariant subset preserved by the action
	of $\pi_1(\conf n {U_{\overline{b}}} {\overline{b}})$, acting on the first $n$ points.
	Define
	$\hur G n Z {\mathcal S} C B$
	to be the stack over $B$ whose functor of points is defined as
	follows: For $T$ a $B$-scheme, 
	$\hur G n Z {\mathcal S} C B(T)$
	is the groupoid
	\begin{align*}
		\left( D, i: D \to C_T, X, h: X \to C_T  \right)
	\end{align*}
	satisfying the following conditions:
		\begin{enumerate}
			\item $D$ is a finite \'etale cover of $T$ of degree $n$.
	        \item $i$ is a closed immersion $i: D \subset C_T$ which is
			disjoint from $Z_T \subset C_T$.
		\item $X$ is a smooth proper relative curve over $T,$ not
		necessarily having geometrically
		connected fibers.
		\item $h: X \to C_T$ is a finite locally free Galois $G$-cover,
			(meaning that $G$ acts simply transitively on the
			geometric generic fiber of $h$,) which is
			\'etale away from $Z_T \cup i(D) \subset C_T$.
		\item Let ${\overline{t}} \to T$ be a geometric point.
			Let $\overline{\eta}$
		denote the geometric generic point of $(C_T)_{\overline{t}}$. Then the
			representation $\rho:
		\pi_1((U_T)_{\overline{t}}-i(D_{\overline{t}}), \overline{\eta}) \to G$ 
		afforded by $h$,
		under the identification of
		$\hom(\pi_1((U_T)_{\overline{t}}-i(D_{\overline{t}}),
		\overline{\eta}),G)$
and $\hom(\pi_1(\Sigma_{g,n+f+1}), G)$
corresponds to an element of $\mathcal S$.
	\item The morphisms between two points $(D_i, i_i, X_i, h_i)$ for $i \in
		\{1,2\}$ are given by $(\phi_D, \psi_X)$ where $\phi_D: D_1
		\simeq D_2$ is an isomorphism so that $i_2 \circ \phi_D = i_2$
		and $\psi_X:X_1 \simeq X_2$ is an isomorphism such $h_2 \circ
		\psi_X = h_1$ and $\psi_X
		= g^{-1} \psi_X g$ for every $g \in G$.
	\end{enumerate}
\end{definition}
\begin{remark}
	\label{remark:hurwitz-stack-algebraic}
The above Hurwitz stacks are algebraic by
\cite[Theorem 1.4.1]{abramovichV:compactifying-the-space-of-stable-maps}.
Specifically, one can construct these Hurwitz stacks as an open substack of the quotient stack
$[{\mathcal K}_{g,n}([C/G],Z,1)/S_n]$, where 
${\mathcal K}_{g,n}([C/G],Z,1)$ is defined in \autoref{notation:stable-maps}.
\end{remark}
\begin{remark}
	\label{remark:center-free-implies-scheme}
	When $G$ is center free, the Hurwitz stacks
	parameterizing connected covers are indeed schemes, see \cite[Theorem 4]{wewers:thesis}.
	However, we will consider Hurwitz stacks parameterizing disconnected
	covers, and, in this case, it is possible that those components may be
	stacks which are not schemes, even when $G$ is center free. This will
	actually occur in the cases we investigate in this paper.
\end{remark}

The following pointed Hurwitz stack, which is a variant of the Hurwitz stack
defined above, will be useful in
connecting Hurwitz stacks to Hurwitz spaces over the complex numbers, described
in terms of tuples of monodromy elements. See \autoref{warning:no-quotient}.
We learned about the following slick construction from
\cite{chang:hurwitz-spaces-nichols-algebras}.
\begin{definition}
	\label{definition:pointed-hurwitz-space}
	With notation as in \autoref{notation:curve-notation},
	suppose there is a section $\sigma: B \to C$ with image contained in
	$Z$.
	Fix an integer $w$ and first define $\mathscr C_{(\sigma,w)}$ to be the root stack of order $w$ along
	$\sigma$, as defined in \cite[Definition
	2.2.4]{cadman:using-stacks-to-impose-tangency}. The fiber of this root
	stack over $\sigma$ is the stack quotient 
$[\left(\spec_B \mathscr O_B[x]/(x^w)\right)/ \mu_w]$ of the relative spectrum $\spec_B \mathscr
O_B[x]/(x^w)$ by $\mu_w$. Let $\widetilde{\sigma} : B \to \mathscr
	C_{(\sigma,w)}$ denote the section over $\sigma$ corresponding to map $B
	\to [\left(\spec_B \mathscr
	O_B[x]/(x^w) \right)/ \mu_w]$ given by the trivial $\mu_w$ torsor over
	$B$, $\mu_{w} \to
	B$, and the $\mu_w$ equivariant map $\mu_{w} \to B \to \spec_B \mathscr
	O_B[x]/(x^w)$.

	Define the {\em $w$-pointed Hurwitz stack}, $\left(\hur G n {\sigma \subset Z} {\mathcal
	S} C B \right)^w$, to be the stack
	whose groupoid of $T$-points is a setoid parameterizing data of the form
	\begin{align*}
		\left( D, 
			h': X \to (\mathscr
		C_{(\sigma,w)})_T, t: T \to X
	\times_{h', (\mathscr C_{(\sigma,w)})_T, \widetilde{\sigma}_T} T, i: D \to C_T, X, h: X \to C_T  \right),
	\end{align*}
	where $D, i,X,$ and $h$ are as defined in
	\autoref{definition:fixed-hur}. We also assume the order of inertia of $h$
	along $\sigma$ is $w$ and define
	$\widetilde{\sigma}_T$ to be the base change of the section
	$\widetilde{\sigma}$ defined above to $T$.
	We also impose the condition that
	$h'$ is a finite locally free $G$-cover, \'etale over
	$\widetilde{\sigma}$, such that the composition of $h': X \to \mathscr
		(C_{(\sigma,w)})_T$ with the coarse
		space map $(\mathscr C_{(\sigma,w)})_T \to C_T$ is $h$,
		and $t: T \to X \times_{(\mathscr C_{(\sigma,w)})_T,
		\widetilde{\sigma}_T} T$
		is a section of $h'$ over $\widetilde{\sigma}$.

		In general, we define the {\em pointed Hurwitz stack} as $\hur G n {\sigma \subset Z} {\mathcal
	S} C B := \coprod_{w \geq 1} \left(\hur G n {\sigma \subset Z} {\mathcal
	S} C B \right)^w.$
\end{definition}

\begin{remark}
	\label{remark:hurwitz-as-quotient}
	It will be useful for later to note that
	there is a $G$ action on 
	$\hur G n {\sigma \subset Z} {\mathcal S} C B$ obtained by sending $t$
	to $g \circ t$, for $g: X \to X$ the automorphism corresponding to $g
	\in G$.
	By construction, the stack quotient $[\hur G n {\sigma \subset Z}
	{\mathcal S} C B/G]$ is $\hur G n {Z} {\mathcal S} C B$.
\end{remark}

Although we will not need the next remark it what follows, it may comfort
the reader who is less familiar with stacks.

\begin{remark}
	\label{remark:}
	In fact,
	$\hur G n {\sigma \subset Z} {\mathcal S} C B$ is a scheme. One
	may verify this by proving it is a finite \'etale cover of $\conf n U
	B$.
\end{remark}

We will see later that the complex points of Hurwitz stacks as in
\autoref{definition:pointed-hurwitz-space}
admit a purely combinatorial description arising from actions of braid groups on finite sets.  We turn to the relevant topology now.

\section{The arc complex spectral sequence}
\label{section:arc-complex}

In this section, we set up the spectral sequence which will relate 
various finite index subgroups of surface braid groups corresponding to Hurwitz spaces
and allow induction arguments to take place.  As usual in arguments of this kind, the decisive fact is the high degree of connectivity of a certain complex, provided to us in this case by a theorem of Hatcher and Wahl.
In \autoref{subsection:coefficient-systems} we define the basic objects, called {\em coefficient systems}, we will
work with associated to surfaces. 
In \autoref{section:homological-stability}, we will show these coefficient systems have nice homological
stability properties. In \autoref{subsection:spectral-sequence} we set up the spectral sequence coming from the arc complex for these coefficient systems.

\subsection{Defining coefficient systems}
\label{subsection:coefficient-systems}

In this subsection, we define coefficient systems, which correspond to a
certain kind of compatible sequence of local systems on the unordered
configuration space of $n$ points on a topological surface with $1$ boundary
component, as $n$ varies. Later, we will show these have desirable
homological stability properties.
We are strongly guided here by the setup in 
\cite{randal-williamsW:homological-stability-for-automorphism-groups}.
We note that there are many different notions related to coefficient systems
in the literature appearing before 
\cite{randal-williamsW:homological-stability-for-automorphism-groups}, see, for
example,
\cite{dwyer:twisted-homological-stability},
\cite{vanderkallen:homology-stability-for-general-linear-groups},
\cite{ivanov:on-the-homology-stability-for-teichmuller-modular-forms}, and
\cite[\S8]{galatiusRW:homological-stability-for-moduli-spaces-i}.

In order to define coefficient systems, which will be our basic objects guiding
our study of homological stability, we begin by introducing some notation for
surface braid groups.

\begin{figure}
\includegraphics[scale=.5]{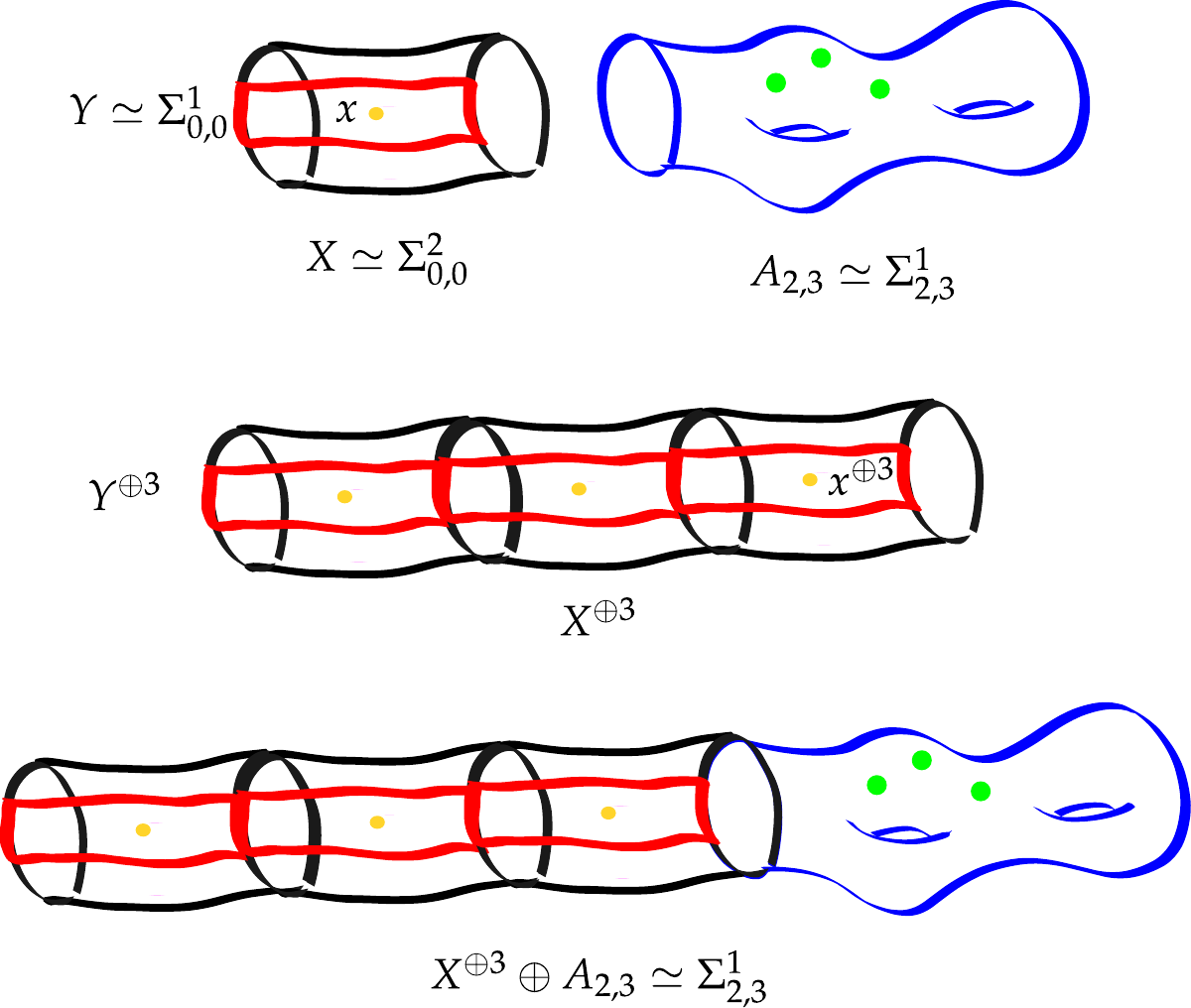}
\caption{
	The blue surface with green punctures is a picture of $A_{2,3} \simeq \Sigma^{1}_{2,3}$ and
	the black surface is $X \simeq \Sigma^2_{0,0}$. The yellow circles
	correspond to the point $x$, the red rectangles are the subsurface $Y$
	with $x \in Y \subset X$. We also depict $X^{\oplus 3}$ and $X^{\oplus 3}
	\oplus A_{2,3}$.}
\label{figure:surface-gluing}
\end{figure}

\begin{notation}
\label{notation:surface-braid-groups}
Let $\Sigma^b_{g,f}$ denote a genus $g$ topological surface with $b$ boundary components and
$f$ punctures.
For $W$ a topological space, we use $\Conf^n_{W}$ for the configuration space
parameterizing tuples of $n$ unordered distinct points on $W$.
Let $A_{g,f} := \Sigma^1_{g,f}$, let $X := \Sigma^2_{0,0}$, and let $x$ be a
point in the interior of $X$. If we think of $X$ as $\R/\Z \times [0,1]$, we may
place $x$ at $(0,1/2)$.  With this same identification, we denote by $Y$ the
rectangle $[-1/4,1/4] \times [0,1]$, thought of as a subspace of $X$. See \autoref{figure:surface-gluing}.

For $n > 0$, define the surface $X^{\oplus n} \oplus A_{g,f}$, which is homeomorphic to
$\Sigma^1_{g,f}$, inductively by gluing the first boundary component of $X$ along
a chosen isomorphism to the boundary component of $X^{\oplus{n-1}} \oplus
A_{g,f}$. 
We suggest the reader consult \autoref{figure:surface-gluing} for a
visualization.
We denote by $x^{\oplus n}$ the $n$-element subset of $X^{\oplus n}
\oplus A_{g,f}$ obtained as the union of the copy of the point $x$ in each of the $n$
copies of $X$.
We also let $X^{\oplus n}$ denote the complement of the interior of $A_{g,f}$ in
$X^{\oplus n} \oplus A_{g,f}$ and we let $Y^{\oplus n} \subset X^{\oplus n}$
denote the subsurface of $X^{\oplus n}$ covered by the $n$ copies of $Y \subset
X$. Again, see \autoref{figure:surface-gluing} for a visualization.

Now, let $B^n_{g,f} := \pi_1(\Conf^n_{X^{\oplus n} \oplus A_{g,f}}, x^{\oplus
n})$ denote the surface braid group.  The natural map
\beq
Y^{\oplus i} \coprod (X^{\oplus n-i} \oplus A_{g,f}) \to X^{\oplus i} \coprod (X^{\oplus n-i} \oplus A_{g,f}) \to 
X^{\oplus n} \oplus A_{g,f}
\eeq
induces a map $\Conf^i_{Y^{\oplus i}} \times \Conf^{n-i}_{X^{\oplus n-i} \oplus
A_{g,f}} \to \Conf^n_{X^{\oplus n} \oplus A_{g,f}}$ which sends $x^{\oplus i} \coprod x^{\oplus n-i}$ to $x^{\oplus n}$.  We note that $Y^{\oplus i}$ is homeomorphic to a disc embedded in $X^{\oplus i}$, so the fundamental group of the configuration space $\Conf^i_{Y^{\oplus i}}$ is just the usual Artin braid group on $i$ strands.  We thus get a map of fundamental groups
\beq
\pi_1(\Conf^i_{Y^{\oplus i}}, x^{\oplus i}) \times \pi_1(\Conf^{n-i}_{X^{\oplus n-i} \oplus A_{g,f}}, x^{\oplus n-i}) \to \pi_1(\Conf^n_{X^{\oplus n} \oplus A_{g,f}},x^{\oplus n})
\eeq
or, in shorter terms, $B^i_{0,0} \times B^{n-i}_{g,f} \to B^n_{g,f}$.
\end{notation}

\begin{remark}
    By means of the homeomorphism between $X^{\oplus n} \oplus A_{g,f}$ and $\Sigma_{g,f}^1$, we may think of $B^n_{g,f}$ as the usual surface braid group on $n$ strands in a genus $g$ surface with $f$ punctures and a boundary component.  We have chosen to define $B^n_{g,f}$ in this more specific way because it will help us keep track of the maps between braid groups we will need to invoke.
\end{remark}
\begin{remark}
	\label{remark:}
	The reason for us introducing $Y$ in
	\autoref{notation:surface-braid-groups}, instead of just using $X$, is to
	obtain an inclusion $B^i_{0,0} = B^i_{0,0} \times \{1\} \subset
	B^i_{0,0} \times B^{n-i}_{g,f} \to B^n_{g,f}$, which gives an inclusion
	from a braid group for a surface with $1$ boundary component instead of
	from a surface with two boundary components.
	In fact, $X$ is not really needed for our arguments, and would be
	slightly more efficient not to introduce $X$ at all and to simply work with $Y$, as is essentially done in 
	\cite[\S5.6.1]{randal-williamsW:homological-stability-for-automorphism-groups}.
	Primarily for psychological and aesthetic reasons to reflect the way we
	were thinking about the monoidal operations, we prefer to view $Y$ as
	sitting inside of $X$.
	The key point of our homological stability results is that we will view
	certain systems of representations of $B^n_{g,f}$ as modules-like
	objects for
	systems of representations of $B^n_{0,0}$, and in order to define the
	module structure, the inclusion $B^n_{0,0} \to B^n_{g,f}$ is essential.
\end{remark}

We next define coefficient systems.
Our definition of coefficient systems is inspired by \cite[Definition
4.1]{randal-williamsW:homological-stability-for-automorphism-groups}, though it is not exactly the same.

\begin{definition}
\label{definition:coefficient-system}
For $k$ a field, {\em a coefficient system for $\Sigma^1_{0,0}$} is a
sequence of $k$ vector spaces $(V_n)_{n \geq 0}$ with actions $B^n_{0,0} \times
V_n \to V_n$
so that $V_0 := k$, $V_n :=
V_1^{\otimes n}$, 
and so that the $B^n_{0,0}$ action on $V_n$
satisfies the following condition.
For any $0 \leq i \leq n$, the diagram
\begin{equation}
\label{equation:coefficient-system-compatibility}
\begin{tikzcd} 
(B^i_{0,0} \times B^{n-i}_{0,0}) \times V_i \otimes V_{n-i}  \ar {r} \ar {d} &
V_i \otimes V_{n-i}  \ar {d} \\
B^n_{0,0} \times V_n \ar {r} & V_n
\end{tikzcd}\end{equation}
commutes, with maps described as follows:
the right vertical map is induced by the isomorphism
coming from the definition of $V_n$, 
the left vertical map is induced by this isomorphism together with the inclusion
$B^i_{0,0} \times B^{n-i}_{0,0} \to B^n_{0,0}$ described in 
\autoref{notation:surface-braid-groups},
and the horizontal maps are induced by the
given actions of $B^j_{0,0}$ on $V_j$.
\end{definition}

\begin{remark}
\label{remark:braid-structure}
If $(V_n)_{n \geq 0}$ is a coefficient system, then $V_1$ naturally has
the structure of a braided vector space coming from the action of a specified generator of
$B^2_{0,0}\simeq \mathbb Z$ on
$V_2 = V_1 \otimes V_1$. 
For any braided vector space $V$, the tensor powers $V^{\tensor n}$ acquire
actions of $B^n_{0,0}$ satisfying
\eqref{equation:coefficient-system-compatibility}.  So, the
definition of coefficient system for $\Sigma^1_{0,0}$ is equivalent to that of a braided vector space.

We chose to set up \autoref{definition:coefficient-system} as we did so that its
structure is analogous to that of coefficient systems for higher genus surfaces,
which we define next.
\end{remark}

\begin{definition}
\label{definition:coefficient-system-over}
Fix a field $k$ and let $V$ be a fixed coefficient system for $\Sigma^1_{0,0}$.
For $g, f \geq 0$, {\em a coefficient system for $\Sigma^1_{g,f}$ over $V$}
is a sequence of $k$ vector spaces $(F_n)_{n \geq 0}$ with actions
$B^n_{g,f} \times F_n \to F_n$
so that $F_n := V_n \otimes
F_0$ and the $B^n_{g,f}$ action on $F_n$ satisfies the following condition.
For any $0 \leq i \leq n$,
the diagram
\begin{equation}
\label{equation:coefficient-over-compatibility}
\begin{tikzcd} 
(B^i_{0,0} \times B^{n-i}_{g,f}) \times V_i \otimes F_{n-i}  \ar {r} \ar {d} &
V_i \otimes F_{n-i}  \ar {d} \\
B^n_{g,f} \times F_n \ar {r} & F_n
\end{tikzcd}\end{equation}
commutes, with maps described as follows:
the right vertical map is an equality
coming from the definition of $F_n$,
the left vertical map is induced by the above equality and the inclusion
$B^i_{0,0} \times B^{n-i}_{g,f} \to B^n_{g,f}$ described in 
\autoref{notation:surface-braid-groups}, and the horizontal maps are induced by the
given actions of $B^j_{0,0}$ on $V_j$
and $B^j_{g,f}$ on $F_j$.
\end{definition}

\begin{remark}
\label{remark:}
It is natural to think of coefficient systems (over $V$) as a compatible sequence of local
systems on $\Conf^n_{\Sigma^1_{g,f}}$. The compatibility condition amounts to
commutativity of the diagram \eqref{equation:coefficient-over-compatibility}.
\end{remark}

\begin{remark}
Just as a braided vector space is determined by a finite amount of linear
algebraic data (an endomorphism of $V_1^{\tensor 2}$ satisfying a certain
identity) it would be interesting to define a coefficient system for
$\Sigma^1_{g,f}$ over $V$ in a similar way, in the spirit of the definitions
introduced by Hoang in \cite[\S3]{hoang:fox-neuwirth-cells-resultant}.
\end{remark}

We next describe coefficient systems related to Hurwitz spaces, which come from
maps from
$\pi_1(A_{g,f})$ to a finite group.

\begin{example}
	\label{example:hurwitz-coefficient-system}
	Fix $g, f \geq 0$.
    Let $G$ be a finite group and $c$ a conjugacy-closed subset of $G$,
    and use notation as in \autoref{notation:surface-braid-groups}.
    Choose a basepoint $p_{g,f}$ on $A_{g,f}$.
    Note that $B^n_{g,f}$ acts on 
	$\pi_1(X^{\oplus n} \oplus A_{g,f} -
    x^{\oplus n}, p_{g,f})$ and hence on 
	$\hom(\pi_1(X^{\oplus n} \oplus A_{g,f} -
	x^{\oplus n}, p_{g,f}), G)$.
	Since $\pi_1(X^{\oplus n} \oplus A_{g,f} -
	x^{\oplus n}, p_{g,f})$ is a free group on $n + 2g + f$ generators, we
	can identify the above set of homomorphisms with $G^{n+2g+f}$.
    Choose subsets
    $\inertiaindex n G c g f \subset \hom(\pi_1(X^{\oplus n} \oplus A_{g,f} -
    x^{\oplus n}, p_{g,f}), G)$
(under the identification
$\hom(\pi_1(X^{\oplus n} \oplus A_{g,f} -
x^{\oplus n}, p_{g,f}), G) \simeq G^{n+2g+f}$)
    so that $\inertiaindex n G c g f = c \times \inertiaindex {n-1} G c g f$ and
    $\inertiaindex n G c g f$ 
    is closed
    under the action of $B^n_{g,f}$ on $\hom(\pi_1(X^{\oplus n} \oplus A_{g,f}-x^{\oplus n}, p_{g,f}),G)$.
    Write $H_{\inertiaindex n G c g f}$ for
    the vector space freely spanned over $k$ by the subset
    $\inertiaindex n G c g f \subset \hom(\pi_1(X^{\oplus n} \oplus A_{g,f} -
    x^{\oplus n}, p_{g,f}),G)$.
   
    Specializing the above to the case $g = f = 0$,
    the action of $B^n_{0,0}$ on 
	$\inertiaindex n G c 0 0$
     induces an action of $B^n_{0,0}$ on $H_{\inertiaindex n G c 0 0}$.
    We denote by $H_{\allinertia G c 0 0}$ the coefficient system $V$
    for $\Sigma^1_{0,0}$
    given by 
    $V_n = H_{\inertiaindex n G c 0 0}$.
    This corresponds to the usual action of the Artin braid group on Nielsen
    tuples that underlies the classical combinatorial description of Hurwitz
    spaces of covers of the disc.
    Further, we denote by $H_{\allinertia G c g f}$ 
	the coefficient system $F$ for $\Sigma^1_{g,f}$
    over $H_{\allinertia G c 0 0}$ given by $F_n := H_{\inertiaindex n G c g f}$.
\end{example}
\begin{warn}
	\label{warning:no-quotient}
	We note that the cover of configuration space afforded by the coefficient system $H_{\allinertia G c g f}$ 
	in \autoref{example:hurwitz-coefficient-system}
	is not exactly the same thing as the space of complex points of the
	Hurwitz stack defined in \autoref{definition:fixed-hur}, but rather it
	is the complex points of the pointed Hurwitz space from
	\autoref{definition:pointed-hurwitz-space}.
	The sets
	$\inertiaindex n G c g f$ carry an action of $G$ by conjugation
	and, via \autoref{remark:hurwitz-as-quotient},
	the Hurwitz stack as in 
	\autoref{definition:fixed-hur} is the quotient
	of the cover afforded by $\inertiaindex n G c g f$ by this $G$-action. 

	The reason we work more with this quotient is that it is easier to
	access
	from the point of view of moduli theory in algebraic geometry, while the unquotiented version is more suitable for the topological arguments we will make over the next several sections.  This is easiest to see in the case $(g,f) = (0,0)$, where an element of $\inertiaindex n G c g f$ is an $n$-tuple of elements of $c$.  Then the concatenation operation $c^m \times c^n \ra c^{m+n}$ plays a key role in our arguments; but there is no well-defined concatenation on $c^m / G \times c^n / G$. 
\end{warn}

\begin{example}
	\label{example:trivial-coefficient-system}
	Take $V$ to be the coefficient system for $\Sigma^1_{0,0}$ with $V_i =
	k$ and the trivial action for all $i$.
	We call $V$ the {\em trivial coefficient system} for $\Sigma^1_{0,0}$.
	Let $F_0$ be a vector space. Then
	$F_i := F_0$ defines a coefficient system where the action of
	$B^n_{g,f}$ on $F_0$ is trivial.
\end{example}

\begin{remark}
	A different but related notion of ``coefficient system'' is 
    considered in
    \cite{randal-williamsW:homological-stability-for-automorphism-groups}.
    Their precise definition doesn't concern us here, but the coefficient
    systems they consider, which they call {\em
    finite degree}, 
    have the property that their sequence of finite-dimensional vector spaces $V = \{V_n\}_{n \in \mathbb Z_{\geq
    0}}$, $\dim V_n$ is
    eventually polynomial in $n$.  
    The coefficient system $H_{\allinertia G c g f}$ considered above, by
    contrast, have $\dim H_{\inertiaindex n G c g f}$ growing exponentially in $n$. More
    precisely, the dimension grows proportionally to $|c|^{n}$.  
\end{remark}

\subsection{The spectral sequence}
\label{subsection:spectral-sequence}

Our next main result is \autoref{proposition:arc-complex-spectral-sequence},
which sets up a spectral sequence coming from the arc complex.
In order to describe this, we first describe the $\mathcal K$ complex associated to a
module.

\begin{definition}
	\label{definition:k-complex}
	Let $V$ be a coefficient system for $\Sigma^1_{0,0}$.
	Let $R^V = \oplus_{n \geq 0} H_0(B^n_{0,0}, V_n)$,
	which has the
	structure of a graded ring induced by the isomorphisms $V_s \otimes V_r
	\to
	V_{r+s}$.
	Let $M$ be a graded $R^V$-module and let $\{M\}_n$ denote the $n$th
	graded part of $M$.
	Let
	$\mathcal K(M)$ denote the complex of graded $R^V$-modules whose $q$th term is $\mathcal
	K(M)_q := V_q \otimes
	M[q]$. That is, $\mathcal K(M)$ is given by
	\begin{align*}
		\cdots \to V_n \otimes M[n] \to \cdots \to V_1 \otimes
		M[1] \to M[0]	
	\end{align*}
	where $M[i]$ denotes the shift by grading $i$ so that 
	$\{M[i]\}_n=\{M\}_{n-i}$. Here we treat $V_i$ as living in grading $i$.	

	To define the differential, we next introduce some notation.
	Using $\tau$ to denote the braiding automorphism of $V_1 \otimes V_1$
	from \autoref{remark:braid-structure}, for $1 \leq i < n$, we let 
	$\tau^n_{i}: V_1^{\otimes
	n} \to V_1^{\otimes n}$ denote the automorphism $\tau^n_i :=
	\id^{\otimes i-1} \otimes \tau \otimes \id^{\otimes n - i-1}$, which
	applies $\tau$ to the $i$ and $i+1$ factors.
	For $1 \leq i \leq j \leq n$, we define $\tau^n_{i,j} := \tau_{j-1}^n
	\cdots \tau_{i+1}^n \tau_i^n$. 
	(Intuitively, $\tau^n_{i,j}$ corresponds to the element of $B^n_{0,0}$
	pulling the $i$th strand past strands $i+1, \ldots, j$, to position $j$.)
	So, in particular, $\tau^n_i = \tau^n_{i,
	i+1}$ and $\id = \tau^n_{i,i}.$
	We use $\mu_n : V_1 \otimes
	\{M\}_n \to \{M\}_{n+1}$ to denote the multiplication map coming from the
	structure of $M$ as a $R^V$-module.
	As mentioned above, we use $\{M\}_n$ to denote the $n$th graded piece
	of a graded module $M$, 
	and then the differential on $\mathcal K(M)$ is given by
	\begin{equation}
	\begin{aligned}
	\label{equation:k-differential}
	\{\mathcal K(M)_{q+1}\}_{n} & \rightarrow \{\mathcal K(M)_q\}_n \\
	(v_0 \otimes \cdots \otimes v_q) \otimes m & \mapsto \sum_{i=0}^q (-1)^i (\id^{\otimes q}
	\otimes \mu_{n-q-1}) \left(\tau_{i+1,q+1}^{q+1}(v_0 \otimes \cdots \otimes v_q) \otimes m\right).
	\end{aligned}
	\end{equation}
\end{definition}

The main case of \autoref{definition:k-complex} we will be interested in is when
our module for
$R^V$ is of the form $M_p^{V,F}$, which we now define.

\begin{notation}
\label{notation:homology-coefficient-system}
Given a coefficient system $V$ for $\Sigma^1_{0,0}$
and
a coefficient system $F$ for $\Sigma^1_{g,f}$ over $V$,
define $M_p^{V,F} := \oplus_{n\geq 0} H_p(B^n_{g,f}, F_n)$, where
here the homology denotes group homology.
\end{notation}

In the case our coefficient system is of the form $M_p^{V,F}$, we next
describe the map
$\mu_n$ concretely as well as the $R^V$-module structure on $M_p^{V,F}$.

\begin{remark}
\label{remark:mp-module-structure}
In the case we take our module for $R^V$ in
\autoref{definition:k-complex} to be $M_p^{V,F}$ from
\autoref{notation:homology-coefficient-system},
	we can describe the map $\mu_n : V_1 \otimes
	\{M_p^{V,F}\}_n \to \{M_p^{V,F}\}_{n+1}$ concretely as follows.
	The inclusion $B^n_{g,f} \to B^{n+1}_{g,f}$ from
	\autoref{notation:surface-braid-groups} coming from the inclusion
	$X^{\oplus n} \oplus A_{g,f} \to X^{\oplus n + 1} \oplus A_{g,f}$
	induces a cross product map 
	\begin{align*}
		V_1 \otimes H_p(B^n_{g,f}, F_n) &\simeq H_0(B^1_{0,0}, V_1) \otimes
		H_p(B^n_{g,f}, F_n)
		\\
		&\to 
		H_0(B^1_{0,0} \times B^n_{g,f}, V_1 \otimes F_n) \\
		&\to
		H_p(B^{n+1}_{g,f}, V_1 \otimes F_n) \\
		&\simeq H_p(B^{n+1}_{g,f},
		F_{n+1}).
	\end{align*}
	The third map above is the map induced on homology by the homomorphism
	$B^1_{0,0} \times B^n_{g,f} \to B^{n+1}_{g,f}$ as defined at the end of
	\autoref{notation:surface-braid-groups}, where we take $n+1$ here in
	place of $n$ there and $1$ here in place of $i$ there.
	More generally, for $n \geq m$, the inclusions
	$B^m_{0,0} \times B^{n-m}_{g,f} \to B^n_{g,f}$ from 
	\autoref{notation:surface-braid-groups} give $M_p^{V,F}$ the structure
	of a $R^V$-module via the cup product map
	\begin{align*}
		H_0(B^i_{0,0}, V_i) \otimes H_p(B^n_{g,f}, F_n) \to H_p(B^{n+i}_{g,f}, V_i\otimes F_n) \simeq H_p(B^{n+1}_{g,f}, F_{n+i}).
	\end{align*}
\end{remark}

We now describe the spectral sequence coming from the arc complex. For a
picture of the $E^2$ page of this spectral sequence, see
\autoref{figure:spectral-sequence-picture}.
(We include the picture only in a later section as we believe it is helpful
to see it side by
side the proof of \autoref{theorem:1-controlled-bound}.)

\begin{proposition}
	\label{proposition:arc-complex-spectral-sequence}
	Fix a non-negative integer $n$, corresponding to a choice of grading.
	Let $V$ be a coefficient system for $\Sigma^1_{0,0}$ and let
	$F$ be a coefficient system for $\Sigma^1_{g,f}$ over $V$.
	There is a homological spectral sequence of $k$-modules $E^1_{q,p}$ converging to $0$
	in dimensions $q + p \leq n - 1$,
where the $p$th row $(E^1_{\ast,p}, d_1)$ is isomorphic to the $n$th graded
piece of $\mathcal K(M_p^{V,F})$. That is, $E^1_{q,p}$ is the $n$th graded piece of
$\mathcal K(M_p^{V,F})_q$ for $p,q \geq 0.$
\end{proposition}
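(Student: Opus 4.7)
The plan is to realize the claimed spectral sequence as the skeletal spectral sequence associated to the action of the surface braid group $B^n_{g,f}$ on a highly-connected semi-simplicial complex of arcs on $X^{\oplus n} \oplus A_{g,f}$, augmented by an empty simplex and with the filtration-degree reindexed by $+1$; this realization will simultaneously give both the vanishing of the abutment in low degrees and the identification of the $E^1$-page with the $n$th graded piece of $\mathcal{K}(M_p^{V,F})$.

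First, I would introduce the semi-simplicial set $\mathcal{A}_\bullet(n)$ whose $(q-1)$-simplices (for $q \geq 1$) are ordered $q$-tuples of pairwise disjoint, non-isotopic, parameterized embedded arcs from the unique boundary component of $X^{\oplus n} \oplus A_{g,f}$ to distinct marked points of $x^{\oplus n}$, together with a single augmenting $(-1)$-simplex. The Hatcher--Wahl connectivity theorem for arc complexes on surfaces with a distinguished boundary component, applied after isotoping all arcs into the disc-subsurface $X^{\oplus n}$ so that $A_{g,f}$ contributes only passively, yields that $\mathcal{A}_\bullet(n)$ is $(n-1)$-connected. Consequently the augmented chain complex $\tilde{C}_*(\mathcal{A}_\bullet(n))$ is $(n-1)$-acyclic, so the hyperhomology $\mathbb{H}_*(B^n_{g,f};\, \tilde{C}_*(\mathcal{A}_\bullet(n)) \otimes F_n)$ vanishes in total degree at most $n-1$; this is the vanishing of the abutment claimed in the proposition.

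Next, I would compute the $E^1$-page of the filtration-by-simplicial-degree spectral sequence, reindexed so that the augmentation sits in filtration degree $0$ and $q$-tuples of arcs sit in filtration degree $q$. For $q = 0$, we have $E^1_{0,p} = H_p(B^n_{g,f}; F_n) = \{M_p^{V,F}\}_n$, which matches $V_0 \otimes M[0]$ since $V_0 = k$. For $q \geq 1$, $B^n_{g,f}$ acts transitively on the ordered $q$-simplices, and a standard representative — obtained by running the $q$ parameterized arcs through the subsurface $Y^{\oplus q} \subset X^{\oplus q} \subset X^{\oplus n} \oplus A_{g,f}$ from \autoref{notation:surface-braid-groups}, ending at the first $q$ marked points in their natural boundary-induced order — has pointwise stabilizer exactly $B^{n-q}_{g,f}$, corresponding to braids supported in the complement $X^{\oplus n-q} \oplus A_{g,f}$ of the disc containing the arcs. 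The coefficient-system compatibility \eqref{equation:coefficient-over-compatibility} identifies $F_n|_{B^{n-q}_{g,f}}$ with $V_q \otimes F_{n-q}$, where $B^{n-q}_{g,f}$ acts trivially on $V_q$ and via the coefficient system on $F_{n-q}$. Shapiro's lemma then yields
\[
E^1_{q,p} \;=\; V_q \otimes H_p\bigl(B^{n-q}_{g,f};\, F_{n-q}\bigr),
\]
which is exactly the $n$th graded piece of $\mathcal{K}(M_p^{V,F})_q$.

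Finally, I would verify that the $d_1$-differential matches \eqref{equation:k-differential}. The $i$th semi-simplicial face map on $\mathcal{A}_\bullet(n)$ forgets the $i$th arc in the boundary-ordering; applied to the standard representative, it first braids the $i$th arc past the arcs to its right so that it becomes the outermost arc, which is precisely the braid operator $\tau^n_{i,n}$ acting on $V_q = V_1^{\otimes q}$ in the sense of \autoref{definition:k-complex}, and then absorbs the outermost arc into the complementary surface — an operation that corresponds via the cup-product description of \autoref{remark:mp-module-structure} exactly to the $R^V$-module multiplication map $\mu_{n-q}$. The alternating signs $(-1)^i$ arise from the standard semi-simplicial sign convention. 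The main technical hurdle will be matching the braiding and cup-product conventions on both sides exactly, including signs and the composition order of $\tau^n_{i,n}$ versus $\mu_{n-q}$; this extends the analogous check in \cite{EllenbergVW:cohenLenstra} for $(g,f) = (0,0)$ by exploiting the fact that the arc-supporting subsurface $X^{\oplus q}$ is a disc disjoint from $A_{g,f}$, so the braid-group rearrangement of the arcs reduces on the nose to the standard Artin braid-group action appearing in their setup, while the $A_{g,f}$ component is merely carried along in the stabilizer factor $B^{n-q}_{g,f}$.
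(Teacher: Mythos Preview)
Your approach is essentially the same as the paper's: both construct the spectral sequence from the skeletal filtration of the Hatcher--Wahl arc complex on $\Sigma^1_{g,f}$, identify the stabilizer of a standard ordered $q$-tuple of arcs with $B^{n-q}_{g,f}$, apply Shapiro together with the compatibility \eqref{equation:coefficient-over-compatibility} to get $E^1_{q,p} \simeq V_q \otimes H_p(B^{n-q}_{g,f};F_{n-q})$, and check the $d_1$-differential against \eqref{equation:k-differential} by tracking the face maps through the braid $\tau_{i,n}$ followed by the module multiplication, exactly as in the $(g,f)=(0,0)$ case of \cite{EllenbergVW:cohenLenstra}.

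One step of your write-up is not right, though it does not break the argument. Your justification for the connectivity --- ``isotoping all arcs into the disc-subsurface $X^{\oplus n}$ so that $A_{g,f}$ contributes only passively'' --- is incorrect: the arc complex contains arcs that wind around handles of $A_{g,f}$ or around the $f$ punctures, and these are not isotopic to any arc supported in the disc $X^{\oplus n}$. No such reduction is available, nor is one needed: the Hatcher--Wahl theorem \cite[Proposition~7.2]{hatcherW:stabilization-for-mapping-class-groups} applies directly to the surface $\Sigma^1_{g,f}$ with $n$ marked interior points and gives $(n-2)$-connectivity of the unaugmented complex (not $(n-1)$ as you wrote). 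After augmentation and your shift-by-one in filtration degree, this yields vanishing of the abutment for $p+q \leq n-1$, matching the proposition. The paper invokes Hatcher--Wahl directly on $\Sigma^1_{g,f}$, and you should do the same.
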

\begin{proof}
The proof is a fairly immediate generalization of \cite[Proposition
5.1]{EllenbergVW:cohenLenstra}. We now fill in some of the details.
One minor difference is that we opt to use an augmented version of the arc
complex so that the spectral sequence converges to $0$, instead of $\mathcal K(M_p^{V,F})$
as in \cite[Proposition 5.1]{EllenbergVW:cohenLenstra}, and we will also use
\cite[Lemma 5.21]{randal-williamsW:homological-stability-for-automorphism-groups} to
demonstrate high connectivity of a relevant complex.

We begin by defining a combinatorial version of the arc complex, which we denote
$\mathbb A(g,f,n)$.
The spectral sequence will be obtained from filtering this complex by the
dimension of its simplices.
For $-1 \leq q \leq n-1$, let $L_q \subset B^n_{g,f}$ denote the subgroup $L_q
\simeq B^{n-q-1}_{g,f}$ obtained via the inclusion
$B^{n-q-1}_{g,f} \subset B^n_{g,f}$ coming from
\autoref{notation:surface-braid-groups}.
If $q = n-1$, $L_q$ is the trivial group. Define $\mathbb A(g,f,n)_q :=
B^n_{g,f}/L_q$ as a $B^n_{g,f}$ set.
Define the faces of the $q$-simplex $bL_q$ by $\partial_i(bL_q) = b s_{q,i}
L_{q-1}$ for $0 \leq i \leq q$, where $s_{q,i} = \sigma_{i+1} \sigma_{i+2}
\cdots \sigma_q$ and $\sigma_i$ denotes an elementary transformation moving the
$i$th point counterclockwise around the $i+1$st point in $\pi_1(\Conf^n_{\Sigma^1_{g,f}}) \simeq
B^n_{g,f}$.
Here, $s_{q,q} = 1$.
An identical computation to \cite[Proposition 5.3]{EllenbergVW:cohenLenstra}
shows $\partial_i \partial_j = \partial_{j-1} \partial_i$ for $i < j$, implying
$\mathbb A(g,f,n)$ is a semi-simplicial set.
It will follow from \cite[Lemma 5.18 and Lemma 5.21]{randal-williamsW:homological-stability-for-automorphism-groups}
that the part of this semi-simplicial set in non-negative degrees,
$\mathbb A(g,f,n)_{q \geq 0}$,
is $(n-2)$-connected.
(Recall $X$ is {\em $n$-connected} if $X$ is non-empty and $\pi_i(X) = 0$ for $i \leq n$.)
This will then imply the homology of the augmented semi-simplicial set 
$\mathbb A(g,f,n)$ vanishes in degree at most $n-2$.
For the reader not familiar with
\cite{randal-williamsW:homological-stability-for-automorphism-groups}, we
next explain 
why $\mathbb A(g,f,n)_{q \geq 0}$,
is $(n-2)$-connected in more detail.

%
To verify $(n-2)$-connectedness
of
$\mathbb A(g,f,n)_{q \geq 0}$,
we next define another semi-simplicial set, which we denote by
$W_n(A,X)$;
here $A = (\Sigma^1_{g,f},J), X =
(S^1 \times [0,1], I)$, for $J$ and $I$ two parameterized intervals on the
boundaries of $\Sigma^1_{g,f}$ and $S^1 \times[0,1]$.
We will explain why $W_n(A,X)$ is equivalent to 
$\mathbb A(g,f,n)_{q \geq 0}$ and why it is $(n-2)$-connected.
As in
\cite[\S5.6]{randal-williamsW:homological-stability-for-automorphism-groups},
we let $\mathcal M_2$ denote the groupoid of {\em decorated surfaces}
$(S,I)$, where $S$ is a compact connected surface with at least one boundary
component and $I: [-1,1] \to \partial S$ is a parameterized interval on its
boundary. Morphisms in $\mathcal M_2$ are diffeomorphisms restricting to the
identity on a neighborhood of $I$.
Boundary connected sum induces a monoidal product $\oplus$ on $\mathcal M_2$, as
explained in 
\cite[\S5.6.1]{randal-williamsW:homological-stability-for-automorphism-groups}.
As in 
\cite[\S1.1]{randal-williamsW:homological-stability-for-automorphism-groups},
let $U\mathcal M_2$ denote the category with the same objects as $\mathcal M_2$,
and with morphisms
given by $\hom_{U\mathcal M_2}(A,B) := \colim_{Y \in \mathcal M_2} \hom_{\mathcal
M_2}(Y \oplus A, B)$.
As in \cite[Definition
2.1]{randal-williamsW:homological-stability-for-automorphism-groups},
we then define the semi-simplicial $W_n(A,X)$ whose $p$-simplices are given by
$W_n(A,X)_p := \hom_{U\mathcal M_2}(X^{\oplus p+1}, X^{\oplus n} \oplus A)$ with
face maps $d_i : \hom_{U\mathcal M_2}(X^{\oplus p+1}, X^{\oplus n} \oplus A) \to
\hom_{U\mathcal M_2}(X^{\oplus p}, X^{\oplus n} \oplus A)$ obtained by
precomposing with the inclusion $X^{\oplus i} \oplus \iota_X \oplus X^{\oplus
p-i}$, for $\iota_X : 0 \to X$ the unit.
Let $v_q \in W_n(A,X)_q$ denote the $q$-simplex corresponding to the map $X^{\oplus q+1} \oplus
\iota_X^{\oplus n - q -1} \oplus \iota_A$.
There is a natural $B^n_{g,f}$-equivariant map
$\mathbb A(g,f,n)_{q \geq 0} \to W_n(A,X)$ induced by sending
$b \mapsto b \cdot v_q$.
Let us now explain why this map is an equivalence of semi-simplicial sets.
By \cite[Lemma
5.18]{randal-williamsW:homological-stability-for-automorphism-groups},
the category $U\mathcal M_2$ defined there is ``locally
homogeneous'' at $(A,X)$. This means the two axioms LH1 and LH2
from 
\cite[Definition
1.2]{randal-williamsW:homological-stability-for-automorphism-groups} are
satisfied;
LH1 guarantees transitivity of
$B^n_{g,f}$ on
$q$-simplices of
$W_n(A,X)$
and LH2 implies the stabilizer in $B^n_{g,f}$ of any $q$-simplex in
$W_n(A,X)$
is
$B^{n-q-1}_{g,f}$.
(See also steps (b) and (c) in the proof of Theorem 3.1 \cite[p.
563]{randal-williamsW:homological-stability-for-automorphism-groups} for further
explanation.)
The transitivity of the action and identification of stabilizers together imply
the map
$\mathbb A(g,f,n)_{q \geq 0} \to W_n(A,X)$
is an equivalence.
Finally, it is shown in 
\cite[Lemma
5.21]{randal-williamsW:homological-stability-for-automorphism-groups} and the
paragraph following it that $W_n(A,X)$ is $(n-2)$-connected, and therefore 
$\mathbb A(g,f,n)_{q \geq 0}$ is as well.

We next describe the claimed spectral sequence.
We will write $k\{\mathbb A(g,f,n)\}$ to denote the free vector space on
the simplices of $\mathbb A(g,f,n)$, which we view both as a semi-simplicial
$k$-module and as a $B^n_{g,f}$ representation.
Since, as we explained above, the non-augmented semi-simplicial set
$\mathbb A(g,f,n)_{q \geq 0}$ is $(n-2)$-connected, we obtain that
$H_p(B^n_{g,f}, k\{\mathbb A (g,f,n)\}\otimes F_n ) = 0$ for
$p \leq n-2$.
We can also identify $H_p( B^n_{g,f}, \left(k\{\mathbb A(g,f,n)_q\} \otimes F_n \right) )$ with
$\{\mathcal K(M_p^{V,F})_{q+1}\}_n$ via the isomorphisms 
\begin{align*}
H_p\left(B^n_{g,f}, k\{\mathbb A(g,f,n)_q\} \otimes F_n \right) &\simeq H_p(L_q,
F_n) \\
&\simeq H_p(B^{n-q-1}_{g,f} , V_1^{\otimes q+1} \otimes F_{n-q-1}) \\
&\simeq 
V_1^{\otimes q+1} \otimes H_p(B^{n-q-1}_{g,f} , F_{n-q-1}) \\
&\simeq \{\mathcal K(M_p^{V,F})_{q+1}\}_n,
\end{align*}
where the first equivalence follows from a version of Shapiro's lemma and the third equivalence uses that $B^{n-q-1}_{g,f}$ acts on
$V_1^{\otimes q+1} \otimes F_{n-q-1}$ through its action on $F_{n-q-1}$.
Filtering $\left(k\{\mathbb A(g,f,n)\} \otimes F_n \right)$ 
(viewed as a vector space with $B^n_{g,f}$ action)
by the
simplicial structure on $\mathbb A(g,f,n)$, we obtain a spectral sequence
\begin{align}
\label{equation:arc-spectral-sequence}
E^1_{q,p} := H_p(B^n_{g,f}, k\{\mathbb A(g,f,n)_q \} \otimes F_n ) \implies
H_{p+q}\left( B^n_{g,f}, k\{\mathbb A(g,f,n)\} \otimes F_n
\right).
\end{align}
Since $\mathbb A(g,f,n)_{q \geq 0}$ is $(n-2)$-connected, 
$\mathbb A(g,f,n)$ has trivial homology in degrees $\leq n-2$ and hence 
the right hand side of \eqref{equation:arc-spectral-sequence} vanishes for $p +
q \leq n - 2$.
Analogously to \cite[Lemma 5.4]{EllenbergVW:cohenLenstra}, one may verify that
the differential $d_1 : E^1_{q,p} \to E^1_{q-1,p}$ is identified with the $n$th
graded part of the
differential $\mathcal K(M_p^{V,F})_{q+1} \to \mathcal K(M_p^{V,F})_{q}$ as in
\eqref{equation:k-differential}.
The spectral sequence we have now constructed has bounds $-1 \leq q \leq n-1$.
We now have spectral sequence $E^1_{q,p} = K(\mathcal K(M_p^{V,F})_{q+1})$
but we want one with
$E^1_{p,q} = K(\mathcal K(M_p^{V,F})_{q})$.
We can obtain the latter by 
replacing $q$ by $q-1$.
This implies
$-1 \leq q-1 \leq n$, or equivalently 
$0 \leq q \leq n$, and yields vanishing in
degrees $p + (q-1) \leq n-2$, or equivalently $p + q \leq n - 1$.
This gives
desired spectral
sequence, as in the statement.
\end{proof}

\section{Deducing homological stability results for coefficient systems}
\label{section:homological-stability}

In this section, we prove that certain types of coefficient systems have nice
homological stability properties, following closely ideas from
\cite{EllenbergVW:cohenLenstra}.
In \autoref{subsection:1-controlled} we give a general formulation of this
stability property.
In \autoref{subsection:sufficient-condition} we show that finitely generated
modules for coefficient systems with a suitable central element satisfy this
stability property.
Finally, in \autoref{subsection:exponential-bound} we put together all the topological material developed in this section and the previous one to arrive at an exponential bound
on the cohomology of these coefficient systems.  

For the reader primarily interested in our application to Selmer groups, 
only two results from this section are needed in future parts.
First, \autoref{corollary:cohomology-bound} will be used as a central ingredient in the proof of
\autoref{lemma:moments-cohomology}.
Second, \autoref{theorem:central-u-implies-cohomology-bound} will be used in the
proof of
\autoref{theorem:frobenius-equivariance}
to show the trace of Frobenius on the cohomology stabilizes.

\subsection{Homological stability for $1$-controlled coefficient systems}
\label{subsection:1-controlled}

We next prove the main homological stability result of this paper in
\autoref{theorem:1-controlled-bound}, using the arc
complex spectral sequence from the previous section.
To set things up in a general context, we define the notion of a $1$-controlled
coefficient system.
Recall that for $M$ a graded $R^V$ module, we use $\{M\}_n$ to denote the $n$th
graded part of $M$. We define the {\em degree of $M$}, which we notate by $\deg M$, to be
the supremum of all $n$ such that $\{M\}_n \neq 0$.
By definition of $\mathcal K(M_p^{V,F})$, the grading on $M$ induces a grading on 
$\mathcal K(M_p^{V,F})$, which descends to a grading on 
	$H_i(\mathcal K(M_p^{V,F}))$.
The idea is that modules for $1$-controlled coefficient systems have degrees of
their $i$th homologies controlled in terms of degrees of their $0$th and $1$st
homologies.
For $R$ a graded ring, we say an element is {\em homogeneous} if it lies in a
single degree of the grading of $R$.

\begin{definition}
	\label{definition:1-controlled}
	Recall that we denote
	\begin{equation}
		\label{equation:r-v-definition}
R^V :=\oplus_{n \geq 0} H_0(B^n_{0,0}, V_n).
\end{equation}
Note that there are isomorphisms
$V_{m+n} \simeq V_m \otimes V_n$, coming from the assumption that $V_n \simeq
(V_1)^{\otimes n}$.
These isomorphisms, together with the monoidal structure of $\Sigma^1_{0,0}$,
supplies $R^V$ with the structure of a graded ring supported in nonnegative gradings. 
	We assume a homogeneous element $\mathbb U \in R^V$ exists so that
	left multiplication by $\mathbb U$
	induces
	a map $\mathbb U: R^V \to R^V$. 
	We also assume $\mathbb U$ is non-invertible, which means that either $\mathbb U$
	has positive grading or $\mathbb U = 0$ (but $\mathbb U$ cannot be a
	non-zero element in grading $0$).
	We fix such a $\mathbb U$. 
	A coefficient system $V$ for $\Sigma^1_{0,0}$ is {\em
	$1$-controlled} (with respect to the operator $\mathbb U$)
	if $\deg H_0(\mathcal K(R^V))$ and $\deg H_1(\mathcal K(R^V))$ are
	finite and
there exists a constant $A_0(V) \geq 1$ 
so that for any left $R^V$-module $M$, 
	the following two properties hold:
	\begin{enumerate}
		\item We have
\begin{align*}
\deg H_i(\mathcal
K(M)) \leq \max(\deg H_0(\mathcal
K(M)),\deg H_1(\mathcal
K(M))) +A_0(V)i.
\end{align*}
\item The map induced by left multiplication by $\mathbb U$, denoted $\mathbb U: M \to M$,
	induces an isomorphism $\{M\}_n \simeq \{M\}_{n + \deg \mathbb U}$
	for 
\begin{align*}
n \geq  \max(\deg H_0(\mathcal K(M)),\deg H_1(\mathcal
K(M))) + A_0(V).
\end{align*}
	\end{enumerate}
\end{definition}

Next, we give a crucial example of a $1$-controlled coefficient system.
\begin{example}
	\label{example:hurwitz-is-1-controlled}
	Let $G$ be a group and $c \subset G$ be a conjugacy class in $G$.
	We will assume $(G, c)$ is
	non-splitting in the sense of \cite[Definition
	3.1]{EllenbergVW:cohenLenstra},
	meaning that $c$ generates $G$ and for every subgroup $H \subset G$, $H \cap
	c$ 
	either consists of a single conjugacy class in $H$ or is empty.
	Let $V := H_{\allinertia G c 0 0}$, as defined in
	\autoref{example:hurwitz-coefficient-system}.
	Note that the ring 
	$R^V$ 
	is called $R$ in the paper \cite{EllenbergVW:cohenLenstra}.
	As described in \cite[\S3.3]{EllenbergVW:cohenLenstra}, the ring $R^V$
	is generated in degree $1$ by elements of the form $r_g$ (corresponding
	to right multiplication by $g$) for $g \in c$.
	Consider the map $\mathbb U :=\sum_{g \in c} r_g^{\on{ord}(g)}$,
	where $\on{ord}(g)$ denotes the order of $g\in G$.
	We will show in \autoref{proposition:degree-order},
	that $\ker \mathbb U, \coker \mathbb U: R^V \to R^V$ both have finite
	degree.
		As we will show later, $R^V$ is $1$-controlled with respect to the
	operator $\mathbb U$ by
	\autoref{theorem:evw-stability}.\footnote{It was already shown in \cite[Lemma
		3.5]{EllenbergVW:cohenLenstra}
		that there is some value of $D$ such that, for 
		$\mathbb U_D := \sum_{g \in c} r_g^{D\on{ord}(g)}$, both 
$\ker \mathbb U_D, \coker \mathbb U_D$ have finite
	degree.
		It would have been fine for the purposes of this article, except
		for proving \autoref{theorem:frobenius-equivariance}, to use this
	operator $\mathbb U_D$ in place of $\mathbb U$.
	It is shown in 
	\cite[Theorem
	4.2]{EllenbergVW:cohenLenstra} that $R^V$ is $1$-controlled with
	respect to $\mathbb U_D$ for some $D$. 
	However, the proof given there only uses that $\mathbb U_D$ has kernel and
	cokernel of finite degree. Thus, we could also have referred to the proof of
\cite[Theorem 4.2]{EllenbergVW:cohenLenstra} in place of 
\autoref{theorem:evw-stability} for showing that $R^V$ is $1$-controlled with
respect to $\mathbb U$.}
\end{example}

The proof of this next result closely follows the proof of
\cite[Theorem 6.1]{EllenbergVW:cohenLenstra}. Indeed, it corresponds to an
abstraction of the first part of the proof of \cite[Theorem 6.1]{EllenbergVW:cohenLenstra}
to the setting of $1$-controlled coefficient systems.

\begin{theorem}
	\label{theorem:1-controlled-bound}
	Suppose $V$ is a $1$-controlled coefficient system for $\Sigma^1_{0,0}$
	and $F$ is a coefficient system for $\Sigma^1_{g,f}$ over $V$.
	Using notation as in \autoref{notation:homology-coefficient-system},
	assume moreover that 
	$\deg H_0(\mathcal K(M_0^{V,F}))$ and $\deg H_1(\mathcal
K(M_0^{V,F}))$ are finite.
Then, there exist constants $I(V),J(V, F)$ depending on $V$ and $F$ but not
	on $n$ or $p$
	so that
	$\mathbb U$ restricts to an isomorphism $\{M_p^{V,F}\}_n \to \{M_p^{V,F}\}_{n+\deg
	\mathbb U}$ whenever $n > I(V)p + J(V, F)$.
\end{theorem}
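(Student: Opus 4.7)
The plan is to induct on $p$, showing that
\[
e(p) := \max\bigl(\deg H_0(\mathcal K(M_p^{V,F})),\, \deg H_1(\mathcal K(M_p^{V,F}))\bigr)
\]
grows at most linearly in $p$, and then to apply clause (2) of the $1$-controlled condition to the $R^V$-module $M_p^{V,F}$ (whose $R^V$-module structure is recalled in \autoref{remark:mp-module-structure}) to obtain the stabilization of $U$. The base case $p = 0$ is immediate from the hypothesis that $\deg H_0(\mathcal K(M_0^{V,F}))$ and $\deg H_1(\mathcal K(M_0^{V,F}))$ are finite.

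For the inductive step, I would use the arc-complex spectral sequence of \autoref{proposition:arc-complex-spectral-sequence}, which converges to $0$ in the range $p + q \leq n - 1$ and whose $E^2$-page is $E^2_{q,p} = \{H_q(\mathcal K(M_p^{V,F}))\}_n$. The key observation is that, since $d_r$ has bidegree $(-r, r-1)$, no outgoing differential from $(q, p)$ can have a valid target for $q \in \{0, 1\}$ and $r \geq 2$; thus any non-zero class at $E^2_{q,p}$ with $q \in \{0, 1\}$ and $p + q \leq n - 1$ must be hit by an incoming differential from $E^r_{q+r,\, p-r+1}$ with $2 \leq r \leq p+1$. Since $p - r + 1 < p$, these sources lie in strictly lower rows, where the inductive hypothesis applies; combining this with the $1$-controlled bound $\deg H_{q+r}(\mathcal K(M_{p-r+1}^{V,F})) \leq e(p-r+1) + A_0(V)(q+r)$ and the inequality $n \leq p + q$ forced by the convergence range gives
\[
\deg H_q(\mathcal K(M_p^{V,F})) \leq \max\Bigl(p + q,\; \max_{2 \leq r \leq p+1} e(p-r+1) + A_0(V)(q+r)\Bigr)
\]
for $q \in \{0, 1\}$. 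Taking the maximum over $q$ yields a recursion for $e(p)$. Plugging in the ansatz $e(p') \leq A' p' + B'$ for $p' < p$, a short calculation shows that the choice $A' = 3 A_0(V)$ makes every term $e(p-r+1) + A_0(V)(r+1)$ bounded by $A' p + B'$, with equality at $r = 2$ and strict decrease for $r \geq 3$; the constant $B'$ can be chosen depending only on $e(0)$, and hence on $F$.

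Once the linear bound $e(p) \leq A' p + B'$ is in hand, clause (2) of the $1$-controlled property gives that $U : \{M_p^{V,F}\}_n \to \{M_p^{V,F}\}_{n + \deg U}$ is an isomorphism for $n \geq e(p) + A_0(V)$, which is implied by $n > A(V) p + B(F)$ with $A(V) := 3 A_0(V)$ and $B(F) := B' + A_0(V)$. The principal subtlety I expect in implementing this outline is the calibration of constants: one must verify that the incoming differential from $(r = 2,\, p-1)$ is genuinely the binding constraint in the recursion, which is what forces the specific choice $A' \geq 3 A_0(V)$. Otherwise, the argument is a direct generalization of the spectral sequence bookkeeping in \cite{EllenbergVW:cohenLenstra}, upgraded from Hurwitz coefficient systems on the disc to arbitrary coefficient systems on $\Sigma^1_{g,f}$ over a base on $\Sigma^1_{0,0}$; the extra flexibility in $F$ enters only via the finiteness hypothesis in the base case.
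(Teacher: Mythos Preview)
Your proposal is correct and follows essentially the same argument as the paper's proof: induct on $p$, use the arc-complex spectral sequence together with the absence of outgoing differentials at $q\in\{0,1\}$ to bound $\deg H_q(\mathcal K(M_p^{V,F}))$ in terms of lower rows, and deduce the linear growth with slope $3A_0(V)$ before applying clause~(2) of the $1$-controlled condition. The only cosmetic difference is that the paper packages the inductive hypothesis as the all-$q$ bound $\deg H_q(\mathcal K(M_p^{V,F})) \leq C(F) + A_0(V)(3p+q)$ rather than tracking your quantity $e(p)$, but this is equivalent via clause~(1) of the $1$-controlled definition, and the resulting constants $A(V)=3A_0(V)$ and $B(F)$ match.
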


\begin{figure}
\includegraphics[scale=.5]{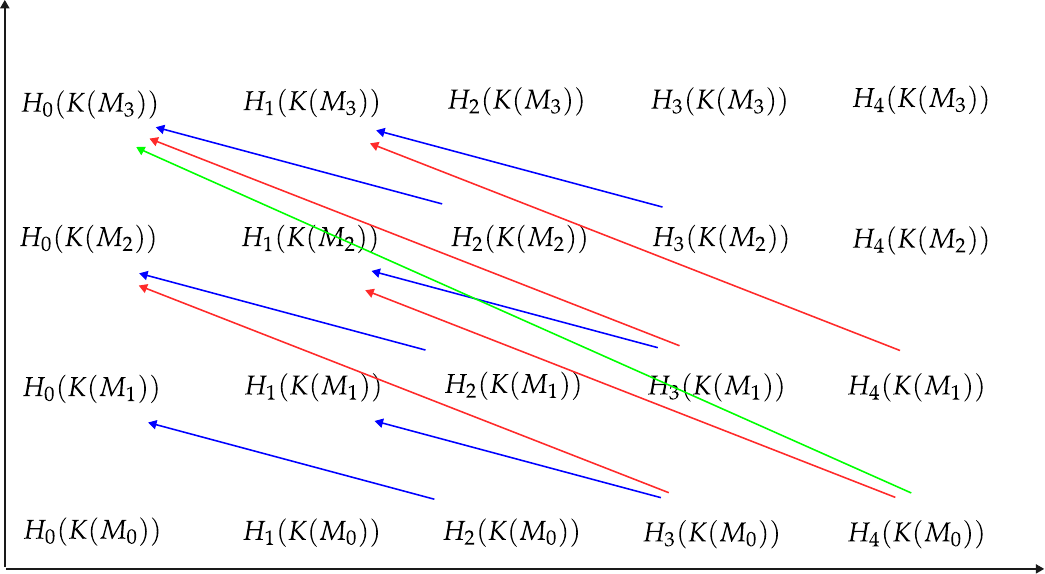}
\caption{We depict the spectral sequence coming from the arc complex.
	To avoid clutter in the picture, we write $\mathcal K(M_i)$ where we
	should write $\{\mathcal K(M_i^{V,F})\}_n$.
The entries here start on the $E^2$ page, so $E^2_{q,p} = H_q(\{\mathcal
K(M_p^{V,F})\}_n)$. The blue arrows depict the differentials on the $E^2$ page, the
red arrows depict the differentials on the $E^3$ page, and the green arrow
depicts a differential on the $E^4$ page.}
\label{figure:spectral-sequence-picture}
\end{figure}

\begin{proof}
Let $A_0(V)$ be the constant associated to $V$ from
\autoref{definition:1-controlled}.
By way of induction on $p$, we will prove there exists a nonnegative constant $C(F)$,
independent of $p,q,$ and $n$, so that
\begin{align}
\label{equation:ss-induction}
\deg H_q(\mathcal K(M_p^{V,F})) \leq C(F) + A_0(V)(3p+q).
\end{align}
for all $q \geq 0$.
Once we establish \eqref{equation:ss-induction}, we will obtain the result because, plugging in the cases
$q = 0$ and $q =1$, we get
\begin{align*}
\deg H_0(\mathcal K(M_p^{V,F})) &\leq C(F) + A_0(V)(3p) \\
\deg H_1(\mathcal K(M_p^{V,F})) &\leq C(F) + A_0(V)(3p+1).
\end{align*}
Hence, by \autoref{definition:1-controlled}(2), we find $\mathbb U$ restricts to an
isomorphism
$\{M_p^{V,F}\}_n \to \{M_p^{V,F}\}_{n+\deg \mathbb U}$
whenever 
\begin{align*}
n > C(F) + 2A_0(V) + 3A_0(V) p,
\end{align*}
and we can then take the constant $I(V) := 3A_0(V)$ and $J(V, F) :=
C(F) + 2A_0(V)$.

We first verify \eqref{equation:ss-induction}
for $p = 0$.
Indeed, let 
\begin{align*}
C(F) := \max( \deg H_0(\mathcal K(M_0^{V,F})),\deg H_1(\mathcal
K(M_0^{V,F})),1).
\end{align*}
By \autoref{definition:1-controlled}(1), we have
$\deg H_q(\mathcal K(M_0^{V,F})) \leq C(F) + A_0(V)q$. This amounts to
\eqref{equation:ss-induction} for the case $p = 0$.

We next assume the result holds for $p < P$, and aim to show it holds
for $P$.
It suffices to show 
\begin{equation}
\label{equation:P-homology-bound}
\begin{aligned}
\deg H_0(\mathcal K(M_P^{V,F})) &\leq C(F) + 3A_0(V)P \\
\deg H_1(\mathcal K(M_P^{V,F})) &\leq C(F) + 3A_0(V)P,
\end{aligned}
\end{equation}
as then
\autoref{definition:1-controlled}(1), implies 
\begin{align*}
	\deg H_q(\mathcal K(M_P^{V,F})) \leq C(F)+3A_0(V)P + A_0(V) q = C(F)+A_0(V) (3P + q),
\end{align*}
which is the inductive claim we
wished to prove.

We conclude by proving \eqref{equation:P-homology-bound}.
From \autoref{proposition:arc-complex-spectral-sequence},
we can identify $E^2_{q,p} \simeq H_q(\{\mathcal K(M_p^{V,F})\}_n)$.
Therefore, it is enough to show
$E^2_{0,P} = E^2_{1,P} = 0$
in degree $n > C(F) + 3A_0(V)P$.
The differential coming into $E^{2+i}_{q,P}$ comes from $E^{2+i}_{q+2+i,P-1-i}$,
see \autoref{figure:spectral-sequence-picture}.
By our inductive hypothesis, these vanish in degree $n > C(F) + A_0(V)(3(P-1-i)
+ (q+2+i))$. For the remainder of the proof, we use $q$ to denote either $0$ or $1$.
We can bound 
\begin{align*}
C(F) + A_0(V)(3(P-1-i) + (q+2+i)) &= C(F) + A_0(V)(3P-3 + 2 + q-2i) \\
&\leq
C(F) + A_0(V)(3P-1 + q) \\
&\leq C(F) + A_0(V)(3P).
\end{align*}
Hence, once the degree $n$ satisfies $C(F) + A_0(V)(3P) < n$, we find
$E^2_{q,P} = E^\infty_{q,P}$.
Finally, $E^\infty_{q,P} = 0$ so long as $P + q \leq n-1$, for $n$ the degree,
by \autoref{proposition:arc-complex-spectral-sequence}.
Once we verify $P + q \leq n - 1$ and $C(F) + 3A_0(V)P \leq n - 1$, we will
conclude
$E^2_{q,P} = 0$.
In particular, 
since 
we have assumed $C(F) \geq 1$, and $A_0(V) \geq 1$ holds by 
\autoref{definition:1-controlled},
we find $P + q \leq C(F) + 3A_0(V)P$, and so
\eqref{equation:P-homology-bound} holds.
\end{proof}

\subsection{A sufficient condition for homological stability}
\label{subsection:sufficient-condition}

We next set out to show that a wide variety of $V$ and $F$ satisfy the
hypotheses of 
\autoref{theorem:1-controlled-bound}.
We establish this in \autoref{theorem:central-u-implies-cohomology-bound}.
Before getting to this, we start by proving a generalization of
\cite[Theorem 4.2]{EllenbergVW:cohenLenstra}, given in
\autoref{theorem:evw-stability}.
For the purposes of our applications to the Poonen-Rains conjectures,
we do not need to prove 
\autoref{theorem:evw-stability},
as we will only apply 
\autoref{theorem:evw-stability} in the $V = k\{c\}$ for certain specific
conjugacy classes $c \subset G$ associated to Hurwitz spaces; in the relevant cases it was already shown in \cite[Theorem 4.2]{EllenbergVW:cohenLenstra}
that such $V$ are $1$-controlled.
However, we 
include this generalization as we believe it may be useful for approaching similar
homological stability problems in the future.

To start, we give a sufficient criterion
for a ring to be $1$-controlled in terms of a central operator $\mathbb U \in R^V$.
The following is the above mentioned generalization of
\cite[Theorem 4.2]{EllenbergVW:cohenLenstra}.

\begin{theorem}
\label{theorem:evw-stability}
Suppose $V$ is a coefficient system for $\Sigma^1_{0,0}$ 
and define $R^V$
as in \eqref{equation:r-v-definition}.
Suppose $\mathbb U \in R^V$ is a homogeneous non-invertible central element such that
$\deg \ker \mathbb U$ and $\deg \coker \mathbb U$ are both finite.
Then, $V$ is $1$-controlled.
\end{theorem}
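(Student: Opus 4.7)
The plan is to adapt the argument from \cite[Theorem 4.2]{EllenbergVW:cohenLenstra}, extracting from it the minimal structural hypotheses on $U$. The argument there analyzes a specific central element of the Hurwitz ring via the arc-complex spectral sequence and induction on degree; in our abstract setting, the role of this element is played by any central $U$ of positive degree whose kernel and cokernel on $R^V$ are bounded in degree.

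First I would set up a Bockstein-type long exact sequence. Because $U$ is central and homogeneous of degree $d := \deg U$, left multiplication by $U$ is a map of left $R^V$-modules $M \to M[d]$, and, since each $V_q$ is a $k$-vector space, the functor $\mathcal K(-)$ is exact. Denote $K_M := \ker(U\colon M \to M[d])$ and $C_M := \coker(U\colon M \to M[d])$; these are $R^V$-modules annihilated by $U$. Splicing the two short exact sequences coming from $0 \to K_M \to M \to M[d] \to C_M[d] \to 0$ yields
\[
\cdots \to H_{i+1}(\mathcal K(C_M))[d] \to H_i(\mathcal K(K_M)) \to H_i(\mathcal K(M)) \xrightarrow{U_\ast} H_i(\mathcal K(M))[d] \to H_i(\mathcal K(C_M))[d] \to \cdots.
\]
For the starting case $M = R^V$, the hypotheses tell us $K_{R^V}$ and $C_{R^V}$ are concentrated in bounded degree, so $\mathcal K(K_{R^V})$ and $\mathcal K(C_{R^V})$ have bounded-degree homology, and an easy induction along this sequence bounds $\deg H_i(\mathcal K(R^V))$; in particular $\deg H_0(\mathcal K(R^V))$ and $\deg H_1(\mathcal K(R^V))$ are finite.

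The main obstacle is propagating this from $R^V$ to an arbitrary left module $M$ with the quantitative bound in \autoref{definition:1-controlled}(1). The key step is to show that $\deg H_i(\mathcal K(K_M))$ and $\deg H_i(\mathcal K(C_M))$ are bounded by $\max(\deg H_0(\mathcal K(M)), \deg H_1(\mathcal K(M))) + O(i)$, with implicit constants depending only on $V$ and $U$. Since $K_M$ and $C_M$ are modules over $R^V/(U)$, this reduces to a comparison against the derived tensor product of $M$ with $R^V/(U)$, which is computed by the two-term complex $R^V \xrightarrow{U} R^V[d]$ whose homology in degrees $0,1$ is precisely $C_{R^V}$ and $K_{R^V}$; their finite degrees then bound $\mathcal K(K_M)$ and $\mathcal K(C_M)$ in terms of $\mathcal K(M)$.

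Once this comparison is in hand, part (1) of \autoref{definition:1-controlled} follows by plugging back into the Bockstein sequence and inducting on $i$, while collecting the constants into a single $A_0(V)$. For part (2), the same sequence shows that in the stated degree range $U_\ast$ is an isomorphism on $H_0(\mathcal K(M))$; a standard Nakayama-type lifting argument, paralleling the conclusion at the end of \cite[\S 4]{EllenbergVW:cohenLenstra} and using that $R^V$ is generated in positive degree, then upgrades this to $U\colon \{M\}_n \to \{M\}_{n+d}$ itself being an isomorphism in the same range.
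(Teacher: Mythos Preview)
Your Bockstein approach has a genuine gap at the very first step, the case $M = R^V$. The long exact sequence you write down, applied to $M = R^V$, only shows that in sufficiently high degree the map
\[
U_\ast \colon \{H_i(\mathcal K(R^V))\}_n \longrightarrow \{H_i(\mathcal K(R^V))\}_{n+d}
\]
is an isomorphism (since the surrounding terms involving $K_{R^V}$ and $C_{R^V}$ vanish). That gives eventual periodicity of $\dim \{H_i(\mathcal K(R^V))\}_n$, not vanishing; there is no ``easy induction'' from this alone that forces $\deg H_i(\mathcal K(R^V)) < \infty$ for $i \geq 1$. What closes this gap in the paper (and in \cite{EllenbergVW:cohenLenstra}) is the explicit chain homotopy of \autoref{lemma:multiplication-null-homotopic}: multiplication by any positive-degree class, in particular by $U$, is \emph{null-homotopic} on $\mathcal K(R^V)$, so $U_\ast = 0$ on every $H_i(\mathcal K(R^V))$. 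Combined with your isomorphism in high degree, this forces $\{H_i(\mathcal K(R^V))\}_n = 0$ for $n$ large. Your outline never invokes this null-homotopy, and without it the argument does not start.

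There is a second, related problem in the passage to general $M$. You assert that $\deg H_i(\mathcal K(K_M))$ and $\deg H_i(\mathcal K(C_M))$ are bounded in terms of $\deg H_0(\mathcal K(M))$, $\deg H_1(\mathcal K(M))$ by comparing with $M \otimes^{\mathbb L}_{R^V} R^V/(U)$. But the two-term complex $[R^V \xrightarrow{U} R^V]$ is not a free resolution of $R^V/(U)$ when $\ker U \neq 0$, so it does not compute that derived tensor product; and in any case $K_M$ can be large even when $H_0(\mathcal K(M))$ and $H_1(\mathcal K(M))$ are small, so the claimed bound does not follow. The paper instead follows \cite[Theorem 4.2]{EllenbergVW:cohenLenstra}: one uses the spectral sequence $\operatorname{Tor}^{R^V}_i(H_j(\mathcal K(R^V)), M) \Rightarrow H_{i+j}(\mathcal K(M))$, which reduces (once $H_j(\mathcal K(R^V))$ is known to be a finite $k$-module via the null-homotopy) to bounding $\deg \operatorname{Tor}^i_{R^V}(k,M)$. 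The inequality $\deg \operatorname{Tor}^1_{R^V}(k,M) \leq \deg H_1(\mathcal K(M))$ is then the second nontrivial ingredient singled out in the paper's proof, and it requires a short computation with the braiding to check that a certain composite vanishes. Your proposal bypasses both of these ingredients, and neither can be avoided.
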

\begin{proof}
This is essentially proved in \cite[Theorem 4.2]{EllenbergVW:cohenLenstra}.
While technically the ring $R$ used there is for a specific $V$ of the form
$k\{c\}$, the proof
generalizes to the case stated here, as we now explain.
Throughout the proof
of \cite[Theorem 4.2]{EllenbergVW:cohenLenstra},
one may replace $k\{c\}$ with $V_1$, and, for $M$ an $R^V$-module, one may then use our
definition of $\mathcal K(M)$ from \autoref{definition:k-complex}
in place of the definition in \cite[\S4.1]{EllenbergVW:cohenLenstra}.
The two parts of the proof of
\cite[Theorem 4.2]{EllenbergVW:cohenLenstra}
whose generalization requires some thought are
the content of \cite[p. 755]{EllenbergVW:cohenLenstra}, where one wishes to
establish the bound
$\deg \on{Tor}^1_{R^V}(k, M) \leq \deg H_1(\mathcal K(M))$,
as well as
\cite[Lemma 4.11]{EllenbergVW:cohenLenstra}.
Both of these refer to specific elements of the ring $R$ in
\cite{EllenbergVW:cohenLenstra}, which is related to
Hurwitz stacks.

The only step of \cite[p. 755]{EllenbergVW:cohenLenstra}
where one cannot easily replace elements of $k\{c\}$ with elements of $V_1$ is
in the third to last paragraph.
To explain why this still holds, let $\alpha: V_1 \otimes_k M[1] \to \{R^V\}_{>0}
\otimes_{R^V} M$
denote the map sending $v \otimes m \mapsto [v] \cdot m$,
where $[v]$ denotes the class of $v$ in $\{R^V\}_1 = H_0(B^1_{0,0}, V_1) \simeq V_1$, and
$[v] \cdot m$ denotes the multiplication using that $M$ is an $R^V$-module.
For $x \in V_n$, we similarly use $[x] \cdot m$ to denote the product of the
class of $x$ in $\{R^V\}_n$ with $m$.
To establish the third to last paragraph of
\cite[p. 755]{EllenbergVW:cohenLenstra}, we wish to verify that the composite map
\begin{align*}
V_1^{\otimes 2} \otimes_k M[2] \xrightarrow{d} V_1 \otimes_k M[1]
\xrightarrow{\alpha} \{R^V\}_{>0} \otimes_{R^V} M
\end{align*}
vanishes. 
For $v \otimes w \in V_1^{\otimes 2}$, if $\tau: V_1^{\otimes 2} \to
V_1^{\otimes 2}$ denotes the isomorphism giving $V_1$ the structure of a braided
vector space, corresponding to a generator of $B^2_{0,0}$, we obtain that
$(\alpha \circ d)(v \otimes w \otimes m) = [v \otimes w] \cdot m -
[\tau(v\otimes w)] \cdot m$. This is equal to $0$ because
$[v \otimes w] = [\tau(v \otimes w)]$ as elements of $\{R^V\}_2 = H_0(B^2_{0,0},
V_2)$: Indeed, a generator of $B^2_{0,0}$ acts via $\tau$ on $V_2 \simeq V_1^{\otimes
2}$, so taking coinvariants via $H_0$ identifies 
$[v \otimes w]$ and $[\tau(v \otimes w)]$.

To conclude, it remains to prove the analog of \cite[Lemma 4.11]{EllenbergVW:cohenLenstra},
which we do in \autoref{lemma:multiplication-null-homotopic}.
\end{proof}

\begin{lemma}
\label{lemma:multiplication-null-homotopic}
For $V$ a coefficient system, the action of $\{R^V\}_{>0}$ on $H_q(\mathcal K(R^V))$ is $0$.
\end{lemma}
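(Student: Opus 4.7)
The plan is to adapt the argument of \cite[Lemma 4.11]{EllenbergVW:cohenLenstra}, constructing an explicit chain null-homotopy for left multiplication by each element of $V_1$, and then reducing multiplication by arbitrary positive-degree elements to this case.

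First I would reduce to the case of degree-$1$ elements. Since $V_n = V_1^{\otimes n}$ and $B^1_{0,0}$ is trivial, we have $\{R^V\}_1 = V_1$, and the ring $R^V$ is generated in positive degrees by $V_1$: any element of $V_n$ is a sum of pure tensors of $n$ factors in $V_1$, which is a product of $n$ degree-one elements in the ring $R^V$. Hence $\{R^V\}_{>0}$ equals the two-sided ideal generated by $V_1$. Since the set of operators acting as zero on $H_q(\mathcal K(R^V))$ is closed under composition with any chain map, it is enough to check that every $r \in V_1$ acts by zero on $H_q(\mathcal K(R^V))$.

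Second, for each such $r \in V_1$, I would define $h \colon \mathcal K(R^V)_q \to \mathcal K(R^V)_{q+1}$ on pure tensors by
\[
h\bigl((v_0 \otimes \cdots \otimes v_{q-1}) \otimes x\bigr) := (r \otimes v_0 \otimes \cdots \otimes v_{q-1}) \otimes x,
\]
and verify that $d \circ h + h \circ d$ equals, up to a uniform sign, left multiplication by $r$. The computation mirrors that of \cite[Lemma 4.11]{EllenbergVW:cohenLenstra}: in the sum \eqref{equation:k-differential} defining $d(h(\xi))$, the terms with $i \geq 1$ act via $\tau_{i,q}$ which fixes the leftmost position (still occupied by the inserted $r$) and only reshuffles the $v_j$'s before applying $\mu$, so they are precisely $-h(d\xi)$ term by term, yielding a cancellation after sign bookkeeping. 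The remaining term, $i = 0$, uses $\tau_{0,q}$ to braid $r$ all the way to the rightmost slot, where $\mu$ then merges it into the $R^V$ factor, producing left multiplication by the class of $r$ in $\{R^V\}_1$; the residual braiding on the remaining $V_q$ factor is absorbed by the same coinvariance identity $[v \otimes w] = [\tau(v \otimes w)]$ in $\{R^V\}_2 = H_0(B^2_{0,0}, V_2)$ that was invoked in the third-to-last paragraph of the proof of \autoref{theorem:evw-stability}.

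The main obstacle is the careful bookkeeping of signs and braid actions, in particular confirming that the braided outcome of the $(i=0)$ term is identified with $r \cdot \xi$ rather than with some twisted variant. In the EVW setting this is transparent because the generators $r_g$ for $g \in c$ only permute among themselves under braiding (becoming conjugate elements which coincide in $H_0$); in our more general setting the identification is forced by the definition of $R^V$ as $B^\bullet_{0,0}$-coinvariants of $V_\bullet$, so any braiding on the factor that lands in $R^V$ is trivialized. Once this verification is carried out, every $r \in V_1$ acts as zero on $H_q(\mathcal K(R^V))$, and the reduction step concludes that all of $\{R^V\}_{>0}$ does.
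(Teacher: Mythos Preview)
Your reduction to degree-one elements is fine, and the cancellation of the $i\geq 1$ terms in $dh(\xi)$ against $-h(d\xi)$ is correct. The gap is in the remaining $i=0$ term. With your homotopy $h(\xi)=(r\otimes v_0\otimes\cdots\otimes v_{q-1})\otimes x$, one obtains
\[
(dh+hd)(\xi)=(\id^{\otimes q}\otimes\mu)\bigl(\tau_{0,q}(r\otimes v_0\otimes\cdots\otimes v_{q-1})\otimes x\bigr),
\]
and this is \emph{not} $(v_0\otimes\cdots\otimes v_{q-1})\otimes([r]\cdot x)$ in general. The point is that $\tau_{0,q}$ braids $r$ past each $v_j$, and this alters the $v_j$'s as well; the first $q$ tensor factors of the output lie in $V_q$, which is \emph{not} a space of coinvariants, so there is no coinvariance identity available to undo the residual braiding there. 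Already at $q=1$ in the Hurwitz example $V_1=k\{c\}$ with $\tau(g\otimes h)=(ghg^{-1})\otimes g$, your formula gives $(ghg^{-1})\otimes([g]\cdot x)$ rather than $h\otimes([g]\cdot x)$, and these are distinct elements of $V_1\otimes R^V$. The coinvariance identity $[v\otimes w]=[\tau(v\otimes w)]$ you invoke lives in $\{R^V\}_2$, but the factor you need to repair lives in $V_q$, where no such identification holds.

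The paper's homotopy is constructed differently precisely to avoid this. One lifts $s\in\{R^V\}_n=H_0(B^n_{0,0},V_n)$ to $\tilde s\in V_n$, appends $v$ on the far right to form $v_1\otimes\cdots\otimes v_q\otimes\tilde s\otimes v\in V_1^{\otimes(q+n+1)}$, applies the \emph{inverse} of the full braid $\tau^{q+n+1}_{1,q+n+1}$, and then projects the last $n$ factors back to coinvariants. Two things must be checked: first, independence of the lift $\tilde s$ (a short braid identity), and second, that in $dS_v+S_vd$ the forward braids from the differential cancel against the inverse braid on positions $1,\ldots,q$, leaving only a braid on positions $q+1,\ldots,q+n+1$, i.e.\ on $\tilde s\otimes v$. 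That residual braid \emph{does} land in the coinvariant part and is absorbed, yielding right multiplication by $[v]$. This cancellation on the $V_q$ part is exactly what your simpler homotopy lacks.
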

\begin{proof}
We generalize the proof of the analogous statement given in \cite[Lemma
4.11]{EllenbergVW:cohenLenstra}.
Since $R^V$ is generated in degree $1$, it is enough to prove
that for every $v \in V_1$, right multiplication by $[v]$ nullhomotopic. 
Start with some element $v_1 \otimes \cdots \otimes v_q \otimes s \in \{\mathcal
K(R^V)_q\}_{n+q} =
V_1^{\otimes q} \otimes H_0(B^{n}_{0,0}, V_n)$.
For any $v \in V_1$, define the linear operator 
\begin{align*}
S_v:\mathcal K(R^V)_q & \rightarrow \mathcal K(R^V)_{q+1} \\
v_1 \otimes \cdots \otimes v_q \otimes \widetilde{s} & \mapsto
\overline{(\tau^{q+n+1}_{1,q+n+1})^{-1}(v_1 \otimes \cdots \otimes v_q \otimes \widetilde{s}
\otimes v)},
\end{align*}
with notation as follows:
we use
$\widetilde{s}$ to denote a lift of $s$ from $H_0(B^n_{0,0}, V_n)$ to $V_n$,
for $x \in V_1^{\otimes q+n+1},$ we use $\overline{x}$ for the image in
$V_1^{\otimes q+1} \otimes H_0(B^n_{0,0}, V_n)$,
and we use notation as in \autoref{definition:k-complex} for
$(\tau^{q+n+1}_{1,q+n+1})^{-1}$, which corresponds to pulling $v$ from the last factor
of the tensor product to the first factor.
First, we need to verify this map is independent of the choice of lift
$\widetilde{s}$ of $s$.
If we chose a different lift $\widetilde{s}'$, we can write
$\widetilde{s}' = \sigma \widetilde{s}$ for some $\sigma \in B^n_{0,0}$.
Writing $\sigma$ as a product of generators, we may assume
$\sigma = (\tau^n_i)^{-1}$. Now, for $n \leq m$ and $i \leq m - n$, define
$\iota_{n,m,i}: B^n_{0,0} \to B^m_{0,0}$ as the inclusion sending $n$
strands of $B^n_{0,0}$ to strands in the range $[i+1, \ldots, i+n]$.
More formally, this can be realized in terms of
\autoref{notation:surface-braid-groups} as the inclusion 
\begin{align*}
B^n_{0,0} \to 
B^i_{0,0} \times B^n_{0,0} \times B^{m-i-n}_{0,0} \to B^{i}_{0,0} \times
B^{m-i}_{0,0} \to B^m_{0,0},
\end{align*}
where the first map is the inclusion to the second component, the second map is
the product of $B^i_{0,0}$ with the map of braid groups associated to the
inclusion $X^{\oplus n} \coprod X^{\oplus m-i-n} \oplus A_{0,0} \to 
X^{\oplus m-i} \oplus A_{0,0}$, and the third map is the map of braid groups
associated to the inclusion
$X^{\oplus i} \coprod X^{\oplus m-i} \oplus A_{0,0} \to 
X^{\oplus m} \oplus A_{0,0}$.
In order to show $S_v$ is well defined, we first observe
\begin{equation}
\begin{aligned}
	\label{equation:braid-identity}
	(\tau^{q+n+1}_{1,q+n+1})^{-1} \iota_{n,q+n+1,q}((\tau^n_i)^{-1})
	&= (\tau^{q+n+1}_1)^{-1} \cdots (\tau^{q+n+1}_{n+q})^{-1}
	(\tau^{q+n+1}_{q+i})^{-1}
	\\
	&= (\tau^{q+n+1}_{q+i+1})^{-1} (\tau^{q+n+1}_1)^{-1} \cdots (\tau^{q+n+1}_{n+q})^{-1}
	\\
	&= \iota_{n,q+n+1,q+1} ((\tau^n_{i})^{-1})
	(\tau^{q+n+1}_{1,q+n+1})^{-1}.
\end{aligned}
\end{equation}
The first and last equalities in 
\eqref{equation:braid-identity} are purely definitional. The second equality
expresses the relation in $B^{n+q+1}_{0,0}$ that if one first pulls strand
$q+i+1$ to position $q+i$ and then pulls strand $q+n+1$ to position $1$, this is
the same as first pulling strand $q+n+1$ to position $1$ and then pulling strand
$q+i+2$ to position $q+i+1$.
The well definedness of $S_v$ follows from 
\eqref{equation:braid-identity}
applied to 
$v_1 \otimes \cdots \otimes v_q \otimes \widetilde{s} \otimes v$, as we now
explain.
Indeed,
\eqref{equation:braid-identity} shows $S_v$ sends 
$v_1 \otimes \cdots \otimes v_q \otimes \widetilde{s}' \otimes v =
\iota_{n,q+n+1,q}((\tau^n_i)^{-1})
(v_1 \otimes \cdots \otimes v_q \otimes \widetilde{s} \otimes v)$
to the same element of $K(R^V)_{q+1}$ that it sends
$v_1 \otimes \cdots \otimes v_q \otimes \widetilde{s} \otimes v$ to, since it shows their images
in $V_1^{\otimes q + 1 + n}$ are related by 
$\iota_{n,q+n+1,q+1} ((\tau^n_{i})^{-1})$.

Since $R^V$ is generated in degree $1$, it is enough to prove
right multiplication by $[v]$ nullhomotopic. 
Having shown that $S_v$ is well defined, 
we now compute
\begin{align*}
(S_v d + d S_v)(v_1 \otimes \cdots \otimes v_q \otimes s)
&= (\id^{\otimes q} \otimes \mu_n) \left( \overline{\tau^{q+n+1}_{1,q+1} 
(\tau^{q+n+1}_{1,q+n+1})^{-1}(v_1 \otimes \cdots \otimes v_q \otimes \widetilde{s}
\otimes v)}\right) \\
&= (\id^{\otimes q} \otimes \mu_n) \overline{(\tau^{q+n+1}_{q+1,q+n+1})^{-1}(v_1 \otimes \cdots \otimes v_q \otimes \widetilde{s}
\otimes v)} \\
&= (v_1 \otimes \cdots \otimes v_q) \otimes \mu_n
[(\tau^{n+1}_{1,n+1})^{-1}(\widetilde{s} \otimes v)] \\
&= v_1 \otimes \cdots \otimes v_q \otimes (s \cdot [v]),
\end{align*}
which shows right multiplication by $[v]$ is nullhomotopic.
\end{proof}

We next observe that $R^V$ is noetherian.
A similar argument in the context of Hurwitz stacks was given in
\cite[Proposition 3.31]{davisS:the-hilbert-polynomial-of-quandles}
and also
\cite[Lemma 3.3]{bianchiM:polynomial-stability}.

\begin{lemma}
\label{lemma:noetherian}
Let $V$ be a coefficient system for $\Sigma^1_{0,0}$ with each $V_i$ finite-dimensional.
Suppose 
$R^V$
has some homogeneous non-invertible $\mathbb U \in R^V$
so that 
$\deg \coker \mathbb U$ is finite.
Then $R^V$ is noetherian.
\end{lemma}
\begin{proof}
Note that $R^V$ is not commutative. However, we claim $R^V$ is a finite module
over a commutative finitely generated ring, hence noetherian.
Let $R_{\mathbb U} \subset R^V$ denote the commutative subring generated by $\mathbb U$ over $k$.
We claim $R^V$ is a finite module over $R_{\mathbb U}$.
We will in fact show that $R^V$ is generated over $R_{\mathbb U}$ by all elements of degree at most
$\deg \coker \mathbb U$. Since each $V_i$ is finite dimensional, this will imply that $R^V$ is
finitely generated over $R_{\mathbb U}$.
To prove our claim, by induction on the homogeneous degree of an element, it is enough to show that any homogeneous element 
$r \in R^V$ with $\deg r \geq \deg \coker \mathbb U$ can be written in the form $s + \mathbb Ut$ for
$\deg s \leq \deg \coker \mathbb U$ and $\deg t < \deg r$.
Indeed, consider the image $\overline{r} \in R^V/\mathbb UR^V$. Because $R^V/\mathbb UR^V = \coker \mathbb U$
has finite degree, there is some element $s \in R^V$ of degree at most $\deg
\coker \mathbb U$ so that $r-s= 0 \in R^V/\mathbb UR^V$. This implies $r - s = \mathbb Ut$ for some $t \in
R^V$. 
Since $\mathbb U$ is non-invertible, it either has positive degree or $\mathbb U
= 0$. Therefore, we can write $r = s + \mathbb Ut$ with $\deg t < \deg r$, using
that $\mathbb U$ is not a unit in degree $0$,
and $\deg s \leq \deg \coker \mathbb U$.
\end{proof}

Using noetherianness of $R^V$, we can also prove the other hypotheses of
\autoref{theorem:1-controlled-bound} hold for finitely generated $R^V$-modules.

\begin{lemma}
\label{lemma:degree-bound-for-km0}
Let $V$ be a coefficient system for $\Sigma^1_{0,0}$ with each $V_i$ finite
dimensional.
Suppose 
$R^V$
has some homogeneous non-invertible central $\mathbb U \in R^V$
so that $\deg \coker \mathbb U$ is finite.
Then, if $N$ is a finitely generated module over $R^V$, both $H_0(\mathcal
K(N))$ and $H_1(\mathcal K(N))$ have finite degree.
\end{lemma}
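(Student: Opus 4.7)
The plan is to reduce, via Noetherianness of $R^V$ and the $k$-linear exactness of $\mathcal K(-)$, to the case $N = R^V$, and then to combine the null-homotopy of Lemma~\ref{lemma:multiplication-null-homotopic} with the $k[U]$-finiteness coming from the proof of Lemma~\ref{lemma:noetherian}.

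First I would observe that $\mathcal K(-)$ is $k$-linear and exact in the module variable, since $\mathcal K(N)_q = V_q \otimes_k N[q]$ with $V_q$ a finite-dimensional $k$-vector space. In particular, every short exact sequence of $R^V$-modules produces a long exact sequence of the $H_i(\mathcal K(-))$. By Lemma~\ref{lemma:noetherian}, $R^V$ is Noetherian, so $N$ fits into a short exact sequence $0 \to K \to F \to N \to 0$ with $F = \bigoplus_j R^V[-d_j]$ a finite direct sum of shifts and $K$ again finitely generated. Since $\mathcal K$ commutes with shifts and finite direct sums, the long exact sequence
\begin{equation*}
H_1(\mathcal K(F)) \to H_1(\mathcal K(N)) \to H_0(\mathcal K(K)) \to H_0(\mathcal K(F)) \to H_0(\mathcal K(N)) \to 0
\end{equation*}
shows the lemma reduces to checking that $H_0(\mathcal K(R^V))$ and $H_1(\mathcal K(R^V))$ have finite degree: the first gives finite-degreeness of $H_0(\mathcal K(F))$ and then of its quotient $H_0(\mathcal K(N))$, and, applied to $K$, of $H_0(\mathcal K(K))$ as well, after which $H_1(\mathcal K(N))$ is trapped between two finite-degree pieces in the long exact sequence.

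For the case $N = R^V$, I would use that $R^V$ is generated in degree $1$: the multiplication $\{R^V\}_1 \otimes \{R^V\}_{n-1} \to \{R^V\}_n$ comes from the identity $V_1 \otimes V_1^{\otimes n-1} = V_1^{\otimes n}$ after passing to $B^n_{0,0}$-coinvariants, and is therefore surjective. Hence $V_1 \cdot R^V = \{R^V\}_{\geq 1}$, so $H_0(\mathcal K(R^V)) = R^V/V_1 R^V = k$ is concentrated in degree $0$ and has finite degree trivially. For $H_1$, Lemma~\ref{lemma:multiplication-null-homotopic} furnishes a null-homotopy on $\mathcal K(R^V)$ for right multiplication by each $[v]$ with $v \in V_1$; linearly combining and composing these shows right multiplication by every element of $\{R^V\}_{>0}$, and in particular by $U$, is null-homotopic, so $U$ acts as zero on $H_1(\mathcal K(R^V))$.

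Finally, the proof of Lemma~\ref{lemma:noetherian} shows $R^V$ is finitely generated as a module over the subring $R_U \simeq k[U]$, so $\mathcal K(R^V)_1 = V_1 \otimes_k R^V[1]$ is a finitely generated $k[U]$-module, and hence so is its subquotient $H_1(\mathcal K(R^V))$ by Noetherianness of $k[U]$. Being annihilated by $U$, $H_1(\mathcal K(R^V))$ is then a finitely generated $k[U]/(U) = k$-module, i.e., a finite-dimensional graded $k$-vector space, so it has finite degree. The only subtlety I foresee is ensuring that the $k[U]$-module structure on $\mathcal K(R^V)$ is compatible with the differential so that $H_1(\mathcal K(R^V))$ inherits a $k[U]$-module structure as a subquotient; this reduces to $U$ being central in $R^V$, which is implicit in the intended applications of this lemma, and the rest of the argument is routine.
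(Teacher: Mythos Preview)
Your proof is correct and takes a genuinely different route from the paper's. The paper also observes that $H_0(\mathcal K(N))$ has finite degree for any finitely generated $N$ (for the same reason you give), but for $H_1$ it proceeds via the identification $\mathcal K(N) = \mathcal K(R^V) \otimes_{R^V} N$ and the resulting spectral sequence $\tor_i^{R^V}(H_j(\mathcal K(R^V)), N) \Rightarrow H_{i+j}(\mathcal K(N))$; the low-degree exact sequence reduces the question to bounding $\deg\tor_0^{R^V}(k,N)$ and $\deg\tor_1^{R^V}(k,N)$, which follow from a finite free resolution of $N$, while the finite degree of $H_0(\mathcal K(R^V))$ and $H_1(\mathcal K(R^V))$ themselves is obtained by invoking Theorem~\ref{theorem:evw-stability}. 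By contrast, you reduce to $N=R^V$ via a long exact sequence and then establish the finite degree of $H_1(\mathcal K(R^V))$ directly from Lemma~\ref{lemma:multiplication-null-homotopic} combined with the $k[U]$-finiteness of $R^V$ from the proof of Lemma~\ref{lemma:noetherian}. Your argument is therefore more elementary and self-contained, sidestepping both the spectral sequence and the appeal to the full $1$-controlled result; the paper's approach, on the other hand, packages things in a way that would more readily bound higher $H_i(\mathcal K(N))$. Note that both proofs implicitly require $U$ to be central --- yours so that the differential of $\mathcal K(R^V)$ is $k[U]$-linear, the paper's because Theorem~\ref{theorem:evw-stability} assumes it --- even though the statement of the lemma does not say so; your caveat on this point is well placed.
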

\begin{proof}
First, since $R^V$ is generated in degree $1$, $H_0(\mathcal K(N)) = N/\im(V_1
\otimes N \to N) = N/ \oplus_{n > 0} \{R^V\}_n N$, and this quotient is supported in
the degrees of generators of $N$ over $R^V$. Therefore, $N$ is generated in degree at most $d$ if and only if $\deg
H_0(\mathcal K(N)) \leq d$.

Next, we show $\deg H_1(\mathcal K(N))$ is finite.
Since $\mathcal K(N) = \mathcal K(R^V) \otimes_{R^V} N$, there is a spectral
sequence
$\tor_i^{R^V}(H_j(\mathcal K(R^V)), N) \implies H_{i+j}(\mathcal
K(N))$.
By the low degree terms exact sequence coming from the spectral sequence, in
order to bound $\deg H_1(\mathcal K(N))$, it is enough to bound 
$\deg \tor_0^{R^V}(H_1(\mathcal K(R^V)), N)$ and
$\deg \tor_1^{R^V}(H_0(\mathcal K(R^V)), N)$.
By \autoref{theorem:evw-stability}, 
$H_0(\mathcal K(R^V))$ and $H_1(\mathcal K(R^V))$ have finite degree.
We will conclude by showing more generally that for $P$ any $R^V$-module of finite
degree, 
$\deg \tor_i^{R^V}(P, N)$ has finite degree.
By noetherianness of $R^V$, as established in \autoref{lemma:noetherian}, we may choose a free resolution of the finite $R^V$-module $N$ of the form $\cdots \to S_2 \to S_1 \to N$ where each term $S_i$ is a finite free
$R^V$-module, hence of finite degree. 
Since we are assuming $P$ has finite degree,
applying $P \otimes_{R^V}$ to this
resolution and taking homology shows that $\tor_i^{R^V}(P, N)$ also has finite
degree, for any fixed value of $i$.
\end{proof}

We next show that in the case $N = M_0^{V,F}$, the finite generation hypothesis
of \autoref{lemma:degree-bound-for-km0}
is automatic.
\begin{lemma}
	\label{lemma:finite-generation-mp}
	Suppose $V$ is a coefficient system for $\Sigma^1_{0,0}$.
	If $F$ is a coefficient system for $\Sigma^1_{g,f}$ over $V$ and $F_0$
	is finite dimensional, then $M_0^{V,F}$ is
finitely generated as a $R^V$-module.
\end{lemma}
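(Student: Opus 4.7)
The plan is to exhibit a surjection $R^V \otimes_k F_0 \twoheadrightarrow M_0^{V,F}$ of graded $R^V$-modules. Since $F_0$ is finite dimensional (the coefficient systems we consider, such as those in \autoref{example:hurwitz-coefficient-system}, have finite dimensional levels), this at once yields finite generation.

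The first observation is that $\Conf^0_{X^{\oplus 0} \oplus A_{g,f}}$ is a point, so the surface braid group $B^0_{g,f}$ is trivial, and consequently $\{M_0^{V,F}\}_0 = H_0(B^0_{g,f}, F_0) = F_0$. The $R^V$-module structure on $M_0^{V,F}$ described in \autoref{remark:mp-module-structure} then supplies, for every $n$, a multiplication map
\begin{align*}
\mu_n \colon \{R^V\}_n \otimes F_0 \to \{M_0^{V,F}\}_n.
\end{align*}
I claim each $\mu_n$ is surjective, which suffices: it says $F_0$, viewed as sitting in degree $0$, generates $M_0^{V,F}$ as an $R^V$-module.

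To verify surjectivity of $\mu_n$, I invoke the defining compatibility \eqref{equation:coefficient-over-compatibility} of a coefficient system for $\Sigma^1_{g,f}$ over $V$ in the case $i = n$: it asserts that the restriction of the $B^n_{g,f}$-action on $F_n$ along the inclusion $B^n_{0,0} \times B^0_{g,f} \to B^n_{g,f}$ of \autoref{notation:surface-braid-groups} agrees with the tensor product of the $B^n_{0,0}$-action on $V_n$ and the (trivial) $B^0_{g,f}$-action on $F_0$, under the equality $F_n = V_n \otimes F_0$. Taking coinvariants with respect to the product group $B^n_{0,0} \times B^0_{g,f}$ and using that coinvariants of a tensor product of representations of a direct product of groups (each acting on one factor) is the tensor product of coinvariants, we obtain a quotient
\begin{align*}
F_n \;\twoheadrightarrow\; H_0(B^n_{0,0}, V_n) \otimes F_0 \;=\; \{R^V\}_n \otimes F_0.
\end{align*}
Further quotienting by the remaining action of $B^n_{g,f}$ (which is what the cup product map in \autoref{remark:mp-module-structure} realizes) gives a surjection $\{R^V\}_n \otimes F_0 \twoheadrightarrow H_0(B^n_{g,f}, F_n) = \{M_0^{V,F}\}_n$, and by construction this composite surjection is exactly $\mu_n$.

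There is no real obstacle here beyond unwinding the definitions. The only points to be careful about are (i) that $B^0_{g,f}$ is indeed trivial so that degree zero equals $F_0$, and (ii) that the inclusion $B^n_{0,0} \times B^0_{g,f} \hookrightarrow B^n_{g,f}$ of \autoref{notation:surface-braid-groups} is compatible with the tensor decomposition $F_n = V_n \otimes F_0$ in the sense dictated by \eqref{equation:coefficient-over-compatibility}, so that the intermediate coinvariants really split as a tensor product. Both are immediate from the definitions given in \autoref{subsection:coefficient-systems}.
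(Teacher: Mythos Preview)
Your proof is correct and follows essentially the same approach as the paper: both arguments produce a surjection $R^V \otimes F_0 \twoheadrightarrow M_0^{V,F}$ by first identifying $H_0(B^n_{0,0}, F_n) \simeq \{R^V\}_n \otimes F_0$ via the compatibility \eqref{equation:coefficient-over-compatibility}, and then using the inclusion $B^n_{0,0} \hookrightarrow B^n_{g,f}$ to obtain the surjection onto $H_0(B^n_{g,f}, F_n)$. One small wording issue: ``further quotienting by the remaining action of $B^n_{g,f}$'' is slightly imprecise, since $B^n_{g,f}$ does not act on the intermediate quotient; what you mean (and what the paper says directly) is that the surjection $F_n \twoheadrightarrow H_0(B^n_{g,f}, F_n)$ factors through $H_0(B^n_{0,0}, F_n)$ because $B^n_{0,0}$ is a subgroup.
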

\begin{proof}
	We may view $M_0^{V,F}$ as an $R^V$-module via
	\autoref{remark:mp-module-structure}.
	Via the inclusion $B^n_{0,0} \to B^n_{g,f}$ from
	\autoref{notation:surface-braid-groups}, there is a surjection
	$H_0(B^n_{0,0}, F_n) \to H_0(B^n_{g,f}, F_n)$.
	We therefore obtain a surjection of graded modules 
	\begin{align*}
	\oplus_{n \geq 0}
	H_0(B^n_{0,0}, F_n) \to \oplus_{n \geq 0}
	H_0(B^n_{g,f}, F_n) = M_0^{V,F}.
	\end{align*}
	Hence, it is enough to show 
	$\oplus_{n \geq 0} H_0(B^n_{0,0}, F_n)$ is finitely generated as an
	$R^V$-module. 
	Indeed, since $B^n_{0,0}$ acts trivially on $F_0$,
	\begin{align*}
		\oplus_{n \geq 0} H_0(B^n_{0,0}, F_n) \simeq \left (\oplus_{n \geq 0}
		H_0(B^n_{0,0}, V_n) \right) \otimes F_0 = R^V \otimes F_0,
	\end{align*}
and so the
	desired finite generation holds because $F_0$ is a finite dimensional vector
	space.
\end{proof}

Combining our work above, we obtain that if we have coefficient systems $V$ and
$F$,
and $R^V$ has a central homogeneous non-invertible element with finite degree kernel and cokernel, then
\autoref{theorem:1-controlled-bound} applies.

\begin{theorem}
\label{theorem:central-u-implies-cohomology-bound}
Suppose $V$ is a coefficient system for $\Sigma^1_{0,0}$ and $\mathbb U \in R^V$
is a homogeneous non-invertible central element such that $\deg \ker \mathbb U$ and
$\deg \coker \mathbb U$ are both finite.
Suppose $F$ is a coefficient system for $\Sigma^1_{g,f}$ over $V$.
Assume each $V_i$ and each $F_i$ are finite-dimensional.
Then, there exist constants $I(V)$ and
$J(V, F)$ independent of $p$ and $n$ so that $\mathbb U$ induces an isomorphism
$\{M_p^{V,F}\}_n \to \{M_p^{V,F}\}_{n+\deg \mathbb U}$ whenever $n > I(V)p +
J(V, F)$.
\end{theorem}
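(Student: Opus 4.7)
The plan is to assemble the theorem directly from the pieces established in this section. The theorem has two main ingredients: we need $V$ to be a $1$-controlled coefficient system so that \autoref{theorem:1-controlled-bound} applies, and we need to verify the finiteness hypotheses on $\deg H_0(\mathcal{K}(M_0^{V,F}))$ and $\deg H_1(\mathcal{K}(M_0^{V,F}))$ that appear in that theorem.

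First, I would invoke \autoref{theorem:evw-stability}: since $U \in R^V$ is a homogeneous positive degree central element with $\deg \ker U$ and $\deg \coker U$ both finite, $V$ is $1$-controlled in the sense of \autoref{definition:1-controlled}. Next, to feed the coefficient system $F$ into \autoref{theorem:1-controlled-bound}, I need the two finiteness hypotheses on $M_0^{V,F}$. By \autoref{lemma:finite-generation-mp}, $M_0^{V,F}$ is finitely generated as an $R^V$-module. Then \autoref{lemma:degree-bound-for-km0}, applied with $N = M_0^{V,F}$, gives that both $\deg H_0(\mathcal{K}(M_0^{V,F}))$ and $\deg H_1(\mathcal{K}(M_0^{V,F}))$ are finite, which is precisely the hypothesis \autoref{theorem:1-controlled-bound} requires.

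With these verifications in place, \autoref{theorem:1-controlled-bound} supplies constants $A(V)$ and $B(F)$, independent of $p$ and $n$, such that $U$ restricts to an isomorphism $\{M_p^{V,F}\}_n \to \{M_p^{V,F}\}_{n+\deg U}$ whenever $n > A(V)p + B(F)$. This is exactly the conclusion we want, so the proof reduces to this short chain of citations. There is no real obstacle here: all the substantive work — the arc complex spectral sequence of \autoref{proposition:arc-complex-spectral-sequence}, the induction-on-$p$ argument of \autoref{theorem:1-controlled-bound}, the noetherianness argument, and the $\tor$-based bound on $H_1(\mathcal{K}(N))$ — has already been carried out. This theorem is best thought of as a convenient packaging of those results in a form that can be applied to the Hurwitz-type coefficient systems relevant to Selmer groups.
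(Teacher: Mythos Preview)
Your proposal is correct and follows essentially the same approach as the paper: verify $1$-controlledness via \autoref{theorem:evw-stability}, finite generation of $M_0^{V,F}$ via \autoref{lemma:finite-generation-mp}, the finiteness of $\deg H_0(\mathcal{K}(M_0^{V,F}))$ and $\deg H_1(\mathcal{K}(M_0^{V,F}))$ via \autoref{lemma:degree-bound-for-km0}, and then apply \autoref{theorem:1-controlled-bound}. There is nothing to add.
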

\begin{proof}
This follows from \autoref{theorem:1-controlled-bound}, once we verify its
hypotheses.
We find $V$ is $1$-controlled by \autoref{theorem:evw-stability}.
From \autoref{lemma:finite-generation-mp}, 
$M_0^{V,F}$ is finitely generated as an $R^V$-module.
By \autoref{lemma:degree-bound-for-km0}, it follows that $H_0(\mathcal K(M_0^{V,F}))$
and
$H_1(\mathcal K(M_0^{V,F}))$ both have finite degree.
\end{proof}

\begin{remark}
	\label{remark:}
	Via private communication with Oscar Randal-Williams,
	it seems likely that one may be able to prove
	\autoref{theorem:central-u-implies-cohomology-bound} using a setup
	similar to that in \cite{randal-williams:homology-of-hurwitz-spaces}.
	However, this is by no means obvious, and we believe it would be very
	interesting to work out the details.
	In particular, one of the trickiest parts to generalize is
	\cite[Proposition 8.1]{randal-williams:homology-of-hurwitz-spaces} where
	it is used that the bar construction $B(k,A,A) = k$. In our setting we need to instead
	analyze $B(k,A,M)$, for a suitable value of $M$ in
	place of $A$.

	Several years later, an anonymous referee suggested that perhaps some
	more recent results such as \cite[Theorem
	3.1.4]{landesmanL:homological-stability-for-hurwitz} could be useful for
	providing a simplification of the proof of 
	\autoref{theorem:central-u-implies-cohomology-bound}.
	It wasn't immediately obvious to us how to apply this, but Ishan Levy
	explained to us what seems to be a promising path to proving this,
	including how one might approach the above mentioned computation of
	$B(k,A,M)$ using factorization homology.
	Unfortunately, even if all the details were to work out, it appears that verifying the
	conditions of 
	\cite[Theorem 3.1.4]{landesmanL:homological-stability-for-hurwitz} in
	the setting of 
\autoref{theorem:central-u-implies-cohomology-bound} would likely take a similar
amount of space to our current proof of 
\autoref{theorem:central-u-implies-cohomology-bound}.
Since it also would require significant additional background on the part of the reader,
we have not pursued this further.

We do note one important special case of 
\autoref{theorem:central-u-implies-cohomology-bound} 
which can be deduced from recent work, which appeared many years after this
paper first appeared.
If one is only interested in the case that $V$ is a coefficient
system coming from a conjugacy class in a group, (or more generally, a rack,) and the coefficient system $F$ corresponds to a bijective Hurwitz module in the
sense of \cite[Definition 2.1.2]{landesmanL:stable-homology-of-hurwitz-modules}, one can deduce a version of 
\autoref{theorem:central-u-implies-cohomology-bound} from
\cite[Theorem 3.1.4]{landesmanL:homological-stability-for-hurwitz}, whose
conditions can be verified using 
\cite[Lemma 3.2.8]{landesmanL:homological-stability-for-hurwitz} and
\cite[Proposition 4.0.2]{landesmanL:stable-homology-of-hurwitz-modules}.
We note that the above special case will not suffice for the purposes of this paper, as we
will need to consider coefficient systems which are more general than bijective
Hurwitz modules, such as in \autoref{example:rank-coefficient-system} and
\autoref{example:fiber-product-with-rank-coefficients}.
\end{remark}

\subsection{An exponential bound on the cohomology}
\label{subsection:exponential-bound}

Our main application of the above homological stability results to the BKLPR
heuristics comes from the bound on cohomology in
\autoref{proposition:cohomology-bound}, and the corresponding consequence
\autoref{corollary:cohomology-bound}.
There are two inputs. The first is our above homological stability
results. The other is a bound on the CW structure of configuration space.

We now give this second bound, which nearly appears in
\cite[\S4.2]{bianchiS:homology-of-configuration-spaces} in the case that $f =
0$. We now give the straightforward generalization to the case of arbitrary $f$.
We will be brief here, but 
encourage the reader to consult \cite[\S4.2]{bianchiS:homology-of-configuration-spaces}
for further details.
We thank Andrea Bianchi for suggesting the following approach.

\begin{lemma}
\label{lemma:cell-bound}
For $g, f, n \geq 0$,
the space $\Conf^n_{\Sigma^{1}_{g,f}}$, parameterizing $n$ unordered points in
the interior of $\Sigma^{1}_{g,f}$,
has $1$-point compactification with a cell decomposition
possessing at most $2^{2g+f+n}+1$ cells, one of which is the $0$-cell
of the added point in the $1$-point compactification.
\end{lemma}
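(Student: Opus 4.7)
The case $f=0$ is essentially \cite[\S4.2]{bianchiS:homology-of-configuration-spaces}, so the plan is to recall the idea of that argument and then check that the extra factor of $2^f$ absorbs the change brought on by the $f$ punctures.

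First I would fix a convenient smooth model of $\Sigma^1_{g,f}$: view it as a compact oriented surface of genus $g$ with one boundary circle, with $f$ interior points removed. Put a handle decomposition on this surface consisting of a single $0$-handle (a small disc inside $\Sigma^1_{g,f}$) together with $2g + f$ one-handles attached to it, namely $2g$ one-handles whose cores represent a standard symplectic basis of $H_1(\Sigma^1_{g,0};\mathbb Z)$ and $f$ additional one-handles whose cores are arcs that travel from the $0$-handle out towards each of the $f$ punctures. Because $\Sigma^1_{g,f}$ is homotopy equivalent to a wedge of $2g+f$ circles, no $2$-handles are required; in particular the dual CW-structure on $\Sigma^1_{g,f}$ has $2g+f+1$ cells.

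Next I would use this handle structure to build a Fox--Neuwirth--Fuks cell decomposition of the one-point compactification $\Conf^n_{\Sigma^1_{g,f}}{}^{+}$, following the strategy of Bianchi--Stavrou. The idea is to pick a Morse function $h\colon\Sigma^1_{g,f}\to\mathbb R$ adapted to the handle decomposition (one minimum, $2g+f$ saddles, no maxima) and to stratify $\Conf^n$ by the combinatorial data of which handle each of the $n$ configuration points lies in, together with the relative $h$-ordering of the points inside each handle. Extending to the one-point compactification collapses the strata where points escape to infinity, and what remains is a finite CW structure whose top-dimensional cells are indexed by the finite combinatorial data just described, together with the degenerate lower-dimensional cells obtained when subsets of the $n$ points share an $h$-value.

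The final step is the counting. Once the cell decomposition is set up, a cell of $\Conf^n_{\Sigma^1_{g,f}}{}^{+}$ is specified by (i) a subset of the $2g+f$ one-handles that is ``active'' (i.e.\ contains at least one configuration point or is traversed by a degeneration), and (ii) for the $n$ configuration points, a choice of a Fuks-type binary datum recording ordering/degeneration structure, exactly as in the planar case. Possibility (i) contributes at most a factor of $2^{2g+f}$ and (ii) contributes at most $2^n$, yielding the claimed bound of $2^{2g+f+n}$. The main obstacle is simply faithfully transcribing the Bianchi--Stavrou cell count with the punctures added, and in particular verifying that the $f$ puncture-handles contribute in exactly the same binary way as the $2g$ genus-handles; this is reasonable because, from the point of view of the Morse function above, a puncture-handle and a genus-handle look identical near the $0$-handle, and the non-compactness at the puncture ends is swallowed by passing to the one-point compactification.
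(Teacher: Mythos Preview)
Your general strategy—extend the Bianchi--Stavrou Fox--Neuwirth--Fuks decomposition to punctured surfaces—is the paper's, but both your model of the surface and your final count diverge from it, and the counting step has a genuine gap.

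The paper does not use a handle decomposition without $2$-handles. It realizes $\Sigma^1_{g,f}$ as a quotient of a rectangle $[0,2]\times[0,1]$ with the right edge subdivided into $4g+2f$ intervals identified in pairs (the extra $2f$ intervals, as compared to the $f=0$ case, encode the punctures). This gives a CW structure with one $0$-cell, $2g+f+1$ one-cells, and \emph{one $2$-cell}. A cell of $(\Conf^n)^+$ is then indexed by a tuple $\mathfrak t=\bigl(b,(P_1,\dots,P_b),(v_1,\dots,v_{2g}),(w_1,\dots,w_f)\bigr)$ with $P_i\ge 1$, $v_i,w_j\ge 0$, and $\sum P_i+\sum v_i+\sum w_j=n$: the $v_i,w_j$ record \emph{how many} points lie on each $1$-cell, and $(P_1,\dots,P_b)$ is the Fox--Neuwirth column composition of the points in the open rectangle. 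The count is then a straight stars-and-bars computation, identifying such tuples with subsets of $\{1,\dots,n+2g+f\}$ of size at least $2g+f$ and bounding by $2^{n+2g+f}$.

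Your claimed parametrisation—(i) a subset of ``active'' one-handles, (ii) a single Fuks binary datum for the $n$ points—does not describe these cells. Knowing only which $1$-cells are nonempty is strictly less information than the integers $v_i,w_j$; for instance with $n=3$ and two $1$-cells $A,B$, the cells ``$2$ on $A$, $1$ on $B$'' and ``$1$ on $A$, $2$ on $B$'' share the active set $\{A,B\}$ but are distinct. There is no factorisation of the cell set as $2^{2g+f}\cdot 2^n$, even though the numerical bound happens to coincide. Moreover, by discarding the $2$-cell you lose the rectangle's first coordinate, which is precisely what makes the column composition $\underline P$ (and hence the whole Fox--Neuwirth mechanism) work; a single Morse function $h$ supplies only one coordinate, not the two needed for the stratification. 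The fix is to keep the rectangle model with its $2$-cell and carry out the stars-and-bars count on the tuples $\mathfrak t$.
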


\begin{figure}
\includegraphics[scale=.8]{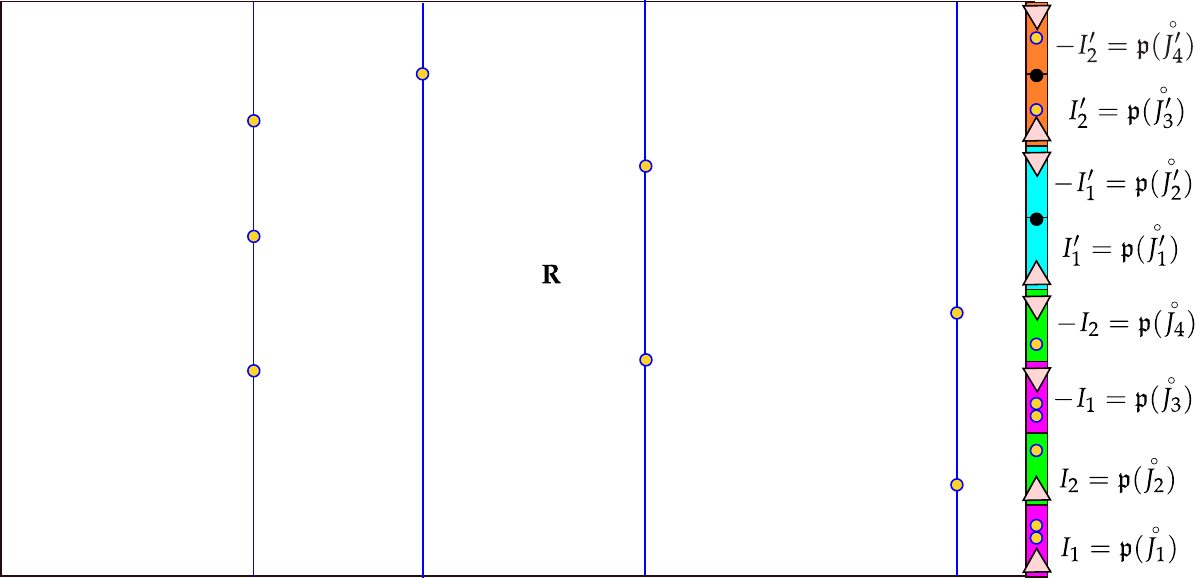}
\caption{This picture depicts a cell in the configuration space
$\Conf^{12}{\Sigma^1_{1,2}}$. 
The boundary component
consists of the union of the upper, left, and lower edges.
The arrows indicate the orientations of the segments of the edges. 
Note that the segments of the same color are glued to each other with the
orientations indicated, so there are only $4$ distinct points represented by the yellow dots on the right boundary despite
the fact that there are $8$ yellow dots on the right boundary in the picture.
The two black
dots indicate the two punctures comprising $W$. The yellow dots indicate the $12$ points in
configuration space.
The cell is labeled by the $12$-tuple
$\mathfrak t = ((3,1,2,2), (2,1),(0,1))$
with $b = 4$.
}
\label{figure:configuration-cell}
\end{figure}

\begin{proof}
	The idea is to generalize the construction of 
	\cite[\S4.2]{bianchiS:homology-of-configuration-spaces}
	to the case that $f > 0$ as follows. We modify their setup so that the
	right edge of their rectangle $\mathbf R$
	includes the intervals $I_1, I_2, -I_1, -I_2, I_3, \ldots, I_{2g},
	-I_{2g-1}, -I_{2g}$, as in the case $f = 0$,
	and then additionally includes the intervals $I'_1, -I'_1, I'_2, -I'_2,
	\ldots, I'_f, -I'_f$ from bottom to top, see
	\autoref{figure:configuration-cell}.

	We now spell this out in some more detail, reviewing the notation of
	\cite[\S4.2]{bianchiS:homology-of-configuration-spaces}.
	First, we describe $\Sigma^1_{g,f}$ as a quotient in a particular way,
	which will be useful for describing a cellular structure on the one
	point compactification of its configuration space.
	Let $\mathbf R :=[0,2] \times [0,1]$ be a rectangle.
	Decompose the side $\{2\} \times [0,1]$ into $4g + 2f$ consecutive
	intervals of equal length $J_1, \ldots, J_{4g}, J'_1, \ldots, J'_{2f}$
	ordered and oriented with increasing second coordinate, as in
	\autoref{figure:configuration-cell}.
	Let $W$ be the set of the $f$ points consisting of the larger endpoint of $J'_{2i+1}$ for $0 \leq i \leq f - 1$.
	Let $\mathbf R - W$ denote the punctured rectangle where we remove
	$W$.
	Let $\mathcal M$ denote the quotient of $\mathbf R -W$ obtained by
	identifying $J_{4i+1}$ with $J_{4i+3}$, $J_{4i+2}$ with $J_{4i+4}$, and
	$J'_{2j+1}$ with $J'_{2j+2}$
	via their unique orientation reversing isometry
	for $0 \leq i \leq g-1$ and $0 \leq j \leq f - 1$ .
	Let $\mathfrak p : \mathbf R- W \to \mathcal M$ denote the quotient map.
	Then, $\mathcal M$ is homeomorphic to $\Sigma^1_{g,f}$.

	We next give a 
	description of the cellular structure of $\mathcal M$.
	This cellular structure is not a CW complex structure, but rather a more
	general open cell structure, which is just a tool to describe the CW
	cell structure on the one point compactification of configuration
	spaces on $\mathcal M$.
	Throughout, for $X$ a topological manifold with boundary, we will use $\accentset{\circ}{X}$ to denote the interior of $X$.
	\begin{enumerate}
	\item 	The space $\mathcal M$ has a single $0$ cell $p_0$, which is the image of any of the
	endpoints of the $J_i$, and is also identified with the larger endpoint of 
	$J'_{2j+2}$.
	\item
	The space $\mathcal M$ has $2g+f+1$ one-cells, described as follows.
	There are the $1$-cells $I_{2i+j}$, where $I_{2i+j} :=\mathfrak
	p\left(\accentset{\circ}{J}_{4i+j}\right)$ with $0 \leq i \leq g - 1$
	and $j \in\{1,2\}$. There are the $1$-cells $I'_i = \mathfrak
p\left(\accentset{\circ}{J}'_{2i+1}\right)$ for $0 \leq i \leq
	f-1$. Finally, there is $I = \mathfrak p(\partial \mathbf R -
	\{2\} \times [0,1])$.
	\item 
	Finally, $\mathcal M$ has one $2$-cell which is 
	$\mathfrak p(\accentset{\circ}{\mathbf R}).$
	\end{enumerate}
	For $1 \leq i \leq g-1$ and $1 \leq j \leq 2$, we let $\iota_{2i+j}: (0,1) \to \mathcal M$ denote the
	composition of $\mathfrak p$ with the linear map 
	sending $(0,1) \to \accentset{\circ}{J}_{4i+j}$.
	We let
	$\iota'_i: (0,1) \to \mathcal M$ denote the composition of $\mathfrak p$
with the linear map sending $(0,1) \to \accentset{\circ}{J}'_{2i+1}$ for
	$0 \leq i \leq f -1$.
	(This notation differs from that of
	\cite[\S4.2]{bianchiS:homology-of-configuration-spaces}, but it is
	slightly more
	convenient for our purposes.)

	We next introduce notation to define the cells in the CW complex we will
	construct.
	For $n \geq 0$, an $n$-tuple, which we denote by $\mathfrak t$,
	consists of
	\begin{enumerate}
	\item an integer $b \geq 0$
	\item a sequence $\underline{P} = (P_1, \ldots, P_b)$ of positive
	integers
	\item a sequence $\mathfrak v = (v_1, \ldots, v_{2g})$ of non-negative
	integers
	\item a sequence $\mathfrak w = (w_1, \ldots, w_f)$ of non-negative
	integers
	\end{enumerate}
	such that $P_1 + \cdots + P_b + v_1 + \cdots v_{2g} +w_1 + \cdots +w_f =
	n$. The above data will index ways to split up $n$ points, representing a
	point of $\Conf^n_{\Sigma^1_{g,f}}$, into different cells of
	$\mathcal M$.

	We next define the cells determining a CW structure for the one point
	compactification of $\Conf^n_{\Sigma^1_{g,f}}$.
	We write $\mathfrak t = (b, \underline{P}, \mathfrak v, \mathfrak w)$
	and use the notation for our surface $\mathcal M$ described above.
	For $\mathfrak t$ an $n$-tuple, let $e_{\mathfrak t}$ denote the subset
	of $[S] \in \Conf^n_{\Sigma^1_{g,f}}$ (which we recall parameterizes points in
	the interior of $\Sigma^1_{g,f}$)
	which satisfies the following conditions.
	\begin{enumerate}
	\item For $1 \leq i \leq 2g$, $v_i$ points lie on $I_i$.
	\item For $1 \leq i \leq f$, $w_i$ points lie in $I'_i$.
	\item There are exactly $b$ real numbers $0 < x_1 < \cdots < x_b < 2$
	such that $S$ admits at least one point in $\accentset{\circ}{\mathbf
	R}$ having $x_i$ as a coordinate.
	\item For all $1 \leq i \leq b$, exactly $P_i$ points of $S$ which lie
	in $\accentset{\circ}{\mathbf R}$ have first coordinate equal to $x_i$.
	\end{enumerate}
	Each $[S] \in \Conf^n_{\Sigma^1_{g,f}}$ lies in a unique subspace
	$e_{\mathfrak t}$.
	Given an $n$-tuple $\mathfrak t$, the space $e_{\mathfrak t}$ is
	homeomorphic to an open disc. Let $d(\mathfrak t)$ denote the dimension
	of this disc.
	Let $\Delta^k$ denote the standard $k$-dimensional simplex.
	Define $\Delta^\mathfrak t := \Delta^b \times \prod_{i=1}^b \Delta^{P_i}
	\times \prod_{i=1}^{2g} \Delta^{v_i} \times \prod_{i=1}^f \Delta^{w_i}$.
	Using 
	$\Conf^{d(\mathfrak t)}_{\Sigma^1_{g,f}} \cup \{\infty\}$
	to denote the $1$-point compactification,
	for $\mathfrak t$ an $n$-tuple,
	define the map $\Phi^{\mathfrak t}$ given in simplicial coordinates by
	\begin{align*}
	&\Phi^{\mathfrak t} : \Delta^{\mathfrak t} \to
	\Conf^{d(\mathfrak t)}_{\Sigma^1_{g,f}} \cup \{\infty\} \\
	&\left( (z_i)_{1 \leq i \leq b}, (s^{(i)}_j)_{1 \leq i \leq b, 1 \leq j
	\leq P_i }, (t^{(i)}_j)_{1 \leq i \leq 2g, 1 \leq j \leq v_i},
	(r^{(i)}_j)_{1 \leq i \leq f, 1 \leq j \leq w_i}
	\right) \\
	&\mapsto
	\left[ \mathfrak p(2z_j, s_j^{(i)}) : 1 \leq i \leq b, 1 \leq j \leq P_i \right]
	\cdot
	\left[ \iota_i(t_j^{(i)}) : 1 \leq i \leq 2g, 1 \leq j \leq v_i \right]
	 \\
	& \qquad
	\cdot\left[ \iota'_i(r_j^{(i)}) : 1 \leq i \leq f, 1 \leq j \leq w_i \right],
	\end{align*}
	where $\cdot$ denotes the superposition product.
	The map above should be interpreted as taking the value $\infty$
	whenever the image does not lie in the configuration space of points in
	$\mathcal M$; that is, whenever the configuration obtained by applying
	$\Phi^{\mathfrak t}$ to some input would result in two points occupy the same position, or
	a point lying on the boundary or a puncture, the map $\Phi^{\mathfrak t}$ instead
	takes value $\infty$.
	The map $\Phi^{\mathfrak t}$ restricts to a homeomorphism sending the
$\accentset{\circ}{\Delta}^{\mathfrak t} \to e_{\mathfrak t}$ and the boundary
	$\partial \Delta^{\mathfrak t}$
	to the union of $\{\infty\}$ and some of the subspaces $e_{\mathfrak t'}$ where
	$d(\mathfrak t') < d(\mathfrak t)$.

	Let us briefly explain why 
	$d(\mathfrak t') < d(\mathfrak t)$ for the subspaces $e_{\mathfrak t'}$
	described above.
	The rough idea is that each codimension-$1$ face $e_{\mathfrak t'}$ of $\Delta^{\mathfrak
	t}$ not sent to $\infty$ corresponds to decreasing the value of $b$ by
	$1$ (either by colliding two of the columns in the interior of $\mathcal
		M$ or by colliding the rightmost column in the interior with the
	right side of the boundary). This means the number of horizontal degrees of freedom decreases by
	$1$ while the number of vertical degrees of freedom stays the same, and
	so, for this value of $\mathfrak t'$, $d(\mathfrak t')  = d(\mathfrak t) - 1$. Similar reasoning applies to higher codimension
	cells $e_{\mathfrak t'}$ as well.
	
	As in \cite[Proposition 4.4]{bianchiS:homology-of-configuration-spaces},
	one may verify the $e_{\mathfrak t}$ together with $\infty$ form a cell
	decomposition for the one point compactification of
	$\Conf^n_{\Sigma^1_{g,f}}$.
	In contrast to the cell structure on $\mathcal M$ described earlier,
	this cell decomposition of the one point compactification of 
	$\Conf^n_{\Sigma^1_{g,f}}$ is actually a CW complex structure.

	Finally, we bound the number of cells in this structure by
	$2^{n+2g + f}$.
	Note that the number of cells is $1$ more than the
	number of $n$-tuples $\mathfrak t$. (We add $1$ to account for the
	$0$-cell at $\infty$.)
	Such an $n$-tuple can equivalently be described by a choice of $b$, and a
	collection of non-negative integers $P_1-1, \ldots, P_b - 1, v_1,
	\ldots, v_{2g}, w_1, \ldots, w_f$ summing to $n-b$.
	By ``stars and bars,'' such collections of integers are in bijection with
	subsets of $\{1, \ldots, (n-b) + (b+2g+f)\} = \{1, \ldots,  n + 2g + f\}$ of size 
	$b + 2g + f$.
	Varying over different possible values of $b$ yields that the total
	number of cells is equal to the number of subsets of $\{1, \ldots,
	n+2g+f\}$ of size at least $2g + f$.
	This is at most the number of subsets of 
	$\{1, \ldots,n+2g+f\}$, which is $2^{n+2g + f}$. Adding $1$ for the
	$0$-cell at $\infty$ gives that there are at most $2^{n+2g+f}+1$ cells, as
	we wished to show.
\end{proof}

As an easy consequence of the above bound on the number of cells, we obtain the
following bound on homology.
\begin{lemma}
\label{lemma:exponential-bound-for-all-cohomology}
Suppose $V$ is a coefficient system for $\Sigma^1_{0,0}$ and $F$ is a
coefficient system for $\Sigma^1_{g,f}$ over $V$.
If each $V_i$ and $F_i$ is finite-dimensional,
$\dim H_i(B^n_{g,f}, F_n) \leq 2^{2g+f+n} \cdot \dim F_n.$
\end{lemma}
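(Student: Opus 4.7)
The plan is to convert the group homology to topological local-system homology via asphericity of configuration space, pass to compactly supported cohomology via Poincaré duality, and bound the latter using the CW structure from \autoref{lemma:cell-bound}.

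First, by the Fadell-Neuwirth fibration theorem applied inductively (using that any surface with nonempty boundary is aspherical), $\Conf^n_{\Sigma^1_{g,f}}$ is a $K(B^n_{\Sigma^1_{g,f}},1)$. This identifies $H_i(B^n_{\Sigma^1_{g,f}}; F_n)$ with $H_i(\Conf^n_{\Sigma^1_{g,f}}; \mathcal{L}_{F_n})$, where $\mathcal{L}_{F_n}$ denotes the local system on configuration space associated to the given $B^n_{\Sigma^1_{g,f}}$-action on $F_n$. Since $\Sigma^1_{g,f}$ is oriented, $\Conf^n_{\Sigma^1_{g,f}}$ is an oriented smooth $2n$-manifold, and Poincaré-Lefschetz duality with local coefficients yields
\begin{align*}
H_i(\Conf^n_{\Sigma^1_{g,f}}; \mathcal{L}_{F_n}) \simeq H^{2n-i}_c(\Conf^n_{\Sigma^1_{g,f}}; \mathcal{L}_{F_n}),
\end{align*}
so it suffices to bound the compactly supported cohomology.

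To do that, I will exploit the CW structure from \autoref{lemma:cell-bound} on the one-point compactification $X := \Conf^n_{\Sigma^1_{g,f}} \cup \{\infty\}$, which has at most $2^{2g+f+n}$ cells. Excluding the $0$-cell $\{\infty\}$ leaves at most $2^{2g+f+n}-1$ cells $e_{\mathfrak t}$, each an open Euclidean disc, which stratify $\Conf^n_{\Sigma^1_{g,f}}$. Filtering $\Conf^n_{\Sigma^1_{g,f}}$ by the intersections with the skeleta of $X$ produces a spectral sequence
\begin{align*}
E_1^{p,q} = \bigoplus_{\dim e_{\mathfrak t} = p} H^{p+q}_c\bigl(e_{\mathfrak t}; \mathcal{L}_{F_n}|_{e_{\mathfrak t}}\bigr) \Longrightarrow H^{p+q}_c(\Conf^n_{\Sigma^1_{g,f}}; \mathcal{L}_{F_n}),
\end{align*}
assembled from the standard long exact sequences for compactly supported cohomology of open-closed decompositions. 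Since each $e_{\mathfrak t}$ is contractible, the restriction of $\mathcal{L}_{F_n}$ there is trivial and $H^*_c(e_{\mathfrak t}; \mathcal{L}_{F_n}|_{e_{\mathfrak t}})$ has total dimension exactly $\dim F_n$.

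Summing over cells bounds the total dimension of $H^*_c(\Conf^n_{\Sigma^1_{g,f}}; \mathcal{L}_{F_n})$ by $(2^{2g+f+n}-1)\dim F_n$, and Poincaré duality together with the asphericity identification then yield $\dim H_i(B^n_{\Sigma^1_{g,f}}; F_n) < 2^{2g+f+n}\dim F_n$. The main point to verify carefully is that the ``open skeleta'' $\Conf^n_{\Sigma^1_{g,f}} \cap X^{(p)}$ really do form a filtration by closed subspaces of $\Conf^n_{\Sigma^1_{g,f}}$ whose successive differences are disjoint unions of the cells $e_{\mathfrak t}$; this follows because the CW attaching maps of $X$ send each boundary sphere into the union of $\{\infty\}$ and lower-dimensional cells, so intersecting with $\Conf^n_{\Sigma^1_{g,f}}$ preserves closedness within the ambient manifold.
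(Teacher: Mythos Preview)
Your proof is correct and follows essentially the same route as the paper: identify group homology with local-system homology on configuration space, pass to compactly supported cohomology via Poincar\'e duality, and bound the latter using the CW structure of \autoref{lemma:cell-bound}. The only differences are cosmetic---the paper phrases the bound via the cellular cochain complex for the pair $(\Conf^n_{\Sigma^1_{g,f}} \cup \{\infty\}, \{\infty\})$ with coefficients in $j_!\mathbb{F}_n$ rather than your filtration spectral sequence, and leaves the asphericity (which you make explicit via Fadell--Neuwirth) implicit.
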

\begin{proof}
Since 
$B^n_{g,f} \simeq \pi_1(\Conf^n_{\Sigma^1_{g,f}})$,
the representation $F_n$ of $B^n_{g,f}$ corresponds to a local system
$\mathbb F_n$ on $\Conf^n_{\Sigma^1_{g,f}}$
If 
$\Conf^n_{\Sigma^1_{g,f}} \cup \{\infty\}$ denotes the $1$-point
compactification and
$j : \Conf^n_{\Sigma^1_{g,f}} \to \Conf^n_{\Sigma^1_{g,f}} \cup \{\infty\}$,
denotes the inclusion,
we have an isomorphism between the compactly supported cohomology and sheaf
cohomology of the extension by $0$ sheaf
\begin{align}
\label{equation:compactly-supported-cohomology}
H^i_{\on{c}}(\Conf^n_{\Sigma^1_{g,f}}, \mathbb F_n) \simeq H^i(
(\Conf^n_{\Sigma^1_{g,f}} \cup \infty), j_! \mathbb F_n).
\end{align}
We will now bound the dimension of the latter cohomology group.
We will use the $\on{CW}$ cell structure on 
$\Conf^n_{\Sigma^1_{g,f}} \cup \infty$ from \autoref{lemma:cell-bound}
which has at most $2^{2g+f+n}+1$ cells.
The cellular cochain complex which computes the $i$th cohomology group
\eqref{equation:compactly-supported-cohomology} has dimension less than
$\rk \mathbb F_n \cdot 2^{2g+f+n} = \dim F_n \cdot 2^{2g+f+n}$.
(We note that we multiply only by $2^{2g+f+n}$ and not $2^{2g+f+n}+1$ since one
	of the cells is the $0$-cell at $\infty$, and the stalk of $j_! \mathbb
F_n$ there vanishes.)
It follows from Poincar\'e duality that 
\[\dim H_{2n-i}(\Conf^n_{\Sigma^1_{g,f}}, \mathbb F_n) =
\dim H^{i}_{\on{c}}(\Conf^n_{\Sigma^1_{g,f}}, \mathbb F_n) \leq \dim F_n \cdot
2^{2g+f+n}. \qedhere
\]
\end{proof}

Combining our homological stability results with the above bounds on homology
gives the following bound on cohomology.
We switch from homology to cohomology because later we will apply these results to bound
the cohomology of certain spaces related to Selmer groups.
For the following, we continue to use notation from
\autoref{notation:homology-coefficient-system}.

\begin{proposition}
	\label{proposition:cohomology-bound}
	Suppose $V$ is a $1$-controlled coefficient system for
	$\Sigma^1_{0,0}$ and $F$ is a coefficient system for $\Sigma^1_{g,f}$
	over $V$.
	Assume moreover that 
	$\deg H_0(\mathcal K(M_0^{V,F}))$ is finite, $\deg H_1(\mathcal
	K(M_0^{V,F}))$ is finite, and each $V_i$ and $F_i$ is
	finite-dimensional.
	Then, there is a constant $K$ depending on $g, f$, and the coefficient
	systems $V$ and $F$, but not on the subscript $n$ or the index $i$ so that
	\begin{align}
	\label{equation:cohomology-bound}
	\dim H^i(B^n_{g,f}, F_n) \leq K^{i+1}
	\end{align}
	for all $i, n$. 
\end{proposition}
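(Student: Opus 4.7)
The plan is to combine the homological stability result of \autoref{theorem:1-controlled-bound} with the crude exponential cell bound of \autoref{lemma:exponential-bound-for-all-cohomology}. Working over a field, Poincaré duality (or universal coefficients) lets us pass freely between $H^i$ and $H_i$ with the same dimension, so it suffices to bound $\dim H_i(B^n_{\Sigma^1_{g,f}}, F_n)$ exponentially in $i$, uniformly in $n$.

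First I would invoke \autoref{theorem:1-controlled-bound} with $p = i$ to obtain constants $A(V)$ and $B(F)$ such that, once $n > A(V)i + B(F)$, multiplication by $U$ induces an isomorphism $\{M_i^{V,F}\}_n \to \{M_i^{V,F}\}_{n+\deg U}$. Consequently, for every $i$ the function $n \mapsto \dim H_i(B^n_{\Sigma^1_{g,f}}, F_n)$ is eventually $(\deg U)$-periodic, and in particular its value at any $n$ is bounded by its maximum over a window of length $\deg U$ just past the stability threshold. Concretely,
\begin{equation*}
\dim H_i(B^n_{\Sigma^1_{g,f}}, F_n) \leq \max_{n_0 \leq n' \leq n_0 + \deg U} \dim H_i(B^{n'}_{\Sigma^1_{g,f}}, F_{n'}),
\end{equation*}
where $n_0 := \lceil A(V)i + B(F)\rceil + 1$, provided $n \geq n_0$; and for $n < n_0$ the bound is trivial since then $n$ lies in a window that itself falls below $n_0 + \deg U$.

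Next I would feed each of the finitely many values $n' \in [0, n_0 + \deg U]$ relevant to this bound into \autoref{lemma:exponential-bound-for-all-cohomology}, which gives
\begin{equation*}
\dim H_i(B^{n'}_{\Sigma^1_{g,f}}, F_{n'}) \leq 2^{2g+f+n'} \cdot \dim F_{n'} = 2^{2g+f+n'} \cdot (\dim V_1)^{n'} \cdot \dim F_0.
\end{equation*}
Substituting $n' \leq A(V)i + B(F) + \deg U + 1$ and regrouping yields a bound of the shape $C_1 \cdot C_2^{\,i}$, where $C_1$ absorbs the contribution from $B(F), \deg U, g, f$ and $\dim F_0$, and $C_2 = 2^{A(V)} \cdot (\dim V_1)^{A(V)}$. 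Taking $K := \max(C_1, C_2)$ (and using $K \geq 1$) produces $\dim H^i(B^n_{\Sigma^1_{g,f}}, F_n) \leq K^{i+1}$, as required.

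The only genuinely delicate step is the first one, namely verifying that the hypotheses of \autoref{theorem:1-controlled-bound} are in fact available: this uses precisely the assumption that $\deg H_0(\mathcal K(M_0^{V,F}))$ and $\deg H_1(\mathcal K(M_0^{V,F}))$ are finite together with the assumption that $V$ is $1$-controlled. Once that is in hand, everything else is bookkeeping—turning a stability window plus a cell count into a uniform exponential bound—and the polynomial growth of the bounding window with $i$ is exactly what allows the final estimate to remain single-exponential in $i$.
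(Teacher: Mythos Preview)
Your proposal is correct and follows essentially the same approach as the paper: reduce to homology via universal coefficients, use \autoref{theorem:1-controlled-bound} to confine attention to $n$ in a window of size $\deg U$ just past the stability threshold $A(V)i + B(F)$, and then apply the cell bound of \autoref{lemma:exponential-bound-for-all-cohomology} together with $\dim F_n = (\dim V_1)^n \dim F_0$ to extract a bound of the shape $C_1 C_2^i$. The only cosmetic difference is that the paper phrases the reduction as ``it is enough to show the bound for $n \leq A(V)i + B(F) + \deg U$'' rather than speaking of periodicity windows, and it briefly disposes of the degenerate case $\dim V_1 = 0$ separately; your mention of Poincar\'e duality is a slight misattribution (universal coefficients is what is actually used), but you already note that alternative.
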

\begin{proof}
	Since the dimensions of the $F_i$
	\eqref{equation:cohomology-bound}
	are finite, and we are working with representations over a field, it follows from the universal coefficient theorem
	that $\dim H^i(B^n_{g,f}, F_n) = \dim H_i(B^n_{g,f},F_n)$.
	Hence, it is enough to bound 
	$\dim H_i(B^n_{g,f}, F_n) \leq K^{i+1}.$
	Recall we use $\mathbb U \in R^V$ to denote a central homogeneous
	non-invertible element.
	By \autoref{theorem:1-controlled-bound}, there are constants $I(V)$ and
	$J(V, F)$ so that whenever $n > I(V)i + J(V, F),$ 
	$H_i(B^n_{g,f},F_n) \simeq H_i(B^{n+\deg \mathbb U}_{g,f}, F_{n+\deg
	\mathbb U}).$
	Therefore, applying this repeatedly, it is enough to show
	$\dim H_i(B^n_{g,f}, F_n) \leq K^{i+1}$ for any $n \leq I(V)i +
	J(V, F) + \deg \mathbb U$.
	By \autoref{lemma:exponential-bound-for-all-cohomology},
	$\dim H_i(B^n_{g,f}, F_n) \leq 2^{2g+f+n} \cdot \dim F_n$.
	Hence, we only need to produce some constant $K$ so that 
	\begin{align*}
	2^{2g+f+I(V)i +
	J(V, F) + \deg \mathbb U}\cdot \dim F_{I(V)i +J(V, F) + \deg \mathbb U} \leq K^{i+1}.
	\end{align*}
	We may assume $\dim V_1 > 0$, as otherwise $R^V = k$ and the statement
	is trivial.
	Because $F_n \simeq V_1^{\otimes n} \otimes F_0$,
	\begin{align*}
		&2^{2g+f+I(V)i + J(V, F) + \deg \mathbb U}\cdot \dim F_{I(V)i
		+J(V, F) + \deg \mathbb U} \\
		&= 2^{2g+f+J(V, F)+\deg \mathbb U}\cdot 2^{I(V)i} \cdot (\dim
		V_1)^{I(V)i +J(V, F) + \deg \mathbb U}
	\cdot \dim F_0 \\
	&\leq (2 \dim V_1)^{I(V) i} \cdot (2 \dim V_1)^{2g+f+J(V, F)+\deg \mathbb U} \dim
	F_0.
	\end{align*}
	The claim then follows by taking 
	\[	K > \max((2 \dim V_1)^{I(V)}, (2 \dim V_1)^{2g+f+J(V, F)+\deg \mathbb U} \dim F_0).
	\qedhere
	\]
\end{proof}

We now reformulate the above in a slightly more convenient form for our
applications.

\begin{corollary}
\label{corollary:cohomology-bound}
	Let $\ell'$ be a prime.
Suppose $V$ is a coefficient system for
	$\Sigma^1_{0,0}$ and $F$ is a coefficient system for $\Sigma^1_{g,f}$
	over $V$ with each $V_i$ and each $F_i$ finite-dimensional.
	Assume that there is a central homogeneous non-invertible element $\mathbb U \in R^V$
	such that $\deg \ker \mathbb U$ and $\deg \coker \mathbb U$ are both finite.
	Suppose $F_n$ corresponds to a local system $\mathbb F_n$ on
	$\Conf^n_{\Sigma^1_{g,f}}$ via the identification
	$\pi_1(\Conf^n_{\Sigma^1_{g,f}}) \simeq B^n_{g,f}$
	with
	$\mathbb F_n = \pi_* (\mathbb Z/\ell' \mathbb Z)$
	for $\pi: W_n \to \Conf^n_{\Sigma^1_{g,f}}$ some finite (unramified) covering space.
	(Above, we consider $\mathbb Z/\ell' \mathbb Z$ as the constant sheaf on
	$W_n$, and so $\mathbb F_n$ is the local system on configuration
	space corresponding to the action of $B^n_{g,f}$ on the free
	$(\mathbb Z/\ell' \mathbb Z)$-module spanned by the fiber of
	$\pi$ over a specified basepoint.)
	Then, there is a constant $K$ depending on $g,f$, the coefficient systems $V$
	and $F$,
but not on the subscript $n$ or index $i$ so that
	\begin{align*}
	\dim H^i(W_n, \mathbb Z/\ell' \mathbb Z) \leq K^{i+1}
	\end{align*}
	for all $i, n$. 
\end{corollary}
\begin{proof}
This is an immediate consequence of
\autoref{proposition:cohomology-bound}, upon identifying group cohomology for a
finite-index subgroup with cohomology of the corresponding finite covering space,
once we verify that $V$ is $1$-controlled and $\deg H_1(\mathcal K(M_0^{V,F}))$
and $\deg H_0(\mathcal K(M_0^{V,F}))$ are finite.
We have that $V$ is $1$-controlled by \autoref{theorem:evw-stability}.
From \autoref{lemma:finite-generation-mp}, we find that 
$M_0^{V,F}$ is finitely generated as an $R^V$-module.
By \autoref{lemma:degree-bound-for-km0}, we find $H_0(\mathcal K(M_0^{V,F}))$
and
$H_1(\mathcal K(M_0^{V,F}))$ both have finite degree.
\end{proof}

\section{The Selmer stack and its basic properties}
\label{section:selmer-space}

In this section, we set up the Selmer stack, which is a finite cover of
the stack of quadratic twists of an abelian variety that parameterizes
pairs of a quadratic twist and a Selmer element for that quadratic twist.
We first define the Selmer stack in
\autoref{subsection:selmer-space-definition}.
In \autoref{subsection:selmer-stack-properties} we prove basic properties of the
Selmer stack, such as the fact that it is a finite \'etale cover of the stack of
quadratic twists.
Since the definition given in
\autoref{subsection:selmer-space-definition} is not obviously connected to Selmer
groups, in \autoref{subsection:selmer-stack-sizes}
we relate the Selmer stack to Selmer groups.
Variants of the Selmer stack for the universal family were studied in
\cite{landesman:geometric-average-selmer} and
\cite{fengLR:geometric-distribution-of-selmer-groups},
and many of the proofs in this section follow ideas from those articles.

\subsection{Definition of the Selmer stack}
\label{subsection:selmer-space-definition}

We now set up notation to define the Selmer stack.

\begin{definition}
	\label{definition:symplectic-sheaf}
	Let $X$ be a Deligne-Mumford stack and $\nu$ a positive integer. A locally constant constructible
	sheaf of free $\mathbb Z/\nu \mathbb Z$-modules $\mathscr F$ on $X$ is {\em
	symplectically self-dual} if there is an isomorphism
	$\mathscr F
	\simeq \mathscr F^\vee(1) := \hom(\mathscr F, \mu_\nu)$ so that the
	resulting pairing $\mathscr F \otimes \mathscr F \to \mu_\nu$ factors
	through $\mathscr F \otimes \mathscr F \to \wedge^2 \mathscr F \to
	\mu_\nu$.
\end{definition}
\begin{remark}
	\label{remark:}
	Sometimes, a symplectically self-dual sheaf is called a {\em weight $1$} symplectically self-dual sheaf. 
	Since 
this is the only kind of symplectically self-dual sheaf we will encounter in our
paper, so we omit the ``weight $1$'' adjective.
All symplectically self-dual sheaves we encounter will be assumed lcc sheaves of
free $\mathbb Z/\nu \mathbb Z$-modules.
\end{remark}

\begin{example}
	\label{example:symplectic-sheaf}
	An important example of a symplectically self-dual sheaf for
	us will be $A[\nu]$, where $A \to U$ is an
	abelian scheme as in \autoref{notation:curve-notation} with a
	polarization of degree prime to $\nu$, for $\nu$ invertible on $B$.
\end{example}

\begin{notation}
	\label{notation:quadratic-twist-notation}
	Keep notation for $B, C, Z, U, n, f$ as in
	\autoref{notation:curve-notation}.
	Let $\mathscr F$ be a tame symplectically self-dual sheaf of $\mathbb
	Z/\nu \mathbb Z$-modules on $U$.

In order to define a Hurwitz stack for the group $\mathbb Z/2\mathbb Z$, let
$\mathcal S
\subset \hom(\pi_1(\Sigma_{g,n + f+1}) , \mathbb Z/2 \mathbb Z)$ denote the subset
sending loops around the geometric points in the degree $f+1$ divisor $Z$ to the trivial element of $\mathbb
Z/2 \mathbb Z$ and loops around the $n$ marked points (corresponding to
geometric points of the divisor $D$) to the nontrivial element of 
$\mathbb Z/2\mathbb Z$. 
(Since $\mathbb Z/2 \mathbb Z$ is abelian, this Hurwitz stack is a $\mathbb Z/2
\mathbb Z$ gerbe over its coarse space.)
Define $\qtwist n U B$ to be $\hur {\mathbb Z/2 \mathbb Z} n Z {\mathcal S} C B$.

We will assume throughout $n$ is even, as otherwise there are no such covers by
Riemann-Hurwitz.
Informally, $\qtwist n U B$ is a moduli space for finite double covers of $C$ ramified over a
degree $n$ divisor $D$, disjoint from $Z$.
Let $h: \mathscr U^n_B \times_{\conf n U B} \qtwist n U B \to \mathscr U^n_B
\to U$
denote the composite projection
and let $\lambda: \mathscr C^n_B \times_{\conf n U B} \qtwist n U B \to \qtwist n U B$
denote the universal proper curve.
The universal open curve $\mathscr U^n_B \times_{\conf n U B} \qtwist n U B$
possesses a natural finite \'etale double cover $t: \mathscr X^{n}_B \to \mathscr
U^n_B\times_{\conf n U B} \qtwist n U B$. 
The extension of this double cover to the proper universal curve
is branched precisely along the boundary
divisor $\mathscr D^n_B \times_{\conf n U B} \qtwist n U B$ (but not along the preimage
of $Z$).

Define $\mathscr F^n_B := t_* t^* h^* \mathscr F/ h^* \mathscr F$.
This is a sheaf on
$\mathscr U^n_B\times_{\conf n U B} \qtwist n U B$
whose fiber over $x := [(D,\phi: \pi_1(C - D) \to \mathbb Z/2\mathbb Z)] \in
\qtwist n U B$ is
a sheaf on
$\mathscr U^n_B\times_{\conf n U B} x \subset U$ which is the
quadratic twist of $\mathscr F$ over $U$ along the finite \'etale double cover corresponding to the
surjection $\phi$, which is branched over $D$.
\end{notation}

With the above notation in hand, we are now prepared to define the Selmer stack.
Recall that any \'etale sheaf on a scheme is represented by an algebraic space
\cite[V, Theorem 1.5]{Milne:etaleBook}.
Similarly, any \'etale sheaf on an algebraic stack is represented by a relative algebraic
space, which is another algebraic stack. We use this in the following definition.

\begin{definition}
	\label{definition:selmer-sheaf}
	Maintain notation as in  \autoref{notation:quadratic-twist-notation} and
	let $\nu$ be a positive integer.
	We assume $2 \nu$ is invertible on $B$.
	As in \autoref{notation:quadratic-twist-notation}, we have a
	symplectically self-dual sheaf $\mathscr F$ on $U$, which we are
	assuming is an lcc sheaf of free $\mathbb Z/\nu \mathbb Z$-modules.
	This gives rise to
	a symplectically self dual sheaf $\mathscr F^n_B$ on $\mathscr U^n_B$ and maps
\begin{align*}
	\mathscr U^n_B \xrightarrow{j} \mathscr
	C^n_B \xrightarrow{\lambda} \qtwist n U B.
\end{align*}
	Define the {\em Selmer sheaf of log-height $n$ associated to $\mathscr F$
	over $B$}
to be $\selsheaf {\mathscr F^n_B} := R^1 \lambda_* \left(
	j_* \mathscr F^n_B \right)$.
	The {\em Selmer stack},
	$\selspace {\mathscr F^n_B}$,
	is the algebraic stack representing this \'etale
	sheaf.
\end{definition}
\begin{remark}
	\label{remark:}
	For odd $\nu$, the Selmer stack is never a scheme because $\qtwist n U
	B$ is a $\mathbb Z/2 \mathbb Z$ gerbe over a scheme, and $\selspace
	{\mathscr F^n_B}$ is an odd degree cover of $\qtwist n U B$.
	Fortunately, since this is a gerbe, its stackiness is rather mild. 
	This will pose some technical, yet overcomable, obstacles.
\end{remark}

We next give a couple examples of types of symplectically self-dual sheaves
coming from abelian varieties, which will be important for our applications to
the BKLPR heuristics.

\begin{example}
	\label{example:abelian-scheme-selmer-sheaf}
	Suppose $p: A \to U$ is a polarized abelian scheme with polarization of
	degree prime to $\nu$ over $B$, as in \autoref{example:symplectic-sheaf}.
	Take $\mathscr F := A[\nu]$. Note $A[\nu] \simeq A^\vee[\nu] \simeq R^1 p_* \mu_\nu$,
	since the polarization has degree prime to $\nu$.
	Then, the Weil pairing gives $A[\nu]$ the structure of a
	symplectically self-dual sheaf on $U$. 
	Further, with notation as in
	\autoref{notation:quadratic-twist-notation},
	$A[\nu]^n_B$ defines a sheaf on $\mathscr U^n_B$. An important example
	of a Selmer sheaf for us will be $\selsheaf {A[\nu]^n_B} = R^1 \lambda_* \left(
	j_* A[\nu]^n_B \right)$.
\end{example}
\begin{example}
	\label{example:abelian-special-fiber}
	A slightly more general setup than
	\autoref{example:abelian-scheme-selmer-sheaf} is the following. Suppose
	we are in the setting of
	\autoref{notation:quadratic-twist-notation}, and 
	$b \in B$ is a closed point. Suppose we are given $\mathscr F$ a
	symplectically self-dual sheaf over $C$ so that the fiber $\mathscr F_b$
	over $U_b$ defines a sheaf which is of the form $A_b[\nu]$ for $p: A \to
	U_b$ a polarized abelian scheme with polarization degree prime to $\nu$.
	Then we obtain a Selmer sheaf $\mathscr F^n_B$ over $\mathscr U^n_B$ so
	that $\mathscr F^n_b \simeq \selsheaf {A[\nu]^n}$.
	The difference between this and
	\autoref{example:abelian-scheme-selmer-sheaf} is that we may not
	have any abelian scheme over $U$ restricting to $A$ over $U_b$.
\end{example}
\begin{remark}
	\label{remark:}
In fact, the \autoref{example:abelian-special-fiber} will be the
	setting we work in to prove our main result
	\autoref{theorem:main-finite-field} because
	it is relatively easy to lift symplectically self-dual sheaves from the
	closed point of a DVR to the whole DVR, as we explain in
	\autoref{lemma:lifting-to-char-0}. However, we are unsure whether it is
	possible to lift abelian schemes in our setting.
\end{remark}

We conclude this subsection with some notation recording data associated to a quadratic twist,
which we will use throughout the paper.
\begin{notation}
	\label{notation:x-points}
	With notation as in \autoref{notation:quadratic-twist-notation},
	for $x \in \qtwist n U B$ a point or geometric point, let
	$y$ denote the image of $x$ under the map $\qtwist n U B \to \conf n U
	B$.
	We use $C_x$ to
	denote the fiber of 
	$\xi: \mathscr C^n_B \to \conf n U B$
	over $y$, $U_x$ to denote the fiber of $\xi \circ
	j$ over $y$, 
	and we use $\mathscr F_x$ to denote the fiber of $\mathscr F^n_B$ over
	the point $x$.

	Assume we are further in the setup of
	\autoref{example:abelian-scheme-selmer-sheaf}
	or \autoref{example:abelian-special-fiber}
	and $x \in \qtwist n {U_b} b$.
	We use $A_x$ to denote the fiber of the abelian scheme $t_* t^* h^* A/h^* A$
	over $x$,
	where $t^*$ and $h^*$ denote the pullback along $t$ and $h$, and $t_*$ denotes the Weil
	restriction along $t$.
	Note
	that $A_x$ is an abelian scheme over $U_x$. We use $\mathscr A_x$ to
	denote the N\'eron model over $C_x$ of $A_x \to U_x$.
	We let $D_x \subset C_x - U_x$ denote the divisor associated to $y$, the
	image of $x$ under the projection $\qtwist n U B \to \conf n U B$.
\end{notation}

\subsection{Basic properties of the Selmer stack}
\label{subsection:selmer-stack-properties}

We next develop some basic properties of the Selmer stack.
The next lemma shows the Selmer sheaf commutes with base change.
The proof is similar to
\cite[Lemma 2.6]{fengLR:geometric-distribution-of-selmer-groups}, though 
some additional technical difficulties come up related to working over the
space of quadratic twists, instead of the universal family.
\begin{lemma}
	\label{lemma:selmer-lcc}
	Use notation as in \autoref{notation:quadratic-twist-notation}.
	In particular, $\mathscr F$ is a tame symplectically self-dual sheaf of
	$\mathbb Z/\nu \mathbb Z$-modules.
	Suppose $2\nu$ invertible on $B$.
	Then, the sheaf $\selsheaf {\mathscr F^n_B}$ is locally constant constructible
	and its formation commutes with base change on $\qtwist n U B$.
	Further, for $\overline \lambda := \lambda \circ j$, both
$R^i \overline \lambda_* \left( \mathscr F^n_B \right)$ and
$R^i \overline \lambda_! \left( \mathscr F^n_B \right)$ are locally
constant constructible for all $i \geq 0$ and their formation commutes with base change on $\qtwist n U B$.
\end{lemma}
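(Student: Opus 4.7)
The strategy is to combine proper base change for $\lambda$ with an upgrade from constructibility to local constancy using tameness and the smoothness of the family of curves.

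First I would observe that $\lambda: \mathscr C^n_B \times_{\conf n U B} \twoconf n U B \to \twoconf n U B$ is proper, since it is the base change of the proper morphism $\mathscr C^n_B \to \conf n U B$ along the projection $\twoconf n U B \to \conf n U B$. Consequently, proper base change applies to $\lambda$: for any constructible \'etale sheaf $\mathscr G$ on the source, $R^i \lambda_* \mathscr G$ is constructible on $\twoconf n U B$ and its formation commutes with arbitrary base change. Applied to $\mathscr G = j_* \mathscr F^n_B$ this will handle $R^i \overline \lambda_* \mathscr F^n_B = R^i \lambda_*(j_* \mathscr F^n_B)$ (using the Leray spectral sequence together with the vanishing of $R^{>0} j_* \mathscr F^n_B$ coming from tameness and the fact that $2\nu$ is invertible), while applied to $\mathscr G = j_! \mathscr F^n_B$ and using $\overline \lambda_! = \lambda_* \circ j_!$ (since $\lambda$ is proper) this handles $R^i \overline \lambda_! \mathscr F^n_B$. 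The case of $\selsheaf{\mathscr F^n_B}$ is the special case $i = 1$ of the first.

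To feed this machine, I need $j_* \mathscr F^n_B$ and $j_! \mathscr F^n_B$ to be constructible on $\mathscr C^n_B \times_{\conf n U B} \twoconf n U B$. Constructibility of $j_! \mathscr F^n_B$ is immediate. For $j_* \mathscr F^n_B$, I would invoke the tameness hypothesis on $\mathscr F$ together with Remark~\ref{remark:abhyankar-implies-tame}, which guarantees that the boundary divisor $\mathscr D^n_B \cup (\mathscr C^n_B \times_C Z)$ is a relative normal crossings divisor and that $\mathscr F^n_B$ is tame along it; Abhyankar's lemma and the standard computation of stalks of $j_*$ at boundary points via invariants under tame inertia then give constructibility (and, since $2\nu$ is invertible on $B$, the vanishing of $R^{>0}j_* \mathscr F^n_B$ used above).

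The remaining step is to upgrade constructibility to local constancy. I would argue as follows: the pair $(C_x, D_x \cup Z_x)$ varies in an \'etale-locally trivial way over $\twoconf n U B$ (the topological type is fixed: a genus $g$ curve with $n + f + 1$ marked points), and the monodromy of the tame sheaf $\mathscr F_x$ around each boundary point is determined purely by $\mathscr F$ on $U$ and by the fixed local behavior of the universal quadratic twist, both of which are constant as $x$ varies. Combined with base-change compatibility from the previous paragraph, this forces the stalk ranks of $R^i \overline \lambda_* \mathscr F^n_B$ and $R^i \overline \lambda_! \mathscr F^n_B$ to be locally constant, which together with constructibility gives lcc.

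I expect the main technical obstacle to be bookkeeping around the stackiness of $\twoconf n U B$: it is a $\mathbb Z/2\mathbb Z$ gerbe over a scheme rather than a scheme itself, and the universal double cover $t: \mathscr X^n_B \to \mathscr U^n_B \times_{\conf n U B} \twoconf n U B$ is defined on this gerbe. I would either work directly on the stack using the six-functor formalism for tame Deligne-Mumford stacks, or pull everything back along an \'etale presentation trivializing the gerbe and descend the lcc conclusion, checking along the way that tameness of $\mathscr F$ is preserved by the twisting $\mathscr F \leadsto \mathscr F^n_B = t_* t^* h^* \mathscr F / h^* \mathscr F$.
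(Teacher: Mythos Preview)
Your argument has a genuine gap: the claim that $R^{>0} j_* \mathscr F^n_B = 0$ is false. At a point of $Z$, the stalk of $R^1 j_* \mathscr F^n_B$ is the coinvariants $V/(\sigma-1)V$ of the local tame inertia $\sigma$ acting on $V = (\mathscr F^n_B)_{\bar\eta}$, and this is typically nonzero (think of unipotent $\sigma$, which is exactly the kind of inertia arising from the multiplicative-reduction hypothesis elsewhere in the paper). At points of the branch divisor $D$ the inertia acts by $-1$ and $-2$ is a unit, so the coinvariants vanish there; but there is no reason for them to vanish along $Z$. Consequently your identification $R^i\overline\lambda_* \mathscr F^n_B = R^i\lambda_*(j_*\mathscr F^n_B)$ fails, and in particular $\selsheaf{\mathscr F^n_B} = R^1\lambda_*(j_*\mathscr F^n_B)$ is \emph{not} the $i=1$ case of $R^i\overline\lambda_*$; you have lost the link between the Selmer sheaf and the sheaves $R^i\overline\lambda_*$, $R^i\overline\lambda_!$ that the lemma also asks about.

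The paper circumvents this by first showing that $\selsheaf{\mathscr F^n_B}$ is the \emph{image} of the natural map $R^1\overline\lambda_! \mathscr F^n_B \to R^1\overline\lambda_* \mathscr F^n_B$ (Leray gives injectivity of one map, and the cokernel of $j_! \to j_*$ being supported on a finite scheme over the base gives surjectivity of the other). Then it proves $R^i\overline\lambda_!$ is lcc using Laumon's Swan-conductor criterion (tameness forces the Swan conductor to vanish, hence to be constant), first for $\nu$ prime and then bootstrapping via the long exact sequence for $0 \to \mathscr F[\ell^{t-1}] \to \mathscr F[\ell^t] \to \mathscr F[\ell] \to 0$. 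Finally, $R^i\overline\lambda_*$ is obtained from $R^{2-i}\overline\lambda_!$ by Verdier duality together with the symplectic self-duality $\mathscr F \simeq \mathscr F^\vee(1)$, and the Selmer sheaf is then lcc as the image of a map between lcc sheaves. Your ``topological type is fixed, so ranks are constant'' step is also not quite a proof: constructible with constant stalk rank does not imply lcc in general, and over a mixed- or positive-characteristic base you need something like Laumon's criterion to make this precise.
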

\begin{proof}
	In order to prove the result, we first set some notation.
	We have a natural map $\phi: R^1 \overline \lambda_! \mathscr F^n_B \to
	\selsheaf {\mathscr F^n_B}$ obtained from the map $j_! \mathscr F^n_B
	\to j_* \mathscr F^n_B$
	and the definition $R^1 \overline \lambda_! (\mathscr F^n_B) := R^1
	\lambda_*
	\left( j_! \mathscr F^n_B \right)$
	\cite[I.8.6]{FreitagK:lectures-etale}.
	Similarly, we have a map $\psi: \selsheaf {\mathscr F^n_B} \to R^1
	\overline \lambda_* \mathscr F^n_B$ obtained from the composition of functors spectral sequence
	for $\lambda \circ j$. 
	Note that $\psi$ is injective by the Leray spectral
	sequence.

	Our first goal is to show $\selsheaf {\mathscr F^n_B}$ is the image of
	$\psi \circ \phi$.
	Since $\psi$ is injective, it only remains to show $\phi$ is surjective.
	Because $\chi: j_! \mathscr F^n_B \to j_* \mathscr F^n_B$ is an isomorphism over $\mathscr U^n_B$, $\coker \chi$ is
	supported on $\mathscr D^n_B$, which is finite over $\conf n U B$, we
	find $R^1 \lambda_*(\coker \chi) = 0.$ This implies $\phi$ is surjective and
	so $\selsheaf {\mathscr F^n_B}$ is a constructible sheaf.

	We conclude by showing $R^1 \overline \lambda_! \mathscr F^n_B$ and
	$R^1 \overline \lambda_* \left( \mathscr F^n_B \right)$ are both locally
	constant constructible, and their formation commutes with base change.
	This will imply $\selspace {\mathscr F^n_B}$ is locally
	constant constructible and its formation commutes with base change, as
	it is the image of the map
	$\psi \circ \phi: R^1 \overline \lambda_! \mathscr F^n_B
	\to R^1 \overline \lambda_* \left( \mathscr F^n_B \right)$.

	We first show $R^i \overline \lambda_! \mathscr F^n_B$ is locally constant
	constructible in the case that $\nu$ is prime. Note that its formation commutes with base change by
	proper base change for any $\nu$.
	Using
	\cite[Corollaire 2.1.2 and Remarque
	2.1.3]{laumon:semi-coninuity-du-conducteur-de-swan},
	it is enough to show the Swan conductor of $\mathscr F^n_B$ is
	constant. 
	As in 
	\cite[Remarque 2.1.3]{laumon:semi-coninuity-du-conducteur-de-swan},
	the Swan conductor over a point $[D] \in \conf n U B$ is a sum of local contributions, one for each
	geometric point of $D$ and one for each geometric point of $Z$ over the
	image of $D$ in $B$. At each geometric point of $D$, because we are
	taking a quadratic twist along $D$, the ramification index is $2$, and
	hence the ramification is tame, since $2$ is invertible on $B$.
		We are also assuming the ramification along points of $Z$ is tame for
	$\mathscr F$. This is identified with the corresponding ramification for
	$\mathscr F^n_B$ along points of $Z$, and hence this is tame as well.
	Therefore, the Swan conductor vanishes identically.

	Next, we show $R^i \overline \lambda_! \mathscr F^n_B$ is locally
	constant constructible for every positive
	integer $\nu$ as in the statement of the lemma, using the case that $\nu$ is prime, settled above.
	As an initial step, we may reduce to the case $\nu = \ell^t$ is a prime power by
	observing that if $\nu$ has prime factorization $\nu = \prod
	\ell^{t_\ell}$ then
	$\mu_\nu \simeq \oplus \mu_\ell^{t_\ell}$.
	Now, suppose $\nu = \ell^t$ is a prime power, and inductively assume we have proven
	$R^i \overline \lambda_! \mathscr F^n_B[\ell^{t-1}]$ is locally constant
	constructible for all $i$.
	Since $\nu = \ell^t$ and $\mathscr F^n_B$ is a locally constant
	constructible sheaf of free $\mathbb Z/\nu \mathbb Z$-modules, we have an exact sequence
	\begin{equation}
		\label{equation:}
		\begin{tikzcd}
		\nonumber
			0 \ar {r} &  \mathscr F^n_B[\ell^{t-1}] \ar {r} &
			\mathscr F^n_B \ar {r} & \mathscr F^n_B[\ell] \ar {r}
			& 0.
	\end{tikzcd}\end{equation}
	Applying $R \overline{\lambda}_!$ to the above sequence, we get a long exact
	sequence on cohomology
	\begin{equation}
		\label{equation:}
		\begin{tikzcd}[column sep=small]
		\nonumber
			R^{i-1} \overline \lambda_! \mathscr F^n_B[\ell] \ar {r} &   
			R^i \overline \lambda_! \mathscr F^n_B[\ell^{t-1}] \ar {r} & 
			R^i \overline \lambda_! \mathscr F^n_B \ar {r} &
			R^i \overline \lambda_! \mathscr F^n_B[\ell]
			\ar {r}
			&
			R^{i+1} \overline \lambda_! \mathscr F^n_B[\ell^{t-1}].
	\end{tikzcd}\end{equation}
	Since all but the middle term are locally constant constructible
	by our inductive assumption, it follows that $R^i \overline \lambda_!
	(\mathscr F^n_B[\ell^t])$
	is also locally constant constructible by
\cite[\href{https://stacks.math.columbia.edu/tag/093U}{Tag
093U}]{stacks-project}.

	We conclude by showing $R^1 \overline \lambda_* \left( \mathscr F^n_B \right)$ is locally constant
	constructible and its formation commutes with base change. 
	Since $R^i \overline \lambda_! \mathscr F^n_B$ is locally constant
	constructible, it follows from Poincar\'e duality \cite[Theorem
	4.8]{verdier:a-duality-theorem}
	and the isomorphism coming from the polarization of degree prime to
	$\nu$ that
	\begin{align*}
		\shom \left( R^{-i} \overline \lambda_! \left(\mathscr F^n_B\right),\mu_\nu \right)
		\xleftarrow{\simeq} R^{i+2} \overline \lambda_* R \shom
		(\mathscr F^n_B,
	\mu_\nu) \simeq R^{i+2} \overline \lambda_* (\mathscr F^n_B).
	\end{align*}
	Taking $i = -2 + s$ gives $(R^{2-s} \overline \lambda_! (\mathscr F^n_B))^\vee(1)
	\simeq R^s \overline \lambda_* \left( \mathscr F^n_B \right)$. Since we have seen 
$(R^{2-s} \overline \lambda_! (\mathscr F^n_B))^\vee$ is locally constant constructible
and its formation commutes with base change,
the same holds for 
$R^s \overline \lambda_* \left( \mathscr F^n_B \right)$.
\end{proof}

\begin{notation}
	\label{notation:component-group}
	Let $k$ be a field and let $C$ be a smooth proper
	geometrically connected curve over $k$ of genus $g$, with $U' \subset C$
	an open subscheme.
	Let $A'$ an abelian scheme over
	$U'$ with N\'eron model $\mathscr A' \to C$. 
	Let $\mathscr A'^0$ denote the identity component of the N\'eron model
	$\mathscr A'$,
	meaning that $\mathscr A'^0$ is the open subscheme of $\mathscr A'$ so that each fiber is nonempty, connected, and
	contains the identity section, see \cite[p. 154]{BoschLR:Neron}.
	Let $\Phi_{A'} := \left(\mathscr
	A'/\mathscr A'^0\right)(k)$ denote the {\em component group} of
the N\'eron model of $A'$.
We use 
$\Phi_{A'_{\overline k}} =
\left(\mathscr
A'_{\overline k}/\mathscr A'^0_{\overline k}\right)(\overline{k})$ to denote the
{\em geometric component group}.
\end{notation}

The following proof is quite similar to \cite[Lemma
3.21]{landesman:geometric-average-selmer}.
We thank Tony Feng for suggesting the idea that appeared there for bootstrap from
the prime case to the general case, which we reuse
here.
In the next lemma,
note that since we are working over an algebraically closed field, the component
group is the same as the geometric component group.
\begin{lemma}
	\label{lemma:free-h1}
	Let $k$ be an algebraically closed field, let $C$ be a smooth proper
	geometrically connected curve over $k$ of genus $g$. 
	Let $\mathscr F'$ be a symplectically self-dual lcc sheaf of free
	$\mathbb Z/\nu\mathbb Z$-modules on an open $j: U'
	\subset C$.
	Suppose that 	
	\begin{enumerate}
		\item 
			for each prime $\ell$ with $w = \ord_\ell(\nu)$, and
			$0 \leq t \leq w$, the
	multiplication by $\ell^t$ map $j_* \mathscr F'[\ell^w] \to
	j_* \mathscr F'[\ell^{w-t}]$ is surjective.
\item $j_*\mathscr F'(C) = 0$.
	\end{enumerate}
	Then 
	$H^1(C, j_* \mathscr F'[\nu])$ is a free $\mathbb
	Z/\nu\mathbb Z$-module.
	In the case $j_* \mathscr F'$ is of the form of $A'[\nu]$, for $A' \to
	U'$ an abelian scheme,
	hypothesis $(1)$ above is satisfied if
	the geometric component group
	$\Phi_{A'}$ has order prime to $\nu$.
\end{lemma}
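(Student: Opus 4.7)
The plan is to reduce to $\nu = \ell^w$ a prime power via primary decomposition, since the $\ell$-primary summands of $\mathscr F'$ yield a direct-sum decomposition of $H^1(C, j_*\mathscr F'[\nu])$ compatible with the $\mathbb Z/\nu\mathbb Z$-module structure. I would then induct on $w$, with the case $w = 1$ trivial since an $\mathbb F_\ell$-module is automatically free.

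Before the induction, I would establish that $H^0(C, j_*\mathscr F'[\ell^s]) = H^2(C, j_*\mathscr F'[\ell^s]) = 0$ for every $s \leq w$. The $H^0$ vanishing is immediate from hypothesis~(2) and the inclusion $j_*\mathscr F'[\ell^s] \subseteq j_*\mathscr F'$. For the $H^2$ vanishing I would appeal to Poincar\'e duality on the smooth proper curve $C$: since $\mathscr F'[\ell^s]$ inherits a symplectic self-duality from $\mathscr F'$, this identifies $H^2(C, j_*\mathscr F'[\ell^s])$ with $H^0(C, j_!\mathscr F'[\ell^s])^\vee$, which vanishes because a nonzero global section of $j_!\mathscr F'[\ell^s]$ would correspond to a $\pi_1(U)$-invariant whose image in every inertia-invariant stalk on $C\setminus U$ is zero, and this forces it to be zero (with the case $U = C$ handled directly by hypothesis~(2)).

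With these vanishings, the sequence from hypothesis~(1) with $t = 1$,
\begin{equation*}
0 \to j_*\mathscr F'[\ell] \to j_*\mathscr F'[\ell^w] \xrightarrow{\ell} j_*\mathscr F'[\ell^{w-1}] \to 0,
\end{equation*}
yields a long exact sequence that collapses to a short exact sequence on $H^1$, so $|H^1(C, j_*\mathscr F'[\ell^w])| = |H^1(C, j_*\mathscr F'[\ell])|^w$ by the inductive hypothesis. Applying the same argument to the sequence from hypothesis~(1) with $t = w-1$ produces a surjection $H^1(C, j_*\mathscr F'[\ell^w]) \twoheadrightarrow H^1(C, j_*\mathscr F'[\ell])$ whose composition with the canonical inclusion $j_*\mathscr F'[\ell] \hookrightarrow j_*\mathscr F'[\ell^w]$ agrees with multiplication by $\ell^{w-1}$ on $M := H^1(C, j_*\mathscr F'[\ell^w])$. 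Thus $|\ell^{w-1} M| = |H^1(C, j_*\mathscr F'[\ell])|$. Writing $M \cong \bigoplus_i \mathbb Z/\ell^{a_i}\mathbb Z$ with $a_i \leq w$, the two constraints $|M| = |H^1(C, j_*\mathscr F'[\ell])|^w$ and $|\ell^{w-1}M| = |H^1(C, j_*\mathscr F'[\ell])|$ together force every $a_i = w$, so $M$ is free.

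For the abelian variety assertion, I would use that the coprimality of $|\Phi_{A'}|$ to $\nu$ yields the standard identification $j_*A'[\nu] \simeq \mathscr A'[\nu]$, after which smoothness of the N\'eron model $\mathscr A'$ makes multiplication by $\ell^t$ \'etale-locally surjective, verifying hypothesis~(1). The main obstacle is the $H^2$ vanishing: while natural from self-duality, it requires carefully combining the Verdier-dual identification $D_C(j_*\mathscr F') \simeq j_!(D_U \mathscr F')$ with the combinatorial vanishing of global sections of any $j_!$-extended lcc sheaf on $C$.
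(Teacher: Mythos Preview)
Your proposal is correct and follows essentially the same route as the paper: reduce to a prime power by CRT, prove the vanishing of $H^0$ and $H^2$ (the latter via self-duality and Poincar\'e duality), use the short exact sequences from hypothesis~(1) to control the order of $M$ and of $\ell^{w-1}M$ (the paper equivalently computes $\ker(\times\,\ell^{w-1})$), and conclude freeness from the structure theorem. The only cosmetic difference is in the duality step: the paper invokes Milne's curve duality in the form $H^2(C,j_*\mathscr F'[\ell^t])\simeq H^0(C,j_*\mathscr F'[\ell^t])^\vee$ directly, whereas you pass through $H^0(C,j_!\mathscr F'[\ell^t])$ and then argue this vanishes; both are valid, and your final remark about this being ``the main obstacle'' somewhat overstates the difficulty, since the identification you need is standard and the vanishing of $H^0$ of a $j_!$-extension of an lcc sheaf on a connected curve with nonempty complement is immediate.
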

\begin{proof}
	Using the Chinese remainder theorem, we can reduce to the case that
	$\nu = \ell^w$ is a prime power.
	Suppose $H^1(C, j_*\mathscr F'[\ell]) \simeq (\mathbb Z/\ell \mathbb Z)^r.$
	We will show by induction on $w$ that $H^1(C, j_*\mathscr F'[\ell^w]) \simeq
	(\mathbb Z/\ell^w \mathbb Z)^r.$

	For $0 \leq t \leq w$
	we claim there is an exact sequence
	\begin{equation}
		\label{equation:neron-torsion-exact-sequence}
		\begin{tikzcd}
			0 \ar {r} & j_*\mathscr F'[\ell^t] \ar {r} & j_*\mathscr
			F'[\ell^w]
			\ar {r} & j_*\mathscr F'[\ell^{w-t}] \ar {r}
			& 0.
	\end{tikzcd}\end{equation}
	This is left exact because the analogous sequence for $\mathscr F'$ in place of
	$j_* \mathscr F'$ is left exact. 
	This sequence is right exact by assumption (1) from the statement
	of the lemma.

	We now prove the final clause of the statement of the lemma:
	In the case $\mathscr F' \simeq A'[\nu]$,
	the cokernel of the map
$j_* \mathscr F'[\ell^w] \to j_* \mathscr F'[\ell^{w-t}]$
	is identified with $\Phi_{A'}/\ell^t
	\Phi_{A'}$. 
	This is trivial by assumption as $\ell^t \mid \nu$.
	Therefore, in this case, $(1)$ holds.
	
	We next claim $H^0(C, j_*\mathscr F'[\ell^t]) = H^2(C, j_*\mathscr F'[\ell^t]) = 0$.
	The former holds by assumption $(2)$. By \cite[V Proposition
	2.2(b)]{Milne:etaleBook} and the polarization
	$(\mathscr F'[\ell^t])^\vee (1) \simeq \mathscr F'[\ell^t]$, we find
	\begin{align*}
		H^2(C, j_* \mathscr F'[\ell^t]) \simeq H^0\left(C, j_* \left(
				\left( \mathscr F'[\ell^t] \right)^\vee
		(1) \right)\right)^\vee \simeq H^0(C, j_* \mathscr F'[\ell^t])^\vee \simeq H^0(U,
		\mathscr F'[\ell^t])^\vee = 0.
	\end{align*}
	
	The long exact sequence associated to
	\eqref{equation:neron-torsion-exact-sequence}
	and the vanishing of the $0$th and $2$nd cohomology above implies we
	obtain an exact sequence
	\begin{equation}
		\label{equation:neron-cohomology-exact-sequence}
		\begin{tikzcd}
			0 \ar {r} & H^1(C, j_*\mathscr F'[\ell^t]) \ar {r}{\alpha^t} & H^1(C,
			j_* \mathscr F'[\ell^w]) \ar {r}{\beta^t} & H^1(C, j_*\mathscr
			F'[\ell^{w-t}]) \ar {r}
			& 0.
	\end{tikzcd}\end{equation}
	Induction on $w$ implies $\# H^1(C,j_*\mathscr F'[\ell^w]) = \ell^{wr}$ and we
	wish to show $H^1(C,j_*\mathscr F'[\ell^w])$ is free of rank $r$.
	By the structure theorem for finite abelian groups, it suffices to show
	the kernel of multiplication by $\ell^{w-1}$ on $H^1(C,j_*\mathscr F'[\ell^w])$
	has order $\ell^{(w-1)r}$.
	The multiplication by $\ell^{w-1}$ map factors as $H^1(C,j_*\mathscr F'[\ell^w])
	\xrightarrow{\beta^{w-1}} H^1(C,j_*\mathscr F'[\ell])
	\xrightarrow{\alpha^1} H^1(C,j_*\mathscr
	F'[\ell^w])$.
	We know from \eqref{equation:neron-torsion-exact-sequence} that
	$\alpha^1$ is injective so 
	\begin{align*}
		\ker (\times \ell^{w-1}) = \ker(\beta^{w-1}
		\circ \alpha^1) = \ker \beta^{w-1} = H^1(C, j_*\mathscr F'[\ell^{w-1}]),
	\end{align*}
	which has size $\ell^{(w-1)r}$, as we wished to show.
\end{proof}

We next aim to compute a formula for the rank of the Selmer sheaf, in favorable situations,
in \autoref{proposition:rank-description}.
First, we introduce notation needed to state that formula.

\begin{definition}
	\label{definition:drop}
	Suppose $\nu$ is a prime number.
	Let $\mathscr F$ 
be a locally constant constructible sheaf 
	of free
	$\mathbb Z/\nu \mathbb Z$-modules on an open $U'
	\subset C$ of a smooth proper geometrically connected curve $C$. Let $\overline{\eta}$ be the geometric
	generic point of $C$. For any point $x \in C - U'$, there is an
	associated action of the inertia group $I_x$ at $x$ on the geometric
	generic fiber $\mathscr F_{\overline\eta}$ of $\mathscr F$.
	This action is well defined up to conjugacy. We use $\drop_x(\mathscr F)$ to denote
	the corank of the invariants of $I_x$, i.e., 
	$\drop_x(\mathscr F) := \rk \mathscr F - \rk \mathscr F_x^{I_x}$.
	In general, if $\nu$ is not necessarily a prime number, for each prime
$\ell \mid \nu$ we use $\drop_{x,\ell}(\mathscr F) := \drop_x(\mathscr F[\ell])$, and if $\drop_{x,\ell}(\mathscr F)$ is independent of $\ell$, we
denote this common value simply by $\drop_x(\mathscr F)$.
Whenever we use the notation $\drop_x(\mathscr F)$ in the case $\nu$ has multiple
prime divisors, we are implicitly claiming it is independent of the prime
divisor.
\end{definition}

\begin{example}
	\label{example:drop}
	If $\nu$ is prime, and $\mathscr F \simeq A[\nu]$, then for any $x \in C -
	U$, $\drop_x(\mathscr F) = 0$ if and only if inertia acts trivially at $x$, i.e.,
	$A[\nu]$ extends over the point $x$.
	If $A$ is a relative elliptic curve and the order of the geometric component group
	of the N\'eron model of $A$ at $x$ is prime to $\nu$, then
	$\drop_x(\mathscr F) = 1$ whenever $A$ has multiplicative
	reduction at $x$ and $\drop_x(\mathscr F) = 2$ whenever $A$ has
	additive reduction at $x$.
\end{example}

\begin{proposition}
	\label{proposition:rank-description}
	Maintain notation as in \autoref{notation:quadratic-twist-notation}, so,
	in particular,
	$\mathscr F$ is a tame symplectically self-dual lcc sheaf of free
	$\mathbb Z/\nu \mathbb Z$-modules.
	Suppose $\nu$ is odd and $n > 0$.
	Assume that $B = {\overline{b}}$ is the spectrum of an algebraically
	closed field. Assume that 
	\begin{enumerate}
		\item[(1)] for each prime $\ell \mid \nu$, each integer $w$ with
			$\ell^w \mid \nu$, and each integer
			$t \leq w$, the multiplication by $\ell^t$ map $j_* \mathscr F[\ell^w] \to
	j_* \mathscr F[\ell^{w-t}]$ is surjective
	\item[(3)] the sheaf $\mathscr F[\ell]$  is irreducible for each prime
		$\ell \mid \nu$.
	\end{enumerate}
	Assume $2 \nu$ is invertible on $B$.
	For each $x \in \qtwist n U B$, 
	consider the following three properties.
\begin{enumerate}
	\item[(1')] for each prime $\ell \mid \nu$ with $\ell^w \mid \nu$ and
			$t \leq w$, the multiplication by $\ell^t$ map $j_* \mathscr F_x[\ell^w] \to
	j_* \mathscr F_x[\ell^{w-t}]$ is surjective
\item[(2')] $j_* \mathscr F_x(C_x) = 0$
\item[(3')] the sheaf $\mathscr F_x[\ell]$  is irreducible for each prime $\ell
\mid \nu$.
\end{enumerate}
Then, $(2')$ always holds, $(1')$ holds if $(1)$ holds, and $(3')$ holds if
$(3)$ holds.

Moreover, assuming $(1)$ and $(3)$, the map $\pi: \selspace {\mathscr F^n_B} \to \qtwist n U B$ is finite \'etale,
	representing a locally constant constructible sheaf of rank 
	$(2g - 2 + n) \cdot 2r + \sum_{x \in Z(B)} \drop_x(\mathscr F)$
	free $\mathbb Z/\nu \mathbb Z$-modules, whose formation commutes with
	base change.
\end{proposition}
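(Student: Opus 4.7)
The plan is to verify $(1')$, $(2')$, and $(3')$ by a local monodromy analysis at the points of $C_x - U_x$, and then combine \autoref{lemma:selmer-lcc}, \autoref{lemma:free-h1}, and the Grothendieck--Ogg--Shafarevich formula to obtain the rank statement. The key local observation is that on $U_x$ the sheaf $\mathscr F_x$ is identified with $\mathscr F|_{U_x} \otimes \chi_x$, where $\chi_x : \pi_1(U_x) \to \{\pm 1\}$ is the character cut out by the double cover branched over $D_x$.

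For the local analysis I would split $C_x - U_x = D_x \sqcup Z(\overline{b})$. At $z \in D_x$, inertia acts trivially on $\mathscr F|_{U_x}$ (since $D_x \subset U$) and as $-1$ through $\chi_x$, so it acts as $-\mathrm{id}$ on $\mathscr F_x$; because $\nu$ is odd this kills all invariants, giving $(j_*\mathscr F_x)_z = 0$. This handles $(1')$ at $z \in D_x$ (where both sides vanish), and since $n>0$ implies $D_x$ is nonempty, it forces any global section of $j_*\mathscr F_x$ to vanish near $z$, hence globally by connectedness of $C_x$, yielding $(2')$. At $z \in Z(\overline{b})$ the character $\chi_x$ is unramified because the double cover is \'etale over $Z$, so inertia acts identically on $\mathscr F_x$ and $\mathscr F$; thus $(1')$ at $z$ follows from $(1)$. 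Statement $(3')$ is immediate, since tensoring an irreducible representation by a one-dimensional character preserves irreducibility.

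For the main claim, \autoref{lemma:selmer-lcc} already implies that $\selsheaf{\mathscr F^n_B}$ is locally constant constructible and compatible with base change, so it suffices to compute each geometric fiber $H^1(C_x, j_*\mathscr F_x)$. Properties $(1')$ and $(2')$ let me invoke \autoref{lemma:free-h1} to conclude this fiber is a free $\mathbb Z/\nu\mathbb Z$-module. To determine the rank I would reduce to $\nu = \ell$ prime and apply the Euler characteristic formula for tame lcc sheaves on $C_x$:
\[
\chi(C_x, j_*\mathscr F_x) = (2-2g)\cdot 2r - \sum_{z \in C_x - U_x} \drop_z(\mathscr F_x),
\]
substituting $\drop_z(\mathscr F_x) = 2r$ for $z \in D_x$ (total contribution $n \cdot 2r$) and $\drop_z(\mathscr F_x) = \drop_z(\mathscr F)$ for $z \in Z(\overline{b})$ from the local analysis. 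Using $(2')$ for $h^0 = 0$, and Poincar\'e duality together with the symplectic self-duality $\mathscr F_x^\vee(1) \simeq \mathscr F_x$ for $h^2 = 0$, I obtain $h^1 = -\chi = (2g-2+n)\cdot 2r + \sum_{z\in Z(\overline{b})}\drop_z(\mathscr F)$, as asserted.

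The delicate point will be the precise identification of the inertia action on $\mathscr F_x$ at points of $D_x$ as $-\mathrm{id}$, and the verification that $\mathscr F_x$ remains a tame symplectically self-dual sheaf on $U_x$ so that both the Euler characteristic formula and Poincar\'e duality apply in the form used; both follow from the hypothesis that $2\nu$ is invertible on $B$ together with the construction of the quadratic twist as the $(-1)$-eigenspace of $t_* t^*\mathscr F$.
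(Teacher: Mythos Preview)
Your proposal is correct and follows essentially the same approach as the paper. The only noteworthy difference is in the rank computation: the paper reduces to $\nu=\ell$ prime and then cites \cite[Lemma 5.1.3]{katz:twisted-l-functions-and-monodromy} for the formula, whereas you unpack this via the tame Grothendieck--Ogg--Shafarevich formula together with the vanishing of $h^0$ and $h^2$; these are the same computation, yours being slightly more self-contained.
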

\begin{proof}
	First, observe that by \autoref{lemma:selmer-lcc},
	$\pi: \selspace {\mathscr F^n_B} \to \qtwist n U B$ is finite \'etale,
	corresponding to a locally constant sheaf of $\mathbb Z/\nu \mathbb Z$-modules, and its formation commutes with base change on $\qtwist n U B$.

	We now verify that condition $(1')$ hold for quadratic
	twists $\mathscr F_x$ of $\mathscr F$, 
	ramified over a divisor $D_x$ disjoint from $Z_x$, using condition $(1)$.
	If $\mathscr F$ corresponds to a representation of $\pi_1(U_x-D_x)$, the quadratic twist corresponds to tensoring this
	representation with an order $2$ character, whose local inertia at any
	point outside of $D_x$ is trivial.
	Surjectivity of the map from $(1')$ can only fail at points $p \in D_x \cup Z_x$.
	If $p \in Z_x$, since surjectivity can be verified locally, surjectivity
	for $j_* \mathscr F_x$ at $p$ follows from the corresponding
	surjectivity for $j_* \mathscr F$ at $p$.
	If $p \in D_x$, 
	the stalk of $j_* \mathscr F_x[\ell^{w-t}]$ is trivial, as it is identified with the
	invariants of multiplication by $-1$, which is trivial,
	and so surjectivity at such points is automatic.

	Next, we check $(2')$ holds, just using $n > 0$.
	We wish to show
	$H^0(C_x, \mathscr F_x) = 0$.
	Thinking of $\mathscr F_x$ as a representation of $\pi_1(U_x - D_x)$,
	a section corresponds to an invariant vector.
	However, since $n > 0$, local inertia at a point of $D_x$ acts by $-1$,
	and so there are no invariant vectors.

	Third, we show $(3')$ holds for $\mathscr F_x$, assuming $(3)$ holds for
	$\mathscr F$. Note that the quadratic twist of the sheaf $\mathscr
	F$ is obtained by tensoring the corresponding representation of
	$\pi_1(U)$ with a character. This preserves irreducibility.
	
	We next show this $\pi$ corresponds to a sheaf of free $\mathbb Z/\nu \mathbb
	Z$-modules.
	We may check this at any point of $\qtwist n U B$ since the formation
	of $\selspace {\mathscr F^n_B}$ commutes with base change on $\qtwist n U B$ by
	\autoref{lemma:selmer-lcc}.
	It follows that over a geometric point of $\qtwist n U B$,
	the hypotheses $(1)$ and $(2)$ of \autoref{lemma:free-h1}, which follow
	from $(1')$ and $(2')$ in the statement of this proposition, are satisfied for any
	quadratic twist of $\mathscr F$. 
	Therefore, $ \selspace {\mathscr F^n_B}$ 
	corresponds to a sheaf of free $\mathbb Z/\nu \mathbb Z$-modules
	by \autoref{lemma:free-h1}.
	
	Finally, we compute the rank of this sheaf.
	Since we have shown  $\mathscr F$ is an irreducible $\mathbb Z/\nu \mathbb Z$ 
	locally constant constructible sheaf on $\mathscr U^n_B$, we can compute
	the formula for its rank after reduction modulo any prime $\ell \mid
	\nu$, and hence assume that $\nu$ is prime.
	The formula for the rank is given in 
	\cite[Lemma 5.1.3]{katz:twisted-l-functions-and-monodromy}.
	Technically, the argument is given there for lisse $\overline{\mathbb
	Q}_\ell$ sheaves, but the same computation applies to $\mathbb Z/\ell
	\mathbb Z$
	sheaves.
	In particular, with the above assumptions, if $B = \on{Spec} k$, for $k$ an
	algebraically closed field,
	$\selsheaf {\mathscr F^n_B}$ has rank $(2g - 2 + n) \cdot 2r + \sum_{x \in Z}
	\drop_x(\mathscr F)$.
\end{proof}

\subsection{Connecting points of the Selmer stack and Selmer groups}
\label{subsection:selmer-stack-sizes}

The next two lemmas connect the Selmer stack to the sizes of Selmer groups and
their proofs are quite similar to \cite[Proposition
3.23]{landesman:geometric-average-selmer} and \cite[Corollary
3.24]{landesman:geometric-average-selmer} respectively.
\begin{lemma}
	\label{lemma:h1-to-selmer-sheaf}
	Retaining notation from \autoref{notation:quadratic-twist-notation} and
	\autoref{notation:x-points}, suppose $n > 0$, 
	$2 \nu$ is invertible on $B$, and
	let $\pi: \selspace {\mathscr F^n_B} \to \qtwist n U B$ denote the structure
	map.
	Suppose $\mathscr F[\ell]$ is irreducible for each prime $\ell \mid
	\nu$.
	Then for $x \in \qtwist n U B(\mathbb F_q)$,
	\begin{align*}
	H^1(C_x, \mathscr F_x) \simeq \left( \pi^{-1}(x) \right) (\mathbb
	F_q).
	\end{align*}
	Note that the right hand $\left( \pi^{-1}(x) \right) (\mathbb
	F_q)$ acquires the structure of an abelian group as the $\mathbb F_q$-points of a
	locally constant constructible sheaf.
	\end{lemma}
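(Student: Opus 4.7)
The plan is to pass from the finite étale cover $\pi$ to its associated locally constant constructible sheaf $\selsheaf{\mathscr F^n_B}$ on $\twoconf n U B$, apply base change to reach a geometric fiber, and then invoke Hochschild--Serre to translate between the geometric and arithmetic cohomology of $C_x$.

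First, I would interpret the notation $\mathscr F_x$ appearing inside $H^1(C_x, \mathscr F_x)$ as the pushforward $j_*\mathscr F_x$ to $C_x$ along $j: U_x \hookrightarrow C_x$, which is consistent with the definition of $\selsheaf{\mathscr F^n_B}$. Fix a geometric point $\bar x$ over $x \in \twoconf n U B(\mathbb F_q)$. Since $\pi$ represents $\selsheaf{\mathscr F^n_B}$, we have
\[ (\pi^{-1}(x))(\mathbb F_q) \;=\; (\selsheaf{\mathscr F^n_B})_{\bar x}^{\Frob}. \]
By \autoref{lemma:selmer-lcc}, the formation of $\selsheaf{\mathscr F^n_B} = R^1\lambda_*(j_*\mathscr F^n_B)$ commutes with base change on $\twoconf n U B$. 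Pulling back along $\bar x \to \twoconf n U B$, the universal proper curve restricts to $C_{\bar x}$, the universal open curve to $U_{\bar x}$, the sheaf $\mathscr F^n_B$ to $\mathscr F_{\bar x}$, and $\lambda$ to the structure map $C_{\bar x} \to \Spec \overline{\mathbb F}_q$, so the base change identification reads
\[ (\selsheaf{\mathscr F^n_B})_{\bar x} \;\simeq\; H^1(C_{\bar x}, j_*\mathscr F_{\bar x}). \]

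Second, I would apply the Hochschild--Serre spectral sequence for the Galois cover $C_{\bar x} \to C_x$ to compare arithmetic and geometric cohomology. Under the hypothesis $n > 0$ together with the irreducibility of $\mathscr F[\ell]$, property $(2')$ from the proof of \autoref{proposition:rank-description} yields $H^0(C_{\bar x}, j_*\mathscr F_{\bar x}) = 0$: indeed, local inertia at any point of the ramification divisor $D_x$ acts by $-1$, so no global sections survive. With the zeroth cohomology vanishing, Hochschild--Serre degenerates in low degrees to give
\[ H^1(C_x, j_*\mathscr F_x) \;\simeq\; H^1(C_{\bar x}, j_*\mathscr F_{\bar x})^{\Frob}. \]
Concatenating the three displays above produces the desired isomorphism. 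The only delicate input is the vanishing of $H^0$, which is already established in the proof of \autoref{proposition:rank-description}, so no real obstacle remains.
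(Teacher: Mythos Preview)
Your proof is correct and matches the paper's approach: both identify the geometric stalk via base change (\autoref{lemma:selmer-lcc}), then apply the Hochschild--Serre/Leray spectral sequence together with the vanishing of $H^0(C_{\bar x}, j_*\mathscr F_{\bar x})$. The only cosmetic difference is that you justify the $H^0$ vanishing via property $(2')$ of \autoref{proposition:rank-description} (the $-1$ inertia argument, which indeed only needs $n>0$), whereas the paper cites $(3')$.
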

\begin{proof}
	Using \autoref{lemma:selmer-lcc}, we know the formation of the Selmer
	sheaf commutes with base change, and hence for $\overline x$ a geometric
	point over $x$, the geometric fiber of $\selspace {\mathscr F^n_B}$ over
	$\overline x$ is identified with
	\begin{align*}
		R^1 \lambda_* ( j_* \mathscr F_{\overline{x}}) &\simeq
		H^1(C_{\overline x}, j_*\mathscr F_{\overline x}).
	\end{align*}
	To distinguish between \'etale and group cohomology, we use
	$H^i_{\grp}$ denote group cohomology and $H^i_{\et}$ to denote \'etale cohomology.
	Let $G_x := \on{Aut}(C_{\ol{x}}/C_{x})$. 
	The $\mathbb F_q$-points of $\pi^{-1}(x)$ are the $G_x$ invariants of
	$H^1_{\et}(C_{\ol{x}}, j_* \mathscr F_{\overline{x}})$. 
	That is, 
	$\pi^{-1}(x) (\mathbb F_q) = H_{\grp}^0(G_x, H^1_{\et}( C_{\overline
	{x}}, j_*\mathscr F_{\overline{x}}))$.
	
	We relate this group to $H^1(C_x, j_*\mathscr F_{x})$ using the Leray spectral sequence
	\begin{equation}
		\label{equation:selmer-to-points}
\begin{tikzpicture}[baseline= (a).base]
\node[scale=.84] (a) at (0,0){
	\begin{tikzcd}[column sep = tiny]
		0 \ar {r} & H^1_{\grp}( G_x, H^0_{\et}(C_{\overline x}, j_*\mathscr
		F_{\overline{x}})) \ar {r} & H^1_{\et}( C_x, j_*\mathscr
		F_x) \ar {r}{\theta} & H^0_{\grp}( G_x, H^1_{\et}(
		C_{\overline x}, j_*\mathscr F_{\overline{x}}))  \ar {r} &
		H^2_{\grp}( G_x, H^0_{\et}(C_{\overline x}, j_*\mathscr
		F_{\overline{x}})).
	\end{tikzcd}};
\end{tikzpicture}
\end{equation}
When $n > 0$,
we want to show $\theta$ is an isomorphism, so it suffices to 
show
$H^0_{\et}(C_{\overline x}, j_*\mathscr F_{\overline{x}}) = 0$.
This holds using \autoref{proposition:rank-description}(3').
\end{proof}

\begin{lemma}
	\label{lemma:selmer-identification}
	With the same assumptions as in \autoref{lemma:h1-to-selmer-sheaf},
	let $x \in \qtwist n U B(\mathbb F_q)$,
	and use $\sel_\nu(A_x)$ to denote the $\nu$-Selmer group of the generic
	fiber of $A_x$ over $U_x$.
	We have
	\begin{align*}
		\sel_\nu(A_x) \simeq \pi^{-1}(x)(\mathbb F_q).
	\end{align*}
\end{lemma}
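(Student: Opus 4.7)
The plan is to combine \autoref{lemma:h1-to-selmer-sheaf}, which already identifies $\pi^{-1}(x)(\mathbb F_q)$ with $H^1(C_x, j_*\mathscr F_x)$, with the standard étale-cohomological description of the Selmer group. So the real content is to exhibit a natural isomorphism
\[
\sel_\nu(A_x) \simeq H^1(C_x, j_*\mathscr F_x),
\]
where $\mathscr F_x = A_x[\nu]$, and then concatenate the two isomorphisms.

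First, I would write down the usual description of the Selmer group in terms of the Néron model $\mathscr A_x$ of $A_x$ over $C_x$. Since $\nu$ is invertible on $\mathbb F_q$ and, by the running hypothesis (inherited from \autoref{theorem:main-finite-field}), $\nu$ is prime to the order of the geometric component group of $\mathscr A_x$, the group scheme $\mathscr A_x[\nu]$ is étale over $C_x$, and the standard Selmer exact sequence together with Lang's theorem over the residue fields gives
\[
\sel_\nu(A_x) \simeq H^1_{\et}(C_x, \mathscr A_x[\nu]).
\]

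Next, I would identify $\mathscr A_x[\nu]$ with $j_*\mathscr F_x$ as étale sheaves on $C_x$. Both restrict to $\mathscr F_x = A_x[\nu]$ on $U_x$, and, since $\mathscr A_x[\nu]$ is étale with no sections supported in $Z_x$, adjunction produces a canonical map $\mathscr A_x[\nu]\to j_*\mathscr F_x$. To check this is an isomorphism one computes the stalk at a geometric point $p \in C_x - U_x$: the left-hand side is the $\nu$-torsion in the special fiber of $\mathscr A_x$, which fits in an exact sequence with the $\nu$-torsion of the identity component and the $\nu$-torsion of the component group; and the right-hand side is the $I_p$-invariants of $A_x[\nu]$. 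The component-group term vanishes by the hypothesis on $\nu$, and a standard computation (using the connected Néron model and the fact that $\nu$ is coprime to the residue characteristic) identifies the remaining term with $(A_x[\nu])^{I_p}$.

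Putting this together gives $\sel_\nu(A_x) \simeq H^1(C_x, j_*\mathscr F_x)$, which combined with \autoref{lemma:h1-to-selmer-sheaf} yields the claim. The only non-routine step is the stalk comparison $\mathscr A_x[\nu]\simeq j_*\mathscr F_x$ at places of bad reduction of $A_x$; this is where the assumption that $\nu$ is prime to the order of the geometric component group is essential, and is the main obstacle in the sense that without it the sheaves $\mathscr A_x[\nu]$ and $j_*\mathscr F_x$ genuinely differ.
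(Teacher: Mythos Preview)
Your approach is correct and essentially matches the paper's: both reduce, via \autoref{lemma:h1-to-selmer-sheaf}, to showing $\sel_\nu(A_x)\simeq H^1_{\et}(C_x,\mathscr A_x[\nu])$ with $\mathscr A_x[\nu]\simeq j_*A_x[\nu]$. The paper dispatches this by citing \cite[Proposition~5.4(c)]{cesnavicius:selmer-groups-as-flat-cohomology-groups} and then the fppf--\'etale comparison \cite[Th\'eor\`eme~11.7~$1^{\circ}$]{grothendieck:brauer-iii}, whereas you sketch the argument directly and make the stalk identification explicit; the substance is the same.

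One small point worth tightening: the component-group hypothesis you invoke is stated in \autoref{theorem:main-finite-field} for $A$, not for the twist $A_x$, so strictly speaking you should say why it transfers. At places in $Z_x$ the twist is by an unramified character, so the geometric component group is unchanged; at the ramification points in $D_x$ the abelian variety $A$ has good reduction and the twist is by a ramified quadratic character, so $A_x$ acquires good reduction over a tame degree-$2$ extension and its component group there is a $2$-group, hence prime to the odd integer $\nu$. The paper is equally terse on this point.
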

\begin{proof}
	Using \autoref{lemma:selmer-lcc}, we know the geometric component group
	$\Phi_{A_{\overline x}}$ has
	order prime to $\nu$.
	As we are also assuming $q$ is prime to $\nu$, it follows from \cite[Proposition
	5.4(c)]{cesnavicius:selmer-groups-as-flat-cohomology-groups},
	$\sel_\nu(A_{x}) \simeq H^1_{\on{fppf}}(C_x, \mathscr A_x[\nu])$.
	Upon identifying fppf cohomology with \'etale cohomology 
	\cite[Th\'eor\`eme 11.7 $1^{\circ}$]{grothendieck:brauer-iii} and 
	combining this with \autoref{lemma:h1-to-selmer-sheaf}, we obtain the result.
\end{proof}

\section{Identifying Selmer elements via Hurwitz stacks}
\label{section:hurwitz-and-selmer}

Throughout this section, we'll work over 
the complex numbers
$B = \on{Spec} \mathbb C$.
One of the main new ideas in this article is that Selmer elements can actually
be parameterized by a Hurwitz stack.  The reason for doing this is that the
topological methods of the first part of the paper can, as in
\cite{EllenbergVW:cohenLenstra}, be used to control the number of $\F_q$-points
on certain Hurwitz stacks.  Using the identification between Selmer stacks and
Hurwitz stacks, we will thus be able to count $\F_q$-points on Selmer stacks.
These counts underlie our main theorems.

We produce an isomorphism between the Selmer stack, base changed to the complex
numbers, and a
certain Hurwitz stack parameterizing $\asp_{2r}(\mathbb
Z/\nu\mathbb Z)$-covers of our base curve $C$, over $B = \spec \mathbb C$.
This is shown in \autoref{proposition:selmer-to-hurwitz}.
Before jumping into the details, we describe the idea of this isomorphism
in
\autoref{subsection:idea-of-hurwitz-to-selmer}.
Continuing to the proof,
we give a monodromy theoretic description of torsion sheaves in
\autoref{subsection:elliptic-torsion-monodromy},
and give a monodromy theoretic description of torsors for torsion sheaves in
\autoref{subsection:torsors-monodromy}.
Finally, we identify the Selmer stack with certain Hurwitz stacks in
\autoref{subsection:selmer-and-hurwitz}.

\subsection{Idea of the isomorphism}
\label{subsection:idea-of-hurwitz-to-selmer}
We will now describe the idea of the proof in the context of torsion in abelian
varieties, though below the proof is carried
out in the more general context of symplectically self-dual sheaves.
The basic idea is that $\nu$-Selmer elements for an abelian variety $A'$ over
$U'$ of relative dimension $r$ with
N\'eron model $j_* A'$ over $C$ 
correspond to torsors for $j_* A'[\nu]$.
We can identify $j_* A'[\nu]$ with a
$\on{Sp}_{2r}(\mathbb Z/\nu\mathbb Z)$-Galois cover of $C$ via its Galois representation. 
We can then identify torsors for $j_* A'[\nu]$ 
as $\asp_{2r}(\mathbb Z/\nu\mathbb Z)$-covers of $C$, see
\autoref{definition:asp}.
This roughly corresponds to the fact that a torsor for $j_* A'[\nu]$ can
translate the monodromy of $j_* A'[\nu]$ by
an element of a geometric fiber
of $j_* A'[\nu]$, which can be identified with $(\mathbb Z/\nu\mathbb Z)^{2r} =
\ker\left(\asp_{2r}(\mathbb Z/\nu\mathbb Z)\to \sp_{2r}(\mathbb Z/\nu\mathbb
Z)\right)$.
The bulk of this section amounts to working out the precise conditions on the
monodromy of these Hurwitz stacks.

\subsection{Symplectically self-dual sheaves in terms of monodromy}
\label{subsection:elliptic-torsion-monodromy}

Recall that throughout this section, we are working over 
$B = \on{Spec} \mathbb C$.
As in \autoref{notation:curve-notation}, we begin with a smooth projective
connected $C$ curve over $\spec \mathbb C$, and a nonempty open subscheme $U
\subset C$. For $D \subset U$ a divisor,
we work with a symplectically self-dual sheaf $\mathscr F'$ over $U - D$ of rank
$2r$. 
A useful example to keep in mind will be when we are in the setting of
\autoref{example:abelian-special-fiber}
and there is an abelian scheme $A' \to U - D$ and $\mathscr F =
A'[\nu]$.
The main application will occur when $\mathscr F'$ 
is a quadratic twist of a sheaf $\mathscr F$, ramified over $D$.

We now describe $\mathscr F'$ in terms of its monodromy.
Fix a basepoint $p \in U-D$ and choose an identification $\mathscr F'[\nu]|_p \simeq \left(
	\mathbb Z/\nu
\mathbb Z\right)^{2r} $.
Because the fundamental group 
$\pi_1^{\top}(U-D, p)$ 
acts linearly on $\mathscr F'|_p$, we obtain a map $\pi_1^{\top}(U-D, p) \ra
\gl(\mathscr F'|_p)$.
Because the sheaf is symplectically self-dual, and we are working over $\mathbb
C$ where the cyclotomic character acts trivially,
this representation factors through
$\on{Sp}(\mathscr F'|_p)$.
In other words, we obtain a monodromy representation
\begin{align}
\label{equation:curve-monodromy}
	\rho_{\mathscr F'} &: \pi_1^{\top}(U-D, p) \ra
	\on{Sp}(\mathscr F'|_p) \simeq \on{Sp}_{2r}(\mathbb Z/\nu\mathbb Z).	
\end{align}
For convenience of notation, label
the points of $Z$ by $s_1, \ldots, s_{f+1}$.
As in \autoref{figure:genus-g-surface-with-loops},
we can draw oriented loops $\alpha_1, \ldots, \alpha_g, \beta_1, \ldots, \beta_g,
\gamma_1, \ldots,\gamma_n, \delta_1, \ldots, \delta_{f+1}$ based at $p$ which pairwise intersect only at $p$
so that 
\begin{enumerate}
\item $\alpha_1, \ldots, \alpha_g, \beta_1, \ldots, \beta_g$ forms a basis for
$H_1(C, \mathbb Z)$,
\item $\gamma_i$ is a loop winding once around $p_i$ corresponding to the local
inertia at $p_i$, where $p_1, \ldots, p_{n}$
are the $n$ points in $D$, and
\item $\delta_i$ is a loop winding once around $s_i$
corresponding to the local inertia at $s_i$.
\end{enumerate}
The above loops form generators of $\pi_1^{\top}(U-D, p)$ and satisfy the single relation 
\begin{align}
	\label{equation:fundamental-group-relation}
	(\alpha_1 \beta_1
	 \alpha_1^{-1} \beta_1^{-1}) \cdots (\alpha_g \beta_g \alpha_g^{-1} \beta_g^{-1})\gamma_1 \cdots \gamma_n\delta_1 \cdots \delta_{f+1}= \id.
\end{align}
Since $\mathscr F'$ is a $\mathbb Z/\nu\mathbb Z$ local system on $U-D$, the monodromy
representation $\rho_{\mathscr F'}$ determines $\mathscr F'$. 
\begin{figure}
\includegraphics[scale=.5]{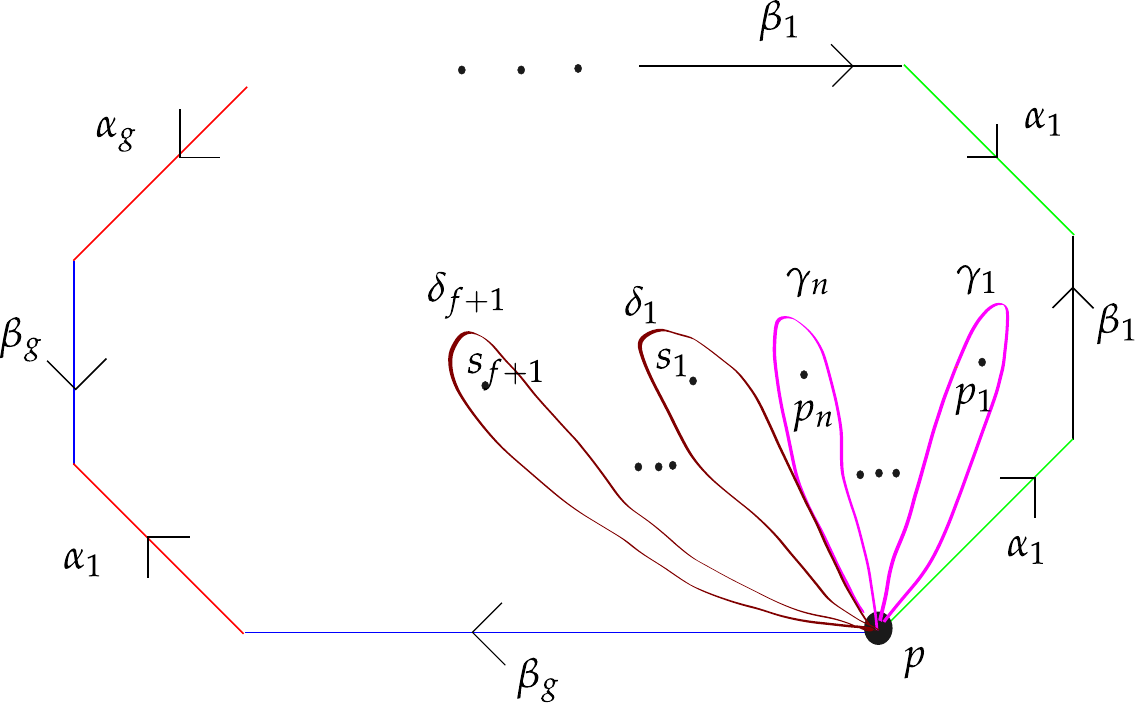}
\caption{This picture depicts a genus $g$ surface $X$ with $f+1$ punctures and
	an
$n$-point configuration. That is it corresponds to a point in $\Conf^n_{X}$. 
It includes the moving points $p_1, \ldots, p_n$, surrounded by loops
$\gamma_1,\ldots, \gamma_n$, the fixed punctures $s_1, \ldots, s_{f+1}$ surrounded by loops
$\delta_1,\ldots, \delta_{f+1}$, and
the standard generators for homology of the compact surface
$\alpha_1, \beta_1, \ldots, \alpha_g, \beta_g$.
}
\label{figure:genus-g-surface-with-loops}
\end{figure}

\subsection{Torsors for symplectically self-dual sheaves in terms of monodromy}
\label{subsection:torsors-monodromy}

The next result we are aiming toward is \autoref{lemma:torsor-description},
which gives a description of $j_* \mathscr F'$ torsors.

We retain
notation from \autoref{subsection:elliptic-torsion-monodromy}.
For $D \subset U$ a divisor, we use $j: U-D \to C$ to denote the inclusion.
As a first observation, we show that any torsor for $j_* \mathscr F'$ over $C$ is
determined by its restriction to $U-D$.
\begin{lemma}
\label{lemma:torsor-restriction}
The restriction map $H^1(C, j_* \mathscr F') \to H^1(U-D, \mathscr F')$ is injective.
Its image consists of those torsors $[\mathscr S] \in H^1(U-D, \mathscr F')$ such that
for each $q \in D \cup Z$, there is some sufficiently small complex analytic open neighborhood $C \supset W \ni q$ such that $\mathscr S|_{W-q}$ is the restriction
of a $j_*\mathscr F'|_{W}$ torsor to $W-q$.
\end{lemma}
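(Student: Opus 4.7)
The plan is to apply the Leray spectral sequence for the open inclusion $j : U - D \to C$ with coefficients in $\mathscr F'$, and read off both statements from its low-degree terms. The relevant spectral sequence is $H^p(C, R^q j_* \mathscr F') \Rightarrow H^{p+q}(U-D, \mathscr F')$, whose associated five-term exact sequence begins
\[
0 \to H^1(C, j_* \mathscr F') \to H^1(U-D, \mathscr F') \to H^0(C, R^1 j_* \mathscr F') \to H^2(C, j_* \mathscr F').
\]
Injectivity of the restriction map is then immediate from exactness on the left.

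For the characterization of the image, I would first observe that $\mathscr F'$ is locally constant on $U - D$, so $R^1 j_* \mathscr F'$ is a skyscraper-type sheaf supported on the finite set $D \cup Z \subset C$. Consequently
\[
H^0(C, R^1 j_* \mathscr F') \;\simeq\; \bigoplus_{q \in D \cup Z} (R^1 j_* \mathscr F')_q,
\]
and the image in question is the kernel of the total residue map to this direct sum. So one needs to interpret vanishing in each stalk $(R^1 j_* \mathscr F')_q$ as the local extension condition in the statement.

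To do this, I would apply the same Leray sequence locally on a small (analytically) contractible open $W \ni q$. Because $R^1 j_* \mathscr F'$ is supported at $q$ inside $W$, one has $H^0(W, R^1 j_* \mathscr F') = (R^1 j_* \mathscr F')_q$, and the local five-term sequence
\[
0 \to H^1(W, j_* \mathscr F') \to H^1(W - q, \mathscr F') \to (R^1 j_* \mathscr F')_q
\]
shows that a class $[\mathscr S|_{W - q}] \in H^1(W - q, \mathscr F')$ lies in the image of $H^1(W, j_* \mathscr F')$ if and only if its image in $(R^1 j_* \mathscr F')_q$ vanishes. Saying that $\mathscr S|_{W-q}$ is the restriction of a $j_*\mathscr F'|_W$-torsor is exactly the assertion that $[\mathscr S|_{W-q}]$ lies in that image, and since the stalk is the colimit over shrinking $W$, this condition is equivalent to vanishing in the stalk.

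I do not expect any serious obstacle: the only small subtlety is that the direction ``$\mathscr S$ extends locally $\Rightarrow$ image in stalk vanishes'' needs the local Leray sequence, and one must note that the constraint is a local condition, so one may freely shrink $W$ to compare the extension condition with the colimit defining $(R^1 j_* \mathscr F')_q$. Combining the global and local Leray sequences then immediately yields that the image of $H^1(C, j_* \mathscr F') \to H^1(U - D, \mathscr F')$ is precisely the set of classes satisfying the stated local extendability at every $q \in D \cup Z$.
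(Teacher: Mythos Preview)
Your proof is correct and follows essentially the same approach as the paper: both use the Leray spectral sequence for $j$ to obtain injectivity, and both identify the image via the local extension condition at points of $D \cup Z$. Your treatment is in fact more detailed than the paper's, which simply asserts the image characterization, whereas you justify it by applying the local five-term sequence on a contractible $W$ and identifying the obstruction with the stalk $(R^1 j_* \mathscr F')_q$.
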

\begin{proof}
Using the comparison between \'etale and complex analytic sheaf cohomology
\cite[Expos\'e XI, Th\'eor\'eme 4.4(iii)]{SGA4}
it is equivalent to prove the statement above in either the \'etale or analytic
topology, so we work in the analytic topology.
The spectral sequence associated to the
composition of the inclusion $j: U-D \to C$ and the global sections functor
yields an injection $H^1(C, j_*\mathscr F')
\hookrightarrow H^1(U-D, \mathscr F')$.
We may describe elements of $H^1(U - D, \mathscr F')$ as torsors in the complex analytic topology for
$\mathscr F'$.
The condition that a torsor $[\mathscr S] \in H^1(U-D, \mathscr F')$ lies in the
image of $H^1(C, j_*\mathscr F') \to H^1(U - D, \mathscr F')$
is precisely the condition that it extends to an $j_* \mathscr F'$ torsor over a
sufficiently small neighborhood of each point $q \in D \cup Z$.
\end{proof}

Recall our goal is to give a monodromy theoretic description of $j_*\mathscr F'$ torsors.
Using \autoref{lemma:torsor-restriction}, 
we can describe $j_*\mathscr F'$ torsors as $\mathscr F'$ torsors which extend
over a small neighborhood of each $p_i$ and $s_i$.
We next describe $\mathscr F'$ torsors, and then, in
\autoref{lemma:monodromy-for-torsor}, give the condition that such a torsor
extends over $D \cup Z$.
First, we introduce notation used to describe the monodromy representation
parameterizing $\mathscr F'$ torsors.

\begin{definition}
	\label{definition:asp}
	The {\em affine symplectic group} is $\asp_{2r}(\mathbb Z/\nu\mathbb
	Z):= \left( \mathbb Z/\nu\mathbb Z \right)^{2r} \rtimes
	\sp_{2r}(\mathbb Z/\nu\mathbb Z),$ where
	the action of $\sp_{2r}(\mathbb Z/\nu\mathbb Z)$ on $\left( \mathbb Z/\nu \mathbb Z
\right)^{2r}$ is via the standard action of matrices on their underlying free
rank $\mathbb Z/\nu\mathbb Z$-module of rank $2r$.
\end{definition}
\begin{remark}
	\label{remark:}
	By definition, $\asp_{2r}(\mathbb Z/\nu\mathbb Z)$ sits in an exact sequence
\begin{equation}
	\label{equation:asp-sequence}
	\begin{tikzcd}
		0 \ar {r} & \left( \mathbb Z/\nu \mathbb Z \right)^{2r} \ar
		{r}{\iota} & \asp_{2r}(\mathbb Z/\nu\mathbb Z) \ar {r}{\Pi} &
		\sp_{2r}(\mathbb Z/\nu \mathbb Z) \ar {r} & 0 
\end{tikzcd}\end{equation}
with inclusion map $\iota$ and quotient map $\Pi$.
With this presentation, $\asp_{2r}(\mathbb Z/\nu\mathbb Z)$
can be explicitly described as those matrices of the form
\begin{equation}
\scalebox{.95}{\parbox{.5\linewidth}{
		\nonumber
		\vspace{-.6cm}
\begin{align}
	\label{equation:mtor-matrix}
	\asp_{2r}(\mathbb Z/\nu\mathbb Z) \simeq
\left\{
	\begin{pmatrix}
		M & v \\
		0 &1
	\end{pmatrix}
	\in \gl_{2r+1}(\mathbb Z/\nu\mathbb Z) : 
	M \in \sp_{2r}(\mathbb Z/\nu\mathbb Z),
	v \in \left( \mathbb Z/\nu\mathbb Z \right)^{2r}
\right\}.
\end{align}
}
}
\end{equation}
\end{remark}
\begin{notation}
	\label{notation:asp-generalization}
Suppose
$H$ is a finite $\mathbb Z/\nu \mathbb Z$-module.
Define
$\ahsp_{2r}(\mathbb Z/\nu \mathbb Z) := (H \otimes_{\mathbb Z/\nu \mathbb Z} (\mathbb Z/\nu \mathbb
Z)^{2r}) \rtimes \sp_{2r}(\mathbb Z/\nu \mathbb Z)$, with 
$\sp_{2r}(\mathbb Z/\nu \mathbb Z)$
acting on $(\mathbb Z/\nu \mathbb
Z)^{2r}$ via the standard representation.
Concretely, if $H$ is of the form $H \simeq
\prod_{i=1}^m \mathbb Z/\nu_i \mathbb Z$,
then $\ahsp_{2r}(\mathbb Z/\nu \mathbb Z) \simeq \left(\prod_{i=1}^m \left( \mathbb
Z/\nu_i \mathbb Z \right)^{2r} \right) \rtimes \sp_{2r}(\mathbb Z/\nu \mathbb Z)
$.
In particular, 
$\ahsp_{2r}(\mathbb Z/\nu \mathbb Z)$
sits in a split exact sequence
\begin{equation}
	\label{equation:asp-m-sequence}
	\begin{tikzcd}
		0 \ar {r} & \prod_{i=1}^m \left( \mathbb Z/\nu_i \mathbb Z \right)^{2r}
		\ar
		{r}{\iota} & \ahsp_{2r}(\mathbb Z/\nu\mathbb Z) \ar {r}{\Pi} &
		\sp_{2r}(\mathbb Z/\nu \mathbb Z) \ar {r} & 0.
\end{tikzcd}\end{equation}
\end{notation}

We next describe the condition for a torsor for $\mathscr F'$ to extend over a
puncture, in terms of monodromy.
By \autoref{subsection:elliptic-torsion-monodromy}, $\mathscr F'$  
can be described in terms of $\rho_{\mathscr F'}$, which has target $\on{Sp}_{2r}(\mathbb Z/\nu\mathbb Z)$.
A torsor $\mathscr S$ for $\mathscr F'$
can be described in terms of $\mathscr F'$ together with the additional data of
transition functions lying in $\left( \mathbb Z/\nu\mathbb Z \right)^{2r}$.
In total, 
$\mathscr S$ can be described in terms of a monodromy representation
\begin{align*}
	\rho_ {\mathscr S} : \pi_1^{\top}(U-D, p) \to \asp_{2r}(\mathbb Z/\nu\mathbb Z).
\end{align*}

More formally, as pointed out by a referee, we can express the above description of $\mathscr F'$ torsors as
follows:
Suppose $G$ is a group acting on another group $H$ and $X$ is a space.
If $F$ is a $G$-torsor on $X$, then $H_F := (H\times F)/G \to X$ is a group
object over $X$, which is locally isomorphic to $H$. There is an equivalence of
categories between $G \ltimes H$-torsors and pairs $(F,S)$ where $F$ is a
$G$-torsor and $S$ is an $H_F$-torsor. We then apply this in our setting in the
case $G = \on{Sp}_{2r}(\mathbb Z/\nu\mathbb Z)$, $H = \left( \mathbb
Z/\nu\mathbb Z \right)^{2r},$ $X = U - D$, and $H_F = \mathscr F'$ to obtain an
equivalence between $\asp_{2r}(\mathbb Z/\nu\mathbb Z)$-torsors over $U-D$ restricting to
$\rho_{\mathscr F'}$ and torsors for $\mathscr F'$.

A composition of loops in $\pi_1^{\top}(U-D, p)$ maps under
$\rho_{\mathscr S}$ to the product of their corresponding matrices, 
viewed as elements of $\gl_{2r+1}(\mathbb Z/\nu\mathbb Z)$ via \eqref{equation:mtor-matrix}.
\begin{remark}
	\label{remark:reduction-mtor}
	By construction, for $\Pi$ as defined in \eqref{equation:asp-sequence},
$\Pi \circ \rho_{\mathscr S} = \rho_{\mathscr F'}$.
\end{remark}
We now describe the condition that a $\mathscr F'$ torsor extends to a $j_*\mathscr F'$ torsor.  We note, first of all, that by \autoref{lemma:torsor-restriction}, we know that this condition only depends on the restriction of $\rho_{\mathscr S}$ to local inertia groups.  Since these inertia groups are procyclic, this amounts to specifying some subset of $\asp_{2r}(\mathbb Z/\nu\mathbb Z)$, necessarily closed under conjugacy, in which the local monodromy groups are constrained to lie.  In the following proposition, we work out what these constraints look like in explicit matrix form.

\begin{lemma}
	\label{lemma:monodromy-for-torsor}
	With notation as in
	\autoref{subsection:elliptic-torsion-monodromy}, let $j: U-D \to C$
	denote the
	inclusion.
	Suppose $q \in Z \cup D$ with $\eta$ a small loop around $q$ whose image
	under $\rho_{\mathscr F'}$ corresponds to the local inertia at $q$.
	Let $d:=\drop_{q}(\mathscr F')$ so that, after choosing a suitable
	basis $\mathscr F'_p \simeq (\mathbb Z/\nu \mathbb Z)^{2r}$,
	we may write $\rho_{\mathscr F'}(\eta)$ in the form
	\begin{align*}
		\begin{pmatrix}
			M_1 & M_2 \\
			0 & \id_{2r-d}.
		\end{pmatrix}
	\end{align*}
	Under the identification of $\asp_{2r}(\mathbb Z/\nu\mathbb Z)$ as in \eqref{equation:mtor-matrix},
	we can extend a $\mathscr F'$ torsor $\mathscr S$ to an $j_*\mathscr
	F'$ torsor in some complex analytic neighborhood $W$ of $q$ if and only if
\begin{align}
	\label{equation:gamma-i-form}
	\rho_{\mathscr S}(\eta) =
	\begin{pmatrix}
		M_1 & M_2 & * \\
		0 & \id_{2r-d} & 0\\
		0 &0& 1
	\end{pmatrix}
\end{align}
for some vector $* \in (\mathbb Z/\nu\mathbb Z)^d$.
Stated more intrinsically, we can extend $\mathscr S$ to a $\mathscr F'$ torsor
if and only if the vector $v$ in \eqref{equation:mtor-matrix} lies in $\im (1 -
\rho_{\mathscr F'}(\eta))$
\end{lemma}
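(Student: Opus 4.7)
The question of extending $\mathscr{S}$ to a $j_* \mathscr{F}'$-torsor is local at $q$, so I would work in a small contractible complex-analytic disk $W \subset C$ around $q$; then $W - q$ is homotopy equivalent to $S^1$ with $\pi_1(W - q) = \mathbb{Z}\langle \eta \rangle$. Writing $V := \mathscr{F}'|_p$ and $M := \rho_{\mathscr{F}'}(\eta)$, the $\mathscr{F}'$-torsors on $W - q$ are classified by $H^1(W - q, \mathscr{F}') \simeq H^1(\mathbb{Z}, V) \simeq V/(M-1)V$; under \eqref{equation:asp-sequence} the torsor $\mathscr{S}$ with monodromy $\rho_{\mathscr{S}}(\eta) = (M, v) \in \asp_{2r}(\mathbb{Z}/\nu\mathbb{Z})$ corresponds to the class $[v]$ via the standard description of group cohomology by crossed homomorphisms modulo coboundaries.

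Next, I would use the Leray spectral sequence for $j: W - q \hookrightarrow W$. The sheaf $R^1 j_* \mathscr{F}'$ is a skyscraper at $q$ with stalk $V/(M-1)V$, and the edge map $H^1(W - q, \mathscr{F}') \to H^0(W, R^1 j_* \mathscr{F}')$ in the resulting five-term sequence is the tautological isomorphism between these two copies of $V/(M-1)V$, forcing $H^1(W, j_* \mathscr{F}') = 0$. Combining with \autoref{lemma:torsor-restriction} applied to the disk $W$ (taking $W$ small enough that $q$ is its only puncture in $Z \cup D$), the torsor $\mathscr{S}|_{W - q}$ extends to a $j_*\mathscr{F}'$-torsor on $W$ if and only if $[v] = 0$ in $V/(M-1)V$, i.e., iff $v \in \im(1 - M) = \im(1 - \rho_{\mathscr{F}'}(\eta))$. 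This gives the intrinsic form of the lemma.

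Finally, I would match this with the matrix form. In the chosen basis, direct computation gives $1 - M = \left(\begin{smallmatrix} I_d - M_1 & -M_2 \\ 0 & 0 \end{smallmatrix}\right)$, so $\im(1 - M)$ sits inside the subgroup $\{(y, 0) : y \in (\mathbb{Z}/\nu\mathbb{Z})^d\}$. Equality of these two submodules is a rank count: $\ker(1 - M) = V^M$ has rank $2r - d$ by definition of the drop, forcing $\im(1 - M)$ to fill out the full $d$-generator subgroup. It is cleanest to verify this prime by prime via rank-nullity modulo each $\ell \mid \nu$ and then lift using Nakayama for the local rings $\mathbb{Z}/\ell^{a_\ell}\mathbb{Z}$ in the primary decomposition of $\mathbb{Z}/\nu\mathbb{Z}$. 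Thus $v \in \im(1 - M)$ is equivalent to the matrix form in \eqref{equation:gamma-i-form}.

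The main obstacle will be the vanishing $H^1(W, j_*\mathscr{F}') = 0$: although routine once $R^1 j_*\mathscr{F}'$ is correctly identified with the skyscraper of stalk $V/(M-1)V$ at $q$, one must verify this in the complex-analytic topology and check that the shrinking family of punctured neighborhoods realizes $V/(M-1)V$ as the colimit stalk, so that the edge map of the spectral sequence is genuinely the identity rather than some auxiliary isomorphism.
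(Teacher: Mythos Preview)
Your proof is correct, and it takes a genuinely different route from the paper.

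The paper argues the matrix form directly and constructively: given an extension $\mathscr T$ of $\mathscr S|_{W-q}$ to a $j_*\mathscr F'|_W$-torsor, it identifies the constant subsheaf $(\mathbb Z/\nu\mathbb Z)^{2r-d} \subset j_*\mathscr F'|_W$ of inertia invariants, observes that the induced $(\mathbb Z/\nu\mathbb Z)^{2r-d}$-torsor on the simply connected $W$ must be trivial, and reads off the required $0$ in rows $d+1,\dots,2r$ of the last column from the resulting sections. The converse is an explicit gluing of $(\mathbb Z/\nu\mathbb Z)^{2r-d}|_W$ to $\mathscr S|_{W-q}$ along the sections produced by the $0$ block. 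No spectral sequence appears, and the intrinsic criterion $v \in \im(1-M)$ is implicit rather than proved separately.

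Your approach inverts the logic: you prove the intrinsic criterion first, via the Leray five-term sequence for $j:W-q \hookrightarrow W$, by showing the edge map $H^1(W-q,\mathscr F') \to H^0(W,R^1 j_*\mathscr F')$ is the tautological identification of two copies of $V/(M-1)V$, whence $H^1(W,j_*\mathscr F')=0$ and extendability is equivalent to $[v]=0$. You then deduce the matrix form by computing $\im(1-M)$, with the Nakayama step handling composite $\nu$. This is cleaner for the intrinsic statement and makes transparent why the condition is exactly $v \in \im(1-M)$; what the paper's hands-on argument buys is that it never needs to identify $R^1 j_*\mathscr F'$ or worry about the edge map, since it works directly with sections of the invariant subsheaf. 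Your flagged ``main obstacle'' is real but routine: the stalk of $R^1 j_*\mathscr F'$ at $q$ is the colimit of $H^1$ over shrinking punctured disks, each homotopy equivalent to $S^1$, and the transition maps are eventually isomorphisms, so the edge map really is the identity.
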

\begin{proof}
	First, \autoref{remark:reduction-mtor}
	shows all entries of 
	the matrix in \eqref{equation:gamma-i-form} 
	are necessary and sufficient for $\mathscr S$ to extend to a $j_*\mathscr F'$ torsor	
	except the first $2r$ entries of the last column, accounting for the $*$
	and the $0$.
	
	Choose a simply connected neighborhood $W$ of $q$ and fix a basepoint $p
	\in W$.
	To conclude the proof, we will show the claimed entries in the last
	column of \eqref{equation:gamma-i-form} from rows $d+1$ to $2r$ are $0$
	if and only if $\mathscr S|_{W-q}$ extends to a $j_*\mathscr F'$ torsor
	over $W$.
	We start by assuming the torsor extends, and aim to show the entries mentioned
	above are $0$.
	Note that we can identify $(\mathbb Z/\nu\mathbb Z)^{2r-d} |_{W} \subset
	j_* \mathscr F'|_{W}$ as a $\mathbb Z/\nu\mathbb Z$ subsheaf
	which restricts to 
	$\on{Span}(e_{d+1}, \ldots, e_{2r}) \subset \left( \mathbb Z/\nu\mathbb Z
	\right)^{2r} \simeq \mathscr F'|_q$ as the inertia invariants.
	Therefore, any $j_*\mathscr F'|_{W}$ torsor $\mathscr T$ has a distinguished
	$(\mathbb Z/\nu\mathbb Z)^{d}$ subtorsor, which is given as $\ker (1 -
		\rho_{\mathscr F'}(\eta))$.
	Since $W$ is simply connected, this $(\mathbb Z/\nu\mathbb Z)^{2r-d}$
	torsor is trivial, which implies that the local inertia at $q$ acts trivially on 
	$\on{Span}(e_{d+1}, \ldots, e_{2r}) \subset \left( \mathbb Z/\nu\mathbb Z
	\right)^{2r} \simeq j_*\mathscr F'|_q$, and hence there is a $0$ in
	\eqref{equation:gamma-i-form} as claimed.

	Conversely, suppose there is a $0$ in rows $d+1$ to $2r$ of the last
	column of \eqref{equation:gamma-i-form}.
	We will conclude by showing the torsor extends over $W$.
	We
	obtain a section of $\mathscr S$ over $W-q$ corresponding to each
	element of $(\mathbb Z/\nu \mathbb Z)^{2r-d}$,
	and hence a subsheaf $(\mathbb Z/\nu\mathbb Z)^{2r-d}|_{W - q} \subset
	\mathscr S|_{W - q}$.
	By gluing $(\mathbb Z/\nu\mathbb Z)^{2r-d}|_{W}$ to $\mathscr
	S|_{W - q}$ along $(\mathbb Z/\nu\mathbb Z)^{2r-d}|_{W - q}$, we obtain
	an $j_* \mathscr F'$ torsor $\mathscr T$, which is the desired extension of $\mathscr S$.
	\end{proof}

We can now describe $j_*\mathscr F'$ torsors in terms of monodromy data.
\begin{lemma}
	\label{lemma:torsor-description}
	With notation as in
	\autoref{subsection:elliptic-torsion-monodromy},
let $\mathscr F$ be an irreducible symplectically self-dual sheaf on $U$.
	Suppose $n > 0$.
	Fix some quadratic twist $\mathscr F'$ of $\mathscr F$, ramified
	along a degree $n$ divisor $D$, 
	in the sense that $\mathscr F'$ is some fiber of $\mathscr F^n_B$,
	so that we
	obtain a corresponding monodromy representation $\rho_{\mathscr F'}$. 
	Suppose $\rho_{\mathscr F'}$ satisfies the hypotheses $(1)$ and $(3)$ of
\autoref{proposition:rank-description}. 
	There are precisely $\nu^{(2g - 2 + n) \cdot 2r + \sum_{x \in Z} \drop_x(\mathscr F)}$ isomorphism classes of torsors for $j_*\mathscr F'$, which can be described in terms of monodromy data
	by specifying a representation $\rho_{\mathscr S} : \pi_1(U-D, p) \to
	\asp_{2r}(\mathbb Z/\nu\mathbb Z)$ up to $\asp_{2r}(\mathbb
	Z/\nu\mathbb Z)$ conjugacy,
	satisfying the following conditions:
	\begin{enumerate}
	\item The image of $\gamma_i$ under $\rho_{\mathscr S}$ is of the form
		\eqref{equation:mtor-matrix} with $M =-\id$.
	\item If $\drop_{s_i}(\mathscr F') = d_i$, the image of $\delta_i$ under $\rho_{\mathscr S}$ is 
		conjugate to a matrix of the form
		\eqref{equation:gamma-i-form}, where we take $(q,d)$ there to
		be $(s_i, d_i)$ here.
	\item We have $\Pi \circ \rho_{\mathscr S} = \rho_{\mathscr F'}$.
\end{enumerate}
Let $j: U - D \to C$ denote the inclusion.
As mentioned above, we consider two torsors $\mathscr T$ and $\mathscr T'$ 
equivalent if there is some $v \in \left( \mathbb Z/\nu\mathbb Z \right)^{2r}$
so that $\rho_{j^*\mathscr T}(\nabla) = \iota(v) \left( \rho_{j^*\mathscr
T'}(\nabla)\right) \iota(v)^{-1}$ for every $\nabla \in \{\alpha_1, \ldots,
\alpha_g, \beta_1, \ldots, \beta_g, \gamma_1, \ldots, \gamma_{n}, \delta_1,
\ldots, \delta_{f+1}\}$,
with $\iota$ as in \eqref{equation:asp-sequence}.
\end{lemma}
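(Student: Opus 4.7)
The plan is to combine Lemma~\ref{lemma:torsor-restriction}, which presents a $j_*\mathscr F'$-torsor on $C$ as an $\mathscr F'$-torsor on $U-D$ admitting local extensions near each puncture in $Z\cup D$, with the local analysis of Lemma~\ref{lemma:monodromy-for-torsor}, and then to check that the resulting count of equivalence classes of monodromy data matches the count of $H^1(C, j_*\mathscr F')$ supplied by earlier results.

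First I would verify the count. By \autoref{proposition:rank-description}, the sheaf $\selsheaf{\mathscr F^n_B}$ is locally constant constructible of rank $(2g-2+n)\cdot 2r + \sum_{x\in Z}\drop_x(\mathscr F)$, and its fibers are free $\mathbb Z/\nu\mathbb Z$-modules by \autoref{lemma:free-h1} (whose hypotheses are exactly conditions $(1')$ and $(2')$, which hold by \autoref{proposition:rank-description}). Hence the number of isomorphism classes of $j_*\mathscr F'$-torsors equals $\nu$ raised to the stated exponent.

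Next, for the monodromy description, an $\mathscr F'$-torsor on the $K(\pi,1)$ space $U-D$ is classified by $H^1(\pi_1^{\top}(U-D,p),\mathscr F'|_p)$. By the standard identification of $H^1$ with equivalence classes of set-theoretic splittings of the extension \eqref{equation:asp-sequence}, such a torsor corresponds exactly to a lift $\rho_{\mathscr S}:\pi_1^{\top}(U-D,p)\to \asp_{2r}(\mathbb Z/\nu\mathbb Z)$ of $\rho_{\mathscr F'}$ along $\Pi$, taken modulo conjugation by $\iota((\mathbb Z/\nu\mathbb Z)^{2r})$. This yields condition $(3)$ together with the stated equivalence relation. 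By \autoref{lemma:torsor-restriction}, the resulting torsor on $U-D$ extends to a $j_*\mathscr F'$-torsor on $C$ if and only if it extends locally around each $q\in Z\cup D$. For $q=p_i\in D$, the monodromy satisfies $\rho_{\mathscr F'}(\gamma_i)=-\id$: indeed $\mathscr F$ is unramified at $p_i\in U$, and the quadratic twist along $D$ multiplies local inertia at $p_i$ by the nontrivial sign; consequently $\drop_{p_i}(\mathscr F')=2r$, so \autoref{lemma:monodromy-for-torsor} imposes no condition on $\rho_{\mathscr S}(\gamma_i)$ beyond $\Pi\circ\rho_{\mathscr S}(\gamma_i)=-\id$, which is condition $(1)$. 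For $q=s_i\in Z$, the quadratic twist is trivial at $s_i$, so $\drop_{s_i}(\mathscr F')=\drop_{s_i}(\mathscr F)=d_i$, and \autoref{lemma:monodromy-for-torsor} in the chosen basis gives condition $(2)$.

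The main obstacle will be the bookkeeping check that conditions $(1)$--$(3)$, together with the $\iota$-conjugation equivalence, carve out exactly the count $\nu^{(2g-2+n)\cdot 2r+\sum_{x\in Z}\drop_x(\mathscr F)}$ coming from Proposition~\ref{proposition:rank-description}. This can be verified by choosing, for each of the generators $\alpha_1,\ldots,\alpha_g,\beta_1,\ldots,\beta_g,\gamma_1,\ldots,\gamma_n,\delta_1,\ldots,\delta_{f+1}$, a section of $\Pi$ so as to parameterize the translation parts in $(\mathbb Z/\nu\mathbb Z)^{2r}$ (or, for the $\delta_i$, in the image of $1-\rho_{\mathscr F'}(\delta_i)$ of dimension $d_i$), then imposing the single surface relation and quotienting by the overall $\iota$-conjugation action; since the dimension of this affine space of lifts modulo the conjugation action must agree with $\dim H^1$ as computed above, the two counts coincide automatically, so no independent combinatorial argument is required.
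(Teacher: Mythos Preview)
Your proof is correct and follows essentially the same route as the paper: you use \autoref{lemma:torsor-restriction} to reduce to $\mathscr F'$-torsors on $U-D$ satisfying local extension conditions, identify these with lifts of $\rho_{\mathscr F'}$ to $\asp_{2r}(\mathbb Z/\nu\mathbb Z)$ modulo $\iota$-conjugation, and invoke \autoref{lemma:monodromy-for-torsor} at each puncture to obtain conditions (1) and (2).

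The one point of departure is the count. The paper performs the direct combinatorial tally you sketch in your final paragraph: it records $\nu^{2r}$ choices for each $\alpha_i,\beta_i,\gamma_i$, $\nu^{d_i}$ for each $\delta_i$, then argues that the surface relation cuts down by $\nu^{2r}$ and that $\iota$-conjugation acts freely (using that $\mathscr F'|_p$ has no nonzero invariants since $n>0$), yielding the stated exponent. You instead read off $|H^1(C,j_*\mathscr F')|$ from \autoref{proposition:rank-description} and note that, once the bijection with monodromy data is in hand, the combinatorial count is forced to agree. This is a legitimate shortcut, and it sidesteps having to justify separately that the relator map is surjective onto $(\mathbb Z/\nu\mathbb Z)^{2r}$ and that the conjugation action is free; conversely, the paper's direct count provides an independent consistency check between the monodromy description and the cohomological rank formula.
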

\begin{proof}
	Using \autoref{lemma:torsor-restriction}, we can describe torsors for
	$j_* \mathscr F'$ as torsors for $\mathscr F'$ which extend over a neighborhood of each $p_i \in D$.
	By \autoref{remark:reduction-mtor}, condition $(3)$ precisely corresponds to the condition that the
	$\on{Sp}_{2r}(\mathbb Z/\nu \mathbb Z)$ local system
	corresponding to $\mathscr S$ on $U - D$ is that
	associated to $\mathscr F'$, and hence $\mathscr S|_U$ is a $\mathscr F'$
	torsor.
	By \autoref{lemma:monodromy-for-torsor},
	an $\mathscr F'$ torsor extend to a $j_* \mathscr F'$ torsor over $p_1, \ldots, p_n$, if and only condition $(1)$
	holds, and extends over $s_1, \ldots, s_{f+1}$ if and only if condition
	$(2)$ holds.
	We consider the representations up to conjugacy, as this corresponds to
	a change of isomorphism $\mathscr F'|_p \simeq \left( \mathbb Z/\nu\mathbb
	Z \right)^{2r}$, and
	expresses the usual condition for two torsors to be equivalent.
%

	To conclude, we wish to see that there are $\nu^{(2g - 2 + n) \cdot 2r + \sum_{x \in Z} \drop_x(\mathscr F)}$
 isomorphism classes of torsors specified by the above data.
	Indeed, we see there are $\nu^{2r}$ possible values $\rho_{\mathscr
	S}$ can take on each of the loops $\alpha_1, \ldots,
	\alpha_g, \beta_1, \ldots, \beta_g$
	in order to satisfy $(3)$.
	For each $\gamma_i$, there are $\nu^{\drop_{p_i}(\mathscr F')} = \nu^{2r}$ possible
	values of $\rho_{\mathscr S}$, because $\Pi(\rho_{\mathscr S}(\gamma_i))
	= -\id_{2r}$.
	For each $\delta_i$, there are $\nu^{\drop_{s_i}(\mathscr F')} = \nu^{\drop_{s_i}(\mathscr F)}$ possible
	values of $\rho_{\mathscr S}$.
	We additionally must impose the condition that
	$\prod_{i=1}^g [\alpha_i, \beta_i] \prod_{i=1}^n \gamma_i
	\prod_{i=1}^{f+1} \delta_i = \id$,
	from the relation \eqref{equation:fundamental-group-relation}
	defining the fundamental group, and that we consider
	these torsors up to conjugacy.
	Before imposing these two conditions, there are $\nu^{(2g + n) \cdot 2r + \sum_{x \in Z} \drop_x(\mathscr F)}$
	possible tuples of matrices. The first condition imposes $\nu^{2r}$
	independent constraints on the matrices. 
	Further, the conjugation action
	always identifies $\nu^{2r}$ elements since the representation is center
	free, using that it is irreducible and that $\asp_{2r}(\mathbb Z/\nu
	\mathbb Z) \subset \on{GL}_{2r+1}(\mathbb Z/\nu \mathbb Z)$ contains no
	scalars, other than $\id$.
	Altogether, this yields $\nu^{(2g -2 + n) \cdot 2r + \sum_{x \in Z} \drop_x(\mathscr F)}$
	such torsors.
\end{proof}

\subsection{Identifying Selmer stacks with Hurwitz stacks}
\label{subsection:selmer-and-hurwitz}

We will use the above description of torsors to identify the Selmer stack with a
certain Hurwitz stack in \autoref{proposition:selmer-to-hurwitz}. We next define
that Hurwitz stack.

\begin{notation}
	\label{notation:sel-hur}
	Let $B = \spec \mathbb C$.
Given a symplectically self-dual sheaf $\mathscr F$ over $U$ as in
\autoref{notation:quadratic-twist-notation}, and fixing values of $\nu$ and $n$,
we now use the notation
$\selhur {\mathscr F^n_B} H$ to indicate the stack
$\hur G n Z {\mathcal S} C B$
as in \autoref{definition:fixed-hur},
for $n, Z, C, B$ as in 
\autoref{notation:quadratic-twist-notation}
and $G,\mathcal S$ as we define next.
Let $\nu_1, \ldots, \nu_m \mid \nu$ and write $H \simeq
\prod_{i=1}^m \mathbb Z/\nu_i \mathbb Z$.
Take $G := \ahsp_{2r}(\mathbb Z/\nu \mathbb Z)$.
Take $\mathcal S$ to be the orbit under the conjugation action of $G$ of the
following subset of $\phi \in \hom(\pi_1(\Sigma_{g,n+f+1}),G)$.
Any such $\phi$ sends a small loop $\gamma_i$ around one of the $n$ points
$p_i$ for $1 \leq i \leq n$,
to an element $g \in G$ so
that 
$\Pi(g) = -\id$, for $\Pi$ as defined in \eqref{equation:asp-m-sequence}.
If $\alpha_1, \ldots, \alpha_g, \beta_1, \ldots, \beta_g \subset \Sigma_{g,f+1} \subset \Sigma_g$ are a fixed set of simple closed curves forming a standard generating set for the first homology of $\Sigma_g$,
we require that $\Pi(\phi(\alpha_i)) \in \{\pm a_i\}$,
$\Pi(\phi(\beta_j)) \in \{\pm b_j\}$,
where $a_i := \rho_{\mathscr F}(\alpha_i)$ and $b_j := \rho_{\mathscr
F}(\beta_j)$.
The local inertia around $s_i$, the $i$th puncture among the $f+1$ punctures,
maps to $(M_i,v_i)$, where $M_i$ is the given local inertia for $\mathscr
F$ and $v_i \in \im (M_i - \id)$.
\end{notation}

\begin{remark}
\label{remark:}
The condition in \autoref{notation:sel-hur} that the $\alpha_i$ and $\beta_j$
map to $\pm a_i$ and $\pm b_j$ under $\Pi \circ \phi$ may seem to depend on choices of the $\alpha_i$ and
$\beta_j$, but it can be expressed independently of these choices 
as follows: if $\zeta : \sp_{2r}(\mathbb Z/\nu \mathbb Z) \to \sp_{2r}(\mathbb
Z/\nu \mathbb Z)/\{\pm 1\}$ is the quotient map,
$\zeta \circ \Pi \circ \phi = \zeta \circ \rho_{\mathscr F}$.
\end{remark}

In order to show the construction in \autoref{notation:sel-hur} gives a Hurwitz
stack as in \autoref{definition:fixed-hur}, we need to show the set $\mathcal S$ is
invariant under the action of $\pi_1(\conf n U B)$.
We now verify this.
\begin{lemma}
	\label{lemma:hurwitz-space-invariant-under-braid-group}
	The set $\mathcal S$ from \autoref{notation:sel-hur} is a subset of
$\hom(\pi_1(\Sigma_{g,n+f+1}),G)$
which is invariant under the action of $\pi_1(\conf n U B)$.
\end{lemma}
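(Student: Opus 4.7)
The action of $\pi_1(\conf n U B)$ on $\hom(\pi_1(\Sigma_{g,n+f+1}), G)$ is by precomposition with outer automorphisms of $\pi_1(\Sigma_{g,n+f+1})$ induced by braids, and $\mathcal{S}$ is defined as a $G$-conjugation orbit; accordingly, it will suffice to verify that the defining conditions of $\mathcal{S}$, up to $G$-conjugation, are preserved by each braid $\sigma$. My plan is to analyze the three separate families of conditions in turn.

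First, I will treat the loops $\gamma_i$ around the moving punctures. A braid $\sigma$ sends each $\gamma_i$, up to an inner automorphism of $\pi_1(\Sigma_{g,n+f+1})$, to some $\gamma_j$, so the condition $\Pi(\phi(\gamma_i)) = -\id$ is preserved because $-\id$ is central in $\sp_{2r}(\mathbb Z/\nu\mathbb Z)$ and therefore invariant under $G$-conjugation.

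Second, I will unify the conditions on $\alpha_j, \beta_j$, and $\delta_i$ by observing that, since $\Pi(\phi(\gamma_i)) = -\id$ reduces to the identity in $\sp_{2r}(\mathbb Z/\nu \mathbb Z)/\{\pm\id\}$, the homomorphism $\Pi \circ \phi$ descends to a homomorphism $\overline{\rho_{\phi}}: \pi_1(U) \to \sp_{2r}(\mathbb Z/\nu\mathbb Z)/\{\pm\id\}$, where $\pi_1(U)$ is identified with the quotient of $\pi_1(\Sigma_{g,n+f+1})$ by the normal closure of the loops around the moving punctures. The defining conditions on $\alpha_j, \beta_j, \delta_i$, taken together, precisely assert that $\overline{\rho_\phi}$ coincides with the reduction modulo $\{\pm\id\}$ of the monodromy $\rho_{\mathscr F}$. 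I will then invoke the standard topological fact that filling in the moving punctures makes the braid group act on $\pi_1(U)$ by an inner automorphism: a braid $\sigma \in \pi_1(\conf n U B)$ corresponds, under the forgetful map tracing out the loop in $U$ of a single moving point, to an element of $\pi_1(U)$ whose conjugation action on $\pi_1(U)$ coincides with the action induced by $\sigma$. Consequently $\overline{\rho_{\sigma \cdot \phi}}$ is conjugate to $\overline{\rho_\phi} = \overline{\rho_{\mathscr F}}$, and by conjugating $\sigma \cdot \phi$ by an appropriate lift to $G$ of this conjugating element, we may assume equality in the quotient, which yields the required $\pm$-ambiguous forms of the images of $\alpha_j, \beta_j, \delta_i$.

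Finally, the vector-part condition $v_i \in \im(M_i - \id)$ at each fixed puncture is itself invariant under $G$-conjugation: a direct calculation in $\ahsp_{2r}(\mathbb Z/\nu\mathbb Z)$ shows that conjugating $(M, v)$ by $(N, w)$ produces $(NMN^{-1}, (I - NMN^{-1})w + Nv)$, and when $v = (M - \id)u$ we have $Nv = (NMN^{-1} - \id)Nu$, so that the new vector part lies in $\im(NMN^{-1} - \id)$. The main technical obstacle I anticipate is the careful justification that the braid action on $\pi_1(U)$ is inner; I expect this to follow from applying the isotopy extension theorem to the path of configurations corresponding to $\sigma$, but it will require some bookkeeping with basepoints and with the choice of standard loops around the points of $Z$, analogous to the tracking of generators in the braid-group computations of Section~\ref{section:arc-complex}.
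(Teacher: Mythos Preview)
Your route via the quotient $\sp_{2r}(\mathbb Z/\nu\mathbb Z)/\{\pm\id\}$ and the Birman-type fact that braids act on $\pi_1(U)$ by inner automorphisms is more conceptual than the paper's direct generator-by-generator check, and the inner-action claim you flag as the ``main technical obstacle'' is in fact standard. The argument for the $\gamma_i$ and for the vector condition $v_i\in\im(M_i-\id)$ is fine. There is, however, a genuine gap in your treatment of the matrix part of the $\delta_i$ condition.

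You assert that the conditions on $\alpha_j,\beta_j,\delta_i$ ``precisely assert'' that $\overline{\rho_\phi}=\overline{\rho_{\mathscr F}}$, and conclude that after the braid action and a global $G$-conjugation you recover ``the required $\pm$-ambiguous forms of the images of $\alpha_j,\beta_j,\delta_i$.'' But the condition on $\delta_i$ in \autoref{notation:sel-hur} is $\Pi(\phi(\delta_i))=M_i$ with \emph{no} sign: the double covers parameterized by $\twoconf n U B$ are unramified at the fixed punctures $s_i\in Z$, so the local monodromy there is exactly that of $\mathscr F$. Your descent to $\sp_{2r}/\{\pm\id\}$ only yields $\Pi(\phi'(\delta_i))=\pm M_i$, which does not yet place $\phi'$ back in $\mathcal S$.

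The paper handles $\delta_i$ separately, observing that the braid action modifies $\delta_i$ only through conjugation by the $\gamma_t$, whose $\Pi$-image $-\id$ is central in $\sp_{2r}$, so $M_i$ and $\im(M_i-\id)$ are preserved outright. Your framework can also be patched: since the braid preserves the conjugacy class of the peripheral loop $\delta_i$, write $\tau^{-1}(\delta_i)=w_i\delta_i w_i^{-1}$; comparing with the inner action by $\eta$ on $\pi_1(U)$ and using that centralizers of primitive elements in the free group $\pi_1(U)$ are cyclic, one finds $\bar w_i=\eta\,\bar\delta_i^{\,k_i}$, so $N^{-1}\Pi(\phi(w_i))=\pm M_i^{k_i}$ commutes with $M_i$, forcing $\Pi(\phi'(\delta_i))=M_i$ exactly.
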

\begin{proof}
	Throughout this proof, it may help the reader to refer to \autoref{remark:explicit-mcg-action}, which
gives an explicit description of the action of $\pi_1(\conf n U B)$.
	Recall we use $\gamma_i$ for the loop giving inertia around $p_i$ for $1
	\leq i \leq n$ and $\delta_i$ for the loop giving inertia around $s_i$,
	$1 \leq i \leq f+1$.
	First, to show the image of $\gamma_i$ is preserved by the 
	$\pi_1(\conf n U B)$ action,
	note that $-\id$ is preserved by this action. Therefore, the condition that $\Pi(g)
	= -\id$ is preserved by the action as well. Hence, the condition that
	$\gamma_i$ has monodromy $g$ with $\Pi(g)
	= -\id$ is preserved by the action of $\pi_1(\conf n U B)$.
	The condition on the $\alpha_i$ and $\beta_j$ is invariant as passing
	one of the $n$ points across $\alpha_i$ or $\beta_j$ has the effect of
	negating $\Pi(\phi(\alpha_i))$ or $\Pi(\phi(\alpha_i))$, since
	$\Pi(\gamma_t)	= -\id$.
	As for the loops $\delta_i$, since the
	loops $\gamma_i$ have inertia $g$ with $\Pi(g) =
	-\id$, which lies in the center of $G$, 
	the matrices $M_i$ defined in
	\autoref{notation:sel-hur}
	are preserved by conjugation under
	$-\id$. Therefore, the $1$-eigenspace $\ker(1-M_i)$ is preserved by
	conjugation under $- \id$, and so the same holds for $\im (1-M_i)$.
	Thus, the set of such homomorphisms to $G$ is indeed preserved by the
	action of $\pi_1(\conf n U B)$.
\end{proof}

\begin{hypotheses}
\label{hypotheses:theta-map}
Suppose $n > 0$, $B = \spec \mathbb C$, and
$\mathscr F$ is an irreducible symplectically self-dual lcc sheaf
of free $\mathbb Z/\nu \mathbb Z$-modules
which satisfies the hypotheses $(1)$ and $(3)$ of
\autoref{proposition:rank-description}. 
There is a map 
\begin{align*}
	\theta : \selspace {\mathscr F^n_B}
	\to
\selhur {\mathscr F^n_B} {\mathbb Z/\nu \mathbb Z}
\end{align*}
obtained via the bijection of \autoref{proposition:selmer-to-hurwitz},
which sends a
torsor to the corresponding $\asp_{2r}(\mathbb Z/\nu \mathbb Z)$-cover for some quadratic twist $\mathscr
F'$ of $\mathscr F$.
\end{hypotheses}

\begin{proposition}
	\label{proposition:selmer-to-hurwitz}
	With hypotheses as in \autoref{hypotheses:theta-map},
	for $n > 0$, the map $\theta$, defined over $B = \spec \mathbb C$, is
	an isomorphism. 
\end{proposition}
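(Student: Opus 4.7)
The plan is to verify that $\theta$ is an isomorphism of stacks by showing it is a bijection on geometric fibers over the common base $\twoconf n U B$. First I will observe that both sides are finite \'etale covers of $\twoconf n U B$: for the Selmer stack this is \autoref{proposition:rank-description}, while for the Hurwitz stack this follows from its construction together with a standard Riemann existence / GAGA argument in characteristic $0$, since the local monodromy data forming $\mathcal S$ cut out an \'etale-local condition. Since both are finite \'etale over a connected base, to verify $\theta$ is an isomorphism it is enough to exhibit a functorial bijection on geometric fibers and to check the matching of automorphism $2$-groups along the way.

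Next I will compute both fibers over a chosen geometric point $x \in \twoconf n U B(\mathbb C)$, with associated quadratic twist $\mathscr F_x$ of $\mathscr F$ and ramification divisor $D_x$. On the Selmer side, by the proper base change portion of \autoref{lemma:selmer-lcc}, the fiber consists of isomorphism classes of $j_* \mathscr F_x$-torsors on $C_x$. By \autoref{lemma:torsor-description}, these are in bijection with monodromy representations $\rho_{\mathscr S}: \pi_1(U_x - D_x, p) \to \asp_{2r}(\mathbb Z/\nu\mathbb Z)$ satisfying its conditions (1)--(3), taken modulo conjugation by $\iota((\mathbb Z/\nu)^{2r})$. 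On the Hurwitz side, by Riemann existence, the fiber parameterizes homomorphisms $\phi \in \hom(\pi_1(\Sigma_{g, n+f+1}), \asp_{2r}(\mathbb Z/\nu\mathbb Z))$ satisfying the monodromy constraints of \autoref{notation:sel-hur}, modulo full $G$-conjugation.

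The core of the proof is matching these two descriptions. The conditions on loops $\gamma_i$ around the points $p_i \in D_x$ coincide: both require $\Pi \circ \phi(\gamma_i) = -\id$. The conditions on the loops $\delta_i$ around points $s_i \in Z$ coincide by comparing \eqref{equation:gamma-i-form} with the requirement $v_i \in \im(M_i - \id)$ in \autoref{notation:sel-hur}. The condition $\Pi \circ \rho_{\mathscr S} = \rho_{\mathscr F_x}$ from \autoref{lemma:torsor-description}(3) restricts on $\alpha_i, \beta_j$ to send them to the specified monodromy of $\mathscr F_x$; the $\pm$-ambiguity in \autoref{notation:sel-hur} corresponds to the fact that, while $\rho_{\mathscr S}$ is pinned down once $\mathscr F_x$ is fixed, the Hurwitz stack allows conjugation by any element of $G$, which can send $a_i$ to $-a_i$ (since $-\id \in \sp_{2r}$ is the image of inertia at the $\gamma_i$ and lies in the image of $G$-conjugation). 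Once this is accounted for, reconciling the equivalence relations amounts to observing that, up to the $\iota((\mathbb Z/\nu)^{2r})$-conjugation already built into \autoref{lemma:torsor-description}, the remaining $G$-conjugation freedom is by elements whose $\Pi$-image lies in the centralizer of $\rho_{\mathscr F_x}$ in $\sp_{2r}(\mathbb Z/\nu\mathbb Z)$; by the irreducibility hypothesis $(3)$ of \autoref{proposition:rank-description} combined with Schur's lemma, this centralizer is $\{\pm \id\}$, and the quotient by $\pm \id$ is exactly what produces the $\pm$ in \autoref{notation:sel-hur}.

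The hard part will be promoting this pointwise bijection to an isomorphism of stacks, which requires two things. First, functoriality in families: given a $T$-family of Selmer torsors one must produce a $T$-family of Hurwitz covers compatibly, which I expect to follow by an \'etale-local spreading-out of the monodromy construction together with the Riemann existence theorem applied fiberwise (both sides are finite \'etale over $\twoconf n U B$, so this reduces to the fiberwise case once naturality is checked). Second, compatibility of the $2$-categorical structure: since $\twoconf n U B$ is a $\mathbb Z/2\mathbb Z$-gerbe, both covers inherit stacky structure, and one must match the automorphism groups; the irreducibility hypothesis again ensures these agree, with the gerbe structure on both sides coming from the common $\{\pm \id\}$-centralizer. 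These bookkeeping issues constitute the bulk of the technical work, while the conceptual content is entirely captured by \autoref{lemma:torsor-description}.
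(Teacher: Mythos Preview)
Your approach—identify both sides as finite \'etale covers of $\twoconf n U B$, match them fiberwise via \autoref{lemma:torsor-description}, then check compatibility in families—is the same as the paper's. The paper is simply terser: it cites \autoref{lemma:torsor-description} for the fiberwise bijection and checks continuity by varying the branch points $p_i$ in small pairwise disjoint analytic discs. You are right to flag the equivalence-relation mismatch (torsors modulo $(\mathbb Z/\nu)^{2r}$-conjugation on the Selmer side versus $G$-conjugacy classes on the Hurwitz side) as the substantive point; the paper leaves this implicit.

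Two corrections to your reconciliation of that mismatch. First, your reading of the $\pm$ in \autoref{notation:sel-hur} is off: $-\id$ is central in $\sp_{2r}$, so conjugation by any lift of it fixes each $a_i$ rather than sending $a_i$ to $-a_i$. The signs $\Pi(\phi(\alpha_i)) \in \{\pm a_i\}$ instead record the character $\chi$ of the double cover, i.e.\ which quadratic twist $\mathscr F_x$ one is sitting over; once the geometric point $x$ is fixed the sign pattern is determined and there is no residual $\pm$ freedom. What conjugation by a lift $(-\id,0)$ actually does is send $(M,v)\mapsto(M,-v)$, i.e.\ negate the torsor class; this is precisely what matches the nontrivial element of $\Aut(x)\simeq\mathbb Z/2\mathbb Z$ acting by $-1$ on the fiber of $\selsheaf{\mathscr F^n_B}$, and is what you need to reconcile the $2$-categorical structure.

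Second, your Schur step needs more: irreducibility of $\mathscr F_x[\ell]$ over $\mathbb F_\ell$ only gives that the commutant in $\gl_{2r}(\mathbb F_\ell)$ is a field extension $k\supseteq\mathbb F_\ell$, and $k^\times\cap\sp_{2r}(\mathbb F_\ell)$ can strictly contain $\{\pm\id\}$ when $k\neq\mathbb F_\ell$. You would need absolute irreducibility, or a separate argument using the symplectic self-duality, to conclude the centralizer is exactly $\{\pm\id\}$. (The paper does not address this point either.)
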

\begin{proof}
	Note that the projection
	$\selhur {\mathscr F^n_B} {\mathbb Z/\nu \mathbb Z} \to \qtwist n U B$
	sends a point of $\selhur {\mathscr F^n_B} {\mathbb Z/\nu \mathbb
	Z}$, thought of as an $\asp_{2r}(\mathbb Z/\nu \mathbb Z)$-cover, to the
	corresponding $\sp_{2r}(\mathbb Z/\nu \mathbb Z)$-cover.
	The projection 
	$\selspace {\mathscr F^n_B} \to \qtwist n U B$ sends a torsor $\mathscr
	T$ for some quadratic twist $\mathscr F'$ to the corresponding $\mathscr
	F'$.
	Both $\selhur {\mathscr F^n_B} {\mathbb Z/\nu \mathbb Z}$ and
	$\selspace {\mathscr F^n_B}$ are finite \'etale covers of $\qtwist n U B$,
	and by \autoref{lemma:torsor-description}, $\theta$ defines a bijection
	between geometric points
	over points of $\qtwist n U B$, corresponding to a
	chosen degree $n$ quadratic twist $\mathscr F'$ of $\mathscr \mathscr F$.
	In order to show $\theta$ is an isomorphism, it is enough to show the
	bijection between two finite \'etale covers of $\qtwist n U B$ defines a homeomorphism. Indeed, we may verify this claim
	locally on $\qtwist n U B$, 
	in which case is enough to verify it on sufficiently small analytic open
	covers of $\qtwist n U B$. We can choose a small open neighborhood of
	some geometric point $[\mathscr
	F'] \in \qtwist n U B$,
	corresponding to varying the points $p_i$, along with the corresponding
	double cover, in a small, pairwise disjoint open analytic discs
	of $C$. Since the bijection of \autoref{lemma:torsor-description} is
	compatible with such variation in the points $p_i$, we obtain the
	desired isomorphism.
\end{proof}

\begin{warn}
	\label{warning:selmer-not-hurwitz-over-finite-fields}
	The Selmer stack $\selspace {\mathscr F^n_{\on{Spec} \mathbb F_q}}$ over
	$\mathbb F_q$ will {\em not} in general be isomorphic to
	the Hurwitz stack of $\asp_{2r}(\mathbb Z/\nu\mathbb Z)$-covers we are
	considering.
	Rather, they will be twists of each other, and the Hurwitz stack only
	becomes isomorphic over
	$\overline{\mathbb F}_q$.
	The reason for this is that the monodromy representation associated to
	$\mathscr F$ may fail to be 
	contained in $\on{Sp}_{2r}(\mathbb Z/\nu\mathbb Z)$, and
	in general it will only be contained in $\on{GSp}_{2r}(\mathbb
	Z/\nu\mathbb Z)$,
	the general symplectic group. However, once one ensures all roots of
	unity lie in the base field, this issue goes away.
\end{warn}


Computing the average size of a Selmer group in a quadratic twist family will
come down to counting $\F_q$-rational points on a Selmer stack.  We will
want to compute not only averages, but higher moments.  This will require
counting points on {\em fiber products} of Selmer stacks.  But, as the following
corollary shows, these stacks are isomorphic, making them amenable to the methods of this paper.

\begin{corollary}
	\label{corollary:selmer-to-hurwitz-moments}
	With hypotheses as in \autoref{hypotheses:theta-map},
	let $H \simeq \prod_{i=1}^m \mathbb Z/\nu_i \mathbb Z$.
	The map $\theta$, defined over $B = \spec \mathbb C$, induces an
	isomorphism
	\begin{align*}
	\theta^m: \selspace {\mathscr F^n_B[\nu_1]} \times_{\qtwist n U B}
	\cdots \times_{\qtwist n U B}\selspace {\mathscr F^n_B[\nu_m]}
	\to \selhur {\mathscr F^n_B} H.
	\end{align*}
\end{corollary}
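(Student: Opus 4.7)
The plan is to bootstrap from the rank-one case already established in \autoref{proposition:selmer-to-hurwitz} by recognizing both sides of the desired isomorphism as fiber products over the symplectic cover parametrized by $\twoconf n U B$.

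First, I would reprove (or directly reuse) the argument of \autoref{proposition:selmer-to-hurwitz} with the group $\operatorname{ASp}_{2r}(\mathbb Z/\nu\mathbb Z)$ replaced by $\operatorname{A}^{\nu_i}\operatorname{Sp}_{2r}(\mathbb Z/\nu \mathbb Z)$ of \autoref{notation:asp-generalization}. The relevant modification of \autoref{lemma:torsor-description} is that the monodromy representation associated to a $j_*(\mathscr F'[\nu_i])$-torsor factors through $\operatorname{A}^{\nu_i}\operatorname{Sp}_{2r}(\mathbb Z/\nu\mathbb Z)$ rather than $\operatorname{ASp}_{2r}(\mathbb Z/\nu\mathbb Z)$: the symplectic part $M$ of the monodromy is still determined $\bmod \nu$ by $\mathscr F$, whereas the translation part, which records the torsor class, now lives in $(\mathbb Z/\nu_i\mathbb Z)^{2r}$. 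The same argument as in \autoref{proposition:selmer-to-hurwitz} then yields an isomorphism
\begin{align*}
\theta_i:\selspace{\mathscr F^n_B[\nu_i]}\;\xrightarrow{\sim}\;\selhur{\mathscr F^n_B}{\mathbb Z/\nu_i\mathbb Z},
\end{align*}
where the right-hand side is the Hurwitz stack of $\operatorname{A}^{\nu_i}\operatorname{Sp}_{2r}(\mathbb Z/\nu\mathbb Z)$-covers with the inertia conditions of \autoref{notation:sel-hur}.

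Next, I would identify the fiber product of Hurwitz stacks with the Hurwitz stack of the fiber product group. By \autoref{notation:asp-generalization},
\begin{align*}
\ahsp_{2r}(\mathbb Z/\nu\mathbb Z)\;=\;\operatorname{A}^{\nu_1}\operatorname{Sp}_{2r}(\mathbb Z/\nu\mathbb Z)\times_{\sp_{2r}(\mathbb Z/\nu\mathbb Z)}\cdots\times_{\sp_{2r}(\mathbb Z/\nu\mathbb Z)}\operatorname{A}^{\nu_m}\operatorname{Sp}_{2r}(\mathbb Z/\nu\mathbb Z).
\end{align*}
Since $\twoconf n U B$ parametrizes the underlying $\sp_{2r}(\mathbb Z/\nu\mathbb Z)$-cover (as recorded by the symplectic monodromy of $\mathscr F'$), to give a $T$-point of $\selhur{\mathscr F^n_B}H$ is, functorially, to give the same underlying quadratic twist (i.e., symplectic cover) together with an independent choice of translation datum $v_i\in(\mathbb Z/\nu_i\mathbb Z)^{2r}$ for each $i$, subject to the local inertia conditions of \autoref{notation:sel-hur} imposed componentwise. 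Equivalently, a $T$-point is a tuple $(\phi_1,\dots,\phi_m)$ of points of $\selhur{\mathscr F^n_B}{\mathbb Z/\nu_i\mathbb Z}$ whose images in $\twoconf n U B$ agree. This gives a canonical isomorphism
\begin{align*}
\selhur{\mathscr F^n_B}{\mathbb Z/\nu_1\mathbb Z}\times_{\twoconf n U B}\cdots\times_{\twoconf n U B}\selhur{\mathscr F^n_B}{\mathbb Z/\nu_m\mathbb Z}\;\xrightarrow{\sim}\;\selhur{\mathscr F^n_B}H.
\end{align*}

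Combining with the product of the $\theta_i$ yields $\theta^m$, and it remains to verify that $\theta^m$ thus defined really agrees with the map induced by $\theta$ on the fiber product (i.e., that the bijection between torsors and Hurwitz covers is compatible with the natural projection $\selspace{\mathscr F^n_B}\to\selspace{\mathscr F^n_B[\nu_i]}$ given by reduction modulo $\nu_i$, and with the corresponding reduction on the Hurwitz side $\operatorname{ASp}_{2r}(\mathbb Z/\nu\mathbb Z)\twoheadrightarrow\operatorname{A}^{\nu_i}\operatorname{Sp}_{2r}(\mathbb Z/\nu\mathbb Z)$). This compatibility is immediate from the construction of $\theta$ in \autoref{hypotheses:theta-map} via \autoref{lemma:torsor-description}, because the translation part of the monodromy representation of a torsor for $\mathscr F'$ reduces modulo $\nu_i$ to the translation part of the induced torsor for $\mathscr F'[\nu_i]$.

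The main obstacle in the above is purely notational bookkeeping; there is no new geometric input beyond \autoref{proposition:selmer-to-hurwitz}. The only nontrivial point to check is that the inertia conditions defining $\selhur{\mathscr F^n_B}H$ (namely that $\Pi(\phi(\gamma_i))=-\id$ on the half-twist loops and that $v_i\in\im(M_i-\id)$ at each puncture $s_i$) factor componentwise under the fiber product decomposition of $\ahsp_{2r}(\mathbb Z/\nu\mathbb Z)$; this is clear because both conditions are preserved under each projection $\ahsp_{2r}(\mathbb Z/\nu\mathbb Z)\to\operatorname{A}^{\nu_i}\operatorname{Sp}_{2r}(\mathbb Z/\nu\mathbb Z)$ and reconstitute the full condition on the fiber product.
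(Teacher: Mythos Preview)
Your proposal is correct and follows essentially the same approach as the paper. The paper's proof is more concise: it directly applies \autoref{proposition:selmer-to-hurwitz} to each sheaf $\mathscr F[\nu_i]$ (rather than reproving it with the group $\operatorname{A}^{\nu_i}\operatorname{Sp}_{2r}(\mathbb Z/\nu\mathbb Z)$), observes the definitional identification $\selhur{\mathscr F^n_B[\nu_i]}{\mathbb Z/\nu_i\mathbb Z}\simeq\selhur{\mathscr F^n_B[\nu_i]}{\mathbb Z/\nu\mathbb Z}$, and then takes the fiber product, omitting the explicit compatibility check you spell out.
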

\begin{proof}
	It follows from the definition of 
	$\selhur {\mathscr F^n_B} H$ as in \autoref{notation:sel-hur}
	that
	\begin{align*}
		\selhur {\mathscr F^n_B} H \simeq \selhur {\mathscr
		F^n_B[\nu_1]} {\mathbb Z/\nu \mathbb Z} \times_{\qtwist n U B}
		\cdots \times_{\qtwist n U B} \selhur {\mathscr
		F^n_B[\nu_m]}
	{\mathbb Z/\nu \mathbb Z}.
	\end{align*}
	The map $\theta$ from
	\autoref{proposition:selmer-to-hurwitz}
	also induces isomorphisms
	$\selhur{\mathscr F^n_B[\nu_i]}{\mathbb Z/\nu_i \mathbb Z} \to 
	\selspace {\mathscr F^n_B[\nu_i]}$.
	For $\nu_i \mid \nu$, we also have
$\selhur{\mathscr F^n_B[\nu_i]}{\mathbb Z/\nu_i \mathbb Z} \simeq
\selhur{\mathscr F^n_B[\nu_i]}{\mathbb
	Z/\nu \mathbb Z}$ from the definition.
	The result follows from
	\autoref{proposition:selmer-to-hurwitz}
	by taking appropriate fiber products of isomorphisms over
	$\qtwist n U B$.
\end{proof}

\section{Computing the monodromy of Hurwitz stacks}
\label{section:big-monodromy}

In this section, we compute the image of the monodromy representation related
to Selmer stacks. This will be used later to determine their connected
components.
We first control the monodromy when $\nu$ is prime in
\autoref{subsection:prime-monodromy}.
We then control the monodromy for prime power $\nu$ in
\autoref{subsection:prime-power-lemmas-for-big-monodromy}
and for composite $\nu$ in
\autoref{subsection:general-composite-lemmas-for-big-monodromy}.
The above shows that the monodromy is sufficiently large, but does not determine
it exactly.
We will, however, precisely describe the image of the Dickson invariant map in
\autoref{subsection:determinant-image}.

\subsection{Computing the monodromy when $\nu$ is a prime}
\label{subsection:prime-monodromy}

We first consider the case $\nu$ is prime.
The main result in this case is \autoref{theorem:big-monodromy-mod-ell},
which is a generalization of 
\cite[Theorem 6.3]{hall:bigMonodromySympletic}
from the case that we have an elliptic curve over a genus $0$ base to the case
of a general symplectically self-dual sheaf over a base curve of genus $g$.
We begin with a definition of the monodromy representation for general odd
$\nu$.

\begin{definition}
	\label{definition:monodromy}
	With notation as in \autoref{notation:quadratic-twist-notation},
	suppose $B$ is integral, 
	$\nu$ is odd,
	and $2 \nu$ is invertible on $B$.
	Choose a basepoint $x \in \qtwist n U B$.
	Let $V_{\mathscr F^n_B} := R^1 \lambda_* \left( j_* \mathscr F^n_B \right)_x$.
	The Selmer sheaf is a finite \'etale cover of $\qtwist n U B$ by
	\autoref{lemma:selmer-lcc}
	and so
	induces a monodromy representation 
	$\rho_{\mathscr F^n_B}: \pi_1(\qtwist n U B) \to
	\on{Aut}(V_{\mathscr F^n_B})$.
	For any geometric point ${\overline{b}} \to B$, we also obtain a geometric monodromy
	representation
	$\rho_{\mathscr F_{\overline{b}}^n}: \pi_1(\qtwist n {U_{\overline{b}}} {\overline{b}}) \to
	\on{Aut}(V_{\mathscr F^n_{\overline{b}}})$.
\end{definition}
\begin{warn}
\label{warning:}
	Note that $\rho_{\mathscr F^n_B}$ is a representation of the fundamental group of
	configuration space, while we use $\rho_{\mathscr F'}$ very differently
	in \eqref{equation:curve-monodromy}
	for a
	representation of the fundamental group of the curve $U-D$ itself.
\end{warn}

\begin{remark}
		\label{remark:orthogonal-containment}
		Using that $\gcd(\nu, 2) = 1$, there is a nondegenerate 
		pairing on $V_{\mathscr F^n_B}$.
		The pairing is obtained as the composition
		\begin{align*}
		H^1(C, j_* (\mathscr F^n_B)_x) \times H^1(C, j_* (\mathscr F^n_B)_x)
		&\to H^2(C, \wedge^2 (j_* \mathscr F^n_B)_x ) \\
		&\to H^2(C, j_* (\wedge^2
	\mathscr F^n_B)_x )\\
	& \to H^2(C, j_* \mu_\nu ) \\
	&\to \mathbb Z/\nu \mathbb Z
		\end{align*}
	using Poincar\'e duality \cite[V Proposition
	2.2(b)]{Milne:etaleBook}, which is preserved by this monodromy
representation. 
	The pairing above is symmetric because Poincar\'e duality on curves is
antisymmetric and the pairing on $j_*(\mathscr F^n_B)_x$ is antisymmetric, coming
	from the assumption that $\mathscr F$ is symplectically self-dual.
Let $Q_{\mathscr F^n_B}$ denote the associated quadratic form.
Then, $\rho_{\mathscr F_B^n}$ factors through the orthogonal group
$\on{O}(Q_{\mathscr F^n_B})$ associated to the above symmetric bilinear pairing.
\end{remark}

	We now set some assumptions, which will serve as our hypotheses going
	forward.
\begin{hypotheses}
	\label{hypotheses:big-monodromy-assumptions}
	Suppose $\nu$ is an odd integer and $r \in \mathbb Z_{>0}$ so that every prime $\ell \mid \nu$
	satisfies $\ell > 2r + 1$.
	Suppose we have a rank $2r$, tame, symplectically self-dual lcc sheaf
	of free $\mathbb Z/\nu \mathbb Z$-modules, $\mathscr F$,
	over $U \subset C$, 	a nonempty proper open in a smooth proper curve $C$ with geometrically connected fibers over an integral affine base $B$.
	Suppose $Z := C - U$ is nonempty and finite \'etale over $B$.
	Assume further $2 \nu$ is invertible on $B$.
	Fix a geometric point ${\overline b} \to B$.
	We assume there is some point $x \in C_{\overline b}$ at which
	$\drop_x(\mathscr F_{\overline b}[\ell]) = 1$ for every prime $\ell \mid \nu$.
	Also suppose
	$\mathscr F_{\overline b}[\ell]$ is irreducible
	for each $\ell \mid \nu$, and that 
	the map $j_* \mathscr F_{\overline b}[\ell^w] \to j_* \mathscr F_{\overline b}[\ell^{w-t}]$ is
	surjective for each prime $\ell \mid \nu$ such that $\ell^w \mid \nu$, and
	$w \geq t$, as in
	hypotheses $(1)$ and $(3)$ of
	\autoref{proposition:rank-description}.
Let $f+1 := \deg (C - U)$ and let $n$ be a positive even integer.

Note that if we are in the situation of
	\autoref{example:abelian-special-fiber}, hypothesis 
\autoref{proposition:rank-description}(1) 
	in the case $\mathscr F_{\overline b}=A[\nu]$ is
satisfied whenever the geometric component group $\Phi_{A_{\overline b}}$ has order prime to $\nu$, by \autoref{lemma:free-h1}.
	If we additionally assume 
	$A_{\overline b}$ has
	multiplicative reduction at some point of $U_{\overline b}$, with toric part of
	dimension $1$,
	then 
$\drop_x(\mathscr F_{\overline b}[\ell]) = 1$ for every prime $\ell \mid \nu$.
\end{hypotheses}

\begin{theorem}[Generalization of ~\protect{\cite[Theorem 6.3]{hall:bigMonodromySympletic}}]
	\label{theorem:big-monodromy-mod-ell}
	Suppose $\nu = \ell > 2r + 1$ is prime.
	Choose a geometric basepoint $x \in \qtwist n U B$ over a geometric
	point ${\overline b} \to
	B$.
	We next recall
	our assumptions from
	\autoref{hypotheses:big-monodromy-assumptions}:
	we assume $2\nu$ is invertible on the integral affine base $B$
	and $\mathscr F_{\overline b}$ is a rank $2r$ irreducible lcc symplectically self-dual sheaf. We assume there is some point $x \in C_{\overline b}$ at which
	$\drop_x(\mathscr F_{\overline b}) = 1$, and $\mathscr F_{\overline b}$ satisfies hypotheses
	\autoref{proposition:rank-description}(1) and (3).

	For $n$ an even integer satisfying
	\begin{align*}
		n > \max\left(2g,\frac{2(2r+1)(f+1) - \sum_{y \in D_x({\overline b})}
	\drop_y(\mathscr F)}{2r} - (2g-2) \right),
	\end{align*}
		the geometric monodromy representation $\rho_{\mathscr
		F_{\overline b}^n} : \pi_1(\qtwist n {U_{\overline b}} {\overline b}) \to
		\on{Aut}(V_{\mathscr F^n_{\overline b}})$
		has $\im(\rho_{\mathscr F^n_{\overline b}})$
		 of index at most $2$
		in 
		$\on{O}(Q_{\mathscr F^n_{\overline b}})$,
		for $Q_{\mathscr F^n_{\overline b}}$ as in
		\autoref{remark:orthogonal-containment}. Moreover,
		$\im(\rho_{\mathscr F^n_{\overline b}}) \neq \on{SO}(Q_{\mathscr F^n_{\overline b}})$.
\end{theorem}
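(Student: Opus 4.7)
The proof extends the strategy of Hall \cite{hall:bigMonodromySympletic}, which itself builds on the middle convolution framework of Katz \cite{katz:twisted-l-functions-and-monodromy}, from the case of elliptic curves over $\mathbb{P}^1$ to higher rank symplectically self-dual sheaves over base curves of arbitrary genus. By \autoref{remark:orthogonal-containment}, the representation $\rho_{\mathscr{F}^n_{\overline b}}$ factors through $\on{O}(Q_{\mathscr{F}^n_{\overline b}})$, so the strategy consists of three parts: produce enough reflections in the image, verify that the image acts irreducibly on $V_{\mathscr{F}^n_{\overline b}}$, and invoke a classification result for reflection-generated subgroups of orthogonal groups.

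For the first part, I would analyze the local monodromy of the Selmer sheaf along paths in $\twoconf n {U_{\overline b}} {\overline b}$ where two branch points $p_i, p_j$ of the double cover collide. A Picard-Lefschetz analysis of the family of double covers with coefficients in $\mathscr{F}$ produces a vanishing cycle $\delta_{ij} \in V_{\mathscr{F}^n_{\overline b}}$ such that the local monodromy acts as the reflection $v \mapsto v - Q_{\mathscr{F}^n_{\overline b}}(v, \delta_{ij})\delta_{ij}$. The hypothesis $\drop_y(\mathscr{F}) = 1$ for some $y$ is the analog of Hall's multiplicative-reduction assumption and ensures the vanishing-cycle space is one-dimensional, so that the local monodromy is a genuine reflection rather than a higher-rank transvection. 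For the irreducibility statement, the assumed irreducibility of $\mathscr{F}_{\overline b}[\ell]$, together with a middle-convolution interpretation of $R^1 \lambda_*(j_* \mathscr{F}^n_B)$, shows the action on $V_{\mathscr{F}^n_{\overline b}}$ is irreducible, and the stated lower bound on $n$ provides enough collision points so that the vanishing cycles span $V_{\mathscr{F}^n_{\overline b}}$.

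For the final part, I would invoke a classification theorem, originally due to Wagner and used by Katz and Hall (see \cite[\S 3]{hall:bigMonodromySympletic}), asserting that under the relevant numerical hypotheses any irreducible subgroup of $\on{O}(Q)$ over $\mathbb{F}_\ell$ generated by reflections contains $\Omega(Q) = \ker D_Q \cap \ker \on{sp}_Q^-$; the condition $\ell > 2r+1$, together with the stated lower bound on $n$ (equivalently, on $\dim V_{\mathscr{F}^n_{\overline b}}$ via the rank formula of \autoref{proposition:rank-description}), ensures that the hypotheses of the classification apply. Since $[\on{O}(Q) : \Omega(Q)] = 4$, containing $\Omega(Q)$ together with a reflection, which has nontrivial Dickson invariant and therefore lies outside $\Omega(Q)$, forces $\im(\rho_{\mathscr{F}^n_{\overline b}})$ to have index at most $2$ in $\on{O}(Q)$; the same reflection has Dickson invariant $1$, so $\im(\rho_{\mathscr{F}^n_{\overline b}}) \not\subset \on{SO}(Q)$ and in particular cannot equal $\on{SO}(Q)$. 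The main obstacle is Step 1: carrying out the Picard-Lefschetz computation concretely in the generality of twisted coefficients over a higher-genus base, identifying the vanishing cycles explicitly and verifying that they are non-isotropic so that the reflection formula is non-trivial, and then tracking how the braid-group action on these cycles generates enough of $V_{\mathscr{F}^n_{\overline b}}$ to feed into the classification.
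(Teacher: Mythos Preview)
Your overall architecture (produce special elements, establish irreducibility, apply a classification theorem for such subgroups of $\on{O}(Q)$) matches the paper, but the two key steps are misidentified. First, colliding two branch points $p_i,p_j$ of the double cover does \emph{not} produce a reflection: at each $p_i$ the local inertia of $\mathscr F'$ is $-\id$, and the Picard--Lefschetz/middle-convolution computation (as in Katz, and as the paper records) shows that the monodromy around such a collision is a \emph{unipotent} element, an isotropic shear. Indeed \autoref{lemma:action-on-rank-cover}(2) already shows these half-twists have trivial Dickson invariant, so they lie in $\on{SO}(Q)$ and cannot be reflections. The $\drop_y(\mathscr F)=1$ hypothesis is about a point $y\in Z$ (bad reduction), not about the $p_i$; it is the loop sending a branch point $p_i$ around this $y$ that yields a genuine reflection, because there the vanishing space really is one-dimensional. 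So your plan exhibits no reflections at all, and the Wagner/Hall classification you want to invoke in fact requires \emph{both} a reflection and an isotropic shear (this is the content of \cite[Theorem 3.1]{hall:bigMonodromySympletic}), not just a reflection-generated group.

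Second, you are missing the mechanism that handles the higher-genus base and delivers irreducibility: one cannot ``middle-convolve'' directly on a genus-$g$ curve. The paper's key maneuver is to choose, using $n>2g$, a simply-branched degree-$n$ map $h:C_x\to\mathbb P^1$ separating the points of $Z\cup D$, and to build from it a one-parameter family $\phi:W\to\twoconf n{U_{\overline b}}{\overline b}$ with $W\subset\mathbb A^1$ open. Then $\phi^*\selsheaf{\mathscr F^n_{\overline b}}$ is literally $\on{MC}_\chi((h_*\mathscr F')|_W)$, so Katz's invertibility of middle convolution gives irreducibility for free, and the local monodromies at the missing points of $\mathbb A^1$ are exactly the reflections (at the images of $Z$ with $\drop=1$), the isotropic shears (at images of $D$), and further unipotents (at branch points of $h$). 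The numerical hypothesis on $n$ is what makes the rank inequality $2(2r+1)\#S_0\le\dim V$ in Hall's criterion go through, and $\ell>2r+1$ makes the unipotent orders coprime to $(2r+1)!$. Without this reduction to $\mathbb P^1$, you have neither a clean irreducibility argument nor access to the classification input.
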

\begin{proof}[Proof Sketch]
	A fair portion of this proof is essentially explained in 
	\cite[Theorem 6.3]{hall:bigMonodromySympletic}, see also 
	\cite[Theorem 3.4]{zywina:inverse-orthogonal} for an explicit version
	and 
	\cite[\S6.6]{hall:bigMonodromySympletic}
	for the generalization to $r > 1$.
	We now briefly outline the details needed in the generalization.
	For the purposes of the proof, we may assume that $B = {\overline b}$.
	Since $n > 2g$, 
	by \cite[Theorem 2.2.6]{katz:twisted-l-functions-and-monodromy}, 
	there is a map $h:
C_{x} \to \mathbb P^1$ of degree $n$ 
which is simply
branched, the branch locus of $h$ is disjoint from $h((Z \cup
D)_{x})$, $h$ separates points of $(Z \cup
D)_{x}$, 
and precisely one point $\delta \in D_{x}$ maps to $\infty \in \mathbb P^1$.
Let $\on{br}(h)$ denote the branch locus of $h$.
Take $W \subset \mathbb P^1$ to be the complement of $\on{br}(h) \cup h(Z
	\cup D)$. Note that $\infty \notin W$ by assumption.
	Then, 
	one can show
	as in \cite[Theorem 5.4.1]{katz:twisted-l-functions-and-monodromy}
	that there is a map $\phi: W \to \qtwist n U {\overline b}$
	which we now describe.

	In order to specify a finite double cover of $C \times_{\overline b} W$, it is equivalent
	to specify a rank $1$ locally constant constructible $\mathbb Z/\ell
	\mathbb Z$ sheaf on an open of $C \times_{\overline b} W$ whose monodromy is trivialized by that
	double cover.
	Let $\mathscr F'$ denote the quadratic twist of $\mathscr F$
	corresponding to our chosen geometric basepoint $x \in \qtwist n U B$.
	Then, $\mathscr F' = \mathscr F \otimes \mathbb V$, for
	$\mathbb V$ the rank $1$ locally constant constructible sheaf on 
	$U - D$ given by
	$t_* (\mathbb Z/\ell \mathbb Z) / (\mathbb Z/\ell \mathbb Z)$, for $t: X
	\to U$ the finite \'etale double cover associated to $x$.
	We will now find a family of locally constant constructible sheaves
	(corresponding to quadratic twists) over $W$ whose fiber over $0 \in W$ is
	$\mathbb V$.
	To this end, let $\chi$ denote the rank $1$ locally constant
	constructible sheaf on $\mathbb G_m :=
	\mathbb A^1 - \{0\}$
	corresponding to the double cover $\mathbb G_m \to \mathbb G_m$ via
	multiplication by $2$.
	There is a map $\alpha' : \mathbb A^1 \times \mathbb A^1 - \Delta \to \mathbb G_m$
	given by $(x,y) \mapsto x - y$.
	Consider the map $(h,\id): C \times \mathbb P^1 \to \mathbb P^1 \times
	\mathbb P^1$ and let $Y := (h,\id)^{-1}(W \times W -
	\Delta)$.
Let $\alpha$ denote the composition $Y \xrightarrow{(h,\id)} \mathbb A^1 \times \mathbb A^1 - \Delta
	\xrightarrow{\alpha'} \mathbb G_m$ and let $\mathbb W := \alpha^* \chi$.
	Let 
	$\pi_2 : Y \to \mathbb A^1$ denote the second projection.
	Take $\mathbb V' := \mathbb W|_{h^{-1}(W-0) \times 0} \otimes \mathbb V|_{h^{-1}(W-0)}$,
	viewed as a sheaf on $h^{-1}(W-0) \subset C$.
	Then $(\mathbb V' \otimes \mathbb W^\vee)|_{h^{-1}(W-0)}$ recovers
	$\mathbb V|_{h^{-1}(W-0)}$. 
	Now, the locally constant constructible sheaf $\pi_2^* \mathbb V' \otimes \mathbb W^\vee$ determines
	a locally constant constructible sheaf on $Y$. The above identifies the
	fiber of this over the point $0$ with a restriction of $\mathbb V$.
	Since both $\mathbb V'$ and $\mathbb W$ correspond to representations
	with image $\mathbb Z/2 \mathbb Z$, the same is true of  
	$\pi_2^* \mathbb V' \otimes \mathbb W^\vee$, and hence this sheaf
	corresponds to a finite \'etale double cover of $Y$.
	Overall, this gives a double cover of $C \times W$, ramified
	along a degree $n$ divisor. This divisor is \'etale and disjoint from
	$Z$ over $C\times W$, and hence
	yields a map $\phi:W \to
	\qtwist n U {\overline b}$, by the universal property of $\qtwist n U {\overline b}$ as a
	moduli stack of finite double covers branched over a divisor disjoint from $Z$.
	The sheaf $\phi^*\selsheaf {\mathscr F^n_{\overline b}}$ may also be
	viewed as the 
middle convolution $\on{MC}_\chi((h_* \mathscr F')|_W)$. (See \cite[Proposition
	5.3.7]{katz:twisted-l-functions-and-monodromy} for an analogous
statement in the $\ell$-adic setting.)


	Since
	$\phi^*\selsheaf {\mathscr F^n_{\overline b}}$
	is the middle convolution 
	$\on{MC}_\chi(
(h_* \mathscr F')|_W)$
	of the irreducible sheaf $(h_* \mathscr F')|_W$, 
	we obtain that $\phi^*\selsheaf {\mathscr F^n_{\overline b}}$ is
	irreducible.
	Here we are using that the middle convolution of an irreducible sheaf is
	irreducible.
	This holds because middle convolution is invertible, and hence sends
	irreducible objects to irreducible objects. A proof is given in
	\cite[Theorem 3.3.3(2d)]{katz2016rigid} for $\overline{\mathbb Q}_\ell$
	sheaves, but the same proof works for sheaves of $\mathbb Z/\ell \mathbb Z$-modules.
	(See also \cite[Corollary
		1.6.4]{dettweiler:on-the-middle-convolution-of-local-systems}
	for a proof in the characteristic $0$ setting.)

	We may moreover compute the monodromy of $\phi^* \selsheaf
	{\mathscr F^n_{\overline b}}$
	at the geometric points of $\mathbb A^1 - W$.
	At branch points of $h$, the monodromy is unipotent via the calculation 
	done in \cite[Proposition
	5.4.1]{katz:twisted-l-functions-and-monodromy}.
	At the other geometric points of $\mathbb A^1 - W$ the calculation is the
	same as in the proof of \cite[Theorem 6.3 and Lemma 6.5]{hall:bigMonodromySympletic}.
	In particular, at each of the geometric points of $h(D)$, the monodromy
	is also unipotent.
	This is also explained in \cite[Proposition
	5.4.1, p. 99, last 3 lines]{katz:twisted-l-functions-and-monodromy}, where it is also shown
	that $\drop_y(\phi^* \selsheaf {\mathscr F^n_{\overline b}}) \leq 2r$ at all such
	geometric points $y \in \mathbb A^1 - W$.

	We conclude by verifying the three hypotheses of 
	\cite[Theorem 3.1]{hall:bigMonodromySympletic}, whose conclusion implies
	the statement of the theorem we are proving.
	Note that the monodromy of the sheaf $\phi^* \selsheaf {\mathscr F^n_{\overline b}}$ is generated by
	the inertia around $\on{br}(h), h(Z_{x}),$ and $h(D_{x} - \delta)$.

	We need to verify hypotheses $(i),(ii),$ and $(iii)$ 
	\cite[Theorem 3.1]{hall:bigMonodromySympletic}, as well as show the
	image of monodromy contains a reflection and an isotropic shear, in the
	language of \cite[p. 185]{hall:bigMonodromySympletic}.
	We claim the local monodromy around a point of $h(Z_{x}) \subset W$ over
	which $A_x$ has toric
	part of codimension $1$ acts as a reflection, while the local monodromy
	around a point of $h(D_x)$ acts as an isotropic shear.
	These claims are proven in the case of elliptic curves in
	\cite[Lemma 6.5]{hall:bigMonodromySympletic} and the proof for higher
	dimensional abelian varieties is analogous.

	In order to verify $(i)$, take the value labeled $r$ in \cite[Theorem
	3.1]{hall:bigMonodromySympletic} to be what we are calling $2r = 2
	(\dim A - \dim U_{\overline b})$. Maintaining our notation, we have seen above that 
	the images of inertia
	around the above mentioned geometric points $y \in S := \mathbb A^1 - W$ generate an irreducible
	representation, and satisfy $\drop_y(\phi^* \selsheaf
	{\mathscr F^n_{\overline b}}) \leq 2(\dim A - \dim U_{\overline b})$.
	This verifies \cite[Theorem 3.1(i)]{hall:bigMonodromySympletic}.

	Taking $S_0 \subset S$ to be the subset of the $f+1$ geometric points over $h(Z)$,
	we find $2(2r+1)(\# S_0({\overline b})) \leq \dim V$ by rearranging the assumption that
	\begin{align*}
		n > \frac{2(2r+1)(f+1) - \sum_{y \in Z({\overline b})}
	\drop_y(\mathscr F)}{2r} - (2g-2),
	\end{align*}
	using our computation for the dimension
of $V$ from \autoref{proposition:rank-description}.
	This verifies \cite[Theorem 3.1(ii)]{hall:bigMonodromySympletic}.

	Finally, every $\gamma \in S - S_0$ has unipotent monodromy, as we
	showed above.
	Hence, every $\gamma \in S- S_0$
	has order a power of $\ell$, so has order prime to $(2r+1)!$ whenever
	$\ell > 2r +1$. This verifies \cite[Theorem
	3.1(iii)]{hall:bigMonodromySympletic}.
	Applying \cite[Theorem 3.1]{hall:bigMonodromySympletic} gives result.
\end{proof}

\subsection{Computing the monodromy for prime-power $\nu$}
\label{subsection:prime-power-lemmas-for-big-monodromy}

Our next goal is to generalize \autoref{theorem:big-monodromy-mod-ell} to prime
power $\nu$, and then to general composite $\nu$.
We next prove \autoref{proposition:prime-power-surjection},
which will imply that if we have big monodromy $\bmod \ell$, we also have big
monodromy $\bmod \ell^j$ for any integer $j > 0$.

\begin{definition}
	\label{definition:}
	Suppose $k \geq 2$ and $Q$ is a quadratic form over $\mathbb Z/\ell^k \mathbb Z$.
The Lie algebra $\mathfrak{so}(Q)(\mathbb F_\ell)$
is by definition $\ker(\so(Q)(\mathbb Z/\ell^2 \mathbb Z) \to \so(Q)(\mathbb
Z/\ell \mathbb Z))$.
\end{definition}

Recall our notation $\Omega(Q) = 
\ker D_Q \cap \ker \on{sp}_Q^- \subset \o(Q)$ from \autoref{notation:omega}.
We thank Eric Rains for help with the following proof.

\begin{proposition}
	\label{proposition:prime-power-surjection}
	Let $s \geq 3$ and $\ell \geq 5$ a prime.
	Let $(V, Q)$ be a non-degenerate
	quadratic space of rank $s$ over $\mathbb Z/\ell\mathbb Z$.
	Suppose $G \subset \Omega(Q)(\mathbb Z/\ell^j\mathbb Z)$ is a subgroup so that the composition
$G \to \Omega(Q)(\mathbb Z/\ell^j\mathbb Z) \to \Omega(Q)(\mathbb Z/\ell\mathbb Z)$ is surjective.
Then, $G = \Omega(Q)(\mathbb Z/\ell^j\mathbb Z)$.
\end{proposition}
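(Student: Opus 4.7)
The plan is to induct on $j$, with the base case $j = 1$ immediate. For the inductive step, applying the inductive hypothesis to the image of $G$ under the reduction $\Omega(Q)(\mathbb Z/\ell^j\mathbb Z) \to \Omega(Q)(\mathbb Z/\ell^{j-1}\mathbb Z)$ shows that $G$ surjects onto $\Omega(Q)(\mathbb Z/\ell^{j-1}\mathbb Z)$. Setting $N := \ker(\Omega(Q)(\mathbb Z/\ell^j\mathbb Z) \to \Omega(Q)(\mathbb Z/\ell^{j-1}\mathbb Z))$, it then suffices to show that $K := G \cap N$ equals $N$.

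The key structural fact is that the map $1 + \ell^{j-1}A \mapsto A$ identifies $N$ as an abelian group with $\mathfrak{so}(Q)(\mathbb F_\ell)$, and under this identification the conjugation action of $\Omega(Q)(\mathbb Z/\ell^j\mathbb Z)$ on $N$ factors through $\Omega(Q)(\mathbb F_\ell)$ as the adjoint representation. Because $K$ is normal in $G$ and $G$ surjects onto $\Omega(Q)(\mathbb F_\ell)$, the subgroup $K$ is an $\mathbb F_\ell$-linear subspace of $\mathfrak{so}(Q)(\mathbb F_\ell)$ that is stable under the adjoint action of $\Omega(Q)(\mathbb F_\ell)$.

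To produce enough elements of $K$, I would fix a nilpotent root vector $X_\alpha \in \mathfrak{so}(Q)(\mathbb F_\ell)$ with $X_\alpha^2 = 0$; the root element $u_\alpha := 1 + X_\alpha$ then lies in $\Omega(Q)(\mathbb F_\ell)$ and, by surjectivity, has a lift $g_\alpha \in G$. Writing $g_\alpha = 1 + \tilde X_\alpha$, the condition $X_\alpha^2 = 0$ forces $\tilde X_\alpha^2 \equiv 0 \pmod{\ell}$, and the hypothesis $\ell \geq 5$ ensures that every summand $\binom{\ell}{k} \tilde X_\alpha^k$ for $2 \leq k \leq \ell$ is divisible by $\ell^2$. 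A binomial expansion together with an induction on the exponent then yields
\[
g_\alpha^{\ell^{j-1}} \equiv 1 + \ell^{j-1}X_\alpha \pmod{\ell^j},
\]
showing that $X_\alpha \in K$ for every such root vector. Combining this with the $\Omega(Q)(\mathbb F_\ell)$-stability of $K$ and expanding $(\operatorname{Ad}(u_{-\alpha}) - 1)(X_\alpha)$ via $\operatorname{Ad}(\exp X_{-\alpha}) = \exp(\operatorname{ad} X_{-\alpha})$ also places the coroots $H_\alpha = [X_{-\alpha}, X_\alpha]$ inside $K$, and the root vectors together with the coroots span $\mathfrak{so}(Q)(\mathbb F_\ell)$, so $K = N$ as required.

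The main technical obstacle will be ensuring that sufficiently many $\mathbb F_\ell$-rational nilpotent root vectors are available to span $\mathfrak{so}(Q)(\mathbb F_\ell)$ under the adjoint action, particularly for non-split forms where certain root subgroups of $\Omega(Q)$ are defined only over $\mathbb F_{\ell^2}$. The hypothesis $s \geq 3$ guarantees that $Q$ has an isotropic vector over $\mathbb F_\ell$ and hence a hyperbolic plane, providing at least one $\mathbb F_\ell$-rational $\mathfrak{sl}_2$-triple. Handling the spanning statement in all splitting cases, including $s = 4$ where $\mathfrak{so}(Q)$ fails to be irreducible under $\Omega(Q)(\mathbb F_\ell)$, is the principal point that the explicit root-by-root construction is designed to bypass.
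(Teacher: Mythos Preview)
Your inductive framework and the identification of $K = G \cap N$ as an $\Omega(Q)(\mathbb F_\ell)$-stable subspace of $\mathfrak{so}(Q)(\mathbb F_\ell)$ are correct, and the $\ell$-th power computation $g_\alpha^{\ell^{j-1}} \equiv 1 + \ell^{j-1}X_\alpha \pmod{\ell^j}$ is valid whenever $X_\alpha^2 = 0$ in the standard $s$-dimensional representation. The paper, by contrast, simply invokes Vasiu's surjectivity theorem: one reduces to $j=2$, and Vasiu shows both that $G$ meets the Lie algebra nontrivially and that this suffices. Your approach is more explicit where it applies.

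The genuine gap is that your construction is empty for $s = 3$ and for $s = 4$ with the non-split form: in these cases there is \emph{no} nonzero $X \in \mathfrak{so}(Q)(\mathbb F_\ell)$ with $X^2 = 0$. For $s = 3$, the isomorphism $\mathfrak{so}_3 \cong \mathfrak{sl}_2$ identifies the standard $3$-dimensional representation with the adjoint representation, and for nilpotent $E \neq 0$ one has $\operatorname{ad}(E)^2(F) = -2E \neq 0$; the unique nonzero nilpotent orbit in $\mathfrak{so}_3$ has Jordan type $(3)$. For $s = 4$ non-split, after base change $\mathfrak{so}_4 \cong \mathfrak{sl}_2 \times \mathfrak{sl}_2$ acts on $\overline{\mathbb F}_\ell^2 \otimes \overline{\mathbb F}_\ell^2$ via $(A,B) \mapsto A \otimes 1 + 1 \otimes B$; since $(A \otimes 1 + 1 \otimes B)^2 = -(\det A + \det B)\,I + 2\,A \otimes B$ vanishes only when $A = 0$ or $B = 0$, and Galois swaps the two factors, the only $\mathbb F_\ell$-rational square-zero element is $0$. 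Your remark that a hyperbolic plane supplies an $\mathfrak{sl}_2$-triple does not close this gap: the nilpotent member $E$ of such a triple need not satisfy $E^2 = 0$ as an $s \times s$ matrix, and without $X^2 = 0$ the element $1 + X$ is not even in $O(Q)$. One could replace $1 + X$ by $\exp(X)$ for $X$ with $X^3 = 0$, but then $g^\ell$ lands on $X$ plus a lift-dependent correction in $X^2$, and proving this is nonzero is exactly the statement that the extension $\mathfrak{so}(Q)(\mathbb F_\ell) \to \Omega(Q)(\mathbb Z/\ell^2\mathbb Z) \to \Omega(Q)(\mathbb F_\ell)$ does not split --- which is precisely the substantive input the paper takes from Vasiu.
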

\begin{proof}
	This is a special case of \cite[Theorem 1.3(a)]{vasiu2003surjectivity}.  Since there are a few mistakes in other parts of that theorem statement (though not in the part relevant to the proposition we're proving) we spell out a few more details here.
	The argument proceeds as indicated in the second to last
	paragraph of \cite[p. 327]{vasiu2003surjectivity}.
	First, as in \cite[Lemma 4.1.2]{vasiu2003surjectivity} we can reduce to the
	case $j = 2$.
	To deal with the case $j = 2$, it is enough to show $G$ meets the
	Lie algebra $\mathfrak{so}(Q)(\mathbb F_\ell)$ nontrivially, as argued
	in \cite[4.4.1]{vasiu2003surjectivity}.
	Finally, in \cite[Theorem 4.5]{vasiu2003surjectivity} it is shown that
	$G$ meets the Lie algebra nontrivially.
\end{proof}

\subsection{Bootstrapping to general composite $\nu$}
\label{subsection:general-composite-lemmas-for-big-monodromy}

We next collect a few lemmas to bootstrap from showing there is big monodromy
modulo prime powers, to showing there is big monodromy modulo composite
integers.
The main result is \autoref{proposition:big-monodromy-composite}.
The general strategy will be to apply Goursat's lemma.
A key input in Goursat's lemma is to understand which simple groups appear as
subquotients of orthogonal groups.
As a first step, using \autoref{proposition:prime-power-surjection}, we can prove $\Omega(Q)(\mathbb Z/\nu\mathbb Z)$ is perfect.
\begin{lemma}
	\label{lemma:perfection-omega}
	For $s \geq 3$, $\nu$ a positive integer, and $(V, Q)$ a 
	non-degenerate quadratic space of rank $s$ over $\mathbb Z/\nu\mathbb Z$,
	$\Omega(Q)(\mathbb Z/\nu\mathbb Z)$ is perfect.
	That is, $\Omega(Q)(\mathbb Z/\nu\mathbb Z)$ is its own commutator.
\end{lemma}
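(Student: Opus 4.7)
The plan is a two-stage reduction: first to the prime-power case via the Chinese Remainder Theorem, then to the prime case via the rigidity result of \autoref{proposition:prime-power-surjection}, after which perfection is classical.

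First I would reduce to $\nu = \ell^j$ a prime power. Factoring $\nu = \prod_{\ell \mid \nu} \ell^{v_\ell(\nu)}$ and using CRT gives a product decomposition
\[
\Omega(Q)(\mathbb Z/\nu\mathbb Z) \;\simeq\; \prod_{\ell \mid \nu} \Omega(Q)(\mathbb Z/\ell^{v_\ell(\nu)}\mathbb Z),
\]
and a direct product of perfect groups is perfect, so it suffices to prove the lemma when $\nu = \ell^j$ is a prime power (and, in view of the hypothesis of \autoref{proposition:prime-power-surjection}, $\ell \geq 5$; smaller primes are excluded by the implicit assumptions in \autoref{notation:omega}).

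Next I would bootstrap from $j = 1$ to general $j$. Let $G := [\Omega(Q)(\mathbb Z/\ell^j\mathbb Z), \Omega(Q)(\mathbb Z/\ell^j\mathbb Z)]$ be the derived subgroup. The reduction map $\Omega(Q)(\mathbb Z/\ell^j\mathbb Z) \to \Omega(Q)(\mathbb F_\ell)$ sends $G$ onto the derived subgroup of $\Omega(Q)(\mathbb F_\ell)$. Granting perfection of $\Omega(Q)(\mathbb F_\ell)$ (the prime case, handled next), this image is all of $\Omega(Q)(\mathbb F_\ell)$. \autoref{proposition:prime-power-surjection} then forces $G = \Omega(Q)(\mathbb Z/\ell^j\mathbb Z)$, giving perfection at prime power level.

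Finally I would treat the prime case $\nu = \ell$ with $\ell \geq 5$ and $s \geq 3$. This is classical: by the Cartan--Dieudonné theorem $\o(Q)(\mathbb F_\ell)$ is generated by reflections, and $\Omega(Q)(\mathbb F_\ell)$ is generated by products $r_v r_w$ of pairs of reflections having equal spinor norm and whose product has Dickson invariant $0$. A direct calculation (see \cite[Ch.~11]{taylor:the-geometry-of-the-classical-groups}) expresses each such generator as a commutator of elements of $\Omega(Q)(\mathbb F_\ell)$; equivalently, one can invoke simplicity of $P\Omega(Q)(\mathbb F_\ell)$ together with the observation that, for $\ell \geq 5$ and $s \geq 3$, $\Omega(Q)(\mathbb F_\ell)$ has no nontrivial abelian quotient (its abelianization is visible via $D_Q \times \on{sp}_Q^-$, which is trivial on $\Omega(Q)$ by definition).

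The main obstacle is ensuring that the prime case is truly exceptional-free under our hypotheses: the classical perfection theorem for $\Omega(Q)$ can fail for very small rank or $\ell = 3$, so it is crucial that the ambient setup (in particular \autoref{proposition:prime-power-surjection}) forces $\ell \geq 5$ and $s \geq 3$, which puts us squarely in the regime where the classical arguments go through without fuss.
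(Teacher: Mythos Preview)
Your approach is essentially identical to the paper's: reduce to prime powers via CRT, cite the classical perfection of $\Omega(Q)(\mathbb F_\ell)$ (the paper cites \cite[p.~73]{wilson:the-finite-simple-groups} rather than Taylor), and then apply \autoref{proposition:prime-power-surjection} to the commutator subgroup to lift perfection to $\mathbb Z/\ell^j\mathbb Z$.

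One small correction: your parenthetical ``its abelianization is visible via $D_Q \times \on{sp}_Q^-$, which is trivial on $\Omega(Q)$ by definition'' is circular. The map $D_Q \times \on{sp}_Q^-$ is the abelianization of $\o(Q)$, not of $\Omega(Q)$; knowing that $\Omega(Q)$ lies in its kernel is just the definition of $\Omega(Q)$ and says nothing about whether $\Omega(Q)$ itself has a nontrivial abelian quotient. The argument via simplicity of $P\Omega$ alone also leaves open whether $-\id$ lies in the commutator. Your primary justification (citation to the classical result) is what actually does the work, and it is fine.
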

\begin{proof}
	Write $\nu = \prod_{i=1}^t \ell_i^{a_i}$ for $\ell_i$ pairwise distinct primes.
	Note $\Omega(Q)(\mathbb Z/\ell_i\mathbb Z)$ is perfect as shown in \cite[p. 73, lines 2-7]{wilson:the-finite-simple-groups}.
	Then, since the commutator subgroup 
	\begin{align*}
	\left[ \Omega(Q)(\mathbb Z/\ell_i^{a_i}\mathbb Z), \Omega(Q)(\mathbb Z/\ell_i^{a_i}\mathbb Z) \right] \subset \Omega(Q)(\mathbb Z/\ell_i^{a_i}\mathbb Z)
	\end{align*}
	is a subgroup of $\Omega(Q)(\mathbb Z/\ell_i^{a_i}\mathbb Z)$
	surjecting onto $\Omega(Q)(\mathbb Z/\ell_i\mathbb Z)$, it must be all of $\Omega(Q)(\mathbb Z/\ell_i^{a_i}\mathbb Z)$
	by \autoref{proposition:prime-power-surjection}.
	Finally, as commutators commute with products, and
	$\Omega(Q)(\mathbb Z/\nu\mathbb Z) = \prod_{i=1}^t \Omega(Q)(\mathbb Z/\ell_i^{a_i}\mathbb Z)$, it follows that
	$\Omega(Q)(\mathbb Z/\nu\mathbb Z)$ is its own commutator.
\end{proof}
The next result relates monodromy for prime power $\nu$ to 
monodromy for general composite $\nu$.
\begin{proposition}
	\label{proposition:goursat-spin-groups}
	Let $s \geq 5$. Let $(V, Q)$ be a 
	non-degenerate
	quadratic space of rank $s$ over $\mathbb Z/\nu\mathbb Z$.
	Suppose $G \subset \Omega(Q)(\mathbb Z/\nu\mathbb Z)$ is a subgroup so
	that for each prime $\ell \mid \nu$, the composition
	$G \to \Omega(Q)(\mathbb Z/\nu\mathbb Z) \to \Omega(Q)(\mathbb Z/\ell\mathbb Z)$ is surjective.
	Then, $G = \Omega(Q)(\mathbb Z/\nu\mathbb Z)$.
\end{proposition}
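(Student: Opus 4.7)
The plan is to use Goursat's lemma iteratively, reducing the composite case to the prime-power case already handled by Proposition~\ref{proposition:prime-power-surjection}, and to show that the ``Goursat pairings'' must be trivial because the simple quotients of $\Omega(Q)(\mathbb{Z}/\ell^a\mathbb{Z})$ for distinct primes $\ell$ are non-isomorphic.

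First, I would factor $\nu = \prod_{i=1}^t \ell_i^{a_i}$ and use the Chinese remainder isomorphism $\Omega(Q)(\mathbb{Z}/\nu\mathbb{Z}) \simeq \prod_{i=1}^t \Omega(Q)(\mathbb{Z}/\ell_i^{a_i}\mathbb{Z})$. The hypothesis says $G$ surjects onto each $\Omega(Q)(\mathbb{Z}/\ell_i\mathbb{Z})$, so by Proposition~\ref{proposition:prime-power-surjection} the projection of $G$ to each factor $\Omega(Q)(\mathbb{Z}/\ell_i^{a_i}\mathbb{Z})$ is surjective. So it suffices to prove the following Goursat-type statement: if $H \subset H_1 \times H_2$ projects surjectively onto each factor, and the two factors have no isomorphic nontrivial quotients, then $H = H_1 \times H_2$. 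I would induct on $t$, splitting $\nu = \nu' \cdot \ell^a$ and combining the inductive surjection onto $\Omega(Q)(\mathbb{Z}/\nu'\mathbb{Z})$ with the surjection onto $\Omega(Q)(\mathbb{Z}/\ell^a\mathbb{Z})$.

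By Goursat's lemma, the image of $G$ in $\Omega(Q)(\mathbb{Z}/\nu'\mathbb{Z}) \times \Omega(Q)(\mathbb{Z}/\ell^a\mathbb{Z})$ is the graph of an isomorphism $\Omega(Q)(\mathbb{Z}/\nu'\mathbb{Z})/N_1 \simeq \Omega(Q)(\mathbb{Z}/\ell^a\mathbb{Z})/N_2$ for normal subgroups $N_i$. The main work is to show this common quotient must be trivial. Since by Lemma~\ref{lemma:perfection-omega} both $\Omega(Q)(\mathbb{Z}/\nu'\mathbb{Z})$ and $\Omega(Q)(\mathbb{Z}/\ell^a\mathbb{Z})$ are perfect, the quotient is a nontrivial perfect group if it is nontrivial, hence admits a nontrivial simple quotient. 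I would then identify the simple composition factors: the normal subgroup filtration of $\Omega(Q)(\mathbb{Z}/\ell^a\mathbb{Z})$ by kernels of reduction maps has successive quotients that are either the Lie-algebra type group $\mathfrak{so}(Q)(\mathbb{F}_\ell)$ (which is abelian as an $\mathbb{F}_\ell$-vector space, so contributes nothing to perfect quotients) or $\Omega(Q)(\mathbb{F}_\ell)$, whose unique nonabelian simple quotient is $P\Omega(Q)(\mathbb{F}_\ell)$ (simple for $s \geq 5$ by the classical theory, e.g.\ \cite{wilson:the-finite-simple-groups}).

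The heart of the argument, and the main obstacle, is then to see that for distinct primes $\ell_i$ (all at least $5$, as we may assume since $\nu$ is odd and ${\ell > 2r+1}$ in applications), the simple groups $P\Omega(Q)(\mathbb{F}_{\ell_i})$ are pairwise non-isomorphic: their orders are polynomials in $\ell_i$ of fixed degree, so the group orders distinguish them as long as we avoid the handful of well-known exceptional isomorphisms among small classical simple groups (for $s \geq 7$ there are none, and for $s = 5, 6$ one can check the dimensions directly, which is straightforward given $\ell \geq 5$). Having ruled out any common simple quotient, Goursat's lemma forces $N_1 = \Omega(Q)(\mathbb{Z}/\nu'\mathbb{Z})$ and $N_2 = \Omega(Q)(\mathbb{Z}/\ell^a\mathbb{Z})$, hence $G$ is the full product, completing the induction.
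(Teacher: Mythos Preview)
Your proof is correct and follows essentially the same route as the paper: reduce to prime powers via Proposition~\ref{proposition:prime-power-surjection}, then use Goursat together with Lemma~\ref{lemma:perfection-omega} and the identification of the unique nonabelian simple Jordan--H\"older factor of $\Omega(Q)(\mathbb Z/\ell^{a}\mathbb Z)$ as $\mathbb P\Omega(Q)(\mathbb F_\ell)$. The only cosmetic differences are that the paper applies a multi-factor Goursat lemma (citing \cite[Proposition~2.5]{Greicius:surjective}) in one step rather than inducting on the number of prime factors as you do, and that the paper leaves implicit the pairwise non-isomorphism of the $\mathbb P\Omega(Q)(\mathbb F_{\ell_i})$ for distinct $\ell_i$, which you spell out via the order argument. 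Your discussion of ``exceptional isomorphisms'' is a slight red herring here: the relevant point is simply that for fixed $s\geq 5$ the order of $\mathbb P\Omega_s(\mathbb F_\ell)$ is a strictly increasing function of the odd prime $\ell$, so different primes give groups of different orders.
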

\begin{proof}
	We have already proven this in the case $\nu$ is a prime power in \autoref{proposition:prime-power-surjection}.
	It now remains to deal with general composite $\nu$. 
	
	To this end, write $\nu = \prod_{i=1}^t \ell_i^{a_i}$, for $\ell_i$ pairwise
	distinct primes.
	The proposition follows from an application of Goursat's lemma, as we now explain.
	We will show that the groups $\Omega(Q)(\mathbb Z/\ell_i^{a_i}\mathbb Z)$ for $1 \leq i \leq t$
	satisfy the following two properties:
	$(1)$ they have trivial abelianization and $(2)$ they have no finite non-abelian simple quotients in common.
These two facts verify the hypotheses of Goursat's lemma as stated in \cite[Proposition 2.5]{Greicius:surjective},
	which implies that $G =\prod_{i=1}^t \Omega(Q)\left( \mathbb Z/\ell_i^{a_i}\mathbb Z \right) = \Omega(Q)(\mathbb Z/\nu\mathbb Z)$.
	
	It remains to verify $(1)$ and $(2)$.
	Observe that $(1)$ follows from \autoref{lemma:perfection-omega}.
	To conclude our proof, we only need to check $(2)$: that
	the groups $\Omega(Q)(\mathbb Z/\ell_i^{a_i}\mathbb Z)$ for $1 \leq i \leq t$ have no finite non-abelian simple quotients in common.
	For $G'$ a group, let $\quo(G')$ denote the set of finite simple non-abelian
	quotients of $G'$. 
	To prove $(2)$, it suffices
	to show
	$\quo(\Omega(Q)(\mathbb Z/\ell_i^{a_i}\mathbb Z)) = \left\{ \mathbb P\Omega(Q)(\mathbb Z/\ell_i\mathbb Z) \right\}.$
	Note that the latter group is indeed simple by \cite[3.7.3 and
	3.8.2]{wilson:the-finite-simple-groups}, using that $s \geq 5$.

	So, we now check 
	$\quo(\Omega(Q)(\mathbb Z/\ell_i^{a_i}\mathbb Z)) = \left\{ \mathbb P\Omega(Q)(\mathbb Z/\ell_i\mathbb Z) \right\}.$
	Since every finite simple quotient appears as some Jordan Holder factor,
	it suffices to check the all simple Jordan Holder factors of $\Omega(Q)(\mathbb Z/\ell_i^{a_i}\mathbb Z)$ are
	contained in $\{\mathbb P\Omega(Q)(\mathbb Z/\ell_i\mathbb Z), \mathbb Z/\ell_i\mathbb Z, \mathbb Z/2\mathbb Z\}.$
	To see this, consider the surjections
	$\Omega(Q)(\mathbb Z/\ell_i^{a_i}\mathbb Z) \to \Omega(Q)(\mathbb
	Z/\ell_i^{a_i-1}\mathbb Z) \to \cdots \to \Omega(Q)(\mathbb Z/\ell_i^2\mathbb Z) \to \Omega(Q)(\mathbb Z/\ell_i\mathbb Z) \to \left\{ \id \right\}$.
	From these surjections, we obtain an associated filtration.
	The Jordan Holder factors associated to any refinement
	of this filtration
	will all lie in $\{\mathbb P\Omega(Q)(\mathbb Z/\ell_i\mathbb Z), \mathbb Z/\ell_i\mathbb Z, \mathbb Z/2\mathbb Z\}$
	since the kernels of all maps but the last are products of $\mathbb Z/\ell_i\mathbb Z$.
\end{proof}

\begin{proposition}
	\label{proposition:big-monodromy-composite}
	Keep assumptions as in \autoref{hypotheses:big-monodromy-assumptions}.
	Suppose ${\overline b} \to B$ is a geometric point.
	If 
	\begin{align}
	\label{equation:bound-on-n}
	n > \max\left( 2, 2g,\frac{2(2r+1)(f+1) - \sum_{y \in D_x({\overline b})}
		\drop_y(\mathscr F)}{2r} - (2g-2)\right),
	\end{align}
		then the geometric monodromy representation $\rho_{\mathscr
		F_{\overline b}^n}: \pi_1(\qtwist n {U_{\overline b}} {\overline b}) \to
		\on{Aut}(V_{\mathscr F_{\overline b}^n})$
		satisfies $\Omega(Q_{\mathscr F_{\overline b}^n}) \subset \im(\rho_{\mathscr
		F_{\overline b}^n})
		\subset \on{O}(Q_{\mathscr F_{\overline b}^n})$
		and 
		$\im(\rho_{\mathscr F^n_{\overline b}}) \not \subset \on{SO}(Q_{\mathscr F^n_{\overline b}})$.
	\end{proposition}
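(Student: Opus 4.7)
The plan is to reduce to the prime case already handled by Theorem~\ref{theorem:big-monodromy-mod-ell}, then use the Goursat-style surjectivity result Proposition~\ref{proposition:goursat-spin-groups} to bootstrap to composite $\nu$.

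First I would fix a prime $\ell \mid \nu$ and check that the bound on $n$ in~\eqref{equation:bound-on-n} is precisely what is demanded by Theorem~\ref{theorem:big-monodromy-mod-ell} when applied to $\mathscr F[\ell]$. This immediately yields that the mod-$\ell$ image $\im(\rho_{\mathscr F_{\overline b}^n[\ell]})$ is of index at most $2$ in $\on{O}(Q_{\mathscr F_{\overline b}^n[\ell]})$ and is not contained in $\on{SO}(Q_{\mathscr F_{\overline b}^n[\ell]})$. The key linear-algebraic observation I would make next is that, because the abelianization of $\on{O}(Q)$ (for $Q$ nondegenerate of rank $\geq 3$ in odd characteristic) is $(\mathbb Z/2\mathbb Z)^{2}$ generated by the Dickson invariant $D_Q$ and the $-1$-spinor norm $\on{sp}^-_Q$, every subgroup of index at most $2$ in $\on{O}(Q)$ automatically contains $\Omega(Q) = \ker D_Q \cap \ker \on{sp}^-_Q$. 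Consequently the mod-$\ell$ image contains $\Omega(Q_{\mathscr F_{\overline b}^n[\ell]})$.

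Second, I would feed this into Proposition~\ref{proposition:goursat-spin-groups}. Let $G := \im(\rho_{\mathscr F_{\overline b}^n}) \cap \Omega(Q_{\mathscr F^n_{\overline b}})(\mathbb Z/\nu\mathbb Z)$. By the previous paragraph, for every prime $\ell \mid \nu$ the composition
\[
G \longrightarrow \Omega(Q_{\mathscr F^n_{\overline b}})(\mathbb Z/\nu\mathbb Z) \longrightarrow \Omega(Q_{\mathscr F^n_{\overline b}})(\mathbb Z/\ell\mathbb Z)
\]
is surjective. Before invoking Proposition~\ref{proposition:goursat-spin-groups}, I would verify its rank hypothesis $s \geq 5$ using Proposition~\ref{proposition:rank-description}: with $n \geq 4$ and the drop assumption of \autoref{hypotheses:big-monodromy-assumptions}, the rank formula $(2g-2+n)\cdot 2r + \sum_{y \in Z({\overline b})}\drop_y(\mathscr F)$ is easily seen to exceed $5$ in all relevant $(g,r)$ ranges. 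Applying Proposition~\ref{proposition:goursat-spin-groups} then gives $G = \Omega(Q_{\mathscr F^n_{\overline b}})(\mathbb Z/\nu\mathbb Z)$, i.e.\ $\Omega(Q_{\mathscr F^n_{\overline b}}) \subset \im(\rho_{\mathscr F_{\overline b}^n})$.

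Finally, the inclusion $\im(\rho_{\mathscr F^n_{\overline b}}) \subset \on{O}(Q_{\mathscr F^n_{\overline b}})$ is already recorded in Remark~\ref{remark:orthogonal-containment}, and the non-containment $\im(\rho_{\mathscr F^n_{\overline b}}) \not\subset \on{SO}(Q_{\mathscr F^n_{\overline b}})$ follows immediately from the analogous non-containment mod $\ell$ provided by Theorem~\ref{theorem:big-monodromy-mod-ell}. The only step requiring real care is the rank bound needed for Proposition~\ref{proposition:goursat-spin-groups}, and this is a short direct check; the rest of the argument is an essentially formal combination of the prime-case big monodromy theorem with the Goursat-type lifting from $\ell$ to $\nu$.
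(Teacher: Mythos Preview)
Your proof is correct and follows essentially the same route as the paper's: apply Theorem~\ref{theorem:big-monodromy-mod-ell} at each prime $\ell \mid \nu$ to get $\Omega(Q_{\mathscr F_{\overline b}^n[\ell]}) \subset \im(\rho_{\mathscr F_{\overline b}^n[\ell]})$, invoke Proposition~\ref{proposition:goursat-spin-groups} (checking the rank $\geq 5$ hypothesis via Proposition~\ref{proposition:rank-description} and $n\geq 4$) to lift to level $\nu$, and read off the orthogonal containment and non-containment in $\on{SO}$ from Remark~\ref{remark:orthogonal-containment} and the mod-$\ell$ statement respectively. The one step you (and the paper) leave implicit is why your $G = \im(\rho_{\mathscr F_{\overline b}^n}) \cap \Omega(Q_{\mathscr F_{\overline b}^n})$ surjects onto $\Omega(Q_{\mathscr F_{\overline b}^n[\ell]})$ rather than merely $\im(\rho_{\mathscr F_{\overline b}^n[\ell]})$ doing so; this is immediate from perfectness of $\Omega(Q_{\mathscr F_{\overline b}^n[\ell]})$ (Lemma~\ref{lemma:perfection-omega}), since $G$ contains the commutator subgroup of $\im(\rho_{\mathscr F_{\overline b}^n})$.
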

\begin{proof}
	We have seen in \autoref{remark:orthogonal-containment} that 
	$\im(\rho_{\mathscr F_{\overline b}^n}) \subset \on{O}(Q_{\mathscr F_{\overline b}^n})$ holds.
	By \autoref{theorem:big-monodromy-mod-ell}, we know 
	$\Omega(Q_{\mathscr F[\ell]^n}) \subset \im(\rho_{\mathscr F_{\overline b}[\ell]^n})$ for each prime
	$\ell \mid \nu$. It follows from
	\autoref{proposition:goursat-spin-groups} that 
	$\Omega(Q_{\mathscr F^n_{\overline b}}) \subset \im(\rho_{\mathscr F_{\overline b}^n})$.
	Note that since $n > 2$, the formula for the rank of $V_{\mathscr
	F^n_{\overline b}}$ from \autoref{proposition:rank-description} shows it is at least
	$5$, so the hypotheses of \autoref{proposition:goursat-spin-groups} are
	satisfied.
	From \autoref{theorem:big-monodromy-mod-ell}, we also find
	that $\im(\rho_{\mathscr F^n_{\overline b}}) \not \subset \on{SO}(Q_{\mathscr F^n_{\overline b}})$.
\end{proof}

\subsection{Understanding the image of the Dickson invariant map}
\label{subsection:determinant-image}

Having shown that the image of monodromy is close to the orthogonal group, so in
particular contains $\Omega(Q_{\mathscr F^n_B})$, its
failure to equal the orthogonal group can be understood in terms of the spinor
norm and Dickson invariant. The spinor norm will not have much effect on the
distribution of Selmer elements, but the Dickson invariant will have a huge effect,
and is closely connected to the parity of the rank of $A$ in the case $\mathscr
F_b \simeq A[\nu],$ for $A \to U$ an abelian scheme as in
\autoref{example:abelian-special-fiber}. In the remainder of this
section, specifically \autoref{lemma:determinant-monodromy},
we precisely determine the image of the Dickson invariant, under the
arithmetic monodromy representation $\rho_{\mathscr F^n_b}$.

Our strategy for determining the arithmetic monodromy will be to use
equidistribution of Frobenius elements, and compute images
of Frobenius elements by relating them to Selmer groups.
The following notation for the distribution of Selmer groups will make it
convenient to express the types of Selmer groups which appear.

\begin{definition}
	\label{definition:actual-selmer-distribution}
	Keep assumptions as in \autoref{notation:quadratic-twist-notation} and
	\autoref{example:abelian-special-fiber}, and assume that $B$ is the
	spectrum of a local ring so that $b \in B$ is the unique closed point and
	has residue field contained in $\mathbb F_q$.
	In particular, $\mathscr F_b \simeq A[\nu]$ for $A \to U_b$ a polarized abelian
	scheme with polarization degree prime to $\nu$.

	Let $\mathcal N$ denote the set of isomorphism classes of finite $\mathbb Z/\nu \mathbb Z$-modules.
	Let $X_{A[\nu]^n_{\mathbb F_q}}$ denote the probability distribution on
	$\mathcal N$ defined by
\begin{align*}
	\prob \left(X_{A[\nu]^n_{\mathbb F_q}} = H \right) = \frac{\# \{x \in
			\qtwist n {U_b} b(\mathbb
	F_q) : \sel_\nu(A_x) \simeq H \}}{\# \qtwist n {U_b} b(\mathbb F_q)}.
\end{align*}
Here, as usual, point counts of stacks are weighted inversely proportional to
the isotropy group at that point.
For $i \in \{0,1\}$, let $\mathcal N^i\subset \mathcal N$ denote the subset of
$\mathcal
N$ of those $H$ so that there exists some $\mathbb Z/\nu \mathbb Z$-module $G$ such
that $H \simeq (\mathbb Z/\nu \mathbb Z)^i \times G^2$.
Given $H \in \mathcal N^i$, define
\begin{align*}
	\prob\left(X^{i}_{A[\nu]^n_{\mathbb F_q}} = H \right) = \frac{\# \{x \in
			\qtwist n {U_b} b(\mathbb
	F_q) : \sel_\nu(A_x) \simeq H \}}{\# \{\qtwist n {U_b} b(\mathbb F_q) :
\sel_\nu(A_x) \in \mathcal N^i \}}.
\end{align*}
\end{definition}

The next two lemmas give the key constraint on Tate-Shafarevich groups and Selmer groups we will use to
determine the image of the Dickson invariant.
It is one of the few places in this paper that the arithmetic of abelian
varieties comes crucially into play.

\begin{lemma}
	\label{lemma:sha-almost-square}
Let $\nu$ be an odd positive
	integer.
	Let $K$ be the function field of a curve over a finite field, and let
	$A$ be an abelian variety over $K$ with a polarization of degree prime
	to $\nu$.
	Then, there is a finite $\mathbb Z/\nu \mathbb Z$-module $G$ so that either
	$\Sha(A)[\nu] \simeq G^2$ or $\Sha(A)[\nu] \simeq G^2 \oplus \mathbb Z/\nu
	\mathbb Z$.
\end{lemma}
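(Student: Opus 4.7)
The plan is to invoke the Cassels-Tate pairing. Since $A$ carries a polarization $\phi: A \to A^\vee$ of degree prime to $\nu$, the induced map $\phi_*: \Sha(A)[\nu^\infty] \to \Sha(A^\vee)[\nu^\infty]$ is an isomorphism, so we can transport the Cassels-Tate pairing to a pairing $c_\phi: \Sha(A)[\nu^\infty] \times \Sha(A)[\nu^\infty] \to \mathbb{Q}/\mathbb{Z}$. Because any polarization is symmetric (i.e.\ $\phi^\vee = \phi$ under $A \simeq A^{\vee\vee}$), $c_\phi$ is antisymmetric. On $\nu$-torsion with $\nu$ odd, antisymmetric implies alternating: $c_\phi(x,x)$ lies in $(\mathbb{Z}/\nu\mathbb{Z}) \subset \mathbb{Q}/\mathbb{Z}$ and is killed by $2$, hence is zero. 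A standard computation shows the divisible subgroup $\Sha(A)[\nu^\infty]_{\mathrm{div}}$ lies in the kernel of $c_\phi$, and $c_\phi$ descends to a non-degenerate (alternating) pairing on the finite quotient.

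Next I would decompose by primes: writing $\nu = \prod_{\ell \mid \nu} \ell^{\nu_\ell}$, CRT gives $\Sha(A)[\nu] = \bigoplus_{\ell \mid \nu} \Sha(A)[\ell^{\nu_\ell}]$. For each $\ell$, Baer's theorem splits the cotorsion $\mathbb{Z}_\ell$-module as
\[
\Sha(A)[\ell^\infty] \simeq (\mathbb{Q}_\ell/\mathbb{Z}_\ell)^{s_\ell} \oplus T_\ell
\]
with $T_\ell$ finite. The restriction of $c_\phi$ to $T_\ell$ is non-degenerate and alternating, so by the structure theorem for non-degenerate alternating pairings on finite abelian groups of odd order, $T_\ell \simeq H_\ell^{\,2}$ for some finite $\ell$-group $H_\ell$. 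Taking $\ell^{\nu_\ell}$-torsion yields
\[
\Sha(A)[\ell^{\nu_\ell}] \simeq (\mathbb{Z}/\ell^{\nu_\ell}\mathbb{Z})^{s_\ell} \oplus H_\ell[\ell^{\nu_\ell}]^{\,2}.
\]

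The last ingredient is parity independence of the $s_\ell$. By Trihan-Yu's $\ell$-parity theorem for function fields (already cited in the introduction), the parity of the $\ell^\infty$ Selmer rank equals the sign of the global functional equation, which is $\ell$-independent. Since this rank equals $\operatorname{rk} A(K) + s_\ell$ and the Mordell-Weil rank is $\ell$-independent, the parity of $s_\ell$ is the same for every $\ell \mid \nu$. If all $s_\ell$ are even, each summand $\Sha(A)[\ell^{\nu_\ell}]$ is visibly a square, so $\Sha(A)[\nu] \simeq G^2$. If all $s_\ell$ are odd, extract a single factor $\mathbb{Z}/\ell^{\nu_\ell}\mathbb{Z}$ from each summand; CRT then assembles these into $\mathbb{Z}/\nu\mathbb{Z}$, leaving $\Sha(A)[\nu] \simeq G^2 \oplus \mathbb{Z}/\nu\mathbb{Z}$. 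The main obstacle is the parity input from Trihan-Yu---without it one would only get the weaker conclusion that $\Sha(A)[\nu]$ has the form $G^2 \oplus \bigoplus_{\ell} (\mathbb{Z}/\ell^{\nu_\ell}\mathbb{Z})^{\epsilon_\ell}$ for possibly-varying $\epsilon_\ell \in \{0,1\}$; the rest of the argument uses only standard properties of the Cassels-Tate pairing.
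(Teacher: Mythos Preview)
Your proposal is correct and follows essentially the same approach as the paper: decompose $\Sha(A)[\ell^\infty]$ into a divisible part plus a finite part, use the nondegenerate antisymmetric Cassels--Tate pairing (the paper cites Tate for nondegeneracy and Flach for antisymmetry) to conclude the finite part is a square, and invoke Trihan--Yu for the parity independence of the corank $s_\ell$. The only minor difference is that the paper cites Trihan--Yu directly for the statement that the corank of $\Sha(A)[\ell^\infty]$ has $\ell$-independent parity, whereas you route through the $\ell^\infty$ Selmer rank via $\mathrm{rk}\,A(K)+s_\ell$; and you are slightly more explicit in taking $H_\ell[\ell^{\nu_\ell}]$ rather than $H_\ell$ when passing to $\nu$-torsion.
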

\begin{remark}
	\label{remark:}
	If we assume the BSD conjecture, $\Sha(A)$ will be finite and then the
	assumptions that the polarization has degree prime to $\nu$ and $\nu$ is
	odd will imply
	$\Sha(A)[\nu]$ has square order.
\end{remark}
\begin{remark}
	The condition that the polarization has degree prime to $\nu$ is
	important here: 
	In general, even when the Tate-Shafarevich group is known to
	be finite, it can fail to be a square or twice a square, see
	\cite[p. 278, Theorem
	1.4]{cremonaLQR:modular-curves-and-abelian-varieties}.
\end{remark}

\begin{proof}
To approach this, we first review some general facts about the structure of the
Tate-Shafarevich group.
We can write 
$\Sha(A)[\ell^\infty] \simeq (\mathbb
Q_\ell/\mathbb Z_\ell)^{r_\ell} \oplus K_\ell$,
where $K_\ell$ is a finite group and $r_\ell$ is the rank of 
$\Sha(A)[\ell^\infty]$. Note that the BSD conjecture would imply
$r_\ell = 0$, but we will not use this.

We next claim that $\oplus_{\ell \mid \nu} K_\ell \simeq G_{\on{nd}}^2$, for
some finite $\mathbb Z/\nu \mathbb Z$-module $G_{\on{nd}}$.
Indeed, let $\Sha(A)[\nu]_{\on{nd}}$ denote the non-divisible part of $\Sha(A)[\nu]$.
Then, $\Sha(A)[\nu]_{\on{nd}}$ has a nondegenerate pairing, by
\cite[Theorem 3.2]{tate:duality-theorems}, which is antisymmetric by
\cite[Theorem 1]{flach:a-generalization-of-the-cassels-tate-pairing}.
Since $\nu$ is odd, any finite $\mathbb Z/\nu \mathbb Z$-module with an
nondegenerate antisymmetric pairing is a square, so there is some $\mathbb Z/\nu
\mathbb Z$-module $G_{\on{nd}}$ with $\Sha(A)[\nu]_{\on{nd}} \simeq
G_{\on{nd}}^2$.

We now conclude the proof.
By
\cite[Corollary 1.0.3]{trihanY:the-ell-parity-conjecture},
$r_\ell$ has parity
independent of $\ell$. 
Write $\nu = \prod_{\ell \mid \nu} \ell^{a_\ell}$, and take
$G = G_{\on{nd}} \oplus \left( \oplus_{\ell \mid \nu} (\mathbb Z/\ell^{a_\ell} \mathbb
Z)^{\lfloor \frac{r_\ell}{2} \rfloor}\right)$. 
We get
$\Sha(A)[\nu] \simeq G^2$ if $r_\ell$ is even for all $\ell \mid \nu$. Similarly,
we get 
$\Sha(A)[\nu] \simeq G^2 \oplus \mathbb Z/\nu \mathbb Z$ if $r_\ell$ is odd for
all $\ell \mid \nu$.
\end{proof}

\begin{lemma}
	\label{lemma:distribution-supported-squares}
	Maintain hypotheses from \autoref{notation:quadratic-twist-notation}
	and notation from \autoref{definition:actual-selmer-distribution}.
	Assume $\nu$ is odd, $n >0$, and $B$ is an integral affine scheme 
	with $2\nu$ invertible on $B$. Let
	$b \in B$ be a
	closed point over which $\mathscr F_b \simeq A[\nu]$, for $A \to U_b$ an
	abelian scheme, as in \autoref{example:abelian-special-fiber}.
	The distributions $X_{A[\nu]^n_{\mathbb F_q}}$ are supported on $\mathcal
	N^0 \coprod \mathcal N^1$. Hence, 
	\begin{equation}
	X_{A[\nu]^n_{\mathbb F_q}} = 
	\prob(X_{A[\nu]^n_{\mathbb F_q}}
	\in \mathcal N^0) \cdot X^0_{A[\nu]^n_{\mathbb F_q}}+
	\prob(X_{A[\nu]^n_{\mathbb F_q}}
	\in \mathcal N^1) \cdot X^1_{A[\nu]^n_{\mathbb F_q}}.
	\label{equation:x-conditional-probability}
	\end{equation}
	\end{lemma}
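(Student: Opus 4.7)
The ``Hence'' decomposition follows immediately from the support claim combined with the law of total probability: once $X_{A[\nu]^n_{\mathbb F_q}}$ is supported on $\mathcal N^0 \sqcup \mathcal N^1$, conditioning on each piece yields by definition $X^0_{A[\nu]^n_{\mathbb F_q}}$ and $X^1_{A[\nu]^n_{\mathbb F_q}}$, and $X = \prob(X \in \mathcal N^0) X^0 + \prob(X \in \mathcal N^1) X^1$. The substance of the lemma is therefore the support claim: for every $x \in \twoconf n {U_b} b(\mathbb F_q)$, we must show $\sel_\nu(A_x) \in \mathcal N^0 \cup \mathcal N^1$.

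Fix such an $x$ and write $A = A_x$, $K$ for the function field of $C_x$ over $\mathbb F_q$. Using that $A[\nu]$ splits as the direct sum of its $\ell$-primary parts for $\ell \mid \nu$, decompose $\sel_\nu(A) = \bigoplus_{\ell \mid \nu} \sel_{\ell^{a_\ell}}(A)$, where $\ell^{a_\ell}$ denotes the exact $\ell$-power dividing $\nu$. I plan to identify each prime-primary piece $\sel_{\ell^{a_\ell}}(A)$ with the $\ell^{a_\ell}$-torsion of $\sel_{\ell^\infty}(A) \simeq (\mathbb Q_\ell/\mathbb Z_\ell)^{r_\ell} \oplus T_\ell$. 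The finite piece $T_\ell$ carries the nondegenerate Cassels--Tate pairing, which is alternating by Flach's theorem (using that the polarization has degree prime to $\nu$), and so for odd $\ell$ we have $T_\ell \simeq G_\ell^2$ for some finite $\mathbb Z_\ell$-module $G_\ell$. This yields $\sel_{\ell^{a_\ell}}(A) \simeq (\mathbb Z/\ell^{a_\ell}\mathbb Z)^{r_\ell} \oplus G_\ell[\ell^{a_\ell}]^2$.

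By the Trihan--Yamamoto parity theorem invoked in the proof of \autoref{lemma:sha-almost-square}, $r_\ell \bmod 2$ is independent of $\ell \mid \nu$. Assembling the prime-primary parts then gives the two cases: if $r_\ell$ is even for all $\ell \mid \nu$, then every $\sel_{\ell^{a_\ell}}(A)$ is a square and hence so is their direct sum, placing $\sel_\nu(A) \in \mathcal N^0$; if $r_\ell$ is odd for all $\ell \mid \nu$, then each $\sel_{\ell^{a_\ell}}(A)$ is a square plus $\mathbb Z/\ell^{a_\ell}\mathbb Z$, and by the Chinese remainder theorem $\bigoplus_{\ell \mid \nu} \mathbb Z/\ell^{a_\ell}\mathbb Z \simeq \mathbb Z/\nu\mathbb Z$, placing $\sel_\nu(A) \in \mathcal N^1$.

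The main obstacle is justifying the identification $\sel_{\ell^{a_\ell}}(A) \simeq \sel_{\ell^\infty}(A)[\ell^{a_\ell}]$, which can fail due to an $\ell$-primary torsion discrepancy in $A(K)$ between the two exact sequences
\[
0 \to A(K)/\ell^{a_\ell} A(K) \to \sel_{\ell^{a_\ell}}(A) \to \Sha(A)[\ell^{a_\ell}] \to 0
\]
and its $\sel_{\ell^\infty}$-analogue. Under the irreducibility assumption on $\mathscr F_b[\ell]$ typically in force when this lemma is applied, $A(K)[\ell] = 0$ and the identification holds cleanly. In greater generality, one should argue directly via the nondegenerate symmetric bilinear form on $\sel_\nu(A) \simeq H^1(C_x, j_* A[\nu])$ coming from Poincar\'e duality and the Weil pairing (as in \autoref{remark:orthogonal-containment}), noting that the image of $A(K)/\nu A(K)$ is isotropic and becomes Lagrangian up to a single cyclic defect whose parity is again controlled by Trihan--Yamamoto.
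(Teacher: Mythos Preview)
Your overall strategy---control the shape of $\sel_\nu(A_x)$ via the Cassels--Tate pairing on the finite part plus the Trihan--Yamamoto parity result---matches the paper's, but you apply it to $\sel_{\ell^\infty}$ directly, whereas the paper applies it to $\Sha$ (this is precisely \autoref{lemma:sha-almost-square}) and then handles the Mordell--Weil contribution separately. Concretely, the paper uses the exact sequence
\[
0 \to A_x(U_x)/\nu A_x(U_x) \to \sel_\nu(A_x) \to \Sha(A_x)[\nu] \to 0,
\]
observes that the left term is a \emph{free} $\mathbb Z/\nu\mathbb Z$-module because $A_x(U_x)[\nu] = 0$, and then splits the sequence using that $\mathbb Z/\nu\mathbb Z$ is injective as a module over itself. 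Since a free module lies in $\mathcal N^0 \cup \mathcal N^1$ and $\Sha(A_x)[\nu] \in \mathcal N^0 \cup \mathcal N^1$ by \autoref{lemma:sha-almost-square}, the same holds for their direct sum. This route entirely avoids the comparison $\sel_{\ell^a}(A_x) \simeq \sel_{\ell^\infty}(A_x)[\ell^a]$ that you flag as delicate.

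The obstacle you identify is real, but you misdiagnose its resolution. You do \emph{not} need irreducibility of $\mathscr F_b[\ell]$, which is not among the lemma's hypotheses. The vanishing $A_x(U_x)[\nu] = 0$ follows from $n > 0$ alone: since $\mathscr F_x$ is a quadratic twist ramified along the nonempty divisor $D_x$, local inertia at any point of $D_x$ acts by $-1$ on the stalk, and since $\nu$ is odd this forces every global invariant to vanish. This is exactly condition $(2')$ of \autoref{proposition:rank-description}, which holds unconditionally once $n>0$. With this in hand, either your route or the paper's closes cleanly; the paper's has the advantage of sidestepping the $\sel_{\ell^a}$ versus $\sel_{\ell^\infty}[\ell^a]$ comparison altogether, and of not reproving inline what is already packaged in \autoref{lemma:sha-almost-square}. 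Your ``greater generality'' fallback via the quadratic form on $H^1(C_x, j_* A_x[\nu])$ is a reasonable direction but remains only a sketch.
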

\begin{proof}
	The claim \eqref{equation:x-conditional-probability} follows from the first
	claim about the support of $X_{A[\nu]^n_{\mathbb F_q}}$ by the law of total expectation.
	We now verify 
	$X_{A[\nu]^n_{\mathbb F_q}}$ are supported on $\mathcal
	N^0 \coprod \mathcal N^1$.

	Using notation as in 
	\autoref{notation:x-points}, it is enough to show the Selmer
	group of any quadratic twist $A_x$ of $A$ lies in $\mathcal N^0$ or
	$\mathcal N^1$.
	In general, there is an exact sequence
	\begin{equation}
		\label{equation:selmer-exact}
		\begin{tikzcd}
			0 \ar {r} & A_x(U_x)/\nu A_x(U_x) \ar {r} & \sel_\nu(A_x) \ar
			{r} & \Sha(A_x)[\nu] \ar {r}
			& 0. 
	\end{tikzcd}\end{equation}
	By \autoref{lemma:sha-almost-square},
	$\Sha(A_x)[\nu]$ lies in $\mathcal N^0 \coprod \mathcal N^1$.
	By \autoref{proposition:rank-description}(2'), $A_x(U_x)[\nu] = 0$, which
	implies that $A_x(U_x)/\nu A_x(U_x)$ is a free $\mathbb Z/\nu \mathbb
	Z$-module.
	Hence, since $\mathbb Z/\nu \mathbb Z$ is injective as a $\mathbb Z/\nu
	\mathbb Z$-module, the exact sequence \eqref{equation:selmer-exact}
	splits and we obtain $\sel_\nu(A_x) \simeq A_x(U_x)/\nu A_x(U_x) \oplus
	\Sha(A_x)[\nu]$.
	Now, we see that since $\Sha(A)[\nu] \in \mathcal N^0 \coprod \mathcal
	N^1$ and $A_x(U_x)/\nu A_x(U_x)$ is a free $\mathbb Z/\nu \mathbb Z$-module,
	$\sel_\nu(A_x) \in \mathcal N^0 \coprod \mathcal N^1$.
\end{proof}

Finally, we are prepared to compute the image of the Dickson invariant map.
\begin{lemma}
	\label{lemma:determinant-monodromy}
	Assume $\nu$ is odd, $n >0$, and $B$ is an integral
	affine base
	scheme $B$ 
	with $2\nu$ invertible on $B$.
	Suppose $b \in B$ is a closed point with finite residue field, and keep hypotheses as in 
	\autoref{notation:quadratic-twist-notation} and
	\autoref{hypotheses:big-monodromy-assumptions}.
	Assume there is an abelian scheme $A \to U_b$ so that $\mathscr F_b
	\simeq A[\nu]$, as in 
\autoref{example:abelian-special-fiber}.
	The Dickson invariant map $D_{Q_{\mathscr F^n_b}} : \on{O}(Q_{\mathscr F^n_b}) \to \prod_{\ell \mid \nu}
	\mathbb Z/2 \mathbb Z$ sends the arithmetic monodromy group
	$\im(\rho_{\mathscr F^n_b})$ surjectively to the diagonal copy of
	$\Delta_{\mathbb Z/2 \mathbb Z} : \mathbb
	Z/2 \mathbb Z \subset \prod_{\ell \mid \nu}
	\mathbb Z/2 \mathbb Z$.
	The same holds for the geometric monodromy group at a geometric point
	$\overline b$ over $b$.
\end{lemma}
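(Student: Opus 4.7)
The plan is to leverage the Frobenius--Selmer identification of \autoref{lemma:selmer-identification} together with the parity constraint on Selmer groups coming from the Cassels--Tate pairing (\autoref{lemma:distribution-supported-squares}) to force the Dickson image of arithmetic monodromy into the diagonal, and then to use the big monodromy result \autoref{proposition:big-monodromy-composite} to hit the nontrivial element of that diagonal.

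First I would compute the Dickson invariant of a Frobenius element explicitly. Fix a closed point $x \in \twoconf n {U_b} b$. For each prime $\ell \mid \nu$, $\on{Frob}_x$ acts on the $\mathbb F_\ell$-quadratic space $V_{\mathscr F^n_b[\ell]}$, and by \autoref{lemma:selmer-identification} together with the vanishing $H^0(C_x, j_*\mathscr F_x[\nu/\ell]) = 0$ (which follows from local inertia in a quadratic twist acting as $-1$ at points of $D_x$, yielding the identification $\sel_\ell(A_x) \simeq \sel_\nu(A_x)[\ell]$), the $1$-eigenspace of this action is $\sel_\nu(A_x)[\ell]$. Applying \autoref{lemma:1-eigenvalue-parity} then yields
\[
D_{Q_{\mathscr F^n_b}}(\on{Frob}_x)_\ell \equiv \rk V_{\mathscr F^n_b[\ell]} - \dim_{\mathbb F_\ell} \sel_\nu(A_x)[\ell] \pmod 2.
\]
The rank formula of \autoref{proposition:rank-description}, together with the standing assumption that the drops $\drop_y(\mathscr F)$ are independent of $\ell \mid \nu$, gives a first summand of $\ell$-independent parity. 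Meanwhile \autoref{lemma:distribution-supported-squares} places $\sel_\nu(A_x)$ in $\mathcal N^0 \sqcup \mathcal N^1$, so the second summand also has $\ell$-independent parity. Hence $D_{Q_{\mathscr F^n_b}}(\on{Frob}_x) \in \Delta_{\mathbb Z/2 \mathbb Z}$ for every closed point $x$. By Chebotarev density, Frobenius conjugacy classes generate the arithmetic monodromy group (a finite quotient of $\pi_1(\twoconf n {U_b} b)$), so $D_{Q_{\mathscr F^n_b}}(\im \rho_{\mathscr F^n_b}) \subset \Delta_{\mathbb Z/2 \mathbb Z}$, and the analogous geometric containment follows since geometric monodromy is a subgroup.

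For surjectivity onto the diagonal, I would invoke \autoref{proposition:big-monodromy-composite}, which gives $\Omega(Q_{\mathscr F^n_{\overline b}}) \subset \im \rho_{\mathscr F^n_{\overline b}} \not\subset \on{SO}(Q_{\mathscr F^n_{\overline b}})$. Thus there is a geometric monodromy element $g$ whose Dickson invariant is nonzero in some coordinate. Since $g$ also lies in the arithmetic monodromy, whose Dickson image is confined to $\Delta_{\mathbb Z/2 \mathbb Z}$, we must have $D_{Q_{\mathscr F^n_{\overline b}}}(g) = (1,\ldots,1)$, proving surjectivity onto the diagonal for both geometric and arithmetic monodromy. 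The principal delicacy lies in justifying the Frobenius--Selmer parity identification cleanly and in applying Chebotarev correctly in the presence of the mild $\mathbb Z/2\mathbb Z$-gerbe structure on $\twoconf n {U_b} b$; however, neither of these poses a substantive obstacle once one tracks the standing hypotheses.
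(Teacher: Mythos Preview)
Your proposal is correct and follows essentially the same route as the paper: both arguments compute the Dickson invariant of each Frobenius via \autoref{lemma:selmer-identification} and \autoref{lemma:1-eigenvalue-parity}, invoke \autoref{lemma:distribution-supported-squares} to force $\ell$-independence of the parity, pass to the full monodromy image by Chebotarev/equidistribution (the paper spells out the stacky version of this more carefully), and then use the big monodromy theorem to produce an element outside $\on{SO}$. The only cosmetic differences are that the paper phrases the key identification as $\ker(g_{x,\ell}-\id)\simeq \sel_\ell(A_x)$ rather than $\sel_\nu(A_x)[\ell]$ (these coincide under the standing hypotheses), and cites \autoref{theorem:big-monodromy-mod-ell} directly rather than \autoref{proposition:big-monodromy-composite}.
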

\begin{proof}
	First, we argue it suffices to show the Dickson invariant of the 
	arithmetic monodromy group satisfies
	$\im(D_{Q_{\mathscr F^n_b}}  \circ\rho_{\mathscr F^n_b}) \subset \im \Delta_{\mathbb Z/2 \mathbb Z}.$ 
	Indeed,
	for $\overline{b}$ a geometric point over $b$,
	the image of the arithmetic monodromy group $\im(D_{Q_{\mathscr F^n_b}})$
	contains the image of the geometric monodromy group $\im (D_{Q_{\mathscr
	F^n_{\overline b}}})$. 
	Assuming we have shown the arithmetic monodromy has
	image
	the diagonal $\mathbb Z/2 \mathbb Z$ under the Dickson invariant map, 
	to show they are equal,
	it is enough to show the geometric monodromy has nontrivial image under
	the Dickson invariant map. Equivalently, we wish to show the geometric
	monodromy is not contained in the special orthogonal group, which
	follows from
	\autoref{theorem:big-monodromy-mod-ell}.

	We now verify the arithmetic monodromy group has Dickson invariant contained in
$\im \Delta_{\mathbb Z/2 \mathbb Z}.$ 
	The strategy will be to use
	\autoref{lemma:distribution-supported-squares} to determine the
	arithmetic monodromy by relating the Dickson invariant map to the parity of
	the rank of Selmer groups modulo different primes, using
	equidistribution of Frobenius.

	Choose $x \in \qtwist n {U_b} b(\mathbb F_q)$.
	As a first step, we identify $\sel_\nu(A_x)$ with the $1$-eigenspace of
	$\rho_{\mathscr F^n_b}(\frob_x)$, for $\frob_x$ the geometric Frobenius
	at $x$.
	With notation as in 
	\autoref{lemma:selmer-identification}, we can identify
	$\pi^{-1}(x)(\mathbb F_q) \simeq \sel_\nu(A_x)$.
	Since $\pi^{-1}(x)(\mathbb F_q)$ can be identified with the $\frob_x$ 
invariants of $\pi^{-1}(x)(\overline{\mathbb F}_q)$,
	if $g_x := \rho_{\mathscr F^n_b}(\frob_x)$, we also have $\pi^{-1}(x)(\mathbb
	F_q) \simeq \ker(g_x - \id)$.
	Combining these two isomorphisms, we obtain $\ker(g_x - \id)
	\simeq \sel_\nu(A_x)$.
	For $\ell \mid \nu$, we use $g_{x,\ell}$ to denote the image of $g_x$
	under the map $\o(Q_{\mathscr F^n_b}) \to \o(Q_{\mathscr F^n_b[\ell]})$.
	We similarly obtain $\ker(g_{x,\ell} - \id) \simeq \sel_\ell(A_x)$.

	We next constrain the image of the Dickson invariant map applied to
	$\rho_{\mathscr F^n_b}(\frob_x)$.
	From \autoref{lemma:distribution-supported-squares}, we have seen that 
	$\ker(g_{x} - \id) \simeq \sel_\nu(A_x)
	\in \mathcal N^0 \coprod \mathcal N^1$, for $\mathcal N^i$ defined in
	\autoref{definition:actual-selmer-distribution}.
	Since the parity of the rank of $H/\ell H$ of any group $H$ in $\mathcal N^0 \coprod
	\mathcal N^1$ is independent of the prime $\ell \mid \nu$, 
	it follows that $\dim \ker (g_{x,\ell}- \id)$ has parity of rank
	independent of $\ell$, for $\ell \mid \nu$.
	By \autoref{lemma:1-eigenvalue-parity},
	for any $\ell \mid \nu$,
	\begin{align*}
	\dim \ker(g_{x, \ell} -\id) \bmod 2 \equiv \rk V_{\mathscr F^n_b[\ell]} -
	D_{Q_{\mathscr F^n_b}}(g_{x,\ell}).
	\end{align*}
	Since $\rk V_{\mathscr F^n_b[\ell]}$ is
	independent of $\ell \mid \nu$, as $V_{\mathscr F^n_b}$ is a free
	$\mathbb Z/\nu \mathbb Z$-module, we also obtain 
	$D_{Q_{\mathscr F^n_b}}(g_{x,\ell})$ is independent of $\ell \mid \nu$. In other
	words, the Dickson invariant map factors through the diagonal copy $\mathbb Z/2
	\mathbb Z$ for each Frobenius element associated to $x \in \qtwist n
	{U_b}
	b(\mathbb F_q)$.

	The lemma will now follow from equidistribution of
	Frobenius elements in the arithmetic fundamental group, as we next
	explain.
	At this point, we employ a result on equidistribution of Frobenius,
	whose precise form we could not find directly in the literature.
	The result is essentially \cite[Theorem
	4.1]{chavdarov:the-generic-irreducibility} (see also \cite[Theorem
		1]{kowalski:on-the-rank-of-quadratic-twists}
		and \cite[Theorem
	3.9]{fengLR:geometric-distribution-of-selmer-groups})
	except that we need a slightly more general statement which also applies
	to Deligne-Mumford stacks in place of only schemes. 
	The only part of the proof of 
	\cite[Theorem 4.1]{chavdarov:the-generic-irreducibility}
	which does not directly apply to stacks is its use of the
	Grothendieck-Lefschetz
	trace formula with twisted coefficients, but this has been generalized to hold in the context of
	stacks, see \cite[Theorem 4.2]{sun:l-series-of-artin-stacks}. 
	Using this, we can find a sufficiently large $q$ and
	$x \in \qtwist n {U_b} b(\mathbb F_q)$
	with the following property:
	the generator $\frob_x$ of $\pi_1(x)$ is sent to
	any particular element of $\im( D_{Q_{\mathscr F^n_b}}\circ \rho_{\mathscr F^n_b})$ under the composition
	$\pi_1(x) \to \pi_1(\qtwist n {U_b} b)
	\xrightarrow{D_{Q_{\mathscr F^n_b}}\circ\rho_{\mathscr F^n_b}} \prod_{\ell \mid \nu}
	\mathbb Z/2 \mathbb Z$.
	For our choice of $q$ above, note that we may need to take $q$ to be suitably large, and also if $q = p^j$ for
	$p = \chr \mathbb F_q$ we may need to impose a congruence condition on
	$j$.
	Therefore, since every $\frob_x$ has image contained in the diagonal
	$\mathbb Z/2 \mathbb Z$, the same must be true of $\im(D_{Q_{\mathscr F^n_b}} \circ
	\rho_{\mathscr F^n_b})$.
	\end{proof}

\section{The rank double cover}
\label{section:rank-cover}

Perhaps surprisingly, the distribution of Selmer groups of abelian varieties is not determined
by its moments. 
As mentioned in the introduction, if one fixes the parity of the rank of
$\sel_\ell$, this does
not change the distribution of Selmer groups.
Even more surprisingly, once one does condition on the parity of the
rank of $\sel_\ell$, the
BKLPR distribution {\em is} determined by its moments.
In this section, we investigate the geometry associated to a certain double
cover of $\qtwist n U B$, which we define in
\autoref{subsection:describing-rank-double-cover}.
In \autoref{subsection:homological-stability-rank-double-cover},
we will use our homological stability machinery to bound the dimensions of the
cohomology of this
double cover.
In \autoref{subsection:rank-cover-and-parity}, we relate this double cover to
the parity of the dimension of $\sel_\ell$ of an abelian variety.
Specifically, suppose we are given a symplectically self-dual sheaf $\mathscr F$ on $U$, and
a point $b \in B$ with $\mathscr F_b \simeq A[\nu]$, for $A \to U_b$ an abelian
scheme. We will define a particular double
cover $\rankcover n {\mathscr F}$ of $\qtwist n U B$ so
that the images $\rankcover n {\mathscr F_b}(\mathbb F_q) \to \qtwist n {U_b}
b(\mathbb F_q)$ corresponds precisely to abelian varieties whose $\ell^\infty$-Selmer rank has parity
equal to $\rk V_{\mathscr F^n_B} \bmod 2$.

\subsection{The rank double cover and its coefficient system}
\label{subsection:describing-rank-double-cover}

We now define the rank double cover, and subsequently proceed to show the
sequence of rank double covers forms a coefficient system.


\begin{definition}
	\label{definition:rank-cover}
	With notation as in \autoref{definition:monodromy} and
	\autoref{notation:omega},
	suppose the composition
$D_{Q_{\mathscr F^n_B}} \circ \rho_{\mathscr F^n_B}: \pi_1(\qtwist
	n U B) \to \prod_{\ell \mid \nu} \mathbb Z/2 \mathbb Z$ 
	factors through the diagonally embedded $\Delta_{\mathbb Z/2 \mathbb Z}
	: \mathbb Z/2 \mathbb Z \to \prod_{\ell \mid \nu} \mathbb Z/2
	\mathbb Z$.
	We define $\rankcover n {\mathscr F} \to \qtwist n U B$ to be the finite
	\'etale double cover corresponding to the composition
	$D_{Q_{\mathscr F^n_B}} \circ \rho_{\mathscr F^n_B}$,
	viewed as a map $\pi_1(\qtwist
	n U B) \to \mathbb Z/2 \mathbb Z$.
\end{definition}

\begin{remark}
	\label{remark:}
	When we are in the situation of 
	\autoref{lemma:determinant-monodromy}, 
	it follows from 
	\autoref{lemma:determinant-monodromy}, 
	that the map $D_{Q_{\mathscr F^n_B}} \circ \rho_{\mathscr F^n_B}: \pi_1(\qtwist
	n U B) \to \prod_{\ell \mid \nu} \mathbb Z/2 \mathbb Z$ takes image in
	the diagonally embedded copy of $\mathbb Z/2 \mathbb Z$,
	so the hypothesis in \autoref{definition:rank-cover} applies.
\end{remark}

Since the rank double cover is a cover of $\qtwist n U B$, which is in turn a cover of $\conf n U B$, we can ask whether the composition $\rankcover n {\mathscr F} \to \conf n U B$ is associated to a coefficient system. 
There is a technical issue with this question, in that $\qtwist n U B$ is not a
scheme, so the
above cover is not representable. However, after suitably rigidifying this
cover, we shall see that it indeed is associated to a coefficient system.  In order to describe that coefficient system, we will first need to describe the coefficient systems associated to Selmer spaces and to
their $H$-moments.
\begin{example}
	\label{example:specified-hurwitz-coefficient-system}
	Let $B = \spec \mathbb C$ and let $\mathscr F$ be a symplectically self-dual sheaf over $U$ as in \autoref{notation:quadratic-twist-notation}.
	Fix a nontrivial finite $\mathbb Z/\nu \mathbb Z$-module $H$.
	We now define a coefficient system of the type described in  \autoref{example:hurwitz-coefficient-system}, which we will denote $H_{\sallinertia {\mathscr F}{H} g f}$.  
	Recall that here we do not quotient by the conjugation action of the
	relevant group, see \autoref{warning:no-quotient}.
	The $n$th part of $H_{\sallinertia {\mathscr F}{H} g f}$ is the free
	vector space generated by a finite set $\sinertiaindex n {\mathscr F}{H}
	g f$ which we now define.
	Take $G_H :=
	\ahsp_{2r}(\mathbb Z/\nu \mathbb Z)$, as in
	\eqref{equation:asp-m-sequence},
	and, with notation as in 
	\eqref{equation:asp-m-sequence},
	take $c_H := \Pi^{-1}(-\id)$, which is a conjugacy class in $G_H$.
	Take $\sinertiaindex n {\mathscr F}{H}g f \subset \hom(\pi_1(X^{\oplus n} \oplus A_{g,f} - x^{\oplus n}, p_{g,f}), G_H)$
	to be the subset $\mathcal S$ described in
	\autoref{notation:sel-hur}.
	(So, we
		are calling
		$\sinertiaindex n {\mathscr F}{H}g f $ what we called
		$\inertiaindex n {G_H} {c_H} g
f$ in \autoref{example:hurwitz-coefficient-system}.)
	More precisely,
	$\sinertiaindex n {\mathscr F}{H}g f\subset
	\hom(\pi_1(Y^{\oplus n} \oplus A_{g,f} - x^{\oplus n}, p_{g,f}), G_H)$
	is the subset consisting of those homomorphisms which send the loops around the $n$ punctures to  $c_H$, which send local inertia around the $f+1$ punctures to the conjugacy class described in \autoref{notation:sel-hur}, and which have the same projection to $\sp_{2r}(\mathbb
	Z/\nu \mathbb Z)/\{\pm 1\}$ as does $\rho_{\mathscr F}$.
 
 
	So long as we choose the basepoint $p_{g,f}$ to lie on the boundary of
	$A_{g,f}$,
	we can also restrict any homomorphism
	$\hom(\pi_1(Y^{\oplus n} \oplus A_{g,f} - x^{\oplus n}, p_{g,f}), G_H)$
	to a homomorphism
	$\hom(\pi_1(Y^{\oplus n} - x^{\oplus n}, p_{g,f}), G_H)$.
	We denote by $\sinertiaindex n {\underline{\mathscr F} }{H} 0 0 \subset
	\hom(\pi_1(Y^{\oplus n} - x^{\oplus n}, p_{g,f}), G_H)$
	the restriction of $\sinertiaindex n {\mathscr F}{H} g f$ to
	$\hom(\pi_1(Y^{\oplus n} - x^{\oplus n}, p_{g,f}), G_H)$.
Define $H_{\sallinertia { \underline{\mathscr F}}{H} 0 0}$
	to be the associated coefficient system, whose $n$th piece is
	$H_{\sinertiaindex n {\underline{\mathscr F}}{H} 0 0}$, the free vector space
	generated by $\sinertiaindex n {\underline{\mathscr F}}{H} 0 0$.

	Take $V := H_{\sallinertia { \underline{\mathscr F}}{H} 0 0}$ and take $F :=
	H_{\sallinertia {\mathscr F}{H} g f}$.
We claim that $V$ forms a coefficient system for $\Sigma^1_{0,0}$ and $F$ forms a coefficient system 
	for $\Sigma^1_{g,f}$ over $V$.
	Indeed, these sets $\sinertiaindex n {\mathscr F}{H}g f$ are fixed under the
	action of $B^n_{g,f}$ by
	\autoref{lemma:hurwitz-space-invariant-under-braid-group}.
	Hence, they form a coefficient system by
	\autoref{example:hurwitz-coefficient-system}.
	We can identify $\inertiaindex {n+1} {G_H} {c_H} g f \simeq c_H \times \inertiaindex n {G_H} {c_H} g f$, where the map to
	$c_H$ is given by
	the local inertia around the added puncture.
	It follows that $F_{n+1} = k\{T^{n+1}_{g,f}\} \simeq k\{c_H\} \otimes
	k\{\inertiaindex n {G_H} {c_H} g f\}
	= V_1 \otimes F_n$.
	In the case $g = f = 0$, we similarly obtain that $V$ is a 
	coefficient system.

	We also define $\on{Hur}_{\sallinertia {\mathscr F}{H} g f}$ to be the finite covering space of $\Conf^n_{X^{\oplus
	n} \oplus A_{g,f}}$ associated to the set $\sinertiaindex n {\mathscr F}
	H g f$.
\end{example}

Building on \autoref{example:specified-hurwitz-coefficient-system}, we
next describe the coefficient system corresponding to the rank double cover.

\begin{example}
	\label{example:rank-coefficient-system}
	Take $G = \mathbb Z/2 \mathbb Z$, $c = \{1\} \in \mathbb Z/2 \mathbb Z$,
	corresponding to the nontrivial element, and consider the Hurwitz space coefficient system
	$H_{\allinertia {\mathbb Z/2 \mathbb Z} { \{1\}} g f}$.
	We recall that in the definition of these coefficient systems, we do not
	quotient by the conjugation action of the relevant group, see
	\autoref{warning:no-quotient}.
	By \autoref{example:hurwitz-coefficient-system}, this is a coefficient
	system for $\Sigma^1_{g,f}$ which we claim lies over the trivial coefficient system for
	$\Sigma^1_{0,0}$.
	Indeed, 
	$H_{\inertiaindex n {\mathbb Z/2 \mathbb Z} {\{1\}} 0 0}$ is
	$1$-dimensional because $c = \{1\}$ has size $1$ and moreover the
	coefficient system is trivial because 
	$\mathbb Z/2 \mathbb Z$ is commutative.

	We assume $\mathscr F$ is a symplectically self dual sheaf as in
	\autoref{definition:rank-cover}.
	We use notation as in
	\autoref{example:specified-hurwitz-coefficient-system}, and take the
	group $H$
	there to be $\mathbb Z/\nu \mathbb Z$.
	For every $n$,
	there is a map of finite sets
	$\phi_{\mathscr F^n_B} : \sinertiaindex n {\mathscr F} {\mathbb Z/\nu
	\mathbb Z} g f \to \inertiaindex n {\mathbb Z/2
	\mathbb Z} {\{1\}} g f$
	which induces a map of $B^n_{g,f}$ representations.
	Moreover, the fiber of 
	$\phi_{\mathscr F^n_B}$, which we call $W_{\mathscr F^n_B}$, over any
	fixed point of the target can be identified
	with a free $\mathbb
	Z/\nu \mathbb Z$-module which has rank $\dim V_{\mathscr F^n_B} + 2r$.
	There is an action of a fiber of $\mathscr F$ 
	on the finite cover of $Q^n_{g,f}$ corresponding to
	$W_{\mathscr F^n_B}$
	by conjugation,
	and we let $\overline{W}_{\mathscr F^n_B}$ denote the
	quotient of $W_{\mathscr F^n_B}$ by this conjugation action.
	Since $\mathscr F$ acts by conjugation on the corresponding cover, it
	follows that $\overline{W}_{\mathscr F^n_B}$ inherits the structure of a
	$B^n_{g,f}$ representation.
	The $B^n_{g,f}$ representations corresponding to the sets $\inertiaindex
	n {\mathbb Z/2 \mathbb Z} {\{1\}} g f$ 
	yield a finite covering space of $\Conf^n_{X^{\oplus
	n} \oplus A_{g,f}}$ of degree $2^{2g}$, which we call $Q^n_{g,f}$.

	In \autoref{lemma:topological-rank-as-dickson}, we will construct a
	finite \'etale double cover $R_{\mathscr F^n_{\mathbb C}}$ of $Q^n_{g,f}$.
	The fiber of $R_{\mathscr F^n_{\mathbb C}}$ over a point of $\Conf^n_{X^{\oplus
	n} \oplus A_{g,f}}$ corresponds to a finite set $S^{n,\on{rk}}_{\mathscr
	F, g, f}$ 
	of order $2^{2g+1}$
	and yields a $B^n_{g,f}$ representation which we call
	$(H^{\rk}_{\mathscr F, g, f})_n$.
	We will show $(H^{\rk}_{\mathscr F, g, f})_n$ forms a coefficient system
	for $\Sigma^1_{g,f}$ over the trivial coefficient system for
	$\Sigma^1_{0,0}$ in \autoref{lemma:action-on-rank-cover} and
	\autoref{lemma:rank-coefficient-system}.
\end{example}

\begin{lemma}
	\label{lemma:topological-rank-as-dickson}
	Continuing with notation as in
	\autoref{example:rank-coefficient-system},
	the action of $\pi_1(Q^n_{g,f})$ on
	$\overline{W}_{\mathscr F^n_B}$
	is $\mathbb Z/\nu \mathbb Z$ linear and factors
	through an orthogonal group $\on{O}_{\mathscr F^n_B}(\mathbb Z/\nu
	\mathbb Z)$.
	Moreover, the action factors through the preimage of the diagonal $\mathbb Z/2
	\mathbb Z \subset \prod_{\ell \mid \nu} \mathbb Z/2 \mathbb Z$ under the Dickson invariant
	and hence composition with the Dickson invariant defines a map 
	$\pi_1(Q^n_{g,f}) \to \on{O}_{\mathscr F^n_B}(\mathbb Z/\nu
	\mathbb Z) \to \mathbb Z/2 \mathbb Z$, corresponding to a finite \'etale double cover
	$R_{\mathscr F^n_{\mathbb C}} \to Q^n_{g,f}$.
\end{lemma}
\begin{proof}
	We may identify the Selmer stack $\selspace {\mathscr F^n_B}$ with a
	Hurwitz stack 
$\selhur {\mathscr F^n_B} {\mathbb Z/\nu \mathbb Z}$
	via \autoref{proposition:selmer-to-hurwitz}. 
	This Hurwitz stack $\selhur {\mathscr F^n_B} {\mathbb Z/\nu \mathbb Z}$
 has a further cover
	given by a pointed Hurwitz space as in \autoref{definition:pointed-hurwitz-space},
	which is identified with the cover corresponding to the coefficient
	system whose $n$th part is
	$\sinertiaindex n {\mathscr F} {\mathbb Z/\nu \mathbb Z} g f$,
	via \autoref{warning:no-quotient}. (Recall we are using 
$\sinertiaindex n {\mathscr F} {\mathbb Z/\nu \mathbb Z} g f$,
for what was called
$\inertiaindex n {G_H} {c_H} g f$
there, as in \autoref{example:specified-hurwitz-coefficient-system}.)
	Quotienting 
	$W_{\mathscr F^n_B}$ by the conjugation action of a fiber
	of $\mathscr F$, we obtain
	$\overline{W}_{\mathscr F^n_B}$. 
	This corresponds to quotienting the pointed Hurwitz space
	by the conjugation action, which is the Hurwitz space $\selhur {\mathscr F^n_B} {\mathbb Z/\nu \mathbb Z}$, so we obtain an
	identification of 
	$\overline{W}_{\mathscr F^n_B}$ as a $\pi_1(Q^n_{g,f})$ representation with a
	geometric fiber of $\selhur {\mathscr F^n_B} {\mathbb Z/\nu \mathbb Z}
	\times_{\qtwist n U B} Q^n_{g,f}$.
	over $Q^n_{g,f}$.
	Hence, we may identify $\overline{W}_{\mathscr F^n_B}$ as a
	$\pi_1(Q^n_{g,f})$ representation with
	$V_{\mathscr F^n_B}$ as a $\pi_1(Q^n_{g,f})$ set,
	viewing 
	$\pi_1(Q^n_{g,f}) \subset \pi_1(\qtwist n U B)$, as $Q^n_{g,f}$ is a
	finite \'etale double cover of $\qtwist n U B$.
	Hence, we obtain the factorization through the claimed group by our
	assumption on $\mathscr F$ from
	\autoref{definition:rank-cover}.
\end{proof}

Gearing up to explicitly describe the rank double cover as a coefficient system, we next record, in terms of generators, the action of 
$\pi_1(\Conf^n_{\Sigma^1_{g,f}},[S])$ on $\pi_1(\Sigma^1_{g,f+n}, p)$ for $[S]$
and $p$ basepoints.
One can prove the description in \autoref{remark:explicit-mcg-action} by computing where the loops as in
\autoref{subsection:elliptic-torsion-monodromy} are sent under the appropriate
Dehn twists or half twists.
Also related to this is the explicit presentation for
$\pi_1(\Conf^n_{\Sigma^1_{g,f}})$
given in
\cite[Theorem 1.1]{bellingeri:on-presentations-of-surface-braid-groups}, which
shows that the four types of loops described in 
\autoref{remark:explicit-mcg-action} generate 
$\pi_1(\Conf^n_{\Sigma^1_{g,f}},[S])$.

\begin{remark}
	\label{remark:explicit-mcg-action}
	Use notation as in
	\autoref{subsection:elliptic-torsion-monodromy} for loops on
	$\Sigma^1_{g,f+n}$, where the $n$ punctures correspond to a set $S
	\subset \Sigma^1_{g,f}$.
	We use $\delta_{f+1}$ for a loop around the boundary component and
	$p := s_{f+1}$.
	For $n$ even, the action of certain generators of
	$\pi_1(\Conf^n_{\Sigma^1_{g,f}},[S])$ on $\pi_1(\Sigma^1_{g,f+n},
	s_{f+1})$ act
	on the data
\begin{align}
	\label{equation:loop-tuple}
	(\alpha_1, \beta_1, \ldots, \alpha_g, \beta_g, \gamma_1, \ldots,
	\gamma_n, \delta_1, \ldots, \delta_{f+1}) 
\end{align}
	in the following way:
	\begin{enumerate}
		\item The full twist of $p_n$ around a loop surrounding $s_1,
			s_2, \ldots, s_i$
			sends \eqref{equation:loop-tuple} to
		\begin{align*}
			(\alpha_1, \ldots, \gamma_{n-1}, \gamma_n \delta_1
				\cdots \delta_i \gamma_n (\delta_1
				\cdots \delta_i)^{-1} \gamma_n^{-1}, \gamma_n
				\delta_1 \gamma_n^{-1},
			\ldots, \gamma_n \delta_i \gamma_n^{-1},\delta_{i+1},
		\ldots, \delta_{f+1}). 	
		\end{align*}
	\item For $1 \leq i \leq n-1$, the half-twist of $p_i$ around $p_{i+1}$ 
(moving $p_i$ counterclockwise toward
point $p_{i+1}$ and $p_{i+1}$ counterclockwise toward $p_i$)
		sends 
\eqref{equation:loop-tuple}
		to
		\begin{align*}
			(\alpha_1, \ldots, \gamma_{i-1}, \gamma_i \gamma_{i+1}
				\gamma_i^{-1}, \gamma_i, \gamma_{i+2},
				\gamma_{i+3}, \ldots, \gamma_n, \delta_1,
			\ldots, \delta_{f+1}).
		\end{align*}
		\item Moving $p_1$ across $\alpha_i$ and then in a loop around
		$s_{f+1}, \ldots, s_1, p_n, \ldots, p_2$ sends 
\eqref{equation:loop-tuple}
		to

\begin{align*}
	&\left( \alpha_1, \ldots, \beta_{i-1}, \left(\prod_{j=1}^{i-1}
		[\alpha_j, \beta_j] \right)^{-1} \gamma_1 \left(\prod_{j=1}^{i-1}
		[\alpha_j, \beta_j] \right) \alpha_i, \beta_i,
		\alpha_{i+1}, \ldots, \beta_g,
	\right. \\
	&\qquad \left. 
		\left(
\left(\prod_{j=1}^{i-1}
		[\alpha_j, \beta_j] \right)
			\beta_i^{-1}  \left(\prod_{j=i+1}^{g}
		[\alpha_j, \beta_j] \right) \right)^{-1}  \gamma_1 \left(\left(\prod_{j=1}^{i-1}
		[\alpha_j, \beta_j] \right) \beta_i^{-1}
		\left( \prod_{j=i+1}^{g}
	[\alpha_j, \beta_j] \right) \right) , 
	\gamma_1 \gamma_2 \gamma_1^{-1}, \ldots, \gamma_1 \delta_{f+1}
\gamma_1^{-1} \right).
\end{align*}
\item Moving $p_1$ across $\beta_i$ 
and then in a loop around
		$s_{f+1}, \ldots, s_1, p_n, \ldots, p_2$
	sends 
\eqref{equation:loop-tuple}
		to
\begin{align*}
	&\left( \alpha_1, \ldots, \beta_{i-1}, \alpha_i, \alpha_i^{-1}\left(\prod_{j=1}^{i-1}
		[\alpha_j, \beta_j] \right)^{-1}
		\gamma_1
		\left(\prod_{j=1}^{i-1} [\alpha_j, \beta_j] \right) \alpha_i \beta_i, \alpha_{i+1},
	\ldots, \alpha_g, \beta_g, \right. \\
	&\qquad \left. \left(
\left(\prod_{j=1}^{i-1}
		[\alpha_j, \beta_j] \right)
			\alpha_i  \left( \prod_{j=i+1}^{g}
	[\alpha_j, \beta_j] \right) \right)^{-1}  \gamma_1 \left(\left(\prod_{j=1}^{i-1}
		[\alpha_j, \beta_j] \right) \alpha_i \left( \prod_{j=i+1}^{g}
[\alpha_j, \beta_j] \right) \right) , \gamma_1 \gamma_2 \gamma_1^{-1}, \ldots,
\gamma_1 \delta_{f+1} \gamma_1^{-1} \right).
\end{align*}
	\end{enumerate}
\end{remark}

\begin{lemma}
	\label{lemma:action-on-rank-cover}
	We use the notation
	introduced in \autoref{example:rank-coefficient-system}.
	For $n$ even, and $B = \spec \mathbb C$,
	any element of $\pi_1(Q^n_{g,f})$ mapping to
	one of the following elements 
	of $\pi_1(\Conf^n_{X^{\oplus n} \oplus A_{g,f}})$ 
	acts on
	$\overline{W}_{\mathscr F^n_B}$
	with trivial Dickson invariant:
	\begin{enumerate}
		\item moving $p_i$ in a half-twist about $p_{i+1}$, which is conjugate to 
			the transformation in 
	\autoref{remark:explicit-mcg-action}(2),
\item 	moving $p_i$ twice across $\alpha_i$ or $\beta_i$, corresponding to a
	conjugate of the square of the transformation from
	\autoref{remark:explicit-mcg-action}(3) or (4).
	\end{enumerate}
Elements of $\pi_1(\Conf^n_{X^{\oplus n} \oplus A_{g,f}})$
sending $p_j$ around $s_i$ as in \autoref{remark:explicit-mcg-action}(1) may act
either with trivial or nontrivial Dickson invariant, where the triviality of the
Dickson invariant is a function of $i$
but not $j$.
\end{lemma}

\begin{proof}
We now explain how the claims
can be deduced from the explicit formula for the
action of 
$\pi_1(\Conf^n_{\Sigma^1_{g,f}})$ on 
$\pi_1(\Sigma^1_{g,f})$ from \autoref{remark:explicit-mcg-action}. 
We will use this in conjunction with 
the description of $V_{\mathscr F^n_B}$ in
\autoref{lemma:torsor-description} to verify
\autoref{lemma:action-on-rank-cover} with a modicum of computation.

We now describe coordinates for a certain free
$\mathbb Z/\nu \mathbb Z$-module $P_n$ of rank
$2r(2g + n + f+1)$
of which
$\overline{W}_{\mathscr F^n_B}$ is a subquotient.
Consider the free $\mathbb Z/\nu \mathbb Z$-module with the coordinates
$x^i_j$ for $1 \leq j \leq 2r$, $1 \leq i \leq 2g + n + (f+1)$.
Here, the $i$ indexes the 
$2g + n + (f+1)$
different entries in \eqref{equation:loop-tuple}
while the $j$ indexes the coordinate in the vector $v$ upon plugging in a matrix
of the form 
\eqref{equation:mtor-matrix} for each such entry.

Let $p_0$ be a point on $U$ disjoint from $D$ and $p$.    The group
$\pi_1(U-D-p_0, p)$ is free on the generators $(\alpha_1, \beta_1, \ldots, \alpha_g, \beta_g, \gamma_1, \ldots,
	\gamma_n, \delta_1, \ldots, \delta_{f+1})$, and also contains an element $\delta_{f+2}$ which satisfies the relation
\begin{align}
	\label{equation:fundamental-group-relation-one-more-point}
	(\alpha_1 \beta_1
	 \alpha_1^{-1} \beta_1^{-1}) \cdots (\alpha_g \beta_g \alpha_g^{-1} \beta_g^{-1})\gamma_1 \cdots \gamma_n\delta_1 \cdots \delta_{f+1}= \delta_{f+2}^{-1}.
\end{align}

We can also think of $P_n$ as the set of group homomorphisms $\phi: \pi_1(U-D-p_0, p) \to
	\asp_{2r}(\mathbb Z/\nu\mathbb Z)$ whose projection to $\sp_{2r}(\mathbb Z/\nu\mathbb Z)$ is $\rho_{\mathscr F'}$.  (In particular, this implies that $\phi(\delta_{f+2})$ lies in $(\mathbb Z/\nu\mathbb Z)^{2r}$, since $\rho_{\mathscr F'}(\delta_{f+2}) = \id.)$ 
 The section $s:\sp_{2r}(\mathbb Z/\nu\mathbb Z) \to \asp_{2r}(\mathbb
 Z/\nu\mathbb Z)$ affords one such homomorphism, namely $\phi_0 := s \circ
 \rho_{\mathscr F'}$.  (In the explicit form of \eqref{equation:mtor-matrix},
 $\phi_0$ sends each generator to a pair $(M,v)$ with $v=0$.)  
 
 Given such a $\phi$, we can attach to each of the $2g+n+f+1$ free generators of
 $\pi_1(U-D-p_0, p)$ an element of $(\mathbb Z/\nu\mathbb Z)^{2r}$; namely, we
 send a generator $g$ to $\phi(g) \phi_0^{-1}(g)$.  This gives the desired
 $(\mathbb Z/\nu\mathbb Z)$-module of rank $2r(2g+n+f+1)$.  Equivalently, we can
 think of $P_n$ as the space of $1$-cocycles from $\pi_1(U-D-p_0, p)$ to
 $(\mathbb Z/\nu\mathbb Z)^{2r}$, with respect to the group action of 
$\pi_1(U-D-p_0, p)$ on $(\mathbb Z/\nu\mathbb Z)^{2r}$ given by $\rho_{\mathscr F'}$.  This description makes it clear that the braids, which are automorphisms of $\pi_1(U-D-p_0, p)$ fixing $\rho_{\mathscr F'}$, act linearly on $P_n$.  (Of course, this can also be derived from the explicit description of the braid group action.)

Note that 
$\overline{W}_{\mathscr F^n_B}$
can be identified as a subquotient of $P_n$ via
\autoref{lemma:torsor-description}.  The reason for working with $P_n$ rather than with $\overline{W}_{\mathscr F^n_B}$ directly is that the explicit description of $P_n$ makes it easier to work out the action of a braid in concrete enough terms to easily compute Dickson invariants.

We first address $(2)$ in the statement.
To do so, let $T_{\alpha_i}$ denote the transformation described in 
	\autoref{remark:explicit-mcg-action}(3),
	which moves $p_1$ across $\alpha_i$.
	We wish to show the Dickson invariant of the transformation
	induced by 
	$T_{\alpha_i}^2$ on $\overline{W}_{\mathscr F^n_B}$
	is trivial.
	Letting 
	\begin{align*}
\eta&:= \left(\prod_{j=1}^{i-1}
		[\alpha_j, \beta_j] \right)
			\beta_i^{-1}  \left(\prod_{j=i+1}^{g}
		[\alpha_j, \beta_j] \right) \\
		\varepsilon &:= \eta^{-1} \gamma_1 \eta\\
\lambda &:= 
\left(\prod_{j=1}^{i-1}
		[\alpha_j, \beta_j] \right)^{-1} \gamma_1 \left(\prod_{j=1}^{i-1}
		[\alpha_j, \beta_j] \right) \\
\lambda' &:= 
\left(\prod_{j=1}^{i-1}
		[\alpha_j, \beta_j] \right)^{-1}\varepsilon \left(\prod_{j=1}^{i-1}
		[\alpha_j, \beta_j] \right)
	\end{align*}
	and using the formula from \autoref{remark:explicit-mcg-action}(3),
	the transformation $T_{\alpha_i}^2$ sends
	\eqref{equation:loop-tuple} to
\begin{align}
	\label{equation:past-alpha-squared-result}
	\left( \alpha_1, \ldots, \beta_{i-1}, \lambda' \lambda \alpha_i, \beta_i,
		\alpha_{i+1}, \ldots, \beta_g,
	\eta^{-2} \gamma_1 \eta^2, 
	\varepsilon \gamma_1 \gamma_2 \gamma_1^{-1}\varepsilon^{-1}, \ldots,
	\varepsilon \gamma_1 \delta_{f+1}
\gamma_1^{-1} \varepsilon^{-1} \right).
\end{align}

First, we will show the action of $T_{\alpha_i}^2$ on the free module $P_n$ has
square determinant.
The key calculation which we will use repeatedly is the following. 
Given a matrix 
$M \in \sp_{2r}(\mathbb Z/\nu \mathbb Z)$ and $v \in \left( \mathbb Z/\nu \mathbb Z \right)^{2r}$ 
we use $(M,v)$ to denote the element of $\asp_{2r}(\mathbb Z/\nu \mathbb Z)$ as
in
\eqref{equation:mtor-matrix}.
Then, 
\begin{equation}
	\label{equation:conjugation-computation}
	\begin{aligned}
	\left( N,w \right)^{-1} \cdot (M,v) \cdot (N,w)
	&=
	(N^{-1}, -N^{-1}w) \cdot (MN, Mw+v) \\
	&=
	(N^{-1} MN, -N^{-1} w + N^{-1}v + N^{-1} Mw).
	\end{aligned}
\end{equation}
Now, $T_{\alpha_i}^2$ acting on $P_n$ can be expressed as the composite of
several transformations. It is the composite of the maps 
induced by sending $\gamma_2, \ldots, \delta_{f+1}$ to
their conjugate by $\varepsilon \gamma_1$,
followed by the map
induced by
$\alpha_i \mapsto \lambda' \lambda \alpha_i$ followed by the map induced by
$\gamma_1 \mapsto \eta^{-2} \gamma_1 \eta^2$.

We claim that each of these transformations have square determinant $1$, and hence the
composite will have square $1$.
First, we show the determinant is a square for each transformation induced by sending a matrix $(M,v)$ associated to one of $\gamma_2, \ldots, \delta_{f+1}$ to
its conjugate by a matrix $(N,w)$, associated to $\varepsilon \gamma_1$.
In this case, $N
= \id$ and so the output of conjugation is $(M, -w+v+Mw)$
by \eqref{equation:conjugation-computation}.
Since the $x^i_j$ entries appearing in $w$ are disjoint from those
appearing in $v$, this transformation is unipotent, so has determinant $1$.

We next consider the transformation coming from $\alpha_i \mapsto \lambda'
\lambda \alpha_i$. Since $\lambda'$ and $\lambda$ are both conjugate to
$\gamma_1$, the element $\lambda'
\lambda$ corresponds to a pair of the form 
$(\id,w)$.  So if $(M,v)$ corresponds to $\alpha_i$, then the transformation in
question sends $(M,v)$ to $(\id,w)\cdot (M,v) = (M,v + w)$.  Once again, the $x^i_j$ entries appearing in $w$ are disjoint from those appearing in $v$, so this transformation is unipotent and has determinant $1$.

Third, to calculate the determinant of the map 
induced by $\gamma_1 \mapsto \eta^{-2} \gamma_1 \eta^2$
we use \eqref{equation:conjugation-computation} with $(N,w)$ corresponding to
$\eta$ and $(M,v)$ corresponding to $\gamma_1$. 
Since $M=-1$, the output of the transformation is $(-\id,N^{-1}v -2N^{-1}w)$. 
Since $w$ only involves $x^i_j$ which are disjoint from those associated to
$\gamma_1$, appearing in $v$, the determinant of this transformation agrees with
that of $N$. Since we are conjugating by $\eta^2$, the resulting $N$ has square
determinant.

In order to conclude the Dickson invariant associated to the action of $T_{\alpha_i}^2$ 
on $\overline{W}_{\mathscr F^n_B}$
is
trivial, it remains to check that this operator still has square determinant
upon passing to the subquotient
$\overline{W}_{\mathscr F^n_B}$ of $P_n$.
To do so, note that $\overline{W}_{\mathscr F^n_B}$ of $P_n$ is obtained from
$P_n$ in three steps:
\begin{enumerate}[(A)]
	\item We first take the subspace dictated by the drop condition
\autoref{lemma:torsor-description}(2), 
\item we then take
the subspace where the product
\eqref{equation:fundamental-group-relation} is satisfied, upon plugging in
matrices for the generators of the fundamental group,
\item
and finally we pass to the quotient by the conjugation action as described at
the end of \autoref{lemma:torsor-description}.
\end{enumerate}
To prove $T_{\alpha_i}^2$ has square determinant (and hence trivial Dickson
invariant) on $\overline{W}_{\mathscr F^n_B}$, we will show that for each of
these steps, the induced operator on the associated subspace or quotient space
has trivial determinant.

To simplify our calculations, we may and shall assume for the remainder of this
proof that $\nu$ is prime; this does not restrict the conclusion of the theorem,
because it follows from \autoref{lemma:determinant-monodromy} that, for any
prime $\ell \mid \nu$, the Dickson invariant of the action of an automorphism of
$\overline{W}_{\mathscr F^n_B}$ can be computed on $\overline{W}_{\mathscr
F^n_B} \otimes_{\mathbb Z / \nu \mathbb Z} \mathbb Z / \ell \mathbb Z.$  Under
this hypothesis, all our $\Z /\nu \Z$-modules are now vector spaces over a field and the Dickson invariant is additive in exact sequences, a fact we will use repeatedly in the argument that follows.

For step $(A)$, we note that there is a homomorphism
\beq
L: P_n \ra \bigoplus_{i=1}^{f+1} \left[ \left(\mathbb Z/\nu \mathbb Z
\right)^{2r} / \im  (\rho_{\mathscr F}(\delta_i) - \id) \right] 
\eeq
whose kernel is the subspace specified by condition
\autoref{lemma:torsor-description}(2). The map $L$ is defined by projection onto
the $\delta_1, \ldots, \delta_{f+1}$ coordinates of $P_n$ followed by projection
of the $\delta_i$ coordinate onto the quotient by $\im  (\rho_{\mathscr
F}(\delta_i) - \id)$.

We also know that $T_{\alpha_i}^2$ acts on the $\delta_i$ coordinate by
conjugation by $\varepsilon \gamma_1 \in \asp_{2r}(\mathbb Z/\nu \mathbb Z)$;
since $\varepsilon \gamma_1$ is an element of the form $(\id,w)$, conjugation by
$\varepsilon \gamma_1$ modifies the $\delta_i$ coordinate by adding an element
of the form $(\rho_{\mathscr F}(\delta_i)w - w)$.
Via a computation analogous to \eqref{equation:conjugation-computation},
for any $x \in P_n$, we have $L(x) = L(T_{\alpha_i}^2 x)$. 
We conclude that $T_{\alpha_i}^2$ acts trivially (and a fortiori with trivial Dickson invariant) on $P_n / L(P_n)$.

For step $(B)$, we observe that $T_{\alpha_i}^2$, considered as an automorphism of $\pi_1(U-D-p_0,p)$, preserves the element $\delta_{f+2}$, and so it preserves the left-hand side of \eqref{equation:fundamental-group-relation-one-more-point}.  In particular, if
\beq
M: P_n \to \left(\mathbb Z/\nu \mathbb Z \right)^{2r} 
\eeq
is the map provided by the left-hand side of \eqref{equation:fundamental-group-relation-one-more-point}, whose kernel is the subspace of $P_n$ obeying \eqref{equation:fundamental-group-relation}, then the induced action of $T_{\alpha_i}^2$ on $P_n / M(P_n)$ is trivial. 

Finally, for step $(C)$, we wish to show $T_{\alpha_i}^2$ acts trivially on the
subspace generated by coboundaries, corresponding to changing the basepoint of the
original matrix.
It is possible to compute this directly using the formula
\eqref{equation:past-alpha-squared-result}, but we will provide a more conceptual explanation.
First, $T_{\alpha_i}^2$ acts linearly on $P_n$ and thus fixes the zero element.  But $T_{\alpha_i}^2$ commutes with the operation of conjugating all $2g+n+f+1$ coordinates by a matrix of the form $(\id,
v)$, for any $v \in (\mathbb Z/\nu \mathbb Z)^{2r}$.  This operation is not linear on $P_n$, but it is {\em affine-linear}, acting by translation by a coboundary $c_v$.  Since $T_{\alpha_i}^2$ commutes with translations by all coboundaries in $P_n$ and fixes $0$, it also fixes all coboundaries in $P_n$.

The combination of the three steps above allows us to conclude that $T_{\alpha_i}^2$ acts with square determinant on 
$\overline{W}_{\mathscr F^n_B}$, and hence has trivial Dickson invariant.
Similar reasoning shows the action in 
\autoref{remark:explicit-mcg-action}(4) is trivial, concluding the verification
of $(2)$.

Next, we will show the elements from $(1)$ of the statement act
trivially on the module $P_n$.
Using the description of the
half-twist from
\autoref{remark:explicit-mcg-action}(2), we can study the resulting 
$2r(2g+n+(f+1)) \times 2r(2g+n+(f+1))$ matrix associated to how 
the transformation in \autoref{remark:explicit-mcg-action}(2) acts on $P_n$,
upon plugging in matrices of the form \eqref{equation:mtor-matrix} for each 
entry in
\autoref{remark:explicit-mcg-action}(2).
Because the $M$ from \eqref{equation:mtor-matrix} associated to each $\gamma_i$ is
$-\id$, we find the above
$2r(2g+n+(f+1)) \times 2r(2g+n+(f+1))$ matrix 
is a block diagonal matrix, consisting of 
$2r$ blocks of size $2g+n+(f+1)$.
In particular, the determinant of this matrix acting on $P_n$ is a $2r$th power,
so it is a square.
To conclude the Dickson invariant is trivial, it remains to justify why passing
$\overline{W}_{\mathscr F^n_B}$ of $P_n$, preserves the condition that the
determinant is a square.
The action on the quotient space to the drop condition from $(A)$ above is
trivial, because the $\delta_i$ are preserved by this transformation.
Triviality of the action on the quotient associated to $(B)$ and subspace
associated to $(C)$ follow in the same way as for $T_{\alpha_i}^2$ in the proof
of $(2)$ from the statement of the lemma above.

The final part of the statement of this lemma, regarding the action in
	\autoref{remark:explicit-mcg-action}(1) holds because one can express
	the generator sending $p_j$ around $s_i$ as a product of half twists
	permuting the $p_j$ with the loop sending $p_n$ around $s_i$, and all
	these half twists have trivial image under the Dickson invariant map, as
	we have shown above.
	\end{proof}
\begin{lemma}
	\label{lemma:rank-coefficient-system}
With notation as in
\autoref{example:rank-coefficient-system},
	the sequence $(H^{\rk}_{\mathscr F, g, f})_n$ of $B^n_{g,f}$ representations
	defines a coefficient system for $\Sigma^1_{g,f}$ over the trivial
	coefficient system $V$ for $\Sigma^1_{0,0}$.
\end{lemma}
\begin{proof}
	Using the explicit description of the double cover $R_{\mathscr
	F^n_{\mathbb C}} \to Q^n_{g,f}$ from
	\autoref{lemma:action-on-rank-cover}(2)
	we first claim the double cover is in fact the base change of a double
	cover of $\conf n U B$.
	Indeed, to show this, it is equivalent to show we can extend the homomorphism
	$\pi_1(Q^n_{g,f}) \to \mathbb Z/2 \mathbb Z$ to a homomorphism
	$\pi_1(\conf n U B) \to \mathbb Z/ 2\mathbb Z$. We can extend this
	homomorphism, for example, by sending loops corresponding to
	moving $p_j$ across $\alpha_i$ or $\beta_i$,
	as in \autoref{remark:explicit-mcg-action}(3) and (4),
	to the trivial element of $\mathbb Z/2 \mathbb Z$. 

	This shows the cover $S^{n,\on{rk}}_{\mathscr F, g,f}$ is then the product of a two
	element set $\{a_n, b_n\}$ corresponding to the above double cover of $\conf n U B$
	with the set 
	$\inertiaindex n {\mathbb Z/2 \mathbb Z} {\{1\}} g f$.
	Therefore, it suffices to show the free vector space on both of these
	sets form coefficient systems.
	First,
	$\inertiaindex n {\mathbb Z/2 \mathbb Z} {\{1\}} g f$
	forms a coefficient system over the trivial coefficient
	system for $\Sigma^1_{0,0}$ by
	\autoref{example:hurwitz-coefficient-system}.

	Second, we explain why the explicit description of the action of $\pi_1(\conf n U B)$
	and $\{a_n, b_n\}$ obtained from \autoref{lemma:action-on-rank-cover}
	shows the free vector space on this collection of these two element
	sets,
	$\{a_n,b_n\}_{n \geq 1}$,
	forms a coefficient system over the trivial coefficient system for
	$\Sigma^1_{0,0}$.
	Indeed, the description of half-twists from
	\autoref{lemma:action-on-rank-cover}(1) shows this lies over the trivial
	coefficient system for $\Sigma^1_{0,0}$.
	The condition to be a coefficient system over the trivial coefficient
	system amounts to checking that 
the action of $\id \times B^{n-i}_{g,f}\subset
B^i_{0,0} \times B^{n-i}_{g,f} \subset B^n_{g,f}$,
on $\{a_n, b_n\}$
can be identified with the action of
$B^{n-i}_{g,f}$ on $\{a_{n-i}, b_{n-i}\}$ via the map of sets 
$\{a_{n-i}, b_{n-i}\} \to \{a_n, b_n\}$ given by $a_{n-i} \mapsto a_n, b_{n-i} \mapsto b_n$.
This indeed holds since 
$B^{n-i}_{g,f} \subset B^n_{g,f}$,
is generated by a subset of the transformations described in
\autoref{lemma:action-on-rank-cover}, and the action of each generator of $B^{n-i}_{g,f} \subset
B^n_{g,f}$ on $\{a_n, b_n\}$ acts in the same way on $\{a_{n-i}, b_{n-i}\}$, via 
\autoref{lemma:action-on-rank-cover}.
\end{proof}

\begin{remark}
	\label{remark:explicit-rank-cover-description}
	In fact, the proof of \autoref{lemma:rank-coefficient-system}
	shows that the $n$th graded part of the rank double
	cover can be identified with a free vector space on a set $V$ of size
	$2^{2g+1}$, for $V$ an explicit quotient 
	$H^1(\Sigma^1_{g,f}, \mathbb Z/2 \mathbb Z) \to V$.
The $B^n_{g,f}$ action is obtained via
	a surjection
	$B^n_{g,f} \to (B^n_{g,f})^{\on{ab}} \simeq (B^1_{g,f})^{\on{ab}} \simeq
H^1(\Sigma^1_{g,f}, \mathbb Z)  \to
	H^1(\Sigma^1_{g,f}, \mathbb Z/2 \mathbb Z) \to V$.
\end{remark}

\begin{remark} 
	\label{remark:root-numbers}
	It should come as no surprise that the result of the explicit
	topological computation of \autoref{lemma:action-on-rank-cover} ends
	up having a rather simple form, as described in
	\autoref{remark:explicit-rank-cover-description}.
	When $\mathscr F = A[\nu]$ is the $\nu$-torsion of an abelian variety
	$A$, it is possible to show that the Dickson invariant is determined by
	the root number of the quadratic twist $A_\chi$ of $A$. 

	Moreover, the root number of $A_\chi$ can be computed via a relatively
	simple formula, as we now explain.
	Let $C_{K(C)} := \mathbb A^\times_{K(C)}/K(C)^\times$ denote the idele class group (the
	quotient of the ideles of $K(C)$ by the units of $K(C)$).
	The global Artin homomorphism $\theta : C_{K(C)} \to
	\gal(\overline{K(C)}/K(C))$ from global class field theory allows us to
	pull back representations of $\gal(\overline{K(C)}/K(C))$ to $C_{K(C)}$.
	Now, if we consider $\chi$ as a quadratic extension of $K(C)$, the field over
	which the generic fiber of $A$ is defined, it corresponds to a
	surjection
	$\chi': \gal(\overline{K(C)}/K(C)) \to \{\pm 1\}$. Hence we can
	pull it back
	along the global Artin homomorphism to obtain a map 
	$\widetilde{\chi}: C_{K(C)} \to \{\pm 1\}$, with
	$\widetilde{\chi} = \chi'\circ \theta$.
	If $B$ is an abelian variety, we use $W(B)$ to denote its root number.
	We can compute $W(A_\chi)$ in terms of $W(A)$ using the explicit formula
  \begin{equation*}
	  W(A_\chi) = W(A) \widetilde{\chi}(N_A),
    \end{equation*}
    where $N_A$ is the conductor of $A$,
see \cite[Corollary 6.12]{bisatt:explicit-root-numbers} and \cite[Proposition 1]{sabitova:twisted-root-numbers}.
\end{remark}

Finally, we construct coefficient systems associated to the
 fiber product of covers associated to $H$ moments and the rank
double cover.

\begin{example}
	\label{example:fiber-product-with-rank-coefficients}
	Continuing with notation as in
	\autoref{example:rank-coefficient-system},
	the coefficient systems
	$H^{\rk}_{\mathscr
	F, g,f}, H_{\allinertia {\mathbb Z/2 \mathbb Z} {1} g
	f}, H_{\sallinertia {\mathscr F}{H} g f}$
	all correspond respectively to the finite covers of $\on{Conf}^n_{X^{\oplus n} \oplus
	A_{g,f}}$ for varying $n$:
	$R_{\mathscr F^n_B}, Q^n_{g,f}$, and $\on{Hur}_{\sinertiaindex n {\mathscr F}{H} g f}$.
	Define $\on{Hur}^{\rk}_{\sinertiaindex n {\mathscr F}{H} g f} :=
	R_{\mathscr F^n_B} \times_{Q^n_{g,f}} \on{Hur}_{\sinertiaindex n {\mathscr F}{H} g f}$ and let
	$H^{\rk}_{\sallinertia {\mathscr F}{H} g f}$
	be the corresponding coefficient system.
	Take 
	$V := H_{\sallinertia {  \underline{\mathscr F}}{H} 0 0}$ and $F :=
	H^{\rk}_{\sallinertia {\mathscr F}{H} g f}$. 
	Then, $F$ is a coefficient system over $V$ because 
	$H_{\sallinertia {\mathscr F}{H} g f}$ is a coefficient system over $V$
	and
	both $H_{\allinertia {\mathbb Z/2 \mathbb Z} {\{1\}} g f}$ and
	$H^{\rk}_{\mathscr F, g,f}$ are coefficient systems over the trivial coefficient
	system, the latter by \autoref{lemma:rank-coefficient-system} and the
	former as explained in \autoref{example:rank-coefficient-system}.
\end{example}

\subsection{Homological stability of the rank double cover}
\label{subsection:homological-stability-rank-double-cover}

We next set out to prove the main homological stability properties for the
spaces related to Selmer groups we are interested in. Namely, in
\autoref{lemma:moments-cohomology} we will prove
these results
for the Selmer stacks, the rank double cover, and moments associated to both of
these.

\begin{notation}
	\label{notation:selmer-space-moment-notation}
	Let $H$ be a finite $\mathbb Z/\nu \mathbb Z$-module of the form $H \simeq
\prod_{i=1}^m \mathbb Z/\nu_i \mathbb Z$.
Let $\mathscr F$ be a lcc symplectically self-dual sheaf of free $\mathbb Z/\nu \mathbb Z$-modules, and maintain hypotheses as in
\autoref{notation:quadratic-twist-notation} and
\autoref{definition:selmer-sheaf}. 
We then have Selmer spaces $\selspace {\mathscr F[\nu_i]^n_B}$ obtained from
the $\nu_i$-torsion sheaves $\mathscr F[\nu_i]$ in place of $\mathscr F$.
Define
\begin{align*}
	\selspacemoments {\mathscr F^n_B} H := \selspace {\mathscr F[\nu_1]^n_B} \times_{\qtwist n
		U B} \selspace {\mathscr F[\nu_2]^n_B} \times_{\qtwist n U B} \cdots
	\selspace {\mathscr F[\nu_m]^n_B}.
\end{align*}
Also define $\rankselspacemoments {\mathscr F^n_B} H := \selspacemoments {\mathscr
F^n_B} H \times_{\qtwist n U B} \rankcover n {\mathscr F}$
and
define $\rankselhur {\mathscr F^n_B} H := \selhur {\mathscr F^n_B} H
\times_{\qtwist n U B} \rankcover n {\mathscr F}$.
\end{notation}

\begin{lemma}
\label{lemma:hypothesis-cohomology-bound}
The hypotheses of
\autoref{corollary:cohomology-bound}
are satisfied if
$V = H_{\sallinertia { \underline{\mathscr F}}{H} 0 0}$
and $F$ is either
$H_{\sallinertia {\mathscr F} H g f}$ or
$F = H^{\rk}_{\sallinertia {\mathscr F}{H} g f}$.
\end{lemma}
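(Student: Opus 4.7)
The plan is to verify each of the four hypotheses of \autoref{corollary:cohomology-bound}: (i) $V$ is $1$-controlled; (ii) $R^V$ admits a central homogeneous positive-degree element $U$ with $\deg \ker U$ and $\deg \coker U$ finite; (iii) $F$ is a coefficient system for $\Sigma^1_{g,f}$ over $V$; and (iv) the representation $F_n$ is the pushforward of the constant $\mathbb{Z}/\ell'\mathbb{Z}$ sheaf from a finite \'etale cover of $\Conf^n_{\Sigma^1_{g,f}}$. Conditions (iii) and (iv) are already packaged into the constructions: \autoref{example:specified-hurwitz-coefficient-system} handles $F = H_{\sallinertia{\mathscr F}{H}{g}{f}}$, and \autoref{example:fiber-product-with-rank-coefficients} combined with \autoref{example:tensor-coefficient-systems} handles $F = H^{\rk}_{\sallinertia{\mathscr F}{H}{g}{f}}$; in both cases $F_n$ is by construction the pushforward from the finite \'etale cover $\on{Hur}_{\sinertiaindex{\mathscr F^n_B}{H}{g}{f}}$ or $\on{Hur}^{\rk}_{\sinertiaindex{\mathscr F^n_B}{H}{g}{f}}$. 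So the real content is (i) and (ii).

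For those, I would reduce to \autoref{example:hurwitz-is-1-controlled}. Unwinding \autoref{example:specified-hurwitz-coefficient-system} and restricting to $X^{\oplus n} - x^{\oplus n}$, the set $\sinertiaindex{\mathscr F^n_B}{H}{0}{0}$ is simply $c_H^n$, where $c_H := \Pi^{-1}(-\id) \subset G_H := \ahsp_{2r}(\mathbb{Z}/\nu\mathbb{Z})$; the extra compatibility with $\sp_{2r}/\{\pm 1\}$ is automatic, since every loop in $X^{\oplus n} - x^{\oplus n}$ is a product of generators, each of whose image maps to $[\id]$ in $\sp_{2r}/\{\pm 1\}$. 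Thus $V = H_{\allinertia{G'}{c_H}{0}{0}}$ for the subgroup $G' := \langle c_H \rangle \subseteq G_H$, and passing from $G_H$ to $G'$ changes nothing because all admissible $\phi$ automatically factor through $G'$. Once we show the pair $(G', c_H)$ is nonsplitting in the sense of \cite[Definition 3.1]{EllenbergVW:cohenLenstra}, \autoref{example:hurwitz-is-1-controlled} simultaneously provides (i) and the central element $U = \sum_{g \in c_H} r_g^{D \cdot \on{ord}(g)}$ with the desired finite kernel and cokernel degrees, via \cite[Lemma 3.5]{EllenbergVW:cohenLenstra}.

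The main step, which I expect to be the main obstacle (though it ultimately reduces to one line of algebra), is the nonsplitting verification. Using the matrix description \eqref{equation:asp-m-sequence}, the product rule reads $(M_1,\vec v_1)(M_2,\vec v_2) = (M_1 M_2,\, M_1 \vec v_2 + \vec v_1)$, and a direct calculation shows conjugation of $(-\id,\vec v) \in c_H$ by $(\id,\vec w) \in G'$ yields $(-\id,\vec v + 2\vec w)$. In particular, $(-\id,\vec v)(-\id,\vec v') = (\id,\vec v - \vec v')$, so if a subgroup $H' \subseteq G'$ contains two elements $(-\id,\vec v),(-\id,\vec v') \in c_H$, it contains the translation $(\id,\vec v - \vec v')$. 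Because $\nu$ is odd, $2$ is a unit modulo each component $\nu_i$ of $\nu$, so $(\id,(\vec v' - \vec v)/2)$ is an integer power of $(\id,\vec v - \vec v')$ and hence also lies in $H'$; conjugating $(-\id,\vec v)$ by this element produces $(-\id,\vec v')$. Hence $H' \cap c_H$ is a single $H'$-conjugacy class whenever nonempty, and specializing to $H' = G'$ confirms $c_H$ is a single $G'$-conjugacy class. This verifies nonsplitting and completes the proof via \autoref{example:hurwitz-is-1-controlled}.
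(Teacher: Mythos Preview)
Your proof is correct and follows essentially the same structure as the paper's. The only substantive difference is in verifying that $(\langle c_H\rangle, c_H)$ is nonsplitting: the paper observes that $S_H := \langle c_H\rangle$ has order $2\bmod 4$ (since $\nu$ is odd) and that $c_H$ consists of involutions, then invokes \cite[Lemma~3.2]{EllenbergVW:cohenLenstra}, whereas you give the direct conjugation calculation $(\id,w)\,(-\id,v)\,(\id,-w)=(-\id,v+2w)$ and use invertibility of $2$ modulo $\nu$. Both arguments exploit that $\nu$ is odd in exactly the same place; yours is self-contained, the paper's is a one-line citation.
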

\begin{proof}
We consider two cases:
\begin{enumerate}
	\item  $V = H_{\sallinertia { \underline{\mathscr F}}{H} 0 0}$ and $F =
		H_{\sallinertia {\mathscr F}{H} g f}$,
	\item $V = H_{\sallinertia { \underline{\mathscr F}}{H} 0 0}$ and $F =
		H^{\rk}_{\sallinertia {\mathscr F}{H} g f}$.
\end{enumerate}
Note that by 
\autoref{example:specified-hurwitz-coefficient-system}
and \autoref{example:fiber-product-with-rank-coefficients},
$V$ and $F$ are indeed coefficient systems.
We will first consider case $(1)$ and show the existence of a homogeneous
central non-invertible $\mathbb U$ in $R^V$ with
kernel and cokernel of finite degree.
Note that $c_H$ does not generate $G_H = \ahsp_{2r}(\mathbb Z/\nu
\mathbb Z)$ but instead generates the preimage of $\{\pm 1\} \subset
\sp_{2r}(\mathbb Z/\nu \mathbb Z)$ in $G_H$.
Let $S_H \subset G_H$ denote the subgroup generated by $c_H$.
Note that $S_H$ has order $2 \bmod 4$ because $\nu$ is odd.
Then, $(S_H, c_H)$ is non-splitting in the sense of
\cite[Definition 3.1]{EllenbergVW:cohenLenstra}
by
\cite[Lemma 3.2]{EllenbergVW:cohenLenstra}.
It then follows from \cite[Lemma 3.5]{EllenbergVW:cohenLenstra}
that there is a homogeneous non-invertible central $\mathbb U$ with finite degree
kernel and cokernel.

We can deduce 
case (2) from case (1).
Namely, taking the same operator $\mathbb U$ as in part $(1)$,
we can view
$F = H^{\rk}_{\sallinertia {\mathscr F}{H} g f}$
as two copies of
$H_{\sallinertia {\mathscr F}{H} g f}$
Since we have already shown in the first case that the action of $\mathbb U$ on
$H_{\sallinertia {\mathscr F}{H} g f}$ has kernel and cokernel of finite degree,
the same holds for the action of $\mathbb U$ on 
$F = H^{\rk}_{\sallinertia {\mathscr F}{H} g f}$.
\end{proof}

\begin{lemma}
	\label{lemma:moments-cohomology}
	Let $H$ be a finite $\mathbb Z/\nu \mathbb Z$-module and $B = \spec
	\mathbb C$.
	We work with coefficient systems over the field $\mathbb Z/\ell' \mathbb
	Z$, for
	$\ell'$ relatively prime to $2, q$, and $\# \asp_{2r}(\mathbb Z/\nu
	\mathbb Z)$.
	Suppose $\mathscr F$ is as in \autoref{hypotheses:theta-map}.
		There is a constant
	$K$ depending on $\mathscr F$ and $H$ but not on $n$, for $n$ even,
	so that 
	\begin{equation}
	\label{equation:h-cohomology-bound}
\begin{aligned}
	\dim H^i(\pi_1(\on{Conf}^n_{X^{\oplus n} \oplus A_{g,f}}, x^{\oplus
	n}),H_{\sinertiaindex n {\mathscr F}{H} g f}) &< K^{i+1}
\text{	and} \\
\dim H^i(\pi_1(\on{Conf}^n_{X^{\oplus n} \oplus A_{g,f}},
x^{\oplus n}),H^{\rk}_{\sinertiaindex n {\mathscr F}{H} g f}) &< K^{i+1}.
\end{aligned}
\end{equation}
	Then, 
	\begin{equation}
	\label{equation:selmer-cohomology-bound}
	\begin{aligned}
\dim H^i(\selspacemoments {\mathscr F^n_{\mathbb C}} H, \mathbb Z/\ell'
	\mathbb Z) & < K^{i+1} \text{ and } \\
\dim H^i(\rankselspacemoments {\mathscr F^n_{\mathbb C}} H, \mathbb Z/\ell'
	\mathbb Z) &< K^{i+1}.
\end{aligned}
\end{equation}
\end{lemma}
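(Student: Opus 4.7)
The plan is to deduce \eqref{equation:h-cohomology-bound} as a direct application of the general machinery of \autoref{corollary:cohomology-bound}, and then deduce \eqref{equation:selmer-cohomology-bound} by relating the Selmer stacks to quotients of the finite covers of configuration space afforded by these coefficient systems, using that $\ell'$ is coprime to $\#\asp_{2r}(\Z/\nu\Z)$.

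For \eqref{equation:h-cohomology-bound}, I would argue as follows. The sheaves $H_{\sallinertia{\mathscr F}{H} g f}$ and $H^{\rk}_{\sallinertia{\mathscr F}{H} g f}$ are coefficient systems for $\Sigma^1_{g,f}$ over the coefficient system $V = H_{\sallinertia{\mathscr F}{H} 0 0}$ by \autoref{example:specified-hurwitz-coefficient-system} and \autoref{example:rank-coefficient-system} (in the latter case using that $H^{\rk}_{g,f}$ is a coefficient system over the trivial system from \autoref{example:trivial-coefficient-system}, and then tensoring via \autoref{example:tensor-coefficient-systems}). The group cohomology $H^i(B^n_{g,f}, F_n)$ for a coefficient system $F_n$ equals the cohomology of the corresponding finite covering space of $\Conf^n_{X^{\oplus n}\oplus A_{g,f}}$. \autoref{lemma:hypothesis-cohomology-bound} produces a homogeneous central element $U \in R^V$ of positive degree with kernel and cokernel of finite degree, so \autoref{corollary:cohomology-bound} applies and yields a constant $K$, independent of $n$ and $i$, with $\dim H^i(\on{Hur}_{\sinertiaindex {\mathscr F^n_B}{H} g f}, \Z/\ell'\Z) \leq K^{i+1}$, and similarly for the rank variant. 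This gives \eqref{equation:h-cohomology-bound}.

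For \eqref{equation:selmer-cohomology-bound}, the key input is \autoref{corollary:selmer-to-hurwitz-moments}, which identifies $\selspacemoments {\mathscr F^n_{\mathbb C}} H$ with the Hurwitz stack $\selhur {\mathscr F^n_{\mathbb C}} H$; taking a fiber product with the rank double cover $\rankcover n {\mathscr F}$ gives the analogous identification for the rank version $\rankselspacemoments {\mathscr F^n_{\mathbb C}} H \simeq \rankselhur {\mathscr F^n_{\mathbb C}} H$. As discussed after \autoref{example:hurwitz-coefficient-system}, the Hurwitz stack $\selhur {\mathscr F^n_{\mathbb C}} H$ is the stack quotient of $\on{Hur}_{\sinertiaindex {\mathscr F^n_B}{H} g f}$ by the $G_H$-conjugation action (with $G_H = \ahsp_{2r}(\Z/\nu\Z)$), and similarly $\rankselhur {\mathscr F^n_{\mathbb C}} H$ is covered by $\on{Hur}^{\rk}_{\sinertiaindex {\mathscr F^n_B}{H}g f}$, as spelled out in \autoref{example:fiber-product-with-rank-coefficients}. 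Since $\#G_H$ is a product of primes dividing $\#\asp_{2r}(\Z/\nu\Z)$, and $\ell'$ is coprime to $\#\asp_{2r}(\Z/\nu\Z)$ by hypothesis, the Leray (or Hochschild--Serre) spectral sequence for the finite cover degenerates and gives
\[
H^i(\selhur {\mathscr F^n_{\mathbb C}} H, \Z/\ell'\Z) \simeq H^i(\on{Hur}_{\sinertiaindex {\mathscr F^n_B}{H} g f}, \Z/\ell'\Z)^{G_H},
\]
and similarly in the rank case. In particular, the dimensions are bounded above by those of the covering spaces, and combining with the bounds from \eqref{equation:h-cohomology-bound} (possibly enlarging the constant $K$) yields \eqref{equation:selmer-cohomology-bound}.

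The main potential obstacle is the bookkeeping in the passage from coefficient-system cohomology to \'etale cohomology of the (non-scheme) Selmer stack: one must check that the $G_H$-action is suitably tame on $\Z/\ell'\Z$-cohomology, which reduces to the order hypothesis on $\ell'$, and that the isomorphism of \autoref{corollary:selmer-to-hurwitz-moments} respects the relevant quotient structure. Both points are handled by the matching discussion in \autoref{example:fiber-product-with-rank-coefficients} together with the standard fact that, over $\mathbb C$, \'etale cohomology with $\Z/\ell'\Z$-coefficients of a Deligne--Mumford stack agrees with the invariants of the cohomology of its atlas when the automorphism groups have order prime to $\ell'$.
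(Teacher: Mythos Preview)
Your proposal is correct and follows essentially the same approach as the paper: both deduce \eqref{equation:h-cohomology-bound} from \autoref{corollary:cohomology-bound} via \autoref{lemma:hypothesis-cohomology-bound}, then pass to \eqref{equation:selmer-cohomology-bound} by identifying the Selmer stacks with Hurwitz stacks through \autoref{corollary:selmer-to-hurwitz-moments} and applying transfer along the finite $G_H$-cover (using that $\ell'$ is prime to $\#\asp_{2r}(\Z/\nu\Z)$). The only organizational difference is that the paper first reduces the bound for $\selspacemoments{\mathscr F^n_{\mathbb C}}{H}$ to that for $\rankselspacemoments{\mathscr F^n_{\mathbb C}}{H}$ via the double cover (invoking $\ell'\neq 2$) and then handles only the rank case, whereas you treat the two cases in parallel; both routes are valid.
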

\begin{proof}
	First, the bound \eqref{equation:h-cohomology-bound}
follows
from \autoref{corollary:cohomology-bound}
whose hypotheses are verified by \autoref{lemma:hypothesis-cohomology-bound}.

For \eqref{equation:selmer-cohomology-bound},
note that in order to bound the homology of 
$\selspacemoments {\mathscr F^n_{\mathbb C}} H$, by transfer and the assumption
that $\ell' \neq 2$,
it suffices to bound the
homology of its finite \'etale double cover
$\rankselspacemoments {\mathscr F^n_{\mathbb C}} H$.

Recall that 
we use the notation
$\on{Hur}_{\sinertiaindex n {\mathscr F}{H}g f}$ and
$\on{Hur}^{\rk}_{\sinertiaindex n {\mathscr F}{H}g f}$ for the
finite unramified covering
space over
$\on{Conf}^n_{X^{\oplus n} \oplus A_{g,f}}$ corresponding to the action of
of $\pi_1(\on{Conf}^n_{X^{\oplus n} \oplus A_{g,f}},
	x^{\oplus n})$ on $H_{\sinertiaindex n {\mathscr F}{H}g f}$
and
$\pi_1(\on{Conf}^n_{X^{\oplus n} \oplus A_{g,f}}, x^{\oplus n})$
on $H^{\rk}_{\sinertiaindex n {\mathscr F}{H}g f}$.
It follows from these definitions that
\begin{align*}
	H^i( \on{Hur}^{\rk}_{\sinertiaindex n {\mathscr F}{H}g f}, \mathbb Z/\ell' \mathbb Z) \simeq
	H^i(\pi_1(\on{Conf}^n_{X^{\oplus n} \oplus A_{g,f}}, x^{\oplus
	n}),H^{\on{\rk}}_{\sinertiaindex n {\mathscr F}{H} g f}).
\end{align*}

To conclude the final statement for bounding the homology of 
$\rankselspacemoments {\mathscr F^n_{\mathbb C}} H$, by transfer, it suffices to show
	$\on{Hur}^{\rk}_{\sinertiaindex n {\mathscr F}{H}g f}$
	defines a finite \'etale cover of $\rankselspacemoments {\mathscr
	F^n_{\mathbb C}} H$.
	We next use the isomorphism
	$\selspacemoments {\mathscr F^n_{\mathbb C}}H
	\to \selhur {\mathscr F^n_{\mathbb C}} H$ from
	\autoref{corollary:selmer-to-hurwitz-moments} over $\qtwist n U B$, 
	which also yields the identification 
	$\rankselspacemoments  {\mathscr F^n_{\mathbb C}}H
	\simeq \rankselhur {\mathscr F^n_{\mathbb C}} H$.
	It therefore suffices to show
	$\rankselhur {\mathscr F^n_{\mathbb C}} H$ 
	has a finite covering space by 
	$\on{Hur}^{\rk}_{\sinertiaindex n{\mathscr F}{H}g f}$.
	There is an action of the group $G_H$ as in
	\autoref{example:specified-hurwitz-coefficient-system} on the latter
	(acting via conjugation on 
	$\on{Hur}_{\sinertiaindex n {\mathscr F}{H}g f}$ and trivially on
$R_{\mathscr F^n_{\mathbb C}}$).
	The quotient by this $G_H$ action is precisely
	$\rankselhur {\mathscr F^n_{\mathbb C}} H$,
	as follows from \autoref{remark:hurwitz-as-quotient}, since 
$\on{Hur}_{\sinertiaindex n {\mathscr F}{H}g f}$ is the fiber product of the
pointed Hurwitz space with the rank double cover, while 
$\rankselhur {\mathscr F^n_{\mathbb C}} H$ is the fiber product of the usual
Hurwitz space with the rank double cover.
\end{proof}

\begin{remark}
	\label{remark:mapping-class-group-action-on-cohomology}
	The Hurwitz stacks and Selmer stacks, whose cohomology we analyze in
	\autoref{lemma:moments-cohomology} have (up to finite index issues) an
	action of $\on{Mod}_{g,f}$ the mapping class group of a genus $g$,
	$f$-punctured surface. Hence, their stable cohomology groups are virtual
	$\on{Mod}_{g,f}$ representations.
	It would be extremely interesting to determine
	which representations these are. A precursor to doing so would be to
	compute the dimension of these representations. We also cannot rule out
	the possibility these dimensions are $0$, and so the representations are
	not particularly interesting. See also \autoref{remark:0-cohomology}
\end{remark}

\subsection{Relation between the rank double cover and parity of rank}
\label{subsection:rank-cover-and-parity}

Our main reason for introducing the rank double cover is that it tells us about
the parity of the rank of $\sel_\ell$, as we next explain.
For the next statement, recall the definition of
$\mathcal N^i$ from 
\autoref{definition:actual-selmer-distribution}.
\begin{lemma}
	\label{lemma:image-of-rank-cover-has-selmer-parity}
	Assume $\nu$ is odd, $n >0$ is even, and $B$ is an integral affine scheme 
	with $2\nu$ invertible on $B$. Let $b\in B$ a closed
	point with residue field $\mathbb F_{q_0}$. Let $\mathbb F_q$ be a
	finite extension of $\mathbb F_{q_0}$. Use hypotheses as in 
	\autoref{notation:quadratic-twist-notation},
	\autoref{hypotheses:big-monodromy-assumptions},
	and
\autoref{example:abelian-special-fiber}, so $\mathscr F_b \simeq  A[\nu]$.	
Let $\ell \mid \nu$ and $i := \rk V_{\mathscr F^n_b[\ell]} \bmod 2 \in \{0,1\}$.
	Then, for $x \in \qtwist
	n {U_b} b(\mathbb F_q)$, $\sel_\nu(A_x) \in \mathcal N^i$
	if and only if $x$ lies in the image of $\rankcover n {\mathscr F_b}(\mathbb F_q) \to \qtwist n
	{U_b} b(\mathbb F_q)$.
\end{lemma}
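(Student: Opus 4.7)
The plan is to chain together three identifications: Selmer groups with fixed points of Frobenius on the Selmer sheaf (Lemma~\ref{lemma:selmer-identification}), parity of the $1$-eigenspace of an orthogonal element with its Dickson invariant (Lemma~\ref{lemma:1-eigenvalue-parity}), and the fact, recorded in Lemma~\ref{lemma:determinant-monodromy}, that the Dickson invariant values on the arithmetic monodromy group live in the diagonal $\mathbb Z/2\mathbb Z$. Throughout, let $g_x := \rho_{\mathscr F^n_b}(\mathrm{Frob}_x) \in \on{O}(Q_{\mathscr F^n_b})$, and for each $\ell\mid\nu$ let $g_{x,\ell}\in\on{O}(Q_{\mathscr F^n_b[\ell]})$ be its reduction.

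First, I would reproduce the chain of isomorphisms appearing in the proof of Lemma~\ref{lemma:determinant-monodromy}: $\sel_\nu(A_x) \simeq \pi^{-1}(x)(\mathbb F_q) \simeq \ker(g_x - \id)$, and analogously $\sel_\ell(A_x) \simeq \ker(g_{x,\ell} - \id)$ for every $\ell\mid\nu$. By Lemma~\ref{lemma:distribution-supported-squares}, $\sel_\nu(A_x)$ lies in $\mathcal N^0\sqcup\mathcal N^1$; by inspecting the $\ell$-torsion of elements of $\mathcal N^0$ versus $\mathcal N^1$, the membership $\sel_\nu(A_x)\in\mathcal N^j$ is equivalent to $\dim_{\mathbb F_\ell}\sel_\ell(A_x)\equiv j\pmod 2$ for any (equivalently, every) $\ell\mid\nu$. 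Combining, $\sel_\nu(A_x)\in\mathcal N^i$ iff $\dim\ker(g_{x,\ell} - \id)\equiv i\pmod 2$ for some $\ell\mid\nu$.

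Next, I would apply Lemma~\ref{lemma:1-eigenvalue-parity} to compute
\begin{align*}
\dim\ker(g_{x,\ell}-\id) \equiv \rk V_{\mathscr F^n_b[\ell]} - D_{Q_{\mathscr F^n_b[\ell]}}(g_{x,\ell})\pmod 2.
\end{align*}
Since $V_{\mathscr F^n_b}$ is a free $\mathbb Z/\nu\mathbb Z$-module (Proposition~\ref{proposition:rank-description}), we have $\rk V_{\mathscr F^n_b[\ell]} = \rk V_{\mathscr F^n_b}$, so $i \equiv \dim\ker(g_{x,\ell}-\id)\pmod 2$ if and only if $D_{Q_{\mathscr F^n_b[\ell]}}(g_{x,\ell}) = 0$. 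By Lemma~\ref{lemma:determinant-monodromy}, the Dickson invariant of $g_x$ lies in the diagonal copy $\Delta_{\mathbb Z/2\mathbb Z}\subset\prod_{\ell\mid\nu}\mathbb Z/2\mathbb Z$, so the vanishing of $D_{Q_{\mathscr F^n_b[\ell]}}(g_{x,\ell})$ for one $\ell$ is equivalent to the vanishing of $\on{pr}_1(D_{Q_{\mathscr F^n_b}}(g_x))$. Hence $\sel_\nu(A_x)\in\mathcal N^i$ iff $(\on{pr}_1\circ D_{Q_{\mathscr F^n_b}}\circ\rho_{\mathscr F^n_b})(\mathrm{Frob}_x)=0$.

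Finally, I would translate this last condition into the geometric statement about the rank double cover. By Definition~\ref{definition:rank-cover}, $\rankcover n {\mathscr F_b}\to\twoconf n {U_b} b$ is precisely the $\mathbb Z/2\mathbb Z$-cover cut out by the character $\on{pr}_1\circ D_{Q_{\mathscr F^n_b}}\circ\rho_{\mathscr F^n_b}$ on $\pi_1(\twoconf n {U_b} b)$. For any finite \'etale $\mathbb Z/2\mathbb Z$-cover $Y\to X$ of a connected Deligne-Mumford stack associated to a character $\chi$, an $\mathbb F_q$-point $x$ lies in the image of $Y(\mathbb F_q)\to X(\mathbb F_q)$ iff $\chi(\mathrm{Frob}_x)=0$, which concludes the equivalence. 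The one technical point worth spelling out, and arguably the main thing to verify carefully, is this last step: since $\twoconf n {U_b} b$ is a $\mu_2$-gerbe rather than a scheme, one should check that the implicit interpretation of ``Frobenius at $x$'' (a conjugacy class in $\pi_1$ of the stack, not merely of the coarse space) is indeed what determines whether $x$ lifts; this is a direct consequence of the equivalence between finite \'etale covers and finite $\pi_1$-sets for Deligne-Mumford stacks, combined with the observation that a connected finite \'etale $\mathbb F_q$-scheme $\spec E\to \spec\mathbb F_q$ factors through $Y\to X$ iff the geometric Frobenius acts trivially on the fiber of $Y$ at the chosen geometric point.
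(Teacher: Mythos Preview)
Your proof is correct and follows essentially the same approach as the paper: identify $\sel_\nu(A_x)$ with $\ker(g_x-\id)$ via \autoref{lemma:selmer-identification}, convert the parity of this kernel into the Dickson invariant via \autoref{lemma:1-eigenvalue-parity}, and then read off the lifting criterion for the double cover from the definition. The only difference is that you are more explicit than the paper in two places: you invoke \autoref{lemma:distribution-supported-squares} to justify that $\sel_\nu(A_x)\in\mathcal N^0\sqcup\mathcal N^1$ (so that the parity of $\dim\sel_\ell$ really does pin down membership in $\mathcal N^i$), and you invoke \autoref{lemma:determinant-monodromy} to justify that the vanishing of $D_{Q_{\mathscr F^n_b[\ell]}}(g_{x,\ell})$ is independent of which $\ell\mid\nu$ one projects to; the paper's proof uses both facts implicitly in its final two sentences without citing them.
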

\begin{proof}
	Let $g_x := \rho_{\mathscr F^n_b}(\frob_x)$. For
	$\ell \mid \nu$, we use $g_{x,\ell}$ to denote the image of $g_x$
	under the map $\o(Q_{\mathscr F^n_b}) \to \o(Q_{\mathscr F^n_b[\ell]})$.
	First, \eqref{equation:kernel-to-determinant} yields
	\begin{align*}
	\dim \ker(g_{x, \ell} -\id) \bmod 2 \equiv\rk V_{\mathscr F^n_b[\ell]}-
D_{Q_{\mathscr F^n_b}} (g_{x, \ell}) \bmod 2.
	\end{align*}
Next,
\autoref{lemma:selmer-identification}
	gives $\ker(g_x - \id)
	\simeq \sel_\nu(A_x)$. Combining these, we find
	\begin{align*}
	D_{Q_{\mathscr F^n_b}}(g_{x,\ell}) \equiv \rk
	V_{\mathscr F^n_b[\ell]} - \dim \ker(g_{x, \ell} -\id) \equiv
	\rk V_{\mathscr F^n_b[\ell]} - \dim \sel_\ell(A_x) \bmod 2.
	\end{align*}
	Since this holds for every $\ell \mid \nu$, we find that
	$D_{Q_{\mathscr F^n_b}}(g_{x,\ell})$ takes the value $0$ if and only if 
	$\rk V_{\mathscr F^n_b[\ell]} \equiv \dim \sel_\ell(A_x) \bmod 2$.
	Since the finite \'etale double cover $\rankcover n {\mathscr F} \to
	\qtwist n {U_b}
	b$ is trivial over each $\mathbb F_q$-point with trivial Dickson
	invariant,
	$D_{Q_{\mathscr F^n_b}}(g_{x,\ell})$ takes the value $0$ if and only if $x \in \qtwist n
	{U_b} b (\mathbb F_q)$ is in the image of $\rankcover n {\mathscr F} (\mathbb F_q)$.
	We conclude the result because 
	$\rk V_{\mathscr F^n_b[\ell]} \equiv \dim \sel_\ell(A_x)$
	can be restated as 
	$\sel_\nu(A_x) \in \mathcal N^i$, with $i =\rk
	V_{\mathscr F^n_b[\ell]}\bmod 2$.
\end{proof}

We now use the previous lemma to show that the distribution of Selmer elements
on the double cover controlling the
parity of the rank agrees with the locus of points on the base where the rank of
$\sel_\ell$
has a specified parity.
This is a fairly trivial observation, but allows us to connect moments of the
rank double cover to moments of the space of quadratic twists with specified
parity of rank of $\sel_\ell$. 
This plays
a key role in proving our main theorem,
\autoref{theorem:main-finite-field}.
For this, recall the definition of $X^i_{A[\nu]^n_{\mathbb F_q}}$ from
\autoref{definition:actual-selmer-distribution}.
\begin{lemma}
	\label{lemma:distribution-equals-rank-cover-distribution}
	With assumptions and notation as in
\autoref{lemma:image-of-rank-cover-has-selmer-parity},
so, in particular, 
$i := \rk V_{\mathscr F^n_b[\ell]} \bmod 2 \in \{0,1\}$
for every $\ell \mid \nu$,
	we have
	\begin{align}
	\label{equation:expectation-rank-cover}
		\frac{\# \rankselspacemoments {\mathscr F^n_b}H (\mathbb
		F_q)}{\#\rankcover n {\mathscr F_b} (\mathbb F_q)} = \mathbb E(\# \hom(X^i_{A[\nu]^n_{\mathbb F_q}},H)).
	\end{align}
\end{lemma}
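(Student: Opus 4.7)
The plan is a direct weighted point count. First note that every $\mathbb F_q$-point of $\twoconf n {U_b} b$ has automorphism group exactly $\mathbb Z/2$, generated by the covering involution of the associated double cover $X\to C$. Because the projections $\selspace{\mathscr F[\nu_i]^n_b}\to \twoconf n {U_b} b$ and $\rankcover n {\mathscr F_b}\to \twoconf n {U_b} b$ are all finite \'etale and hence representable, for any $x\in \twoconf n {U_b} b(\mathbb F_q)$ the stack-theoretic fibers over $x$ are ordinary finite \'etale $\mathbb F_q$-schemes, and the fiber-product decomposition yields
\[
\#\rankselspacemoments{\mathscr F^n_b}{H}(\mathbb F_q)=\sum_{[x]\in\pi_0\twoconf n {U_b} b(\mathbb F_q)}\frac{\#\selspacemoments{\mathscr F^n_b}{H}\big|_x(\mathbb F_q)\cdot\#\rankcover n {\mathscr F_b}\big|_x(\mathbb F_q)}{2},
\]
together with the analogous formula for $\#\rankcover n {\mathscr F_b}(\mathbb F_q)$ in which the $\selspacemoments$ factor is dropped.

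Next I would evaluate each fiber count. By \autoref{lemma:selmer-identification}, $\#\selspace{\mathscr F[\nu_i]^n_b}\big|_x(\mathbb F_q)=\#\sel_{\nu_i}(A_x)=\#\sel_\nu(A_x)[\nu_i]$, so writing $H\simeq\prod_{i=1}^m\mathbb Z/\nu_i\mathbb Z$ gives
\[
\#\selspacemoments{\mathscr F^n_b}{H}\big|_x(\mathbb F_q)=\prod_i\#\sel_\nu(A_x)[\nu_i]=\#\hom(H,\sel_\nu(A_x))=\#\hom(\sel_\nu(A_x),H),
\]
where the last equality is the elementary identity $\#\hom(A,B)=\#\hom(B,A)$ for finite abelian groups annihilated by $\nu$. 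For the rank cover, the fiber $\rankcover n {\mathscr F_b}\big|_x$ is \'etale of degree $2$ over $\spec\mathbb F_q$ and so is either $\spec\mathbb F_q\sqcup\spec\mathbb F_q$ or $\spec\mathbb F_{q^2}$; the former contributes $2$ to the point count and the latter contributes $0$, and by \autoref{lemma:image-of-rank-cover-has-selmer-parity} the former occurs exactly when $\sel_\nu(A_x)\in\mathcal N^i$.

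Substituting these into the display, the factor of $2$ from the rank-cover fiber cancels the $1/2$ coming from the $\mathbb Z/2$-gerbe automorphism weight, leaving
\[
\#\rankselspacemoments{\mathscr F^n_b}{H}(\mathbb F_q)=\sum_{[x]\,:\,\sel_\nu(A_x)\in\mathcal N^i}\#\hom(\sel_\nu(A_x),H),
\]
and likewise $\#\rankcover n {\mathscr F_b}(\mathbb F_q)=\#\{[x]\,:\,\sel_\nu(A_x)\in\mathcal N^i\}$. Dividing, and observing that in the definition of $X^i_{A[\nu]^n_{\mathbb F_q}}$ from \autoref{definition:actual-selmer-distribution} the common weight $1/|\aut(x)|=1/2$ cancels between the numerator and the denominator, the ratio equals $\mathbb E(\#\hom(X^i_{A[\nu]^n_{\mathbb F_q}},H))$, as desired. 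The main delicate point is tracking the $\mathbb Z/2$-gerbe structure consistently through the fiber product; once that bookkeeping is in place, all of the arithmetic content is already packaged into \autoref{lemma:image-of-rank-cover-has-selmer-parity} together with the symmetry of $\hom$-counts for finite abelian groups.
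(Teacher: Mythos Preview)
Your proof is correct and follows essentially the same approach as the paper's. Both arguments invoke \autoref{lemma:image-of-rank-cover-has-selmer-parity} to identify which $x$ contribute, and \autoref{lemma:selmer-identification} together with the fiber-product definition of $\rankselspacemoments{\mathscr F^n_b}{H}$ to compute the contribution of each $x$; the only difference is that you make the $\mathbb Z/2$-gerbe bookkeeping explicit, whereas the paper handles it by observing that each $\mathbb F_q$-point of $\twoconf n {U_b} b$ in the image of the rank cover has exactly two preimages, so the distribution of $\sel_\nu(A_y)$ over $y\in\rankcover n{\mathscr F_b}(\mathbb F_q)$ coincides with $X^i_{A[\nu]^n_{\mathbb F_q}}$.
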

\begin{proof}
Using \autoref{lemma:image-of-rank-cover-has-selmer-parity},
	the distribution $X^i_{A[\nu]^n_{\mathbb F_q}}$ agrees with the
	distribution of Selmer groups at points $x \in \qtwist n {U_b} b(\mathbb
	F_q)$ in the
	image of $\rankcover n {\mathscr F_b}(\mathbb F_q) \to \qtwist n {U_b} b(\mathbb
	F_q)$.
	Since $\rankcover n {\mathscr F_b} \to \qtwist n {U_b} b$ is a finite \'etale double cover,
	each $\mathbb F_q$-point of $\qtwist n {U_b} b$ in the image of a $\mathbb
	F_q$-point of $\rankcover n {\mathscr F_b}$ has exactly two $\mathbb F_q$-points in its
	preimage. This means that, for $y$ varying over points of $\rankcover n
	{\mathscr F_b}(\mathbb F_q)$ and $K \in \mathcal N$ a finite $\mathbb Z/\nu \mathbb
	Z$-module,
	\begin{align*}
		\prob(X^i_{A[\nu]^n_{\mathbb F_q}} \simeq K) &=
		\prob\left(\sel_\nu(A_x) \simeq K \vert x \in \im (\rankcover n
		{\mathscr F_b} (\mathbb F_q) \to \qtwist n {U_b} b(\mathbb F_q)) \right)
	\\
	&= \prob(\sel_\nu(A_y) \simeq K).
	\end{align*}
	Taking the expectation of the number of maps to $H$, which is the same
	as the number of maps from $H$, it is enough to
	show the left hand side of
	\eqref{equation:expectation-rank-cover} is the expected number of maps
	from $H$ to $\sel_\nu(A_y)$.
	This follows from \autoref{lemma:selmer-identification} and the
	definition of $\rankselspacemoments {\mathscr F^n_b}H$ as a fiber
	product.
\end{proof}

\section{Computing the moments}
\label{section:moments}

The purpose of this section is to combine our homological stability results with
our
big monodromy results to
determine the
moments of Selmer groups in quadratic twist families.
The analogous problem of determining the moments in the context of Cohen-Lenstra was approached in
\cite{EllenbergVW:cohenLenstra}, where the problem was much easier as the relevant big monodromy result
was already available in the literature.
In \autoref{subsection:orthogonal-moments}, we compute various statistics
associated to kernels of random elements of orthogonal groups.
Via equidistribution of Frobenius elements we then relate this to components of
Selmer stacks in \autoref{subsection:connected-components}.

\subsection{Moments related to random elements of orthogonal groups}
\label{subsection:orthogonal-moments}

We next compute statistics associated to random elements of orthogonal groups.
In \autoref{lemma:kernel-and-bklpr-distributions}, we compute the distributions of $1$-eigenspaces of random elements of
orthogonal group, and show that these limit to the BKLPR distribution as the
size of the matrix grows. Moreover, we show this in a strong enough sense so
that the limit of the moments is the moment of the limit.

Our next computation
is quite analogous to that of \cite[Proposition
4.13]{fengLR:geometric-distribution-of-selmer-groups}, except that here we work
over $\mathbb Z/\nu \mathbb Z$ for general $\nu$, instead of the case that $\nu$
is prime covered in \cite{fengLR:geometric-distribution-of-selmer-groups}.

	For what follows, we use the notation of
	\cite[\S4.2.1]{fengLR:geometric-distribution-of-selmer-groups}. 
	In the case $\nu$ is prime, 
	we let $A,
	B, C$ be the three nontrivial cosets of $\Omega(Q)$ in $\o(Q)$ so that $\on{sp}^-_Q$ is nontrivial
	on $A$ and $C$, while $D_Q$ is nontrivial on $B$ and $C$.
	For $Z$ a nonnegative integer-valued random variable, we let $G_Z(t) = \sum_{i \in \mathbb N}
	\on{Prob}(Z = i) t^i$.
	As in \cite[\S4.2.1]{fengLR:geometric-distribution-of-selmer-groups},
	for $\bullet \in \{\Omega, A, B, C\}$, we use $\on{RSel}_V^\bullet$ to
	denote the random variable given as $\dim \ker(g - \id)$ for $g$ a uniform random
	element of the coset $\bullet$.

\begin{lemma}
	\label{lemma:coset-generating-functions}
	Let $(Q, V)$ be a quadratic space over $\mathbb Z/\ell \mathbb Z$, with
	$\ell$ an odd prime.
When $\dim V = 2s$ is even, 
\begin{align*}
	G_{\on{RSel}^B_V} &= G_{\on{RSel}^C_V}, \\
G_{\on{RSel}^\Omega_V} &= G_{\on{RSel}^A_V} + \frac{1}{\#
	\Omega(Q)}\prod_{i=0}^{s-1} (t^2 -\ell^{2i}).
\end{align*}
When $\dim V = 2s + 1$ is odd, 
\begin{align*}
	G_{\on{RSel}^B_V} &= G_{\on{RSel}^C_V} + \frac{2 (-1)^{\frac{\ell-1}{2}}\ell^s}{\#
	\Omega(Q)} \prod_{i=1}^{s-1} (t^2-
	\ell^{2i}), \\
	G_{\on{RSel}^\Omega_V} &= G_{\on{RSel}^A_V} + \frac{t}{\#
	\Omega(Q)}\prod_{i=0}^{s-1} (t^2 -\ell^{2i}).
\end{align*}
\end{lemma}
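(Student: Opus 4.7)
The plan is to compute each $G_{\mathrm{RSel}^\bullet_V}(t)$ via Fourier inversion on the finite group $\mathrm{O}(Q)/\Omega(Q)\cong(\mathbb Z/2\mathbb Z)^2$, which by hypothesis is generated by the Dickson invariant $\chi_D:=D_Q$ and the spinor norm $\chi_s:=\mathrm{sp}^-_Q$. Introducing the four twisted sums
\begin{align*}
T_\chi(t):=\sum_{g\in\mathrm{O}(Q)}\chi(g)\,t^{\dim\ker(g-\id)}
\end{align*}
indexed by the four characters $\chi\in\{1,\chi_D,\chi_s,\chi_D\chi_s\}$, orthogonality of characters gives $\#\Omega(Q)\cdot G_{\mathrm{RSel}^\bullet_V}(t)=\tfrac14\sum_\chi\chi(\bullet)\,T_\chi(t)$. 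Hence the desired identities, which describe only the \emph{differences} $G_{\mathrm{RSel}^B_V}-G_{\mathrm{RSel}^C_V}$ and $G_{\mathrm{RSel}^\Omega_V}-G_{\mathrm{RSel}^A_V}$, turn into identities among the two spinor-twisted sums $T_{\chi_s}$ and $T_{\chi_D\chi_s}$.

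The Dickson-twisted sums can be eliminated up front using \autoref{lemma:1-eigenvalue-parity}: since $D_Q(g)\equiv\dim V-\dim\ker(g-\id)\pmod 2$, one reads off $T_{\chi_D}(t)=(-1)^{\dim V}T_1(-t)$ and, more importantly, $T_{\chi_D\chi_s}(t)=(-1)^{\dim V}T_{\chi_s}(-t)$. Combining this with the Fourier inversion, the even-dimensional claim $G_{\mathrm{RSel}^B_V}=G_{\mathrm{RSel}^C_V}$ becomes the vanishing of $T_{\chi_s}(t)-T_{\chi_s}(-t)$ when $\dim V=2s$, and the remaining formulas for $G_{\mathrm{RSel}^\Omega_V}-G_{\mathrm{RSel}^A_V}$ (and, in odd dimension, for $G_{\mathrm{RSel}^B_V}-G_{\mathrm{RSel}^C_V}$) become explicit evaluations of $T_{\chi_s}(t)\pm T_{\chi_s}(-t)$.

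The core computation is therefore the evaluation of $T_{\chi_s}$. I would stratify $\mathrm{O}(Q)$ by the nondegenerate subspace $W:=\ker(g-\id)\subset V$. For each such $W$, the set of elements with that fixed subspace is in bijection with fixed-point-free elements of $\mathrm{O}(W^\perp)$, and the restriction of $\mathrm{sp}^-_Q$ to this set agrees with $\mathrm{sp}^-$ on $\mathrm{O}(W^\perp)$. Writing $N_{\chi_s}(W^\perp)$ for the spinor-weighted count of fixed-point-free elements in $\mathrm{O}(W^\perp)$, one obtains $T_{\chi_s}(t)=\sum_{W\subset V\text{ nondeg.}}N_{\chi_s}(W^\perp)\,t^{\dim W}$. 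The numbers of nondegenerate subspaces of each Witt type in $(V,Q)$ and the spinor-weighted fixed-point-free counts $N_{\chi_s}$ are classical invariants of orthogonal groups over $\mathbb F_\ell$; repackaging the double sum using the standard orders $\#\mathrm{O}^\pm_{2s}(\mathbb F_\ell)$ and $\#\mathrm{O}_{2s+1}(\mathbb F_\ell)$, one expects a pleasant telescoping that collapses the answer to (a scalar multiple of) $\prod_{i=0}^{s-1}(t^2-\ell^{2i})$, possibly multiplied by a factor $t$ in odd dimension.

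The main obstacle, and where I expect to borrow most heavily from \cite[Proposition 4.13]{fengLR:geometric-distribution-of-selmer-groups}, is the bookkeeping for the spinor norm across the stratification, especially in odd dimension where $-\id$ has nontrivial Dickson invariant and introduces the asymmetry visible as the leading factor of $t$ in the formula for $G_{\mathrm{RSel}^\Omega_V}-G_{\mathrm{RSel}^A_V}$ and as the $\mathrm{sgn}(-1)\ell^s$ term in the formula for $G_{\mathrm{RSel}^B_V}-G_{\mathrm{RSel}^C_V}$. The appearance of $\mathrm{sgn}(-1)$ in particular is natural from this perspective: it records the spinor class of $-\id$ in the odd-dimensional orthogonal group, which is exactly the $W=0$ contribution to $T_{\chi_s}$. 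Once $T_{\chi_s}$ has been evaluated, substituting back into the Fourier-inversion formula and subtracting the appropriate pairs of cosets yields the four claimed identities directly.
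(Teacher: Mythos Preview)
Your Fourier-theoretic reduction in the first two paragraphs is correct and clean: the identities really do reduce to evaluating $T_{\chi_s}(t)\pm T_{\chi_s}(-t)$, and your use of \autoref{lemma:1-eigenvalue-parity} to relate $T_{\chi_D\chi_s}$ to $T_{\chi_s}$ is exactly right.

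The gap is in the ``core computation'' of $T_{\chi_s}$. The subspace $W=\ker(g-\id)$ is \emph{not} in general nondegenerate. Already in $\mathrm{SO}_3$ a regular unipotent element has $\ker(g-\id)$ equal to an isotropic line: one always has $\ker(g-\id)^\perp=\mathrm{im}(g-\id)$, so $\ker(g-\id)$ is nondegenerate iff $\ker(g-\id)\cap\mathrm{im}(g-\id)=0$, and this fails for any orthogonal element with a nontrivial unipotent part. Hence your stratification by nondegenerate $W$ misses elements, the bijection with fixed-point-free elements of $\mathrm{O}(W^\perp)$ is not available, and the spinor bookkeeping cannot even get started. (The salvage via Wall's form on $\mathrm{im}(g-\id)$ is possible in principle, but it is substantially more intricate than what you sketch.)

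The paper avoids this entirely by an interpolation argument rather than a direct evaluation of $T_{\chi_s}$. The key input, borrowed from \cite[Lemma~4.5 and Lemma~4.7]{fengLR:geometric-distribution-of-selmer-groups}, is that all four coset generating functions agree at the special values $t=\pm 1,\pm\ell,\dots,\pm\ell^{\lfloor(\dim V-2)/2\rfloor}$. Combined with the parity constraint from \autoref{lemma:1-eigenvalue-parity}, each $G_{\mathrm{RSel}^B_V}-G_{\mathrm{RSel}^C_V}$ and $G_{\mathrm{RSel}^\Omega_V}-G_{\mathrm{RSel}^A_V}$ is a polynomial of known parity vanishing at enough points to be pinned down up to a single scalar; that scalar is then read off from the top coefficient by counting the identity (for the $\Omega$ vs.\ $A$ difference) or reflections of each spinor class (for the $B$ vs.\ $C$ difference in odd dimension, which is where the $\operatorname{sgn}(-1)\ell^s$ arises). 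Your Fourier setup would combine well with this: once you know $G^\Omega-G^A$ and $G^B-G^C$, you have recovered $T_{\chi_s}(t)+(-1)^{\dim V}T_{\chi_s}(-t)$ and $T_{\chi_s}(t)-(-1)^{\dim V}T_{\chi_s}(-t)$, hence $T_{\chi_s}$ itself, without ever stratifying $\mathrm{O}(Q)$.
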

\begin{proof}
	For the proof when $\dim V = 2s$, note that \cite[Lemma
	4.7]{fengLR:geometric-distribution-of-selmer-groups} easily generalizes
	to show that for any coset $H$ of $\Omega(Q)$ in $\o(Q)$,
	$G_{\on{RSel}^H_V}(\ell^i) = G_{\on{RSel}^\Omega_V}(\ell^i)$ whenever
	$2i +2 \leq \dim V$.
	When $\dim V$ is even, the proof proceeds mutatis mutandis as in 
	\cite[Theorem 4.4]{fengLR:geometric-distribution-of-selmer-groups}.

	Therefore, it remains to prove the case that $\dim V = 2s + 1$ is odd.
	We again proceed following the proof strategy of 
	\cite[Theorem 4.4]{fengLR:geometric-distribution-of-selmer-groups}.
	By \autoref{lemma:1-eigenvalue-parity},
	only even powers of $t$ can appear in $G_{\on{RSel}^B_V}(t)$ and
	$G_{\on{RSel}^C_V}(t)$.
	These are therefore even polynomials of degree at most $\dim V$ and agree
	at the $\dim V-1$ values $\pm 1, \pm \ell, \ldots, \pm \ell^{\frac{\dim V -
	3}{2}}$ by \cite[Lemma
	4.5]{fengLR:geometric-distribution-of-selmer-groups}. 
	Since $\dim V$ is odd and the polynomials are even, the polynomials in fact have degree at most $\dim
	V - 1$, and hence are determined up to a scalar.
	That is, $G_{\on{RSel}^B_V}(t) - G_{\on{RSel}^C_V}(t)$ is a
	scalar multiple of $\prod_{i=1}^{\frac{\dim V - 3}{2}} (t^2-
	\ell^{2i})$.
	To pin that scalar multiple down, we can examine the coefficient of
	$t^{\dim V - 1}$ in $G_{\on{RSel}^\bullet_V}(t)$, for $\bullet \in \{B,
	C\}$. This coefficient is
	$\frac{\# R_\bullet(Q)}{\#\Omega(Q)}$,
	where $R_\bullet(Q)$ is the set of reflections in $\bullet$, since any non-identity element of the orthogonal group fixing a
	codimension $1$ plane is a reflection.
	Since there are $\ell^{2s}+\ell^s$ reflections with value $\alpha$ for any
	square $\alpha \in \mathbb F_\ell^\times$, and 
	$\ell^{2s}-\ell^s$ reflections with value $\beta$ for any
	for any nonsquare $\beta \in \mathbb F_\ell^\times$, the definition of
	$\on{sp}^-_Q$ yields
	that 
	\begin{align*}
	G_{\on{RSel}^B_V} - G_{\on{RSel}^C_V} = \frac{2(-1)^{\frac{\ell-1}{2}}\ell^s}{\#
	\Omega(Q)} \prod_{i=1}^{\frac{\dim V - 3}{2}} (t^2-
	\ell^{2i}) =
\frac{2(-1)^{\frac{\ell-1}{2}}\ell^s}{\#
	\Omega(Q)} \prod_{i=1}^{s-1} (t^2-
	\ell^{2i}).
	\end{align*}

	Finally, the remaining two cosets satisfy the relation
	$G_{\on{RSel}^\Omega_V} = G_{\on{RSel}^A_V} + \frac{1}{\#
	\Omega(Q)}\prod_{i=0}^{s-1} (t^2 -\ell^{2i})$ by an argument analogous
	to the last paragraph of the proof of 
	\cite[Theorem 4.4]{fengLR:geometric-distribution-of-selmer-groups}:
	Indeed, 
	$G_{\on{RSel}^B_V}(t)$ and $G_{\on{RSel}^C_V}(t)$ are two odd degree
	$\dim V$ polynomials agreeing on the $\dim V$ values $0, \pm 1, \pm \ell,
	\ldots, \pm \ell^{\frac{\dim V - 3}{2}}$, so their difference is
	divisible by $t\prod_{i=1}^{\frac{\dim V - 3}{2}} (t^2- \ell^{2i})$, and
	the constant of proportionality can be determined using that the
	identity is the only element with a $\dim V$ dimensional fixed space.
\end{proof}

We next define a notion of $m$-total variation distance, which will be useful
for proving moments of two distributions converge, see
\autoref{remark:tv-convergence}. 
\begin{definition}
	\label{definition:}
	Let $\mathcal N$ denote the set of isomorphism classes of finite
	$\mathbb Z/\nu \mathbb Z$-modules.
	Let $X, Y$ be two $\mathcal N$ valued random variables. For $m \in \mathbb Z_{\geq 0}$,
we define the {\em $m$-total variation distance}
or $d^m_{\on{TV}}(X,Y)$
\begin{align*}
d^m_{\on{TV}}(X, Y) := \sum_{H \in \mathcal N} (\#H)^m \left | \on{Prob}(X=H)
	- \on{Prob}(Y=H) \right |.
\end{align*}
\end{definition}
\begin{remark}
	\label{remark:}
	When $m = 0$, and the random variable is real valued instead of valued in
$\mathcal N$, this is twice the usual notion of total variation distance, 
see \cite[\S4.1 and Proposition 4.2]{levinPW:markov-chains-and-mixing-times}.
We claim that a sequence of random variables $(X_n)_{n \geq 0}$
converges to $Y$ in distribution if the total variation distance between $X_n$
and $Y$ tends to $0$ in $n$: Indeed, convergence in distribution simply means
pointwise convergence for distributions on a discrete probability space.
\end{remark}

\begin{remark}
	\label{remark:tv-convergence}
	The point of the
definition of $m$-total variation distance is that if a sequence of random variables $X_n$ converges to $Y$ in
$m$-total variation distance then the $m$th moment of $X_n$ converges to the
$m$th moment of $Y$.
This follows directly from the definition of $m$-total variation distance.
\end{remark}

With the above definition in hand, we are prepared to show the distribution of $1$-eigenspaces of random orthogonal group
matrices converges in a strong sense to the BKLPR distribution, as the size of
the matrix grows.

\begin{lemma}
	\label{lemma:kernel-and-bklpr-distributions}
	Let $(V_{\nu,n}, Q_{\nu,n})_{n \in \mathbb Z_{> 0}}$ be a sequence of nondegenerate
	quadratic spaces 
	over $\mathbb Z/\nu \mathbb Z$, for $\nu$ odd.
	Suppose $\rk V_{\nu,n} \geq n$.
\begin{enumerate}
	\item Suppose $G_{\nu,n} \subset \o(Q_{\nu,n})$ is a subgroup containing
	$\Omega(Q_{\nu,n})$ and not contained in $\so(Q_{\nu,n})$.
	Let $R_{\nu,n}$ denote the distribution of $\ker(g - \id)$ for $g
	\in G_{\nu,n}$ a uniform random element.

	For any $m \in \mathbb Z_{\geq 0}$, the limit $\lim_{n \to \infty} R_{\nu,n}$ converges in $m$-total variation
	distance 
	to a distribution
	which agrees with $\bklpr \nu$.
\item Suppose $G_{\nu,n} \subset \so(Q_{\nu,n})$ is a subgroup containing
	$\Omega(Q_{\nu,n})$.
	Let $R_{\nu,n}^{\rk}$ denote the distribution of $\ker(g - \id)$ for $g
	\in G_{\nu,n}$ a uniform random element.
	Suppose that $\rk V_{\nu,n} \bmod 2$ is independent of $n$.
	For any $m \in \mathbb Z_{\geq 0}$, the limit $\lim_{n \to \infty}
	R_{\nu,n}^{\rk}$ converges in $m$-total variation
	distance 
	to a distribution
	which agrees with $\paritybklpr \nu {\rk V_{\nu,n} \bmod 2}$, which is
	independent of $n$ by assumption.
\end{enumerate}
\end{lemma}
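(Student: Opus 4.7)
The plan is to proceed in four main steps: reduce to prime power $\nu$, analyze cosets of $\Omega(Q_{\nu,n})$ to control the distribution up to controlled error terms, identify the limit with the appropriate BKLPR distribution, and finally upgrade from pointwise convergence to $m$-total variation convergence.

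First, I would reduce to the case that $\nu = \ell^k$ is a prime power. The Chinese remainder decomposition $\mathbb{Z}/\nu\mathbb{Z} = \prod_{\ell \mid \nu} \mathbb{Z}/\ell^{a_\ell}\mathbb{Z}$ induces an orthogonal direct sum decomposition of $(V_{\nu,n}, Q_{\nu,n})$, and $\Omega(Q_{\nu,n})$ decomposes accordingly as a product. A uniform random element of $G_{\nu,n}$ can, up to error of order $1/\#\Omega(Q_{\nu,n})$, be treated as a product of independent uniform random elements in each prime-power factor; kernels then decompose as direct sums, matching the product structure in the definition of $\bklpr\nu$ in \autoref{definition:bklpr-rank-selmer}.

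Next, for prime-power $\nu$, I would use the coset analysis of \autoref{lemma:coset-generating-functions}, generalized from $\nu = \ell$ to $\nu = \ell^k$ by induction on $k$ using the exact sequences relating $\Omega(Q)(\mathbb{Z}/\ell^{k}\mathbb{Z})$ to $\Omega(Q)(\mathbb{Z}/\ell^{k-1}\mathbb{Z})$ from \autoref{proposition:prime-power-surjection}. The key point is that the generating functions (or more generally the joint distributions of $\ker(g-\id)$) for the four cosets of $\Omega(Q_{\nu,n})$ in $\o(Q_{\nu,n})$ differ by an explicit error that is bounded by $C(m)/\#\Omega(Q_{\nu,n})$ when measured against test functions of the form $H \mapsto (\#H)^m$, where $C(m)$ depends only on $m$ and $\nu$. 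Since $\rk V_{\nu,n} \geq n$, we have $\#\Omega(Q_{\nu,n}) \to \infty$ at a rate that dominates any polynomial in $n$. Therefore, in case (1) where $G_{\nu,n}$ meets at least two cosets (one in $\so$, one not) and contains $\Omega$, the distribution $R_{\nu,n}$ is an average over cosets that approaches the average of $\Omega$-distributions over both parities; in case (2) the distribution approaches the $\Omega$-distribution conditioned on rank parity $\rk V \bmod 2$.

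For the identification of the limit, I would combine the moment computation of \autoref{proposition:bklpr-moments} with the standard generating function computation for the expected number of homomorphisms $\mathbb{E}(\#\Hom(\ker(g-\id), H))$ as $g$ ranges over $\Omega(Q_{\nu,n})$. By a direct orthogonal Grassmannian computation (in the spirit of \cite[Theorem~5.10]{bhargavaKLPR:modeling-the-distribution-of-ranks-selmer-groups}), these moments converge to $\#\Sym^2 H$, matching the moments of the parity BKLPR distribution exactly. Together with the tail decay inherent in the structure of $\ker(g-\id)$ as a submodule of a quadratic space, this identifies the limits in cases (1) and (2) with $\bklpr\nu$ and $\paritybklpr{\nu}{\rk V \bmod 2}$ respectively (the latter makes sense because the first factor in $\prod_{\ell \mid \nu} \mathbb{Z}/2\mathbb{Z}$ agrees with the full Dickson invariant under the hypothesis $G_{\nu,n} \subset \so(Q_{\nu,n})$).

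The main obstacle will be upgrading pointwise convergence to $m$-total variation convergence, since naive moment matching only yields convergence in distribution. To handle this, I would establish a uniform tail bound: for any $\epsilon > 0$ and any $m$, there exists $N$ such that for all $n$ and all $H \in \mathcal{N}$ with $\#H > N$, we have $(\#H)^{m+1} \Prob(R_{\nu,n} = H) < \epsilon$, with the analogous bound for $\bklpr\nu$. Such a bound follows from a direct moment estimate: the $(m+1)$st moment of $R_{\nu,n}$ stays bounded uniformly in $n$ by the generating function analysis, and Markov's inequality then controls the tail. Combining this uniform tail bound with pointwise convergence and dominated convergence for the finite-support part yields the claimed $m$-total variation convergence.
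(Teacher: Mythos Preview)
Your proposal has two genuine gaps, and the overall route differs substantially from the paper's.

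\textbf{First gap: the CRT reduction.} Your claim that a uniform element of $G_{\nu,n}$ can ``up to error of order $1/\#\Omega(Q_{\nu,n})$'' be treated as a product of independent uniform elements in each prime-power factor is false. The prime-power components of $g$ are exactly correlated through the coset $g\Omega(Q_{\nu,n}) \in G_{\nu,n}/\Omega(Q_{\nu,n})$, and this correlation does not decay with $n$. In the application (\autoref{lemma:determinant-monodromy}) the Dickson image of $G_{\nu,n}$ is precisely the diagonal $\mathbb{Z}/2\mathbb{Z} \subset \prod_{\ell\mid\nu}\mathbb{Z}/2\mathbb{Z}$, so the parities at the different primes are perfectly correlated, not approximately independent. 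The paper handles this not by a CRT reduction but by conditioning on the full coset of $\Omega(Q_{\nu,n})$: given the coset, the components really are independent (since $\Omega(Q_{\nu,n}) = \prod_\ell \Omega(Q_{\ell^{a_\ell},n})$), and one then sums over the cosets actually present in $G_{\nu,n}$.

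\textbf{Second gap: moment matching cannot identify the limit.} By \autoref{proposition:bklpr-moments}, the three distributions $\bklpr{\nu}$, $\paritybklpr{\nu}{0}$, and $\paritybklpr{\nu}{1}$ all share the same $H$-moments $\#\Sym^2 H$; no tail-decay argument distinguishes them. In case~(2) what pins down the limit is the support: \autoref{lemma:1-eigenvalue-parity} forces $\ker(g-\id)$ to lie in $\mathcal{N}^{\rk V \bmod 2}$ when $g\in\so(Q_{\nu,n})$, and only after restricting to that support class do the moments determine the distribution (this is exactly the mechanism the paper later invokes via Nguyen--Wood in the proof of \autoref{theorem:main-dvr}). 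Your write-up does not supply this support argument. The paper bypasses moment matching entirely here: for $\nu=\ell$ it identifies the limit by directly matching the explicit Fulman--Stanton formula for the fixed-space distribution against the explicit Poonen--Rains formula; it then passes from $\ell$ to $\ell^j$ not by redoing a coset computation over $\mathbb{Z}/\ell^j\mathbb{Z}$ (where $\ker(g-\id)$ is a module, so a single-variable generating function is inadequate) but via the Markov property proved in \cite[Theorems~5.1 and~5.13]{fengLR:geometric-distribution-of-selmer-groups}.
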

\begin{proof}[Proof sketch]
	We start by verifying $(1)$.
	The argument closely follows
	\cite[Theorem 6.4]{fengLR:geometric-distribution-of-selmer-groups}.
	We now provide some more details on the changes one must make.

	We first claim the result holds when $\nu = \ell$ is an odd prime. 
	For $s \mid \nu$, we use $Q_{s,n}$ and $R_{s,n}$ for the reduction mod $s$
	of $Q_{\nu,n}$ and $R_{\nu,n}$.
	For $f(n)$ and $g(n)$ two functions of $n$, we write $f(n) \ll g(n)$ if
	there are constants $C_1, C_2$ so that $f(n) < C_1 g(n)$ for all $n >
	C_2$.
	As an initial step in our argument, we next verify in 
	\autoref{lemma:tv-bound-random-to-bklpr} that
	when $\nu = \ell$ is prime, $
	d_{\on{TV}}^m(R_{\ell,n}, \bklpr \ell) \ll
	\ell^{-((n/2)^2-\varepsilon)}$.
	\begin{lemma}
		\label{lemma:tv-bound-random-to-bklpr}
		With notation as in
		\autoref{lemma:kernel-and-bklpr-distributions},
		for $\ell$ an odd prime, 
		\begin{align*}
	d_{\on{TV}}^m(R_{\ell,n}, \bklpr \ell) \ll
	\ell^{-((n/2)^2-\varepsilon)}.
		\end{align*}
		\end{lemma}
	\begin{proof}
	For $G$ a finite group, we use $R^G$ to denote the distribution the
	dimension of the $1$-eigenspace of a uniformly random element of $G$.
	We can first bound
	$d^m_{\on{TV}}(R_{\ell,n},R^{\o_{\dim V}})$, where we use $\o_{\dim
	V}$ to denote the orthogonal group over a finite field $\mathbb Z/\ell
	\mathbb Z$ of
	dimension $\dim V$, which $G_{\ell,n}$ is a subset of. 
	Note, by 
	convention, $n \leq \dim V$.
	The proof of this bound on $m$-total variation distance is quite similar to that of
	\cite[Theorem 4.23]{fengLR:geometric-distribution-of-selmer-groups},
	except that we replace the input of \cite[Theorem
	4.4]{fengLR:geometric-distribution-of-selmer-groups}
	with that of
	\autoref{lemma:coset-generating-functions}, and note that since these
	probability distributions are both supported on $\{0, \ldots, \dim V\}$,
	$d^m_{\on{TV}}(R_{\ell,n}, R^{\o_{\dim V}}) \leq (\dim V)^m \cdot
	d^0_{\on{TV}}(R_{\ell,n}, R^{\o_{\dim V}})$. Now,
$d^0_{\on{TV}}(R_{\ell,n}, R^{\o_{\dim V}})$
	was
	shown to be $\ll \ell^{-(\frac{\dim V}{2})^2}$ in 
	\cite[Theorem  4.23]{fengLR:geometric-distribution-of-selmer-groups}
	when $\dim V$ is even dimensional with discriminant $1$, and, as
	mentioned, an analogous proof applies here.
	We conclude that $d^m_{\on{TV}}(R_{\ell,n}, R^{\o_{\dim V}}) \ll
	\ell^{-((n/2)^2-\varepsilon)}$.

	Hence, to show $d^m_{\on{TV}}(R_{\ell,n}, \bklpr \ell) \ll
	\ell^{-((n/2)^2-\varepsilon)}$,
	it suffices to bound $d^m_{\on{TV}}(R^{\o_{\dim V}},\bklpr \ell) \ll\ell^{-(\lfloor \dim V/2 \rfloor)^2}$.
	To this end,
	let $2s$ denote the smallest even integer with $2s \leq \dim V$.
	We use 
	$\o^+(2s,\ell)$ to denote the discriminant $1$ orthogonal
	group over $\mathbb F_\ell$ of rank $2s$.
	The formulas in \cite[Theorem 2.7 and
	2.9]{fulmanS:distribution-number-fixed},
	which give the dimension of fixed spaces of elements of orthogonal
	groups,
	show 
	\begin{align}
&d^m_{\on{TV}}(R^{\o_{\dim V}},R^{\o^+(2s,\ell)}) \\
\label{equation:two-sums}
&\leq
	\sum_{k=0}^s (2k)^m \ell^{-(2ks+s^2- k^2 + (s-k))} + \sum_{k=0}^s
	(2k+1)^m\ell^{-(2ks+s^2 - k^2 +(s-k))}
	\\
	&\ll \ell^{-s^2}.
	\end{align}
	The first sum in \eqref{equation:two-sums} is accounted for by the second line of 
	\cite[Theorem 2.9(1)]{fulmanS:distribution-number-fixed}
	(and this is the only one that appears in the case $\dim V$ is even)
	and the second sum is accounted for by the $i=n-k$ term in the sum
	appearing in 
	\cite[Theorem 2.7(2)]{fulmanS:distribution-number-fixed}.

	To conclude the bound 
	$
	d_{\on{TV}}^m(R_{\ell,n}, \bklpr \ell) \ll
	\ell^{-((n/2)^2-\varepsilon)}$, it remains to bound
	\begin{align*}
	d^m_{\on{TV}}(R^{\o^+(2s,\ell)}, \bklpr \ell) \ll \ell^{-s^2}.
	\end{align*}
	This was essentially done in the last paragraph of the proof of 
	\cite[Theorem 4.23]{fengLR:geometric-distribution-of-selmer-groups}
	combined with 
	\cite[Corollary 4.24]{fengLR:geometric-distribution-of-selmer-groups},
	and we now give a slightly more direct argument.
First, $d^m_{\on{TV}}(R^{\o^+(2s,\ell)}, R^{\o^+(2s+2,\ell)}) \ll \ell^{-s^2}$, using the formulas in \cite[Theorem
	2.9]{fulmanS:distribution-number-fixed}, similarly to the preceding
	paragraph.
	This implies that
	$d^m_{\on{TV}}(R^{\o^+(2s,\ell)}, \lim_{s \to \infty} R^{\o^+(2s,\ell)})
	\ll \ell^{-s^2}$.
	An explicit formula for this limiting distribution is given in
	\cite[Theorem 2.9(3)]{fulmanS:distribution-number-fixed}.
	Note that in the case where $\ell$ is prime, which we are currently considering, the ``BKLPR heuristic'' first appeared as the ``Poonen-Rains heuristic'' \cite{poonenR:random-maximal-isotropic-subspaces-and-selmer-groups}, whose explicit formula is given by \cite[Conjecture 1.1(a)]{poonenR:random-maximal-isotropic-subspaces-and-selmer-groups}. By inspection, this agrees with the distribution 
	appearing in 
	\cite[Theorem 2.9(3)]{fulmanS:distribution-number-fixed}, yielding our
	claim that
	$d^m_{\on{TV}}(R_{\ell,n}, \bklpr \ell) \ll \ell^{-((n/2)^2-\varepsilon)}$.
	\end{proof}
	
	Proceeding with the proof of
	\autoref{lemma:kernel-and-bklpr-distributions},
	we next explain why the Markov properties established in \cite[Theorem 5.1 and Theorem
	5.13]{fengLR:geometric-distribution-of-selmer-groups} for the
	$R_{\ell^j,n}$ and $\bklpr {\ell^j}$ imply that we also obtain
	convergence in $m$-total variation distance
	$\lim_{n \to \infty} R_{\ell^j,n} \to \bklpr {\ell^j}$.
	Technically, \cite[Theorem 5.1]{fengLR:geometric-distribution-of-selmer-groups}
	is only stated in the case the quadratic space has even rank. However,
	the proof for $\ell$ odd does not use the assumption that the rank is
	even.
	Although the BKLPR distribution only varies over even dimensional vector
	spaces, we have showed above that 
	$\lim_{n \to \infty} R_{\ell,n} = \bklpr {\ell}$.
	Since both distributions satisfy the same Markov
	property relating the $\mod \ell^j$ and the $\mod \ell^{j-1}$ versions, 
	the $m$-total variation distance also tends to $0$ between the
	$\bmod \ell^j$ distributions, and so
	$\lim_{n \to \infty} R_{\ell^j,n} \to \bklpr {\ell^j}$
	in $m$-total variation distance.

	To obtain the case of general $\nu$, write $\nu = \prod_{\ell}
	\ell^{a_\ell}$.
	The various distributions $\bklpr {\ell^{a_\ell}}$ are not in general
	independent, but they are independent after conditioning on the parity
	of the rank of their reduction $\bmod \ell$.
	Similarly, the distributions $R_{\ell^{a_\ell},n}$ are not independent,
	but they are independent after conditioning on the value of the
	coset of $\Omega(Q_{\nu,n})$ in $G_{\nu,n}$,
	as
	$\Omega(Q_{\nu, n}) = \prod_{\text{prime }
	\ell \mid \nu} \Omega(Q_{\ell^{a_\ell},n})$.
	We therefore obtain that the distribution
	of any specified coset of
	$\Omega(Q_{\nu,n})$ with specified
	value of $D_{Q_{\nu,n}}$ approaches the distribution
	$\bklpr {\ell^{a_\ell}}$, conditioned on
	the parity of the rank as $n \to \infty$,
	in $m$-total variation distance.
	Summing over different cosets on both sides gives the claimed
	convergence in $m$-total variation distance
	$\lim_{n \to \infty} R_{\nu,n}\to\bklpr \nu$.

	To conclude, it remains to deal with $(2)$.
	This is completely analogous to the proof of $(1)$, but where one
	compares distributions to random kernels of special orthogonal groups at
	each step.
	The distribution of $\dim \ker(g - \id)$ for $g \in \so(Q)$, for $(V,Q)$
	over $\mathbb F_\ell$ can be deduced from the distribution over $g \in
	\o(Q)$ using \autoref{lemma:1-eigenvalue-parity}.
	Namely, \autoref{lemma:1-eigenvalue-parity} shows that $\dim \ker(g -
	\id) \equiv \dim V \bmod 2$ for $g \in \so(Q)$. Since of $\o(Q)$ elements are equally likely to lie
	in $\so(Q)$ and $\o(Q) - \so(Q)$, we find 
	\begin{align*}
		\prob(\dim \ker(g - \id) = s | g \in
	\o(Q)) = \frac{1}{2} \prob(\dim \ker(g - \id) = s | g \in
	\so(Q))	
\end{align*}
when $s \equiv \dim V \bmod 2.$

	One can then obtain analogous asymptotic bounds on
	$d_{\on{TV}}^m(R^{\rk}_{\ell,n},\paritybklpr {\ell} {\dim V
	\bmod 2})$ to those proven in \autoref{lemma:tv-bound-random-to-bklpr},
	using
	these explicit formulas. Next one can use the Markov property to obtain
	analogous bounds on $d_{\on{TV}}^m(R^{\rk}_{\ell^j,n},\paritybklpr
		{\ell^j} {\dim V_{\nu,n}
	\bmod 2})$.
	Finally, one can use the Chinese remainder theorem to obtain analogous
	bounds on $d_{\on{TV}}^m(R^{\rk}_{\nu,n},\paritybklpr {\nu} {\dim
			V_{\nu,n}
	\bmod 2})$.
\end{proof}

\subsection{Connected components of Selmer stacks}
\label{subsection:connected-components}

We are now ready to prove the key input to a ``$q \to \infty$ first, then $n \to \infty$`` version
of our main result, which amounts to counting connected components of Selmer
stacks. 

In \autoref{proposition:geometric-moments}, we
combine the above to compute the number of components of Selmer stacks.
To compute this number of connected components, we will combine our big
monodromy result from \autoref{proposition:big-monodromy-composite} with the
convergence result of \autoref{lemma:kernel-and-bklpr-distributions} to deduce
that the number of components agrees with moments of the BKLPR distribution.
Following this, in \autoref{theorem:point-counting-computation} we combine the above
with our main homological stability theorem to compute the moments of Selmer
groups in quadratic twist families.

\begin{proposition}
	\label{proposition:geometric-moments}
	Maintain hypotheses as in \autoref{notation:quadratic-twist-notation},
	\autoref{hypotheses:big-monodromy-assumptions}, 
	\autoref{notation:selmer-space-moment-notation},
and \autoref{example:abelian-special-fiber},
	so that $\mathscr F_b \simeq A[\nu]$.
	Take $b = \on{Spec} \mathbb F_q \in B$ a closed point, and suppose the
	bound on $n$ from \eqref{equation:bound-on-n} is satisfied.
	\begin{enumerate}
		\item 
Every connected component of $\selspacemoments {\mathscr F^n_b} H$ is geometrically connected
	and the number of such connected components is equal to $\mathbb E(\#
	\hom(\bklpr \nu, H))$
	for $n$ sufficiently large, depending on $H$. 
\item 
Every connected component of $\rankselspacemoments {\mathscr F^n_b} H$ is geometrically connected
	and the number of such connected components is equal to $\mathbb E(\#
	\hom(\paritybklpr \nu {\rk V_{\mathscr F^n_b} \bmod 2}, H))$
	for $n$ sufficiently large, depending on $H$. 
	\end{enumerate}
\end{proposition}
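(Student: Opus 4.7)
The plan is to exhibit connected components of $\selspacemoments{\mathscr F^n_b}{H}$ as orbits of the monodromy action on a geometric fiber, count these orbits via Burnside's lemma, and identify the resulting expected value with the BKLPR moment through the big monodromy input provided by Proposition~\ref{proposition:big-monodromy-composite} and Lemma~\ref{lemma:determinant-monodromy}.

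First I would observe that the geometric fiber of $\selspace{\mathscr F[\nu_i]^n_b}$ over a chosen geometric point $\bar{x}\in\twoconf{n}{U_b}{b}$ is $V_{\mathscr F^n_b[\nu_i]}$ by Definition~\ref{definition:selmer-sheaf}, and that under the hypotheses imported via \autoref{proposition:rank-description} one has a canonical identification $V_{\mathscr F^n_b[\nu_i]}=V_{\mathscr F^n_b}[\nu_i]$ (via the long exact sequence for $0\to\mathscr F^n_b[\nu_i]\to\mathscr F^n_b\xrightarrow{\nu_i}\mathscr F^n_b[\nu/\nu_i]\to 0$ and vanishing of $H^0$). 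Taking the product identifies the fiber of $\selspacemoments{\mathscr F^n_b}{H}$ with $\hom(H,V_{\mathscr F^n_b})$, on which the arithmetic and geometric monodromy groups $G^{\mathrm{arith}}\supseteq G^{\mathrm{geom}}\subseteq\o(Q_{\mathscr F^n_b})$ act through their action on $V_{\mathscr F^n_b}$. Connected (resp.\ geometric connected) components of $\selspacemoments{\mathscr F^n_b}{H}$ then correspond to $G^{\mathrm{arith}}$-orbits (resp.\ $G^{\mathrm{geom}}$-orbits), and Burnside's lemma gives
\[
\#\{\text{orbits of }G\}\ =\ \frac{1}{\#G}\sum_{g\in G}\#\hom(H,\ker(g-\id)).
\]

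Proposition~\ref{proposition:big-monodromy-composite} provides $\Omega(Q_{\mathscr F^n_b})\subseteq G^{\mathrm{geom}}\subseteq G^{\mathrm{arith}}$ with $G^{\mathrm{geom}}\not\subseteq\so(Q_{\mathscr F^n_b})$, which are exactly the hypotheses of Lemma~\ref{lemma:kernel-and-bklpr-distributions}(1). Hence the distribution of $\ker(g-\id)$ for uniform random $g$ in either $G^{\mathrm{geom}}$ or $G^{\mathrm{arith}}$ converges to $\bklpr{\nu}$ in $m$-total variation distance for every $m$. Since $\#\hom(H,W)\leq(\#W)^{d(H)}$ for finite $\Z/\nu\Z$-modules $W$, where $d(H)$ is the minimum number of generators of $H$, taking $m=d(H)$ converts total-variation convergence into convergence of expected values,
\[
\frac{1}{\#G^{\bullet}}\sum_{g\in G^{\bullet}}\#\hom(H,\ker(g-\id))\ \longrightarrow\ \EE(\#\hom(H,\bklpr{\nu}))\ =\ \EE(\#\hom(\bklpr{\nu},H)) \qquad (n\to\infty),
\]
for both $\bullet\in\{\mathrm{arith},\mathrm{geom}\}$, where the last equality uses $\#\hom(A,B)=\#\hom(B,A)$ for finite abelian groups of common exponent dividing $\nu$. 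The crucial observation is that each side is a non-negative integer: the left side by Burnside, and the right side by Proposition~\ref{proposition:bklpr-moments} together with the decomposition $\#\hom(X,H)=\sum_{K\leq H}\#\surj(X,K)$. A convergent sequence of integers with integer limit is eventually constant, so for all sufficiently large $n$ (depending on $H$) both orbit counts equal $\EE(\#\hom(\bklpr{\nu},H))$ exactly. Their equality forces each geometric orbit to be an entire arithmetic orbit, establishing geometric connectedness and completing part~(1).

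For part~(2), the double cover $\rankcover{n}{\mathscr F}\to\twoconf{n}{U_b}{b}$ is defined in Definition~\ref{definition:rank-cover} by the character $\on{pr}_1\circ D_{Q_{\mathscr F^n_b}}\circ\rho$, and Lemma~\ref{lemma:determinant-monodromy} identifies the image of $D_{Q_{\mathscr F^n_b}}\circ\rho$ with the diagonal $\Z/2\Z\subseteq\prod_{\ell\mid\nu}\Z/2\Z$ both arithmetically and geometrically. Passing from $\selspacemoments{\mathscr F^n_b}{H}$ to $\rankselspacemoments{\mathscr F^n_b}{H}$ therefore replaces each monodromy group $G^{\bullet}$ by its index-two subgroup $G^{\bullet}\cap\so(Q_{\mathscr F^n_b})$, which still contains $\Omega(Q_{\mathscr F^n_b})$ and now lies in $\so(Q_{\mathscr F^n_b})$. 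These are the hypotheses of Lemma~\ref{lemma:kernel-and-bklpr-distributions}(2), yielding convergence of the analogous orbit-count expected values to $\EE(\#\hom(\paritybklpr{\nu}{\rk V_{\mathscr F^n_b}\bmod 2},H))$, and the integrality/eventual-constancy argument of part~(1) then finishes the proof. The main obstacle is conceptual rather than computational: one must control the arithmetic and geometric monodromy simultaneously with sufficient precision that the eventual-constancy-of-integer-sequences argument produces both the orbit count and the geometric-connectedness assertion in a single stroke; Proposition~\ref{proposition:big-monodromy-composite} together with Lemma~\ref{lemma:determinant-monodromy} supplies exactly this simultaneous control.
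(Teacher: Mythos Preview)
Your proposal is correct and follows essentially the same route as the paper's proof: identify components with monodromy orbits on the fiber, count orbits by Burnside, invoke Proposition~\ref{proposition:big-monodromy-composite} and Lemma~\ref{lemma:determinant-monodromy} to place both the arithmetic and geometric monodromy groups in the regime of Lemma~\ref{lemma:kernel-and-bklpr-distributions}, and then pass to the limit. One point you make explicit that the paper leaves implicit is the integrality step: both the Burnside orbit count and $\mathbb{E}(\#\hom(\bklpr{\nu},H))$ are nonnegative integers (the latter via Proposition~\ref{proposition:bklpr-moments}), so convergence forces eventual equality, which then yields geometric connectedness by comparing the arithmetic and geometric orbit counts. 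The paper instead first treats free $H$ and reduces general $H$ to that case, while you go directly via $m$-total variation with $m=d(H)$; these are equivalent. Your identification $V_{\mathscr F^n_b[\nu_i]}\cong V_{\mathscr F^n_b}[\nu_i]$ is correct under the standing hypotheses (it is implicit in the paper's description of the diagonal action), though strictly speaking the paper works with the product $\prod_i V_{\mathscr F[\nu_i]^n_b}$ rather than $\hom(H,V_{\mathscr F^n_b})$ directly.
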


\begin{remark}
	\label{remark:}
	There has been much recent work,
	notably
	\cite{lipnowskiST:cohen-lenstra-roots-of-unity} and
	\cite{sawinW:conjectures-for-distributions-containing-roots-of-unity},
	studying versions of the Cohen-Lenstra heuristics in the presence of
	roots of unity.
	When working over function fields, the difference in behavior of the Cohen-Lenstra heuristics
	when the base field has certain roots of unity, 
	can be traced back to 
	a certain moduli space whose connected components are not all
	geometrically connected.
	However, in the context of the BKLPR heuristics,
	\autoref{proposition:geometric-moments} shows the connected components
	are always geometrically connected. This explains why the BKLPR
	heuristics are not sensitive to roots of unity in the base field.
\end{remark}
\begin{remark}
	\label{remark:}
	We note that \autoref{proposition:geometric-moments} is quite closely related to the main results of
\cite{parkW:average-selmer-rank-in-quadratic-twist-families}. 
Although it is not exactly stated in this language, it follows from the
Lang-Weil bounds that they prove
a version of \autoref{proposition:geometric-moments}
in the special case that $H$ is of the form $\mathbb Z/\ell \mathbb Z$ for $\ell
\geq 5$ a prime, and $A$ an elliptic curve.
Both of our proofs follow a similar approach, and their proof is essentially a
special case of ours.
\end{remark}

\begin{proof}
	As a first step, note that the monodromy representation $D_{Q_{\mathscr
	F^n_b}}\circ \rho_{\mathscr F^n_b}$ surjects onto the diagonal copy of $\mathbb
	Z/2 \mathbb Z$ by \autoref{lemma:determinant-monodromy}.
	We first deal with case $(1)$.
	Let $\overline b$ denote a geometric point over $b$.
	Take $G_{\nu,n}$ to be the arithmetic monodromy group at $b$,
		$\im \rho_{\mathscr F^n_b}$.

	This is a union of cosets of the geometric monodromy 
	$\im \rho_{\mathscr F^n_{\overline b}}$ in the orthogonal group,
	so is not contained in the special orthogonal group by
	\autoref{proposition:big-monodromy-composite}, as we are assuming $n$
	satisfies the bound of \eqref{equation:bound-on-n}.
	Therefore, $G_{\nu,n}$
	satisfies the hypotheses of
	\autoref{lemma:kernel-and-bklpr-distributions}(1).
 Let $R_{\nu,n}$ denote the distribution of $\ker(g - \id)$ for $g
	\in G_{\nu,n}$ a uniform random element.
	In what follows,
	we will show 
	$\mathbb E(\# \hom(R_{\nu,n}, H))$ agrees with the number of connected
	components of $\selspacemoments {\mathscr F^n_b} H$.
	Granting this, and
	using \autoref{lemma:kernel-and-bklpr-distributions}, which shows that
	the
	$R_{\nu,n}$ converge in $m$-total variation distance to $\bklpr \nu$,
	we find 
	$\lim_{n \to \infty} \mathbb E(\# \hom(R_{\nu,n}, H))$ converges to
	$\mathbb E(\# \hom(\bklpr \nu, H))$, whenever $H$ is a free $\mathbb
	Z/\nu \mathbb Z$-module of rank $m$.

	Having shown the desired convergence for free $H$, we claim that the general case that $H$ is a $\mathbb Z/\nu \mathbb Z$-module with $m$
	generators follows from the case that $H$ is a free module with $m$
	generators. 
	Indeed, it suffices to show the 
	postulation that
	homomorphisms to such $H$ form a subset of homomorphisms to $(\mathbb
	Z/\nu \mathbb Z)^m$. For this choose an injection $H \to \left( \mathbb
		Z/\nu \mathbb Z
	\right)^m$. For any finite group $K$, $\hom(K, H) \hookrightarrow
	\hom(K, \left( \mathbb Z/\nu \mathbb Z \right)^m)$ is injective. Hence
	we obtain the postulation, and therefore the claim.

	It remains to show $\mathbb E(\# \hom(R_{\nu,n}, H))$ agrees with the number of connected
	components of $\selspacemoments {\mathscr F^n_b} H$, all of which are
	geometrically connected.
	This follows from a standard monodromy argument and Burnside's lemma, as we now explain.
	The action of $G_{\nu,n}$ on $V_{\mathscr F^n_b}$ is via the
	standard representation of the orthogonal group on its underlying vector
	space.
	Let $H = \prod_{i=1}^m \mathbb Z/\nu_i \mathbb Z$.
	Then, the action
	$\phi_{\mathscr F^n_b, H} : G_{\nu,n} \to \aut\left(\prod_{i=1}^m V_{\mathscr
	F[\nu_i]^n_b}\right)$
	is via the diagonal action of the orthogonal group on $\prod_{i=1}^m
	V_{\mathscr F[\nu_i]^n_b}$:
	$\phi_{\mathscr F^n_b,H}(g)(v_1, \ldots, v_m) = (gv_1, \ldots,
	gv_m)$, where $g \in G_{\nu,n}$, $v_i \in V_{\mathscr F[\nu_i]^n_b}$, and $gv_i$ denotes the
	standard action of an element of an orthogonal group on its
	underlying free module.
	Hence, the number of connected components of $\selspacemoments
	{\mathscr F^n_b} H$
	is equal to the number of orbits of $G_{\nu,n}$ on $\prod_{i=1}^m
	V_{\mathscr F[\nu_i]^n_b}$
	under the above diagonal action $\phi_{\mathscr F^n_b,H}$.
	Now, using Burnside's lemma, this number of orbits is equal to
	$\frac{1}{\# G_{\nu,n}} \sum_{g \in G_{\nu,n}} \#
	\ker(\phi_{\mathscr F^n_b, H} (g) - \id)$. 
	Noting that an element in $\ker(\phi_{\mathscr F^n_b, H} (g) - \id)$
	is a tuple $(v_1, \ldots, v_m)$ so that $g v_i = v_i$ and $\nu_i v_i =
	0$,
	we can identify $\ker(\phi_{\mathscr
			F^n_b,
	H} (g) - \id) \simeq \hom(H,\ker\phi_{\mathscr F^n_b, \mathbb Z/\nu
	\mathbb Z}(g) - \id)$. Hence, 
	\begin{align*}
	\frac{1}{\# G_{\nu,n}} \sum_{g \in G_{\nu,n}} \#
	\ker(\phi_{\mathscr F^n_b, H} (g) - \id) 
	&=
	\frac{1}{\# G_{\nu,n}} \sum_{g \in G_{\nu,n}} \#\hom(H,\ker\phi_{\mathscr
			F^n_b, \mathbb Z/\nu
	\mathbb Z}(g) - \id) \\
	&=
	\frac{1}{\# G_{\nu,n}} \sum_{g \in G_{\nu,n}} \#\hom(\ker\phi_{\mathscr
			F^n_b, \mathbb Z/\nu
	\mathbb Z}(g) - \id, H) \\
	&= \mathbb E(\#
	\hom(R_{\nu,n}, H)).
	\end{align*}
	
	The same argument as above goes through if one replaces $G_{\ell,n}$ with the
	geometric monodromy group. This shows the number of
	components over $\overline{\mathbb F}_q$ is also
	$\mathbb E(\# \hom(\bklpr \nu, H))$ for $n$ sufficiently large,
	and so the number of components over $\overline{\mathbb F}_q$
	agrees with the number of
	connected components over $\mathbb F_q$. Therefore, every connected component is geometrically connected.

	To conclude, it remains to deal with case $(2)$.
	We note that 
	$\rk V_{\mathscr F^n_b} \bmod 2$ is independent of $n$, as follows from
	the formula in \autoref{proposition:rank-description}.
	This is completely analogous to $(1)$, but one uses
	\autoref{lemma:kernel-and-bklpr-distributions}(2) in place of
	\autoref{lemma:kernel-and-bklpr-distributions}(1),
	and therefore as output obtains 
	the number of components agrees with
	$\mathbb E(\# \hom(\paritybklpr \nu {\rk V_{\mathscr F^n_b}}, H))$
	instead of
	$\mathbb E(\# \hom(\bklpr \nu, H))$.
\end{proof}

Using the above computation of the connected components of our space, we are
able to combine it with our topological tools, the Grothendieck-Lefschetz
trace formula, and Deligne's bounds to deduce the $H$-moments of the distribution of Selmer groups in
quadratic twist families.

\begin{theorem}
	\label{theorem:point-counting-computation}
		Suppose $B = \spec R$ for $R$ a DVR of generic characteristic $0$
		with
	closed point $b$ with residue field $\mathbb F_{q_0}$ and geometric
	point $\overline b$ over $b$.
	Keep hypotheses as in \autoref{hypotheses:big-monodromy-assumptions}:
	Namely, 
suppose $\nu$ is an odd integer and $r \in \mathbb Z_{>0}$ so that every prime $\ell \mid \nu$
	satisfies $\ell > 2r + 1$.
	Let $C$ a smooth proper curve with geometrically connected fibers over $B$,
	$Z \subset C$ finite \'etale nonempty over $B$, and $U := C - Z$.
	Let $\mathscr F$ be a rank $2r$, tame, locally constant constructible, symplectically self-dual sheaf
	of free $\mathbb Z/\nu \mathbb Z$-modules over $U$.
	We assume there is some point $x \in C_{\overline b}$ at which
	$\drop_x(\mathscr F_{\overline b}[\ell]) = 1$ for every prime $\ell \mid \nu$.
	Also suppose
	$\mathscr F_{\overline b}[\ell]$ is irreducible
	for each $\ell \mid \nu$, and that 
	the map $j_* \mathscr F_{\overline b}[\ell^w] \to j_* \mathscr F_{\overline b}[\ell^{w-t}]$ is
	surjective for each prime $\ell \mid \nu$ such that $\ell^w \mid \nu$, and
	$w \geq t$.
Fix $A \to U_b$ as in \autoref{example:abelian-special-fiber}
and suppose the tame irreducible locally constant constructible symplectically self-dual sheaf $\mathscr F$
satisfies
$\mathscr F_b \simeq A[\nu]$.
	For any finite $\mathbb Z/\nu \mathbb Z$-module $H$, 
	and any finite field extension $\mathbb F_{q_0} \subset \mathbb F_q$,
	there are constants $C(H, \mathscr F)$ depending on $H$ and $\mathscr F$, but not on $q$
	or $n$, so that
	\begin{align}
		\label{equation:full-moments}
	\left|\frac{\# \selspacemoments {\mathscr F^n_B} H (\mathbb
	F_q)}{q^{n}} - \# \hom(\bklpr
	\nu ,H) \right| &\leq \frac{C(H, \mathscr F)}{\sqrt{q}} \\
	\label{equation:parity-moments}
		\left |\frac{\# \rankselspacemoments {\mathscr F^n_B}H (\mathbb F_q)}{
	q^{n}} - \# \hom(\paritybklpr
	\nu {\rk V_{\mathscr F^n_B} \bmod 2},H)\right| &\leq \frac{C(H, \mathscr F)}{\sqrt{q}}
	\end{align}
	for all even $n > C(H, \mathscr F)$, and all $q$ with $\sqrt{q} > C(H, \mathscr F)$.

	Moreover, suppose there is a point $\sigma \in Z(B)$ over which
	$\mathscr F$ has trivial inertia.
	There are 
	functions $f_{H,\mathscr F}(q)$,
	and
	positive constants 
	$I(H)$,
	$C(H, \mathscr F)$, and
	$J(\mathscr F, H)$ so that
	\begin{align}
		\label{equation:full-moments-stable}
	\left|\frac{\# \selspacemoments {\mathscr F^n_B} H (\mathbb
	F_q)}{q^{n}} - f_{H,\mathscr F}(q) \right| &\leq  \left(\frac{C(H,
\mathscr F)}{\sqrt{q}}\right)^{\frac{n-J(\mathscr F, H)}{2I(H)}} 
	\end{align}
	for all even $n > C(H, \mathscr F)$, and all $q$ with $\sqrt{q} > 2 C(H, \mathscr F)$.
\end{theorem}
\begin{proof}
	This follows
	from preceding results in our paper, together with the 
	Grothendieck-Lefschetz trace formula and Deligne's bounds, much in
	the same way that \cite[Theorem 8.8]{EllenbergVW:cohenLenstra} follows
	from \cite[Proposition 7.8]{EllenbergVW:cohenLenstra}. 
	The remainder of the proof is somewhat standard, but we spell out the
	details for completeness.
	
	We first explain \eqref{equation:full-moments} and
	\eqref{equation:parity-moments}.
Fix a point $b \in B$ with residue field $\mathbb F_q$ with geometric point
$\overline b$ over $b$.
Let $(Y_n)_{n \geq 1}$ be a sequence of stacks over $B$ which is either
either a sequence of the form $(\selspacemoments {\mathscr F^n_{B}} H)_{n \geq 1}$
	or $(\rankselspacemoments {\mathscr F^n_{B}} H)_{n \geq 1}$.
	Define the sequence $(W_n)_{n \geq 1}$ to be
	$W_n := (Y_n)_{\mathbb C}$, for some map $\spec \mathbb C \to B$.

	We next bound the cohomology groups of the geometric fiber of $Y_n$ over
	$\overline{b}$, via
	comparison to the cohomology of $W_n$.
Note that the $Y_n$ have coarse spaces which are finite \'etale covers of $\conf n U B$.
Note that there is a normal crossings
compactification of 
$\conf n U B$ by \autoref{corollary:hurwitz-compactification}.
It follows from \cite[Proposition 7.7]{EllenbergVW:cohenLenstra}
that the geometric generic fiber of $Y_n$ over $B$ has isomorphic cohomology to the
geometric special fiber of $Y_n$ over $B$.
Now, we will choose $\ell'$ to be a sufficiently large prime, which may even depend on
$n$. We will see in the course of the proof how large $\ell'$ needs to be. 
(It is enough to take $\ell'$ to be prime to 
$q, n!, \# \asp_{2r}(\mathbb Z/\nu \mathbb Z),$ and $2$.)
In other words,
if we use $X_n := (Y_n)_{\overline{\mathbb F}_q}$ for the geometric special
fiber, we obtain
$H^i(X_n, \mathbb Z/\ell' \mathbb Z) \simeq H^i(W_n, \mathbb Z/\ell' \mathbb
Z)$.
By \autoref{lemma:moments-cohomology},
the latter has dimension bounded by $K^{i+1}$, for some constant $K$ independent
of $n$.
Note that
$\dim H^i(X_n, \mathbb Z/\ell' \mathbb Z) \geq \rk H^i(X_n, \mathbb Z_{\ell'}) \geq
\dim H^i(X_n, \mathbb Q_{\ell'})$, so we also have that 
$H^i(X_n, \mathbb Q_{\ell'})$ is bounded by $K^{i+1}$.

Since $Y_n$ is a finite \'etale cover of
the smooth Deligne-Mumford stack $\qtwist n U b$, every connected component
is smooth and hence irreducible.
Let $Z_n$ denote the number of connected components of $X_n$.
Since all the connected components of $X_n$ are base changed from $\mathbb F_q$, by \autoref{proposition:geometric-moments}, proving
\eqref{equation:full-moments} and \eqref{equation:parity-moments} amounts to
proving
	\begin{align*}
		\left| \frac{\#Y_n(\mathbb F_q)}{q^{\dim X_n}} - Z_n\right| \leq
		\frac{C}{\sqrt{q}}
	\end{align*}
	where $C$ is a constant depending on the sequence $(X_n)_{n \geq 1}$, but not the subscript
	$n$.

	Since $X_n$ is smooth of dimension $n$, using Poincar\'e duality,
	$\dim H_{\on{c}}^{2n-i}(X_n, \mathbb
	Q_{\ell'}) = \dim H^i(X_n, \mathbb Q_{\ell'})$.
	We may then produce a
	a constant $D$, depending on the
	sequence $(X_n)_{n \geq 1}$, but not $n$, such that $\dim H_{\on{c}}^{2n-i}(X_n, \mathbb
	Q_{\ell'}) = \dim H^i(X_n, \mathbb Q_{\ell'}) \leq D^i$.
	For example, we can take $D = K^2$.

	Since every eigenvalue of geometric Frobenius $\frob_q$ acting on the compactly
	supported cohomology group $H_{\on{c}}^j(X_n, \mathbb Q_{\ell'})$ of the
	stack $X_n$ is bounded in absolute value by $q^{j/2}$, using Sun's
	generalization of Deligne's bounds to algebraic stacks
	\cite[Theorem 1.4]{sun:l-series-of-artin-stacks},
	we find
	\begin{equation}
		\label{equation:deligne-bound}
	\begin{aligned}
		&\left | q^{-\dim X_n} \sum_{j < 2 \dim X_n} (-1)^j \tr \left( \frob_q |
			H^j_{\on{c}}(X_n, \mathbb Q_{\ell'})\right) \right|
			\\
			&\leq q^{-\dim X_n} \sum_{j=0}^{2\dim X_n-1} q^{j/2} \dim H^j_c(X_n,
		\mathbb Q_{\ell'}) 
		\\
		&\leq q^{-\dim X_n} \sum_{j=0}^{2\dim X_n-1}q^{j/2} D^{2\dim X_n-j}
\\
		&\leq \sum_{k=1}^\infty \left( \frac{D}{\sqrt{q}}
	\right)^k.
	\end{aligned}
\end{equation}
	This is bounded by $2D/\sqrt{q}$ whenever $D/\sqrt{q} \leq 1/2$.
	Hence, taking $C :=2D$, we obtain
	\begin{align*}
		\left| q^{-\dim X_n} \sum_{j < 2\dim X_n} (-1)^j \tr \left( \frob_q |
			H^j_{\on{c}}(X_n, \mathbb Q_{\ell'}) 
	\right) \right| \leq \frac{C}{\sqrt{q}}
	\end{align*}
	whenever $C \leq \sqrt{q}$.
	Therefore, using the Grothendieck-Lefschetz trace formula, it is enough
	to show $\tr \left( \frob_q | H^{2 \dim X_n}_{\on{c}}(X_n, \mathbb
	Q_{\ell'})\right) = Z_n q^{\dim X_n}$
	for $n$ sufficiently large, say larger than some constant $C_1$. 	
	By Poincar\'e duality, this is equivalent to showing that there are $Z_n$
	connected components of $X_n$, all of which are defined over $\mathbb
	F_q$.
	Indeed, this was shown in 
	\autoref{proposition:geometric-moments}.
	Finally, we then take $C(H, \mathscr F)$ in the statement to be $\max(C,
	C_1)$, which proves \eqref{equation:full-moments} and
	\eqref{equation:parity-moments}.

	We conclude by briefly outlining how one may similarly obtain
	\eqref{equation:full-moments-stable} 
	by additionally using
	\autoref{theorem:frobenius-equivariance}.
	We assume $q$ is sufficiently large so that the hypotheses of
	\autoref{theorem:frobenius-equivariance} are satisfied; namely, that $\selspace {\mathscr G^{d,0,\infty}_{ B}}(\mathbb F_q)\neq
	\emptyset$. This will happen for all sufficiently large $q$ by the
	Lefschetz trace formula.
	We maintain the notation set up earlier in the proof.
	We use $J$ to denote $J(\mathscr F, H)$ from the theorem statement and
	$I$ to denote $I(H)$.
	By 
	\autoref{theorem:frobenius-equivariance},
	when $n > Ip + J$ is even,
	$\tr(\frob_q^{-1} | H^p(X_n, \mathbb Q_{\ell'}))$ takes on a
	value independent of $n$.
	Fixing $n$ even with $n > Ip + J$,
	let $t_{p}(q):=\tr(\frob_q^{-1} | H^p(X_n,
	\mathbb Q_{\ell'}))$,
	where as usual $\frob_q$ denotes geometric frobenius.
		Then define $f(q) := \sum_{j=0}^\infty (-1)^j
		t_{j}(q)$.
		(We use $f$ in place of the function 
			$f_{H,\mathscr F}$ 
			as in the theorem statement.)
	We claim that $f$ converges as a function in $q$ for $q$ sufficiently
	large.
	Indeed, \autoref{lemma:moments-cohomology} and Deligne's bounds on
	the eigenvalues of Frobenius acting on cohomology yield $|t_{p}(q)| < \frac{K^{p+1}}{q^{p/2}}$, and so $f(q)$ is bounded by a geometric
	series; see \eqref{equation:deligne-bound} and the surrounding
	paragraphs
	for a similar bounding argument which
	is spelled out in more detail.
	
	We now conclude \eqref{equation:full-moments-stable} 
	by applying the
	Grothendieck-Lefschetz trace formula.
	Note that the condition $n > Ip + J$ is equivalent to the condition $p <
	\frac{n-J}{I}$.
	Note here we are using that the Galois representation
	$H^{2n-i}_{\on{c}}(X_n, \mathbb Q_{\ell'})$ is identified with the
	Galois representation
	$H^{i}(X_n, \mathbb Q_{\ell'})^\vee(-n)$ via Poincar\'e duality, and so $q^n \cdot
	\tr(\frob_q^{-1} | H^{i}(X_n, \mathbb Q_{\ell'})) = \tr\left(
	\frob_q | H^{2n-i}_{\on{c}}(X_n, \mathbb Q_{\ell'}) \right)$.
	From this, it follows that
	$\frac{1}{q^n}\tr\left( \frob_q | H^{2n-j}_{\on{c}}(X_n, \mathbb Q_{\ell'})
	\right) = t_j(q)$.
	Using the above observation combined with the Grothendieck Lefschetz trace formula,
	the difference 
	$\left| \frac{\# X_n(\mathbb F_q)}{q^{n}} - f(q)
	\right|$ can be bounded by the sum of $\sum_{j=\frac{n-J}{I}}^\infty (-1)^j t_{j}(q)$
	and 
	\begin{align}
		\label{equation:unstable-small-cohomology-bound}
		\frac{1}{q^n} \sum_{j \leq 2n -\frac{n-J}{I}} (-1)^j \tr\left( \frob_q |
	H^j_{\on{c}}(X_n, \mathbb Q_{\ell'}) \right).
	\end{align}
	By a computation analogous to \eqref{equation:deligne-bound}, we can
	bound \eqref{equation:unstable-small-cohomology-bound} in absolute value by
	$\frac{C}{q^{\frac{n-J}{2I}}}$, for an appropriate constant $C$ not
	depending on $q$ or $n$, once $n$ is sufficiently large and $\sqrt{q} >
	2C$.
\end{proof}
\begin{remark}
	\label{remark:number-field}
Suppose one started with a setup as in
\autoref{theorem:point-counting-computation},
but where $B$ is a nonempty open in $\spec \mathscr O_K$, for $K$ a number field.
(Note that if one starts with this setup over $\spec K$, one can spread it out
to such a $B$.)
For any geometric point $\spec \overline{\mathbb F}_q \to B$, we
can identify the cohomology groups of the relevant moduli spaces (labeled $X_n$
in the proof of \autoref{theorem:point-counting-computation})
over $\spec \overline{\mathbb F}_q$
with the corresponding cohomology groups over the geometric generic point $\spec
\mathbb C \to B$,
(which are the cohomology of $W_n$ in the proof of
\autoref{theorem:point-counting-computation},)
independently of the choice of geometric point above.
Then, one could prove a result as in \autoref{theorem:point-counting-computation}, but with the limit in $q$ ranging
over primes of all but finitely many characteristics, instead of only
powers of a given prime power $q_0$.

\end{remark}

\begin{remark}
	\label{remark:independence-of-constants}
	Although the constants $C(H, \mathscr F)$ in
	\autoref{theorem:point-counting-computation}
	depend on $\mathscr F$ and $H$ as stated, they can in fact be chosen to be
	functions of $\nu$, the rank $2r$ of $\mathscr F$ and the degree $f+1$
	of $Z$, and the genus $g$ of $C$, as we next explain.

	One way to see this is via comparison to the complex numbers.
	Then, over the complex numbers, 
	the constants only depend on the topological type of the finite covering space
	associated to $\mathscr F$ over $U$.
	There are only finitely many such topological types once we fix
	$r, \nu$, and $f$,
	since the number of these types is bounded by the number of homomorphisms
	$\pi_1(\Sigma_{g,f+1}) \to \asp_{2r}(\mathbb Z/\nu \mathbb Z)$, of which
	there are only finitely many.
	Hence, the relevant constants $C(H, \mathscr F)$ can be taken to only depend on $r, \nu,
	f,g$, and $H$.
\end{remark}

\begin{remark}
	\label{remark:0-cohomology}
	Suppose the stable cohomology groups of spaces appearing in the proof of
	\autoref{theorem:point-counting-computation},
	which are not in the top degree,
	vanish.
	Then, via the Grothendieck-Lefschetz trace formula, one could deduce
	that the constants $C(H, \mathscr F)$ actually vanish.
	This would imply some of our main results, such as
\autoref{theorem:main-moments},
hold on the nose for fixed, sufficiently large $q$, depending on $H$, without the need for taking a large $q$
limit.
\end{remark}

\begin{remark}
	\label{remark:}
	It seems likely one could additionally find a function 
	$f_{H,\mathscr F}^{\on{rk}}(q)$
	as in the statement of \autoref{theorem:point-counting-computation}
	and
	positive constants 
	$I(H)$,
	$C(H, \mathscr F)$, and
	$J(\mathscr F, H)$ so that
	\begin{align}
		\label{equation:parity-moments-stable}
		\left |\frac{\# \rankselspacemoments {\mathscr F^n_B}H (\mathbb F_q)}{
	q^{n}} - f_{H,\mathscr F}^{\on{rk}}(q) \right| &\leq \left(\frac{C(H,
	\mathscr F)}{\sqrt{q}}\right)^{\frac{n-J(\mathscr F, H)}{2I(H)}}
	\end{align}
	for all even $n > C(H, \mathscr F)$, and all $q$ with $\sqrt{q} > 2 C(H, \mathscr F)$.
	For this, one would only need to generalize
	\autoref{theorem:frobenius-equivariance} to also work for the rank double
	cover.
	This seems quite doable, but we have opted not to carry it out as it was
	not required for our main theorems. We do, however, believe it would be
	quite interesting to work out.
\end{remark}

We conclude with a variant of
\autoref{theorem:point-counting-computation}, where the powers of $q$ appearing
in the denominators of
\eqref{equation:full-moments} and \eqref{equation:parity-moments}
are replaced by the number of points of the stack of quadratic twists.

\begin{corollary}
	\label{corollary:}
	With notation and hypotheses as in \autoref{theorem:point-counting-computation},
	after suitably changing the constants $C(H, \mathscr F)$,
	we also have
	\begin{align}
		\label{equation:full-moments-base}
	\left|\frac{\# \selspacemoments {\mathscr F^n_B}H (\mathbb
	F_q)}{ \# \qtwist n U B(\mathbb F_q)} - \# \hom(\bklpr
	\nu,H) \right| &\leq \frac{C(H, \mathscr F)}{\sqrt{q}} \\
	\label{equation:parity-moments-base}
		\left |\frac{\# \rankselspacemoments {\mathscr F^n_B}H (\mathbb F_q)}{
	 \# \rankcover n {\mathscr F}(\mathbb F_q)} - \#
	\hom(\paritybklpr
	\nu {\rk V_{\mathscr F^n_B} \bmod 2},H)\right| &\leq \frac{C(H, \mathscr F)}{\sqrt{q}}.
	\end{align}
	for all even $n> C(H, \mathscr F)$, and all $q$ with $\sqrt{q} > C(H, \mathscr F)$, and
 	$\gcd(q, 2\nu) = 1$.
\end{corollary}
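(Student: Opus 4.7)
The plan is to obtain both estimates by dividing the two conclusions of \autoref{theorem:point-counting-computation} by its specialization at $H = 0$, the trivial $\mathbb Z/\nu\mathbb Z$-module. For $H = 0$, an empty fiber product identifies $\selspacemoments{\mathscr F^n_B}{0}$ with $\twoconf n U B$ and $\rankselspacemoments{\mathscr F^n_B}{0}$ with $\rankcover n {\mathscr F}$, while $\#\hom(\bklpr\nu, 0) = \#\hom(\paritybklpr\nu i, 0) = 1$. Since $\rankcover n {\mathscr F}$ is finite \'etale over $\twoconf n U B$ (so has the same dimension), \autoref{theorem:point-counting-computation} applied to $H = 0$ specializes to
\begin{align*}
\left| \frac{\# \twoconf n U B(\mathbb F_q)}{q^{\dim \twoconf n U B}} - 1 \right| &\leq \frac{C(0)}{\sqrt q}, &
\left| \frac{\# \rankcover n {\mathscr F}(\mathbb F_q)}{q^{\dim \twoconf n U B}} - 1 \right| &\leq \frac{C(0)}{\sqrt q}.
\end{align*}

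Next, I would rewrite the ratios in \eqref{equation:full-moments-base} and \eqref{equation:parity-moments-base} as quotients of the expressions that are controlled in \autoref{theorem:point-counting-computation}, namely
\begin{align*}
\frac{\# \selspacemoments {\mathscr F^n_B} H (\mathbb F_q)}{\# \twoconf n U B(\mathbb F_q)} = \frac{\# \selspacemoments {\mathscr F^n_B} H (\mathbb F_q)/q^{\dim \twoconf n U B}}{\# \twoconf n U B(\mathbb F_q)/q^{\dim \twoconf n U B}},
\end{align*}
and analogously for the rank-cover version. After enlarging $C(H)$ so that $\sqrt q > C(H)$ forces the denominator to lie in $[1/2, 3/2]$, the elementary real-variable bound
\begin{align*}
\left| \tfrac{a}{b} - m \right| \leq 2\bigl(|a - m| + m \, |b - 1|\bigr), \quad \text{valid for } b \geq 1/2,
\end{align*}
applied to $a = \# \selspacemoments {\mathscr F^n_B} H (\mathbb F_q)/q^{\dim \twoconf n U B}$, $b = \# \twoconf n U B(\mathbb F_q)/q^{\dim \twoconf n U B}$, and $m = \# \hom(\bklpr \nu, H)$, yields \eqref{equation:full-moments-base} with the new constant $2(C(H) + \#\hom(\bklpr\nu,H) \cdot C(0))$, which still depends only on $H$ and $\mathscr F$.

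The proof of \eqref{equation:parity-moments-base} is identical, using $\#\rankcover n {\mathscr F}(\mathbb F_q)/q^{\dim \twoconf n U B}$ as the denominator in the ratio and $\paritybklpr\nu{\rk V_{\mathscr F^n_B} \bmod 2}$ in place of $\bklpr\nu$. There is no real obstacle here; the only inputs are the two bounds of \autoref{theorem:point-counting-computation}, the observation that setting $H = 0$ gives back the base stack and the rank cover, and the elementary quotient bound above.
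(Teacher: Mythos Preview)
Your proposal is correct and follows essentially the same approach as the paper's own proof: apply \autoref{theorem:point-counting-computation} with $H$ the trivial group to see that both $\#\twoconf n U B(\mathbb F_q)$ and $\#\rankcover n {\mathscr F}(\mathbb F_q)$ equal $q^{\dim \twoconf n U B}$ up to an error of $O(q^{-1/2})$, and then divide. Your write-up supplies more of the elementary real-variable details than the paper does, but the argument is the same.
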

\begin{proof}
	First, applying \autoref{theorem:point-counting-computation} 
	in the case $H$ is the trivial group gives that both
	$\#\qtwist n U B(\mathbb F_q)$ and
	$\#\rankcover n {\mathscr F}(\mathbb F_q)$ have
	$q^{n}$-points, up to an error
	of $C(\id)/\sqrt{q}$.

	Hence, in \autoref{theorem:point-counting-computation},
	after adjusting the constant $C(H, \mathscr F)$, we can freely replace
	$q^{n}$ appearing in the denominator in
	\eqref{equation:full-moments} and \eqref{equation:parity-moments}
	with $\#\qtwist n U B(\mathbb F_q)$ and $\#\rankcover n {\mathscr F}(\mathbb
	F_q)$.
\end{proof}

\section{Determining the distribution from the moments}
\label{section:determining-distribution}

In this section, we complete the proof of our main result.
In \autoref{subsection:approximating-distributions-by-moments}
we prove a probabilistic result, which we use to show that the distributions we are studying
are determined by their moments, conditioned on the parity of the $\ell^\infty$-Selmer
rank.
Then, in \autoref{subsection:proving-main-result}, we put everything together,
proving our main results in \autoref{subsubsection:proof-main-finite-field},
\autoref{subsubsection:proof-main-moments}, and
\autoref{subsubsection:proof-main-minimalist}.

\subsection{Approximating distributions by approximating moments}
\label{subsection:approximating-distributions-by-moments}

In \autoref{theorem:point-counting-computation}, we determined the moments of
distributions relating to Selmer groups, after taking appropriate limits.
We would like to show these moments determine the distribution.
If we knew the moments exactly, without taking a $q \to \infty$ limit,
we could appeal to 
\cite[Theorem 4.1]{nguyenW:local-and-global-universality}
to show the distribution is also determined.
The next general result will allow us to deal with this issue of taking the $q
\to \infty$ limit.
We thank Melanie Wood for pointing out the following argument, which simplifies
our previous approach.
\begin{proposition}
	\label{proposition:general-moments-to-distribution}
	Let $\mathcal N$ denote the set of isomorphism classes of finite abelian
	$\mathbb Z/\nu \mathbb Z$-modules and let $\mathcal S \subset \mathcal
	N$ denote a subset.
	Suppose $(X^i_j)_{i \in I, j \in J}$ form a set of $\mathcal
	S$-valued random variables, for $I, J$ two infinite subsets of the
	positive integers.
	Suppose there is some $\mathcal S$-valued random variable $Y$ so that
	\begin{enumerate}
		\item for every $H \in \mathcal N$ and for any fixed sufficiently large value of $i$ depending on
	$H$, 
	\begin{align*}
	\lim_{j
	\to \infty} \mathbb E \left( \# \surj(X^i_j, H)\right) = \mathbb E\left(
	\# \surj(Y, H)\right),
	\end{align*}
	and
\item for any sequence $(Y_s)_{s \geq 1}$ of $\mathcal S$-valued random
	variables such that 
	\begin{align*}
	\lim_{s \to \infty} \mathbb E\left(\# \surj(Y_s,
	H)\right) =
	\mathbb E \left(\# \surj(Y, H)\right),
	\end{align*}
we have $\lim_{s \to \infty} \prob(Y_s \simeq
	A) = \prob(Y \simeq A)$ for every $A \in \mathcal N$.
	\end{enumerate}
	Then, both
	\begin{align*}
		\lim_{j \to \infty} \limsup_{i \to \infty} \prob(X^i_j \simeq A)
		\text{ and }
		\lim_{j \to \infty} \liminf_{i \to \infty} \prob(X^i_j \simeq A)
	\end{align*}
	exist, and are equal to $\prob(Y\simeq A)$.
\end{proposition}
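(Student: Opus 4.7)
The plan is to show, for each $A \in \mathcal{N}$, that both $\lim_{j \to \infty} \limsup_{i \to \infty} \prob(X^i_j = A)$ and $\lim_{j \to \infty} \liminf_{i \to \infty} \prob(X^i_j = A)$ exist and equal $\prob(Y = A)$. Since $\liminf_i \leq \limsup_i$ for each fixed $j$, it suffices to verify the two one-sided inequalities $\limsup_j \limsup_i \prob(X^i_j = A) \leq \prob(Y = A)$ and $\liminf_j \liminf_i \prob(X^i_j = A) \geq \prob(Y = A)$.

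For the upper bound, I would argue by contradiction. Suppose for some $\delta > 0$ that $\limsup_j \limsup_i \prob(X^i_j = A) > \prob(Y = A) + 2\delta$, and enumerate $\mathcal{N} = \{H_1, H_2, \ldots\}$. The goal is to construct sequences $i_m \to \infty$ and $j_m \to \infty$ satisfying simultaneously: (a) $\prob(X^{i_m}_{j_m} = A) > \prob(Y = A) + \delta$ for all $m$, and (b) for each fixed $k$, $\mathbb{E}(\#\surj(X^{i_m}_{j_m}, H_k)) \to \mathbb{E}(\#\surj(Y, H_k))$ as $m \to \infty$. Once this is accomplished, hypothesis (2) applied to the sequence $Y_m := X^{i_m}_{j_m}$ forces $\prob(Y_m = A) \to \prob(Y = A)$, contradicting (a).

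The construction is a diagonal argument using hypothesis (1) and the countability of $\mathcal{N}$. For each $m$, I would first pick $i_m$ larger than $m$, $i_{m-1}$, and $\max_{k \leq m} i_0(H_k)$, where $i_0(H_k)$ is the threshold from hypothesis (1). By (1), $\lim_j \mathbb{E}(\#\surj(X^{i_m}_j, H_k)) = \mathbb{E}(\#\surj(Y, H_k))$ for every $k \leq m$, so there exists a threshold $J_m$ (depending on $i_m$) with all these moments within $1/m$ of the respective limits whenever $j \geq J_m$. The contradiction hypothesis guarantees infinitely many $j$ satisfy $\limsup_i \prob(X^i_j = A) > \prob(Y = A) + 2\delta$, so one can choose such a $j_m \geq J_m$ with $j_m > j_{m-1}$. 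Finally, since $\limsup_i \prob(X^i_{j_m} = A) > \prob(Y = A) + 2\delta$, arbitrarily large $i$ satisfy $\prob(X^i_{j_m} = A) > \prob(Y = A) + \delta$; I would update $i_m$ to some such $i'_m \geq i_m$. A further iteration, or an appeal to uniformity in hypothesis (1) (which is available in the intended application through the explicit error bound $C(H)/\sqrt{q}$ of \autoref{theorem:point-counting-computation}), preserves the moment convergence after this update.

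The lower bound for $\liminf$ follows by the symmetric argument, yielding the claimed equality. The main obstacle is the circularity in the construction of $(i_m, j_m)$: $i_m$ must be chosen in terms of $j_m$ so that $\prob(X^{i_m}_{j_m} = A)$ approximates the limsup in $i$, while $j_m$ must be chosen sufficiently large in terms of $i_m$ to guarantee moment convergence via hypothesis (1). This tension is resolved by exploiting two degrees of flexibility: infinitely many $j$ satisfy the contradictory bound (so $j_m$ can be taken arbitrarily large), and arbitrarily large $i$ achieve the limsup for each such $j_m$.
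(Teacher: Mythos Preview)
You correctly locate the crux: the order in which $i_m$ and $j_m$ must be chosen is genuinely circular under hypothesis (1) as stated, and ``further iteration'' does not break the loop---each adjustment of $i_m$ to secure (a) invalidates the $j_m$ chosen for (b), and conversely. The difficulty is intrinsic: hypothesis (1) controls $\lim_j$ for each fixed large $i$ but says nothing about $\limsup_i$ for fixed $j$. (Concretely, if $\nu=\ell$ is prime, $X^i_j$ is deterministically $0$ when $i\le j$ and $\mathbb Z/\ell\mathbb Z$ when $i>j$, and $Y=0$, then (1) and (2) hold yet $\lim_j\limsup_i\prob(X^i_j\simeq\mathbb Z/\ell\mathbb Z)=1$.) Your appeal to the uniform bound $C(H)/\sqrt q$ from \autoref{theorem:point-counting-computation} is exactly the right repair: it upgrades (1) so that, for all $i$ past a threshold depending only on $H$, the $H$-moment is within an error depending only on $j$, and then one may choose $j_m$ first and $i_m$ second without conflict.

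The paper's proof is organized differently. It does not argue by contradiction; it directly constructs a single diagonal $(i_s,j_s)$ along which every $H$-moment converges (pick $i_s$ past the thresholds for $H_1,\dots,H_s$, then $j_s$ large enough to make those $s$ moments $2^{-s}$-close), applies (2) to obtain $\prob(X^{i_s}_{j_s}\simeq A)\to\prob(Y\simeq A)$, and then invokes \cite[Lemma 2.22]{sawin:identifying-measures} to pass to $\limsup_j\limsup_i=\liminf_j\liminf_i=\prob(Y\simeq A)$, from which the stated conclusion follows by sandwiching. The double-limit step where your circularity arises is thus deferred to Sawin's lemma rather than argued from (1) and (2) within the paper.
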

\begin{proof}
Enumerate the countable set $\mathcal N$, so that $H_t$ is the $t$th
	element of $\mathcal N$.
	By our first assumption, for fixed sufficiently large $i$ depending on $H$,
	$\lim_{j \to \infty} \mathbb E(\# \surj(X^i_j, H)) = \mathbb E(\#
	\surj(Y, H))$.
	This implies we can find a sequence of pairs $(i_s, j_s)_{s \geq 1}$
	so that for every $s \geq 1$ and every $t \leq s$, 
	\begin{align*}
		\left |\mathbb E (\#\surj(X^{i_s}_{j_s}, H_t)) - \mathbb E(\#
		\surj(Y, H_t)) \right | < 2^{-s}.
	\end{align*}
	This implies that $\lim_{s \to \infty} \mathbb E (\#\surj(X^{i_s}_{j_s},
	H)) = \mathbb E\left( \# \surj\left( Y, H \right) \right)$ for every $H
	\in \mathcal N$.
	Hence, by our second assumption, applied to the sequence $(Y_s)_{s \geq
1}$ defined by $Y_s :=
	X^{i_s}_{j_s}$, we find 
	$\lim_{s \to \infty} \prob(X^{i_s}_{j_s} \simeq
	A) = \prob(Y \simeq A)$.
	Using \cite[Lemma 2.22]{sawin:identifying-measures}, 
	we find 
	\begin{align*}
	\limsup_{j \to \infty} \limsup_{i \to \infty} \prob(X^i_j \simeq
	A) =
	\liminf_{j \to \infty} \liminf_{i \to \infty} \prob(X^i_j \simeq
	A)
	= \prob(Y \simeq A).
	\end{align*}

	To conclude, 
	note that 
	\begin{align*}
	\limsup_{j \to \infty} \limsup_{i \to \infty} \prob(X^i_j \simeq
	A) \geq \liminf_{j \to \infty} \limsup_{i \to \infty} \prob(X^i_j \simeq
	A) \geq \liminf_{j \to \infty} \liminf_{i \to \infty} \prob(X^i_j \simeq
	A),
	\end{align*}
and since the outer two limits are equal, they also agree with the
	middle one. This implies $\lim_{j \to \infty} \limsup_{i \to \infty} \prob(X^i_j \simeq
	A)$ exists and agrees with $\prob(Y \simeq A)$. Analogously, we also
	find $\lim_{j \to \infty} \liminf_{i \to \infty} \prob(X^i_j \simeq
	A)$ exists and agrees with $\prob(Y \simeq A)$.
\end{proof}

\subsection{Proving the main result}
\label{subsection:proving-main-result}

We can now prove our main result.
To set up notation,
suppose we are in the setting of \autoref{example:abelian-special-fiber}, so
that $A
\to U_b$ is an abelian scheme with $\mathscr F_b \simeq A[\nu]$.
For $x \in \qtwist n {U_b} b$, and $A_x \to U_x$ the corresponding abelian scheme over a curve, we use
$\sel_\nu(A_x)$ to denote the $\nu$-Selmer group of the generic fiber of $A_x$
over $U_x$.
In the following theorem, we use the standard convention that the $\mathbb F_q$-points of a stack, such as $\qtwist n U B$,
are counted weighted by the inverse of the size of the
automorphism group of that point.
Also recall the notation introduced in
\autoref{definition:actual-selmer-distribution} for the distributions of Selmer
groups.
The following statement is nearly our main result, but here we start out over a DVR,
instead of a finite field.
Following the proof of this, we will need to lift all our data from a finite field
to a DVR in order to deduce \autoref{theorem:main-finite-field}.
\begin{theorem}
	\label{theorem:main-dvr}
	Suppose $B = \spec R$ for $R$ a DVR of generic characteristic $0$ with
	closed point $b$ with residue field $\mathbb F_{q_0}$ and geometric
	closed point $\overline b$ over $b$.
	Keep hypotheses as in \autoref{hypotheses:big-monodromy-assumptions}:
	Namely, 
suppose $\nu$ is an odd integer and $r \in \mathbb Z_{>0}$ so that every prime $\ell \mid \nu$
	satisfies $\ell > 2r + 1$.
	Let
	$C$ be a smooth proper curve with geometrically connected fibers over $B$,
	$Z \subset C$ finite \'etale nonempty over $B$, and $U := C - Z$.
	Let $\mathscr F$ be a rank $2r$, tame, locally constant constructible, symplectically self-dual sheaf
	of free $\mathbb Z/\nu \mathbb Z$-modules over $U$.
	We assume there is some point $x \in C_{\overline b}$ at which
	$\drop_x(\mathscr F_{\overline b}[\ell]) = 1$ for every prime $\ell \mid \nu$.
	Also suppose
	$\mathscr F_{\overline b}[\ell]$ is irreducible
	for each $\ell \mid \nu$, and that 
	the map $j_* \mathscr F_{\overline b}[\ell^w] \to j_* \mathscr F_{\overline b}[\ell^{w-t}]$ is
	surjective for each prime $\ell \mid \nu$ such that $\ell^w \mid \nu$, and
	$w \geq t$.
Fix $A \to U_b$ as in \autoref{example:abelian-special-fiber}
and suppose the tame irreducible locally constant constructible symplectically
self-dual sheaf of free $\mathbb Z/\nu \mathbb Z$-modules $\mathscr F$
satisfies
$\mathscr F_b \simeq A[\nu]$.
With notation as in \autoref{definition:actual-selmer-distribution},
we have that, for each $\mathbb Z/\nu \mathbb Z$-module $H$,
		\begin{equation}
	\begin{aligned}
		\label{equation:total-distribution-limit}
		&\lim_{\substack{q \to \infty \\ \mathbb F_{q_0} \subset \mathbb
		F_q}} \limsup_{\substack{n \to \infty \\ n
		\hspace{.1cm} \mathrm{even}}} \prob(X_{A[\nu]^n_{\mathbb
		F_q}} \simeq H) \\
		&\lim_{\substack{q \to \infty \\ \mathbb F_{q_0} \subset \mathbb F_q}} \liminf_{\substack{n \to \infty \\ n
		\hspace{.1cm} \mathrm{even}}} \prob(X_{A[\nu]^n_{\mathbb
		F_q}} \simeq H) \\
	\end{aligned}
\end{equation}
exist and agree with $\prob(\bklpr \nu\simeq H)$.
	Similarly, for $i \in \{0,1\}$, 
	\begin{equation}
	\begin{aligned}
		\label{equation:parity-limit}
		&\lim_{\substack{q \to \infty \\ \mathbb F_{q_0} \subset \mathbb F_q}} \limsup_{\substack{n \to \infty \\ n
		\hspace{.1cm} \mathrm{even}}} \prob( X^i_{A[\nu]^n_{\mathbb
		F_q}}\simeq H) \\
		&\lim_{\substack{q \to \infty \\ \mathbb F_{q_0} \subset \mathbb F_q}} \liminf_{\substack{n \to \infty \\ n
		\hspace{.1cm} \mathrm{even}}} \prob(X^i_{A[\nu]^n_{\mathbb
		F_q}} \simeq H) \\
	\end{aligned}
\end{equation}
exist and agree with $\prob(\paritybklpr \nu i \simeq H)$.
\end{theorem}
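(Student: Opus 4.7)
The plan is to apply \autoref{proposition:general-moments-to-distribution} to establish the parity-conditional statement \eqref{equation:parity-limit} for each $i \in \{0,1\}$ separately, and then combine the two parities to obtain \eqref{equation:total-distribution-limit}. Fix $i \in \{0,1\}$ and take, in the notation of \autoref{proposition:general-moments-to-distribution}, the variables $X^n_q := X^i_{A[\nu]^n_{\mathbb F_q}}$ and target $Y := \paritybklpr \nu i$. The two parities arise naturally from the splitting in \autoref{lemma:distribution-supported-squares}, which writes $X_{A[\nu]^n_{\mathbb F_q}} = p_{i,q}^n X^i_{A[\nu]^n_{\mathbb F_q}} + p_{1-i,q}^n X^{1-i}_{A[\nu]^n_{\mathbb F_q}}$ with $p_{i,q}^n := \prob(X_{A[\nu]^n_{\mathbb F_q}} \in \mathcal N^i)$.

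To verify hypothesis (1) of \autoref{proposition:general-moments-to-distribution} I will show that the $H$-surjection moment of $X^i_{A[\nu]^n_{\mathbb F_q}}$ converges, as $q \to \infty$ for fixed sufficiently large $n$, to $\#\sym^2 H$, which equals $\mathbb E(\#\surj(\paritybklpr \nu i, H))$ by \autoref{proposition:bklpr-moments}. By M\"obius inversion it is equivalent to prove convergence of homomorphism moments. For the unique parity $i_0 = \rk V_{\mathscr F^n_B} \bmod 2$ (which is independent of $n$, since $(2g-2+n)\cdot 2r$ is always even by the rank formula of \autoref{proposition:rank-description}), \autoref{lemma:distribution-equals-rank-cover-distribution} identifies this moment with the ratio $\#\rankselspacemoments{\mathscr F^n_B}H(\mathbb F_q)/\#\rankcover n {\mathscr F_b}(\mathbb F_q)$, whose $q \to \infty$ limit is supplied by \eqref{equation:parity-moments-base}. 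For the opposite parity $1 - i_0$, I use the mixture identity above together with the unconditional moment convergence \eqref{equation:full-moments-base} and the fact that $p_{i,q}^n \to \tfrac{1}{2}$; the latter follows from \autoref{lemma:image-of-rank-cover-has-selmer-parity} combined with \autoref{lemma:determinant-monodromy} and \eqref{equation:full-moments-base} applied with $H$ trivial, which together show that $\rankcover n {\mathscr F_b}$ is a genuinely nontrivial \'etale double cover of $\twoconf n {U_b} b$, so has asymptotically half as many $\mathbb F_q$-points. The identity $\bklpr \nu = \tfrac{1}{2}\paritybklpr \nu 0 + \tfrac{1}{2}\paritybklpr \nu 1$ from \autoref{definition:bklpr-rank-selmer} then lines up the target values on both sides.

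Hypothesis (2) of \autoref{proposition:general-moments-to-distribution} requires that $\paritybklpr \nu i$ is uniquely determined, among $\mathcal N^i$-valued distributions, by its surjection moments. This reduces to the prime power case via the Chinese Remainder decomposition of $\paritybklpr \nu i$ recalled in \autoref{subsubsection:bklpr-n-selmer}, in which the $\ell$-parts are independent once one conditions on the common rank parity. In the prime power case $\paritybklpr {\ell^k} i$ is the conditional torsion cokernel distribution of a random alternating matrix, for which moment rigidity in the sense required follows from Sawin--Wood-style moment uniqueness theorems applicable to distributions with moments of the polynomial growth rate $\#\sym^2 H$. Having verified both hypotheses for each $i \in \{0,1\}$, \autoref{proposition:general-moments-to-distribution} yields \eqref{equation:parity-limit}, and then \eqref{equation:total-distribution-limit} follows by recombining the two parity strata weighted by $p_{i,q}^n \to \tfrac{1}{2}$.

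The main obstacle is hypothesis (2): although the unconditional BKLPR distribution $\bklpr \nu$ famously fails to be determined by its surjection moments (see \cite[Example 1.12]{fengLR:geometric-distribution-of-selmer-groups}), each conditional distribution $\paritybklpr \nu i$ is so determined, and making this rigorous for composite $\nu$ and arbitrary finite $H$ is the one spot in the argument that draws on probabilistic machinery beyond what was built in this paper. The remainder is a comparatively formal assembly of inputs already in hand---the point-count estimates of \autoref{theorem:point-counting-computation} via Grothendieck--Lefschetz, the big monodromy computations of \autoref{proposition:big-monodromy-composite} and \autoref{lemma:determinant-monodromy}, the Hurwitz-stack identifications of \autoref{corollary:selmer-to-hurwitz-moments}, and the homological stability of \autoref{lemma:moments-cohomology}---threaded through the probabilistic scaffolding of \autoref{proposition:general-moments-to-distribution}.
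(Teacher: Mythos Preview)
Your proposal is correct and follows essentially the same route as the paper: apply \autoref{proposition:general-moments-to-distribution} with $\mathcal S=\mathcal N^{i}$, first for $i=i_0:=\rk V_{\mathscr F^n_B}\bmod 2$ via the rank double cover (\autoref{lemma:distribution-equals-rank-cover-distribution} and \eqref{equation:parity-moments-base}), then for $1-i_0$ via the mixture identity and $p^n_{i,q}\to\tfrac12$, and finally recombine for \eqref{equation:total-distribution-limit}.

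Two small points where the paper is tighter than your sketch. First, for hypothesis~(2) the paper does not attempt a Chinese Remainder reduction: it simply notes that by \autoref{proposition:bklpr-moments} the target moments are $\#\sym^2 H$ and invokes \cite[Theorem 4.1]{nguyenW:local-and-global-universality} directly for $\mathcal S$-valued sequences. Your CRT reduction is both unnecessary and delicate, since an arbitrary $\mathcal N^i$-valued sequence $(Y_s)$ need not have independent $\ell$-primary parts, so reducing the moment-determinacy problem to prime powers requires care you have not supplied. Second, the paper explicitly verifies that $Y=\paritybklpr\nu{i_0}$ is supported on $\mathcal N^{i_0}$ (using that the BKLPR model $\mathscr T_{r,\mathbb Z/\nu\mathbb Z}$ carries a nondegenerate alternating pairing and is hence a square), a check you use but do not justify. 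Neither issue is fatal; with the direct Nguyen--Wood citation in place of your CRT sketch, your argument matches the paper's.
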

\begin{proof}
	First, take $i_0 := \rk V_{A^n_B} \bmod 2 \in \{0,1\}$.
	We will apply \autoref{proposition:general-moments-to-distribution}
	with $\mathcal S = \mathcal N^{i_0}, Y = \paritybklpr
	\nu {i_0}, X^n_q =  X^{i_0}_{A[\nu]^n_{\mathbb
	F_q}}$ 
	to prove \eqref{equation:parity-limit} for $i = i_0$.
	(Here, we use $X^n_q$ in place of the notation $X^i_j$ from
	\autoref{proposition:general-moments-to-distribution}.)

	We will now check the hypotheses of 
	\autoref{proposition:general-moments-to-distribution}.
	We need to check the $X^n_q$ and $Y$ are both supported on $\mathcal S$, as well
	as the two enumerated hypotheses of
	\autoref{proposition:general-moments-to-distribution}.
	The $X^n_q$ are supported on $\mathcal S$ by
	\autoref{lemma:distribution-supported-squares}.
	To show $Y$ is supported on $\mathcal S$, from the definition in
	\autoref{subsubsection:bklpr-n-selmer},
	it is enough to show the
	distribution $\mathcal T_{r, \mathbb Z/\nu \mathbb Z}$ defined there is
	supported on abelian groups which are squares, i.e., abelian groups of
	the form $K^2$ for $K$ an abelian group. For this, it is enough to show
	that for any prime $\ell \mid \nu$,
	$\mathcal T_{r, \mathbb Z/\ell \mathbb Z}$ is supported on squares.
	This follows because it is supported on groups with
	a nondegenerate alternating pairing by \cite[Proposition
	5.5]{bhargavaKLPR:modeling-the-distribution-of-ranks-selmer-groups},
	using that
	abelian groups of odd order with a nondegenerate alternating pairing
	have square order.

	We next check the enumerated hypotheses of 
	\autoref{proposition:general-moments-to-distribution}.
	The first enumerated hypothesis of
	\autoref{proposition:general-moments-to-distribution}
	follows from combining
	\autoref{lemma:distribution-equals-rank-cover-distribution} and
	\eqref{equation:parity-moments-base},
	together with an inclusion exclusion argument allows us to replace the
	$\hom$ appearing in these results with 
	$\surj$.
	In order to verify the second enumerated hypothesis of 
	\autoref{proposition:general-moments-to-distribution}, we use
	\autoref{proposition:bklpr-moments}, which bounds the moments of $Y = \paritybklpr
	\nu {i_0}$. The second hypothesis then follows from 
	\cite[Theorem 4.1]{nguyenW:local-and-global-universality}.
	This verifies the hypotheses of 
	\autoref{proposition:general-moments-to-distribution}, and its
	conclusion implies 
	\eqref{equation:parity-limit} for $i = i_0$.

	Having proven \eqref{equation:parity-limit} for $i = i_0$, we next
	aim to prove it for $i = 1-i_0$.
	In this case, note that for any $H \in \mathcal N$,
	$\#\surj(X_{A[\nu]^n_{\mathbb F_q}},H)$ and
	$\# \surj(X^{i_0}_{A[\nu]^n_{\mathbb F_q}}, H)$ take on the same value,
	up to an error of $C(H, \mathscr F)/\sqrt{q}$, by combining
	\autoref{lemma:distribution-equals-rank-cover-distribution},
	\autoref{theorem:point-counting-computation},
	and \autoref{proposition:bklpr-moments}.
	It follows that $\# \surj(X^{1-i_0}_{A[\nu]^n_{\mathbb F_q}}, H)$ also
	takes on this same value, up to an error of $2C(H, \mathscr F)/\sqrt{q}$.
	Hence, an analogous argument to the one above for the case $i = i_0$, 
	this time applying
	\autoref{proposition:general-moments-to-distribution}
	with 
$\mathcal S = \mathcal N^{1-i_0}, Y = \paritybklpr
\nu {1-i_0}, X^n_q =  X^{1-i_0}_{A[\nu]^n_{\mathbb
F_q}}$
proves \eqref{equation:parity-limit} for $i = 1-i_0$.

	Finally, it remains to prove \eqref{equation:total-distribution-limit}.
	By \autoref{lemma:distribution-supported-squares}, the distribution
	$X_{A[\nu]^n_{\mathbb F_q}}$ is supported on $\mathcal N^0 \coprod
	\mathcal N^1$, and so both limits in
	\eqref{equation:total-distribution-limit} exist by summing the limits in
	\eqref{equation:parity-limit} in the cases $i =0$ and $i = 1$.
	Since
	\begin{align*}
		X_{A[\nu]^n_{\mathbb F_q}} = X^0_{A[\nu]^n_{\mathbb F_q}} \cdot
		\prob(X_{A[\nu]^n_{\mathbb F_q}} \in \mathcal N^0) + 
		X^1_{A[\nu]^n_{\mathbb F_q}} \cdot
		\prob(X_{A[\nu]^n_{\mathbb F_q}} \in \mathcal N^1),
	\end{align*}
	it is enough to show
	\begin{align}
		\label{equation:half-points-in-image-of-rank-cover}
1/2 = \lim_{\substack{q \to \infty \\ \mathbb F_{q_0} \subset \mathbb F_q}} \limsup_{\substack{n \to \infty \\ n
		\hspace{.1cm} \mathrm{even}}}\prob(X_{A[\nu]^n_{\mathbb F_q}} \in \mathcal N^{i_0}),
	\end{align}
	and the analogous statement for $\liminf$ in place of $\limsup$.
	Indeed, by \autoref{lemma:image-of-rank-cover-has-selmer-parity}, the probability
$\prob(X_{A[\nu]^n_{\mathbb F_q}} \in \mathcal N^{i_0})$
	is exactly the probability that an $\mathbb F_q$-point of $\qtwist n U
	B$ is in the image of an $\mathbb F_q$-point of $\rankcover n {\mathscr F}$.
	Note that for $n > 0$, $\qtwist n U B$ and $\rankcover n {\mathscr F}$ are both geometrically
irreducible; the latter uses \autoref{theorem:big-monodromy-mod-ell}, which
implies that the geometric monodromy is nontrivial under the Dickson invariant map.
	Using
	\eqref{equation:full-moments} for the trivial group $H = \id$ and
	\eqref{equation:parity-moments} for $H = \id$
	we find both $\qtwist n U B$ and $\rankcover n {\mathscr F}$
	have $q^{\dim \rankcover n {\mathscr F}} + O(1/\sqrt{q})$ points, where the
	implicit constant is independent of $n$.
	This implies \eqref{equation:half-points-in-image-of-rank-cover}
	because the number of $\mathbb F_q$-points in the image of 
	$\rankcover n {\mathscr F}(\mathbb F_q) \to \qtwist n U B (\mathbb F_q)$ is half
	the number of $\mathbb F_q$-points of $\rankcover n {\mathscr F}(\mathbb F_q)$, since this map
	is a finite \'etale double cover.
	\end{proof}

	We have nearly proven our main result, \autoref{theorem:main-finite-field},
	except that \autoref{theorem:main-dvr} begins over a base $B$ of generic
	characteristic $0$, while \autoref{theorem:main-finite-field} begins
	over a finite field.
	It remains to show that if one starts over a finite field, one can lift
	the relevant data to a DVR with generic characteristic $0$.
	This is essentially the content
	of the next lemma, for which we use the following definition.

\begin{definition}
       \label{definition:symplectic-sheaf-data}
       Given a base scheme $B$, {\em a symplectic
sheaf data over $B$} 
is a quadruple $(C, U, Z, \mathscr F)$ over $B$,
where $C$ is a relative smooth
proper curve with geometrically connected fibers over $B$, $U \subset C$ is a nonempty
open,
$Z = C - U$ is a nonempty divisor which is finite \'etale over $B$, and
$\mathscr F$ is a tame symplectically self-dual sheaf of $\mathbb Z/\nu \mathbb
Z$-modules on $U$.
\end{definition}

\begin{lemma}
        \label{lemma:lifting-to-char-0}
        Suppose we are given a symplectic sheaf data $(C_0, U_0, Z_0, \mathscr
        F_0)$ over $\spec \mathbb F_q$.
        If $B$ is the spectrum of a complete DVR with residue field 
        $\mathbb F_q$,
        there exists a symplectic sheaf data $(C, U, Z, \mathscr F)$ over $B$
        whose restriction to $b$, $(C_b, U_b, Z_b, \mathscr F_b)$, is isomorphic to
        $(C_0, U_0, Z_0, \mathscr F_0)$. 
\end{lemma}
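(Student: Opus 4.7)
The plan is to lift the data in two stages: first the geometric pair $(C_0, Z_0)$ by deformation theory, and then the sheaf $\mathscr F_0$ by invariance of the tame \'etale category under specialization.

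First I would lift the pair $(C_0, Z_0)$ to $(C, Z)$ over $B$. The smooth proper curve $C_0$ has unobstructed deformations since $H^2(C_0, T_{C_0}) = 0$, so it lifts to a smooth formal scheme over $\Spf B$, which algebraizes to a smooth proper curve $C \to B$ via Grothendieck's existence theorem. The divisor $Z_0 \subset C_0$, being a zero-dimensional \'etale closed subscheme, has unobstructed deformation inside $C$ (the obstructions lie in $H^1$ of the normal sheaf, which vanishes in dimension zero), so it lifts to a closed subscheme $Z \subset C$, flat over $B$; by nilpotent lifting of \'etale morphisms, $Z \to B$ is automatically finite \'etale of the same degree as $Z_0 \to \spec \mathbb F_q$. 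Setting $U := C - Z$ completes this step.

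Second I would lift the sheaf $\mathscr F_0$. Since $B$ is a complete (hence Henselian) DVR and $(C, Z) \to B$ is a smooth proper relative curve with relative \'etale divisor, the pullback functor from finite tame \'etale covers of $U$ to finite tame \'etale covers of $U_0$ is an equivalence of categories (Grothendieck--Murre, \cite[Expos\'e XIII]{noopsortSGA1Grothendieck1971}). Concretely, any tame \'etale cover of $U_0$ extends uniquely to a finite cover of $C_0$ ramified at most along $Z_0$; by deformation theory of finite covers of proper smooth curves combined with Grothendieck's algebraization, this extends to a finite cover of $C$ over $B$, whose restriction to $U$ is a tame \'etale cover lifting the original one. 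Applied to the cover of $U_0$ encoding $\mathscr F_0$ together with its free $\mathbb Z/\nu\mathbb Z$-module structure, this yields a locally constant constructible tame sheaf $\mathscr F$ of free $\mathbb Z/\nu\mathbb Z$-modules on $U$ whose restriction to $U_0$ recovers $\mathscr F_0$.

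Finally, the symplectically self-dual structure transfers from $\mathscr F_0$ to $\mathscr F$: the given alternating non-degenerate pairing $\mathscr F_0 \otimes \mathscr F_0 \to \mu_\nu$ is a morphism in the category of tame \'etale sheaves of $\mathbb Z/\nu\mathbb Z$-modules on $U_0$, so by the equivalence above it lifts uniquely to a morphism $\mathscr F \otimes \mathscr F \to \mu_\nu$ on $U$. Both the alternation (factoring through $\wedge^2 \mathscr F$) and the non-degeneracy can be checked after restriction to the closed fiber $U_0$, where they hold by hypothesis, so $\mathscr F$ is symplectically self-dual. The main (mild) obstacle is invoking the tame specialization equivalence over a Henselian, not necessarily strictly Henselian, base; but what is needed is only the equivalence of tame \'etale sites (not of arithmetic fundamental groups), and this reduces immediately to lifting finite tame covers of the proper pair $(C_0, Z_0)$, which is standard deformation theory combined with Grothendieck's algebraization for proper morphisms.
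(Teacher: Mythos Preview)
Your proposal is correct and follows essentially the same two-stage strategy as the paper: lift the pair $(C_0,Z_0)$ by deformation theory plus Grothendieck algebraization, then lift $\mathscr F_0$ via the equivalence of tame \'etale categories under specialization, and transport the symplectic pairing through that equivalence. The only cosmetic differences are that the paper lifts the curve and divisor simultaneously via the log tangent sheaf and cites Wewers' \cite[Corollary 3.1.3]{wewers:deformations-of-tame-admissible-covers-of-curves} for the unique lifting of tame covers, whereas you lift $C$ and $Z$ in sequence and invoke SGA~1, Expos\'e~XIII directly.
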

\begin{proof}
        The general strategy of the proof will be to show we can lift           
	$(C_0, U_0, Z_0,\mathscr F_0)$ to arbitrary neighborhoods of $b \in B$ and then
algebraize this data.
If $B = \spec S$, with $S$ a complete DVR and uniformizer $\pi$, let $b_n := \spec S/\pi^{n+1}$.
If $(C_i, Z_i)$ is some lifting of $(C_0, Z_0)$ to $b_i$, then the obstruction
to further lifting it to $b_{i+1}$ vanishes because it lies in the coherent cohomology
group
$H^2(C_0, \Omega_{C_0/b}(\on{log} Z_0)) = 0$.
By \cite[Theorem 8.4.10]{FantechiGIK:fundamentalAlgebraicGeometry},
we can lift $C_i$ to $C_0$ over $B$ using the ample line
bundle $\mathscr O_{C_i}(Z_i)$ on $C_i$.
Using \cite[Corollary 8.4.5]{FantechiGIK:fundamentalAlgebraicGeometry}, we
obtain a closed subscheme $Z \subset C$ restricting to $Z_i
\subset C_i$ over $b_i$.
Note that $Z$ is finite \'etale over $B$ because it dominates $B$ and $Z_b =
Z_0$ is geometrically reduced, (as the residue field is assumed to be perfect,) hence smooth over $b$.

Next, we wish to show $\mathscr F_0$ over $U_0$ lifts to $\mathscr F$ over $U$.
In fact, $\mathscr F_0$ has a unique lift by \cite[Corollary
3.1.3]{wewers:deformations-of-tame-admissible-covers-of-curves},
which we note uses our tameness assumption on $\mathscr F_0$.
Note there that $\mathscr F_0$ is a locally constant constructible sheaf with finite coefficients,
and when applying the above, we are viewing it as a finite \'etale cover of
$U_0$. The lift $\mathscr F$ corresponds to a locally constant
constructible sheaf, using the
uniqueness of the lift.
Moreover, by uniqueness of the lift above, the isomorphism $\mathscr F_0 \simeq
\mathscr F_0^\vee(1)$ giving $\mathscr F_0$ its symplectic self-dual structure lifts to
an isomorphism $\mathscr F \simeq \mathscr F^\vee(1)$, giving $\mathscr F$ a
self-dual structure.
Since $\mathscr F_0 \otimes \mathscr F_0 \to \mu_\nu$ factors through $\wedge^2
\mathscr F_0$, we also obtain that $\mathscr F \otimes \mathscr F \to \mu_\nu$
factors through $\wedge^2 \mathscr F$, implying $\mathscr F$ is symplectically
self-dual.
\end{proof}

\subsubsection{Proof of \autoref{theorem:main-finite-field}}
\label{subsubsection:proof-main-finite-field}

We first explain the proof of \autoref{theorem:main-finite-field}.
Let $b = \spec \mathbb F_q$,
and $(C, U, Z, A[\nu])$ be our given symplectic sheaf data over $b$ as in
\autoref{theorem:main-finite-field}. 
Let $B$ be a complete DVR with closed point
$b$ and generic characteristic $0$.
By \autoref{lemma:lifting-to-char-0}, we can realize 
$(C, U, Z, A[\nu])$
as the restriction along $b \to B$ of some symplectic sheaf data $(C_B, U_B,
Z_B, \mathscr F_B)$ on $B$.
Note that the hypotheses of \autoref{theorem:main-dvr}
imply those of \autoref{theorem:main-finite-field} as mentioned in the last
paragraph of
\autoref{hypotheses:big-monodromy-assumptions}.
Hence, 
\autoref{theorem:main-finite-field} follows from \autoref{theorem:main-dvr}.
\qed

\subsubsection{Proof of \autoref{theorem:main-moments}}
\label{subsubsection:proof-main-moments}

As in the proof of
\autoref{theorem:main-finite-field}
in \autoref{subsubsection:proof-main-finite-field} above,
we may lift all our symplectic sheaf data over $\mathbb F_q$ to symplectic sheaf
data over the spectrum of a complete DVR $B$, with residue field $\mathbb F_q$ and generic
characteristic $0$, using
\autoref{lemma:lifting-to-char-0}.
To obtain \eqref{equation:moment-limsup} of \autoref{theorem:main-moments}, we note that $\sym^2 H$ is the
$H$-surjection moment of the BKLPR distribution by
\autoref{proposition:bklpr-moments}. 
Hence, 
\eqref{equation:moment-limsup}
follows from
\autoref{theorem:point-counting-computation}, together with an
inclusion-exclusion to show that points on a certain subset of the components of 
$\selspacemoments {\mathscr F^n_b}{H}$
correspond to {\em surjections} onto $H$, in place of all homomorphisms.

For establishing \eqref{equation:moment-limit-residue},
we only need show that the limit
\begin{align*}
	\lim_{\substack{n \to \infty \\ n \hspace{.1cm} \mathrm{ even}}}
				\frac{\sum_{x \in \qtwist n U
			{\mathbb F_q}(\mathbb F_{q^j})}  \# \surj
		(\sel_\nu(A_x), H)}{\sum_{x \in \qtwist n U {\mathbb F_q}(\mathbb F_{q^j})}
		1}
\end{align*}
exists, as then \eqref{equation:moment-limsup} yields what the limit as $j \to \infty$ of this
value must be.
The limit exists by
\eqref{equation:full-moments-stable}, together with an
inclusion-exclusion to show that points on a certain subset of the components of 
$\selspacemoments {\mathscr F^n_b}{H}$
correspond to surjections onto $H$, in place of all homomorphisms.
\qed

\subsubsection{Proof of \autoref{theorem:main-minimalist}}
\label{subsubsection:proof-main-minimalist}

We next explain the proof of \autoref{theorem:main-minimalist}.
Choose $\nu=\ell$ a prime as in \autoref{theorem:main-finite-field}.
Note that this only excludes finitely many possibilities for $\ell$, so any
sufficiently large $\ell$ works.
By \autoref{theorem:main-dvr},
together with 
\autoref{lemma:lifting-to-char-0} as in
\autoref{subsubsection:proof-main-finite-field}
above,
we obtain equidistribution of the parity of the dimension of the $\ell$-Selmer
group in the quadratic twist family,
since the BKLPR distribution predicts the parity of the rank of the $\ell$-Selmer group of
the abelian variety is even half the time and odd half the time. 
It follows from \autoref{lemma:sha-almost-square}
that the parity of $\rk_{\ell^\infty} A$
agrees with the parity of the rank of $\sel_\ell(A)$.
Therefore, 
the
parity of $\rk_{\ell^\infty}$ is also equidistributed.

To conclude the result, we only need to prove that the probability that
$\ell^\infty$-Selmer rank
is $\geq 2$ is $0$.
It follows from \autoref{theorem:main-moments} (and an inclusion
exclusion to relate surjections to homomorphisms) that the average size
of the $\nu$-Selmer group is $\sum_{\sigma \mid \nu} \sigma$.
Therefore, the same argument as in
\cite[Proposition 5]{bhargavaS:average-4-selmer} 
(see also
\cite[p.246-247]{poonenR:random-maximal-isotropic-subspaces-and-selmer-groups})
implies that the probability that the $\ell^\infty$-Selmer rank is $\geq 2$ is $0$.
\qed

\subsubsection{Proof of 
\autoref{theorem:main-finite-field}
in the special case $\nu = \ell$}
\label{subsubsection:proof-special-case-prime}

We will now give a somewhat shorter proof of
\autoref{theorem:main-finite-field}
in the special case that  $\nu$ is a prime $\ell$.
In particular, we need only the first lines of
\eqref{equation:h-cohomology-bound} and
\eqref{equation:selmer-cohomology-bound}, and not the second lines of these
equations.  As explained in \autoref{subsubsection:proof-main-finite-field}, the case $\nu = \ell$ is all that is necessary for the application to \autoref{theorem:main-minimalist}.  We sketch below how to handle this case for the convenience of those readers who are not in need of the full generality of \autoref{theorem:main-finite-field}. 

The main difference when $\nu = \ell$ is that in this case it is easier to recover the distribution from the moments.  In the proof of \autoref{theorem:main-finite-field} we need to compute the moments of the variables $X^i_{A[\nu]^n_{\mathbb F_q}}$ for $i=0,1$; in other words, we need the moments of the mod $\ell$-Selmer rank conditional on parity.  But when $\nu = \ell$, we can get by with less; by \cite[Thm 2.10, Cor 2.12]{wood:probability-theory-for-random-groups}, the distribution of Selmer ranks converges to the BKLPR distribution if the moments converge to the BKLPR moments, and if the parity of the mod $\ell$-Selmer rank is equidistributed between odd and even.  The former statement is what we have proved in \eqref{equation:full-moments}.  

It remains to show that the parity of the mod $\ell$-Selmer rank is equidistributed. First of all, it follows from the discussion in \autoref{lemma:sha-almost-square} that the mod $\ell$-Selmer rank of a quadratic twist $A_\chi$ has the same parity as the $\ell^\infty$-Selmer rank of $A_\chi$.  By  \cite[Theorem 1.1]{trihanY:the-ell-parity-conjecture}, this parity is determined by the root number $W(A_\chi)$.  
Let $N_A$ denote the conductor of $A$.
Using notation from \autoref{remark:root-numbers}, we use $\widetilde{\chi}:
C_{K(C)} \to
\{\pm 1\}$ to denote the
map corresponding to the quadratic extension $\chi$, and by \cite[Cor. 6.12]{bisatt:explicit-root-numbers}, 
we can relate the root number of $A$ and $A_\chi$ by
\begin{equation*}
	W(A_\chi) = W(A)\widetilde{\chi}(N_A).
\end{equation*}
So what remains is to show that the average of $\widetilde{\chi}(N_A)$ as $\chi$ ranges over quadratic characters of discriminant $B$ 
approaches $0$ as $B$ goes to infinity.  
This follows from \cite[Theorem
2]{bhargavaSW:geometry-of-numbers-over-global-fields-i} upon noting that the contribution to the local term $m_{\mathfrak p}(\Sigma_{\mathfrak p})$
in \cite[Theorem 2]{bhargavaSW:geometry-of-numbers-over-global-fields-i}, for
$\mathfrak p$ the specified point from
\eqref{equation:multiplicative-hypothesis},
over which the cover is trivial,
is equal to the local contribution from which the cover is \'etale but
nontrivial, and $\widetilde{\chi}(N_A)$ has opposite signs in these two cases.
\qed

\appendix
\section[Frobenius equivariance]{Frobenius equivariance\\ By Aaron Landesman }
\label{section:frobenius-equivariance}

Throughout this section, we will use notation as in
\autoref{notation:quadratic-twist-notation} and
\autoref{definition:selmer-sheaf}.
In particular, we work over a base scheme $B$ and $\nu$ is an odd integer with $2
\nu$ invertible on $B$.
Additionally, we will assume there exists some section $\sigma \in Z(B)$ over
which our symplectically self-dual sheaf $\mathscr F$ of $\mathbb Z/\nu \mathbb
Z$-modules on $U = C - Z$ has trivial inertia at
$\sigma$, and hence its pushforward along $U \to C$ is lcc in a neighborhood of
$\sigma$.
The main result is \autoref{theorem:frobenius-equivariance}, which shows that
the stabilization isomorphisms on the cohomology of Selmer spaces are equivariant for the
action of Frobenius.
The only part of our paper this appendix comes into play is to prove
\eqref{equation:moment-limit-residue}
(and \eqref{equation:full-moments-stable} along the way).
That is, we can show the limit in
\eqref{equation:moment-limit-residue} exists,
instead of only showing that the $\liminf$ and
$\limsup$ exist as in \eqref{equation:moment-limsup}.

In order to prove \autoref{theorem:frobenius-equivariance}, we first set up
notation to describe a partially compactified version of Selmer
spaces in \autoref{subsection:compactified-notation}.
Then, we introduce log structures and a logarithmic version of the stabilization map
in \autoref{subsection:gluing-map-with-log}.
In \autoref{subsection:u-degree}, we show that we may take the topological stabilization map
to have degree $2$.
Next, we show this logarithmic stabilization map agrees with the topological
stabilization map in \autoref{subsection:gluing-to-stabilization}.
Finally, in \autoref{subsection:main-stabilization-proof}, we prove
\autoref{theorem:frobenius-equivariance}.

We thank Dori Bejleri for suggesting that the general strategy taken here could
work.
We would also like to mention that the idea of viewing these sort of
stabilization maps in algebraic geometry as coming from log geometry is not
new. Variants have been studied in, for example
\cite{abramovichCGB:punctured},
\cite{gross:remarks},
\cite{holmesS:logarithmic-cohomological-field-theories},
\cite{parker:log-geometry-and-exploded},
and \cite{bergstromDPW:hyperelliptic-curves-the-scanning-map}.

\subsection{Notation for the partially compactified Selmer space}
\label{subsection:compactified-notation}

We next set up notation for a partially compactified version of Selmer spaces.
First we define a partially compactified version of configuration space in
\autoref{subsubsection:compactified-configuration-space},
then we define a partially compactified version of the space of quadratic twists
in \autoref{subsubsection:compactified-quadratic-twist}, and finally we define a
partially  compactified
version of the selmer space in \autoref{subsubsection:compactified-selmer}.

\subsubsection{Defining a partially compactified configuration space with sections}
\label{subsubsection:compactified-configuration-space}

Let $\mathcal K_{n+f+1,g}(C,1)$ denote the moduli stack (which is in fact a
scheme) of $n+f+1$-pointed stable
maps of degree $1$ over $B$ to our given curve $C$.
\begin{remark}
	\label{remark:}
	The above stack of stable maps parameterizes curves, one of whose
	components is $C$, and all other components have genus $0$, and are
	contracted under the map to $C$.
	We next construct a locally closed substack of a quotient stack of this
	which corresponds to
	only allowing $d$ of the $n$ points to simultaneously collide with
	$\sigma$.
\end{remark}

We suppose that $\sigma \subset Z$ and let $U = C - Z$ as usual.
Suppose that $Z$ has connected components of degrees $1, f_1, \ldots,
f_k$, the
first $1$ corresponding to $\sigma$, so that
$1+ (\sum_{i=1}^k f_i) = 1+ f  = \deg Z$.
There is an action of $S_n \times S_{f_1} \times \cdots \times S_{f_k}$
on $\mathcal K_{n+f+1,g}(C,1)$ by permuting the $n+f+1$ points.
There is an evaluation map
$\ev : [\mathcal K_{n+f+1,g}(C,1)/S_n \times S_{f_1} \times \cdots \times S_{f_k}] \to
C \times [C^{f_1}/S_{f_1}] \times \cdots \times [C^{f_k}/S_{f_k}]$.
Define
$\overline{\st n {\sigma} U B}$ to be the fiber of the map $\ev$
over the point of $C \times [C^{f_1}/S_{f_1}] \times \cdots \times [C^{f_k}/S_{f_k}]$ 
corresponding to the divisor $Z$.


We further fix an integer $d$.
There is an open substack
$\st n {\sigma} U B \subset \overline{\st n {\sigma} U B}$
which is set theoretically supported on the locus where the universal curve is
either irreducible (hence isomorphic to $C$) or a union of $C$ and $\mathbb P^1$
where a subset of $d$ of the $n$ points collide into the point $\sigma$.
More precisely, this open substack can be described as the complement of the following
divisors:
first, the divisor parameterizing two points, neither of which is $\sigma$,
colliding,
and, second,
the divisor where $d'$ points collide with $\sigma$ for $d'
\neq d$.

\begin{remark}
	\label{remark:nc-divisor}
	We observe that
$\st n {\sigma} U B$ is smooth and the complement of $\conf n U B \subset \st n
{\sigma} U B$ is a smooth divisor.
This follows from \autoref{theorem:nc} since one can use this to realize $\st n
{\sigma} U B$ as an open in a compactification of $\conf n U B$ with normal
crossings boundary.
\end{remark}

\subsubsection{Defining a partially compactified space of quadratic twists with
sections}
\label{subsubsection:compactified-quadratic-twist}
We next define a space of quadratic twists over $\st n {\sigma} U B$ which we
can think of as partially
compactifying $\qtwist n U B$ (though in actuality it will partially compactify a double
cover of $\qtwist n U B$; the double cover corresponding to specifying a
point in the universal double cover over $\sigma$).
To generalize the Selmer space to stable curves, we use similar notation to our
definition of Selmer space, but include the subscript $\on{St}$ throughout.

Assume that $Z$ is the disjoint union of multisections of degrees $f_0 := 1, f_1,
\ldots, f_k$ the first multi-section corresponding to a section $\sigma \in C(B)$.
There is a universal schematic proper curve $\mathscr C^{n,\sigma}_{\on{St},B} \to C
\times_B \st n{\sigma} U B
\to \st n{\sigma} U B$.
There is a universal degree $n$ divisor $\mathscr D^{n,\sigma}_{\on{St},B}
\subset \mathscr
C^{n,\sigma}_{\on{St},B}$. 

\begin{remark}
	\label{remark:}
	The first map 
$\mathscr C^{n,\sigma}_{\on{St},B} \to C \times_B \st n{\sigma} U B$
is an isomorphism over $\conf n U B \subset \st n \sigma U B$, but in general may
contain additional genus $0$ fibers corresponding to locations where $d$ of the $n$ points
collide with $\sigma$.
\end{remark}

We next define an extension of a variant of the Selmer sheaf over $\st n{\sigma} U B$.
The informal idea is that $\stqtwist n {\sigma}U B$ parameterizes double covers of
curves with a degree $1$ stable map to $C$, branched over a degree $n$ divisor
in the smooth locus of the nodal curve, together with a trivialization of this
double cover at $\sigma$.
We now give a more formal definition.

Let $[C/(\mathbb Z/2 \mathbb Z)] = C \times B(\mathbb Z/2 \mathbb Z)$ denote the
stack quotient of $C$ by the trivial $\mathbb Z/2\mathbb Z$ action.
(Recall we are assuming $2$ is invertible on $B$.)
Next,
$\stqtwist n{\sigma} U B$ can be constructed from 
$\mathcal K_{n+f+1,g}([C/(\mathbb Z/2 \mathbb Z)],1)$
much the same way 
$\st n{\sigma} U B$ was constructed from 
$\mathcal K_{g,n+f+1}(C,1)$.
Here, 
$\mathcal K_{n+f+1,g}([C/(\mathbb Z/2 \mathbb Z)],1)$
denotes twisted stable maps from a genus $g$ twisted curve $\mathcal X$ with $n+f+1$ marked points to 
$[C/(\mathbb Z/2 \mathbb Z)]$ such that the composition to the coarse space
$\mathcal X \to [C/(\mathbb Z/2 \mathbb Z)] \to C$ has degree $1$, in the sense
that the line bundle $\mathscr O_C(\sigma)$ on $C$ pulls back to a degree $1$ line
bundle on $\mathcal X$.
Namely, we first form the quotient of 
$\mathcal K_{g,n+f+1}([C/(\mathbb Z/2 \mathbb Z)],1)$
by the action of
$S_n \times S_{f_1} \times \cdots \times S_{f_k}$.
We next construct
the fiber of the evaluation map
$\on{ev}: [\mathcal K_{g,n+f+1}([C/(\mathbb Z/2 \mathbb Z)],1)/S_n \times S_{f_1} \times
\cdots \times S_{f_k}] \to
C \times [C^{f_1}/S_{f_1}] \times \cdots \times [C^{f_k}/S_{f_k}]$ over the
point corresponding to the divisor $Z$.
We let
$\overline{\stqtwist n{\sigma} U B}$
be the double cover of this fiber, obtained by specifying a point in the fiber of
the double cover over the pullback of $\sigma$.
We let
$\stqtwist n{\sigma} U B \subset\overline{\stqtwist n{\sigma} U B}$
denote the open substack parameterizing double
covers which are balanced in the sense of
\cite[\S2.1.3]{abramovichCV:twisted-bundles},
which also map to $\st n {\sigma} U B \subset \overline{\st n {\sigma} U B}$.
We define 
$\openstqtwist n {\sigma} U B := \stqtwist n {\sigma} U B \times_{\st n \sigma U B} \conf n U B$.

It will be useful to additionally specify a slight variant of the above
construction, where one marks $2$ (or more) sections, instead of just a single
section. Namely, if $\sigma_1, \ldots, \sigma_t$ are $t$ sections in $Z(B) \subset C(B)$, we use
$\stqtwist n{\sigma_1, \ldots, \sigma_t} U B$ to denote the analogous construction, but
where one additionally marks a point of the double cover over each of $\sigma_1,
\ldots, \sigma_t$.
In particular, $\stqtwist n{\sigma_1, \ldots, \sigma_t} U B$ is a finite \'etale cover of degree
$2^{t-1}$ over $\stqtwist n{\sigma_1} U B$.
\begin{remark}
	\label{remark:}
	In what follows, we will only apply this construction with multiple sections in the case $C =
	\mathbb P^1$, $t = 2$, and $\{\sigma_1, \sigma_2\} = \{0,\infty\}$.
\end{remark}

\subsubsection{Defining a partially compactified Selmer sheaf}
\label{subsubsection:compactified-selmer}
Using the description of twisted stable maps, there is a universal schematic
curve $\mathscr R^{n,\sigma}_B :=\mathscr C^{n,\sigma}_{\on{St},B} \times_{\st n{\sigma} U B} \stqtwist n{\sigma} U B$ over
$\stqtwist n {\sigma} U B$ 
with a 
finite degree $2$ cover
branched over
the universal degree $n$ divisor $\mathscr
D^n_{\on{St}, B} \times_{\st n \sigma U B} \stqtwist n \sigma U B$.
There is also an universal evaluation map
$h_{\on{St},B}^{n,\sigma}: \mathscr R^{n,\sigma}_B \to C$ coming from the definition of
stable maps to $C$.
Let $j: U \to C$ denote the inclusion.
We next construct a ``universal quadratic twist'' of the sheaf
$(h_{\on{St},B}^{n,\sigma})^* j_*\mathscr F$.

Recall that, by construction, there is a double cover of $\mathscr R^{n,\sigma}_B$
branched over 
the universal degree $n$ divisor $\mathscr
D^n_{\on{St}, B} \times_{\st n \sigma U B} \stqtwist n \sigma U B$.
Let $\widetilde{j} : \mathscr R^{n,\sigma}_B - \mathscr
D^n_{\on{St}, B} \times_{\st n \sigma U B} \stqtwist n \sigma U B \to \mathscr
R^{n,\sigma}_B$ denote the open immersion and let $\chi^n_B$ denote the
nontrivial rank $1$ local system of $\mathbb Z/\nu \mathbb Z$-modules
on
$\mathscr R^{n,\sigma}_B - \mathscr
D^n_{\on{St}, B} \times_{\st n \sigma U B} \stqtwist n \sigma U B$
which is trivialized on the above
mentioned
double cover.
Define
$\mathscr F^{n,\sigma}_{\on{St},B} := \widetilde{j}_*( \chi^n_B \otimes
\widetilde{j}^*((h_{\on{St},B}^{n,\sigma})^* j_*\mathscr F))$,
which we view as the universal quadratic twist of 
$(h_{\on{St},B}^{n,\sigma})^* j_*\mathscr F)$
on
$\mathscr R^{n,\sigma}_B$.

Define $\selspace {\mathscr F^{n,\sigma}_{\on{St},B}}$ to be the algebraic space represented by the
\'etale sheaf
parameterizing torsors for 
$\mathscr F^{n,\sigma}_{\on{St},B}$ over $\mathscr R^{n,\sigma}_B$ together
with a trivialization of the torsor over
$\mathscr R^{n,\sigma}_B$ at the section $\sigma$.
We also use $\selspace {\mathscr F^{n,\sigma}_{B}}$
for the restriction of $\selspace {\mathscr F^{n,\sigma}_{\on{St}, B}}$ along the open
immersion $\conf n U B \subset \st n \sigma U B$.
Note that the fiber $(j_*\mathscr F)_{\sigma}$ is finite \'etale of degree $\nu^{\rk
\mathscr F}$ over $\sigma$ by the assumption that the inertia of $\mathscr F$ at $\sigma$
is trivial.

We also define 
$\mathscr F^{n,\sigma_1, \ldots, \sigma_t}_{\on{St},B}$ and
$\selspace {\mathscr F^{n,\sigma_1, \ldots, \sigma_t}_{\on{St}, B}}$
over
$\stqtwist n{\sigma_1, \ldots, \sigma_t} U B$
as the pullbacks of
$\mathscr F^{n,\sigma_1}_{\on{St},B}$ and
$\selspace {\mathscr F^{n,\sigma_1}_{\on{St}, B}}$
along
$\stqtwist n{\sigma_1, \ldots, \sigma_t} U B \to \stqtwist n{\sigma_1} U B$.
We define
$\mathscr F^{n,\sigma_1, \ldots, \sigma_t}_{B}$ and
$\selspace {\mathscr F^{n,\sigma_1, \ldots, \sigma_t}_{B}}$
as the further restrictions to 
$\openstqtwist n{\sigma_1, \ldots, \sigma_t} U B \subset
\stqtwist n{\sigma_1, \ldots, \sigma_t} U B$.
\begin{remark}
	\label{remark:}
	We will only need this variant with $t > 1$ in the case $t = 2, C =
	\mathbb P^1, \{\sigma_1, \sigma_2\} = \{0,\infty\}$ and $\mathscr F = \mathscr G$ is a trivial sheaf on
	$\mathbb G_m$.
	Note that the universal torsor for $\mathscr F_B^{n,\sigma_1, \ldots,
	\sigma_t}$ over 
$\selspace {\mathscr F^{n,\sigma_1, \ldots, \sigma_t}_{B}}$
is only trivialized at the first marked section. 
So, in particular,
	the universal 
	$\mathscr F_B^{n,0,\infty}$
	torsor
	over
	$\selspace {\mathscr F^{n,0,\infty}_{B}}$ is trivialized at $0$ but not
	$\infty$, 
	while
	the universal $\mathscr F_B^{n,\infty,0}$ torsor over
	$\selspace {\mathscr F^{n,\infty,0}_{B}}$ is trivialized at
	$\infty$ but not at $0$.
\end{remark}



\subsection{The gluing map with log structures}
\label{subsection:gluing-map-with-log}

In this subsection, we define the gluing map, which joins the Selmer space of
degree $n - d$ over
$C$ with a trivialization at $p$ to the Selmer space of degree $d$ over $\mathbb P^1$ with a
trivialization at $0$ and $\infty$, and sends it to the partially compactified Selmer
space of degree $n$ with a trivialization at $p$.
We first define the gluing map in \autoref{subsubsection:gluing-map} and
\autoref{lemma:coefficient-system-algebraic-restriction}.
We then briefly review relevant parts of logarithmic algebraic geometry in
\autoref{subsubsection:log-background}.
In \autoref{lemma:log-gluing-map}, we define version of the gluing map for
logarithmic stacks.

\subsubsection{Defining the gluing map}
\label{subsubsection:gluing-map}

Fix an even positive integer $d$ and let $\mathbb G_m \subset \mathbb P^1$ over our base
$B$ denote the
complement of the sections $\infty : B \to \mathbb P^1$
and $0 : B \to \mathbb P^1$.
For even $n > d$,
there is a gluing map
$\Delta : \conf d {\mathbb G_m} B \times \conf {n-d} U B \to
\st n{\sigma} U B$
which send a degree $d$ reduced subscheme of $\mathbb G_m$ and $n-d$ subscheme
of $U$ to the corresponding degree $n$ subscheme on the glued curve $C
\coprod_{\sigma \sim 0} \mathbb P^1$,
obtained by gluing the section $\sigma \in C(B)$ to the $0$ section on $\mathbb P^1$.

Over $\Delta$, there is another gluing map
\begin{align}
	\label{equation:gamma-map}
	\Gamma : \openstqtwist d {0,\infty}{\mathbb G_m} B \times \openstqtwist {n-d}{\sigma} U B \to
\stqtwist n{\sigma} U B
\end{align}
 which we define next.
A point of $\openstqtwist {n-d}{\sigma} U B$ can be described as $[X \to C,z]$, for $X
\to C$ a double cover unramified over $\sigma$, and $z$ a choice of point in the
preimage of $\sigma$.
We use $z'$ to denote the remaining section of $X$ in the preimage of $\sigma$.
A point of $\openstqtwist d {0,\infty}{\mathbb G_m} B$ can be described as
$[Y \to \mathbb P^1,v, s]$ where $Y \to \mathbb P^1$ is a double cover, $v$ is a
choice of point in the preimage of $0$, and $s$ is a choice of point over $\infty$.
We also use $v'$ to denote the remaining section of $Y$ over $0$.
The map is then given by gluing $v$ to $z$ and gluing $v'$ to $z'$ to obtain a
double cover $[X \coprod_{v\sim z, v' \sim z'} Y \to C \coprod_{\sigma\sim 0} \mathbb
P^1,s]$, which we view as a point of 
$\stqtwist n{\sigma} U B$, for $s$ the choice of section over the marked section
$\infty \in \mathbb P^1 \subset C \coprod_{\sigma\sim 0} \mathbb P^1$.
(We note here that the universal section $\sigma$ on $\mathscr R^{n,\sigma}_B$
restricts to $\infty \in \mathbb P^1$.)
Via the description above, the map $\Gamma$ from \eqref{equation:gamma-map} is induced by a gluing map on the universal curves
\begin{align*}
	\Theta: 
	(C \times_B \openstqtwist {n-d}{\sigma} U B) \times (\mathbb P^1_B \times_B
	\openstqtwist d {0,\infty}{\mathbb G_m} B) \to
	\mathscr R^{n,\sigma}_B
\end{align*}
together with a gluing map on double covers of these universal curves (which we
will not need to distinguish with further notation).
We also define $\mathscr G$ to be the constant sheaf of $\mathbb Z/\nu \mathbb Z$-modules of rank $2r = \rk \mathscr F$ on $\mathbb G_m \subset \mathbb P^1$.
There are maps 
\begin{align*}
	\Theta' &: 
	(\mathbb P^1_B \times_B
	\selspace {\mathscr G^{d,0,\infty}_{B}}) \times_B
(C \times_B \openstqtwist {n-d}{\sigma} U B) 
	\to
	\mathscr R^{n,\sigma}_B \\
\Gamma' &: \selspace {\mathscr G^{d,0,\infty}_{B}}\times_B \openstqtwist {n-d}{\sigma} U B \to
\stqtwist n{\sigma} U B
\end{align*}
where $\Theta'$ and $\Gamma'$ are obtained from $\Theta$ and $\Gamma$ by
precomposing these with the projection
$\selspace {\mathscr G^{d,0,\infty}_{ B}} \to 
\openstqtwist d {0,\infty}{\mathbb G_m} B$.
Define the projections
\begin{align*}
	\pi_C &: \selspace {\mathscr
	G^{d,0,\infty}_{ B}} \times_B
\openstqtwist {n-d}{\sigma} U B 
	\to \openstqtwist {n-d}{\sigma} U B \\
\pi_{\mathbb P^1}&: \selspace {\mathscr G^{d,0,\infty}_{ B}}
\times_B \openstqtwist {n-d}{\sigma} U B
\to \selspace {\mathscr G^{d,0,\infty}_{ B}}.
\end{align*}
\begin{lemma}
	\label{lemma:coefficient-system-algebraic-restriction}
	There is an isomorphism of lcc \'etale sheaves on 
$\selspace {\mathscr G^{d,0,\infty}_{ B}} \times
\openstqtwist {n-d}{\sigma} U B$,
\begin{align}
	\label{equation:selmer-pullback}
\pi_{C}^*\selsheaf
{\mathscr F^{n-d,\sigma}_{\on{St},B}} \oplus \pi_{\mathbb P^1}^* \selsheaf {\mathscr
	G^{d,\infty,0}_{B}} \simeq
	(\Gamma')^* \selsheaf{\mathscr F^{n,\sigma}_{\on{St},B}}.
\end{align}
\end{lemma}
\begin{proof}
	By construction,
	$\Gamma^* \selsheaf{\mathscr F^{n,\sigma}_{\on{St},B}}$
	is a universal torsor
	for quadratic twists of
	$\Theta^* (h_{\on{St},B}^{n,\sigma})^* (j_*\mathscr F)$, together with a trivialization over
	$\infty \in \mathbb P^1 \subset C \coprod_{\sigma \sim 0} \mathbb P^1_B$ (since $\infty$ is the pullback of the section
	corresponding to $\sigma$ over $\st n \sigma U B$).
	It follows that $(\Gamma')^* \selsheaf{\mathscr
	F^{n,\sigma}_{\on{St},B}}$,
	which is the pullback of $(\Gamma)^* \selsheaf{\mathscr F^{n,\sigma}_{\on{St},B}}$
	along 
	$\selspace {\mathscr G^{d,0,\infty}_{ B}} \to \openstqtwist d
	{0,\infty}{\mathbb G_m} B$,
	is a universal torsor for quadratic twists of 
	$(\Theta')^* (h_{\on{St},B}^{n,\sigma})^* (j_*\mathscr F)$, together
	with a specified trivialization over
	$\infty \in \mathbb P^1 \subset C \coprod_{\sigma \sim 0}
	\mathbb P^1_B$. 
	This sheaf is also trivializable at $0$ because we pulled it back to 
	$\selspace {\mathscr
	G^{d,0,\infty}_{ B}}$,
	although no trivialization is specified at $0$.
	We can choose an \'etale path so as to make an identification of the
	fiber of $\mathscr F$ over $0$ and the fiber over $\infty$, and thereby
	transfer the trivialization at $\infty$ to a trivialization at $0$.
	Specifying a torsor for a quadratic twist of $(\Theta')^* (h_{\on{St},B}^{n,\sigma})^* (j_*\mathscr F)$
	which is trivialized in this way at $\sigma\sim 0$ is equivalent to specifying
	a torsor for the restriction of the quadratic twist to $\mathbb P^1$
	with a trivialization at $\infty$, together with a torsor for the
	restriction to $C$ with a trivialization at $\sigma$. In other words, this gives
the desired isomorphism \eqref{equation:selmer-pullback}.
\end{proof}


\subsubsection{Background on logarithmic geometry}
\label{subsubsection:log-background}
For a general reference on cohomology of log schemes, we recommend
\cite{illusie:an-overview}. We will work with a subcategory of log stacks
called Deligne-Faltings log stacks. 
We recommend
\cite[\S8]{bergstromDPW:hyperelliptic-curves-the-scanning-map}, especially
\cite[\S8.2]{bergstromDPW:hyperelliptic-curves-the-scanning-map},
for an introduction to the basic properties of Deligne-Faltings log stacks we will use.
We next review notation for Deligne-Faltings log stacks.
It is shown in \cite[Theorem 3.6]{borneV:parabolic-sheaves-on-logarithmic-schemes} that
Deligne-Faltings log schemes
correspond to a full subcategory of log structures on schemes.
(We will not need this, but it is shown in
\cite[Theorem 3.6]{borneV:parabolic-sheaves-on-logarithmic-schemes}
that they correspond to the full subcategory of quasi-integral log
structures.)
Moreover the proof that these Deligne-Faltings log structures form a full
subcategory of usual log structures generalizes straightforwardly from the case
that we work over a scheme to the case that we work over a Deligne-Mumford stack.

Let $X$ be a Deligne-Mumford stack. Let $\mathfrak{Div}_X$ denote the \'etale
sheaf of symmetric monoidal groupoids on $X$ which associates to an \'etale map
$U \to X$ the groupoid of pairs $(L,s)$ for $L$ an invertible sheaf on $U$ and
$s : \mathscr O_U \to L$ a section. The symmetric monoidal structure is given by tensor
product of line bundles and multiplication of the corresponding sections.
A {\em pre-Deligne-Faltings log structure on $X$} is an \'etale presheaf of
commutative monoids $A$ on $X$ and a symmetric monoidal functor $\phi: A \to
\mathfrak{Div}_X$.
We call a pre-Deligne-Faltings log structure on $X$ a {\em Deligne-Faltings log
structure on $X$}
if $A$ is a sheaf and $\phi$ has trivial kernel.

A {\em Deligne-Faltings log stack} is a Deligne-Mumford stack $X$ equipped with
a Deligne-Faltings log structure $(A,\phi)$.
Suppose we have two Deligne-Mumford stacks $X$ and $Y$ with log structures
$(A,\phi)$ and $(B, \psi)$.
A {\em morphism of Deligne-Faltings log structures} is the data of a map $f: X \to Y$, a map of \'etale sheaves $h: A
\to f^* B$ on $X$, and a natural transformation of symmetric monoidal functors from $f^* \psi \circ h$ to $\phi$.
A morphism as above is {\em strict} if $h: A \to f^* B$ is an isomorphism.

We now mention a few types of log structures we will encounter in what follows.

The {\em trivial log structure} denotes the unique Deligne-Faltings log structure associated to the
sheaf $A = 0$. The unique map $A \to \mathfrak{Div}_X$ necessarily
sends the unit to the unit, i.e., $0 \mapsto (\mathscr O_X,1)$.

We also use the {\em standard log
structure} to denote the Deligne-Faltings log structure associated to the
constant sheaf $A = \underline{\mathbb N}$ on the natural numbers
and the function $A \to \mathfrak{Div}_X$ sending a generator to $(\mathscr O_X,0)$.
(Note that this has trivial kernel because the unit in the monoidal structure is 
$(\mathscr O_X,1)$ and not
$(\mathscr O_X,0)$.)

Suppose we are given a line bundle $\mathscr L$ on $X$ and a section $s: \mathscr O_X \to
\mathscr L$. So long as $(\mathscr L,s) \neq (\mathscr O_X,0)$, we can associate a Deligne-Faltings log structure as follows.
The sheaf $A$ is the constant sheaf $\underline{\mathbb N}$ and the map
$\underline{\mathbb N} \to
\mathfrak{Div}_X$ induced by sending $1$ to $(\mathscr L,s)$.
We call this the {\em log structure associated to $(\mathscr
L,s)$}.
If $D \subset X$ is a smooth divisor, the 
{\em log structure associated to $D$}
is the Deligne-Faltings log structures
associated to $(\mathscr O_X(D), s: \mathscr O_X \to \mathscr O_X(D))$.

\subsubsection{Defining our log stacks}
\label{subsubsection:defining-our-log-stacks}
We now define the Deligne-Faltings log stacks we will work with.
Let 
$\selspace {\mathscr H}$ denote the stack associated to the
sheaf $\selsheaf{\mathscr H}$.
Let $E \subset \selspace {\mathscr F^{n,\sigma}_{\on{St},B}}$ denote the divisor
which is the preimage of $\st n \sigma U B  - \conf n U B \subset \st n \sigma U B$.
Let $\logselspace {\mathscr F^{n,\sigma}_{\on{St},B}}$
denote the Deligne-Faltings log stack with underlying space 
$\selspace {\mathscr F^{n,\sigma}_{\on{St},B}}$ with the Deligne-Faltings log
structure given by the log structure associated to $E$, as in
\autoref{subsubsection:log-background}.
Define a Deligne-Faltings log scheme
$\left( \left(\selspace {\mathscr G^{d,\infty,0}_{B}} \times_{\openstqtwist d
		{0,\infty} {\mathbb G_m}B} \selspace {\mathscr G^{d,0,
\infty}_{B}}\right)
			\times_B \selspace {\mathscr
		F^{n-d,\sigma}_{B}} \right)^{\on{log}}$
with underlying scheme 
\begin{align}
	\label{equation:log-space}
	\left(\selspace {\mathscr G^{d,\infty,0}_{B}} \times_{\openstqtwist d
		{0,\infty} {\mathbb G_m}B} \selspace {\mathscr G^{d,0,\infty}_{B}}\right)
			\times_B \selspace {\mathscr
			F^{n-d,\sigma}_{B}}
\end{align}
with the standard log structure, as defined in
\autoref{subsubsection:log-background}.
%
%

\begin{lemma}
	\label{lemma:log-gluing-map}
	Suppose $B$ is the spectrum of a complete DVR (or, more generally, has trivial
	Picard group).
	The isomorphism of
\autoref{lemma:coefficient-system-algebraic-restriction}
yields a strict map of Deligne-Faltings log stacks
\begin{align}
	\label{equation:log-scheme-map}
	\alpha: \left(\left(\selspace {\mathscr G^{d,\infty,0}_{B}} \times_{\openstqtwist d
	{0,\infty} {\mathbb G_m}B} \selspace {\mathscr
G^{d,0,\infty}_{B}}\right) \times_B \selspace {\mathscr
F^{n-d,\sigma}_{B}} \right)^{\on{log}} \to \logselspace {\mathscr F^{n,\sigma}_{\on{St},B}}.
\end{align}
\end{lemma}
\begin{proof}
	The map of underlying stacks of these Deligne-Faltings log stacks is obtained from 
	\autoref{lemma:coefficient-system-algebraic-restriction}.
	Note that $\alpha^* \tau$ is the
	zero section because $\tau$ vanishes on the image of $\alpha$.
	It remains to show that the line bundle
	$\mathscr O_{\selspace {\mathscr F^{n,\sigma}_{\on{St},B}}}(E)$
	pulls back to 
	the trivial bundle.
	
	First, we define another line bundle $\mathscr L$ on
	\eqref{equation:log-space}. Then, we show
	$\mathscr L$ is isomorphic to the trivial bundle on
	\eqref{equation:log-space}.
	Finally, we will show 
	$\mathscr O_{\selspace {\mathscr F^{n,\sigma}_{\on{St},B}}}(E)$ pulls back to
	$\mathscr L$.

Let $\pi_1$ and $\pi_2$ denote the two projections from 
\eqref{equation:log-space}
onto its two factors.
Let $\mathbb T_{\sigma}$ denote
the line bundle on
$\selspace {\mathscr F^{n-d,\sigma}_{B}}$
which is the  restriction to $\sigma$ of the relative tangent bundle 
for the universal curve over 
$\selspace {\mathscr F^{n-d,\sigma}_{B}}$.
Similarly, let $\mathbb T_0$ denote the line bundle on
$\left(\selspace {\mathscr G^{d,\infty,0}_{B}} \times_{\openstqtwist d
		{0,\infty} {\mathbb G_m}B} \selspace {\mathscr G^{d,0,\infty}_{B}}\right)$ 
which is the  restriction to $0$ of the relative tangent bundle 
for the universal curve over 
$\left(\selspace {\mathscr G^{d,\infty,0}_{B}} \times_{\openstqtwist d
		{0,\infty} {\mathbb G_m}B} \selspace {\mathscr G^{d,0,\infty}_{B}}\right)$
	which is pulled back from the universal curve over 
$\selspace {\mathscr G^{d,0,\infty}_{B}}$.
Then, define $\mathscr L := \pi_1^* \mathbb T_0 \otimes \pi_2^* \mathbb T_{\sigma}$ and
take $\tau$ to be the zero
section of this line bundle.

	We claim that $\mathscr L$ is isomorphic to the trivial bundle. 
	It suffices to show both
$\mathbb T_0$ and $\mathbb T_{\sigma}$ are trivial. 
These are pulled back from a line bundle on $B$ since the section $\sigma$ is
pulled back from $C \to B$ and the section $0$ is pulled back from $\mathbb
P^1_B
\to B$. Hence, these bundles are trivial because $B$ has trivial Picard group.

It remains to show 
	$\mathscr O_{\selspace {\mathscr F^{n,\sigma}_{\on{St},B}}}\to\mathscr O_{\selspace {\mathscr F^{n,\sigma}_{\on{St},B}}}(E)$
	pulls back to the zero section of
	$\pi_1^* \mathbb T_0 \otimes \pi_2^*
	\mathbb T_{\sigma}$.
	This can be proven via an argument analogous to
	\cite[p. 346, line 2]{arbarelloCG:geomtry-of-algebraic-curves-ii}, as we
	next further expand on.
	A minor technicality is that the gluing map $\alpha$ joining $C$ and
	$\mathbb P^1$ factors as the composition of a positive dimensional
	smooth map of relative dimension $1$ and an \'etale map. 
	Namely, it factors through a gluing map joining $C$ and $P$, where
	$P$ is a genus $0$
	curve with $0$ and $\infty$ marked. Because a third point is not
	marked on $P$, $P$ may not be isomorphic to $\mathbb P^1$.
	The latter gluing map, described by gluing $C$ to $P$, does define an
	\'etale map to $\logselspace {\mathscr F^{n,\sigma}_{\on{St},B}}$.
	Since the normal bundle of an \'etale morphism is identified with the
	pullback of the ideal sheaf of its image, an argument similar to 
	\cite[p. 346, line 2]{arbarelloCG:geomtry-of-algebraic-curves-ii}
	shows that $\mathscr O_{\selspace {\mathscr F^{n,\sigma}_{\on{St},B}}}(E)$ pulls
	back to the tensor product of the tangent line bundles on $C$ and the
	genus $0$ curve. When one further pulls this line bundle back to
	$\left(\selspace {\mathscr G^{d,\infty,0}_{B}} \times_{\openstqtwist d
		{0,\infty} {\mathbb G_m}B} \selspace {\mathscr G^{d,0,\infty}_{B}}\right)
			\times_B \selspace {\mathscr
			F^{n-d,\sigma}_{B}}$, we obtain the claim.
\end{proof}

\subsection{A fun, combinatorial group theory interlude}
\label{subsection:u-degree}

Taking a break from the heavy machinery of log geometry, we will need a result
from combinatorial group theory which strengthens \cite[Lemma
3.5]{EllenbergVW:cohenLenstra}. This will show the degree of $\mathbb U$ may be
taken to be $2$ in our setting above.

Let $G$ be a group and $c \subset G$ be a conjugacy class. Recall that $(G,c)$
is {\em non-splitting} if $c$ generates $G$ and, for any subgroup $K \subset G$, $c \cap K$ is either empty
or a single conjugacy class. I.e., $c$ does not split into multiple conjugacy
classes upon intersecting with $K$.
Consider the coefficient system $V_n$ for $\Sigma^1_{0,0}$, as in 
\autoref{example:hurwitz-coefficient-system}, associated to the group $G$,
and $c \subset G$ a specified conjugacy class.
We use $R^V$ to denote $\oplus_{n \geq 0} H_0(B^n_{0,0}, V_n)$,
$r_h$ denote right multiplication by $h$ on $R^V$, and 
$\on{ord}(h)$ to denote the order of $h$.

\begin{proposition}
	\label{proposition:degree-order}
	Let $(G,c)$ be non-splitting and $D$ any positive integer.
Then $\mathbb U := \sum_{h \in c} r_h^{D \on{ord}(h)} \in R^V$
	is a homogeneous central element with finite degree kernel and
	cokernel. Hence, $\mathbb U$ satisfies the hypotheses of
\autoref{theorem:central-u-implies-cohomology-bound}.
\end{proposition}
\begin{proof}
	In \cite[Lemma 3.5]{EllenbergVW:cohenLenstra}
it was shown that there exists some integer $D$ so that 
$\mathbb U_D := \sum_{h \in c} r_h^{D\on{ord}(h)} \in R^V$
satisfies the conclusion of the theorem statement.
We want to show $D$ can be taken to be any positive integer.
Let $S_t(K)$ denote the subset of the quotient set $c^t/B_t$, via the standard braid group
action of $B_t$ on $t$-tuples of elements in $c$, consisting of those $t$-tuples
of elements which generate $K$.
The proof of \cite[Lemma 3.5]{EllenbergVW:cohenLenstra}
shows that we may take $D$ to be any positive integer so that for every subgroup
$K \subset G$,
and for $t$ sufficiently large,
the map $r_{h}^{\on{ord}(h)D} : S_t(K) \to S_{t + \on{ord}(h)D}(K)$ is a bijection which is
independent of choice of $h \in c \cap K$.
Therefore, by possibly replacing $(G,c)$ with $(K, c \cap K)$,
to complete the proof, it suffices to show that for any non-splitting $(G,c)$
for $t$ sufficiently large, and for any $h, k \in c$,
$r_h^{\on{ord}(h)}, r_{k}^{\on{ord}(k)} : S_t(G) \to S_{t + 2}(G)$ induce the same bijection.

Let $t$ be sufficiently large and $x \in S_t(G)$.
We wish to show the class of $r_h^{\on{ord}(h)}(x)$ agrees with the class of $r_k^{\on{ord}(k)}(x)$.
Let $\widehat{G}$ denote the group
$S_c \times_{G^{\on{ab}}} \mathbb Z$,
where $G^{\on{ab}}$ denotes the abelianization of $G$
and
$S_c$ is a reduced Schur cover for $(G,c)$, as defined in
\cite[Definition, p. 21]{wood:an-algebraic-lifting-invariant};
for the reader's benefit, we next review this notation.
A Schur cover $S \to G$ is a central extension of $G$ by some group $K$ so
that the class of the extension in $H^2(G,K)$ maps to an isomorphism in
$\hom(H_2(G, \mathbb Z),K)$ under the map from the universal coefficients exact
sequence. A reduced Schur cover $S_c \subset S$ is a particular subgroup
which surjects onto $G$.
Following the notation of \cite{wood:an-algebraic-lifting-invariant},
we notate the element of $\hat{G}$ corresponding to $h\in c$ as $(\hat{h}, e_h)
\in S_c \times_{G^{\on{ab}}} \mathbb Z = \hat{G}$,
for 
$e_h = 1 \in \mathbb Z$
and
$\hat{h} \in S_c$ projecting to $h \in G$ under the map $S_c \to G$ coming from
the definition of a reduced Schur cover.
For all other $k \in c$, if $k = shs^{-1}$, we can choose any lift $\tilde{s}\in
S_c$ of $s$ and take $\hat{k} := \tilde{s} \hat{h} \tilde{s}^{-1}$. This is
independent of the choice of $s$ and $\tilde{s}$ by \cite[Lemma
2.3]{wood:an-algebraic-lifting-invariant}.
Write $x = (h_1, \ldots, h_t)$.
It follows from
\cite[Theorem 3.1 and Theorem 2.5]{wood:an-algebraic-lifting-invariant}
that showing $r_h^{\on{ord}(h)}x$ lies in the same orbit as $r_k^{\on{ord}(k)} x$
is equivalent to showing
$(\hat{h}, e_h)^{\on{ord}(h)}\cdot (\hat{h}_1, e_{h_1}) \cdots (\hat{h}_t, e_{h_t})
=
(\hat{k}, e_k)^{\on{ord(k)}} \cdot
(\hat{h}_1, e_{h_1}) \cdots (\hat{h}_t, e_{h_t}).$
Equivalently,
we wish to show 
$(\hat{h}, e_h)^{\on{ord}(h)} =
(\hat{k}, e_k)^{\on{ord(k)}}$.
We are assuming $h$ and $k$ lie in the same conjugacy class, and hence have the
same order.
Thus, the second coordinates of the above products agree, and it is enough to
show
$\hat{h}^{\on{ord}(h)} \hat{k}^{-\on{ord(k)}} = \id$.
Writing $k = shs^{-1}$, and
using the relation from \cite[Lemma 2.3]{wood:an-algebraic-lifting-invariant} that
for $\tilde{s}$ any lift of $s$,
$\tilde{s}\hat{h}\tilde{s}^{-1} = \widehat{shs^{-1}}$,
we find
\begin{align*}
\hat{h}^{\on{ord}(h)} \hat{k}^{-\on{ord}(h)} = \hat{h}^{\on{ord}(h)}
	\tilde{s} \hat{h}^{-\on{ord}(h)} \tilde{s}^{-1} =
	[\hat{h}^{\on{ord}(h)}, \tilde{s}].
\end{align*}
However, $\hat{h}^{\on{ord}(h)} $ lies in the center of $S_c$, since its image
in $G$ is $h^{\on{ord}(h)}
= \id$ and
$S_c$ is a central
extension of $G$ by $\ker(S_c \to G)$.
Therefore, $\hat{h}^{\on{ord}(h)}$ commutes with
$\tilde{s}$, so 
$[\hat{h}^{\on{ord}(h)}, \tilde{s}] = \id$, as desired.
\end{proof}

\subsection{Relating the gluing map to the stabilization map}
\label{subsection:gluing-to-stabilization}

In this subsection, we compare the logarithmic gluing map constructed in
\eqref{equation:log-scheme-map} to the stabilization map on cohomology.
The main result is \autoref{proposition:u-map-identification}, which shows they
can be identified in a suitable sense.

In the case $B =\spec \mathbb C,$
we obtain a stabilization map $\mathbb U$ on the cohomology over $\spec \mathbb C$ as
follows:
Following \cite[Lemma 3.5]{EllenbergVW:cohenLenstra},
define $\mathbb U := \sum_{h \in c} r_h^{D\on{ord}(h)} \in R^V$ for $r_h$ right
multiplication by $h$, $\on{ord}(h)$
the order of $h$. Let $d = D \cdot \ord(h)$ denote the degree of $\mathbb U$.
(In our case, $h$ will always have order $2$, so $d = 2D$, and ultimately we
	will take $D = 1$, but we will continue
to use $d$ as we believe it is somewhat clarifying.)
Then, $\mathbb U$ is a homogeneous central element with finite degree kernel and
cokernel by \autoref{proposition:degree-order},
and hence satisfies the hypotheses of
\autoref{theorem:central-u-implies-cohomology-bound}.
The map $\mathbb U$ on homology can be reexpressed in terms of a map on compactly
supported cohomology which we continue to call $\mathbb U$.
We take $\ell'$ to be a prime invertible on $B$.
We may identify this $\mathbb U$ operator over the complex numbers with an operator
$\mathbb U_{\overline{\mathbb F}_q}$ on the $\overline{\mathbb F}_q$ cohomology via the
following commutative diagram
\begin{equation}
	\label{equation:u-transfer}
	\begin{tikzcd} 
		H_{\on{c}}^i(\selspace {\mathscr F^{n-d,\sigma}_{\mathbb C}},
		\mathbb Q_{\ell'}(n-d))\ar {r}{\mathbb U} \ar {d} & 
		H_{\on{c}}^{i+2d}(\selspace {\mathscr F^{n,\sigma}_{\mathbb C}},
		\mathbb Q_{\ell'}(n))  \ar {d} \\
H_{\on{c}}^i(\selspace {\mathscr F^{n-d,\sigma}_{\overline{\mathbb F}_q}}, \mathbb
Q_{\ell'}(n-d)) \ar {r}{\mathbb U_{\overline{\mathbb F}_q}} & 
H_{\on{c}}^{i+2d}(\selspace {\mathscr F^{n,\sigma}_{\overline{\mathbb F}_q }}, \mathbb
Q_{\ell'}(n)),
\end{tikzcd}\end{equation}
where the vertical isomorphisms are obtained via the specialization maps (there are
isomorphisms by
\cite[Proposition 7.7]{EllenbergVW:cohenLenstra})
and the map
$\mathbb U_{\overline{\mathbb F}_q}$ is the unique map making the diagram
commute.

One can show in a fashion analogous to
\autoref{example:specified-hurwitz-coefficient-system}
that the sequence of spaces $\selspace {\mathscr F^{n,\sigma}_{\mathbb C}}$
correspond to a coefficient system $F_n$ for $\Sigma_{g,f}^1$ over the same
coefficient system $V_n$ for $\Sigma^1_{0,0}$ as in 
\autoref{example:specified-hurwitz-coefficient-system}.
Now, let $c \subset \asp_{2g}(\mathbb Z/\nu \mathbb Z)$ denote the conjugacy
class of elements projecting to $-\id$ in $\sp_{2g}(\mathbb Z/\nu \mathbb Z)$
and let $G \subset \asp_{2g}(\mathbb Z/\nu \mathbb Z)$ denote the subgroup
generated by $c$.

We will next define another map coming from logarithmic geometry. In order to
define that map, we need the following result.

\begin{lemma}
	\label{lemma:scheme-over-fq}
	If we are given $x \in 	\selspace {\mathscr G^{d,0,\infty}_{
	B}}(\mathbb F_q)$,
we may identify the $\overline{\mathbb F}_q$-points of 
\begin{align*}
	\selspace {\mathscr G^{d,\infty,0}_{B}} \times_{\openstqtwist d
		{0,\infty} {\mathbb G_m}B} \selspace {\mathscr G^{d,0,\infty}_{B}}
\end{align*}
over $x$ with tuples in $G^d$ whose
product is $\id \in G$, with $G$ as defined above. This fiber has an action of Frobenius,
	$\frob_q$.
	For $d$ even, under this
	bijection, the set of elements $\{(h, \ldots, h) : h \in c\}$ in the
	fiber over $x$
	constitutes a union of $\frob_q$ orbits.
\end{lemma}
\begin{proof}
	First, we can identify the $\overline{\mathbb F}_q$-points over $x$ with
	the set claimed above using the
	description of the Selmer space as a Hurwitz space from
	\autoref{corollary:selmer-to-hurwitz-moments}. Although this isomorphism
	is only given over $\mathbb C$, one can also deduce this isomorphism
	over $\overline{\mathbb F}_q$ using that the Selmer space is a quotient
	of a finite \'etale cover of a configuration space by a finite group,
	and said configuration space has a normal 
	crossings compactification using
	\autoref{corollary:hurwitz-compactification}.

	Since Frobenius must preserve the conjugacy class of an element in
	$G$, as the conjugacy class can be read from from the inertia of the
	corresponding cover,
	the orbit of $(h, \ldots, h)$ must consist of elements of the form $(h_1, \ldots, h_n)$
	where each $h_i \in c$. 

	It remains to show that any such element in this
	orbit satisfies $h_i = h_j$ for $1 \leq i \leq n$.
	We have that $\selspace {\mathscr G^{d,\infty,0}_{B}} \times_{\openstqtwist d
		{0,\infty} {\mathbb G_m}B} \selspace {\mathscr G^{d,0,\infty}_{B}}$ is a finite \'etale cover
	of $\selspace {\mathscr G^{d,0,\infty}_{
	B}}$, and hence we obtain an action of
	the fundamental group of $\selspace {\mathscr G^{d,0,\infty}_{
	B}}$ on
	the geometric fiber of 
$\selspace {\mathscr G^{d,\infty,0}_{B}} \times_{\openstqtwist d
		{0,\infty} {\mathbb G_m}B} \selspace {\mathscr G^{d,0,\infty}_{B}}$
	over a given point $x \in \selspace {\mathscr G^{d,0,\infty}_{B}}$.
	The above sheaf over $\selspace {\mathscr G^{d,0,\infty}_{x}}$ is obtained as the base
	change of a sheaf over $\conf d {\mathbb P^1} x$
	using that $\mathscr G$ is trivial, so extends over $\infty$ and
	$0$.

	We next conclude the proof by showing the set $\{(h, \ldots, h) : h \in G\}$ over the
	image of $x$ form a union of $\frob_q$ orbits.
	Let $\overline{x}$ denote a geometric point over $x$.
	Note that this set 
	$\{(h, \ldots, h) : h \in G\}$
	now inherits an action of the fundamental group of 
	$\conf d {\mathbb P^1} x$,
	which is a semidirect product
	of its geometric fundamental group,
	$\pi_1(\conf d {\mathbb P^1} {\overline{x}})$, the
profinite completion of the braid group, and $\pi_1(x) \simeq \widehat{\mathbb Z}$,
	generated by Frobenius.
Hence, for any $\eta \in \pi_1(\conf d {\mathbb P^1} {\overline{x}})$
there is some $\eta' \in \pi_1(\conf d {\mathbb P^1} {\overline{x}})$
	with
	$\eta \frob_q (h, \ldots, h) = \frob_q \eta'(h,\ldots, h)$.
	Since the
	braid group fixes $(h, \ldots, h)$, we find
	$\eta \frob_q (h, \ldots, h) = \frob_q(h, \ldots, h)$, and hence
	$\frob_q(h, \ldots, h)$
	is fixed by the action of the profinite completion of the Braid group.
	Since elements of the form $(k, \ldots, k) \in G^n$ are the only
	elements fixed by the profinite completion of the Braid group, we must
	have $\frob_q(h, \ldots, h) = (k, \ldots, k)$ for some $k \in G$.
\end{proof}

\subsubsection{Defining a stabilization map from logarithmic geometry}
\label{subsubsection:stabilization-log-map}

Let $B$ be the spectrum of a complete DVR with residue field $\mathbb F_q$ and
generic characteristic $0$.
Suppose there exists 
$x \in \selspace {\mathscr G^{d,0,\infty}_{ B}}(\mathbb F_q)$.
One can lift this to a section of
$\selspace {\mathscr G^{d,0,\infty}_{B}}(B)$ over $B$
using smoothness of $\selspace {\mathscr G^{d,0,\infty}_{B}}$
to lift the point over any power of the maximal ideal of the DVR corresponding
to $B$, which then algebraizes to a $B$-point by \cite[Corollary
8.4.6]{FantechiGIK:fundamentalAlgebraicGeometry}.
Let $\iota: S_{\mathbb F_q} \subset \selspace {\mathscr G^{d,\infty,0}_{B}} \times_{\openstqtwist d
		{0,\infty} {\mathbb G_m}B} \selspace {\mathscr G^{d,0,\infty}_{B}}
$
denote the reduced closed subscheme over $\mathbb F_q$ whose base change
to $\overline{\mathbb F}_q$ corresponds to the set of $\overline{\mathbb F}_q$-points $\cup_{h \in c} (h, \ldots, h)$.
This is a well defined subscheme by \autoref{lemma:scheme-over-fq}.
Let $S_B$ denote a lift of $S_{\mathbb F_q}$ over the given lift of $x$, which
exists and is unique because the cover 
$\selspace {\mathscr G^{d,\infty,0}_{B}} \times_{\openstqtwist d
		{0,\infty} {\mathbb G_m}B} \selspace {\mathscr G^{d,0,\infty}_{B}}
	\to
	\selspace {\mathscr G^{d,0,\infty}_{B}}$
is finite
\'etale.
One may verify the complement of
$\selspace {\mathscr F^{n,\sigma}_{T}} \subset \selspace {\mathscr F^{n,\sigma}_{\on{St},T}}$
is a smooth divisor, using its description as a finite cover of
$\conf n U B \subset \st n {\sigma} U B$, which has complement a smooth divisor
by
\autoref{remark:nc-divisor}.
(The cover is not \'etale over the boundary, but it is branched over the boundary
of a fixed degree, which is enough to guarantee the smoothness above.)
For $T \to B$ the spectrum of a field,
using smoothness of
the complement
$\selspace {\mathscr F^{n,\sigma}_{T}} \subset \selspace {\mathscr F^{n,\sigma}_{\on{St},T}}$
mentioned above,
we also obtain the identification
\begin{align*}
\delta: 
H^i \left(\selspace {\mathscr F^{n,\sigma}_{T}}, \mathbb Q_{\ell'}(n)	\right)
\simeq
H^i \left(\logselspace {\mathscr F^{n,\sigma}_{\on{St},T}}, \mathbb
	Q_{\ell'}(n)
\right).
\end{align*}
by 
\cite[\S8.4.3]{bergstromDPW:hyperelliptic-curves-the-scanning-map}.

Next, the inclusion
\begin{align}
	\label{equation:s-inclusion}
	\beta: S_B \simeq S_B \times_B B \xrightarrow{\iota \times \{1\}}
 \selspace {\mathscr G^{d,\infty,0}_{B}} \times_{\openstqtwist d
		{0,\infty} {\mathbb G_m}B} \selspace {\mathscr G^{d,0,\infty}_{B}}
\times_B \mathbb G_m,
\end{align}
induces a strict map of Deligne-Faltings log stacks
\begin{align*}
	\left( S_B \times_B \selspace {\mathscr F^{n-d,\sigma}_{B}}
	\right)^{\on{log}} \to
\left(\left(\selspace {\mathscr G^{d,\infty,0}_{B}} \times_{\openstqtwist d
		{0,\infty} {\mathbb G_m}B} \selspace {\mathscr G^{d,0,\infty}_{B}}\right)
			\times_B \selspace {\mathscr F^{n-d,\sigma}_{B}}\right)^{\on{log}}
\end{align*}
where we endow $S_B \times_B \selspace {\mathscr F^{n-d,\sigma}_{B}}$
with the standard log structure,
corresponding to the sheaf $\underline{\mathbb N}$ and the map to $\mathfrak{Div}_{S_B
\times_B \selspace {\mathscr F^{n-d,\sigma}_{B}}}$ sending
$1$ to $(\mathscr O_{S_B \times_B \selspace {\mathscr F^{n-d,\sigma}_{B}}}, 0)$.

Using the above described maps along with the map $\alpha$ from
\eqref{equation:log-scheme-map},
and base changing along some spectrum of a field $T \to B$,
we obtain a map on cohomology
\begin{equation}
\begin{aligned}
	\label{equation:composite-log-scheme-map}
H^{i} \left(\selspace {\mathscr F^{n,\sigma}_{T}}, \mathbb
	Q_{\ell'}	\right)
	&\xrightarrow{\delta}
H^{i} \left(\logselspace {\mathscr F^{n,\sigma}_{\on{St},T}}, \mathbb
Q_{\ell'}	\right) \\
	&\xrightarrow{\alpha^*}
H^{i} \left(\left(	\left(\selspace {\mathscr G^{d,\infty,0}_{T}} \times_{\openstqtwist d
		{0,\infty} {\mathbb G_m}T} \selspace {\mathscr G^{d,0,\infty}_{T}}\right)
			\times_T \selspace {\mathscr
	F^{n-d,\sigma}_{T}}\right)^{\on{log}}, \mathbb Q_{\ell'} \right) \\
&\xrightarrow{\beta^*}
H^i\left( \left(S_T \times_T \selspace {\mathscr F^{n-d,\sigma}_{T}}
\right)^{\on{log}}, \mathbb Q_{\ell'} \right).
\end{aligned}
\end{equation}
\begin{lemma}
	\label{lemma:factor-through-trivial-log}
		The map $\alpha^* \circ \delta$ in \eqref{equation:composite-log-scheme-map}
	over $T = \spec \mathbb C$ can be identified with the map induced on cohomology of
	the gluing map described as follows.
	The map takes in the following data:
	\begin{enumerate}
		\item a direction $\tau$ on the unit circle,
		\item an $\asp_{2g}(\mathbb Z/\nu \mathbb Z)$-cover
	of $\Sigma^1_{g,f}$, 
\item an
	$\asp_{2g}(\mathbb Z/\nu \mathbb Z)$-cover of $\Sigma^2_{0,0}$, 
\item a specified identification of the boundary of
	$\Sigma^1_{g,f}$ with $S^1$,
\item a specified identification of one of the boundary components of
	$\Sigma^2_{0,0}$, corresponding to the point $0 \in \mathbb P^1$, with $S^1$.
	\end{enumerate}
	The gluing map then glues the two copies of $S^1$ in $(4)$ and $(5)$ via a
	rotation by $\tau$, and glues the boundary components of the covers by
	the pullback of this identification.
\end{lemma}
\begin{proof}
	We let $T = \spec \mathbb C$ and verify the explicit description
	of the map. 
Recall the map $\alpha^*$ from
\eqref{equation:log-scheme-map} is obtained from the map of underlying stacks from
\autoref{lemma:coefficient-system-algebraic-restriction}
together with the map of logarithmic structures from
\autoref{lemma:log-gluing-map}.
	We next identify the Deligne-Faltings log schemes in the source and target of the map
$\alpha^*$ in \eqref{equation:composite-log-scheme-map}
with their corresponding Kato-Nakayama spaces as in 
\cite[Examples 8.3.8
and 8.3.9]{bergstromDPW:hyperelliptic-curves-the-scanning-map}, to obtain a map
of topological stacks.
We then have a commutative square of Deligne-Faltings log stacks
\begin{equation}
	\label{equation:selmer-to-conf-square}
	\begin{tikzcd} 
	\left(\left(\selspace {\mathscr G^{d,\infty,0}_{T}} \times_{\openstqtwist d {0,\infty} {\mathbb G_m}T} \selspace {\mathscr G^{d,0,\infty}_{T}}\right)
			\times_T \selspace {\mathscr F^{n-d,\sigma}_{T}}\right)^{\on{log}}	 \ar {r} \ar {d} & \logselspace {\mathscr F^{n,\sigma}_{\on{St},T}} \ar {d} \\
		\left(\conf d {\mathbb G_m} B \times \conf {n-d} U B \right)^{\on{log}} \ar {r} & \left( \st n {\sigma}
		U B \right)^{\on{log}},
\end{tikzcd}\end{equation}
where $\left(\conf d {\mathbb G_m} B \times \conf {n-d} U B \right)^{\on{log}}$
has
the standard log structure (associated to the sheaf $\underline{\mathbb N}$ with the
	generator mapping to the
trivial line bundle with the $0$ section), and 
$\left( \st n {\sigma} U B \right)^{\on{log}}$
has the log structure defined by the boundary divisor $\st n {\sigma} U B -
\conf n U B$.
The pull back of the line bundle giving the Deligne-Faltings log structure
associated to the
boundary divisor
$\st n {\sigma} U B$
can be identified with
the trivial bundle
on
$\conf d {\mathbb G_m} B \times \conf {n-d} U B$, which is more canonically the tensor product of the tangent bundles at $0$ and $\sigma$, 
by a proof analogous to the proof of \autoref{lemma:log-gluing-map}.
Hence, the gluing map associated to the bottom map in 
\eqref{equation:selmer-to-conf-square}
can be described as choosing a unit tangent vector over $0$, choosing a
unit tangent vector over $\sigma$, and then gluing the unit tangent spaces so as to
identify those unit tangent vectors. This results in a point of the
Kato-Nakayama space of 
$\left( \st n {\sigma}U B \right)^{\on{log}}$.
Choose an identification of $U$ with
the interior of
$\Sigma^1_{g,f}$, with one boundary component corresponding to $\sigma$,
and an identification of $\mathbb G_m$ with the interior of $\Sigma^1_{0,1}$,
with a boundary component at $0$ and a puncture at $\infty$.
Topologically, we can further identify the above map with a map gluing 
$\Sigma^1_{g,f}$ to $\Sigma^2_{0,0}$ via altering
$\Sigma^1_{0,1}$ to $\Sigma^2_{0,0}$ by replacing the puncture at $\infty$ with a
boundary component.
The above yields a description of the gluing map on configuration spaces
analogous to that in the statement.
Using the commutative square \eqref{equation:selmer-to-conf-square},
the map of line bundles associated to the map of Selmer spaces is
pulled back from the corresponding map of line bundles on configuration spaces,
yielding the identification we wished to show.
\end{proof}

Additionally, there is a map of Deligne-Faltings log schemes
$\left(S_T \times_T \selspace {\mathscr F^{n-d,\sigma}_{T}}
\right)^{\on{log}}\xrightarrow{\widetilde{\gamma}} S_T \times_T \selspace
{\mathscr F^{n-d,\sigma}_{T}}$,
where we use $S_T \times_T \selspace {\mathscr F^{n-d,\sigma}_{T}}$
to denote the Deligne-Faltings log scheme with trivial log structure
(corresponding to the $0$ monoid).
This induces a map on cohomology
\begin{align}
	\label{equation:cohomology-map-to-trivial-log-structure}
	H^i( S_T \times_T \selspace {\mathscr F^{n-d,\sigma}_{T}}, \mathbb Q_{\ell'})
	\xrightarrow{\gamma}
H^i\left( \left(S_T \times_T \selspace {\mathscr F^{n-d,\sigma}_{T}}
\right)^{\on{log}}, \mathbb Q_{\ell'}\right).
\end{align}
\begin{proposition}
	\label{proposition:u-map-identification}
	Assume $B$ is the spectrum of a complete DVR with residue field $\mathbb F_q$
	and generic characteristic $0$.
	Suppose $\selspace {\mathscr G^{d,0,\infty}_{ B}}(\mathbb F_q)\neq
	\emptyset$.
	If $T$ is either $\spec \overline{\mathbb F}_q$ or $\spec \mathbb C$,
	there is a canonical splitting 
\begin{align}
	\label{equation:splitting-cohomology-map-to-trivial-log-structure}
H^i\left( \left(S_T \times_T \selspace {\mathscr F^{n-d,\sigma}_{T}}
\right)^{\on{log}}, \mathbb Q_{\ell'}\right)
	\xrightarrow{\varepsilon}
	H^i( S_T \times_T \selspace {\mathscr F^{n-d,\sigma}_{T}}, \mathbb Q_{\ell'})
\end{align}
of $\gamma$, i.e., $\varepsilon \circ \gamma= \id$. Additionally, 
if $\eta : H^i( S_T \times_T \selspace {\mathscr F^{n-d,\sigma}_{T}}, \mathbb
Q_{\ell'}) \to H^i( \selspace {\mathscr F^{n-d,\sigma}_{T}}, \mathbb
Q_{\ell'})$ is the summation map obtained by identifying $S_T$ with a disjoint
union of points and summing the resulting cohomology elements,
the composition of \eqref{equation:composite-log-scheme-map} 
with $\eta \circ \varepsilon$
is Poincar\'e dual to a map
\begin{align}
	\label{equation:T-cohomology-map}
H^i_{\on{c}}(\selspace {\mathscr
F^{n-d,\sigma}_{T}}, \mathbb Q_{\ell'}(n-d))
\to
	H^{i+2d}_{\on{c}} \left(\selspace {\mathscr F^{n,\sigma}_{T}}, \mathbb Q_{\ell'}(n)\right).
\end{align}
which agrees with $\mathbb U$ when $T = \spec \mathbb C$ and agrees with
$\mathbb U_{\overline{\mathbb F}_q}$ with $T = \spec
\overline{\mathbb F}_q.$
\end{proposition}
\begin{proof}
	First, we explain how to deduce the final statement
	when $T = \spec
	\overline{\mathbb F}_q$ from the case that $T = \spec \mathbb C$ using the
	specialization map.
	For the final statement with $T = \spec
	\overline{\mathbb F}_q$, we wish to prove
	the surjective specialization map is an isomorphism, and so we wish to
	prove the constructible cohomology sheaves on $B$ corresponding to each
	of the terms in \eqref{equation:composite-log-scheme-map} and
	\eqref{equation:cohomology-map-to-trivial-log-structure} are locally
	constant on $B$.
	Local constancy of the cohomology of
	$\selspace {\mathscr F^{n-d,\sigma}_{B}}$, and hence also of its finite
	cover,
	$S_B \times_B \selspace {\mathscr F^{n-d,\sigma}_{B}}$, follows from \cite[Proposition 7.7]{EllenbergVW:cohenLenstra}.
Hence, by functoriality of the specialization map, it is enough to 
verify
local constancy of the cohomology for the projection $\left(S_B \times_B
\selspace {\mathscr F^{n-d,\sigma}_{B}} \right)^{\on{log}} \to B$
and that the splitting $\varepsilon$ from
\eqref{equation:splitting-cohomology-map-to-trivial-log-structure}
is compatible with the specialization map.

We first verify local constancy of the cohomology.
Observe that we can write $\left(S_B \times_B
\selspace {\mathscr F^{n-d,\sigma}_{B}} \right)^{\on{log}}$ as the fiber product
$\left( S_B \times_B
\selspace {\mathscr F^{n-d,\sigma}_{B}} \right) \times_B B^{\on{log}}$
where here we give $\left( S_B \times_B \selspace {\mathscr F^{n-d,\sigma}_{B}} \right)$
and $B$ the trivial log structures, corresponding to the $0$ monoid, and $B^{\on{log}}$ the standard log
structure.
By the K\"unneth theorem, whose log cohomology version in our setting follows from proper base
change \cite[Proposition 6.3]{illusie:an-overview}
and the projection formula, it is enough to show the cohomology sheaves
associated to both 
$\left( S_B \times_B \selspace {\mathscr F^{n-d,\sigma}_{B}} \right)$ and
$B^{\on{log}}$ are locally constant.
We have already verified the former above, while the latter follows from
\cite[Theorem 5.2]{illusie:an-overview}, assuming $\ell'$ is invertible on $B$.
Moreover the cohomology $B^{\on{log}}$ is isomorphic to
that of $\mathbb G_m$ over $B$.

We next define the splitting $\varepsilon$ in
\eqref{equation:splitting-cohomology-map-to-trivial-log-structure}
and verify it
is compatible with the specialization map.
Notice that the above description of the cohomology of
$\left(S_T \times_T
\selspace {\mathscr F^{n-d,\sigma}_{T}} \right)^{\on{log}}$
gives an isomorphism of cohomology rings with Frobenius action
\begin{align*}
	H^\bullet\left( \left(S_T \times_T \selspace {\mathscr F^{n-d,\sigma}_{T}}
\right)^{\on{log}}, \mathbb Q_{\ell'}\right)
&\simeq
H^\bullet \left( (S_T \times_T \selspace {\mathscr F^{n-d,\sigma}_{T}}) \times_T
\mathbb G_m, \mathbb Q_{\ell'}\right)
\\
&\simeq
H^\bullet \left( S_T \times_T \selspace {\mathscr F^{n-d,\sigma}_{T}}, \mathbb Q_{\ell'}\right)
\otimes H^\bullet(\mathbb G_m, \mathbb Q_{\ell'}),
\end{align*}
the latter isomorphism via the K\"unneth isomorphism.
Hence, we obtain a map
\begin{align*}
	H^i \left( S_T \times_T \selspace {\mathscr F^{n-d,\sigma}_{T}}, \mathbb Q_{\ell'}\right)
\otimes H^0(\mathbb G_m, \mathbb Q_{\ell'})
\to
H^i\left( \left(S_T \times_T \selspace {\mathscr F^{n-d,\sigma}_{T}}
\right)^{\on{log}}, \mathbb Q_{\ell'}\right),
\end{align*}
where the isomorphism is equivariant for the Frobenius action when $T = \spec \overline{\mathbb
F}_q$.
This gives the desired splitting $\varepsilon$ from 
\eqref{equation:splitting-cohomology-map-to-trivial-log-structure}.
Moreover, the above subspace is compatible with the specialization map, as we wished
to show.
Overall, this reduces us to verifying the final claim when $T =\spec \mathbb C$.

We conclude by verifying the final statement when $T = \spec \mathbb C$.
Let $S^1$ denote the circle.
On the level of Kato-Nakayama spaces, the splitting $\varepsilon$ defined above can be
	obtained from the inclusion 
	$S_T \times_T \selspace {\mathscr F^{n-d,\sigma}_{T}} \to S^1 \times (S_T \times_T \selspace
	{\mathscr F^{n-d,\sigma}_{T}})$, coming from choosing a fixed direction $\tau \in S^1$.
		If we compose with the projection 
$\iota_h: S_T \times_T \selspace {\mathscr F^{n-d,\sigma}_{T}} \to \selspace {\mathscr
F^{n-d,\sigma}_{T}}$
associated to a particular tuple
$(h,\ldots, h)$ over $x \in\selspace {\mathscr G^{d,0,\infty}_{ B}}(\mathbb
F_q)$,
the description from \autoref{lemma:factor-through-trivial-log}
implies that the map
$\iota_h^* \circ \varepsilon\circ \beta^* \circ \alpha^* \circ \delta $
on cohomology is induced by the map of Kato-Nakayama spaces described as
follows:
start with an 
$\asp_{2r}(\mathbb Z/\nu \mathbb Z)$-cover of
$\Sigma^1_{g,f}$ and glue on 
a disc
with $d$ punctures having monodromy around each such puncture given by $h$.
The map $\mathbb U$ is the sum over $h \in c$ of the Poincar\'e duals of these maps on
cohomology, and hence
the composite of
$\eta \circ \varepsilon$
with \eqref{equation:composite-log-scheme-map} 
is Poincar\'e dual to $\mathbb U$.
\end{proof}

By \autoref{proposition:u-map-identification}, the map $\mathbb U_{\overline{\mathbb F}_q}$ is
identified with a map
\begin{align}
	\label{equation:finite-field-cohomology-map}
H^i_{\on{c}}(\selspace {\mathscr
F^{n-d,\sigma}_{{\overline{\mathbb F}_q}}}, \mathbb Q_{\ell'}(n-d))
\to
	H^{i+2d}_{\on{c}} \left(\selspace {\mathscr F^{n,\sigma}_{{\overline{\mathbb
	F}_q}}}, \mathbb Q_{\ell'}(n)\right).
\end{align}

\subsection{Proving the main Frobenius equivariance result}
\label{subsection:main-stabilization-proof}

In this subsection, we prove our main result,
\autoref{theorem:frobenius-equivariance},
that the stabilization map is equivariant for the Frobenius action.

As a preliminary step to connect the version of Selmer spaces where we mark
extra data over $\sigma$ to the version without such marked data, we need to
understand the group action relating these two spaces.
Note that there is an action of
$\mathbb Z/2
\mathbb Z \ltimes (j_*\mathscr F)_{\sigma}$
on $\selspace {\mathscr F^{n-d,\sigma}_{ B}}$
where the $\mathbb Z/2 \mathbb Z$ acts by negation on the fiber
$(j_*\mathscr F)_{\sigma}$
and the copy of $(j_*\mathscr F)_{\sigma}$ acts by translation.
The quotient of $\selspace {\mathscr F^{n-d,\sigma}_{ B}}$ by this action is
$\selspace {\mathscr F^{n-d}_{B}}$, where we no longer include the marked point
$\sigma$.

\begin{lemma}
	\label{lemma:frobenius-equivariance-on-cover}
	Assume $B$ is the spectrum of a complete DVR with residue field $\mathbb F_q$
	and generic characteristic $0$.
	Suppose $\selspace {\mathscr G^{d,0,\infty}_{ B}}(\mathbb F_q)\neq
	\emptyset$.
	The map \eqref{equation:finite-field-cohomology-map} is equivariant for
	the actions of Frobenius and the actions
	of $\mathbb Z/2
\mathbb Z \ltimes (j_*\mathscr F)_{\sigma}$
on both sides.
\end{lemma}
\begin{proof}
The map \eqref{equation:finite-field-cohomology-map}
	is equivariant for the action of Frobenius since it is the composite of
	the dual map \eqref{equation:composite-log-scheme-map} with the
	Frobenius equivariant maps $\varepsilon$ and $\eta$ in
	\eqref{proposition:u-map-identification}.
	Note here we are using that maps of Deligne-Faltings log schemes induce functorial maps
		on their cohomology,
		as follows from functoriality
	of the Kummer \'etale topology,	see \cite[\S 2.1]{illusie:an-overview}.
	The composite map \eqref{equation:finite-field-cohomology-map} is then
	also equivariant for the action of Frobenius because
	$S_{\overline{\mathbb F}_q}$ is defined over $\mathbb F_q$ by
	\autoref{lemma:scheme-over-fq}.

	We conclude by arguing that the action of $\mathbb Z/2 \mathbb Z \ltimes (j_*\mathscr F)_{\sigma}$ is also
	equivariant for the map \eqref{equation:finite-field-cohomology-map}.
	One can identify the action of this group with the action on the fiber
	over $\sigma$.
	The gluing
	map $\mathbb U$ in topology induces an equivariant map on cohomology for this
	group action, and the algebraic map
	\eqref{equation:finite-field-cohomology-map} is identified with the map
	$\mathbb U$ via \autoref{proposition:u-map-identification} and
	\eqref{equation:u-transfer}.
\end{proof}

We are now ready to deduce our main result of this section.
\begin{theorem}
	\label{theorem:frobenius-equivariance}
	Assume $B$ is a complete DVR with residue field $\mathbb F_q$
	and generic characteristic $0$.
	Suppose $\selspace {\mathscr G^{d,0,\infty}_{ B}}(\mathbb F_q)\neq
	\emptyset$.
	Suppose $\nu$ is an odd integer with $2 \nu$ invertible on $B$.
	Suppose $Z$ as in \autoref{notation:curve-notation} has a section
	$\sigma: B \to Z$ and $\mathscr F$, a sheaf of $\mathbb Z/\nu \mathbb
	Z$ modules as in
	\autoref{notation:quadratic-twist-notation}, has trivial inertia along
	$\sigma$.
	Suppose $H$ is a finite $\mathbb Z/\nu \mathbb Z$-module.
For any positive even integer $n$,
	there is a map
	\begin{align}
		\label{equation:equivariance-no-rank}
		H^{2n-p}_{\on{c}}(\selspacemoments  {\mathscr F^n_{\overline{\mathbb
			F}_q}} H, \mathbb
		Q_{\ell'}(n)) &\to H^{2n-p+4}_{\on{c}}(\selspacemoments {\mathscr
			F^{n+2}_{\overline{\mathbb
			F}_q}} H, \mathbb
		Q_{\ell'}(n+2))
	\end{align}
	which is equivariant for the action of Frobenius. 
	There is a positive integer constant $I(H)$, depending on $H$, as well as a positive
	integer constant $J(\mathscr F,H)$, depending on $\mathscr F$ and $H$,
	so that \eqref{equation:equivariance-no-rank} is
	an isomorphism whenever $n
	> I(H)p + J(\mathscr F,H)$.
\end{theorem}
\begin{remark}
	\label{remark:}
	The map \eqref{equation:equivariance-no-rank} is induced from the map
	\eqref{equation:finite-field-cohomology-map}, with $d = 2$ and $n$
	replaced by $n + 2$, via transfer.
\end{remark}
\begin{proof}
First, by \autoref{proposition:degree-order}, since we are working with
	$c \subset G = H \rtimes (\mathbb Z/2 \mathbb Z)$ corresponding to the elements
	of order $2$, we may take the operator $\mathbb U$ to have degree $2$.

	We will only explain
	the proof in the case that $H = \mathbb Z/\nu \mathbb Z$.
	The case of 
	general $H$, where one takes iterated fiber products of the Selmer space
	over the space of quadratic twists, is quite analogous. 
	However, we opt to just explain the case that $H = \mathbb Z/\nu \mathbb
	Z$ to avoid introducing an onslaught of additional notation that does
	not require any new ideas.

	First, we note that the map $\mathbb U_{\overline{\mathbb F}_q}$ is
	equivariant for the action of Frobenius by 
\autoref{lemma:frobenius-equivariance-on-cover}.
When $n > I(H)p +J(\mathscr F, H)$, 
	commutativity of \eqref{equation:u-transfer} implies that 
	$\mathbb U_{\overline{\mathbb F}_q}$ is an isomorphism 
	\begin{align*}
	H^{2n-p}_{\on{c}}(\selspace {\mathscr F^{n,\sigma}_{\overline{\mathbb
			F}_q}}, \mathbb
		Q_{\ell'}(n)) \to H^{2n-p+4}_{\on{c}}(\selspace {\mathscr
				F^{n+2,\sigma}_{\overline{\mathbb
			F}_q}}, \mathbb
		Q_{\ell'}(n+2)),
	\end{align*}
	which uses that the corresponding map $\mathbb U$ over
	$\mathbb C$ is an isomorphism by
	Poincar\'e duality and 
	\autoref{theorem:central-u-implies-cohomology-bound}.
	See
	\autoref{example:specified-hurwitz-coefficient-system} and
	\autoref{example:fiber-product-with-rank-coefficients} for why the
	relevant representations of $B^n_{g,f}$ form coefficient systems.

	Finally, using that the map 
	\eqref{equation:finite-field-cohomology-map}
	is equivariant for the
	action of
	$\mathbb Z/2
	\mathbb Z \ltimes (j_*\mathscr F)_{\sigma}$
	by \autoref{lemma:frobenius-equivariance-on-cover},
	we obtain an induced map on the cohomology of the quotient space by this action
	of $\mathbb Z/2	\mathbb Z \ltimes (j_*\mathscr F)_{\sigma}$.
	By transfer, since we are assuming $2 \nu$ is invertible on $B$, the cohomology of the quotient is
	also equivariant for the action of Frobenius.
	Since the quotient of $\selspace {\mathscr F^{n,\sigma}_{ \overline{\mathbb
	F}_q}}$ by this 
$\mathbb Z/2	\mathbb Z \ltimes (j_*\mathscr F)_{\sigma}$ action is
	$\selspace {\mathscr F^{n}_{\overline{\mathbb F}_q}}$,
	(without trivializations over $\sigma$,)
	we obtain the maps 
	\begin{align*}
		H^{2n-p}_{\on{c}}(\selspace {\mathscr
F^{n}_{\overline{\mathbb F}_q}}, \mathbb Q_{\ell'}(n))
\to
H^{2n-p+4}_{\on{c}} \left(\selspace {\mathscr F^{n+2}_{\overline{\mathbb
	F}_q}}, \mathbb Q_{\ell'}(n+2)\right)
	\end{align*}
	are Frobenius equivariant and moreover are isomorphisms when 
	$n > I(H)p+J(\mathscr F,H)$.
\end{proof}

\section[A normal crossings compactification of Hurwitz spaces]{A normal crossings
compactification of Hurwitz spaces\\ By Dori Bejleri and Aaron Landesman}
\label{section:nc}

The main consequence of this appendix,
\autoref{corollary:hurwitz-compactification}, proves that configuration spaces
of points on a pointed smooth curve, considered earlier in this
paper, have normal crossing compactifications. 
This was crucially used to compare the cohomology of Hurwitz spaces over $\overline{\mathbb F}_q$
with the cohomology of Hurwitz spaces over $\mathbb C$.
Because it is little extra work, we also show that Hurwitz spaces, which are
finite \'etale covers of these configurations spaces, have normal
crossing compactifications. 
In order to achieve this comparison between $\mathbb C$ and $\overline{\mathbb
F}_q$, when dealing with a Hurwitz space for a finite group
$G$, we work over the base $\mathbb Z[1/|G|]$. In particular, our results hold over mixed
characteristic bases.
Additionally, we allow the base curve to be
semistable, and do not require that it is smooth.
We begin by constructing the normal crossings compactifications of configuration spaces and Hurwitz
spaces in
\autoref{subsection:describing-compactification}.
We next introduce various notation for log covers in
\autoref{subsection:cover-notation}.
We then reduce our task to proving a certain map is log smooth in
\autoref{subsection:reducing-to-log-smoothness}.
Finally, we verify the above mentioned map is 
log smooth in
\autoref{subsection:verifying-log-smooth}.

\subsection{The normal crossings compactification via twisted stable
maps}
\label{subsection:describing-compactification}

In order to prove the Hurwitz spaces we consider have a normal crossings
compactification, we first define the relevant compactification, in terms
of twisted stable maps.

\begin{notation}
	\label{notation:stable-maps}
	Let $B$ be a Deligne-Mumford stack and let $\pi: C \to B$ be
	a projective family of nodal
	curves with geometrically connected fibers of genus $g$.
	For each geometric point $b \to B$,
	let $[C_b]$ denote the fundamental class of a fiber of $\pi$ viewed as a $1$-cycle.
	
Fix a divisor $Z \subset C$ which is finite \'etale of degree $d$ over $B$ and
contained in the smooth locus of $C \to B$.
Fix a finite group $G$ whose order is invertible on $B$ and let $[C/G]$ denote the stack quotient of $C$ by the
trivial $G$ action.
The reader may wish to recall the notion of a twisted stable map being balanced
as defined in \cite[Definition 3.2.4]{abramovichV:compactifying-the-space-of-stable-maps}; colloquially this means the stabilizer action on smoothing parameters on
each side of a twisted node are inverse to each other.
Let ${\mathcal K}_{g,n+d}([C/G],1)$ denote the moduli stack of
balanced twisted stable
maps whose $S$-points described as follows:
Given a map $S \to B$ for $S$ a scheme,
${\mathcal K}_{g,n+d}([C/G],1)(S)$
is the groupoid of representable maps $h : \mathcal X \to [C/G]$ 
from an $(n+d)$-pointed balanced twisted curve $\mathcal X$
such that
\begin{enumerate}
	\item $X \to S$ is the coarse space of $\mathcal X$ with map $f: X \to
		C$ induced by $h$,
	\item the fibers of $X \to S$ have genus $g$, and
	\item $(f_s)_*[X_s] = [C_s]$ for each geometric point $s \to S$,
		where $[X_s]$ is the fundamental class of the fiber over $s \to S$.
\end{enumerate}
We note that ${\mathcal K}_{g,n+d}([C/G],1)$
is an algebraic stack proper over $B$ by
\cite[\S8.3 and \S8.4]{abramovichV:compactifying-the-space-of-stable-maps}.

There is an action of $S_d$ permuting the final $d$ marked points of the curve
$\mathcal X$.
The
quotient stack $[{\mathcal K}_{g,n+d}([C/G],1)/S_d]$
parameterizes stable maps with $n$ marked sections as well
as an \'etale degree $d$ divisor contained in the smooth locus and disjoint from the $n$ marked sections.
There is an evaluation map ${\mathcal K}_{g,n+d}([C/G],1) \to C^d_B$ 
to the $d$-fold fiber product over $B$
sending an $(n + d)$-pointed map to the image of the final $d$
sections, and hence we obtain a map
$\pi: [{\mathcal K}_{g,n+d}([C/G],1)/S_d] \to [C^d_B/S_d]$.
If $[Z] : \spec B \to [C^d_B/S_d]$ denotes the $B$-point of $\Conf_{C/B}^d
\subset [C^d_B/S_d]$
corresponding to the finite \'etale degree $d$ divisor $Z$,
we then define
\begin{align*}
	{\mathcal K}_{g,n}([C/G],Z, 1) := [{\mathcal
	K}_{g,n+d}([C/G],1)/S_d] \times_{\pi, [C^d_B/S_d],[Z] } B.
\end{align*}
In other words, 
${\mathcal K}_{g,n}([C/G],Z, 1)$ is the closed substack of 
$[{\mathcal K}_{g,n+d}([C/G],1)/S_d]$ so that the degree $d$ marked
divisor maps to $Z \subset C$.
\end{notation}

The following is the main result of this section, which will lead to a normal
crossing compactification of Hurwitz space in
\autoref{corollary:hurwitz-compactification}. We will later generalize
\autoref{theorem:nc} to nodal curves in \autoref{theorem:nc_nodal}.

\begin{theorem}
	\label{theorem:nc}
	Let $B$ be a regular locally noetherian scheme, $C \to B$ a smooth projective curve with geometrically connected fibers.
	Let $Z \subset C$ be a degree $d$ divisor which is finite \'etale over
	$B$.
	The Deligne-Mumford stack ${\mathcal
	K}_{g,n}([C/G],Z, 1)$ is smooth and proper
	over $B$.
	Moreover, the locus of points in 
${\mathcal K}_{g,n}([C/G],Z,1)$ corresponding to stable maps with smooth source forms a dense open substack of 
${\mathcal K}_{g,n}([C/G],Z, 1)$ with complement a normal crossings divisor.
\end{theorem}
We will prove this in \autoref{subsubsection:nc-proof}.
\begin{remark}
	\label{remark:}
	In the case $Z$ is a disjoint union of sections, (which always holds if
	$Z$ has degree $0$ or $1$,) $B$ is a scheme, and $G$ is trivial, one can verify that 
	${\mathcal K}_{g,n}([C/G],Z,1)$
is in fact a projective scheme, and not just an algebraic stack.
This amounts to verifying that the inertia stack is trivial, which then implies
it is projective because it is known the coarse moduli space is projective 
\cite[Theorem 1.4.1]{abramovichV:compactifying-the-space-of-stable-maps}.
\end{remark}

One of our main motivations for proving \autoref{theorem:nc} is that it
provides a normal crossings compactification of Hurwitz spaces of $G$-covers of $C$. In particular, if we take $G$ to be trivial, it provides
a normal crossings compactification of a configuration space of points in $C - Z$. A normal crossings compactification in the case $C = \mathbb P^1$ and $Z =
\infty$ and $G$ is trivial
was given in an ad hoc fashion in \cite[Lemma 7.6]{EllenbergVW:cohenLenstra}.
When $Z$ is empty and $G$ is trivial,
this normal crossings compactification was given in
\cite{fultonM:a-compactification}.
However, even in
the case $G$ is trivial, $C$ is arbitrary, and $Z$ is nonempty, which is the most important case for the present paper, we do not know of a reference. 
A normal crossings compactification of a variant of our 
Hurwitz spaces was
constructed in \cite[Corollary p.
390-391]{mochizuki:the-geometry-of-the-compactification},
also using log geometry.

\begin{corollary}
	\label{corollary:hurwitz-compactification}
	With notation as in \autoref{definition:fixed-hur} and
	\autoref{definition:pointed-hurwitz-space},
	both the Hurwitz stack
$\hur G n Z {\mathcal S} C B$
and the pointed Hurwitz scheme $\hur G n {\sigma \subset Z} {\mathcal S} C B$
are dense opens inside a Deligne Mumford stack which is smooth and proper over
$B$, such that the complementary divisor is a normal crossings divisor.
In particular, taking $U := C - Z$, the scheme $\conf n U B$
as defined in \autoref{notation:curve-notation}
	is a dense open subscheme of 
	a smooth proper Deligne-Mumford stack,
	such that the complement is a normal crossings divisor.
\end{corollary}
\begin{proof}
	There is an action of $S_n$ on the 
	stack ${\mathcal K}_{g,n}([C/G],Z,1)$ which permutes the $n$ marked points.
	Consider the quotient stack 
	$[{\mathcal K}_{g,n}([C/G],Z,1)/S_n].$
	An appropriate union of components of this quotient stack contains a
	dense open substack parameterizing those smooth
	covers of $C$, which precisely correspond to points of
	$\hur G n Z {\mathcal S} C B$.
	The complement is a normal crossings divisor by \autoref{theorem:nc}.
	In the case we mark a section $\sigma \subset Z$ and mark a point of the
	cover over
	$\sigma$, we can form an appropriate finite \'etale cover of 
	$[{\mathcal K}_{g,n}([C/G],Z,1)/S_n]$ corresponding to marking a
	section over $\sigma$, (similar to the construction in 
	\autoref{definition:pointed-hurwitz-space}),
	and a union of components of this cover contains 
	$\hur G n {\sigma \subset Z} {\mathcal S} C B$
	as a dense open substack with complement a normal crossings divisor.

	As a special case, taking $G = \id$, we obtain that 
$\conf n U B$ forms a dense open subscheme of $[{\mathcal K}_{g,n}(C,Z,1)/S_n]$,
	whose complement is a normal crossings divisor.
\end{proof}

\subsection{Notation for log covers}
\label{subsection:cover-notation}

In order to prove \autoref{theorem:nc}, we use log deformation theory. 
The starting point is the observation that every twisted stable map as in
\autoref{notation:stable-maps}
	can be endowed with the structure of a map of log stacks and this induces a log structure on the space of twisted stable maps itself.
To carefully describe these log structures, we require a hefty amount of
notation.
We begin by describing a log structure on the moduli stack of curves, which
corresponds to the divisor parameterizing singular curves.
Throughout this section, we will assume all log structures appearing are fine.

\begin{notation}
	\label{notation:log-curves}
	Let $\overline{\mathscr M}_{g,n+\underline{d}}^{\on{log}}$ denote the
	log stack whose underlying stack is $[\overline{\mathscr M}_{g,n +
	d}/S_d]$, where $S_d$ acts on the final $d$ marked points, over $\spec
	\mathbb Z[1/|G|]$; the log structure on $\overline{\mathscr M}_{g,n+\underline{d}}^{\on{log}}$
	is given by the
reduced divisor parameterizing singular curves. We note that the points of 
the underlying stack $\overline{\mathscr M}_{g,n+\underline{d}}$
of $\overline{\mathscr M}_{g,n+\underline{d}}^{\on{log}}$
parameterize
tuples $(C, p_1, \ldots, p_n, Z)$, where $C$ is a nodal curve, $p_i$ are marked
smooth points, and $Z$ is a degree $d$ \'etale divisor contained in the smooth
locus such that $K_C + Z + \sum p_i$ is ample.
When $n = 0$, 
we let $\mathscr C$ denote the universal curve over $\overline{\mathscr
M}_{g,\underline{d}}$, and let
$\mathscr Z \subset \mathscr C$ denote the distinguished degree $d$ divisor.
We let the finite group $G$ act trivially on $\mathscr C$ and $[\mathscr C/G]$
denote the quotient stack.
\end{notation}

We next introduce notation to describe various aspects of the geometric points of the stack
${\mathcal K}_{g,n}([\mathscr C/G], \mathscr Z,1)$.
See \autoref{figure:deformation-components} for a picture depicting some of this
notation.

\begin{notation}
	\label{notation:divisors-and-coarse-space}
	Let $S$ be a scheme.
	Let $[h: \mathcal X \to [C/G],
	\mathcal{D}+\mathcal{E}] \in
{\mathcal K}_{g,n}([\mathscr C/G],\mathscr  Z,1)(S)$ be a point; here we use 
$C$ and $Z$ denote the pullbacks of $\mathscr C$ and $\mathscr Z$ to $S$, 
$\mathcal X$ to
denote the twisted curve, $\mathcal{D} \subset \mathcal X$ is a closed
substack which is a gerbe over the $n$
sections in the smooth locus of $\mathcal X$, and $\mathcal{E} \subset \mathcal
X$ a substack in the smooth locus of $\mathcal X$ which is a gerbe over
the degree $d$ divisor mapping to $Z \subset C$.
	We also use $X$ to denote the coarse space of $\mathcal X$, and we will 
	write $E \subset X$ for the degree $d$ subscheme of $X$ corresponding to
	$\mathcal{E}\subset \mathcal X$ and $D \subset X$ the subscheme corresponding to
	$\mathcal{D} \subset \mathcal X$. 
	These both lie in the smooth locus of $X$ and $E$ maps isomorphically to
	$Z$.

	We use $\pi: \mathcal X \to X$ and $\psi: [C/G] \to C$ to denote the
	coarse space maps, and
	$f : X \to C$
	to denote the map on coarse spaces induced by $h$.
\end{notation}
\begin{remark}
	\label{remark:}
	From now on, following \autoref{notation:divisors-and-coarse-space}, we
	will use the notation $C \to S$ for the
	target of an $S$-point of our stable maps.
	(In particular, this is not to be confused with $C \to B$, which we are
		replacing by $\mathscr C \to \overline{\mathscr
	M}_{g,\underline{d}}$ and $C$ is the pullback of $\mathscr C$ to $S$.) 
	We note that this is a slight conflict of notation with
	\autoref{notation:stable-maps}, but the notation $C \to B$ there will not come
	up for us again in the remainder of this section.
\end{remark}

\begin{notation}
	\label{notation:stable-compactification}
	Continuing to use notation as in
	\autoref{notation:divisors-and-coarse-space},
	we suppose $S$ is of the form
	$V = \spec k$, for $k$ an algebraically closed field. 
	By \autoref{lemma:base-description}, 
	we can write
	$X$ in the form $X = P \cup \widetilde{C}$ satisfying the conditions
	from \autoref{lemma:base-description}. In particular, $P$ is the
	union of the irreducible components contracted under $f$.

	We also let $W \subset P$ denote the union of irreducible components of
	$P$, whose connected components consist of $W_j \subset P_j$ defined as
	follows:
	Let $P_j \subset P$ denote a connected component of $P$ which joins $u,v
	\in \widetilde{C}$ mapping to a node in $C$. 
	We take $W_j \subset P_j$ to be the union of
	irreducible components of $P_j$ which are not directly between $u$ and $v$; more
	formally, we can say these irreducible components of $P_j$ in $W_j$
	do not
	correspond to the vertices of the dual graph of $P_j$ which lie in a
	minimal path joining the
	irreducible
	component meeting $u$ to the irreducible component meeting $v$.
	For each $P_j \subset P$ a connected component of $P$ mapping to a
	smooth point of $C$, we take $W_j
	\subset P_j$ to be the union of the irreducible components of $P_j$
	which are not directly between the irreducible component on which $E$ lies and the
	irreducible component meeting $\widetilde{C}$; more formally 
	the components of $W_j$ do not correspond to the vertices of the dual graph of $P_j$ which
	lie in a minimal path joining the component on which $E$ lies to the
	component meeting $\widetilde{C}$.

	We define $Y \subset X$ to denote the union of irreducible
	components of $X$ which are not contained in $W$.
	Define $\rho: X \to Y$ and $t: Y \to C$ so that $f = t \circ \rho$ and let
	$i : W \to X$ denote the inclusion
	For $p \in X$, let $k_p$ denote the skyscraper sheaf at a point $p$.
\end{notation}
\begin{figure}
	\includegraphics[width=5cm, height=6cm]{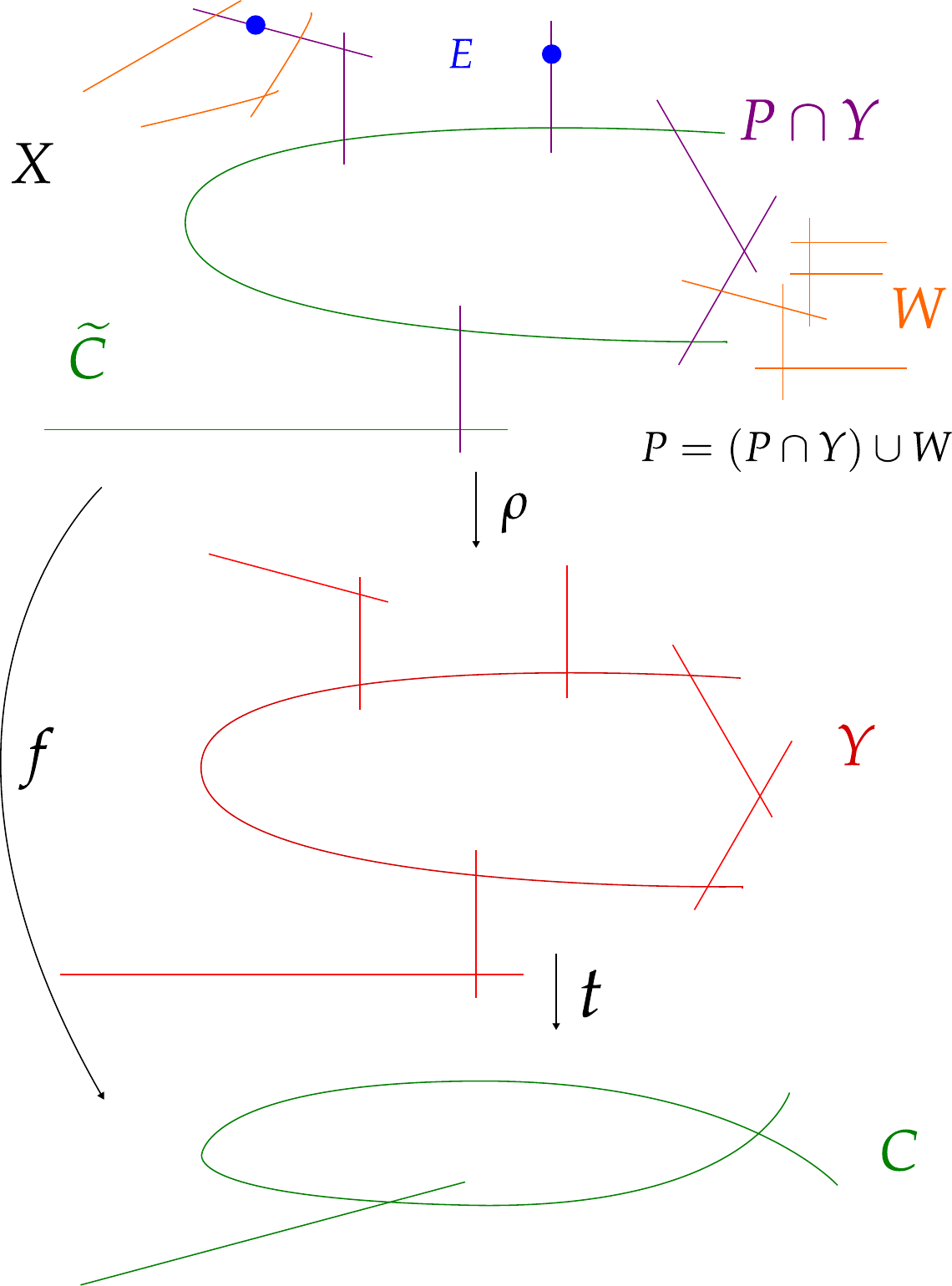}
\caption{
	A picture depicting the names we have given to various parts of the
	coarse space $X$ of $\mathcal X$.}
\label{figure:deformation-components}
\end{figure}

The following lemma was to make sense of
\autoref{notation:stable-compactification} above.

\begin{lemma}
	\label{lemma:base-description}
	Using notation for $h: \mathcal X \to [C/G], \psi: [C/G] \to C, \pi:
	\mathcal X \to X$ and $f: X \to C$ as in \autoref{notation:stable-compactification},
	we have $f \circ \pi = \psi \circ h$. Moreover,
	$X$ is of the form $X = P \cup \widetilde{C}$, where $P$ and
	$\widetilde{C}$ satisfy the following conditions:
	\begin{enumerate}
		\item $\widetilde{C}$ is a partial normalization of $C$ at a finite set $N$ of its
			nodes,
		\item $P$ is a genus $0$ semistable curve,
		\item $P$ is contracted under the map $X \to C$, and
		\item any connected component of $P$ is either contracted to a
	smooth point of $C$, in which case it meets $\widetilde{C}$ at a single
	smooth point, or the component is contracted to a node of $C$, in which case it meets
	$\widetilde{C}$ at both preimages of the node.
	\end{enumerate}
\end{lemma}
\begin{proof}
	We have that $f \circ \pi = \psi \circ h$ by the universal property of
	the coarse space $X$.
We now show $X = \widetilde{C} \cup P$ satisfies the conditions as in the statement.
	Since $X \to C$ has degree $1$ on each component of $C$, it must be the
	union of a birational map with several contracted components, and hence
	must be of the form $\widetilde{C} \cup P$ for $\widetilde{C}$ a partial
	normalization of $C$ and $P$ the components which are contracted under
	the map.
	To conclude, we wish to show $P$ has properties $(2)$ and $(4)$.
	First, since the genus of $X$ agrees with the genus of $C$, each
	connected component of $P$ must have genus $0$.
	Continuing to use that the genus of $X$ agrees with the genus of $C$,
	if a connected component of $P$ is contracted to a node in $C$, it must meet the two preimages of the
	node in $\widetilde{C}$ nodally. Similarly, if a connected component $P$ is contracted to a smooth point, the
	only way $X$ has the same genus as $C$ is if that component of $P$ meets the
	preimage of that point in a single node, as claimed.
\end{proof}

Using the preceding notation, we are now ready to describe the relevant log
structures on our twisted curves.

\begin{notation}
	\label{notation:log-stable-maps}
	Using \autoref{notation:stable-maps}, \autoref{notation:log-curves}, and
\autoref{notation:divisors-and-coarse-space},
let
${\mathcal K}_{g,n}([\mathscr C/G],\mathscr Z, 1)^{\on{log}}$, 
denote the log stack
whose underlying stack is 
${\mathcal K}_{g,n}([\mathscr C/G],\mathscr Z, 1)$ with the log structure we
describe next.
For a scheme $S$, suppose we have an $S$-point of ${\mathcal K}_{g,n}([\mathscr C/G],\mathscr Z, 1)$,
corresponding to a twisted stable map
$\mathcal X \to [C/G]$ over $S$.
We endow $(\mathcal X, \mathcal M_{\mathcal X}) \to (S, \mathcal M_S)$ with the log
structure described in
\cite[\S3.10]{Olsson:log-twisted-curves}
obtained by viewing $\mathcal X$ as an $n$ pointed twisted curve together with a
degree $d$ divisor (so that, in particular, there is a copy of
	$\underline{\mathbb N}$ in
$M_{\mathcal X}$ over the $n$ marked gerbes
and the degree $d$ marked gerbe on $\mathcal X$).
Similarly, $C \to S$ has a canonical log structure from
\cite[\S3.10]{Olsson:log-twisted-curves}, and we endow $[C/G]$ with the pullback of this log structure along $[C/G] \to
C$ amalgamated with the log structure induced by the Cartier divisor $Z$ (so in
particular there is a copy of $\mathbb{N}$ along the preimage of $Z$ in
$[C/G]$).
We denote this log structure by $([C/G], \mathcal M'_{[C/G]}) \to (S, \mathcal M'_S)$.

In general $\mathcal M'_S$ may be different from $\mathcal M_S$ when $\mathcal X$ has
more nodes than $C$ or has twisted nodes lying over the nodes of $C$. 
If $X$ denotes the coarse space of $\mathcal X$ with its log structure
$\mathcal M_X$ (including the $n$ points and degree $d$ divisor), and $C$ has
log structure $\mathcal M'_C$ (including the degree $d$ divisor), then $f$ has the structure of a log map $(f,f^\flat):(X, \mathcal M_X) \to (C,
\mathcal M'_C)$. 
We now describe the structure of this log map, see also \cite[Theorem B.6]{AMW_log_relative}.
First, after replacing
$S$ with an \'etale cover,
so that $f$ factors as $X \to Y \to C$ where $Y \to
C$ is a
composition of log blowups of $C$ and $X \to Y$ is a contraction of trees of rational curves lying over smooth unmarked points of $Y$. 
Using notation which restricts on geometric fibers to that in \autoref{notation:stable-compactification} and
\autoref{figure:deformation-components}, $Y \to C$ is a sequence of log blowups
of nodes and expansions of marked sections which contracts the chains of rational
curves denoted $P \cap Y$ and $X \to Y$ contracts
the trees of rational curves
denoted $W$. Now $Y \to C$ is a morphism of log schemes by
construction and $X \to Y$ is a morphism of log schemes since $W$ lies over the
strict locus of $Y$ and $X \to Y$ is an isomorphism away from $W$. Thus the
composition is a morphism of log schemes.
Then, by composing the coarse space map $(\mathcal X, \mathcal M_{\mathcal X})
\to (X, \mathcal M_X)$ with the above maps, we have a map of log stacks
$(\mathcal X, \mathcal M_{\mathcal X}) \to (C, \mathcal M'_C)$
over $(S, \mathcal M_{S}) \to (S, \mathcal M'_{S})$. 

Since the log
structure on $[C/G]$ is pulled back from the 
log structure on $C$,
we obtain a corresponding commutative diagram
\begin{equation}
	\label{equation:}
	\begin{tikzcd} 
		(\mathcal X, \mathcal M_{\mathcal X}) \ar {r} \ar {d} & ([C/G], \mathcal M'_{[C/G]}) \ar {d} \\
		(S, \mathcal M_S) \ar {r} & (S, \mathcal M'_S).
\end{tikzcd}\end{equation}
The $S$-points of 
${\mathcal K}_{g,n}([\mathscr C/G],\mathscr Z, 1)^{\on{log}}$, 
comprise all of the above data, with the log structure on $S$ for such an $S$-point given by $(S,\mathcal M_S)$. 

The injective map $\mathcal M_S' \to \mathcal M_S$ of locally free log structures is not necessarily saturated due to the
presence of twisted nodes in $\mathcal X$ lying over nodes of $C$. We let
$\mathcal M_S' \rightarrow \mathcal M_S'' \to \mathcal M_S$ denote its
saturation. Then $\mathcal M_S' \hookrightarrow \mathcal M_S''$ is a simple
extension 
(see, for example, \cite[Definition 1.5]{Olsson:log-twisted-curves})
of locally free log structures. 
\end{notation}
\subsection{Reducing to log smoothness}
\label{subsection:reducing-to-log-smoothness}

In this subsection we will show how log smoothness of the map
${\mathcal K}_{g,n}([\mathscr C/G],\mathscr Z, 1)^{\on{log}} \to \overline{\mathscr
M}_{g, \underline{d}}^{\on{log}}$ implies
our main result, \autoref{theorem:nc}.
We also deduce a generalization of 
\autoref{theorem:nc} where we allow the curve there to be nodal.

\begin{proposition}
	\label{proposition:log-smooth}
	With notation as in \autoref{notation:log-stable-maps},
	the log algebraic stack ${\mathcal K}_{g,n}([\mathscr C/G],\mathscr Z, 1)^{\on{log}}$, 
	is log smooth over $\overline{\mathscr
M}_{g, \underline{d}}^{\on{log}}$.
\end{proposition}

We will return to the proof of \autoref{proposition:log-smooth} in
\autoref{subsubsection:proof-log-smooth}. 

We next record a version of \autoref{theorem:nc} for nodal curves. 
The reader may refer to
\autoref{notation:log-curves} and \autoref{notation:log-stable-maps}
for notation used in the next statement. We say that a log smooth morphism is
{\em semistable} if it is saturated and the source and target are regular with
log structure given by a normal crossings divisors, see \cite[Remark
3.6.6]{Illusie_temkin_logsmooth}.
The reader may also wish to consult \cite[Definition 0.1]{abramovich_karu} and
\cite[Subsection 4.2.1]{alt_semistable}.

\begin{theorem}\label{theorem:nc_nodal} ${\mathcal K}_{g,n}([\mathscr
	C/G],\mathscr Z, 1)^{\on{log}}$ is a normal crossings compactification
	of the locus of points corresponding to stable maps with smooth source and the log structure induced by the complementary divisor.
	Moreover, there is a factorization ${\mathcal K}_{g,n}([\mathscr
	C/G],\mathscr Z, 1)^{\on{log}} \xrightarrow{\alpha} \widetilde{\mathscr M}
	\xrightarrow{\beta} \overline{\mathscr M}_{g,
	\underline{d}}^{\on{log}}$,
	where $\alpha$ is semistable and $\beta$ is proper, quasifinite, log
	\'etale, and birational on each component of the source. 
\end{theorem}

\begin{remark}
	In the statement of \autoref{theorem:nc_nodal}, $\widetilde{\mathscr M}$ is a union of components of the stack of simple extensions of log structures over $\overline{\mathscr M}_{g,\underline{d}}$ as in \cite[Section 5.2]{Olsson:log-twisted-curves} and it parameterizes certain twisted curves by the proof of \cite[Theorem 1.10]{Olsson:log-twisted-curves}. 
\end{remark}

\begin{remark}
	We note that when $|G|=1$,
	\autoref{theorem:nc_nodal} reduces to the well known statement that the
	forgetful map $\overline{\mathscr M}_{g,n+d}^{\on{log}} \to
	\overline{\mathscr M}_{g,d}^{\on{log}}$ is semistable, where the moduli
	spaces of curves are equipped with their boundary log structures,
	parameterizing singular curves. The fiber over a geometric point
	representing a curve $(C, p_{n+1}, \ldots, p_{n+d})$ is a log smooth
	compactification of the configuration space of $n$ points on $C^{sm}
	\setminus \{p_{n+1}, \ldots, p_{n+d}\}$. This agrees with the Fulton-MacPherson compactification 
	given in
	\cite{fultonM:a-compactification}
	when $C$ is smooth and $d = 0$. 
\end{remark}

\subsubsection{Proof of \autoref{theorem:nc}, \autoref{theorem:nc_nodal}}
\label{subsubsection:nc-proof}
We begin by explaining why \autoref{theorem:nc} and the first part of \autoref{theorem:nc_nodal} follow from 
\autoref{proposition:log-smooth}. Let $C \to S$ denote either 
\begin{enumerate}
	\item the family from
\autoref{theorem:nc} where $S = B$ is regular and has the trivial log structure or 
\item
the pullback of
the universal family over 
$\overline{\mathscr M}_{g,\underline{d}}^{\on{log}}$ along some strict map $S \to
\overline{\mathscr M}_{g,\underline{d}}^{\on{log}}$
from a log scheme $S$
whose map of underlying stacks is \'etale.
\end{enumerate}

We now verify that $S$ is log regular in the above two cases.
Using 
\cite[7.3(b)]{illusie:an-overview}, the log scheme $S$ log
regular in case $(1)$.
In case $(2)$, note that $\spec \mathbb Z[1/|G|]$ with the trivial log
structure is log regular by 
\cite[7.3(b)]{illusie:an-overview}.
Since 
$\overline{\mathscr M}_{g,\underline{d}}$
is log smooth over 
$\spec \mathbb Z[1/|G|]$, we obtain $S$ is also log smooth over 
$\spec \mathbb Z[1/|G|]$. Hence $S$ is log regular by 
\cite[7.3(c)]{illusie:an-overview}.

We next show stable maps to such $C
\to S$ as above form a normal
crossings compactification of the locus of such maps with smooth source.
Using 
\autoref{proposition:log-smooth},
we find that 
${\mathcal K}_{g,n}([C/G],Z, 1)^{\on{log}}$ is log smooth over $S$.
By \cite[7.3(c)]{illusie:an-overview},
${\mathcal K}_{g,n}([C/G],Z, 1)^{\on{log}}$ is log regular.
Note also that the log structure defined on
${\mathcal K}_{g,n}([C/G],Z, 1)^{\on{log}}$ coming from the divisor
parameterizing singular covers is pulled back from that on $\overline{\mathscr
M}_{g, n+\underline{d}}$, as follows from \cite[Theorem 1.10]{Olsson:log-twisted-curves}
and the proof of 
\cite[Lemma 5.1]{Olsson:log-twisted-curves}.
At a geometric point $x$ of ${\mathcal K}_{g,n}([C/G],Z, 1)^{\on{log}}$, the characteristic monoid of the 
log structure described in \cite[\S3.10]{Olsson:log-twisted-curves}
and the log structure on $S$ is described two lines before
\cite[(3.6.6)]{Olsson:log-twisted-curves}. This log structure is identified with
$\underline{\mathbb N}^{n(x)}$, where $n(x)$ is the number of nodes of
the twisted curve $\mathcal X$ corresponding to the point $x$, so this log structure is locally free.
Then, by \cite[7.3(b)]{illusie:an-overview},
we obtain that the log structure on 
${\mathcal K}_{g,n}([C/G],Z, 1)^{\on{log}}$ is defined by a normal crossings
divisor whose complement is the locus of triviality of the log structure and
${\mathcal K}_{g,n}([C/G],Z, 1)$ is regular.
Since the open subset of triviality of the log structure on 
${\mathcal K}_{g,n}([C/G],Z, 1)^{\on{log}}$
is precisely the locus
of covers of curves with smooth source, we find that
the above normal
crossings divisor is that parameterizing the locus of covers where the source is
singular.
Finally, the fact that 
the locus of points in 
${\mathcal K}_{g,n}([C/G],Z, 1)$ corresponding to stable maps with
smooth source forms a dense open of 
${\mathcal K}_{g,n}([C/G],Z,1)$ follows from 
\cite[7.3(d)]{illusie:an-overview}. This completes the proof of \autoref{theorem:nc} and the first part of \autoref{theorem:nc_nodal}. 

We conclude by now proving the second part of \autoref{theorem:nc_nodal}. By the last
paragraph of \autoref{notation:log-stable-maps}, any $S$-point of ${\mathcal
K}_{g,n}([C/G],Z,1)$ induces a simple extension $\mathcal M_S' \hookrightarrow
\mathcal M_S''$ where $\mathcal M_S'$ is the pullback of the log structure of
$\overline{\mathscr M}_{g,\underline{d}}^{\on{log}}$ along $S \to
\overline{\mathscr M}_{g,\underline{d}}^{\on{log}}$. Thus there is a map
${\mathcal K}_{g,n}([C/G],Z,1)^{\on{log}}$ to a union of connected components of the stack of simple extensions of
log structures (\cite[Section 5.2]{Olsson:log-twisted-curves}) over
$\overline{\mathscr M}_{g,\underline{d}}^{\on{log}}$, which we will denote by
$\widetilde{\mathscr M}$. Note that by representability of the map $\mathcal X
\to [C/G]$, the order of the simple extension is bounded by $|G|$. Then
$\widetilde{\mathscr M} \to \overline{\mathscr M}_{g,\underline{d}}^{\on{log}}$
is proper, quasi-finite, log \'etale, and birational on each component of the source by \cite[Lemma 5.3(ii)]{Olsson:log-twisted-curves} and \cite[Proposition 3.4]{Kato1989}. 
It follows that ${\mathcal K}_{g,n}([C/G],Z,1)^{\on{log}} \to \widetilde{\mathscr
M}$ is a log smooth and saturated morphism where the source and target are regular with normal crossings log structures by the previous paragraph. 
This completes the proof. 
\qed

\vspace{.2cm}

\subsection{Verifying log smoothness}
\label{subsection:verifying-log-smooth}

In the remainder of this section, we prove \autoref{proposition:log-smooth},
stating that ${\mathcal K}_{g,n}([\mathscr C/G], \mathscr Z,1)^{\on{log}} \to
\overline{\mathscr M}_{g,\underline{d}}^{\on{log}}$ is log smooth,
which will also complete the proof of \autoref{theorem:nc}.

We will approach 
\autoref{proposition:log-smooth} via deformation
theory.
To begin understanding the deformation theory,
we next describe the map on log cotangent sheaves associated to
$X \to C$. 
\begin{remark}
	\label{remark:cotangent-map}
	Given a geometric point $V$ of
	$\mathcal K_{g,n}([C/G],Z,1)^{\on{log}}$,
	using notation from \autoref{notation:stable-compactification} and \autoref{notation:log-stable-maps},
	(where we use $V$ for what is called $S$ there,)
	there is an associated map $(f, f^\flat): (X, \mathcal M_{X}) \to (C, \mathcal M'_{C})$.
This induces a map $f^* \Omega^{\on{log}}_{C/V} \to \Omega^{\on{log}}_{X/V}$ which we
now describe as a composition of three maps.
First, recall that $(C, \mathcal M'_C)$ is a log curve over $(V, \mathcal M'_V)$.
Let $\mathcal M_C$ denote the pullback of $\mathcal M'_C$ along $(V, \mathcal
M_V) \to (V, \mathcal M'_V)$.
Then, the map $(X, \mathcal M_{X}) \to (C, \mathcal M'_{C})$ factors through
$(X, \mathcal M_{X}) \to (C, \mathcal M_{C})$ and the two versions of the
relative logarithmic sheaf of differentials
$\Omega^{\on{log}}_{C/V}$ for these two log structures $\mathcal M_C$ and $\mathcal
M'_{C}$ are isomorphic.
Hence, to describe
$f^* \Omega^{\on{log}}_{C/V} \to \Omega^{\on{log}}_{X/V}$
we can endow $C$ with the log structure 
$\mathcal M_{C}$ so that the map of log schemes $(X, \mathcal M_X) \to (C,
\mathcal M_C)$ is over the fixed base $(V,
\mathcal M_V)$.
First, there is a map 
$(a, a^\flat) : (X, \mathcal M_{X}) \to (X, \mathcal M_{X}^d)$, where $\mathcal M_X^d$ is the
log structure on $X$ from
\cite[\S3.10]{Olsson:log-twisted-curves}
obtained by forgetting the $n$ marked points and
only remembering the degree $d$ divisor.
Next, there is a map
$(\rho, \rho^\flat) : (X, \mathcal M_{X}^d) \to (Y, \mathcal M_{Y})$, for $Y$ as in
\autoref{notation:stable-compactification} and $\mathcal M_Y$ the log
structure on $Y$, including the degree $d$ divisor, but not including any of the $n$
marked points.
And finally, we have a map $(t, t^\flat): (Y, \mathcal M_Y) \to (C, \mathcal
M_C)$.
Note that $f = t \circ \rho \circ a$.
We can now identify the map 
$f^* \Omega^{\on{log}}_{C/V} \to \Omega^{\on{log}}_{X/V}$ as the composite map
\begin{align}
	\label{equation:composite-log-differentials}
	a^* \rho^* t^* \Omega^{\on{log}}_{C/V} \to a^* \rho^*
	\Omega^{\on{log}}_{Y/V} \to a^*
	\Omega^{\on{log},d}_{X/V} \to \Omega^{\on{log}}_{X/V}
\end{align}
where $\Omega^{\on{log},d}_{X/V}$ denotes the relative sheaf of logarithmic
differentials associated to the log
structure $\mathcal M_X^d$.
Using \cite[Proposition 1.13]{kato:log-smooth-deformation-theory-and-moduli},
and the fact that the identification there is functorial for maps of log
schemes,
we can identify \eqref{equation:composite-log-differentials} with the sequence of maps
\begin{equation}
	\label{equation:composite-differentials}
	f^* \omega_{C/V}(Z) = \rho^* t^* \omega_{C/V}(Z) \xrightarrow{\alpha} \rho^*
	\omega_{Y/V}(E) \xrightarrow{\varepsilon} \omega_{X/V}(E)
	\xrightarrow{\delta} \omega_{X/V}(D+E).
\end{equation}
We denote the composite map in \eqref{equation:composite-differentials} by $\phi$.
\end{remark}

To better understand the deformation theory associated to a stable map, our
first step will be to understand the map $t^*\omega_{C/V}(Z) \to
\omega_{Y/V}(E)$, whose pullback under $\rho$ is the first map in \eqref{equation:composite-differentials}. 

\begin{lemma}
	\label{lemma:differential-computation}
	For $t: Y \to C$ as in \autoref{notation:stable-compactification}, there is
	an isomorphism $\omega_{C/V}(Z) \simeq t_* \omega_{Y/V}(E)$
	as well as an isomorphism $t^*\omega_{C/V}(Z) \simeq \omega_{Y/V}(E)$.
\end{lemma}
\begin{proof}
	Write $Y = \widetilde{C} \cup Q$, for $Q$ the union of components of $Y$
	not contained in $\widetilde{C}$, for $\widetilde{C}$ as in
	\autoref{notation:stable-compactification}. 
	For $s: \widetilde{C} \coprod Q \to Y$, we have an exact sequence
	\begin{equation}
		\label{equation:differential-restriction}
		\begin{tikzcd}
			0 \ar {r} & \omega_{Y/V}(E) \ar {r} &
			s_* (\omega_{Y/V}(E)|_{\widetilde{C}})
			\oplus s_*(\omega_{Y/V}(E)|_{Q})   \ar {r} & \mathscr O|_{\widetilde{C} \cap Q}  \ar {r}
			& 0.
	\end{tikzcd}\end{equation}
	We can think of this sequence as expressing 
	a local section of $\omega_{Y/V}(E)$ as a log differential on the normalization of $Y$ with poles along $E$ and
	poles along the preimages of the nodes whose corresponding residues
	sum to zero.
	Observe that $\omega_{Y/V}|_{\widetilde{C}} \simeq
	\omega_{\widetilde{C}/V}(\widetilde{C} \cap Q)$  
	and $\omega_{Y/V}|_{Q} \simeq
	\omega_{Q/V}(\widetilde{C} \cap Q)$.
	Hence, pushing forward \eqref{equation:differential-restriction} along $t$, we get an exact sequence
	\begin{equation}
		\label{equation:pushforward-restriction}
		\begin{tikzcd}
			0 \ar {r} & t_*\omega_{Y/V}(E) \ar {r} &
		t_* s_* (\omega_{\widetilde{C}/V}(\widetilde{C} \cap Q +
	E|_{\widetilde{C}} ))
			\oplus t_* s_*\omega_{Q/V}(\widetilde{C} \cap
		Q+E|_{Q})   \ar {r} & t_* \mathscr O|_{\widetilde{C} \cap
		Q}.
			& 
	\end{tikzcd}\end{equation}

	Now, let $M := t(Q)$. Note that 
	$t_* s_*(\omega_{Q/V}(\widetilde{C} \cap Q + E|_Q))$ is supported on
	$M$, which is a disjoint union of
points. By construction of $Q$, using
\autoref{lemma:base-description},
each connected
component of $Q$ is a chain of $\mathbb P^1$'s and $\widetilde{C} \cap Q + E|_Q$
consists of a degree two subscheme on each such connected component, with a
degree $1$ point on each component on either end of the chain.
Since the dualizing sheaf of $\mathbb P^1$ has degree $-2$, this allows us to identify $\omega_{Q/V}(\widetilde{C} \cap Q) \simeq \mathscr
O_Q$ and hence
$t_* s_*(\omega_{Q/V}(\widetilde{C} \cap Q))$ is identified with $s_* t_* \mathscr
O_{Q}$. This is a skyscraper sheaf supported on $M$, which we
denote $k_{M}$.
Hence, the above sequence \eqref{equation:pushforward-restriction} becomes 
	\begin{equation}
		\label{equation:}
		\begin{tikzcd}
			0 \ar {r} & t_*\omega_{Y/V}(E) \ar {r} &
			t_* s_* \omega_{\widetilde{C}/V}(\widetilde{C} \cap Q+
			E|_{\widetilde{C}} )
			\oplus k_{M}  \ar {r}{\mu} & t_* \mathscr
			O_{\widetilde{C} \cap Q} & 
	\end{tikzcd}\end{equation}

	We claim that 
	this sequence expresses the condition that $t_* \omega_{Y/V}(E)$ is the
	subsheaf of 
	$t_* s_* \omega_{\widetilde{C}/V}(\widetilde{C} \cap Q +
	E|_{\widetilde{C}} )$ whose poles at
	preimages of a given node along the normalization map $ t \circ
	s|_{\widetilde{C}}: \widetilde{C}\to C$ agree. Since
	$\omega_{C/V}(Z)$ also has this description, this will yield an
	identification
	$t_* \omega_{Y/V}(E) \simeq \omega_{C/V}(Z)$.
	To verify our claim above, there are two cases. 
	The easier case occurs in the neighborhood of a point of
	$\widetilde{C}  \cap Q$ mapping to a smooth point of $C$.
	Then, the map locally in a small neighborhood $U$ of such a point $p$ is identified with
$t_* s_* \omega_{\widetilde{C}/V}(\widetilde{C} \cap Q +
	E|_{\widetilde{C}} )|_U \oplus k_p \to k_p, (a,b) \mapsto a -
	b$, and the kernel is
$t_* s_* \omega_{\widetilde{C}/V}(\widetilde{C} \cap Q +
E|_{\widetilde{C}} )|_U$, as claimed.
The more difficult case is to compute the kernel at a nodal point of $C$.
Here, the fiber of $\mu$ is identified with a map $k^{\oplus 2} \oplus k \to
	k^{\oplus 2}$ given by $(a,c,b) \mapsto (a-b, b-c)$. The first two
	copies of $k$ on the source correspond to the residues of the sheaf on the two preimages
	of the node in $\widetilde{C}$ and the third copy of $k$ corresponds to
	the section on the contracted component of $Q$. Lying in the kernel of
	this map expresses the condition that the residues on each side of
	$\widetilde{C}$ agree with the value on the contracted component of $Q$.
	Said another way, the values of the residues on each side of
	$\widetilde{C}$ agree.
	This verifies our claim.

	Finally,
	since we showed above the restriction of
	$\omega_{Y/V}(E)$ to any fiber of $Y \to C$ is the structure sheaf, the
	adjoint $t^*\omega_{C/V}(Z) \to \omega_{Y/V}(E)$
	to our isomorphism $\omega_{C/V}(Z) \simeq t_*\omega_{Y/V}(E)$
	restricts to an isomorphism on each contracted fiber of $Y \to C$. 
	Since 
$t^*\omega_{C/V}(Z) \to \omega_{Y/V}(E)$
	also restricts to an isomorphism on $\widetilde{C}$, it is an
	isomorphism.
\end{proof}

We will see later that the log cotangent complex associated to a geometric point of 
${\mathcal K}_{g,n}([\mathscr C/G], \mathscr Z,1)^{\on{log}}$ as in
\autoref{notation:stable-compactification} can be identified with
the two-term complex
$f^* (\omega_{C/V}(Z)) \xrightarrow{\phi} \omega_{X/V}(D+E)$.
The following lemma will therefore help us analyze the deformation theory of
${\mathcal K}_{g,n}([\mathscr C/G],\mathscr Z, 1)^{\on{log}}$.
\begin{lemma}
	\label{lemma:cotangent-kernel}
We use notation as in \autoref{notation:stable-compactification}, 
where $i : W \to C$ is the inclusion.
With $\phi$ as defined in \autoref{remark:cotangent-map},
		$\ker \phi \simeq i_*\mathscr O_W(-(Y \cap W))$.
	\end{lemma}
	\begin{proof}
		We first describe the map $\phi$, which was defined as a
		composition $\delta \circ \varepsilon \circ \alpha$ in
		\autoref{remark:cotangent-map}, in a more concrete fashion.
Let $j: Y \to X$ denote the inclusion and $\rho: X \to Y$ the map
	contracting $W$. 
		The following statements can be obtained by unwinding the definitions of
		the maps induced on log differentials, used to define
		\eqref{equation:composite-differentials}.
		The map $\alpha: \rho^* t^* \omega_{C/V}(Z) \to \rho^*
		\omega_{Y/V}(E)$ 
		in \eqref{equation:composite-differentials}
		is
	obtained as the pullback under $\rho$ of the isomorphism $t^*\omega_{C/V}(Z) \to \omega_{Y/V}(E)$ from
	\autoref{lemma:differential-computation}.
	The map $\delta: \omega_{X/V}(E) \to \omega_{X/V}(E+D)$
	in \eqref{equation:composite-differentials}
	is obtained from twisting the inclusion $\mathscr
	O_X \to \mathscr O_X(D)$ by $\omega_{X/V}(E)$.
Finally, it remains to describe the map $\varepsilon: \rho^* \omega_{Y/V}(E) \to
\omega_{X/V}(E)$
in \eqref{equation:composite-differentials}.
Since $\rho \circ j = \id_Y$,
	There is an isomorphism
	$\omega_{Y/V} \simeq \rho_* j_* \omega_{Y/V}$ which yields by adjunction a
	map $\beta: \rho^* \omega_{Y/V}(E) \to j_* \omega_{Y/V}(E)$. 
	Define the map $\gamma: j_* \omega_{Y/V}(E) \to \omega_{X/V}(E)$ as that
	obtained via the inclusion $j_*(\omega_{Y/V}(E)) \simeq \omega_{X/V}(E - (Y \cap W))
	\hookrightarrow \omega_{X/V}(E)$.
	Then, $\varepsilon = \gamma \circ \beta$ and so 
the map $\phi$ is the composite of the maps
$\rho^* t^* \omega_{C/V}(Z) \xrightarrow{\alpha} \rho^* \omega_{Y/V}(E)
\xrightarrow{\beta} j_* \omega_{Y/V}(E) \xrightarrow{\gamma} \omega_{X/V}(E)
\xrightarrow{\delta} \omega_{X/V}(E+D).$

We now wish to identify the kernel of $\phi$.
First, the map $\alpha$ is an isomorphism
by \autoref{lemma:differential-computation}.
The maps $\gamma$ and $\delta$ are both injective maps of locally free sheaves
by construction. Therefore, we can identify the kernel of $\phi$ with the kernel
of $\beta: \rho^* \omega_{Y/V}(E) \to j_* \omega_{Y/V}(E)$.
This map $\beta$ is an isomorphism away from $W$, so we only need compute the
kernel restricted to $W$.
On $W$ the map $\beta$ restricts to the map $\mathscr O_W \to \mathscr O_{W \cap
Y}|_W$ and so the kernel is indeed $\mathscr O_W(-W \cap Y)$. Hence, the kernel
of $\phi$ is $i_* \mathscr O_W(-W \cap Y)$, as claimed.
	\end{proof}

We will see that the obstructions to deforming a point 
of ${\mathcal K}_{g,n}([\mathscr C/G], \mathscr Z,1)^{\on{log}}$ as in
\autoref{notation:stable-compactification} lie in 
$\on{Ext}^2( \mathbb L_h^{\on{log}}, \mathscr O_{\mathcal X})$,
for $\mathbb L_h^{\on{log}} = [h^*(\omega_{[C/G]/V}([Z/G])) \to \omega_{\mathcal
	X/V}(\mathcal D+
\mathcal E)]$.
Therefore, the next lemma will verify that deformations are unobstructed and
hence be used to show ${\mathcal K}_{g,n}([\mathscr C/G], \mathscr Z,1)$ is log smooth over
$(\overline{\mathscr M}_{g,n+\underline{d}})^{\on{log}}$.

\begin{lemma}
	\label{lemma:vanishing-ext}
	With notation as in \autoref{notation:stable-compactification},
	let $\mathbb L_h^{\on{log}} = [h^*
	(\omega_{[C/G]/V}([Z/G])) \to \omega_{\mathcal X/V}(\mathcal
D+\mathcal E)]$
	denote the two term complex on $\mathcal X$
	where the first term lies in degree $-1$ and the second in degree $0$.
	Then $\on{Ext}^2( \mathbb L_h^{\on{log}}, \mathscr O_{\mathcal X})= 0$.
\end{lemma}
\begin{proof}
	First, we identify 
	$\on{Ext}^2( \mathbb L_h^{\on{log}}, \mathscr O_{\mathcal X})
\simeq \on{Ext}^2( \mathbb L_f^{\on{log}}, \mathscr O_{X})$
where $\mathbb L_f^{\on{log}} = [f^* (\omega_{C/V}(Z)) \to \omega_{X/V}(D+E)]$,
also 
in degrees $[-1,0]$.
For $\pi: \mathcal X \to X$ the coarse space, and any line bundle $\mathscr L$ on $X$ the adjunction map $\mathscr L \to \pi_*
\pi^*\mathscr L$ is an isomorphism, as can be verified locally using that
$\mathscr O_X \to \pi_* \pi^* \mathscr O_X$ is an isomorphism. 
Hence,
because $\pi^* (\omega_{X/V}(D + E)) \simeq \omega_{\mathcal X/V}(\mathcal D + \mathcal E)
$ by \cite[Proposition 3.11]{ascherB:smoothability-of-relative-stable-maps}, we find
$\omega_{X/V}^\vee(-D - E)
\simeq \pi_* (\omega_{\mathcal X/V}^\vee(-\mathcal D - \mathcal E))$.
We also have 
\begin{align*}
	\pi_* h^* \omega_{[C/G]/V}^\vee \simeq \pi_* h^* \psi^* \omega_{C/V}^\vee \simeq
	\pi_* \pi^* f^* \omega_{C/V}^\vee \simeq f^* \omega_{C/V}^\vee,
\end{align*}
using that $C \to [C/G]$ is \'etale, that $f \circ \pi = \psi\circ h$
by \autoref{lemma:base-description}, and that $\pi_* \pi^* \mathscr O_X \simeq
\mathscr O_X$.
The above observations yield the third isomorphism in the below chain of
isomorphisms:
\begin{align*}
	\on{Ext}^2( \mathbb L_h^{\on{log}}, \mathscr O_{\mathcal X}) 
	& \simeq H^2(\mathcal X, \omega_{\mathcal X/V}^\vee (-\mathcal D -
	\mathcal E) \to h^*( \omega_{[C/G]/V}^\vee(-[Z/G]))) \\
	&\simeq H^2(X, \pi_* (\omega_{\mathcal X/V}^\vee (-\mathcal D - \mathcal
		E))
	\to \pi_* h^* (\omega_{[C/G]/V}^\vee(-[Z/G]))) \\
	&\simeq H^2(X, \omega_{X/V}^\vee (-D -E)
		\to f^* \omega_{C/V}^\vee(-Z)) \\
		&\simeq \on{Ext}^2( \mathbb L_f^{\on{log}}, \mathscr O_{X}).
\end{align*}
	Therefore, it suffices to prove
$\on{Ext}^2( \mathbb L_f^{\on{log}}, \mathscr O_X) = 0$.

	It follows from \autoref{lemma:cotangent-kernel} that $\mathbb
	L_f^{\on{log}} = [f^*
	(\omega_{C/V}(Z)) \xrightarrow{\phi} \omega_{X/V}(D+E)]$ sits in the following
	exact triangle
	\begin{align}
		\label{equation:distinguished-triangle}
		i_* \mathscr O_W(-(Y \cap W))[1]
		\to \mathbb L_f^{\on{log}} \to 
		\mathscr Q[0]
		 \to \qquad
	\end{align}
	where $\mathscr Q$ the cokernel of the map 
	$f^*(\omega_{C/V}(Z)) \xrightarrow{\phi} \omega_{X/V}(D+E)$.
	Applying $\hom(\bullet, \mathscr O_X)$ to
	\eqref{equation:distinguished-triangle} and taking the long exact
	sequence yields the exact sequence
	\begin{equation}
		\label{equation:ext-sequence}
		\begin{tikzcd}[column sep = small]
			\qquad & \on{Ext}^2(\mathscr Q, \mathscr O_X)
			 \ar {r} &
			 \on{Ext}^2(\mathbb L_f^{\on{log}}, \mathscr O_X) \ar {r} & \on{Ext}^2(i_* \mathscr O_W(-(Y \cap W))[1], \mathscr
			O_X).
	\end{tikzcd}\end{equation}
	It is therefore enough to show that the first and third terms of 
	\eqref{equation:ext-sequence}
	vanish.
	In general, by Serre duality, for $\mathscr F$ a coherent sheaf on a
	Gorenstein curve $X$, 
	$\on{Ext}^i(\mathscr F, \mathscr O_X)$ is dual to 
	$\on{Ext}^{1-i}(\mathscr O, \mathscr F \otimes \omega_{X/V}) \simeq H^{1-i}(\mathscr F \otimes
	\omega_{X/V})$.
	From this, it follows that 
$\on{Ext}^2(\mathscr Q, \mathscr O_X) = 0$, as the $-1$st cohomology of any
	coherent sheaf vanishes.

	To complete the proof, it remains only to show 
	$\on{Ext}^2(i_* \mathscr O_W(-(Y \cap W))[1], \mathscr O_X)$ vanishes.
	Using Serre duality,
	\begin{align*}
\on{Ext}^2(i_* \mathscr O_W(-(Y \cap W))[1], \mathscr O_X)
		&\simeq 
		\on{Ext}^1(i_* \mathscr O_W(-(Y \cap W)), \mathscr O_X)
		\\
		& \simeq H^0(X, i_* \mathscr O_W(-(Y \cap W)) \otimes
		\omega_{X/V})^\vee
		\\
		&\simeq
		H^0(W, \mathscr O_W(-(Y \cap W)) \otimes \omega_{X/V}|_W)^\vee
		\\
		&\simeq
		H^0(W, \mathscr O_W(-(Y \cap W)) \otimes \omega_{W/V}(Y \cap
		W))^\vee
		\\
		&\simeq
		H^1(W, \mathscr O_W) \\
		&= 0.
	\end{align*}
	In the final step, we are using that each connected component of $W$ has
arithmetic genus $0$, since it is a union of irreducible components of the arithmetic genus
$0$ curve $P$,
	so $H^1(W, \mathscr O_W)= 0$.
\end{proof}

To prove \autoref{proposition:log-smooth}, we will discuss the deformation theory needed to deduce log
smoothness of
${\mathcal K}_{g,n}([\mathscr C/G],\mathscr Z,1)^{\on{log}} \to (\overline{\mathscr
M}_{g, n+\underline{d}})^{\on{log}}$ from the vanishing demonstrated in
\autoref{lemma:vanishing-ext}. 
Note that ${\mathcal K}_{g,n}([\mathscr C/G],\mathscr Z,1)^{\on{log}}$
parameterizes certain log structures on covers of curves, and we next
introduce a stack
$\mathcal L{\mathcal K}_{g,n}([\mathscr C/G],\mathscr Z,1)$
parameterizing all fine
log structures.
\begin{notation}
	\label{notation:stack-of-log-maps}
	Using notation as in \autoref{notation:log-stable-maps} let 
$\mathcal L{\mathcal K}_{g,n}([\mathscr C/G],\mathscr Z,1)$ denote the stack
whose $S$-points are tuples $(\mathcal M_S, (\pi, \pi^\flat) : (\mathcal X, \mathcal
M_{\mathcal X}) \to (S, \mathcal M_S), (h, h^{\flat}) : (\mathcal X, \mathcal
M_{\mathcal X}) \to ([C/G], \mathcal M'_{[C/G]}))$ where $\mathcal M_S$ is a
fine log
structure on $S$, 
$\pi$ is a family of log twisted curves of type $(g,n+d)$ and
$(h, h^{\flat})$ is a log map such that $h$ is as in
\autoref{notation:stable-maps}. There is a map $\iota: {\mathcal K}_{g,n}([\mathscr C/G],\mathscr Z,1)^{\log} \to
\mathcal L{\mathcal K}_{g,n}([\mathscr C/G],\mathscr Z,1)$
which sends an $S$-point of the source, thought of as a map $\mathcal X \to
[C/G]$ with their log structures, as described in
\autoref{notation:log-stable-maps},
to the corresponding point of 
$\mathcal L{\mathcal K}_{g,n}([\mathscr C/G],\mathscr Z,1)$. 
\end{notation}

Combining the above lemmas with some deformation theory, we
deduce
\autoref{proposition:log-smooth}.

\subsubsection{Proof of \autoref{proposition:log-smooth}}
\label{subsubsection:proof-log-smooth}
We note that 
${\mathcal K}_{g,n+d}([\mathscr C/G],1)$ is a proper algebraic stack by
\cite[Theorem 1.4.1]{abramovichV:compactifying-the-space-of-stable-maps},
and hence 
${\mathcal K}_{g,n}([\mathscr C/G],\mathscr Z,1)$ is also a proper algebraic stack.
To show ${\mathcal K}_{g,n}([\mathscr C/G],\mathscr Z,1)$ is Deligne-Mumford, it
suffices to show ${\mathcal K}_{g,n+d}([\mathscr C/G],1)$ is Deligne-Mumford, which
follows from \cite[Theorem 1.16]{Olsson:log-twisted-curves}.

To conclude the proof, we only need to verify that 
${\mathcal K}_{g,n}([\mathscr C/G],\mathscr Z,1)^{\on{log}}$ is log smooth over
$(\overline{\mathscr M}_{g,\underline{d}})^{\on{log}}$.
Let $S = \spec A$ denote a local Artin scheme over $\mathbb Z[1/|G|]$.
Fix a point $[h: \mathcal X \to [C/G], \mathcal{D}+ \mathcal E] \in {\mathcal
K}_{g,n}([\mathscr C/G],\mathscr Z, 1)^{\on{log}}(S)$.
Note that by \autoref{notation:log-stable-maps},
$S$ has an induced log structure coming from pulling back the log
structure from the associated map
$S \to (\overline{\mathscr M}_{g,n+\underline{d}})^{\on{log}}$ classifying $X$,
the coarse space of $\mathcal X$.
Let $T' = \spec A'$ denote a
thickening of $S$ with $I := \ker A' \to A$.
Suppose $A'$ has residue field $\kappa$, maximal ideal $\mathfrak m$,
and assume $\mathfrak m I = 0$.
In order to verify formal smoothness,
we wish to extend the above $S$-point to a $S'$-point compatible with the above
extension of log structure.
First, we claim the obstruction to deforming our $S$-point above, viewed as
a map
of log stacks, lies in the
hypercohomology group
$\on{Ext}^2( \mathbb L_h^{\on{log}}, I \otimes_{A'} \mathscr O_{\mathcal X})$
for $\mathbb L_h^{\on{log}} := [h^* \Omega_{[C/G]/S}^{\on{log}} \to
\Omega^{\on{log}}_{\mathcal X/S}]$ in degrees $[-1,0]$. 
(We will soon show this is isomorphic to
	the complex
$\mathbb L_h^{\on{log}}$ as defined in \autoref{lemma:vanishing-ext}.)
Indeed by \cite[Theorem
8.36(i)]{Olsson:theLogarithmicCotangent}, there is a canonical obstruction in
$\on{Ext}^2( \mathbb L_h^{G}, I \otimes_{A'} \mathscr O_{\mathcal X})$ where
$\mathbb L_h^G$ is Gabber's cotangent complex, as defined in 
\cite[Definition 8.5]{Olsson:theLogarithmicCotangent}. 
By \cite[Section 8.29]{Olsson:theLogarithmicCotangent} there is a transitivity triangle 
$$
Lh^*\mathbb{L}_{[C/G]/S}^{G} \to \mathbb{L}_{\mathcal X/S}^G \to \mathbb{L}_h^G
$$
and by \cite[Corollary 8.34 and Theorem 1.1(iii)]{Olsson:theLogarithmicCotangent}, we can identify the map $Lh^*\mathbb{L}_{[C/G]/S}^{G} \to \mathbb{L}_{\mathcal X/S}^G$ with 
$\mathbb L_h^{\on{log}}$;
here we use that log smooth curves are integral and that $Lh^* = h^*$ for a locally free sheaf.
We next wish to show $\on{Ext}^2( \mathbb L_h^{\on{log}}, I \otimes_{A'}
\mathscr O_{\mathcal X})  = 0$.
There is an identification 
$\on{Ext}^2( \mathbb L_h^{\on{log}}, I \otimes_{A'} \mathscr O_{\mathcal X} )
\simeq \on{Ext}^2( \mathbb L_{h_0}^{\on{log}}, I \otimes_{\kappa} \mathscr O_{\mathcal
X_0})$,
where 
${\mathcal X_0}$ is the base change of $\mathcal X$ along $\spec \kappa \to
\spec A'$ and
$h_0$ is the base change of $h$ along $\spec \kappa \to
\spec A$,
since $I$ is killed by $\mathfrak m$.
In order to show this $\kappa$ vector space vanishes, we are free to base change to the algebraic
closure of $\kappa$. Hence, for the remainder of the proof, we can assume $S = V = \spec k$ is a geometric point
as in \autoref{notation:stable-compactification},
and we aim to show
$\on{Ext}^2( \mathbb L_h^{\on{log}}, \mathscr O_X)= 0$.

To verify
$\on{Ext}^2( \mathbb L_h^{\on{log}}, \mathscr O_X)= 0$, we next claim we can identify 
$\Omega^{\on{log}}_{\mathcal X/V}
 \simeq \omega_{\mathcal X/V}(\mathcal D+ \mathcal
E)$ and
$\Omega_{[C/G]/V}^{\on{log}} \simeq \omega_{[C/G]/V}([Z/G])$
so that
$\mathbb L_h^{\on{log}} \simeq [h^*  (\omega_{[C/G]/V}([Z/G])) \to
\omega_{\mathcal X/V}(\mathcal D+ \mathcal E)]$.
By \cite[Proposition 1.13]{kato:log-smooth-deformation-theory-and-moduli}
we can identify 
$\Omega^{\on{log}}_{X/V} \simeq \omega_{X/V}(D+E)$.
Then, by \cite[Proposition 3.11]{ascherB:smoothability-of-relative-stable-maps},
if $\pi: \mathcal X \to X$ denotes the coarse space map,
$\Omega^{\on{log}}_{\mathcal X/V} \simeq \pi^*\Omega^{\on{log}}_{X/V} \simeq \pi^*
\omega_{X/V}(D+E) \simeq \omega_{\mathcal X/V}(\mathcal D + \mathcal E)$.
Arguing similarly, we also obtain
$\Omega_{[C/G]/V}^{\on{log}} \simeq \omega_{[C/G]/V}([Z/G])$.
Therefore, the cotangent complex $\mathbb L_h^{\on{log}}$ is identified with
$[h^* (\omega_{[C/G]/V}([Z/G])) \to \omega_{\mathcal
X/V}(\mathcal D + \mathcal E)]$.
Now, note that
$\on{Ext}^2( \mathbb L_h^{\on{log}}, \mathscr O_{\mathcal X}) = 0$,
by \autoref{lemma:vanishing-ext}.

We are nearly done, and it only remains to explain why the vanishing of 
the obstruction space
$\on{Ext}^2( \mathbb L_h^{\on{log}}, \mathscr O_{\mathcal X})$ actually implies
log smoothness of
${\mathcal K}_{g,n}([\mathscr C/G],\mathscr Z,1)^{\on{log}} \to
(\overline{\mathscr M}_{g,\underline{d}})^{\on{log}}$.
To this end, let $\mathcal{L}og_{\overline{\mathscr M}_{g,\underline{d}}}$ denote the algebraic stack 
classifying fine log schemes over $\overline{\mathscr M}_{g,\underline{d}}$,
as defined in \cite[Section 5]{Olsson:logStacks}, and, in particular,
\cite[Proposition 5.9]{Olsson:logStacks}.
Using \autoref{notation:stack-of-log-maps},
the log structure on 
${\mathcal K}_{g,n}([\mathscr C/G],\mathscr Z,1)^{\on{log}}$ induces maps
${\mathcal K}_{g,n}([\mathscr C/G],\mathscr Z,1) \xrightarrow{\iota}
{\mathcal L}{\mathcal K}_{g,n}([\mathscr C/G],\mathscr Z,1) \xrightarrow{\zeta}
\mathcal{L}og_{\overline{\mathscr M}_{g,\underline{d}}}$.
The vanishing of
$\on{Ext}^2( \mathbb L_h^{\on{log}}, \mathscr O_{\mathcal X})$
implies the map $\zeta$ above is formally smooth.
The log structure
from \autoref{notation:log-stable-maps} is the minimal log structure of the log
map in the sense of \cite[p. 724]{wise:moduli-of-morphisms}
and so
\cite[Theorem B.2]{wise:moduli-of-morphisms}
implies $\iota$ above is an open embedding.
Therefore, the composite $\zeta \circ \iota$
is formally smooth, hence smooth.
It is shown in \cite[Theorem 4.6(ii) and (iii)]{Olsson:logStacks}
that if $(W, \mathcal M_W)$ is a scheme with fine log structure
then $(W, \mathcal M_W) \to (\overline{\mathscr M}_{g,\underline{d}}, \mathcal M_{\overline{\mathscr M}_{g,\underline{d}}})$ is log smooth if 
$W \to \mathcal{L}og_{\overline{\mathscr M}_{g,\underline{d}}}$
is smooth.
From this, one can easily deduce the same holds in the case that
$(W, \mathcal M_W)$ is an algebraic stack with fine log structure by passing to a smooth
cover of $W$ by a scheme. Hence, we obtain that
${\mathcal K}_{g,n}([\mathscr C/G],\mathscr Z,1)^{\on{log}}$ is log smooth over
$(\overline{\mathscr M}_{g,\underline{d}})^{\on{log}}$,
completing
the proof. 
\qed

\bibliographystyle{alpha}
\bibliography{./bibliography, ./log_bibliography}

\end{document}